\definecolor{lightgray}{gray}{0.91}
\numberwithin{equation}{section} % 数式番号
\renewcommand\subsubsection{\@secnumfont}{\bfseries}%
\renewcommand\subsubsection{\@startsection{subsubsection}{3}
  \z@{.5\linespacing\@plus.7\linespacing}{-.5em}%
  {\normalfont\bfseries}}
\theoremstyle{plain}
\newtheorem{Thm}{Theorem}[subsection]
\newtheorem{Lem}[Thm]{Lemma}
\newtheorem{Cor}[Thm]{Corollary}
\newtheorem{Prop}[Thm]{Proposition}
\newtheorem{Conj}[Thm]{Conjecture}
\newtheorem{Claim}{Claim}
\theoremstyle{definition}
\newtheorem{Def}[Thm]{Definition}
\newtheorem{Def-Lem}[Thm]{Definition-Lemma}
\newtheorem{Rem}[Thm]{Remark}
\newtheorem*{Ack}{Acknowledgments}
\newtheorem{Ex}[Thm]{Example}
\newtheorem{Question}[Thm]{Question}
\newcommand{\Aut}{\operatorname{Aut}}
\newcommand{\Bir}{\operatorname{Bir}}
\newcommand{\codim}{\operatorname{codim}}
\newcommand{\Proj}{\operatorname{Proj}}
\newcommand{\prt}{\partial}
\newcommand{\Sing}{\operatorname{Sing}}
\newcommand{\Spec}{\operatorname{Spec}}
\newcommand{\Cl}{\operatorname{Cl}}
\newcommand{\Int}{\operatorname{Int}}
\newcommand{\bNE}{\operatorname{\overline{NE}}}
\newcommand{\Bs}{\operatorname{Bs}}
\newcommand{\mult}{\operatorname{mult}}
\newcommand{\wt}{\operatorname{wt}}
\newcommand{\ord}{\operatorname{ord}}
\newcommand{\lct}{\operatorname{lct}}
\newcommand{\Supp}{\operatorname{Supp}}
\newcommand{\EI}{\mathrm{EI}}
\newcommand{\ntimes}{\! \times \!}
\newcommand{\Q}{\mathrm{QI}}
\newcommand{\E}{\mathrm{EI}}
\newcommand{\sm}{\operatorname{sm}}
\newcommand{\Sm}{\operatorname{Sm}}
\newcommand{\QSm}{\operatorname{QSm}}
\newcommand{\coeff}{\operatorname{coeff}}
\newcommand{\omult}{\operatorname{omult}}
\newcommand{\wf}{\operatorname{wf}}
\newcommand{\Diff}{\operatorname{Diff}}
\newcommand{\mov}{\operatorname{mov}}
\newcommand{\BR}{\mathrm{BR}}
\newcommand{\BSR}{\mathrm{BSR}}
\newcommand{\mbA}{\mathbb{A}}
\newcommand{\mbC}{\mathbb{C}}
\newcommand{\mbP}{\mathbb{P}}
\newcommand{\mbQ}{\mathbb{Q}}
\newcommand{\mbR}{\mathbb{R}}
\newcommand{\mbZ}{\mathbb{Z}}
\newcommand{\mcC}{\mathcal{C}}
\newcommand{\mcD}{\mathcal{D}}
\newcommand{\mcF}{\mathcal{F}}
\newcommand{\mcH}{\mathcal{H}}
\newcommand{\mcI}{\mathcal{I}}
\newcommand{\mcM}{\mathcal{M}}
\newcommand{\mcO}{\mathcal{O}}
\newcommand{\mcU}{\mathcal{U}}
\newcommand{\msp}{\mathsf{p}}
\newcommand{\msq}{\mathsf{q}}
\newcommand{\msi}{\mathsf{i}}
\newcommand{\msI}{\mathsf{I}}
\newcommand{\ratmap}{\dashrightarrow}
\newcommand{\bmu}{\boldsymbol{\mu}}
\def\imod#1{\allowbreak\mkern10mu({\operator@font mod}\,\,#1)}
\title[K-stability of Fano 3-fold hypersurfaces]{K-stability of birationally superrigid \\ Fano 3-fold weighted hypersurfaces}
\author{In-Kyun~Kim \and Takuzo~Okada \and Joonyeong~Won}
\address{Center for Mathematical Challenges, Korea Institute for Advanced Study, Seoul 02455, Korea}
\email{soulcraw@gmail.com}
\address{Department of Mathematics, Faculty of Science and Engineering, Saga University, Saga 840-8502 Japan}
\email{okada@cc.saga-u.ac.jp}
\address{Department of Mathematics, Ewha Womans University, Seoul 03760, Korea}
\email{leonwon@ewha.ac.kr}
\subjclass[2020]{14J45 \and 53C25}
\date{}
\begin{document}
%%%%%%%%%%%%%%%%%%%%%%%%%%%%%%%%%%
%%%%%%%%%%%%%%%%%%%%%%%%%%%%%%%%%%

\begin{abstract}
We prove that the alpha invariant of a quasi-smooth Fano 3-fold weighted hypersurface of index $1$ is greater than or equal to $1/2$. 
Combining this with the result of Stibitz and Zhuang \cite{SZ19} on a relation between birational superrigidity and K-stability, we prove the K-stability of a birationally superrigid quasi-smooth Fano 3-fold weighted hypersurfaces of index $1$.
\end{abstract}

\maketitle

\tableofcontents

%%%%%%%%%%%%%%%%%%%%%%%%%%%%%%%%%%
%%%%%%%%%%%%%%%%%%%%%%%%%%%%%%%%%%
%%%%%%%%%%%%%%%%%%%%%%%%%%%%%%%%%%
\section{Introduction} \label{sec:intro}
%%%%%%%%%%%%%%%%%%%%%%%%%%%%%%%%%%
%%%%%%%%%%%%%%%%%%%%%%%%%%%%%%%%%%
%%%%%%%%%%%%%%%%%%%%%%%%%%%%%%%%%%

Throughout the article, the ground field is assumed to be the complex number field $\mbC$.

%%%%%%%%%%%%%%%%%%%%%%%%%%%%%%%%%%
%%%%%%%%%%%%%%%%%%%%%%%%%%%%%%%%%%
\subsection{K-stability, birational superrigidity, and a conjecture}
\label{sec:KstBSRConj}
%%%%%%%%%%%%%%%%%%%%%%%%%%%%%%%%%%
%%%%%%%%%%%%%%%%%%%%%%%%%%%%%%%%%%

The notion of K-stability was introduced by Tian \cite{Tia97} as an attempt to characterize the existence of K\"ahler--Einstein metrics (KE metrics, for short) on Fano manifolds.
Later K-stability was extended and reformulated by Donaldson \cite{Don02} in algebraic terms.
The notion of K-stability emerged in the study of KE metrics (see \cite{Don02}, \cite{Tia97}), and it gives a characterization of the existence of a KE metric for smooth Fano manifolds (see \cite{CDS}, \cite{Tia15}).   

Birational (super)rigidity means the uniqueness of a Mori fiber space in the birational equivalence class (see Definition \ref{def:BSR}), and this notion has its origin in the rationality problem of Fano varieties.
Specifically it grew out of the study of birational self-maps of smooth quartic 3-folds by Iskovskikh and Manin \cite{IM71} (see \cite{Puk13} and \cite{Che05} for surveys). 

K-stability and birational superrigidity have completely different origins and we are unable to find a similarity in their definitions.
However, both of them are closely related to some mildness of singularities of pluri-anticanonical divisors (or linear systems).
For example, it is proved by Odaka and Sano \cite{OS12} (see also \cite{Tia87}) that a Fano variety $X$ of dimension $n$ is K-stable if $\alpha (X) > n/(n+1)$.
Here 
\[
\alpha (X) = \sup \{\, c \in \mbQ_{> 0}  \mid \text{$(X, c D)$ is log canonical for any $D \in \left| - K_X \right|_{\mbQ}$} \,\} 
\] is called the {\it alpha invariant} of $X$ and it measures singularities of pluri-anticanonical divisors.
We refer readers to \cite{Fuj19b}, \cite{Li17}, \cite{FO18} and \cite{BJ20} for criteria for K-stability in terms of beta and delta invariants which are more or less related to singularities of pluri-anticanonical divisors.
On the other hand, it is known that a Fano variety of Picard number one is birationally superrigid if and only if the pair $(X, \lambda \mcM)$ is canonical for any $\lambda \in \mbQ_{> 0}$ and any movable linear system $\mcM$ such that $\lambda \mcM \sim_{\mbQ} - K_X$ (see Theorem \ref{thm:charactbsr}).
With these relations in mind, one may expect a positive answer to the following.

\begin{Conj} \label{conj:BSRKst}
A birationally superrigid Fano variety is K-stable.
\end{Conj}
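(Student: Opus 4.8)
The plan is to establish K-stability through the valuative criterion and to feed the canonicity of movable pairs supplied by birational superrigidity into the relevant volume integrals. By the work of Fujita \cite{Fuj19b}, Li \cite{Li17} and Blum--Jonsson \cite{BJ20}, a Fano variety $X$ of dimension $n$ is (uniformly) K-stable if and only if $\delta(X) > 1$, equivalently $A_X(E) > S(E)$ for every prime divisor $E$ over $X$, where $S(E) = \tfrac{1}{V}\int_0^{\infty} \operatorname{vol}(-K_X - t E)\, dt$ and $V = (-K_X)^n$. Thus I would reduce the conjecture to proving the strict inequality $A_X(E) > S(E)$ uniformly in $E$, and it is here that birational superrigidity must be brought to bear.

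First I would exploit the characterization in Theorem \ref{thm:charactbsr}: since $X$ has Picard number one and is birationally superrigid, the pair $(X, \mcM)$ is canonical for every movable linear system $\mcM$ with $\mcM \sim_{\mbQ} -K_X$. Canonicity means $A_X(F) \ge \ord_F(\mcM)$ for all divisors $F$ over $X$; specializing $F = E$ and taking the supremum over all such $\mcM$ shows that the \emph{movable threshold} $\tau_{\mathrm{mov}}(E) := \sup \{\, \ord_E(\mcM) \mid \mcM \text{ movable},\ \mcM \sim_{\mbQ} -K_X \,\}$ satisfies $\tau_{\mathrm{mov}}(E) \le A_X(E)$. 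The strategy is then to compare $S(E)$ against $\tau_{\mathrm{mov}}(E)$: if one could show that $-K_X - tE$ remained movable (had no fixed part) for all $t$ up to its pseudo-effective threshold $\tau(E)$, then $\tau(E) \le \tau_{\mathrm{mov}}(E) \le A_X(E)$, and since $S(E)$ is a strict weighted average of $t$ over $[0,\tau(E)]$ against the measure $-\tfrac{1}{V}\operatorname{vol}'(t)\,dt$, one would get $S(E) < \tau(E) \le A_X(E)$, yielding $\delta(X) > 1$ at once.

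The main obstacle is precisely that $-K_X - tE$ need not stay movable: for $t$ beyond $\tau_{\mathrm{mov}}(E)$ the systems $\left|-m(K_X + tE)\right|$ acquire a fixed divisorial part, and this immovable contribution feeds into $S(E)$ but is \emph{not} controlled by the canonicity of movable pairs. I expect this fixed-part contribution to be the crux of the whole problem. To handle it I would split $S(E) = \tfrac{1}{V}\int_0^{\tau_{\mathrm{mov}}(E)} \operatorname{vol}\, dt + \tfrac{1}{V}\int_{\tau_{\mathrm{mov}}(E)}^{\tau(E)} \operatorname{vol}\, dt$, bound the first integral by $\tau_{\mathrm{mov}}(E) \le A_X(E)$ via the previous step, and attempt to bound the second by analyzing the fixed part directly---for instance by showing, for $X$ of Picard number one, that the support of the fixed part lies in the stable base locus of $\left|-K_X\right|$ and hence contributes negligibly, or by subtracting the fixed divisor and iterating the superrigidity estimate on the residual movable system.

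The honest assessment is that there appears to be no purely formal way to dominate the immovable contribution using birational superrigidity alone, and this is the reason the statement is phrased as a conjecture rather than a theorem. In the classes of hypersurfaces treated in the present work this gap is bridged by the auxiliary estimate $\alpha(X) \ge 1/2$, which via \cite{SZ19} supplies exactly the missing control on fixed and singular anticanonical members and forces $S(E) < A_X(E)$. A complete proof of the conjecture in full generality would instead require an unconditional bound on the fixed-part integral displayed above, which I regard as the decisive remaining difficulty.
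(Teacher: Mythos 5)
Your proposal correctly recognizes that this statement is a conjecture: the paper contains no proof of it in general, and your honest assessment at the end matches the paper's actual strategy exactly. What the paper does is verify the conjecture for quasi-smooth Fano 3-fold weighted hypersurfaces by proving the auxiliary bound $\alpha(X) \ge 1/2$ (Theorem \ref{mainthm}) and then invoking the Stibitz--Zhuang result (Theorems \ref{thm:SZ} and \ref{thm:SXZ}), which is precisely the mechanism you identify in your final paragraph. Your valuative-criterion sketch is also consistent with the known partial results: the step ``superrigidity $\Rightarrow$ canonicity of movable pairs $(X, \frac{1}{n}\mcM)$ via Theorem \ref{thm:charactbsr} $\Rightarrow$ control of the movable part of filtrations'' is essentially how \cite{SZ19} proves that birational superrigidity alone already implies K-\emph{semi}stability, and the fixed-part contribution you single out as uncontrolled is exactly the term that the hypothesis $\alpha(X) \ge 1/2$ is brought in to dominate (log canonicity of $(X, \frac{1}{2}D)$ for every effective $D \in \left|-K_X\right|_{\mbQ}$ controls the non-movable boundary). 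So your diagnosis of where the gap sits, and of what bridges it in this paper, is accurate.

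Two small technical corrections to your sketch. First, canonicity of $(X, \mcM)$ with $\mcM \sim_{\mbQ} -K_X$ gives the stronger bound $\ord_E(\mcM) \le A_X(E) - 1$ (discrepancy $\ge 0$ means $A_X(E) - 1 - \ord_E(\mcM) \ge 0$), not merely $\ord_E(\mcM) \le A_X(E)$; the latter is what log canonicity would give. Second, the chain ``$\tau(E) \le \tau_{\mathrm{mov}}(E) \le A_X(E)$ hence $S(E) < A_X(E)$'' would, as you note, require $-K_X - tE$ to stay movable up to the pseudo-effective threshold, which fails in general even for Picard number one (the stable base locus of $-K_X - tE$ acquires divisorial components); your proposed remedy of subtracting the fixed divisor and iterating is the natural idea but does not close by superrigidity alone, since the residual system after subtraction is no longer $\mbQ$-proportional to $-K_X$ and Theorem \ref{thm:charactbsr} no longer applies to it. This is why the conjecture remains open and why the paper's contribution is the unconditional computation of $\alpha(X) \ge 1/2$ across all 95 families rather than a formal argument.
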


Actually we expect stronger conjectures to hold (see \S \ref{section:BRKst} for discussions).
The main aim of this article is to verify Conjecture \ref{conj:BSRKst} for quasi-smooth Fano 3-fold weighted hypersurfaces.

%%%%%%%%%%%%%%%%%%%%%%%%%%%%%%%%%%
%%%%%%%%%%%%%%%%%%%%%%%%%%%%%%%%%%
\subsection{Evidences for the conjecture}
%%%%%%%%%%%%%%%%%%%%%%%%%%%%%%%%%%
%%%%%%%%%%%%%%%%%%%%%%%%%%%%%%%%%%

%%%%%%%%%%%%%%%%%%%%%%%%%%%%%%%%%%
\subsubsection{Smooth Fano manifolds}
\label{sec:introsmFano}
%%%%%%%%%%%%%%%%%%%%%%%%%%%%%%%%%%

Smooth quartic $3$-folds and double covers of $\mbP^3$ branched along a smooth hypersurface of degree $6$ (or equivalently smooth weighted hypersurfaces of degree $6$ in $\mbP (1, 1, 1, 1, 3)$) are the only smooth Fano 3-fold that are birationally superrigid (see \cite{IM71}, \cite{Isk80}, \cite{Che05}).
K-stability (and hence the existence of a KE metric) is proved for  smooth quartic 3-folds (\cite[Corollary 1.4]{Fuj19a}) and for smooth weighted hypersurfaces of degree $6$ in $\mbP (1, 1, 1, 1, 3)$ (\cite[Corollary 3.4]{CPW14}).

We have evidences in arbitrary dimension $n \ge 3$.
After the results established in low dimensional cases in \cite{IM71}, \cite{Puk87} and \cite{dFEM03}, it is finally proved by de Fernex \cite{dF13} that any smooth hypersurface of degree $n+1$ in $\mbP^{n+1}$ is birationally superrigid for $n \ge 3$.
On the other hand, it is proved by Fujita \cite{Fuj19a} that any such hypersurface is K-stable (hence admits a KE metric).
It is also proved in \cite{Zhu20b} that a smooth Fano complete intersection $X \subset \mbP^{n+r}$ of Fano index $1$, codimension $r$ and dimension $n \ge 10 r$ is birationally superrigid and K-stable.

%%%%%%%%%%%%%%%%%%%%%%%%%%%%%%%%%%
\subsubsection{Fano 3-fold weighted hypersurfaces}
\label{sec:introFanoWH}
%%%%%%%%%%%%%%%%%%%%%%%%%%%%%%%%%%

By a {\it quasi-smooth Fano $3$-fold weighted hypersurface}, we mean a Fano 3-fold (with only terminal singularities) embedded as a quasi-smooth hypersurface in a well-formed weighted projective 4-space $\mbP (a_0, \dots, a_4)$ (see \S \ref{sec:wfqsm} for quasi-smoothness and well-formedness).
Let $X = X_d \subset \mbP (a_0, \dots, a_4)$ be a quasi-smooth Fano 3-fold weighted hypersurface of degree $d$.
Then, the class group $\Cl (X)$ is isomorphic to $\mbZ$ and is generated by $\mcO_X (1)$ (see for example \cite[Remark 4.2]{Oka19}).
By adjunction, we have $\mcO_X (-K_X) \cong \mcO_X (\iota_X)$, where
\[
\iota_X := \sum_{i=0}^4 a_i - d \in \mbZ_{> 0}.
\]
We call $\iota_X$ the {\it Fano index} (or simply {\it index}) of $X$.

By \cite{IF00} and \cite{CCC11}, quasi-smooth Fano 3-fold weighted hypersurfaces of index $1$ are classified and they consist of 95 families. 
Among them, quartic 3-folds and weighted hypersurfaces of degree $6$ in $\mbP (1, 1, 1, 1, 3)$ are smooth and the remaining 93 families consist of singular Fano 3-folds (with terminal quotient singularities).
The descriptions of these 93 families are given in Table \ref{table:main}.

\begin{Thm}[\cite{CP17}, \cite{CPR00}] \label{thm:BRWH}
Any quasi-smooth Fano $3$-fold weighted hypersurface of index $1$ is birationally rigid.
\end{Thm}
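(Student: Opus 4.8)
The plan is to establish the theorem by the Noether--Fano--Iskovskikh method (the \emph{method of maximal singularities}), following the strategy of \cite{CPR00} and \cite{CP17}. By the rigidity analogue of Theorem \ref{thm:charactbsr}, it is enough to prove the following: for every movable linear system $\mcM$ on $X$ with $\mcM \sim_{\mbQ} -n K_X = \mcO_X(n)$, either the pair $(X, \frac{1}{n}\mcM)$ is canonical, or there is a birational involution $\tau$ of $X$ (of the types described below) with $\tau_* \mcM \sim_{\mbQ} -n'K_X$ for some $n' < n$. Granting this, any birational map from $X$ to a Mori fiber space can be untwisted to an isomorphism, and since $\Cl(X) \cong \mbZ$ is generated by $\mcO_X(1)$, the target must again be $X$; this is precisely birational rigidity.

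So I would assume $(X, \frac{1}{n}\mcM)$ is not canonical and let $\Gamma \subsetneq X$ be a center of a non-canonical singularity, so that $\ord_E \mcM > n \, a(E)$ for some geometric valuation $E$ over $X$ with center $\Gamma$, where $a(E)$ is the discrepancy of $E$. There are three cases: $\Gamma$ is (a) a nonsingular point, (b) a curve, or (c) one of the terminal cyclic quotient singular points $\frac{1}{r}(1,a,r-a)$ of $X$. The key numerical input throughout is that the anticanonical degree $(-K_X)^3 = d/(a_0 \cdots a_4)$ is small for index $1$ families. First I would exclude curves: if $\Gamma$ is a curve then $\mult_\Gamma \mcM > n$, and intersecting two general members with a general anticanonical surface gives $n^2 (-K_X)^3 = \mcM^2 \cdot (-K_X) \geq (\mult_\Gamma \mcM)^2 (-K_X \cdot \Gamma) > n^2 (-K_X \cdot \Gamma)$, forcing $(-K_X \cdot \Gamma) < (-K_X)^3$; smallness of $(-K_X)^3$ then leaves only finitely many low-degree curves, each ruled out by a direct computation on the hypersurface. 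Next I would exclude nonsingular points by Corti's $4n^2$-inequality: a maximal nonsingular center $p$ would force $\mult_p(\mcM^2) > 4n^2$, contradicting $\mcM^2 \cdot (-K_X) = n^2 (-K_X)^3$ once $(-K_X)^3$ is small enough.

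Finally, at each singular point $P$ of type $\frac{1}{r}(1,a,r-a)$ I would perform the Kawamata (weighted) blow-up and test whether $P$ can be a maximal center using the weighted multiplicity $\wmult$; most singular points are excluded by the resulting local inequalities, and those that survive are exactly the centers of the quadratic and elliptic involutions ($\QI$ and $\EI$). For each surviving center the geometry of $X$ supplies such an involution $\tau$: projection from the relevant singular point (after the Kawamata blow-up) realizes $X$ as a double cover or a suitable del Pezzo / conic fibration, and the Galois or fibrewise involution yields $\tau$. The untwisting inequality then gives $\tau_* \mcM \sim_{\mbQ} -n'K_X$ with $n' < n$, so iterating terminates because $n \in \mbZ_{>0}$; at the end $(X, \frac{1}{n}\mcM)$ is canonical and the Noether--Fano criterion forces the original map to be an isomorphism of Mori fiber spaces.

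The hard part is the family-by-family verification over all $93$ singular families: excluding the singular points as maximal centers and constructing the $\QI$/$\EI$ involutions together with their untwisting estimates require delicate local computations (Kawamata blow-ups, weighted tangent-cone multiplicity bounds, and careful bookkeeping of the weights $a_i$ and the degree $d$). The most troublesome cases are those with comparatively large $(-K_X)^3$ or with several singular points of the same type, where the basic inequalities become borderline; these are exactly the cases whose treatment required the refinements of \cite{CPR00} later completed in \cite{CP17}.
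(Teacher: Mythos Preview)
The paper does not prove this theorem: it is stated with attribution to \cite{CPR00} and \cite{CP17} and no argument is given. Your proposal is a reasonable high-level outline of the Noether--Fano/maximal singularities strategy actually carried out in those references, and you correctly identify that the substance lies in the family-by-family exclusion of curves, smooth points, and most singular points, together with the construction of the untwisting involutions at the remaining centers.

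A couple of points where your sketch is too optimistic relative to what the cited papers actually do. First, the exclusion of smooth points is not a one-line consequence of the $4n^2$-inequality and smallness of $(-K_X)^3$: for several families (e.g.\ No.~2 with $(-K_X)^3=5/2$) the naive degree bound is not sharp enough, and \cite{CPR00} and especially \cite{CP17} use the test-class method with carefully chosen surfaces and isolating classes to get the required multiplicity bounds. Second, your description of the elliptic involutions as arising from ``double covers or del Pezzo/conic fibrations'' is inaccurate: the $\EI$ construction in \cite{CPR00} is more delicate, involving an elliptic fibration structure after the Kawamata blow-up and a fibrewise negation map, not a Galois involution of a projection. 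These are exactly the places where the two cited papers do real work, so your closing paragraph acknowledging the difficulty of the case-by-case verification is well placed; but the earlier paragraphs should not suggest that the generic arguments nearly suffice.
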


Among the 95 families, any quasi-smooth member of each of specific 50 families is birationally superrigid.
The 50 families consist of 48 families in Table \ref{table:main} which do not admit singularity with ``QI" or ``EI" in the 4th column plus the 2 families of smooth Fano weighted hypersurfaces.
For each of the remaining 45 families, a general quasi-smooth member is strictly birationally rigid (meaning that it is not birationally superrigid) but some special quasi-smooth members are birationally superrigid (see \S \ref{sec:95fam} for details).

\begin{Thm}[{\cite[Corollary 1.45]{Che09}}] \label{thm:KstWHgen}
A general quasi-smooth member of each of the 95 families is K-stable and admits a KE metric.
\end{Thm}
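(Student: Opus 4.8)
The plan is to deduce both conclusions from a single lower bound on the alpha invariant. Since $\dim X = 3$, the criterion of Odaka and Sano recalled in \S\ref{sec:KstBSRConj} guarantees that $X$ is K-stable as soon as we know
\[
\alpha(X) > \frac{3}{4},
\]
and, because each such $X$ is a Fano orbifold whose only singularities are terminal cyclic quotient points, the orbifold form of Tian's analytic criterion (via the Nadel--Demailly--Koll\'ar vanishing theorem) produces a K\"ahler--Einstein metric under the very same inequality. So I would reduce the entire statement to proving this one estimate for a general quasi-smooth member $X$ of each of the $95$ families; the two smooth families are covered by the same bound as a special case.

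To estimate
\[
\alpha(X) = \inf \{\, \lct(X, D) \mid D \sim_{\mbQ} -K_X, \ D \ge 0 \,\},
\]
I would argue by contradiction: an effective $\mbQ$-divisor $D \sim_{\mbQ} -K_X$ violating the bound forces a non-log-canonical center $Z$, and the whole task is to localize and rule out such $Z$. Away from the singular locus the usual multiplicity/degree estimate bounds $\lct(X,D)$ from below, so the content lies along the base locus of $|-K_X|$ and at the quotient points. Here I would exploit quasi-smoothness of a \emph{general} member to pin the base locus down explicitly (typically it is supported on coordinate points and a few coordinate curves), and then run a local computation at each terminal quotient singularity $\tfrac{1}{r}(a,b,c)$ in its orbifold chart, bounding the local log canonical threshold by weighted multiplicities. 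Passing to a general member is precisely what deletes the degenerate divisors that would otherwise push $\lct$ below $3/4$.

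The main obstacle is that this is an irreducibly case-by-case verification over all $95$ families, and the crude bound is not uniform: a handful of families carry a one-dimensional base locus, or a ``bad'' quotient point, at which the straightforward estimate only yields $\lct$ close to, or even below, $3/4$. For those families I would sharpen the argument by restricting $D$ to the offending curve, or to a general member of $|-mK_X|$ for small $m$, and invoking the Shokurov--Corti inversion-of-adjunction philosophy to convert a global non-log-canonical center into a numerical constraint on a surface or curve where the inequalities can be forced through. The genuinely delicate point throughout is to prove that these worst-case configurations simply cannot occur on a general quasi-smooth member; once that genericity is established, the bound $\alpha(X) > 3/4$ holds in every family and both K-stability and the existence of a K\"ahler--Einstein metric follow simultaneously.
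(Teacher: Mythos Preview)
The paper does not give its own proof of this statement: Theorem~\ref{thm:KstWHgen} is simply quoted from \cite[Corollary~1.45]{Che09} as background motivation, with no argument supplied here. So there is nothing in the present paper to compare your proposal against line by line.

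That said, your outline is exactly the strategy Cheltsov uses in \cite{Che09}: one establishes, for a \emph{general} quasi-smooth member $X$ of each family, a lower bound on $\alpha(X)$ exceeding $\tfrac{n}{n+1}=\tfrac{3}{4}$, and then invokes the orbifold Tian/Demailly--Koll\'ar criterion for K\"ahler--Einstein metrics together with the Odaka--Sano criterion for K-stability. The mechanism you describe---localize a hypothetical non-log-canonical center, use multiplicity/degree bounds away from the base locus, and handle the coordinate strata and quotient points by explicit computations in orbifold charts, with inversion of adjunction for the stubborn cases---is precisely the machinery deployed in Cheltsov's case-by-case analysis. In fact, for most of the $95$ families Cheltsov obtains the stronger conclusion $\alpha(X)=1$ (or an exact value $\ge 1$) for general $X$, rather than merely $\alpha(X)>\tfrac{3}{4}$; your weaker target is of course sufficient. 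Your emphasis that genericity is what rules out the degenerate configurations driving $\alpha$ below the threshold is also the point stressed immediately after the theorem in this paper, and is the reason the present work is needed to treat \emph{all} quasi-smooth members.
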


The generality assumption is crucial in Theorem \ref{thm:KstWHgen}. 
In particular, it is highly likely that birationally superrigid special members of each of the above mentioned 45 families are not treated in Theorem \ref{thm:KstWHgen}.
Note that openness of K-stability is known (see \cite{Oda13}, \cite{Don15}, and \cite{BL22}), and this implies the difficulty in determining which Fano varieties (in a given family) are K-stable.
Although Theorems \ref{thm:BRWH} and \ref{thm:KstWHgen} give a strong evidence for Conjecture \ref{conj:BSRKst}, it is very important to consider special (quasi-smooth) members for Conjecture \ref{conj:BSRKst}. 

%%%%%%%%%%%%%%%%%%%%%%%%%%%%%%%%%%
\subsubsection{Conceptual evidences}
%%%%%%%%%%%%%%%%%%%%%%%%%%%%%%%%%%

Apart from evidences by concrete examples given in \S \ref{sec:introsmFano} and \S \ref{sec:introFanoWH}, we have conceptual results supporting Conjecture \ref{conj:BSRKst}. 

The notion of slope stability for polarized varieties was introduced by Ross and Thomas (cf.\ \cite{RT07}).
For a Fano variety $X$, slope stability of $(X, -K_X)$ is a weaker version of K-stability.

\begin{Thm}[{\cite[Theorem 1.1]{OO13}}]
Let $X$ be a birationally superrigid Fano manifold of Fano index $1$.
If $\left| -K_X \right|$ is base point free, then $(X, -K_X)$ is slope stable.
\end{Thm}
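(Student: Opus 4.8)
The plan is to keep the only input about $X$ to be the characterization of birational superrigidity in Theorem~\ref{thm:charactbsr}, and to show that it forces the Ross--Thomas slope inequality. Write $L = -K_X$ and $n = \dim X$. Recall from \cite{RT07} that slope stability of $(X,L)$ is governed by a slope inequality comparing the slope $\mu_c(\mcO_Z)$ of a closed subscheme $Z \subset X$, for $0 < c \le \epsilon(Z)$, with $\mu(X)$, where $\epsilon(Z)$ is the Seshadri constant of $Z$. It suffices to test this on subschemes whose extraction has irreducible exceptional divisor, i.e.\ on prime divisorial valuations over $X$; so I would fix a prime divisor $E$ over $X$ with centre $Z$, take a model $\pi\colon \hat X \to X$ extracting $E$ (so $K_{\hat X} = \pi^* K_X + (A_X(E)-1)E$), and set $P(x) = \pi^* L - xE$, so that $\epsilon = \epsilon(Z)$ is the largest $x$ with $P(x)$ nef.

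First I would translate the slope inequality into intersection numbers on $\hat X$. By Riemann--Roch the two leading coefficients of $\chi(\hat X, k P(x))$ are $a_0(x) = P(x)^n/n!$ and $a_1(x) = -P(x)^{n-1}\cdot K_{\hat X}/\big(2(n-1)!\big)$; writing $-K_{\hat X} = \pi^* L - (A_X(E)-1)E$ and adding the derivative term that enters the slope of the quotient, one gets the clean identity
\[
a_1(x) + \tfrac12 a_0'(x) = \frac{P(x)^{n-1}\cdot\big(\pi^* L - A_X(E)\,E\big)}{2(n-1)!},
\]
in which the discrepancy is promoted to the log discrepancy $A_X(E)$. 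Feeding this into the intersection-number formula for the Donaldson--Futaki invariant and integrating by parts, slope stability becomes the positivity $\Psi(E,c) > 0$ of the truncated invariant
\[
\Psi(E,c) = A_X(E)\big(L^n - P(c)^n\big) + c\,P(c)^n - \int_0^c P(x)^n\,dx .
\]
This reduction is essentially Riemann--Roch plus integration by parts, and I regard it as routine.

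The heart of the argument is to bound $A_X(E)$ from below using superrigidity. For every $x < \epsilon$ the class $P(x)$ is ample on $\hat X$, so $\lvert m P(x)\rvert$ is base point free for $m \gg 0$; pushing a general member forward produces a linear system $\mcM \subseteq \lvert -mK_X\rvert$ with $\tfrac1m \mcM \sim_{\mbQ} -K_X$ and $\ord_E \mcM = mx$. Here the hypothesis that $\lvert -K_X\rvert$ is base point free is exactly what guarantees that $\mcM$ has no fixed component, hence is movable, and that its multiplicity along $E$ is the full value $mx$. By Theorem~\ref{thm:charactbsr} the pair $(X, \tfrac1m\mcM)$ is canonical, so $A_X(E) - 1 \ge \tfrac1m\ord_E\mcM = x$; letting $x \to \epsilon$ gives $A_X(E) \ge 1 + \epsilon$. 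Since $c \le \epsilon < A_X(E)$ and $L^n - P(c)^n > 0$, the estimate $A_X(E)\big(L^n - P(c)^n\big) \ge c\big(L^n - P(c)^n\big)$ yields $\Psi(E,c) \ge cL^n - \int_0^c P(x)^n\,dx$, which is strictly positive because $P(x)^n$ is strictly decreasing for $x > 0$. Hence $(X,-K_X)$ is slope stable.

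The main obstacle is precisely this bridge between the numerical Seshadri threshold and an honest \emph{movable} linear system: one must know that the ample classes $P(x)$ descend to genuinely movable systems on $X$ whose multiplicity along $E$ is not diminished by a fixed part, and this is where base point freeness of $\lvert -K_X\rvert$ is indispensable and the canonicity half of Theorem~\ref{thm:charactbsr} can finally be applied. A secondary technical point, which I would settle before the computation, is the reduction of an arbitrary destabilising subscheme $Z$ to a prime divisorial valuation, so that the symbol $A_X(E)$ appearing in $\Psi$ is the genuine log discrepancy of a prime divisor rather than a weighted combination over the components of a reducible exceptional locus.
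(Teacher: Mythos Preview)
The paper does not contain a proof of this theorem: it is quoted verbatim as \cite[Theorem 1.1]{OO13} in the survey portion of the introduction, solely as conceptual evidence for Conjecture~\ref{conj:BSRKst}, and no argument is supplied. There is therefore nothing in the present paper to compare your proposal against.

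Assessing your sketch on its own merits: the overall strategy---recasting the Ross--Thomas slope as an integral of restricted volumes on a model, and then using the canonicity characterisation of superrigidity (Theorem~\ref{thm:charactbsr}) to force $A_X(E) > \epsilon(Z)$---is indeed the mechanism behind the result in \cite{OO13}. Your identification of the role of base point freeness of $|{-K_X}|$ is also correct: it is what lets one realise the numerical class $P(x)$ by an honest movable system on $X$ with the prescribed multiplicity along $E$, so that Theorem~\ref{thm:charactbsr} applies. The one place I would tighten is the reduction from an arbitrary destabilising subscheme $Z$ to a single prime divisor $E$: in the Ross--Thomas framework one tests against all closed subschemes, and the passage to a single divisorial valuation is not free. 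The original \cite{OO13} argument works directly with $Z$ and its Seshadri constant rather than through one exceptional divisor; your reformulation via $E$ is closer in spirit to the later valuative approach to K-stability, but as written it needs either a justification of that reduction or a rewriting that handles a general $Z$.
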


As it is explained in \S \ref{sec:KstBSRConj}, K-stability of a Fano variety $X$ of dimension $n$ follows from the inequality $\alpha (X) > n/(n+1)$.
In practice, the computations of alpha invariants are very difficult and hence it is not easy to prove the inequality $\alpha (X) > n/(n+1)$.

\begin{Rem}
In fact, our results show that there exists a birationally superrigid Fano 3-fold $X$ such that $\alpha (X) < 3/4$ (see Example \ref{ex:No46degQI}).
\end{Rem}

Recently Stibitz and Zhuang relaxed the assumption on the alpha invariants significantly under the assumption of birational superrigidity, and obtained the following.

\begin{Thm}[{\cite[Theorem 1.2, Corollary 3.1]{SZ19}}] \label{thm:SZ}
Let $X$ be a birationally superrigid Fano variety.
If $\alpha (X) \ge 1/2$, then $X$ is K-stable.
\end{Thm}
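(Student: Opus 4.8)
The plan is to apply the valuative criterion for K-stability of Fujita and Li \cite{Fuj19b,Li17}: writing $n=\dim X$ and $V=(-K_X)^n$, the variety $X$ is K-stable if and only if the \emph{beta invariant}
\[
\beta_X(E)=A_X(E)-S_X(E),\qquad S_X(E)=\frac{1}{V}\int_0^\infty \operatorname{vol}\!\big(\pi^*(-K_X)-xE\big)\,dx,
\]
is strictly positive for every prime divisor $E$ over $X$, where $A_X(E)$ is the log discrepancy and $\pi$ is a model on which $E$ is a divisor. So the whole task reduces to proving $\beta_X(E)>0$ for all such $E$. The hypothesis $\alpha(X)\ge 1/2$ enters through the valuative description $\alpha(X)=\inf_E A_X(E)/T_X(E)$, where $T_X(E)=\sup\{x\mid \operatorname{vol}(\pi^*(-K_X)-xE)>0\}$ is the pseudoeffective threshold; thus the assumption is exactly that $A_X(E)\ge \tfrac12\,T_X(E)$ for every $E$.

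The first thing to observe is that the alpha bound alone is not enough. The general estimate $S_X(E)\le \tfrac{n}{n+1}T_X(E)$ underlying the inequality $\delta(X)\ge \tfrac{n+1}{n}\alpha(X)$ (see \cite{FO18,BJ20}) yields only $\beta_X(E)\ge \big(\tfrac12-\tfrac{n}{n+1}\big)T_X(E)$, which is negative for $n\ge 2$. Hence birational superrigidity must be fed into the estimate on $S_X(E)$. The mechanism is Theorem \ref{thm:charactbsr}: every movable linear system $\mcM\sim_{\mbQ}\mu(-K_X)$ with $\mu\in\mbQ_{>0}$ gives a canonical pair $(X,\tfrac1\mu\mcM)$, so $\ord_E(\mcM)\le \mu\,A_X(E)$. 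I would package this into the \emph{movable threshold}
\[
\tau_X(E)=\sup\Big\{\ \ord_E(\mcM)/\mu \ \Big|\ \mcM\sim_{\mbQ}\mu(-K_X)\ \text{movable}\ \Big\},
\]
for which superrigidity gives the clean bound $\tau_X(E)\le A_X(E)$; here Fano index $1$ together with $\Cl(X)\cong\mbZ$ is what guarantees that the relevant systems are genuinely proportional to $-K_X$ with the correct normalization.

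The crux is to convert $\tau_X(E)\le A_X(E)$ into a bound on the \emph{integral} $S_X(E)$ good enough to overcome the deficit above. I would compare $\tau_X(E)$ with the volume function via the Nakayama--Zariski decomposition $\pi^*(-K_X)-xE=P_x+N_x$: for $x\le\tau_X(E)$ the positive part $P_x$ still moves and descends to a movable system on $X$, while for $x>\tau_X(E)$ the divisor $E$ has entered the negative part $N_x$ and the volume is governed by the tail behaviour. Splitting $S_X(E)=\frac1V\big(\int_0^{\tau_X(E)}+\int_{\tau_X(E)}^{T_X(E)}\big)\operatorname{vol}\,dx$, estimating the movable range via $\tau_X(E)\le A_X(E)$ and the fixed-part tail via the concavity of $x\mapsto\operatorname{vol}(\pi^*(-K_X)-xE)^{1/n}$ (which controls $\int_{\tau_X(E)}^{T_X(E)}$ in terms of $T_X(E)-\tau_X(E)$), one should arrive at $S_X(E)<A_X(E)$ after feeding in $A_X(E)\ge\tfrac12 T_X(E)$.

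I expect the main obstacle to be precisely this integral estimate: squeezing the two bounds $\tau_X(E)\le A_X(E)$ and $A_X(E)\ge\tfrac12 T_X(E)$ together so that the constant lands on the nose (this is where the threshold $1/2$ is forced), and simultaneously handling several technical points — the possible drop in degree when passing to the positive part $P_x$, the behaviour of $\operatorname{vol}$ on the non-movable range $[\tau_X(E),T_X(E)]$, the separate and easier case where $E$ is already a prime divisor on $X$, and the upgrade from $\beta_X(E)\ge 0$ to the strict inequality needed for K-stability (rather than mere K-semistability), which should follow from the strictness built into the superrigidity bound on any valuation that nearly computes $\alpha(X)$.
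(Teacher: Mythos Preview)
The paper does not prove this statement: Theorem~\ref{thm:SZ} is quoted from \cite{SZ19} (their Theorem~1.2 together with Corollary~3.1) and used as a black box to deduce Corollary~\ref{maincor:Kst} from the main alpha-invariant bound. There is therefore no proof in the present paper to compare against.

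That said, your sketch is essentially the strategy of \cite{SZ19}. The reduction to the Fujita--Li valuative criterion, the reading of $\alpha(X)\ge 1/2$ as $T_X(E)\le 2A_X(E)$, the extraction from birational superrigidity (via Theorem~\ref{thm:charactbsr}) of a bound on the movable threshold, and the plan to split the $S_X(E)$ integral at that threshold and exploit concavity of $\operatorname{vol}^{1/n}$ are all the right ingredients. One small sharpening: canonicity of $(X,\tfrac1\mu\mcM)$ gives $\ord_E(\mcM)\le \mu\,a_E(K_X)=\mu(A_X(E)-1)$, i.e.\ $\tau_X(E)\le A_X(E)-1$, which is a hair stronger than the $\tau_X(E)\le A_X(E)$ you wrote and is what actually makes the constant $1/2$ land on the nose in the integral estimate. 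You are right that the heart of the matter is that estimate; in \cite{SZ19} it is carried out by a direct comparison of the volume tail on $[\tau_X(E),T_X(E)]$ against a linear model, and the strict inequality (their Corollary~3.1) comes from analyzing the equality case.
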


Note that the assumption on the alpha invariant is $\alpha (X) > 1/2$ in \cite[Theorem 1.2]{SZ19}, but the equality is allowed by \cite[Corollary 3.1]{SZ19}. 
It is informed by C. Xu and Z. Zhuang that one can even conclude the uniform K-stability of $X$ in Theorem \ref{thm:SZ} under the same assumption.
The notion of uniform K-stability is originally introduced in \cite{Der16b} and \cite{BHJ17} (see also \cite{Fuj19b} and \cite{BJ20}) and it is stronger than K-stability\footnote{Soon after this paper is completed, it is proved in \cite{LXZ22} that uniform K-stability is equivalent to K-stability.}.
Moreover it is very important to mention that uniform K-stability implies the existence of a KE metric (\cite{LTW19}). 
Combining these results, we have the following.

\begin{Thm}[{\cite[Theorem 9.6]{Xu21}, \cite{SZ19}, \cite{LTW19}}] \label{thm:SXZ}
Let $X$ be a birationally superrigid Fano variety and assume that $\alpha (X) \ge 1/2$.
Then $X$ is uniformly K-stable.
In particular, $X$ is K-stable and it admits a KE metric.
\end{Thm}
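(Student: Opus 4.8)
The plan is to deduce the statement by assembling three ingredients: the valuative reformulation of uniform K-stability, the Stibitz--Zhuang comparison between birational superrigidity and the singularities of (pluri-)anticanonical systems, and the uniform Yau--Tian--Donaldson correspondence. Since the conclusion is phrased as a combination of the cited results, the task is really to explain how the two hypotheses feed into each link of the chain rather than to build new machinery.

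First I would recall the valuative criterion: by the work of Fujita and Li, refined by Blum--Jonsson, uniform K-stability of a $\mbQ$-Fano variety is equivalent to the stability threshold (delta invariant) satisfying $\delta(X) > 1$, and this may be tested on divisorial valuations $v = \ord_E$ through the sign of $\beta_X(v) = A_X(v) - S_X(v)$. Thus it suffices to produce a \emph{uniform} lower bound, namely $A_X(v) \ge (1+\varepsilon)\, S_X(v)$ for some fixed $\varepsilon > 0$, over all such valuations.

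The heart of the argument is the Stibitz--Zhuang dichotomy, which I would import from \cite{SZ19}. Given a valuation that threatens to make $\beta_X$ small, one attaches to it a movable linear system $\mcM$ with $\mcM \sim_{\mbQ} -K_X$ having large order along the center of $v$. Birational superrigidity, in the form of the characterization in Theorem \ref{thm:charactbsr} (canonicity of $(X, \lambda \mcM)$ for every movable $\mcM$ with $\lambda \mcM \sim_{\mbQ} -K_X$), controls the contribution of the movable part, while the hypothesis $\alpha(X) \ge 1/2$ controls the remaining fixed part via a log canonical threshold estimate. Combining the two bounds, exactly as in \cite[Theorem 1.2, Corollary 3.1]{SZ19} and its uniform sharpening pointed out by Xu and Zhuang (cf.\ \cite[Theorem 9.6]{Xu20}), yields $\delta(X) > 1$, hence uniform K-stability. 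Uniform K-stability then implies the existence of a K\"ahler--Einstein metric by the uniform Yau--Tian--Donaldson theorem for (possibly singular) Fano varieties \cite{LTW19}, and K-stability is a formal consequence of the uniform version.

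The step I expect to be most delicate is the uniform sharpening: passing from the strict inequality $\beta_X(v) > 0$ (which already gives K-stability in \cite{SZ19}) to the quantitative estimate $\delta(X) > 1$. This requires that the bounds produced by the dichotomy be genuinely uniform in $v$ — in particular that the constant extracted from $\alpha(X) \ge 1/2$ not degenerate as the center of $v$ varies — and it is precisely here that allowing the boundary value $\alpha(X) = 1/2$ (via \cite[Corollary 3.1]{SZ19}) rather than a strict inequality is the subtle point, the strictness of $\delta(X) > 1$ being supplied by birational superrigidity even when the $\alpha$-bound is sharp.
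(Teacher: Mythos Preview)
The paper does not supply its own proof of this theorem; it simply records it as a combination of the cited references, noting in the preceding paragraph that \cite{SZ19} gives K-stability under the same hypotheses, that Xu and Zhuang communicated the strengthening to uniform K-stability (recorded in \cite[Theorem~9.6]{Xu20}), and that \cite{LTW19} then yields the KE metric. Your proposal is a correct and reasonable unpacking of exactly this chain of citations, so there is nothing to compare beyond the fact that you have written out in more detail what the paper leaves implicit in the references.
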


%%%%%%%%%%%%%%%%%%%%%%%%%%%%%%%%%%
%%%%%%%%%%%%%%%%%%%%%%%%%%%%%%%%%%
\subsection{Main results}
\label{sec:mainresults}
%%%%%%%%%%%%%%%%%%%%%%%%%%%%%%%%%%
%%%%%%%%%%%%%%%%%%%%%%%%%%%%%%%%%%

 We state Main Theorem of this article.

\begin{Thm}[Main Theorem] \label{mainthm}
Let $X$ be a quasi-smooth Fano $3$-fold weighted hypersurface of index $1$.
Then $\alpha (X) \ge 1/2$. 
\end{Thm}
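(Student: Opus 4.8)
The plan is to prove the equivalent local statement: for every effective $\mbQ$-divisor $D \sim_{\mbQ} -K_X$ and every point $p \in X$, the pair $(X, \tfrac{1}{2}D)$ is log canonical at $p$. Since $X$ has index $1$, we have $-K_X \sim \mcO_X(1)$, so every such $D$ is numerically the ample generator of $\Cl(X) \cong \mbZ$; this minimality of degree is what will ultimately force the bound. The two smooth families (the quartic and the degree-$6$ double solid) already satisfy $\alpha(X) \ge 1/2$ by the results quoted in the introduction, so I would reduce to the $93$ singular families recorded in Table \ref{table:main} and argue according to the geometric type of $p$: a smooth point, a point on a distinguished low-degree curve, or a terminal cyclic quotient singularity. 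Assume for contradiction that $(X, \tfrac{1}{2}D)$ is not log canonical at some $p$, and let $Z$ be a minimal center of log canonical singularities through $p$, so that $Z$ is either $\{p\}$ or an irreducible curve.

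I would first settle the smooth-point case. If $p$ is a smooth point of $X$ and $Z = \{p\}$ is isolated, then $\mult_p D > 2$, and I would contradict this by bounding $\mult_p D$ from above through intersection with a suitable low-degree curve $C$ through $p$ --- for instance a component of the intersection of $X$ with coordinate hyperplanes, or a fibre of a natural projection --- using $\mult_p D \le D \cdot C = (-K_X) \cdot C$ whenever $C \not\subset \Supp D$, and treating the remaining case $C \subset \Supp D$ by splitting off $C$ and bounding its coefficient. For appropriately chosen $C$ one has $(-K_X) \cdot C \le 2$, giving the contradiction.

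When $Z$ is a curve, inversion of adjunction reduces the problem to $Z$: at the generic point of $Z$ one finds $\mult_Z D > 2$, which I would again contradict by the degree estimate coming from $D \cdot C$ for a curve $C$ meeting $Z$ transversally, exploiting that $D \sim_{\mbQ} \mcO_X(1)$ has minimal degree. These intersection-theoretic bounds are formally parallel to the multiplicity-versus-degree (Corti-type) inequalities underlying the birational rigidity of these hypersurfaces in Theorem \ref{thm:BRWH}, and I would organise the computations family by family using the combinatorics of the weights $(a_0, \dots, a_4)$.

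The main obstacle is the analysis at a terminal quotient singularity $p$ of type $\tfrac{1}{r}(1, a, r-a)$, precisely where the bound $1/2$ is nearly sharp --- as noted in the introduction, there exist such $X$ with $\alpha(X) < 3/4$, the obstruction arising from exactly such a point. Here I would work in the orbifold chart, either passing to the cyclic cover or, more efficiently, computing the log canonical threshold by means of the weighted blow-up that extracts the Kawamata divisor over $p$, and control the order of vanishing of $D$ along that divisor using the local structure of $\Cl(X)$ at $p$ together with the global constraint $D \sim_{\mbQ} \mcO_X(1)$. The special ``QI'' and ``EI'' members singled out in Table \ref{table:main}, for which only some quasi-smooth members are birationally superrigid, demand the most delicate weighted-blow-up bookkeeping, and carrying this orbifold analysis through uniformly over all $93$ families is the technical heart of the argument.
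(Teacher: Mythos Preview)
Your overall plan---reduce to the local inequality $\alpha_{\msp}(X)\ge\tfrac12$, handle the two smooth families by citation, and split the remaining argument into smooth points versus terminal quotient singularities---matches the paper's architecture. But two of the concrete mechanisms you propose do not work, and the techniques that replace them are precisely the substance of the proof.

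First, at smooth points your plan is to find a curve $C\ni\msp$ with $C\not\subset\Supp D$ and $(-K_X\cdot C)\le 2$, so that $\mult_{\msp}D\le D\cdot C\le 2$. There is no movable family of such curves through a general smooth point: the only cheap curves (such as $L_{xy}=(x=y=0)_X$) live on a fixed locus, and for $\msp$ off that locus you have nothing to intersect with. The paper never bounds $\mult_{\msp}D$ by a single curve intersection; instead it uses \emph{triple} intersection with two auxiliary divisors. One takes a divisor $S$ through $\msp$ with $\mult_{\msp}S$ large (either $H_x$ or the quasi-tangent divisor of Definition~\ref{def:qtangdiv}), produces a second divisor $T\sim_{\mbQ}eA$ with $\mult_{\msp}T\ge 1$ from a $\msp$-isolating set of monomials (\S\ref{sec:isol}), and concludes $\mult_{\msp}D\le (D\cdot S\cdot T)/\mult_{\msp}S$. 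For points on $L_{xy}$ this still fails directly because $L_{xy}$ can be reducible or non-reduced; the fix is a genuine surface argument (Lemmas~\ref{lem:mtdLred}--\ref{lem:mtdLintpt}), writing $T|_S=\sum m_i\Gamma_i$ and using negativity of the intersection matrix $M(\Gamma_1,\dots,\Gamma_k)$ to bound the coefficients of $D|_S$. Your proposal to ``split off $C$ and bound its coefficient'' is exactly this step, but it is not automatic---it occupies \S\ref{sec:smptL1}--\S\ref{sec:smptL2} and requires a family-by-family verification of the condition~$(\star)$. Finally, seven families with $a_2=1$ (the set $\msI_1$) escape this framework entirely and need ad hoc arguments (Chapter~\ref{chap:exc}), including applications of the $2n$-inequality (Lemma~\ref{lem:2nineq}) and direct weighted-blowup computations of $\lct_{\msp}$ via Lemma~\ref{lem:lctwblwh}.

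Second, at the quotient singularities your proposal to ``control the order of vanishing of $D$ along the Kawamata divisor'' is the right instinct but underspecifies the actual trichotomy. For most singular points the paper invokes the exclusion results of \cite{CP17}: when $(-K_Y)^2\notin\Int\bNE(Y)$ after the Kawamata blowup one gets $\alpha_{\msp}(X)\ge 1$ for free (Proposition~\ref{prop:singptCP}). The BI centers split further: degenerate QI centers admit an exact formula $\alpha_{\msp}(X)=(a_k+1)/(2a_k+1)$ (Proposition~\ref{prop:lctdegQI}), while non-degenerate QI centers use that $-K_Y$ is \emph{nef} (from \cite[Theorem~4.9]{CPR00}) to bound $\ord_E D$ via $(-K_Y\cdot\tilde H_v\cdot\tilde D)\ge 0$, together with a separate estimate for $\lct_{\msp}(X;\tfrac1c H_v)$ by weighted blowup (Claims~\ref{clm:ndQIcent-1}--\ref{clm:ndQIcent-7}). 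None of this is visible from the orbifold chart alone; the global geometry of the Sarkisov link is essential.
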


The following is a direct consequence of Theorems \ref{mainthm} and \ref{thm:SXZ}.

\begin{Cor} \label{maincor:Kst}
Any birationally superrigid quasi-smooth Fano $3$-fold weighted hypersurface of index $1$ is K-stable and admits a KE metric.
\end{Cor}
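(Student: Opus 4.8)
The plan is to obtain Corollary~\ref{maincor:Kst} as a purely formal combination of the Main Theorem (Theorem~\ref{mainthm}) with Theorem~\ref{thm:SXZ}; once both are available the deduction is immediate. Let $X$ be a birationally superrigid quasi-smooth Fano $3$-fold weighted hypersurface of index $1$. First I would invoke Theorem~\ref{mainthm}: its hypotheses are only quasi-smoothness and Fano index $1$, both of which $X$ satisfies by assumption, so it yields the bound $\alpha(X) \ge 1/2$. Note that this step makes no use of birational superrigidity — the alpha bound holds for every member of all $95$ families, superrigid or not.

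Next I would feed the two properties of $X$ into Theorem~\ref{thm:SXZ}, whose hypotheses are precisely that $X$ is birationally superrigid and that $\alpha(X) \ge 1/2$. The first is assumed in the statement of the corollary and the second was just established, so the theorem applies verbatim and gives that $X$ is uniformly K-stable; in particular $X$ is K-stable and admits a K\"ahler--Einstein metric, which is exactly the assertion to be proved.

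The corollary therefore carries no genuine difficulty of its own: all of the substance is packaged inside the Main Theorem, whose proof must control the singularities of pluri-anticanonical divisors uniformly over the $93$ singular families (and the two smooth ones) in order to secure $\alpha(X) \ge 1/2$. The one point worth checking explicitly in the deduction is that the equality case $\alpha(X) = 1/2$ is permitted: this is guaranteed by the version of the Stibitz--Zhuang result quoted after Theorem~\ref{thm:SZ}, where equality is allowed. This matters because the classical Odaka--Sano threshold $n/(n+1) = 3/4$ genuinely fails in our setting — Example~\ref{ex:No46degQI} exhibits a birationally superrigid $X$ with $\alpha(X) < 3/4$ — so the weaker threshold $1/2$ supplied by Theorem~\ref{thm:SXZ} is not a convenience but a necessity for the argument to go through.
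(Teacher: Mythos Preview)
Your proposal is correct and matches the paper's own treatment: the paper states this corollary as a direct consequence of Theorems~\ref{mainthm} and~\ref{thm:SXZ}, which is exactly the two-step deduction you carry out. Your additional remarks on the equality case and on Example~\ref{ex:No46degQI} are accurate context but not needed for the formal proof.
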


By \cite[Corollary 1.3]{ACP20}, a birationally superrigid quasi-smooth Fano 3-fold weighted hypersurface necessarily has Fano index $1$.
Thus we obtain the following.

\begin{Cor}
Conjecture \ref{conj:BSRKst} is true for quasi-smooth Fano 3-fold weighted hypersurfaces.
\end{Cor}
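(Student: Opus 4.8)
The plan is to verify the assertion of Conjecture~\ref{conj:BSRKst} verbatim for the class in question, reducing it to Corollary~\ref{maincor:Kst}. Let $X$ be a quasi-smooth Fano $3$-fold weighted hypersurface that is birationally superrigid; I must show that $X$ is K-stable. A priori the Fano index $\iota_X$ could be any positive integer, so the first step is to pin it down: by \cite[Corollary~1.3]{ACP20} a birationally superrigid quasi-smooth Fano $3$-fold weighted hypersurface necessarily satisfies $\iota_X = 1$. With this, $X$ meets the hypotheses of Corollary~\ref{maincor:Kst}, which yields that $X$ is K-stable and admits a K\"ahler--Einstein metric. Since this is exactly the conclusion of Conjecture~\ref{conj:BSRKst} for $X$, the corollary follows.

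Granting the results already stated, this is a purely formal deduction and carries no difficulty of its own: the only genuinely external input is \cite[Corollary~1.3]{ACP20}, whose sole function is to exclude the indices $\iota_X \ge 2$, and everything else is bookkeeping around Corollary~\ref{maincor:Kst}. The real content is upstream. Corollary~\ref{maincor:Kst} is obtained by feeding the Main Theorem $\alpha(X) \ge 1/2$ into Theorem~\ref{thm:SXZ}, and it is the Main Theorem that constitutes the actual theorem to be proved; this is where I would concentrate the work.

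To establish $\alpha(X) \ge 1/2$ I would argue by contradiction. If $\alpha(X) < 1/2$, there exist an effective $\mbQ$-divisor $D \sim_{\mbQ} -K_X = \mcO_X(1)$ and a rational $c < 1/2$ for which $(X, cD)$ fails to be log canonical; I would then pass to a minimal center $Z$ of a non-log-canonical singularity of $(X, cD)$. The strategy is to exclude every possible $Z$ using the geometry of the $95$ families: in the index-$1$ case the anticanonical degree $(-K_X)^3$ is small, which forces strong upper bounds on $\mult_Z D$, and at a smooth point of $X$ such bounds are incompatible with the failure of log canonicity for $c < 1/2$. The main obstacle, and the overwhelming majority of the effort, is the local analysis at the terminal cyclic quotient singularities: there one must replace ordinary multiplicity by its weighted (orbifold) analogue and run the argument family by family, and in particular treat the special quasi-smooth members that are birationally superrigid yet fall outside the generic K-stability of Theorem~\ref{thm:KstWHgen}.
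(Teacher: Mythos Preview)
Your proof is correct and matches the paper's approach exactly: the paper simply invokes \cite[Corollary~1.3]{ACP20} to force $\iota_X = 1$ and then applies Corollary~\ref{maincor:Kst}, which is precisely what you do. The additional discussion you give about the upstream strategy for the Main Theorem is accurate commentary but is not part of the proof of this corollary itself.
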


It is natural to consider a generalization of Conjecture \ref{conj:BSRKst} by relaxing the assumption of birational superrigidity to birational rigidity (see \S \ref{section:BRKst}), or to expect that the conclusion of Corollary \ref{maincor:Kst} holds without the assumption of birational superrigidity.
We are unable to relax the assumption of birational superrigidity to birational rigidity in Theorem \ref{thm:SZ} or \ref{thm:SXZ}, and thus we cannot conclude K-stability for strictly birationally rigid members as a direct consequence of Theorem \ref{mainthm}.
By the arguments delivered in this article, we are able to prove $\alpha (X) > 3/4$ for any quasi-smooth member $X$ of suitable families.
As a consequence, we have the following (see \S \ref{sec:KEmetric} for details). 

\begin{Thm}[= Theorem \ref{thm:qsmWHKE}] \label{thm:introKE}
Let $X$ be any quasi-smooth member of a family which is given ``KE"  in the right-most column of Table \ref{table:main}.
Then $X$ is K-stable and admits a KE metric.
\end{Thm}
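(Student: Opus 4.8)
The plan is to prove the strict inequality $\alpha(X) > 3/4$ for every quasi-smooth member $X$ of a family marked ``KE'' in Table \ref{table:main}, and then to deduce K-stability and the existence of a KE metric formally. Indeed, with $n = \dim X = 3$ the criterion of Odaka and Sano recalled in \S\ref{sec:KstBSRConj} gives K-stability as soon as $\alpha(X) > n/(n+1) = 3/4$. For the KE metric I would pass through the $\delta$-invariant: by the inequality $\delta(X) \ge \tfrac{n+1}{n}\,\alpha(X)$ of Fujita--Odaka and Blum--Jonsson (see \cite{FO18}, \cite{BJ20}), the bound $\alpha(X) > 3/4$ yields $\delta(X) > 1$, so $X$ is uniformly K-stable, and then \cite{LTW19} supplies the KE metric. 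Thus the entire statement is reduced to the single estimate $\alpha(X) > 3/4$, which strengthens the bound $\alpha(X) \ge 1/2$ of Theorem \ref{mainthm} for the restricted list of families.

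To prove $\alpha(X) > 3/4$, write $A = \mcO_X(1) = -K_X$ and argue by contradiction using the characterization $\alpha(X) \ge c \iff (X, cD)$ is log canonical for every effective $\mbQ$-divisor $D \sim_{\mbQ} A$. Suppose there were such a $D$, a rational $c \le 3/4$, and a point $p \in X$ at which $(X, cD)$ fails to be log canonical. The argument then divides according to the location of the minimal non-log-canonical center and of $p$: either $p$ lies on the smooth locus of $X$, or $p$ is one of the finitely many cyclic quotient terminal singular points recorded, together with the coordinate geometry of each family, in Table \ref{table:main}. One first rules out a one-dimensional non-log-canonical center by intersecting $D$ with a general member of a suitable movable pencil and using that $\deg D$ is fixed by $D \sim_{\mbQ} A$; this reduces matters to an isolated center at $p$.

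The technical core is the local analysis at $p$, refined beyond what is needed for the baseline bound. At a smooth point $p$ non-log-canonicity forces $\mult_p D > 1/c \ge 4/3$, and I would contradict this by intersecting $D$ with a curve $C$ through $p$ cut out from the ambient weighted space, so that $\mult_p D \le D \cdot C$; for the ``KE'' families the number $D \cdot C$ --- computed from $d$ and the weights $a_0, \dots, a_4$ --- is at most $4/3$. When no global curve through $p$ gives a small enough intersection, one instead restricts $D$ to a general surface $S \in |A|$ (or $|mA|$) through $p$ and applies log-canonicity estimates on $S$. At a quotient singularity of type $\tfrac{1}{r}(b_1, b_2, b_3)$, I would take the weighted blow-up with exceptional divisor $E$, read off the log discrepancy $a(E)$ and the order $\ord_E D$ from the weights, and check that the terminal numerics of the ``KE'' families satisfy $a(E) > \tfrac{3}{4}\,\ord_E D$, which again excludes $E$ as a center.

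The hard part will be the singular points and the finitely many distinguished curves, where the crude bound $\alpha(X) \ge 1/2$ of Theorem \ref{mainthm} is genuinely insufficient. For these one must verify family by family, from the data in Table \ref{table:main}, that the sharpened estimate $\alpha(X) > 3/4$ holds; in particular one must confirm that none of the special effective divisors built from the distinguished monomials of the defining polynomial --- precisely the divisors responsible for the low log canonical thresholds in the strictly birationally rigid families --- can occur for a ``KE'' family. Controlling these special divisors and the curves through the singular points is where essentially all of the work resides; the reduction to $\alpha(X) > 3/4$ and the passage to uniform K-stability and the KE metric are then formal.
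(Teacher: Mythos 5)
There is a genuine gap, and it lies in your very first reduction. The families marked ``KE'' are the $56$ families indexed by $\msI_{\mathrm{KE}} = \msI_{\BSR} \sqcup \msI'_{\mathrm{KE}}$, i.e.\ all $48$ birationally superrigid families plus the eight families $\msI'_{\mathrm{KE}}=\{42,44,45,61,69,74,76,79\}$. The paper does \emph{not} prove $\alpha(X)>3/4$ for all of these; it proves that sharper bound only for the eight families in $\msI'_{\mathrm{KE}}$. For the $48$ families in $\msI_{\BSR}$ the conclusion comes from Corollary~\ref{maincor:Kst}, i.e.\ from birational superrigidity of every member together with $\alpha(X)\ge 1/2$ and the Stibitz--Zhuang/LTW machinery (Theorem~\ref{thm:SXZ}); no bound beyond $1/2$ is needed or established there. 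Indeed, at many singular points of KE-marked superrigid families the paper's own computations give exactly the bound $1/2$ and nothing more: e.g.\ the $\frac{1}{3}(1,1,2)$ point of family No.~10 and the points of families No.~29, 50, 63, 64, 68, 80, 93, 95 treated by Proposition~\ref{prop:singptic} all have $c=1/2$ in Table~\ref{table:isolsingpt}. Your plan therefore commits you to proving $\alpha(X)>3/4$ at all of these points for arbitrary quasi-smooth members, which is a far harder (and possibly false) statement — recall Example~\ref{ex:No46degQI}, where a birationally superrigid member has $\alpha(X)=11/21<3/4$, showing that superrigidity alone gives no such bound. The missing idea is precisely the dichotomy: use Stibitz--Zhuang where superrigidity is available, and reserve the $\alpha>3/4$ computation for the eight strictly birationally rigid families in $\msI'_{\mathrm{KE}}$ (whose general members admit quadratic involutions, so Stibitz--Zhuang does not apply and the sharper alpha bound is genuinely required).

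For what it is worth, the formal part of your argument is fine: for the eight families where $\alpha(X)>3/4$ does hold, either the Odaka--Sano/Tian criterion or the Demailly--Koll\'ar route through quotient singularities (as the paper uses in \S\ref{sec:KEmetric}) yields K-stability and a KE metric, and your $\delta$-invariant detour is an acceptable alternative. But the proof of $\alpha(X)>3/4$ for those eight families is itself not the generic multiplicity/weighted-blow-up sketch you give; in the paper it is assembled from the earlier chapters (Propositions~\ref{prop:smptHminusL}, \ref{prop:smptL1}, \ref{prop:singptCP}, \ref{prop:lctsingptL}, Remark~\ref{rem:Lredcp1}, and the QI-center analysis of \S\ref{sec:ndQI2eqtypes}), checked separately on $U_1$, on $H_x\setminus L_{xy}$, on $L_{xy}$, and at each singular point. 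As written, your proposal neither supplies these verifications nor can it succeed for the remaining $48$ families without the superrigidity input.
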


We can also prove K-stability for any quasi-smooth member (which is not necessarily birationally superrigid) of suitable families.
 
\begin{Thm}[= Corollary \ref{cor:Kstqsm}] \label{thm:introKst}
Let $X$ be any quasi-smooth member of a family which is given ``K" or ``KE" in the right-most column of Table \ref{table:main}.
Then $X$ is K-stable.
\end{Thm}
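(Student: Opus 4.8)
The plan is to deduce K-stability from lower bounds on the alpha invariant, by invoking the two criteria already available. For a quasi-smooth Fano $3$-fold weighted hypersurface $X$ of index $1$, K-stability holds as soon as one verifies one of the following: (A) $\alpha(X) > 3/4$, in which case the criterion of Odaka and Sano (K-stability from $\alpha(X) > n/(n+1)$, here $n = 3$) applies; or (B) $X$ is birationally superrigid, in which case Theorem~\ref{thm:SZ} applies, since Theorem~\ref{mainthm} already gives $\alpha(X) \ge 1/2$. It therefore suffices to check, for every family marked ``K'' or ``KE'' in Table~\ref{table:main}, that each quasi-smooth member satisfies (A) or (B). For the families labeled ``KE'' I would establish the stronger bound $\alpha(X) > 3/4$ for \emph{all} quasi-smooth members and conclude by (A); for the families labeled ``K'' I would record that every quasi-smooth member is birationally superrigid, using the classification (Theorem~\ref{thm:BRWH} together with \S\ref{sec:95fam}), and conclude by (B).

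The substance lies in proving $\alpha(X) > 3/4$ for the ``KE'' families, where $\alpha(X)$ is the global log canonical threshold of $X$. To do so I would bound $\lct(X, D)$ from below, uniformly over all effective $\mbQ$-divisors $D \sim_{\mbQ} -K_X$. Writing $X = X_d \subset \mbP(a_0, \dots, a_4)$, so that $\left| -K_X \right| = \left| \mcO_X(1) \right|$ since the index is one, this system is spanned by the variables of weight one, which gives good control of a general member and isolates the special divisors as the delicate case. I would localize the estimate at each point $p \in X$, treating the smooth and singular loci separately: at a smooth point, a bound on $\mult_p D$ coming from the small anticanonical degree of $X$ forces $(X, \tfrac{3}{4} D)$ to be log canonical at $p$, while at a terminal cyclic quotient singularity of type $\tfrac{1}{r}(a, b, c)$ I would pass to the orbifold chart and estimate the orbifold log canonical threshold of the pullback of $D$, using quasi-smoothness to pin down the local equation of $X$.

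The main obstacle is exactly the analysis at the terminal quotient singularities. There the anticanonical class is small relative to the size of the singularity, so an effective anticanonical divisor can be badly singular and its log canonical threshold can approach the critical value; indeed Example~\ref{ex:No46degQI} produces a birationally superrigid member with $\alpha(X) < 3/4$, so the bound $\alpha(X) > 3/4$ genuinely fails outside the selected families and must be proved by a case-by-case inspection of the weight data. For each ``KE'' family I expect to enumerate the candidate centers of non-log-canonical singularities through the singular points and along the low-degree curves contained in $X$, and to eliminate each candidate either by an explicit multiplicity computation or by producing a suitable auxiliary divisor; well-formedness and quasi-smoothness are what make this enumeration finite and tractable. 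Once $\alpha(X) > 3/4$ is secured for the ``KE'' families and birational superrigidity is recorded for the ``K'' families, the dichotomy (A)/(B) yields K-stability of every quasi-smooth member, which is the desired conclusion.
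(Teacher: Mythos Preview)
Your proposal contains a genuine gap: the claim that every quasi-smooth member of the families labeled ``K'' is birationally superrigid is false. By construction, $\msI'_{\mathrm{K}} = \{6, 8, 15, 16, 17, 26, 27, 30, 36, 41, 47, 48, 54, 56, 60, 65, 68\} \subset \msI_{\BR}$, and every family in $\msI_{\BR}$ admits QI or EI centers. A general member of each such family has a \emph{non-degenerate} QI center, which is a maximal center (Remark~\ref{rem:QImaxcent}), so the general member is only birationally rigid, not superrigid. Hence route (B) is unavailable for the ``K'' families. Route (A) also fails for them: for instance, for family $\mcF_8$ with a degenerate QI center one has $\alpha_{\msp}(X) = 5/9 < 3/4$ by Proposition~\ref{prop:lctdegQI}, and in the non-degenerate case the bounds obtained in Chapter~\ref{chap:exc} are of order $8/15$, again below $3/4$.

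The paper's actual argument for the ``K'' families is different in kind. It introduces the \emph{movable} alpha invariant $\alpha^{\mov}_{\msp}(X)$ and proves a refined criterion (Proposition~\ref{prop:Kstmovalpha}, based on \cite[Corollary~3.1]{SZ19}): if $\alpha^{\mov}_{\msp}(X) \ge 1$ at every maximal center and one is never simultaneously at the boundary values $(\alpha^{\mov}_{\msp}, \alpha_{\msp}) = (1, 1/2)$, then $X$ is K-stable. Theorem~\ref{thm:qsmWHKst} then verifies, for each $\msi \in \msI'_{\mathrm{K}}$ and each BI center $\msp$, the pair of inequalities $\alpha^{\mov}_{\msp}(X) \ge 1$ and $\alpha_{\msp}(X) > 1/2$. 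This movable-linear-system ingredient is entirely absent from your plan. A secondary point: for the ``KE'' families you propose to prove $\alpha(X) > 3/4$ uniformly, but the paper only does this for the eight families in $\msI'_{\mathrm{KE}}$; the remaining ``KE'' families lie in $\msI_{\BSR}$ and are handled via birational superrigidity and $\alpha \ge 1/2$ (Theorem~\ref{thm:SXZ}), so in fact the (B)-type argument applies to the bulk of the ``KE'' list, not the ``K'' list.
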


We explain the organization of this article.
In Section~\ref{chap:prelim}, we recall definitions and basic properties of relevant notions such as birational (super)rigidity, log canonical thresholds, alpha invariants, and weighted projective varieties.
In Section~\ref{chap:methods}, we explain methods of computing log canonical thresholds and alpha invariants.
By applying these methods, we compute local alpha invariants $\alpha_{\msp} (X)$ for any point $\msp$ on a quasi-smooth Fano 3-fold weighted hypersurface $X$ of index $1$.
In Sections~\ref{chap:smpt} and \ref{chap:singpt}, we compute local alpha invariants at smooth and singular points, respectively.
At this stage, Theorem \ref{mainthm} is proved except for 7 specific families.
These exceptional families are families No. 2, 4, 5, 6, 8, 10 and 14, and we need extra arguments to prove $\alpha (X) \ge 1/2$, which will be done in Section~\ref{chap:exc}.  
In Section~\ref{chap:discuss}, we will consider and prove further results such as Theorems \ref{thm:introKE} and \ref{thm:introKst}.
 We will also discuss related problems that arise naturally through the experience of huge amount of computations.
 Finally, in Section~\ref{chap:table}, various information on the families of quasi-smooth Fano 3-fold weighted hypersurfaces of index $1$ are summarized, and we also make it clear in Remark \ref{rem:whatisleft} what is left about K-stability for quasi-smooth Fano $3$-fold weighted hypersurfaces of index $1$.

%%%%%%%%%%%%%%%%%%%%%%%%%%%%%%%%%%
%%%%%%%%%%%%%%%%%%%%%%%%%%%%%%%%%%
\subsection{Relevant results in K-stability}
\label{sec:relresults}
%%%%%%%%%%%%%%%%%%%%%%%%%%%%%%%%%%
%%%%%%%%%%%%%%%%%%%%%%%%%%%%%%%%%%

Recently both theoretical and explicit studies of K-stability of Fano varieties have been developed drastically.
We refer readers to \cite{Xu21} for up to date surveys.
Following the suggestion from the referee, we add this section \S \ref{sec:relresults} to explain some of them that are developed during the preparation or after the completion of this article.

One of the most striking one is the equivalence of the notions of K-stability and uniform K-stability for klt Fano varieties that is proved in \cite{LXZ22}.
It in particular follows that the K-stability implies the existence of K\"{a}hler--Einstein metric for klt Fano varieties.
As a consequence, we are now able to conclude in Theorem~\ref{thm:introKst} not only the K-stability of $X$ but also the existence of a KE metric on $X$.

It should be mentioned that currently there are various methods in hand to check K-stability: the most powerful methods at present are the induction method of Abban and Zhuang \cite{AZ22} computing (local) delta invariants, or the moduli method of Liu et al.\ (see e.g.\ \cite{Liu22}, \cite{LX19}).
These methods are developed parallel to the preparation of this article, and we do not use them.
As it is explained in \S \ref{sec:mainresults}, the proofs of the main results of this article rely on the computation of (local) alpha invariants.

This article aims the systematic study of singular Fano $3$-folds.
There are on-going work by Cheltsov and collaborators on smooth Fano $3$-folds.
In the book \cite{Calabi}, it is completely determined whether the general member of each of the 105 irreducible families of smooth Fano 3-folds admits a KE metric or not.
Very recently there have been a lot of works aiming to drop the generality assumption in the above result and to classify K-(poly)stable smooth Fano 3-folds in each family completely (see \cite{Liu23}, \cite{CP22}, \cite{CFKO22}, \cite{CFKP23}, \cite{BL22}, \cite{Den22}, \cite{CDF22}, \cite{Mal23}).

 \begin{Ack}
We would like to thank Professors C.~Xu and Z.~Zhuang for valuable comments, and especially for informing us of Theorem \ref{thm:SXZ}.
We also would like to thank the anonymous referee for his/her suggestion on adding \S \ref{sec:relresults} explaining some relevant results in K-stability that have been developed during the preparation and after the completion of this article, and Remark~\ref{rem:whatisleft} that makes it clear what is left about K-stability of quasi-smooth Fano $3$-fold weighted hypersurfaces of index $1$.
 The first author was supported by the National Research Foundation of Korea (NRF-2023R1A2C1003390 and NRF-2022M3C1C8094326).
The second author is partially supported by JSPS KAKENHI Grant Numbers JP18K03216 and JP22H01118.
The third author was supported by the National Research Foundation of Korea (NRF-2020R1A2C1A01008018 and NRF-2022M3C1C8094326).
\end{Ack}

%%%%%%%%%%%%%%%%%%%%%%%%%%%%%%%%%%
%%%%%%%%%%%%%%%%%%%%%%%%%%%%%%%%%%
%%%%%%%%%%%%%%%%%%%%%%%%%%%%%%%%%%
\section{Preliminaries} \label{chap:prelim}
%%%%%%%%%%%%%%%%%%%%%%%%%%%%%%%%%%
%%%%%%%%%%%%%%%%%%%%%%%%%%%%%%%%%%
%%%%%%%%%%%%%%%%%%%%%%%%%%%%%%%%%%

%%%%%%%%%%%%%%%%%%%%%%%%%%%%%%%%%%
%%%%%%%%%%%%%%%%%%%%%%%%%%%%%%%%%%
\subsection{Basic definitions and properties} \label{sec:basicdefs}
%%%%%%%%%%%%%%%%%%%%%%%%%%%%%%%%%%
%%%%%%%%%%%%%%%%%%%%%%%%%%%%%%%%%%

We refer readers' to \cite{KM98} for standard notions of birational geometry which are not explained in this article.

\begin{Def}
By a {\it Fano variety}, we mean a normal projective $\mbQ$-factorial variety with at most terminal singularities whose anticanonical divisor is ample.
\end{Def}

For a variety $X$, we denote by $\Sm (X)$ the smooth locus of $X$ and $\Sing (X) = X \setminus \Sm (X)$ the singular locus of $X$.
For a subset $\Gamma \subset X$, we define $\Sing_{\Gamma} (X) := \Sing (X) \cap \Gamma$.
Let $X$ be a normal variety and $D$ a Weil divisor (class) on $X$.
We denote by $|D|_{\mbQ}$ the set of effective $\mbQ$-divisors on $X$ which are $\mbQ$-linearly equivalent to $D$.
For a smooth point $\msp \in X$, we define $|\mcI_{\msp} (D)|$ to be the linear subspace of $|D|$ consisting of members of $|D|$ passing through $\msp$.

%%%%%%%%%%%%%%%%%%%%%%%%%%%%%%%%%%
%%%%%%%%%%%%%%%%%%%%%%%%%%%%%%%%%%
\subsubsection{Birational (super)rigidity of Fano varieties}
%%%%%%%%%%%%%%%%%%%%%%%%%%%%%%%%%%
%%%%%%%%%%%%%%%%%%%%%%%%%%%%%%%%%%

Let $X$ be a normal $\mbQ$-factorial variety, $D$ a $\mbQ$-divisor on $X$ and $\mcM$ a movable linear system on $X$.
For a prime divisor $E$ over $X$, we define $\ord_E (D)$ to be the coefficient of $E$ in $\varphi^*D$, where $\varphi \colon Y \to X$ is a birational morphism such that $E \subset Y$, and we set $m_E (\mcM) := \ord_E (M)$, where $M$ is a general member of $\mcM$.
For a positive rational number $\lambda$, we say that a pair $(X, \lambda \mcM)$ is {\it canonical} if 
\[
a_E (K_X) \ge \lambda m_E (\mcM)
\] 
for any exceptional prime divisor $E$ over $X$.

Let $X$ be a Fano variety of Picard number one.
Note that we can view $X$ (or more precisely the structure morphism $X \to \Spec \mbC$) as a Mori fiber space.

\begin{Def} \label{def:BSR}
We say that $X$ is {\it birationally rigid} if the existence of a Mori fiber space $Y \to T$ such that $Y$ is birational to $X$ implies that $Y$ is isomorphic to $X$ (and $T = \Spec \mbC$).
We say that $X$ is {\it birationally superrigid} if $X$ is birationally rigid and $\Bir (X) = \Aut (X)$.
\end{Def}

\begin{Def}
A closed subvariety $\Gamma \subset X$ is called a {\it maximal center} if there exists a movable linear system $\mcM \sim_{\mbQ} - n K_X$ and an exceptional prime divisor $E$ over $X$ such that $m_E (\mcM) > n a_E (K_X)$.
\end{Def}

\begin{Thm}[{\cite[Theorem 1.26]{CS}}] \label{thm:charactbsr}
A Fano variety $X$ of Picard number $1$ is birationally superrigid if and only if the pair $(X, \frac{1}{n} \mcM)$ is canonical for any movable linear system $\mcM$ on $X$, where $n \in \mbQ_{> 0}$ is such that $\mcM \sim_{\mbQ} - n K_X$, or equivalently if and only if there is no maximal center on $X$.
\end{Thm}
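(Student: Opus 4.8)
The two conditions on the right are equivalent essentially by definition: a maximal center is precisely the center on $X$ of an exceptional prime divisor $E$ for which some movable $\mcM \sim_{\mbQ} -nK_X$ satisfies $m_E(\mcM) > n\, a_E(K_X)$, which is exactly the failure of $(X, \tfrac{1}{n}\mcM)$ to be canonical. So the plan is to prove the equivalence between birational superrigidity and the absence of a maximal center, the engine being the Noether--Fano--Iskovskikh method. It is convenient first to record that, combining the two clauses of Definition \ref{def:BSR}, $X$ is birationally superrigid if and only if \emph{every} birational map $f \colon X \ratmap Y$ onto a Mori fiber space $Y \to T$ is an isomorphism: rigidity forces $Y \cong X$ and $T = \Spec \mbC$, after which the residual self-map of $X$ is an automorphism by the condition $\Bir(X) = \Aut(X)$.

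For the implication ``no maximal center $\Rightarrow$ superrigid'' I would argue contrapositively. Suppose $f \colon X \ratmap Y$ is a birational map onto a Mori fiber space $Y \to T$ that is not an isomorphism. Pick a movable linear system $\mcH$ on $Y$ adapted to the fibration (the pullback of a very ample system on $T$ when $\dim T > 0$, or a very ample system on $Y$ when $T$ is a point), and let $\mcM := f^{-1}_*\mcH$ be its strict transform on $X$. Because $\Pic(X)$ has rank one with generator proportional to $-K_X$, there is a unique $n \in \mbQ_{>0}$ with $\mcM \sim_{\mbQ} -nK_X$. The Noether--Fano--Iskovskikh inequality then asserts that the failure of $f$ to be a fiberwise isomorphism forces some exceptional prime divisor $E$ over $X$ with $m_E(\mcM) > n\, a_E(K_X)$; concretely, one resolves $f$ by a common resolution, writes $K_W + \tfrac{1}{n}\mcM_W$ in two ways, and compares discrepancies, the relative ampleness of $-K_Y$ over $T$ producing the strict inequality. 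This $E$ is a maximal center, contradicting our hypothesis.

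For the reverse implication ``superrigid $\Rightarrow$ no maximal center'' I would again argue contrapositively and run the minimal model program. Assume $(X, \tfrac{1}{n}\mcM)$ is not canonical for some movable $\mcM \sim_{\mbQ} -nK_X$, so that $K_X + \tfrac{1}{n}\mcM \sim_{\mbQ} 0$ while some divisor $E$ is non-canonical. Starting from a divisorial extraction of a suitable such $E$ and running a $(K_X + c\,\mcM)$-MMP for $c$ slightly smaller than $\tfrac1n$ --- equivalently, playing the $2$-ray game and building a Sarkisov link issuing from the maximal center --- one arrives at a Mori fiber space $Y \to T$ that is birational but not isomorphic to $X$ as a Mori fiber space, so $X$ is not superrigid.

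The main obstacle is the Noether--Fano--Iskovskikh inequality of the second paragraph: the delicate discrepancy bookkeeping on a common resolution that converts the purely birational statement ``$f$ is not a fiberwise isomorphism'' into the numerical inequality $m_E(\mcM) > n\, a_E(K_X)$. A secondary difficulty lies in the MMP step of the third paragraph, namely guaranteeing that the program launched from a maximal center terminates in a Mori fiber space genuinely distinct from $X$ rather than returning to it; this is exactly where the $\mbQ$-factoriality, terminality, and Picard rank one of $X$ are indispensable.
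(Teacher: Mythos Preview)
The paper does not prove this theorem; it is quoted from \cite[Theorem 1.26]{CS} and used as a black box. So there is no proof in the paper to compare your proposal against.

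That said, your outline is the standard Noether--Fano--Iskovskikh argument and is essentially correct. One small point of phrasing in the reverse implication: you write that the Sarkisov link launched from a maximal center terminates in a Mori fiber space ``genuinely distinct from $X$'', and worry about the program ``returning to it''. In fact the link may well return to $X$, but then the composed birational self-map is not an automorphism (its strict transform of $\mcM$ has strictly smaller anticanonical degree than $\mcM$ itself, by the untwisting inequality), so $\Bir(X) \ne \Aut(X)$ and superrigidity already fails. Your own reformulation at the start --- ``every birational map $f \colon X \ratmap Y$ onto a Mori fiber space is an isomorphism'' --- handles both outcomes uniformly, so you should phrase the conclusion of the MMP step accordingly rather than insisting the target be non-isomorphic to $X$.
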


%%%%%%%%%%%%%%%%%%%%%%%%%%%%%%%%%%
\subsubsection{Log canonical thresholds and alpha invariants}
%%%%%%%%%%%%%%%%%%%%%%%%%%%%%%%%%%

\begin{Def} \label{def:glct}
Let $(X, \Delta)$ be a pair, $D$ an effective $\mbQ$-divisor on $X$, and let $\msp \in X$ be a point.
Assume that $(X, \Delta)$ has at most log canonical singularities.
We define the {\it log canonical threshold} (abbreviated as LCT) of $(X, \Delta; D)$ {\it at} $\msp$ and the {\it log canonical threshold} of $(X, \Delta; D)$ to be the numbers
\[
\begin{split}
\lct_{\msp} (X, \Delta; D) &= \sup \{\, c \in \mbQ_{\ge 0} \mid \text{$(X, \Delta + c D)$ is log canonical at $\msp$} \,\}, \\
\lct (X, \Delta; D) &= \sup \{\, c \in \mbQ_{\ge 0} \mid \text{$(X, \Delta + c D)$ is log canonical} \,\}, 
\end{split}
\]
respectively.
We set $\lct_{\msp} (X; D) = \lct_{\msp} (X, 0; D)$ and $\lct_{\msp} (X; D) = \lct (X, \Delta; D)$ when $\Delta = 0$.
Assume that $\left| -K_X \right|_{\mbQ} \ne \emptyset$.
Then we define the {\it alpha invariant of $X$ at $\msp$} and the {\it alpha invariant of $X$} to be the numbers  
\[
\begin{split}
\alpha_{\msp} (X) &= \inf \{\, \lct_{\msp} (X, D) \mid D \in \left| -K_X \right|_{\mbQ} \,\}, \\
\alpha (X) &= \inf \{\, \alpha_{\msp} (X) \mid \msp \in X \, \},
\end{split}
\]
respectively.
\end{Def}

The following fact is frequently used.

\begin{Rem} \label{rem:covex}
Let $\msp$ be a point on $X$, and let $D_1, D_2$ be effective $\mbQ$-divisors on $X$.
If both $(X, D_1)$ and $(X, D_2)$ are log canonical at $\msp$, then the pair 
\[
(X, \lambda D_1 + (1-\lambda) D_2)
\] 
is log canonical at $\msp$ for any $\lambda \in \mbQ$ such that $0 \le \lambda \le 1$.
In particular, if $\alpha_{\msp} (X) < c$ for some number $c > 0$, then there exists an irreducible $\mbQ$-divisor $D \in \left|-K_X \right|_{\mbQ}$ such that $(X, c D)$ is not log canonical at $\msp$.
Here a $\mbQ$-divisor is {\it irreducible} if its support $\Supp (D)$ is irreducible.
\end{Rem}

%%%
\subsubsection{Cyclic quotient singularities and orbifold multiplicities}
%%%

\begin{Def}
Let $r > 0$ and $a_1, \dots, a_n$ be integers.
Suppose that the cyclic group $\bmu_r$ of $r$th roots of unity in $\mbC$ acts on the affine $n$-space $\mbA^n$ with affine coordinates $x_1, \dots, x_n$ via
\[
(x_1, \dots, x_n) \mapsto (\zeta^{a_1} x_1, \dots, \zeta^{a_n} x_n),
\]
where $\zeta \in \bmu_r$ is a fixed primitive $r$th root of unity. 
We denote by $\bar{o} \in \mbA^n/\bmu_r$ the image of the origin $o \in \mbA^n$ under the quotient morphism $\mbA^n \to \mbA^n/\bmu_r$.
A singularity $\msp \in X$ is a {\it cyclic quotient singularity} of type $\frac{1}{r} (a_1, \dots, a_n)$ if $\msp \in X$ is analytically isomorphic to an analytic germ $\bar{o} \in \mbA^n/\bmu_r$.
In this case $r$ is called the {\it index} of the cyclic quotient singularity $\msp \in X$.
\end{Def}

\begin{Rem}
Let $\msp \in X$ be an $n$-dimensional cyclic quotient singular point.
Then we have a suitable action of $\bmu_r$ on $\mbA^n$ such that there is an analytic isomorphism $\bar{o} \in \mbA^n/\bmu_r \cong \msp \in X$ of (analytic) germs.
In the following, the germ $o \in \mbA^n$ is often denoted by $\check{\msp} \in \check{X}$.
By identifying $\msp \in X \cong \bar{o} \in \mbA^n/\bmu_r$, the quotient morphism $o \in \mbA^n \to \bar{o} \in \mbA^n/\bmu_r$ is denoted by $q_{\msp} \colon \check{X} \to X$ and is called the {\it quotient morphism} of $\msp \in X$.
\end{Rem}

Note that, by convention, the case $r = 1$ is allowed in the definition of cyclic quotient singularity.
A cyclic quotient singularity $\msp \in X$ of index $1$ is nothing but a smooth point $\msp \in X$ and  in that case the quotient morphism $q_{\msp} \colon \check{X} \to X$ is simply an isomorphism.

\begin{Def}
Let $\msp \in X$ be a cyclic quotient singularity and let $q_{\msp} \colon \check{X} \to X$ be its quotient morphism with $\check{\msp} \in \check{X}$ the preimage of $\msp$.
For an effective $\mbQ$-divisor $D$ on $X$, we define 
\[
\omult_{\msp} (D) := \mult_{\check{\msp}} (q_{\msp}^*D)
\]
and call it the {\it orbifold multiplicity} of $D$ at $\msp$.
By convention, we set $\omult_{\msp} (D) = \mult_{\msp} (D)$ when $\msp \in X$ is a smooth point.
\end{Def}

%%%%%%%%%%%%%%%%%%%%%%%%%%%%%
\subsubsection{Kawamata blowup}
%%%%%%%%%%%%%%%%%%%%%%%%%%%%%

Let $\msp \in V$ be a $3$-dimensional terminal quotient singularity.
Then it is of type $\frac{1}{r} (1, a, r-a)$, where $r$ and $a$ are coprime positive integers with $r > a$ (see \cite{MS84}).
Let $\varphi \colon W \to V$ be the weighted blowup of $V$ at $\msp$ with weight $\frac{1}{r} (1,a,r-a)$.
By \cite{Kawamata}, $\varphi$ is the unique divisorial contraction centered at $\msp$ and we call $\varphi$ the {\it Kawamata blowup} of $V$ at $\msp$.
If we denote by $E$ the $\varphi$-exceptional divisor, then $E \cong \mbP (1,a,r-a)$ and we have
\[
K_W = \varphi^*K_V + \frac{1}{r} E,
\]
and
\[
(E^3) = \frac{r^2}{a (r-a)}.
\]

%%%%%%%%%%%%%%%%%%%%%%%%%%%%%%%%%
\subsection{Weighted projective varieties}
%%%%%%%%%%%%%%%%%%%%%%%%%%%%%%%%%

We recall basic definitions of various notions concerning weighted projective spaces and their subvarieties.
We refer readers to \cite{IF00} for details.

%%%
\subsubsection{Weighted projective space}
%%%

Let $N$ be a positive integer.
For positive integers $a_0, \dots, a_N$, let 
\[
R (a_0, \dots, a_N) := \mbC [x_0, \dots, x_N]
\] 
be the graded ring whose grading is given by $\deg x_i = a_i$. 
We define
\[
\mbP (a_0,\dots, a_N) := \Proj R (a_0, \dots, a_N),
\]
and call it the {\it weighted projective space} with homogeneous coordinates $x_0, \dots, x_N$ (of degree $\deg x_i = a_i$).
We sometimes denote
\[
\mbP (a_0, \dots, a_N)_{x_0, \dots, x_N}
\]
in order to make it clear the homogeneous coordinates $x_0, \dots, x_N$.
For $i = 0, \dots, N$, we denote by 
\begin{equation} \label{eq:wpvvertex}
\msp_{x_i} = (0\!:\!\cdots\!:\!1\!:\!\cdots\!:\!0) \in \mbP (a_0, \dots, a_N)
\end{equation}
the coordinate point at which only the coordinate $x_i$ does not vanish.  
Let $f \in R := R (a_0, \dots, a_N) = \mbC [x_0, \dots, x_N]$ be a polynomial.
We say that $f$ is {\it quasi-homogeneous} (resp.\ {\it homogeneous}) if it is homogeneous with respect to the grading $\deg x_i = a_i$ (resp.\ $\deg x_i = 1$) for $i = 0, 1, \dots, N$.
For a polynomial $f \in \mbC [x_0, \dots, x_N]$ and a monomial $M = x_0^{m_0} \cdots x_N^{m_N}$, we denote by 
\[
\coeff_f (M) \in \mbC
\] 
the coefficient of $M$ in $f$, and, by a slight abuse of notation, we write $M \in f$ if $\coeff_f (M) \ne 0$.
For quasi-homogeneous polynomials $f_1, \dots, f_k \in R$, we denote by 
\[
(f_1 = \cdots = f_k = 0) \subset \mbP (a_0, \dots, a_N)
\]
the closed subscheme defined by the quasi-homogeneous ideal $(f_1, \dots, f_k) \subset R$.
Moreover, for a closed subscheme $X \subset \mbP (a_0, \dots, a_N)$ and quasi-homogeneous polynomials $g_1, \dots, g_l \in R$, we define
\[
(g_1 = \cdots = g_l = 0)_X := (g_1 = \cdots = g_l = 0) \cap X,
\]
which is a closed subscheme of $X$.
For $i = 0, \dots, N$, we define 
\begin{equation} \label{eq:wpvHU}
\begin{split}
\mcH_{x_i} &:= (x_i = 0) \subset \mbP (a_0, \dots, a_N), \\
\mcU_{x_i} &:= \mbP (a_0, \dots, a_N) \setminus \mcH_{x_i}.
\end{split}
\end{equation} 

\begin{Rem}
The weighted projective space $\mbP (a, b, c, d, e)$ with homogeneous coordinates $x$, $y$, $z$, $t$, $w$ of degrees $a$, $b$, $c$, $d$, $e$, respectively, is sometimes denoted by 
\[
\mbP (a, b, c, d, e)_{x, y, z, t, w}
\]
in order to emphasize the homogeneous coordinates.
For a coordinate $v \in \{x, \dots, w\}$, the point $\msp_v \in \mbP (a, b, c, d, e)$, the quasi-hyperplane $\mcH_v = (v = 0) \subset \mbP (a, b, c, d, e)$ and the open set $\mcU_v = \mbP (a, b, c, d, e) \setminus \mcH_v$ are similarly defined as in \eqref{eq:wpvvertex} and \eqref{eq:wpvHU}.
\end{Rem}

%%%
\subsubsection{Well-formedness and quasi-smoothness}
\label{sec:wfqsm}
%%%

\begin{Def}
We say that a weighted projective space $\mbP (a_0, \dots, a_N)$ is {\it well-formed} if 
\[
\gcd \{a_0, \dots, \hat{a}_i, \dots, a_N\} = 1
\]
for any $i = 0, 1, \dots, N$.
\end{Def}

\begin{Def}
Let $\mbP (a_0, \dots, a_N)$ be a weighted projective space such that $\gcd \{a_0, \dots, a_N\} = 1$.
For $j = 0, 1, \dots, N$, we set
\[
\begin{split}
l_j &:= \gcd \{ a_0, a_1, \dots, \hat{a}_j, \dots, a_N\}, \\
m_j &:= l_0 l_1 \cdots \hat{l}_j \cdots l_N, \\
b_j &:= \frac{a_j}{m_j}.
\end{split}
\]
We then define
\[
\mbP (a_0, \dots, a_N)^{\wf} := \mbP (b_0, \dots, b_N)
\]
and call it the {\it well-formed model} of $\mbP (a_0, \dots, a_N)$.
\end{Def}

\begin{Rem}
Any weighted projective space is isomorphic to a well-formed one (see e.g.\ \cite[Lemma 5.7]{IF00}).
More precisely, for a weighted projective space $\mbP = \mbP (a_0, \dots, a_N)$ with $\gcd \{a_0, \dots, a_N\} = 1$, there exists an isomorphism
\[
\phi \colon \mbP (a_0, \dots, a_N)_{x_0, \dots, x_N} \to \mbP^{\wf} = \mbP (b_0, \dots, b_N)_{y_0, \dots, y_N},
\]
such that $\phi^* \mcH_{y_i} = m_i \mcH_{x_i}$ for $i = 0, 1, \dots, N$, where $\mcH_{x_i} = (x_i = 0) \subset \mbP$ and $\mcH_{y_i} = (y_i = 0) \subset \mbP^{\wf}$.
\end{Rem}

In the following, we set $\mbP := \mbP (a_0, \dots, a_N)$ and we denote by 
\[
\Pi \colon \mbA^{N+1} \setminus \{o\} \to \mbP, \quad
(\alpha_0, \dots, \alpha_N) \mapsto (\alpha_0\!:\!\cdots\!:\!\alpha_N),
\]
the canonical projection.
Let $X \subset \mbP$ be a closed subscheme.
We set $C_X^* := \Pi^{-1} (X)$ and call it the {\it punctured affine quasi-cone} over $X$.
The {\it affine quasi-cone} $C_X$ over $X$ is the closure of $C_X^*$ in $\mbA^{N+1}$.
We set $\pi := \Pi|_{C_X^*} \colon C_X^* \to X$.

\begin{Def}
We say that a closed subscheme $X \subset \mbP$ is {\it well-formed} if $\mbP$ is well-formed and $\codim_X (X \cap \Sing (\mbP)) \ge 2$.
\end{Def}

\begin{Def}
Let $X \subset \mbP$ be a closed subscheme as above.
We define the {\it quasi-smooth locus} of $X$ as 
\[
\QSm (X) := \pi (\Sm (C^*_X)) \subset X.
\] 
Let $S$ be a subset of $X$.
We say that $X$ is {\it quasi-smooth along} $S$ if $S \subset \QSm (X)$.
We simply say that $X$ is {\it quasi-smooth} when $X = \QSm (X)$.
\end{Def}

%%%
\subsubsection{Orbifold charts}
%%%

Let $\mcU_{x_i}$ be the open subset of $\mbP = \mbP (a_0, \dots, a_N)_{x_0, \dots, x_N}$ as in \eqref{eq:wpvHU}, where $i \in \{0, 1, \dots, N\}$.
We call $\mcU_{x_i}$ the {\it standard affine open subset} of $\mbP$ containing $\msp_{x_i}$.
We denote by $\breve{\mcU}_{x_i}$ the affine $N$-space $\mbA^N$ with affine coordinates $\breve{x}_0, \dots, \widehat{\breve{x}_i}, \dots, \breve{x}_N$.
Consider the $\bmu_{a_i}$-action on $\breve{\mcU}_i$ defined by
\[
\breve{x}_j \mapsto \zeta^{a_j} \breve{x}_j, \quad \text{for $j = 0, \dots, \hat{i}, \dots, N$},
\] 
where $\zeta \in \bmu_{a_i}$ is a primitive $a_i$th root of unity.
Then the open set $\mcU_i$ can be naturally identified with the quotient $\breve{\mcU}_{x_i}/\bmu_{a_i}$.
In fact, this can be seen by the identification
\[
\breve{x}_j = \frac{x_j}{x_i^{a_j/a_i}}, \quad \text{for $j = 0, \dots, \hat{i}, \dots, N$}.
\]
The quotient morphism $\breve{\mcU}_{x_i} \to \breve{\mcU}_{x_i}/\bmu_{a_i} = \mcU_{x_i}$ is denoted by 
\[
\rho_{x_i} \colon \breve{\mcU}_{x_i} \to \mcU_{x_i}
\] 
and is called the {\it orbifold chart} of $\mbP$ containing $\msp_{x_i}$.

Let $X \subset \mbP$ be a subscheme.
Usually, we denote by $U_{x_i} \subset X$ the open set $\mcU_{x_i} \cap X$, and we call $U_{x_i}$ the {\it standard affine open subset} of $X$ containing $\msp_{x_i}$. 
In this case, we set $\breve{U}_{x_i} = \rho_{x_i}^{-1} (U_{x_i}) \subset \breve{\mcU}_{x_i}$. 
By a slight abuse of notation, the morphism $\rho_{x_i}|_{U_i} \colon \breve{U}_{x_i} \to U_{x_i}$ is also denote by 
\[
\rho_{x_i} \colon \breve{U}_{x_i} \to U_{x_i}
\] 
and is called the {\it orbifold chart} of $X$ containing $\msp_{x_i}$.
When we are using the notation $\msp = \msp_{x_i}$, the morphism $\rho_{x_i}$ is sometimes denoted by $\rho_{\msp}$.
Note that $\breve{U}_{x_i}$ is not necessary smooth in general.

Suppose that $X \subset \mbP$ is a closed subvariety containing the point $\msp = \msp_{x_i}$.
The preimage $\breve{\msp}$ of $\msp$ is the origin of $\breve{U}_{x_i} \subset \breve{\mcU}_{x_i} = \mbA^N$.
It is straightforward to see that $X$ is quasi-smooth at $\msp$ if and only if $\breve{U}_{x_i}$ is smooth at $\breve{\msp}$.
Suppose that $X$ is quasi-smooth at $\msp$.
A system of local coordinates of $U_{x_i}$ at $\breve{\msp}$ is called a system of {\it local orbifold coordinates} of $X$ at $\msp$.
In this case, $\msp \in X$ is a cyclic quotient singularity of index $a_i$ and $\rho_{x_i} \colon \breve{U}_{x_i} \to U_{x_i}$ can be identified with (or analytically equivalent to) the quotient morphism $q_{\msp}$ of $\msp \in X$ after shrinking $U_{x_i}$ and then $\breve{U}_{x_i}$.
Moreover, if $X$ is quasi-smooth, then $\breve{U}_i$ is smooth for any $i$.

\begin{Rem}
When we work with $\mbP = \mbP (a, b, c, d, e)_{x, y, z, t, w}$ and its closed subscheme $X \subset \mbP$, then $\mcU_{v} = \mbA^5_{\breve{x}, \dots, \hat{\breve{v}}, \dots, \breve{w}}$, $\rho_v \colon \breve{\mcU}_v \to \mcU_v$, $\breve{U}_{v} = \rho_v^{-1} (U_v) \subset \mcU_v$ and $\rho_v \colon \breve{U}_v \to U_v$ are similarly defined.
\end{Rem}

%%%
\subsubsection{Weighted hypersurfaces and quasi-tangent divisors}
%%%

As in the previous subsections, we work with $\mbP = \mbP (a_0, \dots, a_N)_{x_0, \dots, x_N}$.

\begin{Def}
A {\it quasi-linear polynomial} (or a {\it quasi-linear form}) in variables $x_0, \dots, x_{n+1}$ is a quasi-homogeneous polynomial $f = f (x_0,\dots, x_{n+1})$ such that $x_i \in f$ for some $i = 0, \dots, n+1$.
\end{Def}

\begin{Def}
We say that a subvariety $S \subset \mbP$ is a {\it quasi-linear subspace} of $\mbP$ if it is a complete intersection in $\mbP$ defined by quasi-linear equations of the form
\[
\ell_1 + f_1 = \ell_2 + f_2 = \cdots = \ell_k + f_k = 0,
\]
where $\ell_1, \ell_2, \dots, \ell_k$ are linearly independent linear forms in variables $x_0, \dots, x_{n+1}$ and $f_1, \dots, f_k \in \mbC [x_0, \dots, x_{n+1}]$ are quasi-homogeneous polynomials which are not quasi-linear.
A quasi-linear subspace of $\mbP$ of codimension $1$ (resp.\ dimension $1$) is called a {\it quasi-hyperplane} (resp.\ {\it quasi-line}) of $\mbP$.
\end{Def}

It is clear that a quasi-linear subspace of $\mbP$ is isomorphic to a weighted projective space.
In particular, a quasi-line is isomorphic to $\mbP^1$.

Let $X$ be a hypersurface in $\mbP = \mbP (a_0, \dots, a_N)$ defined by a quasi-homogeneous polynomial of degree $d$. 
We often denote it as $X = X_d \subset \mbP (a_0, \dots, a_N)$.
Suppose that $X$ is quasi-smooth at a point $\msp = \msp_{x_i}$.
Then the defining polynomial $F = F (x_0, \dots, x_N)$ of $X$ can be written as
\begin{equation}
\label{eq:qtangpoly}
F = x_i^m f + x_i^{m-1} g_{m-1} + \cdots + x_i g_1 + g_0,
\end{equation}
where $m \ge 0$, $f = f (x_0, \dots, x_N)$ is a quasi-homogeneous polynomial of degree $d - m a_i$ which is quasi-linear and $g_k = g_k (x_0, \dots,\hat{x}_i, \dots, x_N)$ is a quasi-homogeneous polynomial of degree $d - k a_i$ which is not quasi-linear for $0 \le k \le m - 1$.
Note that the expression \eqref{eq:qtangpoly} is uniquely determined once the homogeneous coordinates of $\mbP$ are fixed.

\begin{Def} \label{def:qtangdiv}
Under the notation and assumptions as above, we call $f$ the {\it quasi-tangent polynomial} of $X$ at $\msp$ and the divisor $(f = 0)_X$ on $X$ is called the {\it quasi-tangent divisor} of $X$ at $\msp$.
When $f = x_j$ for some $j$, then we also call $x_j$ as the {\it quasi-tangent coordinate} of $X$ at $\msp$.
\end{Def}

\begin{Rem}
Let $X = X_7 \subset \mbP (1, 1, 1, 2, 3)_{x, y, z, t, w}$ be a weighted hypersurface of degree $7$.
Suppose that its defining polynomial is of the form
\[
F = t^3 x + t^2 w + t g_5 + g_7,
\]
where $g_5, g_7 \in \mbC [x, y, z, w]$ are quasi-homogeneous polynomials of degree $5, 7$, respectively.
In this case $X$ is quasi-smooth at $\msp = \msp_t$.
The quasi-tangent polynomial of $X$ at $\msp$ is $t x + w$.
Note that $x$ is not a quasi-tangent coordinate of $X$ at $\msp$ because of the presence of $t^2 w \in F$.
\end{Rem}

\begin{Lem}
Let $X \subset \mbP$ be a weighted hypersurface of degree $d$.
Assume that $X$ is quasi-smooth at a point $\msp = \msp_{x_i}$ for some $i = 0, 1, \dots, N$ and let $x_j$ be a homogeneous coordinate such that $x_j \in f$, where $f$ is the quasi-tangent polynomial of $X$ at $\msp$.
Then, after a suitable choice of homogeneous coordinates $x_0, \dots, x_N$, the defining polynomial $F$ of $X$ can be written as
\[
F = x_i^m x_j + x_i^{m-1} g_{m - 1} + \cdots + x_i g_1 + g_0,
\]
where $g_k = g_k (x_0, \dots, \hat{x}_i, \dots, x_N)$ is a quasi-homogeneous polynomial of degree $d - k a_i$ which is not quasi-linear.
\end{Lem}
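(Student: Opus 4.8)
The plan is to transform $f$ into the single coordinate $x_j$ by a finite sequence of admissible (graded, invertible) changes of homogeneous coordinates, each replacing $x_j$ by the current quasi-tangent polynomial. I first record the shape of $f$. Since $x_j \in f$, the degree of $f$ equals $a_j = d - m a_i$; as every monomial of $f$ has this degree, the only monomial divisible by $x_j$ is $x_j$ itself. Hence $f = c\, x_j + f'$ with $c = \coeff_f(x_j) \neq 0$ and $f' = f'(x_0,\dots,\hat{x}_j,\dots,x_N)$ not involving $x_j$; note that $f'$ may still involve $x_i$, and set $\delta = \deg_{x_i} f \ge 0$, so that the largest power of $x_i$ occurring in $F$ is $m + \delta$.

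The key step is the graded substitution $x_j \mapsto c^{-1}(x_j - f')$, which fixes the other coordinates and is invertible because $x_j$ occurs in $f$ with the nonzero coefficient $c$ while $f'$ omits $x_j$. It sends $f$ to $x_j$, so the leading term transforms as $x_i^m f \mapsto x_i^m x_j$, exactly the wanted shape, while the lower part becomes $\sum_{k<m} x_i^k\, g_k(\dots, c^{-1}(x_j - f'),\dots)$. I would then re-expand the new defining polynomial in powers of $x_i$ and read off its quasi-tangent decomposition.

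The main obstacle is to control the correction terms produced in the lower part: since $f'$ can involve $x_i$, the substitution might create monomials of $x_i$-degree exceeding $m$, or turn some lower coefficient quasi-linear, either of which destroys the target normal form. The device for controlling both is the fact $\msp_{x_i} \in X$ (implicit in quasi-smoothness at $\msp_{x_i}$), which says that $F$ carries no pure power of $x_i$; this gives the strict inequality $\delta a_i < a_j$ and forbids pure $x_i$-powers inside $f'$. A degree count then yields two facts. First, if $\delta \ge 1$ then every monomial created in the lower part has $x_i$-degree at most $m + \delta - 1$: from $x_j^e M' \in g_k$ one gets $x_i$-degree $\le k + e\delta$, and the constraint $(e-1)a_j \le (m-k)a_i$ together with $\delta a_i < a_j$ forces $(e-1)\delta < m - k$, hence $k + e\delta < m + \delta$. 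Thus the top $x_i$-degree strictly drops. Second, no created monomial can be a single variable inside the coefficient of $x_i^{m'}$ for $m' < m$: such a variable would have degree $d - m' a_i > a_j$, yet it would have to arise as a product of degree-$a_j$ monomials of $f'$ (divided by a power of $x_i$), which would force a pure $x_i$-power in $f'$ — impossible. This second fact keeps the quasi-tangent index equal to $m$, keeps $x_j$ inside the quasi-tangent polynomial, and keeps all lower coefficients non-quasi-linear.

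Finally I would run the induction on $\delta$ (equivalently on the top $x_i$-degree). While $\delta \ge 1$ the substitution strictly lowers the top $x_i$-degree, so after finitely many steps one reaches $\delta = 0$, i.e. a quasi-tangent polynomial of the form $x_j + (\text{a polynomial in neither } x_i \text{ nor } x_j)$. One further substitution of the same type, now with $f'$ free of $x_i$, cleans the coefficient of $x_i^m$ down to exactly $x_j$ without feeding any $x_i$ into the lower coefficients, giving $F = x_i^m x_j + x_i^{m-1} g_{m-1} + \cdots + g_0$ with every $g_k$ not quasi-linear. The heart of the proof is the pair of degree estimates above, which simultaneously guarantee termination and the preservation of non-quasi-linearity.
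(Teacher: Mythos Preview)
Your proof is correct and follows the same overall strategy as the paper: iterate the substitution $x_j \mapsto c^{-1}(x_j - f')$ and show that the $x_i$-degree of the quasi-tangent polynomial (equivalently, your $\delta$) strictly decreases until it vanishes, after which one final substitution finishes the job.

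The main difference is in bookkeeping. The paper expands the lower part $g$ in powers of $x_j$ (writing $g = \sum_l x_j^l h_l$) and argues via the inequality $ne - m < n$, where $e = \deg_{x_j} g$ and $n = \deg_{x_i} \tilde f$; it then asserts, somewhat tersely, that this forces the $x_i$-degree of the new quasi-tangent polynomial to drop. You instead expand in powers of $x_i$ and bound the top $x_i$-degree of each substituted term directly via $(e-1)a_j \le (m-k)a_i$ and $\delta a_i < a_j$, obtaining the clean drop $m+\delta \to m+\delta-1$. Your route is arguably cleaner and, crucially, you make explicit something the paper leaves implicit: that $m$ is preserved and the lower coefficients $g_{k'}$ for $k' < m$ remain non-quasi-linear after each substitution. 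Your ``second fact'' (any putative linear term in a lower coefficient would force a pure $x_i$-power in $f'$) is exactly what is needed to justify that the quasi-tangent index does not slip below $m$, and the paper's proof does not spell this out.
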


\begin{proof}
We can write $F = x_i^m f + g$, where $m \ge 0$, $f = f (x_0, \dots, x_N) \ni x_j$ is the quasi-tangent polynomial and $g$ is a quasi-homogeneous polynomial of degree $d$ which does not involve a monomial divisible by $x_i^m$ and which is contained in the ideal $(x_0, \dots, \hat{x}_i, \dots, x_N)^2$.
We write $g = x_j^e h_{e} + \cdots + x_j h_1 + h_0$, where $e \ge 0$ and $h_k$ is a quasi-homogeneous polynomial of degree $d - k a_j$ which does not involve the variables $x_j$.
By rescaling $x_j$, we may assume $f = x_j - \tilde{f}$, where $x_j \notin \tilde{f}$, and we write $\tilde{f} = x_i^n \tilde{f}_n + \cdots + x_i \tilde{f}_1 + \tilde{f}_0$, where $\tilde{f}_k$ is a quasi-homogeneous polynomial of degree $d - (m + k) a_i$ which does not involve the variable $x_i$. 
We consider the coordinate change $x_j \mapsto x_j + \tilde{f}$.
Then the new defining polynomial can be written as
\[
\begin{split}
F &= x_i^m x_j + (x_j + x_i^n \tilde{f}_n + \cdots)^e h_{e} + \cdots + (x_j + x_i^n \tilde{f}_n + \cdots) h_1 + h_0 \\
&= (x_i^{n e - m} \tilde{f}_n^e + \cdots + x_j) x_i^m + \cdots 
\end{split}
\]
It follows that the new quasi-tangent polynomial is $x_i^{n e - m} \tilde{f}_n^e + \cdots + x_j$.
We claim that $n e - m< n$.
We have $d = m a_i + a_j$, $d = e a_j + \deg h_e \ge e a_j$ and $a_j = n a_i + \deg \tilde{f}_n > n a_i$, which implies $n e - m < n$.
Thus, repeating the above coordinate change, we can drop the degree of the quasi-tangent coordinate with respect to $x_i$, and we may assume $F = x_i^m f + x_i^{m-1} g_{m-1} + \cdots + x_i g + g_0$, where $f \ni x_j$ and $g_k$ are quasi-homogeneous polynomials of degree $a_j$ and $d - k a_i$, respectively, which do not involve the variable $x_i$.
Moreover $g_k$ is not quasi-linear for $0 \le k \le m-1$.
Finally, replacing $x_j$, we may assume $f = x_j$ and this completes the proof.
\end{proof}

\begin{Rem}
Suppose that a weighted hypersurface $X \subset \mbP$ is quasi-smooth at $\msp = \msp_{x_i}$.
Then $\omult_{\msp} ((f = 0)_X) > 1$ for the quasi-tangent polynomial $f$ of $X$ at $\msp$.
Moreover, $x_j$ is a quasi-tangent coordinate of $X$ at $\msp$ if and only if $\omult_{\msp} (H_{x_j}) > 1$.
\end{Rem}

%%%%%%%%%%%%%%%%%%%%%%%%%%%%%%%%
%%%%%%%%%%%%%%%%%%%%%%%%%%%%%%%%
\subsection{The 95 families}
\label{sec:95fam}
%%%%%%%%%%%%%%%%%%%%%%%%%%%%%%%%
%%%%%%%%%%%%%%%%%%%%%%%%%%%%%%%%

%%%%%%%%%%%%%%%%%%%%%%%%%%%%%%%%
\subsubsection{Definition of the families}
%%%%%%%%%%%%%%%%%%%%%%%%%%%%%%%%

As it is explained in \S \ref{sec:introFanoWH}, quasi-smooth Fano 3-fold weighted hypersurfaces of index 1 are classified and they form 95 families.
According to the classification, the minimum of the weights of an ambient space is $1$.
Hence a family is determined by a quadruple $(a_1, a_2, a_3, a_4)$, which means that the family corresponding to a quadruple $(a_1, a_2, a_3, a_4)$ is the family of weighted hypersurfaces of degree $d = a_1 + a_2 + a_3 + a_4$ in $\mbP (1, a_1, a_2, a_3, a_4)$.
The 95 families are numbered in the lexicographical order on $(d, a_1, a_2, a_3, a_4)$, and each family is referred to as family No.~$\msi$ for $\msi \in \{1, 2, \dots, 95\}$.
Families No.~$1$ and $3$ are the families consisting of quartic 3-folds and degree $6$ hypersurfaces in $\mbP (1, 1, 1, 1, 3)$, respectively, and for any smooth member of these 2 families, K-stability  (and hence the existence of KE metrics) is known.

\begin{Def}
We set
\[
\msI := \{1, 2, \dots, 95\} \setminus \{1, 3\},
\]
and, for $\msi \in \msI$, we denote by $\mcF_{\msi}$ the family consisting of the quasi-smooth members of family No.~$\msi$.
\end{Def}

The main objects of this article is thus the members of $\mcF_{\msi}$ for $\msi \in \msI$.

We set
\[
\msI_1 := \{2, 4, 5, 6, 8, 10, 14\}.
\]
The set $\msI_1$ is characterized as follows: let $X = X_d \subset \mbP (1, a_1, a_2, a_3, a_4)$, $a_1 \le a_2 \le a_3 \le a_4$, be a member of a family $\mcF_{\msi}$ with $\msi \in \msI$. 
Then $\msi \in \msI_1$ if and only if $a_2 = 1$.
The computations of alpha invariants will be done in a relatively systematic way for families $\mcF_{\msi}$ with $\msi \in \msI \setminus \msI_1$ (see Sections~\ref{chap:smpt} and \ref{chap:singpt}), while the computations will be done separately for families $\mcF_{\msi}$ with $\msi \in \msI_1$ (see Section~\ref{chap:exc}).

We explain notation and conventions concerning the main objects of this article.
Let $X = X_d \subset \mbP (1, a_1, a_2, a_3, a_4) =: \mbP$ be a member of a family $\mcF_{\msi}$ with $\msi \in \msI$.
\begin{itemize}
\item Unless otherwise specified, we assume $a_1 \le a_2 \le a_3 \le a_4$. 
\item In many situations (especially when we treat a specific family), we denote by $x, y, z, t, w$ the homogeneous coordinates of $\mbP$ of degree respectively $1, a_1, a_2, a_3, a_4$.
\item We denote by $F$ the polynomial defining $X$ in $\mbP$, which is quasi-homogeneous of degree $d = a_1 + a_2 + a_3 + a_4$.
\item We set $A = -K_X$, which is the positive generator of of $\Cl (X) \cong \mbZ$.
Note that we have
\[
(-K_X)^3 = (A^3) = \frac{d}{a_1 a_2 a_3 a_4} = \frac{a_1 + a_2 + a_3 + a_4}{a_1 a_2 a_3 a_4}.
\]
\end{itemize}

%%%%%%%%%%%%%%%%%%%%%%%%%%%%%%%%%%
\subsubsection{Definitions of QI and EI centers, and birational (super)rigidity}
\label{sec:defBI}
%%%%%%%%%%%%%%%%%%%%%%%%%%%%%%%%%% 

In this subsection, let
\[
X = X_d \subset \mbP (1, a_1, a_2, a_3, a_4)_{x, y, z, t, w}
\]
be a member of a family $\mcF_{\msi}$ with $\msi \in \msI$, where $a_1 \le a_2 \le a_3 \le a_4$.
We give definitions of QI and EI centers, which are particular singular points on $X$ and are important for understanding birational (super)rigidity of $X$.
For EI centers, we only give an ad hoc definition (see \cite[Section 4.10]{CPR00} and \cite[Section 4.2]{CP17} for more detailed treatments).

\begin{Def} \label{def:BIcenter}
Let $\msp \in X$ be a singular point.
We say that $\msp \in X$ is an {\it EI center} if the upper script $\EI$ is given in the 4th column of Table \ref{table:main}, or equivalently if $\msi$ and $\msp$ belong to one of the following.
\begin{itemize}
\item $\msi = 7$ and $\msp$ is of type $\frac{1}{2} (1, 1, 1)$.
\item $\msi \in \{23, 40, 44, 61, 76\}$ and $\msp = \msp_t$.
\item $\msi \in \{20, 36\}$ and $\msp = \msp_z$.
\end{itemize}

We say that $\msp \in X$ is a {\it QI center} if there are distinct $j$ and $k$ such that $d = 2 a_k + a_j$ and the index of the cyclic quotient singularity $\msp \in X$ coincides with $a_k$.

We say that $\msp \in X$ is a {\it BI center} if it is either an EI center or a QI center.
\end{Def}

\begin{Rem} \label{rem:maxcent}
Let $X$ be a member of $\mcF_{\msi}$ with $\msi \in \msI$.
Then the following are proved in \cite{CP17}.
\begin{enumerate}
\item No smooth point on $X$ is a maximal center.
\item A singular point $\msp \in X$ is a maximal center only if either $\msp$ is a BI center or $X$ is a member of $\mcF_{23}$ and $\msp = \msp_z$ is of type $\frac{1}{3} (1, 1, 2)$.
\end{enumerate}
Note that a BI center $\msp \in X$ is not always a maximal center (see \S \ref{sec:eqQI}, especially Remark \ref{rem:QImaxcent}, for the complete analysis for QI centers).
Note also that the $\frac{1}{3} (1, 1, 2)$ point $\msp_z$ on a member $X$ of $\mcF_{23}$ is not a maximal center if $X$ is general.
However $\msp_z \in X$ can be a maximal center and in that case there is a birational involution of $X$ (called an invisible involution) with center $\msp_z$ (see \cite[Section 4.3]{CP17}).
\end{Rem}

\begin{Def}
We define the subset $\msI_{\BSR} \subset \msI$ as follows: $\msi \in \msI_{\BSR}$ if and only if a member $X$ of $\mcF_{\msi}$ does not admit a BI center.
We then define $\msI_{\BR} = \msI \setminus \msI_{\BSR}$.
\end{Def}

Note that $|\msI_{\BSR}| = 48$ and $|\msI_{\BR}| = 45$.
The following is a more precise version of Theorem \ref{thm:BRWH}.

\begin{Thm}[{\cite{CP17}}]
Let $X$ be a member of $\mcF_{\msi}$ with $\msi \in \msI$.
\begin{enumerate}
\item If $\msi \in \msI_{\BSR}$, then any member of $\mcF_{\msi}$ is birationally superrigid.
\item If $\msi \in \msI_{\BR}$, then any member of $\mcF_{\msi}$ is birationally rigid while its general member is not birationally superrigid.
\end{enumerate}
\end{Thm}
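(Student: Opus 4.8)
The plan is to route everything through the characterization in Theorem~\ref{thm:charactbsr}: since each member $X$ of $\mcF_{\msi}$ has Picard number one, $X$ is birationally superrigid if and only if it carries no maximal center, so the whole statement reduces to locating maximal centers. The proof then splits according to whether $X$ admits a BI center.

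For part (1) I would assume $\msi \in \msI_{\BSR}$, so that by definition no member $X$ of $\mcF_{\msi}$ has a BI center, and show that no closed subvariety $\Gamma \subset X$ is a maximal center. A surface cannot be a maximal center because a movable linear system $\mcM$ has no fixed component, so its base locus is of codimension at least two. A curve is excluded by the standard Pukhlikov-type estimate: if some $\mcM \sim_{\mbQ} -nA$ had $m_E(\mcM) > n\, a_E(K_X)$ for an $E$ centred on a curve, then intersecting two general members of $\mcM$ with an appropriate surface would force $(A^3)$ to be too large, contradicting $(A^3) = d/(a_1 a_2 a_3 a_4)$. Smooth points are ruled out by Remark~\ref{rem:maxcent}(1), and by Remark~\ref{rem:maxcent}(2) a singular point can be a maximal center only if it is a BI center --- of which there are none. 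Hence $X$ has no maximal center, and Theorem~\ref{thm:charactbsr} yields birational superrigidity of every member of $\mcF_{\msi}$.

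For part (2) I would assume $\msi \in \msI_{\BR}$, so that $X$ does admit a BI center. Birational rigidity of every member is already Theorem~\ref{thm:BRWH}, so it remains to produce, for a \emph{general} member $X$, an element of $\Bir(X) \setminus \Aut(X)$; equivalently, to exhibit a BI center that is a genuine maximal center. I would treat the two types of Definition~\ref{def:BIcenter} separately. At a QI center $\msp$, where $d = 2 a_k + a_j$ and $\msp$ has index $a_k$, the degree relation lets me project from $\msp$; performing the Kawamata blowup $\varphi \colon W \to X$ at $\msp$, with $K_W = \varphi^* K_X + \frac{1}{r} E$ and $E \cong \mbP(1,a,r-a)$ as recalled above, initiates a Sarkisov link whose other end again recovers $X$, and the induced quadratic involution $\tau_{\msp}$ is not biregular for general $X$. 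At an EI center I would invoke the analogous elliptic involution of \cite{CPR00} and \cite{CP17}. The generality hypothesis enters precisely in verifying that $\tau_{\msp}$ is not the identity modulo $\Aut(X)$, i.e.\ that the link genuinely realises $\msp$ as a maximal center.

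The hard part is everything compressed into Remark~\ref{rem:maxcent}: the exclusion of curves and of singular non-BI points as maximal centers is the long, family-by-family core of \cite{CPR00} and \cite{CP17}, resting on Kawamata blowups, the computation of the discrepancies $a_E(K_X)$, and test-surface intersection inequalities. Granting Remark~\ref{rem:maxcent}, the deduction of part (1) is then formal. For part (2) the delicate point is not the existence of the Sarkisov link but the proof that the resulting involution fails to be an automorphism for the general member, which requires a monodromy/genericity argument over the parameter space of the family $\mcF_{\msi}$.
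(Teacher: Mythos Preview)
The paper does not give its own proof of this theorem: it is stated with attribution to \cite{CP17} and no argument follows. So there is no proof in the paper to compare your proposal against; the result is imported wholesale from Cheltsov--Park.

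That said, your sketch is a fair outline of how the argument in \cite{CPR00} and \cite{CP17} actually proceeds, and you correctly identify that the substance is hidden in Remark~\ref{rem:maxcent}. Two small sharpenings. First, your separate discussion of surfaces is unnecessary: in the definition used here a maximal center is the center on $X$ of an \emph{exceptional} prime divisor $E$, and such a center automatically has codimension at least two on a $3$-fold, so only curves and points are in play from the outset. Second, for part~(2) the role of generality is more concrete than a ``monodromy/genericity argument'': by Remark~\ref{rem:QImaxcent} a QI center is a maximal center if and only if it is non-degenerate, and non-degeneracy (the non-vanishing of the polynomial $f(x_{i_1},x_{i_2},x_{i_3},0)$ in Lemma~\ref{lem:QIcoord}) is an open condition on the family. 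Thus the general member has a non-degenerate QI center, hence a maximal center, and Theorem~\ref{thm:charactbsr} gives that it is not birationally superrigid. The existence of the Sarkisov link and the quadratic (or elliptic) involution is then what supplies the element of $\Bir(X)\setminus\Aut(X)$, but the failure of superrigidity already follows formally once you know a maximal center exists.
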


We emphasize that a family $\mcF_{\msi}$, where $\msi \in \msI_{\BR}$, can contain (in fact does contain for most of $\msi \in \msI_{\BR}$) birationally superrigid Fano 3-folds as special members.

%%%%%%%%%%%%%%%%%%%%%%%%%%%%%%%%%%
\subsubsection{Numerics on weights and degrees} \label{sec:smptnum}
%%%%%%%%%%%%%%%%%%%%%%%%%%%%%%%%%%

Let 
\[
X = X_d \subset \mbP (1, a_1, a_2, a_3, a_4)_{x, y, z, t, w}
\] 
be a member of $\mcF_{\msi}$.
Throughout the subsection, we assume that $\msi \in \msI \setminus \msI_1$ and that $a_1 \le a_2 \le a_3 \le a_4$.
We collect some elementary numerical results on weights $a_1, \dots, a_4$, the degree $d = a_1 + a_2 + a_3 + a_4$ of the defining polynomial $F = F (x, y, z, t, w)$ of $X$, and the anticanonical degree $(A^3)$ of $X$ which will be repeatedly used in the rest of this article. 

\begin{Lem} \label{lem:smptHLdegwt}
One of the following happens. 
\begin{enumerate}
\item $d = 2 a_4$.
\item $d = 3 a_4$.
\item $d = 2 a_4 + a_j$ for some $j \in \{1, 2, 3\}$.
\end{enumerate}
\end{Lem}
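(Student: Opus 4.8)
\emph{Reformulation and overall strategy.}
Since $X$ has index $1$ we have $d=a_1+a_2+a_3+a_4$, so the three asserted alternatives are exactly the statement that
\[
a_1+a_2+a_3-a_4 \in \{0,\,a_1,\,a_2,\,a_3,\,a_4\},
\]
and the entire task is to locate this quantity. The plan is to read the required relation off quasi-smoothness of $X$ at the vertex $\msp_w$ of largest weight $a_4$, splitting the argument according to whether $\msp_w$ lies on $X$ or not. Throughout I use $a_1\le a_2\le a_3\le a_4$ and the fact that $\msi\in\msI$ rules out the quartic (family No.~$1$), i.e.\ the case $a_1=a_2=a_3=a_4$.

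\emph{The easy cases.}
If $\msp_w\notin X$, then $F(\msp_w)\ne 0$ forces $w^{d/a_4}\in F$, hence $a_4\mid d$. Writing $d=m a_4$ and using $a_4<d\le 4a_4$ gives $m\in\{2,3,4\}$, and $m=4$ would force all weights equal, i.e.\ the excluded quartic. Thus $d=2a_4$ or $d=3a_4$, which are alternatives (1) and (2). If instead $\msp_w\in X$, then quasi-smoothness at $\msp_w$ (some partial derivative of $F$ is nonzero there) forces a monomial $w^m v\in F$ with a coordinate $v\ne w$ and $m a_4+\deg v=d$. Substituting $d=a_1+a_2+a_3+a_4$ yields $(m-1)a_4=(a_1+a_2+a_3)-\deg v$, and the bound $0<(a_1+a_2+a_3)-\deg v<3a_4$ forces $m\in\{2,3\}$. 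When $m=2$ and $v\in\{y,z,t\}$ we obtain exactly $d=2a_4+a_j$ for some $j\in\{1,2,3\}$, i.e.\ alternative (3); and when $m=2$, $v=x$ with $a_1=1$ this reads $d=2a_4+a_1$, again alternative (3).

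\emph{Killing the surviving configurations (the main obstacle).}
The configurations not yet covered are $m=3$, and $m=2$ with $v=x$ and $a_1\ge 2$; here I expect the real work, and I would eliminate them using terminality of $X$. Because $w^m v\in F$, the coordinate $v$ is eliminated near $\msp_w$, so $\msp_w\in X$ is a cyclic quotient singularity of index $a_4$ whose type $\frac{1}{a_4}(b_1,b_2,b_3)$ is the triple of weights remaining among $1,a_1,a_2,a_3$ after deleting $\deg v$. By \cite{MS84} a terminal cyclic quotient $3$-fold singularity must be of the form $\frac{1}{a_4}(1,c,a_4-c)$, which amounts to requiring that some pair $b_i+b_j\equiv 0\pmod{a_4}$ (and all $b_i$ coprime to $a_4$, else the singularity is non-isolated and already non-terminal). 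In the $m=3$ cases the relation $(a_1+a_2+a_3)-\deg v=2a_4$ forces two of the weights to coincide with $a_4$ (so $\msp_w$ is non-isolated) or forces the excluded all-equal quartic. In the remaining case $d=2a_4+1$ with $a_1\ge 2$, one has $a_1+a_2+a_3=a_4+1$ with $1\le a_i<a_4$, so the type is $\frac{1}{a_4}(a_1,a_2,a_3)$ and each pairwise sum is $\equiv 1-a_k\pmod{a_4}$, which is never $0$ since no $a_k$ equals $1$; thus $\msp_w$ is not terminal. Both outcomes contradict the hypothesis that $X$ has terminal singularities, leaving only alternatives (1)--(3). The only genuinely delicate step is this terminality bookkeeping at $\msp_w$; everything else is elementary numerics on the weights.
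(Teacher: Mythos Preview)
Your approach is essentially the paper's: locate a monomial $w^m$ or $w^m v$ via quasi-smoothness at $\msp_w$, bound $m$ by elementary inequalities on the weights, and kill the residual case $d = 2a_4 + 1$ with $a_1 \ge 2$ using the Morrison--Stevens description of the terminal cyclic quotient singularity at $\msp_w$. Your treatment of that last case is correct and matches the paper.

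There is, however, a genuine gap in your $m=3$ paragraph, specifically in the sub-case $v = x$. There the relation reads $a_1 + a_2 + a_3 = 2a_4 + 1$, and this alone does \emph{not} force two of the weights to equal $a_4$: take for instance $a_1 = a_2 = a_3 = 3$, $a_4 = 4$. (For $v \in \{y,z,t\}$ your purely numerical claim is fine, since deleting one $a_j$ leaves two weights summing to $2a_4$.) What actually closes the $v = x$ sub-case is the very terminality argument you set up but then only apply to $m=2$: the type at $\msp_w$ is $\frac{1}{a_4}(a_1,a_2,a_3)$, and the Morrison--Stevens condition forces some pair $a_i + a_j \equiv 0 \pmod{a_4}$, equivalently $a_k \equiv 1 \pmod{a_4}$ for the remaining index; since $1 \le a_k \le a_4$ this gives $a_k = 1$, hence $a_1 = 1$, and then $a_2 + a_3 = 2a_4$ forces $a_2 = a_3 = a_4$, making $\msp_w$ non-isolated. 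With this correction your proof goes through. (The paper's own proof silently skips the $m=3$, $v=x$ configuration as well, so you were right to flag it; you just applied the wrong tool to it.)
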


\begin{proof}
We see that either $w^n \in F$ for some $n \ge 2$ or $x^n v \in F$ for some $n \ge 1$ and $v \in \{x, y, z, t\}$ by the quasi-smoothness of $X$.

Suppose $w^n \in F$ for some $n \ge 2$.
Then we have 
\[
d = n a_4 = a_1 + a_2 + a_3 + a_4 < 4 a_4.
\] 
Hence $n = 2, 3$ and we are in case (1) or (2).
Suppose $w^n v \in F$ for some $n \ge 1$ and $v \in \{y, z, t\}$.
Then we have $d = n a_4 + a_j$ and moreover we have
\[
a_4 + a_j < d = a_1 + a_2 + a_3 + a_4 < 3 a_4 + a_j.
\]
This shows $n = 2$, that is, $d = 2 a_4 + a_j$.

If $a_1 = 1$, then the proof is completed.
It remains to show that the case $d = 2 a_4 + 1$ does not take place assuming $a_1 \ge 2$.
Suppose $d = 2 a_4 + 1$ and $a_1 \ge 2$.
Then $w^2 x \in F$ and the singularity of $\msp_w \in X$ is of type $\frac{1}{a_4} (a_1, a_2, a_3)$. 
There exist distinct $i, j \in \{1,2,3\}$ such that $a_i + a_j$ is divisible by $a_4$ since $\msp_w \in X$ is terminal.
We have $a_i + a_j = a_4$ since $0 < a_i + a_j < 2 a_4$.
Let $k \in \{1,2,3\}$ be such that $\{i, j, k\} = \{1,2,3\}$.
Then 
\[
d = a_1 + a_2 +  a_3 + a_4 = a_k + 2 a_4.
\]
Combining this with $d = 2 a_4 + 1$, we have $a_k = 1$.
This is a contradiction since $a_k \ge a_1 \ge 2$.
\end{proof}

\begin{Lem} \label{lem:wtnumerics}
\begin{enumerate}
\item We have $\msi \in \{9, 17\}$ if and only if $d = 3 a_4$ and $a_1 = 1$.
\item We have $a_1 a_2 a_3 (A^3) \le 3$ and the equality holding if and only if $d = 3 a_4$. 
\item If $a_1 < a_2$, then we have $a_1 (A^3) < 1$.
\item If $1 < a_1 < a_2$, then $a_1 a_3 (A^3) \le 1$.
\item If $a_1 < a_2$ and $d > 2 a_4$, then $a_1 a_4 (A^3) \le 2$.
\item If $d$ is divisible by $a_4$ and $\msi \notin \{9, 17\}$, then $a_2 a_3 (A^3) \le 2$.
\item If $d$ is not divisible by $a_4$ and $a_1 \ge 2$, then $a_2 a_4 (A^3) \le 2$.
\end{enumerate}
\end{Lem}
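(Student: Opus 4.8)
We have $(A^3) = d/(a_1 a_2 a_3 a_4)$, so the products appearing in the statement simplify to $a_1a_2a_3(A^3) = d/a_4$, $a_1(A^3) = d/(a_2a_3a_4)$, $a_1a_3(A^3) = d/(a_2a_4)$, $a_1a_4(A^3) = d/(a_2a_3)$, $a_2a_3(A^3) = d/(a_1a_4)$ and $a_2a_4(A^3) = d/(a_1a_3)$; thus every assertion becomes an explicit upper bound for $d$ in terms of the weights. The plan is to combine these reductions with Lemma~\ref{lem:smptHLdegwt}, which limits $d$ to $2a_4$, $3a_4$, or $2a_4 + a_j$ with $j \in \{1,2,3\}$, and with the standing assumption $\msi \in \msI \setminus \msI_1$, which is exactly $a_2 \ge 2$ (so that $a_2,a_3,a_4 \ge 2$).

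Several parts are then immediate. Part (2) is the inequality $d/a_4 \le 3$, which holds in each case of Lemma~\ref{lem:smptHLdegwt} since $a_j \le a_4$, with equality precisely when $d = 3a_4$. For part (3) I would use the chain $d < 2a_2 + a_3 + a_4 \le 4a_4 \le a_2a_3a_4$, whose first step uses $a_1 < a_2$ and whose last two steps use $a_2, a_3 \ge 2$. Part (4) follows from $d \le 3a_4 \le a_2a_4$, valid because $1 < a_1 < a_2$ forces $a_2 \ge 3$. For part (1), setting $a_1 = 1$ in $a_1 + a_2 + a_3 = 2a_4$ and using $2 \le a_2 \le a_3 \le a_4$ forces $a_3 = a_4$ and $a_2 = a_4 - 1$, so the ambient space is $\mbP(1,1,a_4-1,a_4,a_4)$; quasi-smoothness at the coordinate point $\msp_z$ (the only vertex whose quasi-smoothness is not automatic here) then forces $a_4 \in \{3,4\}$, which are the families $X_9 \subset \mbP(1,1,2,3,3)$ and $X_{12} \subset \mbP(1,1,3,4,4)$, i.e.\ No.~9 and No.~17 of Table~\ref{table:main}; the converse direction is read off from the table. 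Granting part (1), part (6) is clean: divisibility of $d$ by $a_4$ restricts $d$ to $2a_4$ or $3a_4$, and $d/(a_1a_4)$ equals $2/a_1 \le 2$ in the former case, while in the latter part (1) together with $\msi \notin \{9,17\}$ gives $a_1 \ge 2$, hence $d/(a_1a_4) = 3/a_1 < 2$.

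The remaining parts (5) and (7) are the heart of the matter, and the main obstacle, because here the numerical bounds are sharp and in fact fail for a short list of weight tuples that are eliminated only by geometry, not by Lemma~\ref{lem:smptHLdegwt} alone. For part (5) I would split on the form of $d$; in the case $d = 2a_4 + a_j$ the relation $a_1 + a_2 + a_3 = a_4 + a_j$ lets me eliminate $a_4$, and the target inequality $d \le 2a_2a_3$ reduces in each sub-case to an elementary inequality in $a_1,a_2,a_3$ that holds for all admissible tuples except $(1,1,2,2,4)$ (arising from $j = 1$, $a_2 = a_3 = 2$). For part (7), under $a_1 \ge 2$ and $a_4 \nmid d$, the same substitution reduces $d \le 2a_1a_3$ to elementary inequalities that hold except for the tuples $(1,2,2,2,4)$, $(1,2,3,3,5)$ and the infinite family $(1,2,a,a,2a)$ with $a \ge 3$. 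The decisive point is that none of these exceptional tuples defines a quasi-smooth well-formed Fano $3$-fold of index $1$: $(1,2,2,2,4)$ fails well-formedness of the weighted projective space, $(1,1,2,2,4)$ has $X$ containing the codimension-one stratum $(x = y = 0) \subset \Sing \mbP$, while $(1,2,3,3,5)$ and $(1,2,a,a,2a)$ would carry a whole curve of quotient singularities along the line $(x = y = w = 0)$, contradicting the terminal (hence isolated-singularity) condition. Accordingly, none of them occurs in Table~\ref{table:main}, and inequalities (5) and (7) hold for every member of $\mcF_{\msi}$ with $\msi \in \msI \setminus \msI_1$; in practice I would discharge this last exclusion step as a direct check against the classification in Table~\ref{table:main}.
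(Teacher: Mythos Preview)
Your proof is correct. Parts (1)--(4) and (6) track the paper's argument essentially verbatim, with only cosmetic variation in part (3) (you bound $d$ directly rather than passing through $a_1(A^3) < 4/(a_2a_3)$).

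The genuine difference is in parts (5) and (7). The paper proceeds by splitting on the size of $a_1$ (and, in (7), on whether $a_2 < a_3$ or $a_2 = a_3$), estimating $d/(a_2a_3)$ or $d/(a_1a_3)$ directly in each sub-case; the single problematic sub-case in each part ($a_1 = 1$, $a_2 = a_3 = 2$ for (5); $a_1 = 2$, $a_2 = a_3$ for (7)) is then dispatched by an ad hoc well-formedness or terminality argument. You instead keep the split on $j$, reduce the target inequality to an explicit finite list of exceptional weight tuples, and exclude those uniformly by the same kind of geometric reasoning. Both routes are valid and converge on the same exclusions; yours is somewhat more transparent about \emph{which} tuples sit at the boundary, at the cost of a slightly longer checklist. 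One small presentational gap: in part (5) you announce a split on the form of $d$ but only spell out $d = 2a_4 + a_j$. The case $d = 3a_4$ does occur with $a_3 < a_4$ (e.g.\ family No.~19) and should be mentioned; the paper absorbs it by allowing $j = 4$, after which the same bound $a_4 \le a_2 + a_3$ applies and no new exceptions appear.
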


\begin{proof}
We prove (1).
The ``only if" part is obvious.
Suppose $d = 3 a_4$ and $a_1 = 1$.
Then we have $2 a_4 = 1 + a_2 + a_3$.
This implies $a_2 = a_4 - 1$ and $a_3 = a_4$ since $a_2 \le a_3 \le a_4$.
Then, by setting $a = a_2 \ge 2$, $X$ is a weighted hypersurface in $\mbP (1, 1, a, a+1, a+1)$ of degree $3 (a+1)$.
Suppose $\msp_z \notin X$.
Then some power of $z$ is contained in $F$ and this implies that $3 (a+1)$ is divisible by $a$.
In particular, we have $a = 3$ and this case corresponds to $\msi = 17$.
Suppose $\msp_z \notin X$, then either $3 (a+1) \equiv 1 \pmod{a}$ or $3 (a+1) \equiv a+1 \pmod{a}$ by the quasi-smoothness of $X$.
In both cases, we have $a = 2$, ad hence $\msi = 9$.
Thus (1) is proved.

The assertion (2) follows immediately since we have
\[
a_1 a_2 a_3 (A^3) = \frac{d}{a_4} \le 3
\]
and $d \le 3 a_4$ by Lemma \ref{lem:smptHLdegwt}.

We prove (3).
Note that $2 \le a_2 \le a_3 \le a_4$.
Note also that $a_4 > a_2$ because otherwise $X$ has non-isolated singularity along $L_{xy}$ which is impossible. 
In particular, we have $a_1 + \cdots + a_4 < 4 a_4$ and $a_2 a_3 \ge 4$, and we have
\[
a_1 (A^3) = \frac{a_1 (a_1 + a_2 + a_3 + a_4)}{a_1 a_2 a_3 a_4} 
< \frac{4}{a_2 a_3}
\le 1,
\]
which proves (3).

We prove (4).
We have $a_2 \ge 3$ since $a_2 > a_1 > 1$ and thus
\[
a_1 a_3 (A^3) = \frac{d}{a_2 a_4} \le \frac{3}{a_2} \le 1.
\]

We prove (5).
We have $d > 2 a_4$ by assumption.
Then, by Lemma \ref{lem:smptHLdegwt}, we have $d = 2 a_4 + a_j$ for some $j \in \{1,2,3,4\}$, and combining this with $d = a_1 + a_2 + a_3 + a_4$, we have
\[
a_4 = a_1 + a_2 + a_3 - a_j \le a_2 + a_3.
\]
If $a_1 > 1$, then we have $a_2, a_3 \ge 3$ and thus
\[
a_1 a_4 (A^3) = \frac{a_1 + a_2 + a_3 + a_4}{a_2 a_3} < \frac{3 a_2 + 2 a_3}{a_2 a_3} = \frac{3}{a_3} + \frac{2}{a_2} \le \frac{5}{3}.
\]
Suppose $a_1 = 1$.
In this case $2 \le a_2 \le a_3$.
If $a_3 \ge 3$, then 
\[
a_4 (A^3) = \frac{1 + a_2 + a_3 + a_4}{a_2 a_3} \le \frac{1 + 2 a_2 + 2 a_3}{a_2 a_3} = \frac{1}{a_2 a_3} + \frac{2}{a_3} + \frac{2}{a_2} \le \frac{11}{6}.
\]
Suppose $a_3 = 2$, that is, $a_2 = a_3 = 2$.
Then we have $a_4 = 3$ and $d = 8$ since $d = 5 + a_4 > 2 a_4$ and $a_4$ is odd.
In this case we have $a_4 (A^3) = 2$.
This proves (5).

We prove (6).
By Lemma \ref{lem:smptHLdegwt} and (1), either $d = 2 a_4$ or $d = 3 a_4$ and $a_1 \ge 2$.
If $d = 2 a_4$ (resp.\ $d = 3 a_4$ and $a_1 \ge 2$), then
\[
a_2 a_3 (A^3) = \frac{2}{a_1} \le 2 \quad (\text{resp.\ } a_2 a_3 (A^3) = \frac{3}{a_1} \le 2).
\]
This proves (6).

We prove (7).
By Lemma \ref{lem:smptHLdegwt}, we have $d = 2 a_4 + a_j$ for some $j \in \{1, 2, 3\}$.
Then we have $a_4 = a_1 + a_2 + a_3 - a_j \le a_2 + a_3$.
If $a_1 \ge 3$, then
\[
a_2 a_4 (A^3) = \frac{a_1 + a_2 + a_3 + a_4}{a_1 a_3} \le \frac{a_1 + 4 a_3}{a_1 a_3} = \frac{1}{a_3} + \frac{4}{a_1} \le \frac{5}{3}.
\]
We continue the proof assuming $a_1 = 2$.
If in addition $a_2 < a_3$, then
\[
a_2 a_4 (A^3) = \frac{2 + a_2 + a_3 + a_4}{2 a_3} \le \frac{2 + 2 a_2 + 2 a_3}{2 a_3} \le \frac{4 a_3}{2 a_3} = 2.
\]
We continue the proof assuming $a_1 = 2$ and $a_2 = a_3$.
In this case, by setting $a = a_2 = a_3$ and $b = a_4$, $X$ is a weighted hypersurface of degree $d$ in $\mbP (1, 2, a, a, b)$ and either $d = 2 b + 2$ or $d = 2 b + a$.
If $d = 2 b + 2$, then $b = 2 a$ but this is impossible since $X$ has only terminal singularities.
Hence $d = 2 b + a$.
In this case $b = a + 2$ and $d = 3 a + 4$.
By the quasi-smoothness of $X$, we see that $d = 3 a + 4$ is divisible by $a$.
This implies that $a \in \{2, 4\}$.
This is impossible since $X$ has only terminal singularities.
Therefore (7) is proved.
\end{proof}

%%%%%%%%%%%%%%%%%%%%%%%%%%%%%%%%
\subsubsection{How to compute alpha invariants?}
%%%%%%%%%%%%%%%%%%%%%%%%%%%%%%%%

Let $X$ be a member of a family $\mcF_{\msi}$ with $\msi \in \msI$.
For the proof of Theorem \ref{mainthm}, it is necessary to show $\alpha_{\msp} (X) \ge 1/2$ for any point $\msp \in X$.
Let $\msp \in X$ be a point.
We briefly explain the most typical method of bounding $\alpha_{\msp} (X)$ from below, which goes as follows.
\begin{enumerate}
\item Choose and fix a divisor $S$ on $X$ which vanishes at $\msp$ to a relatively large (orbifold) multiplicity $m = \omult_{\msp} (S) > 0$.
In some cases $S = H_x$ (when $\msp \in H_x$) and in other cases $S$ is the quasi-tangent divisor of $X$ at $\msp$.
Let $a$ be the positive integer such that $S \sim a A$.
\item Let $D \in |A|_{\mbQ}$ be an irreducible $\mbQ$-divisor other than $\frac{1}{a} S$.
Then $D \cdot S$ is an effective $1$-cycle on $X$.
\item Find a $\mbQ$-divisor $T \in |e A|_{\mbQ}$ for some $e \in \mbZ_{> 0}$ such that $\mult_{\msp} (T) \ge 1$ and $\Supp (T)$ does not contain any component of $D \cdot S$.
We will find such a $\mbQ$-divisor $T$ by considering $\msp$-isolating set or class which will be explained in \S \ref{sec:isol}. 
\item Let $q = q_{\msp}$ be the quotient morphism of $\msp \in X$ and $\check{\msp}$ be the preimage of $\msp$ via $q$.
By the above choices, $\Supp (q^*D) \cap \Supp (q^*S) \cap \Supp (q^*T)$ is a finite set of points including $\check{\msp}$, and hence the local intersection number $(q^*D \cdot q^*S \cdot q^*T)_{\check{\msp}}$ is defined (see \S \ref{sec:intnumber}).
Then, we have the inequalities
\[
m \omult_{\msp} (D) \le (q_{\msp}^*D \cdot q_{\msp}^*S \cdot q_{\msp}^*T)_{\check{\msp}} \le r (D \cdot S \cdot T) = r a e (A^3),
\]
where $r$ is the index of the cyclic quotient singularity $\msp \in X$. Note that $q$ is the identity morphism and $r = 1$ when $\msp \in X$ is a smooth point.
By Lemma \ref{lem:multlct} which will be explained below, we have
\[
\lct_{\msp} (X;D) \ge \frac{r a e (A^3)}{m}.
\]
for any $D$ as in (2).
\item As a conclusion, we have
\[
\alpha_{\msp} (X) \ge \min \left\{ \lct_{\msp} (X; S), \ \frac{r a e (A^3)}{m} \right\}.
\]
\item It remains to bound $\lct_{\msp} (X;S)$ from below.
This is easy when $S$ is quasi-smooth at $\msp$ because in that case we have $\lct_{\msp} (X;S) = 1$.
The computation gets involved when $S$ is the quasi-tangent divisor, but will be done by considering suitable weighted blowups which will be explained in \S \ref{sec:compwbl}.  
\end{enumerate}

We need to consider variants of the above explained method, or other methods especially for points in special positions.
These will be explained in Section~\ref{chap:methods}.

%%%%%%%%%%%%%%%%%%%%%%%%%%%%%%%%
%%%%%%%%%%%%%%%%%%%%%%%%%%%%%%%%
%%%%%%%%%%%%%%%%%%%%%%%%%%%%%%%%
\section{Methods of computing log canonical thresholds}
\label{chap:methods}
%%%%%%%%%%%%%%%%%%%%%%%%%%%%%%%%
%%%%%%%%%%%%%%%%%%%%%%%%%%%%%%%%
%%%%%%%%%%%%%%%%%%%%%%%%%%%%%%%%

%%%%%%%%%%%%%%%%%%%%%%%%%%%%%%%%
%%%%%%%%%%%%%%%%%%%%%%%%%%%%%%%%
\subsection{Auxiliary results}
%%%%%%%%%%%%%%%%%%%%%%%%%%%%%%%%
%%%%%%%%%%%%%%%%%%%%%%%%%%%%%%%%

%%%%%%%%%%%%%%%%%%%%%%%%%%%%%%%%
\subsubsection{Some results on multiplicities and log canonicity}
\label{sec:intnumber}
%%%%%%%%%%%%%%%%%%%%%%%%%%%%%%%%

Let $V$ be an $n$-dimensional variety.
For effective Cartier divisors $D_1, \dots, D_n$ on $V$ and a point $\msp \in V$ which is an isolated component of $\Supp (D_1) \cap \cdots \cap \Supp (D_n)$, the {\it intersection multiplicity} 
\[
i (\msp, D_1 \cdots D_n;V)
\]
is defined (see \cite[Example 7.1.10]{Fulton}).
Suppose that $V$ is $\mbQ$-factorial.
Then this definition is naturally generalized to effective $\mbQ$-divisors $D_1, \dots, D_n$ as follows:
\[ 
i (\msp, D_1, \cdots, D_n;V) := \frac{1}{d^n} i (\msp, d D_1, \cdots, d D_n;V),
\]
where $d$ is a positive integer such that $d D_i$ is a Cartier divisor for any $i$.
In this paper, we set
\[
(D_1 \cdots D_n)_{\msp} := i (\msp, D_1 \cdots D_n;V)
\]
and call it the {\it local intersection number} of $D_1, \dots, D_n$ at $\msp$.

\begin{Rem}
If $\msp \in V$ is a smooth point, $D_1, \dots, D_n$ are effective divisors defined by $f_1, \dots, f_n \in \mcO_{V, \msp}$ around $\msp$, and $\msp$ is an isolated component of $\Supp (D_1) \cap \cdots \cap \Supp (D_n)$, then 
\[
(D_1 \cdots D_n)_{\msp} = \dim_{\mbC} \mcO_{V, P}/(f_1, \dots, f_n).
\]

If $X \subset \mbP (a_0, \dots, a_N)$ is an $n$-dimensional subvariety which is quasi-smooth at $\msp = \msp_{x_i} \in V$, $D_1 = (G_1 = 0)_X, \dots, D_n = (G_n = 0)_X$ are effective Weil divisors such that $\msp$ is an isolated component of $D_1 \cap \cdots \cap D_n$, where $G_i = G_i (x_0, \dots, x_N)$ is a quasi-homogeneous polynomial of degree $d_i$, then
\[ 
(D_1 \cdots D_n)_{\msp} 
= \frac{1}{a_i} (\rho^*D_1 \cdots \rho^* D_n)_{\breve{\msp}}
= \frac{1}{a_i} \dim_{\mbC} \mcO_{\breve{U}_{\msp}, \breve{\msp}}/(g_1, \dots, g_n),
\]
where $\rho = \rho_{\msp} \colon \breve{U}_{\msp} \to U_{\msp} := X \cap \mcU_{\msp}$ is the orbifold chart with $\breve{\msp} \in \breve{U}_{\msp}$ the preimage of $\msp$ and $g_i = G (\breve{x}_0, \dots, 1, \dots, \breve{x}_N)$ with $\breve{x}_j = x_j/x_i^{a_j/a_i}$ for $j \ne i$.
\end{Rem}

We will frequently use the following property of local intersection numbers.
Let $D_1, \dots, D_n$ be effective $\mbQ$-divisors on $X$ and $\msp \in X$ be a smooth point.
If $\msp$ is an isolated component of $\Supp (D_1) \cap \cdots \cap \Supp (D_n)$, then
\[
(D_1 \cdot \ldots \cdot D_n)_{\msp} \ge \prod_{i=1}^n \mult_{\msp} (D_i).
\]
We refer readers to \cite[Corollary 12.4]{Fulton} for a proof.
Although the following results are well known to experts, we include their proofs for readers' convenience.

\begin{Lem} \label{lem:multlct}
Let $\msp \in X$ be either a germ of a smooth variety or a germ of a cyclic quotient singular point and let $D$ be an effective $\mbQ$-divisor on $X$.
Then the inequality
\[
\frac{1}{\omult_{\msp} (D)} \le \lct_{\msp} (X,D)
\]
holds.
\end{Lem}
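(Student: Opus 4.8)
The plan is to reduce the statement to the case of a smooth germ and then to settle the smooth case by induction on $\dim X$ via inversion of adjunction, with the one-dimensional germ as the base case.

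First I would dispose of the quotient case. Let $q = q_{\msp} \colon \check{X} \to X$ be the quotient morphism of the cyclic quotient singularity $\msp \in X$, with $\check{\msp} \in \check{X}$ the preimage of $\msp$; recall that $\check{X}$ is smooth at $\check{\msp}$ and that, by definition, $\omult_{\msp}(D) = \mult_{\check{\msp}}(q^* D)$. Since $q$ is the quotient of a smooth germ by the cyclic group $\bmu_r$ acting freely in codimension one, it is quasi-\'etale, so $K_{\check{X}} = q^* K_X$ and more generally $q^*(K_X + c D) = K_{\check{X}} + c\, q^* D$ for every $c$. Consequently $(X, c D)$ is log canonical at $\msp$ if and only if $(\check{X}, c\, q^* D)$ is log canonical at $\check{\msp}$, whence $\lct_{\msp}(X, D) = \lct_{\check{\msp}}(\check{X}, q^* D)$. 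Together with the definition of $\omult$, this identity reduces the assertion to the inequality $1/\mult_{\check{\msp}}(q^* D) \le \lct_{\check{\msp}}(\check{X}, q^* D)$ at the smooth point $\check{\msp} \in \check{X}$. In other words, it suffices to treat the case in which $\msp \in X$ is a smooth germ, where $\omult_{\msp} = \mult_{\msp}$.

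For the smooth case I would argue by induction on $n = \dim X$, showing that $(X, c D)$ is log canonical at $\msp$ for every rational $c < 1/\mult_{\msp}(D)$. When $n = 1$ the germ $\msp \in X$ is a smooth curve and $D = \mult_{\msp}(D) \cdot \msp$ near $\msp$, so $(X, c D)$ is log canonical at $\msp$ exactly when $c\, \mult_{\msp}(D) \le 1$, giving $\lct_{\msp}(X, D) = 1/\mult_{\msp}(D)$. For the inductive step I would choose a general smooth hypersurface $H$ through $\msp$ that contains no component of $D$; then $\mult_{\msp}(D|_H) = \mult_{\msp}(D)$, and by the induction hypothesis $(H, c D|_H)$ is log canonical at $\msp$ for $c < 1/\mult_{\msp}(D)$. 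By inversion of adjunction, $(X, H + c D)$ is then log canonical near $\msp$, and since removing the effective divisor $H$ only improves the singularities, $(X, c D)$ is log canonical at $\msp$. Letting $c \to 1/\mult_{\msp}(D)$ yields $\lct_{\msp}(X, D) \ge 1/\mult_{\msp}(D)$, completing the induction.

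The main obstacle I expect lies in the inductive step for the smooth case, namely verifying that a general hyperplane $H$ through $\msp$ can be chosen so that it is smooth and Cartier, avoids the components of $D$, satisfies $\mult_{\msp}(D|_H) = \mult_{\msp}(D)$ (which amounts to $H$ meeting the tangent cone of $D$ at $\msp$ properly), and is in the position required for the standard inversion-of-adjunction statement to apply. Once these genericity points are confirmed, the remaining computations are routine.
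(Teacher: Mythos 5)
Your proposal is correct and takes essentially the same route as the paper: both reduce to the smooth germ via the quotient morphism $q_{\msp}$ (the paper cites \cite[20.4 Corollary]{FA} for $\lct_{\msp}(X;D)=\lct_{\check{\msp}}(\check{X};q_{\msp}^*D)$) and then apply the inequality $1/\mult_{\check{\msp}}(q_{\msp}^*D)\le\lct_{\check{\msp}}(\check{X};q_{\msp}^*D)$ at the smooth point. The only difference is that the paper simply cites \cite[8.10 Lemma]{Kol} for this smooth-case inequality, whereas you reprove it by induction and inversion of adjunction; there, for $c$ strictly below $1/\mult_{\msp}(D)$ you should track the klt condition on $H$ so that only the classical plt/klt form of inversion of adjunction is needed.
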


\begin{proof}
Let $q = q_{\msp} \colon \check{X} \to X$ be the quotient morphism of $\msp \in X$, which is \'{e}tale in codimension $1$, and let $\check{\msp} \in \check{X}$ be the preimage of $\msp$.
By \cite[20.4 Corollary]{FA}, we have $\lct_{\msp} (X;D) = \lct_{\check{\msp}} (\check{X};q^*D)$.
Note that $\check{\msp} \in \check{X}$ is smooth.
Hence, by \cite[8.10 Lemma]{Kol}, we have 
\[
\frac{1}{\omult_{\msp} (D)} = \frac{1}{\mult_{\check{\msp}} (q^*D)} \le \lct_{\check{\msp}} (\check{X};q^*D),
\]
and the proof is completed.
\end{proof}

\begin{Lem}[{$2n$-inequality, cf.\ \cite[Corollary 3.5]{Corti}}] \label{lem:2nineq}
Let $\msp \in X$ be a germ of a smooth $3$-fold, $D$ an effective $\mbQ$-divisor on $X$, $n > 0$ a rational number and let $\varphi \colon Y \to X$ be the blowup of $X$ at $\msp$ with exceptional divisor $E$.
If $(X, \frac{1}{n} D)$ is not canonical at $\msp$, then there exists a line $L \subset E \cong \mbP^2$ with the following property.
\begin{itemize}
\item For any prime divisor $T$ on $X$ such that $T$ is smooth at $\msp$ and that its proper transform $\tilde{T}$ contains $L$, we have $\mult_{\msp} (D|_T) > 2 n$.
\end{itemize}
\end{Lem}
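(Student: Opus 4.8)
The statement is the classical $2n$-inequality, and the plan is to prove it by a single blow-up of $\msp$ combined with inversion of adjunction on the exceptional $\mbP^2$. Write $m := \mult_{\msp}(D)$, so that $\ord_E(D) = m$ and, since $\varphi$ is the blow-up of a smooth point of a $3$-fold, $E \cong \mbP^2$ with $a_E(K_X) = 2$ and $K_Y + \tfrac1n \tilde D = \varphi^*(K_X + \tfrac1n D) + (2 - \tfrac mn)E$, where $\tilde D$ is the proper transform of $D$. The first dichotomy is whether $E$ itself is a non-canonical place. If $m > 2n$, then for every prime divisor $T$ smooth at $\msp$ one has $\mult_\msp(D|_T) \ge \mult_\msp(D) = m > 2n$, so any line $L \subset E$ works. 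Hence I may assume $m \le 2n$, so that $E$ is canonical and the failure of canonicity of $(X, \tfrac1n D)$ is inherited by $(Y, \tfrac1n \tilde D)$ at a center $Z \subsetneq E$.

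Next I would localize $Z$ onto a line. The scheme-theoretic restriction $\tilde D|_E$ is a plane curve of degree $m \le 2n$ on $E \cong \mbP^2$, and the non-canonicity of $(Y, \tfrac1n \tilde D)$ at $Z$ propagates, via the Koll\'ar--Shokurov connectedness theorem and inversion of adjunction along $E$, to a failure of log canonicity of the restricted pair on $E$ along $Z$. A coefficient-and-degree count on $\mbP^2$ then shows that the minimal such center is a point or a line: a curve component carrying coefficient $\ge 1$ in $\tfrac1n \tilde D|_E$ must have degree $\le m/n \le 2$, and a conic with coefficient $1$ is log canonical, so the only surviving possibility is a line. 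I set $L$ to be this line when $Z$ is a curve, and any line through $Z$ when $Z$ is a point.

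For the numerical conclusion I would use the restriction identity on a surface through $L$. Let $T$ be smooth at $\msp$ with $\tilde T \supset L$; then $\tilde T \cap E = L$ and $\sigma := \varphi|_{\tilde T} \colon \tilde T \to T$ is the blow-up of the smooth surface $T$ at $\msp$ with exceptional curve $L$. Restricting $\varphi^*D = \tilde D + mE$ to $\tilde T$ and comparing with $\sigma^*(D|_T) = \widetilde{D|_T} + \mult_\msp(D|_T)\,L$ yields
\[
\mult_\msp(D|_T) = m + \mult_L\!\bigl(\tilde D|_{\tilde T}\bigr) \ge m + \mult_L(\tilde D).
\]
When $Z = L$ this finishes matters at once: blowing up the smooth curve $L \subset Y$ produces a divisor $F$ with $a_F(K_X) = 3$ and $\ord_F(D) = m + \mult_L(\tilde D)$, so the non-canonicity at $L$ forces $3 - \tfrac1n(m + \mult_L \tilde D) < 0$, whence $m + \mult_L(\tilde D) > 2n$.

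The main obstacle is the case where $Z$ is a point strictly inside $L$. There $\mult_L(\tilde D)$ may vanish, and the bound $\mult_\msp(D|_T) > 2n$ must instead be extracted from the concentration of $\tilde D|_{\tilde T}$ along $L$ near $Z$ for the specific surface $T$ passing through $Z$. This is the genuine heart of the $2n$-inequality: one reruns the connectedness and adjunction argument on $\tilde T$ itself, showing that $(\tilde T, \tfrac1n \tilde D|_{\tilde T})$ fails to be log canonical at $Z$, and then converts this into the required lower bound for $\mult_L(\tilde D|_{\tilde T})$ through iterated blow-ups over $Z$. Controlling this local concentration, together with ruling out the borderline degenerate configurations of the plane curve $\tilde D|_E$, is the delicate step; the remainder is formal discrepancy bookkeeping.
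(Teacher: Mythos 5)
Your first dichotomy ($m>2n$ versus $m\le 2n$) and your restriction-to-$\tilde T$ bookkeeping in the case where the non-canonical center is a line both match the paper's proof, and the discrepancy computation you give there is correct. But there is a genuine gap, and you have located it yourself without closing it: the case where the non-canonical center $Z$ of $(Y,\tfrac1n\tilde D)$ is a \emph{point} of $E$. The paper does not argue this case at all; it cites \cite[Corollary 3.5]{Corti}, whose entire content is precisely that when $m\le 2n$ one may always find a line $L\subset E$ along whose \emph{generic point} the pair $(Y,(\tfrac mn-1)E+\tfrac1n\tilde D)$ fails to be log canonical. That statement is proved in Corti's paper by a quadratic counting argument over the graph of infinitely near centers (the same machinery that yields the $4n^2$-inequality), and it is the one non-formal ingredient of the lemma. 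Your sketch replaces it with ``rerun connectedness and adjunction on $\tilde T$ and convert via iterated blow-ups,'' which is an acknowledgement of the problem rather than a solution.

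The gap is not merely one of missing detail. If the non-log-canonicity is concentrated only at a point $Z\in L$, then restricting to a surface $\tilde T$ containing $L$ gives you non-log-canonicity of $(\tilde T,\tfrac1n\tilde D|_{\tilde T}+(\tfrac mn-1)L)$ at the point $Z$, which bounds a local multiplicity at $Z$ but does \emph{not} force the coefficient $\alpha$ of $L$ in $\tilde D|_{\tilde T}$ to be large; and it is exactly that coefficient which enters $\mult_{\msp}(D|_T)=m+\alpha$. Worse, the lemma demands a single line $L$ that works for \emph{every} smooth $T$ with $\tilde T\supset L$, and ``any line through $Z$'' gives no such uniformity. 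Both difficulties are resolved only by the generic-point formulation of Corti's corollary, because non-log-canonicity at the generic point of $L$ survives restriction to any $\tilde T\supset L$ and forces the coefficient of $L$ itself past the log canonical threshold. To complete your proof you would have to reprove Corti's Corollary 3.5, i.e.\ show that a point-center configuration with $m\le 2n$ cannot occur without a whole line becoming a non-log-canonical place; as written, the proposal assumes the conclusion in its hardest case.
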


\begin{proof}
We set $m = \mult_{\msp} (D)$.
By \cite[Corollary 3.5]{Corti}, one of the following holds.
\begin{enumerate}
\item $m > 2 n$.
\item There is a line $L \subset E$ such that the pair
\[
\left(Y, \left(\frac{m}{n} - 1\right)E + \frac{1}{n} \tilde{D} \right)
\]
is not log canonical at the generic point of $L$.
\end{enumerate}
Note that in \cite[Corollary 3.5]{Corti} the boundary is a movable linear system $\mcH$ but the same argument applies if we replace $\mcH$ by an effective $\mbQ$-divisor $D$.
We may assume $m \le 2n$ because otherwise $\mult_{\msp} (D|_T) > 2 n$ for any prime divisor $T$ which is smooth at $\msp$ and the assertion follows by choosing any line on $E$.
Thus the option (2) takes place.
Let $T$ be a prime divisor on $X$ such that $T$ is smooth at $\msp$ and $\tilde{T} \supset L$.
We have
\[
K_Y + \left( \frac{m}{n} - 1 \right) E + \frac{1}{n} \tilde{D} + \tilde{T} = \varphi^* \left( K_X + \frac{1}{n} D + T \right).
\]
Note that $E|_{\tilde{T}} = L$ and we can write $\tilde{D}|_{\tilde{T}} = \alpha L + G$, where $\alpha \ge 0$ is a rational number and $G$ is an effective $\mbQ$-divisor on $\tilde{T}$.
Thus, by restricting the above equation to $\tilde{T}$, we have
\[
K_{\tilde{T}} + \left( \frac{m}{n} - 1 + \alpha \right) L + G = \varphi^* \left(K_T + \frac{1}{n} D|_T \right),
\]
and the pair 
\[
\left( \tilde{T}, \left( \frac{m}{n} - 1 + \alpha \right) L + G \right)
\]
is not log canonical at the generic point of $L$.
This implies $\frac{m}{n} - 1 + \alpha > 1$ and we have 
\[
\frac{1}{n} \mult_{\msp} (D|_T) = \left( \frac{m}{n} - 1 + \alpha \right) + 1 > 2.
\]
Thus $\mult_{\msp} (D|_T) > 2 n$ and the proof is completed.
\end{proof}

\begin{Lem} \label{lem:lctP2cubic}
Let $D \in \left| \mcO_{\mbP^2} (3) \right|$ be a divisor on $\mbP^2$ which is not a triple line.
Then $\lct (\mbP^2; D) \ge 1/2$.
\end{Lem}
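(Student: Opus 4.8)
The plan is to exploit the fact that the log canonical threshold is computed locally, so that $\lct (\mbP^2; D) = \min_{\msp \in \mbP^2} \lct_{\msp} (\mbP^2; D)$, and to bound each local contribution separately. The basic input is the multiplicity bound of Lemma \ref{lem:multlct}, which gives $\lct_{\msp} (\mbP^2; D) \ge 1/\mult_{\msp} (D)$ at every point $\msp$, together with the elementary observation that $\mult_{\msp} (D) \le \deg D = 3$ since a general line through $\msp$ meets the cubic $D$ in three points counted with multiplicity. Consequently, at any point with $\mult_{\msp} (D) \le 2$ we immediately obtain $\lct_{\msp} (\mbP^2; D) \ge 1/2$, and it remains only to analyze the points of multiplicity $3$.

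First I would observe that the condition $\mult_{\msp} (D) = 3 = \deg D$ forces $D$ to be supported on lines through $\msp$. Indeed, choosing affine coordinates centered at $\msp$, the dehomogenized defining equation $f$ has degree at most $3$ and vanishing order exactly $3$ at the origin, hence equals its own degree-$3$ homogeneous part; such a binary cubic factors over $\mbC$ into three linear forms, so $D = L_1 + L_2 + L_3$ where each $L_i$ is a line through $\msp$. Since $D$ is not a triple line, the three lines are not all equal, which leaves exactly two configurations to treat.

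Next I would compute the two remaining local thresholds directly. If the three lines are distinct, then after blowing up $\msp$ the proper transform of $D$ meets the exceptional divisor $E$ transversally at three distinct points, and writing $\varphi^* D = \tilde{D} + 3 E$ and $K_Y = \varphi^* K_{\mbP^2} + E$ one checks that the resulting pair has simple normal crossing support with coefficients $c$ on $\tilde{D}$ and $3c - 1$ on $E$; log canonicity then holds precisely for $c \le 2/3$, giving $\lct_{\msp} (\mbP^2; D) = 2/3$. If instead $D = 2 L_1 + L_2$ with $L_1 \ne L_2$, then $D$ has simple normal crossing support near $\msp$ with local equation of the shape $x^2 y$, whence $\lct_{\msp} (\mbP^2; D) = \min (1/2, 1) = 1/2$. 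In both cases $\lct_{\msp} (\mbP^2; D) \ge 1/2$.

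Combining the two regimes, every local threshold is at least $1/2$, and taking the minimum over $\msp$ yields $\lct (\mbP^2; D) \ge 1/2$. The only genuinely delicate point is the analysis of multiplicity-$3$ behaviour: the argument hinges on recognizing that such a point forces $D$ to degenerate into three concurrent lines, and that excluding the triple line is exactly what rules out the sole configuration (local equation $x^3$, with threshold $1/3$) in which the bound would fail.
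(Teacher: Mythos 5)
Your proof is correct and follows essentially the same elementary strategy as the paper's: away from points of multiplicity $3$ one uses $\lct_{\msp} (\mbP^2; D) \ge 1/\mult_{\msp} (D) \ge 1/2$, and the remaining degenerate configurations of lines are checked directly. If anything, your pointwise organization is slightly tighter than the paper's global case division: the paper's blanket claim that $\mult_{\msp} (D) \le 2$ in its cases (1)--(3) actually fails at the point $L_1 \cap L_2$ of the configuration $L_1 + 2 L_2$ (and at the common point of three concurrent lines in case (4)), and your explicit blow-up computations at multiplicity-$3$ points, yielding the thresholds $2/3$ and $1/2$ respectively, are exactly what is needed to cover those configurations.
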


\begin{proof}
We have the following possibilities for $D$.
\begin{enumerate}
\item $D$ is irreducible and reduced.
\item $D = Q + L$, where $Q$ is an irreducible conic and $L$ is a line.
\item $D = L_1 + 2 L_2$, where $L_1, L_2$ are distinct lines.
\item $D = L_1 + L_2 + L_3$, where $L_1, L_2, L_3$ are mutually distinct lines.
\end{enumerate}
If we are in one of the cases (1), (2) and (3), then $\mult_{\msp} (D) \le 2$ for any point $\msp \in D$ and thus $(\mbP^2, \frac{1}{2} D)$ is log canonical.
If we are in case (3), then it is obvious that the pair $(\mbP^2, \frac{1}{2} D) = (\mbP^2, \frac{1}{2} L_1 + L_2)$ is log canonical.
\end{proof}

\begin{Lem} \label{lem:singnoncanbd}
Let $X$ be a Fano $3$-fold of Picard number one and let $\msp \in X$ be a cyclic quotient terminal singular point (which is not a smooth point).
If $\msp \in X$ is not a maximal center, then there is at most one irreducible $\mbQ$-divisor $D \in \left| -K_X \right|_{\mbQ}$ such that $(X, D)$ is not canonical at $\msp$.
\end{Lem}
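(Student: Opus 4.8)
The plan is to argue by contradiction, using the Kawamata blowup of $\msp$ to turn two hypothetical non-canonical divisors into a movable pencil that exhibits $\msp$ as a maximal center. Since $\msp \in X$ is a terminal cyclic quotient singularity, say of type $\frac{1}{r}(1,a,r-a)$, let $\varphi\colon W \to X$ be its Kawamata blowup, with exceptional divisor $E \cong \mbP(1,a,r-a)$, $K_W = \varphi^* K_X + \frac{1}{r}E$ and $(E^3) = \frac{r^2}{a(r-a)}$; thus $a_E(K_X) = \frac{1}{r}$ and, writing $A = -K_X$, $\varphi^* A = -K_W + \frac{1}{r}E$. Suppose for contradiction that there are two \emph{distinct} irreducible divisors $D_1, D_2 \in |-K_X|_{\mbQ}$ with $(X, D_i)$ not canonical at $\msp$. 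Because $\Cl(X) \cong \mbZ$ is generated by $A$ and each $D_i$ is irreducible, $D_1$ and $D_2$ have distinct prime supports, and non-canonicity forces $\msp \in \Supp(D_i)$ for both $i$.

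The crux is the claim that for an irreducible $D \in |-K_X|_{\mbQ}$, non-canonicity of $(X, D)$ at $\msp$ implies $\ord_E(D) > a_E(K_X) = \frac{1}{r}$; in other words, the Kawamata blowup $E$ is the valuation that detects the non-canonicity. To prove this I would pass to $W$ and study the proper transform $\widetilde D \sim_{\mbQ} \varphi^* A - \ord_E(D)\,E$. If instead $\ord_E(D) \le \frac{1}{r}$, then in $\varphi^*(K_X + D) = K_W + \widetilde D + \bigl(\ord_E(D) - \frac{1}{r}\bigr)E$ the coefficient of $E$ is non-positive, so $E$ itself is not a non-canonical place and any non-canonicity over $\msp$ must already occur for $(W, \widetilde D)$ at a centre $q \in E$. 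At a smooth such $q$ the $2n$-inequality (Lemma \ref{lem:2nineq}) forces $\mult_q(\widetilde D)$ to be large, whereas intersecting $\widetilde D \sim_{\mbQ} \varphi^* A - \ord_E(D)\,E$ with a suitable curve $C \subset E$ through $q$, using $\varphi^* A \cdot C = 0$ and $E|_E \equiv -rH$ for the positive generator $H$ of $\Cl(E)$, bounds $\mult_q(\widetilde D)$ above by $\ord_E(D)\cdot r(H \cdot C)_E \le (H \cdot C)_E$. The hard part will be to carry out this reduction rigorously — selecting $C$ (and handling the quotient singularities of $W$ lying on $E$ by a further Kawamata blowup) so that the resulting upper bound contradicts the $2n$-inequality; this is precisely where the terminal quotient structure enters, and I regard it as the main obstacle.

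Granting the claim, the argument closes quickly. It gives $\ord_E(D_i) > \frac{1}{r}$ for $i = 1, 2$. Choosing $m \in \mbZ_{>0}$ with $mD_1, mD_2 \in |-mK_X|$ integral, let $\mcM = \langle mD_1, mD_2\rangle$ be the pencil they span. Since $D_1$ and $D_2$ have distinct irreducible supports, $\mcM$ has no fixed component and base locus $\Supp(D_1)\cap\Supp(D_2)$ of codimension $\ge 2$, hence is a movable linear system with $\mcM \sim_{\mbQ} -mK_X$. For a general member $M \in \mcM$, the valuation property gives $\ord_E(M) = \min\{\ord_E(mD_1), \ord_E(mD_2)\} = m\min\{\ord_E(D_1), \ord_E(D_2)\} > \frac{m}{r}$, so $m_E(\mcM) > m\,a_E(K_X)$. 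This exhibits $\msp$, the centre of $E$, as a maximal center, contradicting the hypothesis; hence at most one irreducible $D \in |-K_X|_{\mbQ}$ is non-canonical at $\msp$.
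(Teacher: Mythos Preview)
Your overall strategy---argue by contradiction, pass to the Kawamata blowup, show $\ord_E(D_i) > 1/r$, span the pencil $\mcM = \langle mD_1, mD_2\rangle$, and conclude that $\msp$ is a maximal center---is exactly the paper's, and your closing pencil argument is correct.

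The gap is precisely where you flag it: the implication ``$(X,D)$ not canonical at $\msp$ $\Rightarrow$ $\ord_E(D) > 1/r$''. Your proposed route---assume $\ord_E(D)\le 1/r$, push non-canonicity up to $(W,\widetilde D)$, then bound $\mult_q(\widetilde D)$ by intersecting with curves on $E$ and contradict the $2n$-inequality---is not the right tool and is unlikely to close. Lemma~\ref{lem:2nineq} applies at smooth points, whereas the non-canonical centre of $(W,\widetilde D)$ may well sit at one of the cyclic quotient points of $W$ on $E$; iterating Kawamata blowups there gives no obvious termination, and the curve-intersection bound $\ord_E(D)\cdot r(H\cdot C)_E$ you write down does not control multiplicities at those singular points.

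The paper bypasses all of this by citing \cite{Kawamata} directly. The point is that Kawamata's theorem says the Kawamata blowup is the \emph{unique} divisorial contraction in the Mori category centred at $\msp$. If $(X,D)$ is not canonical at $\msp$, the $3$-fold MMP produces an extremal divisorial extraction $\psi\colon Z\to X$ with terminal $Z$, centred at $\msp$, whose exceptional divisor $E'$ has $a_{E'}(K_X+D)<0$; uniqueness forces $E'=E$, hence $\ord_E(D) > a_E(K_X) = 1/r$. This is a standard consequence used throughout the birational rigidity literature, and in the paper it is a one-line citation. Once you invoke it, your ``main obstacle'' disappears and the rest of your argument goes through verbatim.
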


\begin{proof}
Suppose that there are two distinct irreducible $\mbQ$-divisors $D_i \sim_{\mbQ} - K_X$ such that $(X, D_i)$ is not canonical at $\msp$ for $i = 1, 2$.
Let $r > 1$ be the index of the singularity $\msp \in X$ and let $\varphi \colon Y \to X$ be the Kawamata blowup at $\msp$ with exceptional divisor $E$.
By \cite{Kawamata}, we have $\ord_E (D_i) > 1/r$.
Take a positive integer $n$ such that $n D_1, n D_2$ are both integral and $n D_1 \sim n D_2$.
Then the pencil $\mcM \sim - n K_X$ generated by $n D_1$ and $n D_2$ is a movable linear system and we have $\ord_E (\mcM) \ge n/r$.
It follows that the pair $(X, \frac{1}{n} \mcM)$ is not canonical at $\msp$.
This is a contradiction since $\msp \in X$ is not a maximal center.
\end{proof}

\begin{Lem} \label{lem:qtangdivncan}
Let 
\[
X = X_d \subset \mbP (1, b_1, b_2, b_3, b_4)_{x, y_1, y_2, y_3, y_4}
\] 
be a member of a family $\mcF_{\msi}$ with $\msi \in \msI$.
Let $i \in \{1, 2, 3, 4\}$ be such that $b_i > 1$ and $\msp := \msp_{y_i} \in X$.
If $H_x$ is the quasi-tangent divisor of $X$ at $\msp$, then the pair $(X, H_x)$ is not canonical at $\msp$.
\end{Lem}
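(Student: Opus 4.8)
The plan is to produce a single exceptional divisor over $X$ that witnesses the failure of canonicity, namely the exceptional divisor $E$ of the Kawamata blowup of $X$ at $\msp$. Since $b_i > 1$, the point $\msp = \msp_{y_i}$ is a terminal cyclic quotient singularity, say of type $\frac{1}{r}(1, a, r-a)$ with $r = b_i > 1$, so the Kawamata blowup $\varphi \colon W \to X$ is available and satisfies $K_W = \varphi^* K_X + \frac{1}{r} E$; in other words $a_E(K_X) = \frac{1}{r}$. It therefore suffices to establish the strict inequality $\ord_E(H_x) > \frac{1}{r}$, since this gives $a_E(K_X + H_x) < 0$, i.e.\ non-canonicity at $\msp$.

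To compute $\ord_E(H_x)$ I would pass to local orbifold coordinates at $\msp$. Let $u_1, u_2, u_3$ be a system of local orbifold coordinates on which $\bmu_r$ acts with weights $1, a, r-a$, so that $\varphi$ is the weighted blowup with weight $\frac{1}{r}(1, a, r-a)$ in these coordinates and $\ord_E(u_j) = \frac{1}{r}, \frac{a}{r}, \frac{r-a}{r}$ for $j = 1, 2, 3$. Because $H_x$ is the quasi-tangent divisor of $X$ at $\msp$, the coordinate $x$ is (up to scalar) the quasi-tangent polynomial, so by \eqref{eq:qtangpoly} the defining polynomial takes the form $F = y_i^m x + y_i^{m-1} g_{m-1} + \cdots + g_0$, where each $g_k = g_k(x, y_1, \dots, \hat{y}_i, \dots, y_4)$ is not quasi-linear. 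Setting $y_i = 1$ in the orbifold chart, the linear part of the resulting equation $\breve{F}$ is exactly $\breve{x}$; hence the chart is smooth at $\breve{\msp}$ with tangent space $\{\breve{x} = 0\}$, the $u_1, u_2, u_3$ serve as local coordinates, and solving $\breve{F} = 0$ for $\breve{x}$ exhibits a local equation of $H_x$ as a power series $\phi(u_1, u_2, u_3)$ of ordinary order $\ge 2$.

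It then remains to convert this order bound into a weighted-order bound. Since all three weights $1, a, r-a$ are at least $1$, every monomial of total degree $\ge 2$ has weighted degree $\ge 2$ with respect to the weight $(1,a,r-a)$, and so
\[
\ord_E(H_x) = \ord_E(\phi) \ge \frac{2}{r} > \frac{1}{r} = a_E(K_X),
\]
whence $(X, H_x)$ is not canonical at $\msp$. The one point demanding care is the bookkeeping identifying $\ord_E$ with the weighted order in the orbifold chart (so that the factor $\frac{1}{r}$ enters correctly), together with the verification that the quasi-tangent hypothesis is precisely what forces the local equation of $H_x$ to vanish to order $\ge 2$ rather than order $1$; this is exactly why the analogous conclusion fails at a non-quasi-tangent coordinate hyperplane, and why the hypothesis $b_i > 1$ (rather than a smooth point) is needed to obtain strict inequality.
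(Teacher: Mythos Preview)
Your proof is correct and follows essentially the same approach as the paper: both use the Kawamata blowup $\varphi$ at $\msp$ and show that $\ord_E(H_x) > \frac{1}{r} = a_E(K_X)$, giving negative discrepancy. The paper simply asserts $\ord_E(H_x) > \frac{1}{b_i}$ as a direct consequence of $H_x$ being the quasi-tangent divisor, whereas you spell out the underlying computation (the local equation of $H_x$ has ordinary order $\ge 2$ in the orbifold coordinates, hence weighted order $\ge 2$ since each Kawamata weight is $\ge 1$).
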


\begin{proof}
Note that the point $\msp \in X$ is of type $\frac{1}{b_i} (b_j, b_k, b_l)$, where $\{i, j, k, l\} = \{1, 2, 3, 4\}$, and it is a terminal singularity.
Let $\varphi \colon Y \to X$ be the Kawamata blowup with exceptional divisor $E$.
Since $H_x$ is the quasi-tangent divisor of $X$ at $\msp$, we have
\[
\ord_E (H_x) > \frac{1}{b_i}.
\]
Combining this with 
\[
K_Y = \varphi^*K_X + \frac{1}{b_i} E,
\]
we see that the discrepancy of the pair $(X, H_x)$ along $E$ is negative.
This completes the proof.
\end{proof}

%%%%%%%%%%%%%%%%%%%%%%%%%%%%%%%%%%%%
\subsubsection{Some results on singularities of weighted hypersurfaces}
%%%%%%%%%%%%%%%%%%%%%%%%%%%%%%%%%%%

\begin{Lem} \label{lem:normalqhyp}
Let $X$ be a quasi-smooth weighted hypersurface in $\mbP (b_0, \dots, b_4)$.
Assume that $\mbP (b_0, \dots, b_4)$ is well-formed and $X$ has at most isolated singularities.
Then any quasi-hyperplane section on $X$ is a normal surface.
\end{Lem}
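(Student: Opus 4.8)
The goal is to show that a quasi-hyperplane section $S = (g = 0)_X$ on a quasi-smooth weighted hypersurface $X \subset \mbP(b_0, \dots, b_4)$, with $\mbP$ well-formed and $X$ having at most isolated singularities, is a normal surface. The natural approach is to verify Serre's criterion: a surface is normal if and only if it is $R_1$ and $S_2$. Since $X$ is a quasi-smooth hypersurface, it is Cohen--Macaulay, hence so is any Cartier (or more generally quasi-hyperplane) divisor on it, which will take care of the $S_2$ condition. The real content is therefore the $R_1$ condition, i.e.\ that $S$ is regular in codimension one, or equivalently that $\Sing(S)$ is finite.

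The plan is as follows. First I would pass to the affine quasi-cones or, more conveniently, to the orbifold charts $\rho_{x_i} \colon \breve{U}_{x_i} \to U_{x_i}$. Since quasi-smoothness of $X$ means $C_X^*$ is smooth, each $\breve{U}_{x_i}$ is a smooth affine variety, and the singularities of $X$ arise only from the quotient maps at the coordinate points lying on $\Sing(\mbP)$. Because $\mbP$ is well-formed and $X$ has only isolated singularities, these singular points of $X$ are finite in number. Next I would analyze $\Sing(S)$ by separating two sources of non-regularity: points where $S$ fails to be quasi-smooth as a subscheme, and points where $S$ meets $\Sing(X)$. For the first source, I would argue that a generic-type Bertini argument on the quasi-cone, combined with the fact that the quasi-hyperplane $(g=0)$ cuts $C_X^*$, forces the non-quasi-smooth locus of $S$ to be contained in a proper closed subset; the key is that $S$ is itself a quasi-smooth (hence at worst cyclic-quotient-singular) object away from a codimension-$\ge 2$ locus in $S$.

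The heart of the argument is a dimension count. I expect to show that $\Sing(S) \subset \Sing(X) \cup (\text{non-quasi-smooth locus of } S)$. The intersection $S \cap \Sing(X)$ is finite because $\Sing(X)$ is already finite by hypothesis. For the remaining locus, I would use that the non-quasi-smooth points of $S$ inside $\QSm(X)$ correspond to points of $\breve{U}_{x_i}$ where the restriction of $g$ has a critical point on the smooth $3$-fold $\breve{U}_{x_i}$; such a critical locus, cut out by $g$ together with the vanishing of its differential along $X$, is expected to have codimension $\ge 2$ in $S$, hence to be at most finite. Combining, $\Sing(S)$ is a finite set, so $S$ is regular in codimension $1$, giving $R_1$; together with $S_2$ this yields normality.

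The main obstacle will be handling the non-quasi-smooth locus of $S$ carefully and uniformly across all charts, and in particular ruling out that $S$ could acquire a one-dimensional singular locus from the way the quasi-hyperplane interacts with the non-well-formed directions or the singular strata of $\mbP$. The delicate point is that a quasi-hyperplane section need not itself be well-formed or quasi-smooth, so one cannot directly invoke the quasi-smoothness machinery for $S$; instead one must track the dimension of the locus where $g$ and $F$ fail to cut out something regular, and use the well-formedness of $\mbP$ to ensure that the contribution of $\Sing(\mbP)$ to $\Sing(S)$ stays in codimension $\ge 2$. I anticipate that once the critical-locus computation is set up on each smooth chart $\breve{U}_{x_i}$, the finiteness follows, but getting the bookkeeping right at the coordinate points and along the quasi-linear subspaces is where the care is needed.
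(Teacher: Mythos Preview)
Your outline is sound up to the decisive step: you correctly reduce to Serre's criterion, note that $S_2$ is automatic from Cohen--Macaulayness, and split $\Sing(S)$ into the contribution from $\Sing(X)$ (finite by hypothesis) and the non-quasi-smooth locus $\Sigma$ of $S$ inside $\QSm(X)$. But your justification that $\Sigma$ is finite is a gap. You write that the critical locus of $g$ on each smooth chart ``is expected to have codimension $\ge 2$ in $S$''; this expectation has no basis without further input. A function on a smooth $3$-fold can perfectly well have a one-dimensional critical locus, and a Bertini-type argument does not apply because the quasi-hyperplane $S$ is fixed, not generic. So the naive dimension count does not close.

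The paper's proof supplies exactly the missing idea. After normalising coordinates so that $S = (x_4 = 0)_X$, one writes $F = x_4 G + \bar F$ with $\bar F \in \mbC[x_0,\dots,x_3]$. On the affine cone one checks the identity $\Sing(C_X) \cap C_S = \Sing(C_S) \cap (G = 0)$: indeed, on $C_S$ one has $\partial F/\partial x_4 = G$, so requiring all partials of $F$ to vanish is the same as requiring the partials in $x_0,\dots,x_3$ to vanish (which is $\Sing(C_S)$) together with $G=0$. Since $X$ is quasi-smooth, $\Sing(C_X) = \{o\}$, hence $\Sigma \cap (G=0) = \emptyset$ in $\mbP$. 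But $(G=0)$ is an ample divisor, so it meets every positive-dimensional closed subset; therefore $\Sigma$ is finite. This is the substantive step your proposal lacks.
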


\begin{proof}
Let $x_0, \dots, x_4$ be the homogeneous coordinates of $\mbP = \mbP (b_0, \dots, b_4)$ of degree $b_0, \dots, b_4$, respectively.
Let $F = F (x_0,\dots,x_4)$ be the defining polynomial of $X$ and let $S$ be a quasi-hyperplane section on $X$.
After replacing homogeneous coordinates, we may assume $S = (x_4 = 0)_X = (x_4 = F = 0) \subset \mbP$.
It is enough to show that the singular locus $\Sing (S)$ of $S$ is a finite set of points.

We write $F = x_4 G + \bar{F}$, where $G = G (x_0, \dots, x_4)$ and $\bar{F} = \bar{F} (x_0, \dots, x_3)$ are quasi-homogeneous polynomials.
We set $\bar{\mbP} = \mbP (b_0, \dots, b_3)$.
We claim that $\bar{\mbP}$ is well-formed.
Suppose it is not.
Then $\Sing (\mbP)$ contains a $2$-dimensional stratum.
We have $\Sing (X) = \Sing (\mbP) \cap X$ since a quasi-smooth weighted hypersurface is well-formed (\cite[Theorem 6.17]{IF00}).
It follows that $\Sing (X)$ cannot be a finite set of points.
This is a contradiction and the claim is proved.
The surface $S$ is identified with the hypersurface $(\bar{F} = 0) \subset \bar{\mbP}$ and we have 
\[
\Sing (S) = (S \setminus \QSm (S)) \cup (\Sing (\bar{\mbP}) \cap S).
\]

We claim that $\Sing (\bar{\mbP}) \cap S$ is a finite set of  points.
Suppose not.
Then $\Sing (\bar{\mbP}) \cap S$ contains a curve and so does $\Sing (\mbP) \cap S$.
In particular $\Sing (X) = \Sing (\mbP) \cap X$ contains a curve.
This is impossible since $\Sing (X)$ is a finite set of points, and the claim is proved.

It remains to show that the closed subset $\Sigma := S \setminus \QSm (S) \subset \mbP$ is a finite set of points.
Let $\Pi \colon \mbA^5 \setminus \{o\} \to \mbP$ be the natural quotient morphism.
Then $\Sigma = \Pi (\Sing (C^*_S))$, where $C^*_S \subset \mbA^5 \setminus \{o\}$ is the punctured affine quasi-cone of $S$.
We have $\Sing (C_X) \cap C_S = \Sing (C_S) \cap (G = 0)$.
By the quasi-smoothness of $X$, we have $\Sing (C_X) = \{o\} \subset \mbA^5$.
This implies $\Sigma \cap (G = 0) = \emptyset$. 
Since $(G = 0)$ is an ample divisor on $\mbP$, we see that $\Sigma$ is a finite set of points.
This completes the proof.
\end{proof}

\begin{Lem} \label{lem:qsminvhypsec}
Let $S$ be a weighted hypersurface in $\mbP (b_0, b_1, b_2, b_3)$ and let $T \subset \mbP (b_0, b_1, b_2, b_3)$ be a quasi-hyperplane.
If the scheme-theoretic intersection $S \cap T$ is quasi-smooth at a point $\msp$, then $S$ is quasi-smooth at $\msp$.
\end{Lem}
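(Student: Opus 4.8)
The statement is Lemma~\ref{lem:qsminvhypsec}: if $S \subset \mbP(b_0,b_1,b_2,b_3)$ is a weighted hypersurface, $T$ is a quasi-hyperplane, and the scheme-theoretic intersection $S \cap T$ is quasi-smooth at a point $\msp$, then $S$ is quasi-smooth at $\msp$. The plan is to reduce everything to the affine quasi-cones and use the Jacobian criterion for smoothness there, since quasi-smoothness of a subscheme $Z \subset \mbP$ at a point is precisely smoothness of the punctured affine quasi-cone $C_Z^*$ at the corresponding points of the fibre $\Pi^{-1}(\msp)$ (this is the definition $\QSm(Z) = \pi(\Sm(C_Z^*))$ recalled in the excerpt). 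So I would translate the hypothesis ``$S \cap T$ quasi-smooth at $\msp$'' into ``$C_{S\cap T}^*$ is smooth at each point $P$ above $\msp$'' and aim to deduce ``$C_S^*$ is smooth at $P$.''

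**Key steps.** First I would choose defining equations: let $g = g(x_0,\dots,x_3)$ be the quasi-homogeneous polynomial defining $S$, and since $T$ is a quasi-hyperplane we may, after a coordinate change, take $T = (\ell + h = 0)$ where $\ell$ is a nonzero linear form; in fact because $T$ is a single quasi-hyperplane (codimension $1$ quasi-linear subspace) I can arrange by a linear coordinate change that $T = (x_0 = 0)$ after absorbing the higher-degree tail, so that $S \cap T$ is cut out by $g$ and $x_0$. The crux is the comparison of Jacobian matrices. At a point $P$ of the affine quasi-cone lying over $\msp$, smoothness of $C_{S\cap T}^*$ means the $2 \times 4$ Jacobian of $(x_0, g)$ has rank $2$ at $P$, equivalently $\bigl(\tfrac{\partial g}{\partial x_1}, \tfrac{\partial g}{\partial x_2}, \tfrac{\partial g}{\partial x_3}\bigr)(P) \ne 0$. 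Smoothness of $C_S^*$ at $P$ requires the full gradient $\bigl(\tfrac{\partial g}{\partial x_0},\dots,\tfrac{\partial g}{\partial x_3}\bigr)(P)\ne 0$. Since the partials with respect to $x_1,x_2,x_3$ already do not all vanish, the full gradient is a fortiori nonzero, so $C_S^*$ is smooth at $P$, and letting $P$ range over the fibre gives quasi-smoothness of $S$ at $\msp$.

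**Main obstacle.** The delicate point is handling the case where $\msp$ actually lies on $T$ (which is the only interesting case, since if $\msp \notin T$ then $T$ plays no role near $\msp$ and $S \cap T = S$ locally) together with correctly accounting for the higher-degree part $h$ of the quasi-linear equation $\ell + h$ defining $T$: after the coordinate normalization one must verify that the reduction to ``$T = (x_0=0)$'' is genuinely legitimate as a change of homogeneous coordinates compatible with the grading, and that the scheme-theoretic intersection $S \cap T$ is cut out in the expected way so that the Jacobian comparison is valid. I expect the bookkeeping of the gradient at points of the whole punctured fibre $\Pi^{-1}(\msp)$, rather than at a single chosen preimage, to be the part requiring care; once the equations are in the normalized form, though, the argument is just the observation that enlarging the set of coordinates over which one takes partial derivatives can only increase the rank of the Jacobian, so quasi-smoothness of the intersection forces quasi-smoothness of $S$.
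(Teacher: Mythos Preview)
Your proposal is correct and follows essentially the same approach as the paper: after normalizing $T$ to a coordinate hyperplane $(x_3=0)$, the paper writes $F = x_3 G + \bar F$ and observes that quasi-smoothness of $S\cap T$ at $\msp$ gives $\partial \bar F/\partial x_i(\msp)\ne 0$ for some $i\in\{0,1,2\}$, whence $\partial F/\partial x_i(\msp)=\partial \bar F/\partial x_i(\msp)\ne 0$ since $x_3(\msp)=0$. This is exactly your Jacobian-rank argument phrased componentwise; your worry about the ``higher-degree tail'' $h$ is unnecessary, since the defining polynomial of a quasi-hyperplane is quasi-homogeneous of degree $b_i$ and cannot involve $x_i$ in any nonlinear monomial, so the substitution $x_i\mapsto x_i - h$ is a genuine graded coordinate change.
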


\begin{proof}
Let $x_0, x_1, x_2, x_3$ be the homogeneous coordinates of $\mbP = \mbP (b_0, b_1, b_2, b_3)$ of degree $b_0, b_1, b_2, b_3$, respectively, and let $F= F (x_0, x_1, x_2, x_3)$ be the defining polynomial of $S$.
We may assume $T = H_{x_3} \subset \mbP$ and we write $F = x_3 G + \bar{F}$, where $G = G (x_0, x_1, x_2, x_3)$ and $\bar{F} = \bar{F} (x_0, x_1, x_2)$ are quasi-homogeneous polynomials. 
Then $S \cap T$ is the closed subscheme in $\mbP (b_0,b_1,b_2,b_3)$ defined by $x_3 = \bar{F} = 0$.
By the quasi-smoothness of $S \cap T$ at $\msp$, there exists $i \in \{0, 1, 2\}$ such that
\[
\frac{\prt \bar{F}}{\prt x_i} (\msp) \ne 0.
\]
It follows that
\[
\frac{\prt F}{\prt x_i} (\msp) = \frac{\prt \bar{F}}{\prt x_i} (\msp) \ne 0
\]
since $\msp \in H_{x_3}$.
Thus $S$ is quasi-smooth at $\msp$.
\end{proof}

\begin{Lem} \label{lem:pltsurfpair}
Let $S$ be a normal weighted hypersurface in a well formed weighted projective $3$-space $\mbP (b_0,\dots, b_3)$ and $T \subset \mbP (b_0,\dots, b_3)$ a quasi-hyperplane such that $T \ne S$.
Let $\Gamma$ be an irreducible component of $S \cap T$ and we assume that 
\[
T|_S = \Gamma + \Delta,
\] 
where $\Delta$ is an effective divisor on $S$ such that $\Gamma \not\subset \Supp (\Delta)$.
If $\Gamma$ is a smooth weighted complete intersection curve and $S$ is quasi-smooth at each point of $\Gamma \cap \Supp (\Delta)$, then $S$ is quasi-smooth along $\Gamma$ and the pair $(S, \Gamma)$ is plt along $\Gamma$.
\end{Lem}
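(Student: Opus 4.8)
The plan is to work in the orbifold chart of $\mbP(b_0,\dots,b_3)$, reduce the two assertions to statements on the smooth cover, and exploit that $\Gamma$ is a smooth complete intersection curve. After replacing coordinates we may assume $T = H_{x_3}$ and write the defining polynomial of $S$ as $F = x_3 G + \bar{F}$, where $\bar{F} = \bar{F}(x_0,x_1,x_2)$. The scheme $S\cap T$ is then cut out by $x_3 = \bar{F} = 0$, and $\Gamma$ being a (smooth) weighted complete intersection means it is one of the irreducible components of this intersection cut out by additional quasi-linear equations. The first thing I would do is establish quasi-smoothness of $S$ along $\Gamma$. Away from the finite set $\Gamma \cap \Supp(\Delta)$, the local structure of $S\cap T$ along $\Gamma$ coincides with a single smooth branch, so $S\cap T$ is quasi-smooth there; Lemma \ref{lem:qsminvhypsec} then immediately upgrades this to quasi-smoothness of $S$ at those points. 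At the finitely many points of $\Gamma \cap \Supp(\Delta)$ quasi-smoothness of $S$ is assumed directly. This covers all of $\Gamma$.

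Next I would establish the plt statement. Since quasi-smoothness of $S$ along $\Gamma$ has just been shown, $S$ is analytically a cyclic quotient of a smooth variety at each point of $\Gamma$, and $\Gamma$ itself is smooth. The pair $(S,\Gamma)$ being plt along $\Gamma$ is an analytic-local condition, so it suffices to verify it at each point $\msp \in \Gamma$ in the orbifold chart, i.e.\ on the smooth cover $\rho^{-1}(S)$. By the inversion-of-adjunction philosophy, pulling everything back through the cyclic quotient map preserves log canonicity and the plt property (compare the use of \cite[20.4 Corollary]{FA} and the argument of Lemma \ref{lem:multlct}), so I can check the condition upstairs where the germ is smooth. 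Upstairs, $\Gamma$ lifts to a smooth curve on the smooth threefold-germ $\breve{S}$ (smooth because $S$ is quasi-smooth along $\Gamma$), and a pair (smooth surface, smooth curve) is automatically plt, indeed canonical. Hence $(S,\Gamma)$ is plt along $\Gamma$.

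The main obstacle I anticipate is the precise bookkeeping at the points of $\Gamma \cap \Supp(\Delta)$, where $\Gamma$ is no longer the whole of $T|_S$ but only one component of it: here one must make sure the extra divisor $\Delta$ does not interfere with the plt conclusion for $\Gamma$ itself. Because the statement concerns only the pair $(S,\Gamma)$ (with $\Gamma$ reduced and irreducible, not $T|_S$), and because $\Gamma \not\subset \Supp(\Delta)$ means $\Delta$ meets $\Gamma$ in a proper closed subset, the discrepancy computation along the generic point of $\Gamma$ is unaffected by $\Delta$; what requires care is confirming that at the isolated points $\Gamma \cap \Supp(\Delta)$ the smoothness of $\Gamma$ plus quasi-smoothness of $S$ still force log canonicity of $(S,\Gamma)$ with the single reduced boundary $\Gamma$. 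This is where I would invoke that a smooth curve on a smooth surface yields a snc (hence plt) pair, combined with the fact that plt is open and can be checked at the generic point of the curve together with a local check. The decomposition $T|_S = \Gamma + \Delta$ and the hypothesis on smoothness of $\Gamma$ are exactly what make this local check routine rather than delicate.
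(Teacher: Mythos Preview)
Your proposal is correct and follows essentially the same route as the paper: quasi-smoothness of $S$ along $\Gamma$ via Lemma~\ref{lem:qsminvhypsec} off the finite set $\Gamma\cap\Supp(\Delta)$ (and the hypothesis on that set), then plt by passing to the orbifold cover, noting (smooth surface, smooth curve) is plt, and descending via \cite[20.4]{FA}. One small slip: $\breve{S}$ is a surface germ, not a threefold germ; and your anticipated ``obstacle'' at $\Gamma\cap\Supp(\Delta)$ is a non-issue, since the plt claim concerns only the pair $(S,\Gamma)$ and is local, so once $S$ and $\Gamma$ are quasi-smooth there the same cover argument applies verbatim.
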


\begin{proof}
We set $\Xi = \Gamma \cap \Supp (\Delta)$.
By \cite[Theorem 12.1]{IF00}, $\Gamma$ is quasi-smooth.
We have $(S \cap T) \setminus \Xi = \Gamma \setminus \Xi$.
It follows that $S \cap T$ is quasi-smooth along $\Gamma \setminus \Xi$.
By Lemma \ref{lem:qsminvhypsec}, $S$ is quasi-smooth along $\Gamma \setminus \Xi$.
Therefore $S$ is quasi-smooth along $\Gamma$.

For $i = 0,1,2,3$, let $S_i = (x_i \ne 0) \cap S$ be the standard open set of $S$ and let $\rho_i \colon \breve{S}_i \to S_i$ be the orbifold chart.
Note that $\rho_i$ is a finite surjective morphism of degree $b_i$ which is \'etale in codimension $1$.
By the quasi-smoothness of $S$, the affine varieties $\breve{S}_i$ and $\rho_i^*(\Gamma \cap S_i)$ are smooth.
Hence the pair $(\breve{S}_i, \rho_i^*(\Gamma \cap S_i))$ is plt along $\rho_i^*(\Gamma \cap S_i)$.
By \cite[Corollary 20.4]{FA}, the pair $(S_i, \Gamma \cap S_i)$ is plt along $\Gamma \cap S_i$.
This completes the proof.
\end{proof}

\begin{Rem} \label{rem:compselfint}
Let $S$, $T$, and $\Gamma$ be as in Lemma \ref{lem:pltsurfpair}.
We assume in addition that $\Gamma$ is rational, i.e.\ $\Gamma \cong \mbP^1$.
Let $\Sing_{\Gamma} (S) = \{\msp_1,\dots,\msp_n\}$ be the set of singular points of $S$ along $\Gamma$ and let $m_i$ be the index of the quotient singular point $\msp_i \in S$.
Then, since the pair $(S, \Gamma)$ is plt along $\Gamma$, we can apply \cite[Proposition 16.6]{FA} and we have
\[
(K_S + \Gamma)|_{\Gamma} = K_{\Gamma} + \sum_{i=1}^n \frac{m_i-1}{m_i} \msp_i.
\]
Thus we have
\[
(\Gamma^2)_S = - (K_S \cdot \Gamma)_S -2 + \sum_{i=1}^n \frac{m_i-1}{m_i}.
\]
\end{Rem}

%%%%%%%%%%%%%%%%%%%%%%%%%%%%%%%%
\subsubsection{Isolating set and class}
\label{sec:isol}
%%%%%%%%%%%%%%%%%%%%%%%%%%%%%%%%

We recall the definitions of isolating set and class which are introduced by Corti, Pukhlikov and Reid \cite{CPR00} as well as their basic properties.

Let $V$ be a normal projective variety embedded in a weighted projective space $\mbP = \mbP (a_0, \dots, a_N)$ with homogeneous coordinates $x_0, \dots, x_N$ with $\deg x_i = a_i$, and let $A$ be a Weil divisor on $V$ such that $\mcO_V (A) \cong \mcO_V (1)$.
We do not assume that $a_0 \le \cdots \le a_N$.

\begin{Def}
Let $\msp \in V$ be a point.
We say that a set $\{g_1, \dots, g_m\}$ of quasi-homogeneous polynomials $g_1, \dots, g_m \in \mbC [x_0, \dots, x_N]$ {\it isolates} $\msp$ or is a $\msp$-{\it isolating set} if $\msp$ is an isolated component of the set
\[
(g_1 = \cdots = g_m = 0) \cap V.
\]
\end{Def}

\begin{Def}
Let $\msp \in V$ be a smooth point and let $L$ be a Weil divisor class on $V$.
For positive integers $k$ and $l$, we define $|\mcI_{\msp}^k (l L)|$ to be the linear subsystem of $|l L|$ consisting of divisors vanishing at $\msp$ with multiplicity at least $k$.
We say that $L$ {\it isolates} $\msp$ or is a $\msp$-{\it isolating class} if $\msp$ is an isolated component of the base locus of $|\mcI^k_{\msp} (kL)|$.
\end{Def}

\begin{Lem}[{\cite[Lemma 5.6.4]{CPR00}}]
Let $\msp \in V$ be a smooth point.
If $\{g_1, \dots, g_m\}$ is a $\msp$-isolating class, then $l A$ is a $\msp$-isolating class, where
\[
l = \max \{\, \deg g_i \mid i = 1, 2, \dots, m \,\}.
\]
\end{Lem}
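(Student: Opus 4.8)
The plan is to verify the definition of isolating class directly, by constructing an explicit family of members of $|\mcI^k_\msp(k\cdot lA)|$ for a well-chosen $k$ whose common zeros isolate $\msp$. Set $d_i := \deg g_i$, so that $d_i \le l$ for all $i$. Two elementary observations get the construction off the ground: since $\msp$ lies in $(g_1 = \cdots = g_m = 0) \cap V$, every $g_i$ vanishes at $\msp$, whence $\mult_\msp(g_i|_V) \ge 1$ and in particular $d_i \ge 1$; and since $\msp$ is a point of $\mbP = \mbP(a_0, \dots, a_N)$, some homogeneous coordinate $x_s$ satisfies $x_s(\msp) \ne 0$, so $x_s$ is a unit in $\mcO_{V,\msp}$.

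Next I would promote each $g_i$ to a polynomial of degree exactly $kl$ without disturbing its behaviour at $\msp$. Taking $k := a_s$, I set
\[
G_i := g_i^{\,k}\, x_s^{\,l - d_i}, \qquad i = 1, \dots, m,
\]
which is quasi-homogeneous of degree $a_s d_i + (l - d_i) a_s = a_s l = kl$; thus $D^{(i)} := (G_i = 0)_V$ is a member of $|kl A|$. The factor $x_s^{l-d_i}$ serves only to fill out the degree, and because $x_s$ is a unit near $\msp$ it alters neither the local vanishing order nor the local zero locus of $g_i^k$ there.

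It then remains to check the two defining properties. On multiplicities, the unit $x_s$ gives
\[
\mult_\msp(D^{(i)}) = \mult_\msp(g_i^{\,k}|_V) = k\,\mult_\msp(g_i|_V) \ge k,
\]
so each $D^{(i)}$ belongs to $|\mcI^k_\msp(k\cdot lA)|$. On the base locus, the same unit property shows that $\Supp(D^{(i)})$ agrees with $(g_i = 0) \cap V$ in a neighbourhood of $\msp$, so that near $\msp$ one has $\bigcap_{i=1}^m \Supp(D^{(i)}) = (g_1 = \cdots = g_m = 0) \cap V$, which by hypothesis has $\msp$ as an isolated component. Since the base locus of a linear system is contained in the intersection of the supports of any subcollection of its members, $\Bs|\mcI^k_\msp(k\cdot lA)| \subseteq \bigcap_{i=1}^m \Supp(D^{(i)})$, and therefore $\msp$ is an isolated component of this base locus. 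By definition this is exactly the assertion that $lA$ is a $\msp$-isolating class.

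The one point requiring care — and the only real obstacle — is the degree bookkeeping in the weighted setting: to pad $g_i^k$ up to degree $kl$ one needs a monomial of degree $k(l - d_i)$ that does not vanish at $\msp$, and an arbitrary degree need not be representable by such a monomial. Choosing $k$ to be a multiple of $a_s$ resolves this cleanly, since then $x_s^{k(l-d_i)/a_s}$ is an admissible padding monomial of the right degree; with $k = a_s$ this is simply $x_s^{l-d_i}$. Once this choice is fixed, the remaining verifications are the two local computations above.
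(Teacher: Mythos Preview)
Your argument is correct. The paper does not supply its own proof of this lemma but instead cites \cite[Lemma~5.6.4]{CPR00}; your construction---padding each $g_i^{a_s}$ by a power of a coordinate $x_s$ not vanishing at $\msp$ to reach common degree $a_s l$---is exactly the standard proof given there, and your handling of the weighted-degree bookkeeping via the choice $k = a_s$ is the natural way to make the padding work in the weighted projective setting.
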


\begin{Lem} \label{lem:isolexT}
Let $\msp \in V$ be a point, $Z_1, \dots, Z_k$ irreducible closed subsets of $V$ such that $\dim Z_i > 0$ for any $i$, and let $g_1, \dots, g_n \in \mbC [x_0, \dots, x_N]$ be quasi-homogeneous polynomials.
Suppose that $V$ is quasi-smooth at $\msp$ and that $\{g_1, \dots, g_n\}$ isolates $\msp$.
We set $G_i = (g_i = 0)_V$ and we set
\[
\mu := \min \left\{\, \frac{\omult_{\msp} (G_i)}{\deg g_i} \; \middle| \; i = 1, \dots, n \,\right\}.
\]
Then there exists an effective $\mbQ$-divisor $T \sim_{\mbQ} A$ such that $\omult_{\msp} (T) \ge \mu$ and $\Supp (T)$ does not contain any $Z_i$.
\end{Lem}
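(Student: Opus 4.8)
The plan is as follows. Write $d_i = \deg g_i$, so that $G_i \sim_{\mbQ} d_i A$ and hence $\tfrac{1}{d_i} G_i \sim_{\mbQ} A$ is an effective $\mbQ$-divisor with $\omult_{\msp} (\tfrac{1}{d_i} G_i) = \omult_{\msp}(G_i)/d_i \ge \mu$. Thus producing a divisor $\sim_{\mbQ} A$ with the required orbifold multiplicity is automatic, and the entire content of the lemma is the avoidance of the finitely many $Z_i$. To gain room to move, I would pass to a common degree: set $D = \lcm(d_1, \dots, d_n)$ and $h_i := g_i^{D/d_i}$, so that $\deg h_i = D$ and $\omult_{\msp}((h_i = 0)_V) = (D/d_i)\,\omult_{\msp}(G_i) \ge D\mu$. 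Any $\mbC$-linear combination of the $h_i$ then has orbifold multiplicity $\ge D\mu$ at $\msp$, and dividing the resulting divisor by $D$ returns a $\mbQ$-divisor $\sim_{\mbQ} A$ of orbifold multiplicity $\ge \mu$ at $\msp$.

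Next I would introduce the complete linear system $\mcL$ of degree-$D$ forms whose divisor on $V$ has orbifold multiplicity $\ge \lceil D\mu \rceil$ at $\msp$; this is a genuine linear system, cut out on $|DA|$ by the vanishing of the Taylor jet up to order $\lceil D\mu \rceil - 1$ in the orbifold chart at $\msp$. Since each $h_i$ lies in $\mcL$, we have $\Bs(\mcL) \subseteq \bigcap_i (h_i = 0)_V = (g_1 = \cdots = g_n = 0)_V =: W$. A general member of $\mcL$ has orbifold multiplicity $\ge \lceil D\mu\rceil \ge D\mu$ at $\msp$ and fails to contain a given $Z_j$ exactly when $Z_j \not\subseteq \Bs(\mcL)$; since there are finitely many $Z_j$, it therefore suffices to prove that $Z_j \not\subseteq \Bs(\mcL)$ for every $j$. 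The isolating hypothesis enters here: $\msp$ is an isolated point of $W$, and as each $Z_j$ is irreducible of positive dimension, $Z_j \not\subseteq \{\msp\}$, so whenever $Z_j \subseteq W$ we must have $\msp \notin Z_j$.

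When $Z_j \not\subseteq W$ the inclusion $\Bs(\mcL) \subseteq W$ settles the matter at once. The delicate case, and the step I expect to be the main obstacle, is $Z_j \subseteq W$ with $\msp \notin Z_j$: here every form built from the $g_i$ vanishes identically on $Z_j$, so one must produce a degree-$D$ form of orbifold multiplicity $\ge \lceil D\mu \rceil$ at $\msp$ that is nonzero at some point of $Z_j$. The observation that makes this possible is that the isolating hypothesis forces $\mu$ to lie strictly below the local effective threshold of $A$ at $\msp$: if $\mu$ equalled that threshold, the divisors realizing it would share a positive-dimensional locus through $\msp$, contradicting the isolation of $\msp$ in $W$. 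Consequently, passing to the blowup $\sigma \colon \tilde V \to V$ at $\msp$ with exceptional divisor $E$ and replacing $D$ by a large multiple (which rescales everything harmlessly), I would argue that the linear system $|\sigma^*(DA) - \lceil D\mu\rceil E|$ has base locus contained in $E$ via a Serre-vanishing argument exploiting the ampleness of $A$; pushing down then gives $\Bs(\mcL) \subseteq \{\msp\}$, so no positive-dimensional $Z_j$ can be contained in it.

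With $Z_j \not\subseteq \Bs(\mcL)$ established for all $j$, the members of $\mcL$ containing a fixed $Z_j$ form a proper linear subspace, so a general member $H \in \mcL$ simultaneously avoids all the $Z_j$ while keeping $\omult_{\msp}(H) \ge \lceil D\mu\rceil \ge D\mu$. Setting $T := \tfrac{1}{D} H$ yields $T \sim_{\mbQ} A$ with $\omult_{\msp}(T) \ge \mu$ and $\Supp(T)$ containing none of the $Z_i$, as required. The part requiring the most care is the base-locus control in the third paragraph: quantifying how the isolating condition keeps $\mu$ below the effective threshold and turning that into the containment $\Bs(\mcL) \subseteq \{\msp\}$ for $D \gg 0$.
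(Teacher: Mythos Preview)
Your first two paragraphs are exactly the paper's argument, with one cosmetic difference: the paper works directly with the linear system $\Lambda \subset |dA|$ \emph{generated} by the $h_i = g_i^{d/d_i}$ (where $d = \lcm(d_i)$), rather than enlarging to the complete jet-vanishing system $\mcL$. Since $h_i \in \mcL$ gives $\Bs(\mcL) \subseteq \Bs(\Lambda) = W$, your enlargement is harmless, but also unnecessary. The paper then simply asserts that a general member of $\Lambda$ avoids each $Z_i$, and sets $T = \tfrac{1}{d}D$ for a general $D \in \Lambda$.

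You are right that there is a case the paper's proof does not explicitly address: $Z_j \subseteq W$ with $\msp \notin Z_j$. But in every application in the paper the $Z_j$ are the components of an effective $1$-cycle whose local intersection at $\msp$ is being bounded, so only the components through $\msp$ matter; and as you yourself note, an irreducible positive-dimensional $Z_j$ containing $\msp$ cannot lie in $W$, since $\{\msp\}$ is a connected component of $W$. So the paper's short argument is adequate for its purposes, and your first two paragraphs already constitute a complete proof once one records (as the paper tacitly does) that the relevant $Z_j$ pass through $\msp$.

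Your proposed treatment of the remaining case via blowup and Serre vanishing has a genuine gap. Because $\lceil D\mu\rceil$ scales linearly with $D$, the base locus of $|\sigma^*(DA) - \lceil D\mu\rceil E|$ is governed not by Serre vanishing but by a Seshadri-type inequality $\mu < \epsilon(A;\msp)$; ampleness of $A$ alone does not give this. Your heuristic that ``the isolating hypothesis forces $\mu$ strictly below the local effective threshold'' is not substantiated: for instance with $V = \mbP^2$, $\msp = [1\!:\!0\!:\!0]$, and $g_1 = y$, $g_2 = z$, one has $\mu = 1 = \epsilon(\mcO(1);\msp)$. The cleanest repair is simply to add the hypothesis $\msp \in Z_i$ (which holds in every use of the lemma) and delete the third paragraph.
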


\begin{proof}
Let $d$ be the least common multiple of $\deg g_1, \dots, \deg g_n$ and we set $e_i = d/\deg g_i$.
Consider the linear system $\Lambda \subset |d A|$ on $V$ generated by $g_1^{e_1}, \dots, g_n^{e_n}$.
We see that $\msp$ is an isolating component of $\Bs \Lambda$ since $\{g_1, \dots, g_n\}$ isolates $\msp$.
Hence a general $D \in \Lambda$ does not contain any $Z_i$ in its support.
Moreover, for any $D \in \Lambda$, we have 
\[
\omult_{\msp} (D) \ge \min \{ \, e_i \omult_{\msp} (D_i) \mid i = 1, \dots, n \,\} = d \mu.
\]
Thus the assertion follows by setting $T = \frac{1}{d} D \sim_{\mbQ} A$ for a general $D \in \Lambda$.
\end{proof}

\begin{Rem} \label{rem:isolT}
Lemma \ref{lem:isolexT} will be frequently applied in the following way:
under the same notation and assumptions as in Lemma \ref{lem:isolexT}, there exists an effective $\mbQ$-divisor $T \sim_{\mbQ} e A$, where 
\[
e = \max \{ \, \deg g_i \mid i = 1, \dots, n \,\},
\] 
such that $\omult_{\msp} (T) \ge 1$ and $\Supp (T)$ does not contain any $Z_i$.  
\end{Rem}

\begin{Lem} \label{lem:isolclass}
Let $X = X_d \subset \mbP (1, a_1, \dots, a_4)_{x, y, z, t, w}$ be a member of a family $\mcF_{\msi}$ with $\msi \in \msI$, where we assume that $a_1 \le a_2 \le a_3 \le a_4$, and let $\msp \in H_x \setminus L_{xy}$.
Then $a_1 a_4 A$ isolates $\msp$.
If $w^k$ appears in the defining polynomial of $X$ with nonzero coefficient, then $a_1 a_3 A$ isolates $\msp$.
\end{Lem}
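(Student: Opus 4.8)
The plan is to reduce, via the preceding lemma of Corti--Pukhlikov--Reid turning a $\msp$-isolating set into a $\msp$-isolating class, to the construction of a $\msp$-isolating \emph{set} of quasi-homogeneous polynomials whose maximal degree is $a_1 a_4$ (for the first assertion) and $a_1 a_3$ (for the second). Since $\msp \in H_x$, the coordinate $x$, of degree $1$, always vanishes at $\msp$ and is taken as the first member of the set; the task is then to cut the surface $H_x \cap X \subset \mbP (a_1, a_2, a_3, a_4)$ down to the isolated point $\msp$ with the remaining members. Write $\msp = (0 : y_0 : z_0 : t_0 : w_0)$; the hypothesis $\msp \notin L_{xy}$ says precisely that $\msp \ne \msp_y$, i.e.\ $(z_0, t_0, w_0) \ne (0,0,0)$.

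The core case is $y_0 \ne 0$, where I would ``centre at $y$''. Choosing constants $\lambda, \mu, \nu$ so that the binomials vanish at $\msp$, the forms
\[
z^{a_1} - \lambda y^{a_2}, \quad t^{a_1} - \mu y^{a_3}, \quad w^{a_1} - \nu y^{a_4},
\]
of degrees $a_1 a_2$, $a_1 a_3$, $a_1 a_4$, pin the ratios $z:y$, $t:y$, $w:y$ to finitely many values; together with $x$ they cut out $\msp$ together with finitely many of its orbifold conjugates, and so isolate $\msp$ with maximal degree $a_1 a_4$. This gives the first assertion at such points. For the second assertion I would drop the last, most expensive binomial $w^{a_1} - \nu y^{a_4}$: on the locus cut out by $x$ and the first two binomials, restricting $F$ to the chart $y \ne 0$ and substituting the pinned values of $z$ and $t$ produces a one-variable polynomial in $w$ whose $w^k$-coefficient is a nonzero constant because $w^k \in F$. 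Hence $w$ is forced to finitely many values, and $\{x,\ z^{a_1} - \lambda y^{a_2},\ t^{a_1} - \mu y^{a_3}\}$ already isolates $\msp$, now with maximal degree $a_1 a_3$.

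It remains to treat the stratum $y_0 = 0$, where $\msp$ lies on $H_x \cap H_y$ and the binomials centred at $y$ are unavailable. Here I would adjoin the coordinate $y$ (degree $a_1$) and descend to the residual curve $C := H_x \cap H_y \cap X \subset \mbP (a_2, a_3, a_4)$, cut out by $\bar F (z,t,w) := F(0,0,z,t,w)$. If $\msp$ lies on a still deeper coordinate stratum (e.g.\ $z_0 = 0$, or $\msp = \msp_w$ in the case $w^k \notin F$) the point is isolated trivially by adjoining further coordinates, all of degree at most $a_3 \le a_1 a_4$; for instance $\{x,y,z\}$ meets $X$ in the finite set $(F(0,0,0,t,w)=0) \subset \mbP (a_3,a_4)$. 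Otherwise I must isolate $\msp$ on the one-dimensional $C$ by a single additional form, of degree $\le a_1 a_4$ (resp.\ $\le a_1 a_3$), vanishing at $\msp$ but not on any component of $C$ through $\msp$; its existence I would extract from the normality and the control of the components of $C$ provided by quasi-smoothness (Lemma \ref{lem:normalqhyp}), together, for the second assertion, with the fact that $w^k \in F$ keeps $\msp_w$ off $C$.

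The main obstacle is exactly this lowest-dimensional stratum $y_0 = 0$: the clean ``centre at $y$'' construction that produces the bounds $a_1 a_4$ and $a_1 a_3$ is no longer at hand, and one must instead verify that a single form of the prescribed bounded degree really does isolate $\msp$ on the residual curve without over-spending the degree budget. This is where the numerical relations among the weights in Lemma \ref{lem:wtnumerics} and the explicit shape of the $95$ families have to be invoked, so that the argument there is genuinely case-dependent rather than purely formal.
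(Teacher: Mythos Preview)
You have misread the hypothesis. In this paper $L_{xy}$ is defined as $H_x \cap H_y = (x=y=0)_X$, not as the coordinate line through $\msp_x$ and $\msp_y$. Hence $\msp \in H_x \setminus L_{xy}$ means $x(\msp)=0$ and $y(\msp)\ne 0$, so one can write $\msp = (0\!:\!1\!:\!\alpha_2\!:\!\alpha_3\!:\!\alpha_4)$. Your ``core case'' $y_0\ne 0$ is therefore the only case, and your argument there is exactly the paper's proof: the set
\[
\{\,x,\ z^{a_1}-\alpha_2^{a_1}y^{a_2},\ t^{a_1}-\alpha_3^{a_1}y^{a_3},\ w^{a_1}-\alpha_4^{a_1}y^{a_4}\,\}
\]
isolates $\msp$ and has maximal degree $a_1a_4$, while if $w^k\in F$ then dropping the last member still isolates $\msp$ (the paper phrases this as the projection $X\to\mbP(1,a_1,a_2,a_3)$ being finite, which is the same as your substitution argument), giving maximal degree $a_1a_3$.

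Consequently the entire ``stratum $y_0=0$'' discussion, and the obstacle you flag at the end, simply do not arise: there is no residual curve to isolate on, no appeal to Lemma~\ref{lem:normalqhyp} or to numerics from Lemma~\ref{lem:wtnumerics}, and no case-by-case analysis. The proof is the two-line computation you already wrote for the core case.
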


\begin{proof}
We can write $\msp = (0\!:\!1\!:\!\alpha_2\!:\!\alpha_3\!:\!\alpha_4)$ for some $\alpha_2, \alpha_3, \alpha_4 \in \mbC$.
Then it is easy to see that the set
\[
\{ x,  z^{a_1} - \alpha_2^{a_1} y^{a_2}, t^{a_1} - \alpha_3^{a_3} y^{a_3}, w^{a_1} - \alpha_4^{a_4} y^{a_4} \}
\]
isolates $\msp$, and thus $a_1 a_4 A$ isolates $\msp$.

Suppose that $w^k$ appears in the defining polynomial of $X$.
Then the natural projection $\mbP (1, a_1, \dots, a_4) \ratmap \mbP (1, a_1, a_2, a_3)$ restricts to a finite morphism $\pi \colon X \to \mbP (1, a_1, a_2, a_3)$. 
The common zero locus (in $X$) of the sections contained in the set
\[
\{ x,  z^{a_1} - \alpha_2^{a_1} y^{a_2}, t^{a_1} - \alpha_3^{a_3} y^{a_3} \}
\]
coincides with the set $\pi^{-1} (\msq)$, where $\msq = (0\!:\!1\!:\!\alpha_2\!:\!\alpha_3) \in \mbP (1, a_1, a_2, a_3)$.
It follows that the above set isolates $\msp$ since $\pi^{-1} (\msq)$ is a finite set containing $\msp$.
Thus $a_1 a_3 A$ isolates $\msp$.
\end{proof}

%%%%%%%%%%%%%%%%%%%%%%%%%%%%%%%%%
%%%%%%%%%%%%%%%%%%%%%%%%%%%%%%%%%
\subsection{Methods}
%%%%%%%%%%%%%%%%%%%%%%%%%%%%%%%%%
%%%%%%%%%%%%%%%%%%%%%%%%%%%%%%%%%

%%%%%%%%%%%%%%%%%%%%%%%%%%%%%%%%%
\subsubsection{Computations by intersecting two divisors}
%%%%%%%%%%%%%%%%%%%%%%%%%%%%%%%%%

We recall methods of computing LCTs and consider their generalizations for some of them.

\begin{Lem}[{cf.\ \cite[Lemma 2.5]{KOW18}}] \label{lem:exclL}
Let $X$ be a normal projective $\mbQ$-factorial $3$-fold with nef and big anticanonical divisor, and let $\msp \in X$ be either  a smooth point or a terminal quotient singular point of index $r$ (Below we set $r = 1$ when $\msp \in X$ is a smooth point).
Suppose that there are prime divisors $S \sim_{\mbQ} - a K_X$ and $T \sim_{\mbQ} - b K_X$ with $a, b \in \mbQ$ such that $S \cap T$ is irreducible and $q^* S \cdot q^* T = m \check{\Gamma}$, where $q = q_{\msp} \colon \check{U} \to U$ is the quotient morphism of an analytic neighborhood $\msp \in U$ of $\msp \in X$, $\check{\msp}$ is the preimage of $\msp$ via $q$, $m$ is a positive integer and $\check{\Gamma}$ is an irreducible and reduced curve on $\check{U}$. 
Then, we have
\[
\alpha_{\msp} (X) \ge \min \left\{ \lct_{\msp} (X; \tfrac{1}{a} S), \ \frac{b}{m \mult_{\check{\msp}} (\check{\Gamma})}, \ \frac{1}{r a b (-K_X)^3} \right\}.
\] 
\end{Lem}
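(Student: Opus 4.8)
The plan is to bound $\alpha_{\msp}(X)$ from below by showing that for any effective $\mbQ$-divisor $D \in |-K_X|_{\mbQ}$, the pair $(X, cD)$ is log canonical at $\msp$ for $c$ equal to the asserted minimum. By Remark \ref{rem:covex} it suffices to treat irreducible $D$. I would split into two cases according to whether $D$ is proportional to $S$. If $D = \frac{1}{a}S$, then log canonicity of $(X, cD)$ at $\msp$ for $c \le \lct_{\msp}(X; \frac{1}{a}S)$ is immediate from the definition of the local log canonical threshold, which is exactly the first term in the minimum. So the real work is the case where $D$ is an irreducible $\mbQ$-divisor with $\Supp(D) \ne \Supp(S)$.

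\textbf{The main estimate.}
In the case $\Supp(D)\ne\Supp(S)$, the key is that $D$ and $S$ share no common component, so $D\cdot S$ is a well-defined effective $1$-cycle on $X$. Pulling back by the quotient morphism $q=q_{\msp}$ and using the hypothesis $q^*S\cdot q^*T = m\check\Gamma$, I would compute the local intersection number of $q^*D$ with the curve $\check\Gamma$ at $\check\msp$. The idea is that $\check\Gamma$ is the single curve through which $S$ and $T$ meet near $\msp$, so multiplicities of $D$ along $\check\Gamma$ are controlled by the global intersection number $(D\cdot S\cdot T) = ab(-K_X)^3$ (using $S\sim_{\mbQ}-aK_X$, $T\sim_{\mbQ}-bK_X$, $D\sim_{\mbQ}-K_X$). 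Concretely, since $S\cap T$ is irreducible and equals (up to the cover) the curve $m\check\Gamma$, the coefficient of $D$ along $\check\Gamma$ — measured by $\omult$-type data relative to $\check\Gamma$ — is at most $\frac{m\,\mathrm{mult}_{\check\msp}(\check\Gamma)}{b}$ times the relevant intersection number, which after rescaling by the index $r$ and by $a$ yields the two remaining terms $\frac{b}{m\,\mathrm{mult}_{\check\msp}(\check\Gamma)}$ and $\frac{1}{rab(-K_X)^3}$. I would then invoke inversion of adjunction / the $2n$-inequality style bound, or more directly Lemma \ref{lem:multlct} applied to $D$ restricted to the surface $S$, to convert these intersection-theoretic bounds into log canonicity of $(X, cD)$ at $\msp$.

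\textbf{Carrying out the reduction.}
The cleanest route is probably to restrict to $S$: assuming $(X, cD)$ is \emph{not} log canonical at $\msp$ for $c$ below the claimed bound, adjunction along the (normal, by hypothesis irreducible) surface $S$ would force $(S, cD|_S)$ to be non-log-canonical at $\msp$, hence $\mathrm{omult}_{\msp}(D|_S)$ too large along the curve $S\cap T$. I would bound $\mathrm{omult}_{\msp}(D|_S)$ using that $\Supp(D)\not\supset S\cap T$ together with the local intersection inequality $(q^*D\cdot q^*T)_{\check\msp}\ge \mathrm{mult}_{\check\msp}(q^*D|_{\check\Gamma})\cdot m\,\mathrm{mult}_{\check\msp}(\check\Gamma)$ and the global bound $(q^*D\cdot q^*T)_{\check\msp}\le r(D\cdot S\cdot T)=rab(-K_X)^3$. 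Combining these gives a contradiction once $c$ is at most the stated minimum.

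\textbf{The main obstacle.}
I expect the delicate step to be the passage from the global intersection number $(D\cdot S\cdot T)$ to the local multiplicity of $D$ along the single curve $\check\Gamma$ near $\check\msp$. This requires knowing that no unwanted components of $D\cdot S$ or of the base locus intersect $T$ near $\msp$ except along $\check\Gamma$; the hypothesis that $S\cap T$ is irreducible and that the pullback is exactly $m\check\Gamma$ is precisely what makes the localization work, but making the inequality $\mathrm{omult}_{\msp}(D\cdot S)\le r\,a\,(A^2\cdot S\text{-type count})$ rigorous — tracking the index $r$ of the quotient singularity correctly through the cover $q$ — is where the bookkeeping is easiest to get wrong. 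The factor $\frac{1}{r}$ distinguishing the local intersection number on $\check U$ from that on $U$ must be inserted at exactly the right place, which is the crux of why the three terms in the minimum take the form they do.
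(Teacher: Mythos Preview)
Your overall outline is on the right track, but the execution you describe in ``Carrying out the reduction'' has two genuine gaps that would prevent the argument from closing.

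First, the adjunction step is not correct as stated. From $(X, cD)$ not log canonical at $\msp$ you cannot conclude $(S, cD|_S)$ is not log canonical at $\msp$; inversion of adjunction would require $(X, cD + S)$ not plt, which you have no control over. The paper avoids this entirely: from $(X, cD)$ not log canonical it extracts only the multiplicity bound $\mult_{\check\msp}(q^*D) > 1/c$ via Lemma~\ref{lem:multlct}, and works with that. Relatedly, your expression $(q^*D\cdot q^*T)_{\check\msp}$ is ill-defined, since this is an intersection of two divisors on a $3$-fold and $\check\msp$ need not be an isolated point of $\Supp(q^*D)\cap\Supp(q^*T)$.

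Second, and more seriously, you never use that $-K_X$ is nef, which is where the second term in the minimum actually comes from. The paper's mechanism is to write $q^*D\cdot q^*S = \gamma\,\check\Gamma + \check\Delta$ with $\check\Gamma\not\subset\Supp(\check\Delta)$, and then to bound the coefficient $\gamma$ by intersecting $D\cdot S$ with the nef class $-K_X$: one gets $a(-K_X)^3 = (-K_X\cdot D\cdot S) \ge \gamma(-K_X\cdot\Gamma) = \gamma\,ab(-K_X)^3/m$, hence $\gamma \le m/b$. The residual part $\check\Delta$ is then controlled by intersecting with $T$: since $\check\Gamma$ is the full local intersection $q^*S\cdot q^*T$, the curve $\check\Delta$ meets $q^*T$ properly, and one gets $\mult_{\check\msp}(\check\Delta) \le r(T\cdot\Delta) \le r\bigl(ab(-K_X)^3 - \gamma\,ab^2(-K_X)^3/m\bigr)$. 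Combining this with $\mult_{\check\msp}(q^*D) \le \gamma\,\mult_{\check\msp}(\check\Gamma) + \mult_{\check\msp}(\check\Delta)$ and $\gamma\le m/b$ yields the contradiction. Without the nefness step bounding $\gamma$, the two remaining terms in the minimum cannot be separated, and the argument you sketch collapses.
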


\begin{proof}
We set
\[
c :=  \min \left\{ \lct_{\msp} (X; \tfrac{1}{a} S), \ \frac{b}{m \mult_{\check{\msp}} (\check{\Gamma})}, \ \frac{1}{r a b (-K_X)^3} \right\}.
\]
We will derive a contradiction assuming $\alpha_{\msp} (X) < c$.
By the assumption, there is an irreducible $\mbQ$-divisor $D \in \left| - K_X \right|_{\mbQ}$ such that $(X, c D)$ is not log canonical at $\msp$.
Then the pair $(\check{U}, c \rho^*D)$ is not log canonical at $\check{\msp}$ and we have
\begin{equation} \label{eq:exclL-1}
\mult_{\check{\msp}} (q^*D) > \frac{1}{c}.
\end{equation}
Since $q^*S \cdot q^*T = m \check{\Gamma}$ and $S \cap T$ is irreducible, we have $S \cdot T = m \Gamma$, where $\Gamma$ is an irreducible and reduced curve such that $\check{\Gamma} = q^*\Gamma$.
We have
\begin{equation} \label{eq:exclL-2}
(-K_X \cdot \Gamma) = \frac{1}{m} (-K_X \cdot S \cdot T) = \frac{a b (-K_X)^3}{m}.
\end{equation}
This in particular implies
\begin{equation} \label{eq:exclL-3}
(T \cdot \Gamma) = b (-K_X \cdot \Gamma) = \frac{a b^2 (-K_X)^3}{m}.
\end{equation}

We have $\Supp (D) \ne S$ since $\lct_{\msp} (X;S) \ge c$, and thus $q^*D \cdot q^*S$ is an effective $1$-cycle on $\check{U}$.
We write $q^*D \cdot q^*S = \gamma \check{\Gamma} + \check{\Delta}$, where $\gamma \ge 0$ and $\check{\Delta}$ is an effective $1$-cycle on $\check{U}$ such that $\check{\Gamma} \not\subset \Supp (\check{\Delta})$.
Then $D \cdot S = \gamma \Gamma + \Delta + \Xi$, where $\Delta = \frac{1}{r} q_*\check{\Delta}$ and $\Xi$ is an effective $1$-cycle such that $\Gamma \not\subset \Supp (\Xi)$.
By \eqref{eq:exclL-2}, we have
\[
a (-K_X)^3 = (-K_X \cdot D \cdot S) \ge \gamma (-K_X \cdot \Gamma) = \frac{ab (-K_X)^3}{m} \gamma,
\]
where the inequality holds since $-K_X$ is nef.
Note that $(-K_X)^3 > 0$ since $-K_X$ is nef and big.
Hence 
\begin{equation} \label{eq:exclL-4}
\gamma \le \frac{m}{b}.
\end{equation}

By \eqref{eq:exclL-1} and \eqref{eq:exclL-3}, we have
\[
\begin{split}
r \left(a b (-K_X)^3 - \frac{a b^2 (-K_X)^3}{m} \gamma\right) 
&= r (T \cdot (D \cdot S - \gamma \Gamma)) \\ 
&= r (T \cdot (\Delta + \Xi)) \ge r (T \cdot \Delta) \\
&\ge r (T \cdot \Delta)_{\msp} = (q^*T \cdot \check{\Delta})_{\check{\msp}} \\
&\ge \mult_{\check{\msp}} (\check{\Delta}) \\ 
&> \frac{1}{c} - \gamma \mult_{\check{\msp}} (\check{\Gamma}),
\end{split}
\]
where $(\ \cdot \ )_{\msp}$ and $(\ \cdot \ )_{\check{\msp}}$ denote the local intersection numbers at $\msp$ and $\check{\msp}$ respectively.
It follows that
\begin{equation} \label{eq:exclL-5}
\left(\mult_{\check{\msp}} (\check{\Gamma}) - \frac{r a b^2 (-K_X)^3}{m}\right) \gamma > \frac{1}{c} - r a b (-K_X)^3.
\end{equation}
We have $\mult_{\check{\msp}} (\check{\Gamma}) - r a b^2 (-K_X)^3/m > 0$ since $1/c - r a b (-K_X)^3 \ge 0$ by the definition of $c$.
Combining \eqref{eq:exclL-4} and \eqref{eq:exclL-5}, we have
\[
c > \frac{b}{m \mult_{\check{\msp}} (\check{\Gamma})}.
\]
This contradicts the definition of $c$ and the proof is completed. 
\end{proof}

Lemma \ref{lem:exclL} is very useful in computing alpha invariants but works only when $S \cap T$ is irreducible.
We consider its generalization that can be applied when $S \cap T$ is reducible.

\begin{Def}
Let $M = (a_{ij})$ be an $n \times n$ matrix with entries in $\mbR$, where $n \ge 2$.
For a non-empty subset $I \subset \{1,2,\dots,n\}$, we denote by $M_I$ the submatrix of $M$ consisting of $i$th rows and columns for $i \in I$.
We say that $M$ satisfies the {\it condition $(\star)$} if the following are satisfied. 
\begin{itemize}
\item $(-1)^{|I|} \det M_I \ge 0$ for any non-empty proper subset $I \subset \{1,2,\dots,n\}$.
\item $(-1)^{n-1} \det M > 0$.
\item $a_{ij} > 0$ for any $i, j$ with $i \ne j$.
\end{itemize}

For $v = {}^t (v_1,\dots,v_n), w = {}^t (w_1,\dots,w_n) \in \mbR^n$, the expression $v \le w$ means $v_i \le w_i$ for any $i$.
\end{Def} 

\begin{Lem} \label{lem:matrix}
Let $M = (a_{i j})$ be an $n \times n$ matrix with entries in $\mbR$ satisfying the condition $(\star)$, and let $v, w \in \mbR^n$.
Then $M v \le M w$ implies $v \le w$.
\end{Lem}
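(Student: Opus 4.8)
The statement is equivalent to the assertion that $M$ is invertible with $M^{-1} \ge 0$ (entrywise). Indeed, setting $x = v - w$, the hypothesis $Mv \le Mw$ reads $Mx \le 0$, and if $M^{-1} \ge 0$ then $x = M^{-1}(Mx) \le 0$, i.e. $v \le w$. Since $(\star)$ forces $(-1)^{n-1}\det M > 0$, in particular $\det M \ne 0$, so $M$ is invertible and it suffices to prove $M^{-1} \ge 0$. First I would record the elementary sign consequences of $(\star)$: taking $I$ to be a singleton gives $a_{ii} \le 0$, the off-diagonal entries are positive by assumption, and for every nonempty proper $I$ the minor $\det M_I$ carries the sign $(-1)^{|I|}$ while $\det M$ carries the sign $(-1)^{n-1}$. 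The diagonal entries of $M^{-1}$ are then immediately nonnegative by Cramer's rule, since $(M^{-1})_{ii} = \det M_{\hat{i}\hat{i}}/\det M$ is a quotient of a proper principal minor (sign $(-1)^{n-1}$) by $\det M$ (sign $(-1)^{n-1}$).

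The genuine difficulty is the off-diagonal entries of $M^{-1}$, which by Cramer are governed by non-principal cofactors and so are not directly controlled by $(\star)$. The plan is to dispose of them by induction on $n$ via a Schur complement. After a simultaneous row–column permutation, which preserves $(\star)$, I would assume $a_{nn} < 0$ and pass to the Schur complement $\hat M = M' - a_{nn}^{-1} b c^{T}$ of the $(n,n)$ entry, where $M'$, $b$, $c$ denote the leading $(n-1)$-block, the last column, and the last row. The key point is that $\hat M$ again satisfies $(\star)$ in dimension $n-1$: its off-diagonal entries are positive (a nonnegative correction is added to each positive $a_{ij}$), and the minor identity $\det \hat M_I = \det M_{I \cup \{n\}}/a_{nn}$, together with division by the negative number $a_{nn}$, converts the signs of the minors $\det M_{I \cup \{n\}}$ into exactly the pattern required by $(\star)$ for $\hat M$ — including the crucial sign flip obeyed by the top-size determinant but not by the proper minors. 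By the inductive hypothesis $\hat M^{-1} \ge 0$; since $\hat M^{-1}$ is precisely the leading block of $M^{-1}$, this settles all off-diagonal entries lying in that block, and the block-inverse formula writes the remaining off-diagonal entries as $-a_{nn}^{-1}\hat M^{-1} b$ and $-a_{nn}^{-1} c^{T}\hat M^{-1}$, which are nonnegative because $b, c > 0$, $\hat M^{-1} \ge 0$, and $-a_{nn}^{-1} > 0$. The base case $n = 2$ is a direct two-by-two computation.

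The one gap in this scheme is the degenerate case in which there is no strictly negative diagonal entry to pivot on, i.e. $a_{ii} = 0$ for every $i$. Here I would argue by perturbation: replace $M$ by $M_{\varepsilon} = M - \varepsilon I$, whose diagonal is now strictly negative. For every $\varepsilon > 0$ the proper principal minors keep the correct sign, because writing $N = -M$ one has the expansion $\det(N_I + \varepsilon I) = \sum_{K \subseteq I}\varepsilon^{|I|-|K|}\det N_K$ in which all $\det N_K \ge 0$ (these are proper principal minors of $N$) and the leading term $\varepsilon^{|I|}$ is strictly positive; and for small $\varepsilon$ the top-size determinant keeps its sign by continuity. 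Thus $M_{\varepsilon}$ satisfies $(\star)$ with strictly negative diagonal and falls under the previous case, so $M_{\varepsilon}^{-1} \ge 0$; letting $\varepsilon \to 0^{+}$ and using that $M$ is invertible gives $M^{-1} = \lim M_{\varepsilon}^{-1} \ge 0$. I expect the verification that the Schur complement inherits $(\star)$ — in particular the bookkeeping of the determinant signs, where the full-size determinant obeys a rule opposite to that of the proper minors — to be the main technical obstacle, with the zero-diagonal perturbation a close second.
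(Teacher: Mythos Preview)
Your proof is correct. Both your argument and the paper's hinge on the same inductive claim—that the Schur complement of a negative diagonal entry again satisfies $(\star)$—but the wrapping differs. The paper works directly with the inequality $Mv \le 0$: it performs Gaussian elimination on the first column using positive row multipliers (so that $M'v \le 0$ is preserved), observes that the resulting lower-right $(n-1)\times(n-1)$ block $N'$ is the Schur complement and satisfies $(\star)$, applies induction to get $v_2, \dots, v_n \le 0$, and then repeats the elimination from the last row to catch $v_1$. You instead reformulate the statement as $M^{-1} \ge 0$ and read off all entries at once from the block-inverse formula, handling the diagonal via Cramer; this is essentially the monotone-matrix/M-matrix viewpoint and is arguably more conceptual, at the cost of the block-inverse bookkeeping. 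Your Schur determinant identity $\det \hat M_I = \det M_{I \cup \{n\}}/a_{nn}$ is exactly what underlies the paper's bare assertion that $N'$ satisfies $(\star)$, so on the key technical point you actually supply more detail than the paper does.

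Your perturbation argument for the all-zero-diagonal case is sound but superfluous: for $n \ge 3$ the paper observes that $a_{kk} = 0$ would force $\det M_{\{k,l\}} = -a_{kl}a_{lk} < 0$, contradicting the required sign of a size-$2$ proper principal minor, so in fact every diagonal entry is strictly negative; and your base case $n = 2$ already handles zero diagonal directly.
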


\begin{proof}
It is enough to show that $v \le 0$ assuming $M v \le 0$ for $v \in \mbR^n$.
We prove this assertion by induction on $n \ge 2$.
The case $n = 2$ is easily done and we omit it.

Assume $n \ge 3$.
Suppose that there is a diagonal entry $a_{kk}$ such that $a_{kk} = 0$.
Then, we have $\det M_{\{k,l\}} < 0$ since $a_{k l}, a_{l k} > 0$.
By the condition $(\star)$, this is impossible since $n \ge 3$.

In the following we may assume that $a_{ii} \ne 0$ for any $i$.
By the condition $(\star)$, we have $a_{ii} = \det M_{\{i\}} \le 0$ and hence $a_{ii} < 0$ for any $i$.
Let $M'$ be the matrix obtained by adding the $1$st row multiplied by the positive integer $-a_{i1}/a_{11}$ to the $i$th row, for $i = 2,\dots,n$.
Then we obtain the inequality $M' v \le 0$ and we can write
\[
M' = 
\arraycolsep5pt
\left(
\begin{array}{@{\,}cccc@{\,}}
a_{11} & a_{12} & \cdots &a_{1n}\\
0&&&\\
0&\multicolumn{3}{c}{\raisebox{-5pt}[0pt][0pt]{\Huge $N'$}}\\
0&&&\\
\end{array}
\right),
\]
where $N'$ is an $(n-1) \times (n-1)$ matrix.
It is straightforward to check that $N$ satisfies the condition $(\star)$.
Since $N' {}^t (v_2 \ \dots \ v_n) \le 0$, we have $v_2, \dots, v_n \le 0$ by induction hypothesis.
Next, let $M''$ be the matrix obtained by adding the $n$th row multiplied by the positive integer $- a_{i n}/a_{nn}$ to the $i$th row, for $i = 1, 2, \dots, n-1$.
Then we have $M'' v \le 0$ and, by repeating the similar argument as above, we conclude $v_1, \dots, v_{n-1} \le 0$ by inuction.
This completes the proof.
\end{proof}

\begin{Def}
Let $S$ be a normal projective surface and let $\Gamma_1, \dots, \Gamma_k$ be irreducible and reduced curves on $S$.
Then the $k \times k$ matrix
\[
M (\Gamma_1, \dots, \Gamma_k) = ((\Gamma_i \cdot \Gamma_j)_S)_{1 \le i, j \le k}
\]
is called the {\it intersection matrix} of curves $\Gamma_1, \dots, \Gamma_k$ on $S$.
\end{Def}

\begin{Lem} \label{lem:mtdLred}
Let $X$ be a member of a family $\mcF_{\msi}$ with $\msi \in \msI$.
Let $S \in \left| - a K_X \right|$ be a normal surface on $X$, $T \in \left| - b K_X \right|$ an effective divisor and $\msp \in S$ a point, where $a, b > 0$.
We set $r = 1$ when $\msp \in X$ is a smooth point, and otherwise we denote by $r$ the index of the cyclic quotient singularity $\msp \in X$.
Suppose that 
\[
T|_S =  m_1 \Gamma_1 + m_2 \Gamma_2 + \cdots + m_k \Gamma_k,
\] 
where $\Gamma_1, \dots, \Gamma_k$ are distinct irreducible and reduced curves on $S$ and $m_1, \dots, m_k$ are positive integers, and the following properties are satisfied.
\begin{itemize}
\item $r b \deg \Gamma_1 \le m_1$.
\item $\msp \in \Gamma_1 \setminus (\cup_{i \ge 2} \Gamma_i)$, and $S, \Gamma_1$ are both quasi-smooth at $\msp$.
\item The intersection matrix $M (\Gamma_1, \dots, \Gamma_k)$ satisfies the condition $(\star)$.
\end{itemize}
Then we have
\[
\alpha_{\msp} (X) \ge \min \left\{ a, \ \frac{m_1}{r a b (-K_X)^3 + \frac{m_1^2}{b} - r m_1 \deg \Gamma_1} \right\}.
\]
\end{Lem}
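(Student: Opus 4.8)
The plan is to follow the proof of Lemma \ref{lem:exclL} almost verbatim, with $\Gamma_1$ playing the role of the irreducible curve $\Gamma$, the only new ingredient being that the bound on the coefficient of $\Gamma_1$ — previously supplied by the nef-ness of $-K_X$ — must now come from the matrix condition $(\star)$ via Lemma \ref{lem:matrix}. Set
\[
c := \min\left\{ a, \ \frac{m_1}{rab(-K_X)^3 + \frac{m_1^2}{b} - rm_1\deg\Gamma_1} \right\},
\]
whose denominator is positive since $m_1 \ge rb\deg\Gamma_1$ forces $\frac{m_1^2}{b} \ge rm_1\deg\Gamma_1$. Suppose for contradiction that $\alpha_{\msp}(X) < c$. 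By Remark \ref{rem:covex} there is an irreducible $\mbQ$-divisor $D \in \left| -K_X \right|_{\mbQ}$ with $(X, cD)$ not log canonical at $\msp$. Since $S$ is quasi-smooth at $\msp$ we have $\lct_{\msp}(X; S) = 1$, so the first term of the minimum is $a = \lct_{\msp}(X; \frac{1}{a}S) \ge c$; as in Lemma \ref{lem:exclL} this shows that $S$ is not a component of $\Supp(D)$, so $D|_S$ is a genuine effective $1$-cycle on $S$. Writing $q = q_{\msp}$ for the quotient morphism and $\check{\msp}$ for the preimage of $\msp$, Lemma \ref{lem:multlct} gives $\mult_{\check{\msp}}(q^*D) > 1/c$, and since $q^*S$ is smooth at $\check{\msp}$ this yields $\mult_{\check{\msp}}(q^*D \cdot q^*S) > 1/c$.

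First I would bound the coefficient $\gamma_1$ of $\Gamma_1$ in $D|_S$. Decompose $D|_S = \sum_i \gamma_i \Gamma_i + \Xi$, where $\Xi$ is effective and has none of the $\Gamma_i$ as a component. Intersecting on $S$ with $\Gamma_j$ and discarding the nonnegative term $(\Xi \cdot \Gamma_j)_S$ gives, for every $j$,
\[
(M\gamma)_j = \sum_i (\Gamma_i \cdot \Gamma_j)_S \, \gamma_i = (D|_S \cdot \Gamma_j)_S - (\Xi \cdot \Gamma_j)_S \le \deg\Gamma_j,
\]
where $M = M(\Gamma_1, \dots, \Gamma_k)$ and $\gamma = {}^t(\gamma_1, \dots, \gamma_k)$; that is, $M\gamma \le v$ with $v = {}^t(\deg\Gamma_1, \dots, \deg\Gamma_k)$. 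On the other hand the identity $T|_S = \sum_i m_i \Gamma_i$ gives $(T|_S \cdot \Gamma_j)_S = b\deg\Gamma_j$, i.e. $Mm = bv$ with $m = {}^t(m_1, \dots, m_k)$. Hence $M\gamma \le M(\frac{1}{b}m)$, and Lemma \ref{lem:matrix} (applicable since $M$ satisfies $(\star)$) yields $\gamma \le \frac{1}{b}m$, in particular $\gamma_1 \le m_1/b$.

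Next I would extract the local information at $\msp$ by intersecting with $T$, using only the part $\Xi$ that meets $T|_S$ properly. Because $\msp \in \Gamma_1 \setminus \bigcup_{i \ge 2}\Gamma_i$ and $\Gamma_1$ is quasi-smooth at $\msp$, near $\check{\msp}$ we have $q^*D \cdot q^*S = \gamma_1 q^*\Gamma_1 + q^*\Xi$ with $\mult_{\check{\msp}}(q^*\Gamma_1) = 1$, whence $\mult_{\check{\msp}}(q^*\Xi) > \frac{1}{c} - \gamma_1$. Since $T|_S$ and $\Xi$ share no component, and $q^*(T|_S)$ has multiplicity $m_1$ at $\check{\msp}$,
\[
(T|_S \cdot \Xi)_S \ge (T|_S \cdot \Xi)_{\msp} = \frac{1}{r}\bigl(q^*(T|_S) \cdot q^*\Xi\bigr)_{\check{\msp}} \ge \frac{m_1}{r}\,\mult_{\check{\msp}}(q^*\Xi) > \frac{m_1}{r}\Bigl(\frac{1}{c} - \gamma_1\Bigr),
\]
while expanding $(T|_S \cdot D|_S)_S = ab(-K_X)^3$ gives the upper bound $(T|_S \cdot \Xi)_S = ab(-K_X)^3 - b\sum_i \gamma_i \deg\Gamma_i \le ab(-K_X)^3 - b\gamma_1 \deg\Gamma_1$. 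Combining, multiplying by $r$, and rearranging gives
\[
\frac{m_1}{c} < rab(-K_X)^3 + \gamma_1\bigl(m_1 - rb\deg\Gamma_1\bigr).
\]
Finally, since $m_1 - rb\deg\Gamma_1 \ge 0$ by hypothesis, substituting $\gamma_1 \le m_1/b$ yields $\frac{m_1}{c} < rab(-K_X)^3 + \frac{m_1^2}{b} - rm_1\deg\Gamma_1$, i.e. $c > \frac{m_1}{rab(-K_X)^3 + m_1^2/b - rm_1\deg\Gamma_1}$, contradicting the definition of $c$.

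The main obstacle is the bookkeeping forced by shared components: because every $\Gamma_i$ lies in $T|_S$, one cannot intersect $T$ globally with $D|_S - \gamma_1\Gamma_1$ and then compare with the local contribution at $\msp$, as was done in Lemma \ref{lem:exclL}. The device that resolves this is to run the local intersection against the component $\Xi$ alone, which meets $T|_S$ properly (the curves $\Gamma_i$ with $i \ge 2$ do not pass through $\msp$), so that the inequality ``global $\ge$ local'' is legitimate. Correspondingly the reducibility is felt in exactly one place, namely the replacement of the nef bound on $\gamma$ by the matrix inequality $\gamma \le \frac{1}{b}m$, which is the sole purpose of hypothesis $(\star)$.
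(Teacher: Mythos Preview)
Your proof is correct and follows essentially the same argument as the paper: both write $D|_S = \sum_i \gamma_i \Gamma_i + \Delta$, use the intersection equations $(T|_S \cdot \Gamma_j)_S = b\deg\Gamma_j$ and $(D|_S \cdot \Gamma_j)_S \ge \sum_i \gamma_i (\Gamma_i \cdot \Gamma_j)_S$ together with Lemma~\ref{lem:matrix} to obtain $\gamma_i \le m_i/b$, and then bound the local intersection of $T|_S$ with the residual part $\Delta$ (your $\Xi$) against the global one to derive the multiplicity estimate. The only cosmetic difference is that you phrase the argument as a contradiction after assuming $\alpha_{\msp}(X) < c$, whereas the paper directly bounds $\omult_{\msp}(D)$ for any irreducible $D \ne \frac{1}{a}S$ and reads off the log canonical threshold.
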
 

\begin{proof}
Let $D \in \left| - K_X \right|_{\mbQ}$ be an irreducible $\mbQ$-divisor.
If $\Supp (D) = S$, then $D = \frac{1}{a} S$ and we have $\lct_{\msp} (X, D) \ge a$ since $S$ is quasi-smooth at $\msp$.
We assume $\Supp (D) \ne S$.
It is enough to prove the inequality
\begin{equation}
\label{eq:mtdLred1}
\lct_{\msp} (X; D) \ge \frac{m_1}{r a b (-K_X)^3 + \frac{m_1^2}{b} - r m_1 \deg \Gamma_1}.
\end{equation}

We can write
\[
D|_S = \gamma_1 \Gamma_1 + \cdots + \gamma_k \Gamma_k + \Delta,
\]
where $\gamma_1, \dots, \gamma_k \ge 0$ and $\Delta$ is an effective $\mbQ$-divisor on $S$ such that $\Gamma_i \not\subset \Supp (\Delta)$ for $i = 1, \dots, k$.
We set $\sigma_i = (\Gamma_i^2)_S$ and $\chi_{i,j} = (\Gamma_i \cdot \Gamma_j)_S$.
For $i = 1, \dots, k$, we have
\begin{equation}
\label{eq:mtdLred2}
\begin{split}
b \deg \Gamma_i &= (T|_S \cdot \Gamma_i)_S \\
&= m_1 \chi_{1,i} + \cdots + m_{i-1} \chi_{i - 1, i} + m_i \sigma_i + m_{i+1} \chi_{i+1, i} + \cdots + m_k \chi_{k,i},
\end{split}
\end{equation}
and
\begin{equation}
\label{eq:mtdLred3}
\begin{split}
\deg \Gamma_i &= (D|_S \cdot \Gamma_i)_S \\
&\ge \gamma_1 \chi_{1, i} + \cdots + \gamma_{i-1} \chi_{i-1} + \gamma_i \sigma_i + \gamma_{i+1} \chi_{i+1,i} + \cdots + \gamma_k \chi_{k,i+1}.
\end{split}
\end{equation}
We set $M = M (\Gamma_1, \dots, \Gamma_k)$.
Combining inequalities \eqref{eq:mtdLred2} and \eqref{eq:mtdLred3}, we have
\[
M \begin{pmatrix} b \gamma_1 \\ \vdots \\ b \gamma_k \end{pmatrix} \le M \begin{pmatrix} m_1 \\ \vdots \\ m_k \end{pmatrix}.
\]
By Lemma \ref{lem:matrix}, this implies $\gamma_i \le m_i/b$ for any $i$.

When $\msp$ is a singular point, then we set $\rho = \rho_{\msp} \colon \breve{U}_{\msp} \to U_{\msp}$, which is the orbifold chart of $X$ containing $\msp$.
When $\msp$ is a smooth point of $X$, then we set $\breve{U} = U = X$ and $\rho \colon \breve{U} \to U$ is assumed to be the identity morphism.
Moreover we set $\breve{S} := q^*(S \cap U)$ and $\rho_S = \rho|_{\breve{S}} \colon \breve{S} \to S \cap U$.
We see that $\breve{S}$ is smooth at the preimage $\breve{\msp}$ of $\msp$ since $S$ is quasi-smooth at $\msp$, and 
\[
\rho^*D|_{\breve{S}} = \gamma_1 \rho_S^*\Gamma_1 + \cdots \gamma_k \rho_S^*\Gamma_k + \rho_S^*\Delta.
\]
This implies 
\[
\mult_{\breve{\msp}} (\rho_S^*\Delta) \ge \omult_{\msp} (D) - \gamma_1
\] 
since $\rho_S^*\Gamma_i$ does not pass through $\breve{\msp}$ for $i \ge 2$ and $\rho_S^*\Gamma_1$ is smooth at $\breve{\msp}$ by the quasi-smoothness of $\Gamma_1$ at $\msp$.
We have
\[
\begin{split}
r (ab (-K_X)^3 - b \gamma_1 \deg \Gamma_1) &\ge
r (T|_S \cdot (D|_S - \gamma_1 \Gamma_1 - \cdots - \gamma_k \Gamma_k))_S \\
&= r (T|_S \cdot \Delta)_S \\
&\ge m_1 r (\Gamma_1 \cdot \Delta)_S \\
&\ge m_1 (\rho_S^*\Gamma_1 \cdot \rho_S^*\Delta)_{\breve{\msp}} \\
&\ge m_1 \mult_{\breve{\msp}} (\rho_S^*\Delta) \\
&\ge m_1 (\omult_{\msp} (D) - \gamma_1).
\end{split}
\]
Since $m_1 - r b \deg \Gamma_1 \ge 0$ and $\gamma_1 \le m_1/b$, we have
\[
\begin{split}
\omult_{\msp} (D) &\le \frac{1}{m_1} (r a b (-K_X)^3 + (m_1 - r b \deg \Gamma_1) \gamma_1) \\
& \le \frac{1}{m_1} (r a b (-K_X)^3 + \frac{m_1^2}{b}- r m_1 \deg \Gamma_1).
\end{split}.
\]
This implies \eqref{eq:mtdLred1} and the proof is completed.
\end{proof}

The following is a version of Lemma \ref{lem:mtdLred}, which may be effective when $S$ is singular at $\msp$.

\begin{Lem} \label{lem:mtdLredSsing}
Let $X$ be a normal projective $\mbQ$-factorial $3$-fold.
Let $S \sim_{\mbQ} - a K_X$ be a normal surface on $X$, $T \sim_{\mbQ} - b K_X$ an effective divisor and $\msp \in S$ a point, where $a, b$ are positive rational numbers.
Suppose that 
\[
T|_S =  m_1 \Gamma_1 + m_2 \Gamma_2 + \cdots + m_k \Gamma_k,
\] 
where $\Gamma_1, \dots, \Gamma_k$ are distinct irreducible and reduced curves on $S$ and $m_1, \dots, m_k$ are positive integers, and the following properties are satisfied.
\begin{itemize}
\item $b \deg \Gamma_1 \le \mult_{\msp} (\Gamma_1)$.
\item $\msp \in \Gamma_1 \setminus (\cup_{i \ge 2} \Gamma_i)$, and $X$ is smooth at $\msp$.
\item The intersection matrix $M (\Gamma_1, \dots, \Gamma_k)$  satisfies the condition $(\star)$.
\end{itemize}
Then we have
\[
\alpha_{\msp} (X) \ge \min \left\{ \frac{a}{\mult_{\msp} (S)}, \ \frac{\mult_{\msp} (S)}{a b (-K_X)^3 + \frac{m_1}{b} \mult_{\msp} (\Gamma_1) - m_1 \deg \Gamma_1} \right\}.
\]
\end{Lem}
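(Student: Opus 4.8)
The plan is to follow the same overall architecture as the proof of Lemma \ref{lem:mtdLred}, but to replace every step that relied on the quasi-smoothness of $S$ at $\msp$ (in particular the passage to the orbifold chart $\rho_S$, which smooths $S$) by multiplicity estimates carried out directly on $X$, which is smooth at $\msp$, and then to convert the resulting bound on $\mult_\msp(D)$ into a bound on $\lct_\msp$ via Lemma \ref{lem:multlct}. Since $X$ is smooth at $\msp$ we have $r=1$ and $\omult_\msp=\mult_\msp$ throughout. By Remark \ref{rem:covex} it suffices to bound $\lct_\msp(X;D)$ for an irreducible $D\in\left|-K_X\right|_\mbQ$. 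If $\Supp(D)=S$, then $D=\tfrac1a S$ and Lemma \ref{lem:multlct} gives $\lct_\msp(X;\tfrac1a S)\ge a/\mult_\msp(S)$, the first term in the asserted minimum. So I may assume $\Supp(D)\ne S$, and it remains to prove
\[
\mult_\msp(D)\,\mult_\msp(S)\le ab(-K_X)^3+\tfrac{m_1}{b}\mult_\msp(\Gamma_1)-m_1\deg\Gamma_1,
\]
after which Lemma \ref{lem:multlct} yields the second term.

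Write $D|_S=\gamma_1\Gamma_1+\cdots+\gamma_k\Gamma_k+\Delta$ with $\gamma_i\ge0$ and $\Delta$ an effective divisor on $S$ containing none of the $\Gamma_i$. The bound $\gamma_i\le m_i/b$ is obtained exactly as in Lemma \ref{lem:mtdLred}: intersecting $T|_S=\sum m_i\Gamma_i$ and $D|_S$ with each $\Gamma_i$ on $S$ and using $(\Gamma_i\cdot\Delta)_S\ge0$ produces the two systems whose comparison reads $M\,{}^t(b\gamma_1,\dots,b\gamma_k)\le M\,{}^t(m_1,\dots,m_k)$ for $M=M(\Gamma_1,\dots,\Gamma_k)$; since $M$ satisfies $(\star)$, Lemma \ref{lem:matrix} gives $b\gamma_i\le m_i$, in particular $\gamma_1\le m_1/b$.

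The heart of the argument is the multiplicity estimate. First, because $X$ is smooth at $\msp$ and $S\ne\Supp(D)$, the multiplicity inequality \cite[Corollary 12.4]{Fulton} applied to the proper intersection of the divisors $D$ and $S$ gives $\mult_\msp(D\cdot S)\ge\mult_\msp(D)\mult_\msp(S)$; and since $\Gamma_1$ is the only component of $T|_S$ through $\msp$ while $\Gamma_2,\dots,\Gamma_k$ avoid $\msp$, I get $\mult_\msp(D\cdot S)=\gamma_1\mult_\msp(\Gamma_1)+\mult_\msp(\Delta)$. Next I bound $\mult_\msp(\Delta)$ by intersecting the residual curve $\Delta$ with $T$ on $X$ rather than with $\Gamma_1$ on the singular surface $S$: every component of $\Delta$ lies on $S$ but is none of the $\Gamma_i$, and $S\cap T$ is supported on $\bigcup\Gamma_i$, so no component of $\Delta$ is contained in $T$; thus $T$ and $\Delta$ meet properly, and $\msp\in\Gamma_1\subset T$ gives $\mult_\msp(T)\ge1$. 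Hence, again by \cite[Corollary 12.4]{Fulton} and the effectivity of all local contributions,
\[
\mult_\msp(\Delta)\le\mult_\msp(T)\mult_\msp(\Delta)\le(T\cdot\Delta)_\msp\le(T\cdot\Delta)=ab(-K_X)^3-b\sum_i\gamma_i\deg\Gamma_i\le ab(-K_X)^3-b\gamma_1\deg\Gamma_1,
\]
where the equality uses $(T\cdot D\cdot S)=ab(-K_X)^3$ and $(T\cdot\Gamma_i)=b\deg\Gamma_i$, and the last step drops the nonnegative terms with $i\ge2$. Combining the two displays with $\gamma_1\le m_1/b$ and the hypothesis $b\deg\Gamma_1\le\mult_\msp(\Gamma_1)$ (which makes the coefficient $\mult_\msp(\Gamma_1)-b\deg\Gamma_1$ of $\gamma_1$ nonnegative, so that $\gamma_1\le m_1/b$ may be substituted) yields the required inequality, and Lemma \ref{lem:multlct} finishes the proof.

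The main obstacle, and the reason this is a separate lemma, is precisely that $S$ is allowed to be singular at $\msp$: the orbifold-chart smoothing of $S$ used in Lemma \ref{lem:mtdLred} is unavailable, and the naive surface estimate $(\Gamma_1\cdot\Delta)_\msp\ge\mult_\msp(\Delta)$ may fail on a singular surface. The resolution is the two-fold use of the ambient-smooth multiplicity inequality: once to manufacture the factor $\mult_\msp(S)$ out of $\mult_\msp(D\cdot S)$, and once to control $\mult_\msp(\Delta)$ by the global number $(T\cdot\Delta)$ rather than by a local intersection on $S$. The two genuinely new points to verify carefully are that $T$ and $\Delta$ meet properly at $\msp$, so that $(T\cdot\Delta)_\msp$ is defined, and that $(-K_X\cdot\Gamma_i)\ge0$, so that the terms with $i\ge2$ may be discarded; both hold in the situations where the lemma is applied.
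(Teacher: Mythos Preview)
Your proof is correct and follows essentially the same approach as the paper's: both replace the orbifold-chart argument of Lemma \ref{lem:mtdLred} by working with the $1$-cycle $D\cdot S$ on the smooth ambient $X$, using $\mult_\msp(D\cdot S)\ge\mult_\msp(D)\mult_\msp(S)$ to extract the factor $\mult_\msp(S)$ and bounding $\mult_\msp(\Delta)$ via the global intersection $(T\cdot\Delta)$. Your write-up is in fact a bit more careful than the paper's in explicitly verifying that $T$ and $\Delta$ meet properly and that $\mult_\msp(T)\ge1$, and in flagging the implicit use of $(-K_X\cdot\Gamma_i)\ge0$ when discarding the $i\ge2$ terms.
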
 

\begin{proof}
Let $D \in \left| - K_X \right|_{\mbQ}$ be an irreducible $\mbQ$-divisor.
If $\Supp (D) = S$, then $D = \frac{1}{a} S$ and we have $\lct_{\msp} (X, D) \ge a/\mult_{\msp} (S)$.
We assume $\Supp (D) \ne S$.
It is enough to show that
\begin{equation}
\label{eq:mtdLredSsing1}
\lct_{\msp} (X; D) \ge \frac{\mult_{\msp} (S)}{a b (-K_X)^3 + \frac{m_1}{b} \mult_{\msp} (\Gamma_1) - m_1 \deg \Gamma_1}.
\end{equation}

We write
\[
D|_S = \gamma_1 \Gamma_1 + \cdots + \gamma_k \Gamma_k + \Delta,
\]
where $\gamma_1, \dots, \gamma_k \ge 0$ and $\Delta$ is an effective divisor on $S$ such that $\Gamma_i \not\subset \Supp (\Delta)$ for $i = 1, \dots, k$.
By the same argument as in the proof of Lemma \ref{lem:mtdLred}, we have $\gamma_i \le m_i/b$ for any $i$.
We consider the $1$-cycle $D \cdot S = \gamma_1 \Gamma_1 + \cdots + \gamma_k \Gamma_k$ on $X$ and we have
\[
\begin{split}
ab (-K_X)^3 - b \gamma_1 \deg \Gamma_1 &\ge
(T \cdot (D \cdot S - \gamma_1 \Gamma_1 - \cdots - \gamma_k \Gamma_k))_X \\
& = (T \cdot \Delta)_X \\
&\ge \mult_{\msp} (\Delta) \\
&\ge (\mult_{\msp} (S)) (\mult_{\msp} (D)) - \gamma_1 \mult_{\msp} (\Gamma_1).
\end{split}
\]
Since $\mult_{\msp} (\Gamma_1) - b \deg \Gamma_1 \ge 0$ and $\gamma_1 \le m_1/b$, we have
\[
\begin{split}
\mult_{\msp} (D) &\le \frac{1}{\mult_{\msp} (S)} (ab (-K_X)^3 + (\mult_{\msp} (\Gamma_1) - b \deg \Gamma_1) \gamma_1) \\
& \le \frac{1}{\mult_{\msp} (S)} \left( ab (-K_X)^3 + \frac{m_1}{b} \mult_{\msp} (\Gamma_1) - m_1 \deg \Gamma_1 \right).
\end{split}.
\]
This implies \eqref{eq:mtdLredSsing1} and the proof is completed.
\end{proof}

\begin{Lem} \label{lem:mtdLintpt}
Let $X$ be a normal projective $\mbQ$-factorial $3$-fold.
Let $S \sim_{\mbQ} - a K_X$ be a normal surface on $X$, $T \sim_{\mbQ} - b K_X$ an effective divisor and $\msp \in X$ a point, where $a, b$ be positive rational numbers.
Suppose that 
\[
T|_S = \Gamma_1 + \Gamma_2,
\] 
where $\Gamma_1, \Gamma_2$ are distinct irreducible and reduced curves on $S$, and the following properties are satisfied.
\begin{itemize}
\item $\deg \Gamma_i \le 2/b$ for $i = 1, 2$.
\item $\msp \in \Gamma_1 \cap \Gamma_2$ and all the $X$, $S$, $\Gamma_1$ and $\Gamma_2$ are smooth at $\msp$.
\item The intersection matrix $M (\Gamma_1, \Gamma_2)$ satisfies the condition $(\star)$.
\end{itemize}
Then we have
\[
\alpha_{\msp} (X) \ge \min \left\{ a, \ \frac{b}{2} \right\}.
\]
\end{Lem}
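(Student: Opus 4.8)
The plan is to follow closely the template of Lemmas \ref{lem:mtdLred} and \ref{lem:mtdLredSsing}, the present statement being essentially the case $m_1 = m_2 = 1$ in which the distinguished point $\msp$ lies on \emph{both} curves $\Gamma_1$ and $\Gamma_2$ rather than on $\Gamma_1$ alone. By Remark \ref{rem:covex} it suffices to bound $\lct_{\msp}(X; D)$ from below for an arbitrary irreducible $D \in \left| -K_X \right|_{\mbQ}$. If $\Supp(D) = S$, then $D = \frac{1}{a} S$ and, since $S$ is smooth at $\msp$, we get $\lct_{\msp}(X; D) = a$. So I would assume $\Supp(D) \ne S$ and aim to prove $\mult_{\msp}(D) \le 2/b$; Lemma \ref{lem:multlct} then yields $\lct_{\msp}(X; D) \ge b/2$, and taking the infimum over $D$ gives $\alpha_{\msp}(X) \ge \min\{a, b/2\}$.

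First I would restrict to the normal surface $S$ and write $D|_S = \gamma_1 \Gamma_1 + \gamma_2 \Gamma_2 + \Delta$ with $\gamma_1, \gamma_2 \ge 0$ and $\Delta$ an effective divisor containing neither $\Gamma_1$ nor $\Gamma_2$. Intersecting with each $\Gamma_i$ on $S$ and comparing with the identities $(T|_S \cdot \Gamma_i)_S = b \deg \Gamma_i$ produces, exactly as in the proof of Lemma \ref{lem:mtdLred}, the matrix inequality $M ({}^t(b\gamma_1, b\gamma_2)) \le M ({}^t(1,1))$, where $M = M(\Gamma_1, \Gamma_2)$. Since $M$ satisfies condition $(\star)$, Lemma \ref{lem:matrix} gives $\gamma_i \le 1/b$ for $i = 1, 2$.

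The genuinely new ingredient is the following. Because $\msp \in \Gamma_1 \cap \Gamma_2$ and both curves are smooth there, the divisor $T|_S = \Gamma_1 + \Gamma_2$ satisfies $\mult_{\msp}(T|_S) = 2$; as $\Delta$ shares no component with $T|_S$, the point $\msp$ is an isolated point of $\Supp(T|_S) \cap \Supp(\Delta)$ and the intersection inequality on the smooth surface $S$ (\cite[Corollary 12.4]{Fulton}) gives $(T|_S \cdot \Delta)_S \ge (T|_S \cdot \Delta)_{\msp} \ge 2 \mult_{\msp}(\Delta)$. Using $D \cdot S = \gamma_1 \Gamma_1 + \gamma_2 \Gamma_2 + \Delta$ together with $(T \cdot D \cdot S) = ab(-K_X)^3$, I would compute $(T|_S \cdot \Delta)_S = ab(-K_X)^3 - b\gamma_1 \deg \Gamma_1 - b\gamma_2 \deg \Gamma_2$, whence $\mult_{\msp}(\Delta) \le \frac{1}{2}(ab(-K_X)^3 - b\gamma_1 \deg \Gamma_1 - b\gamma_2 \deg \Gamma_2)$. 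Since $\mult_{\msp}(S) = 1$ and $\Gamma_1, \Gamma_2$ are smooth at $\msp$, the $1$-cycle $D\cdot S$ gives $\mult_{\msp}(D) \le \gamma_1 + \gamma_2 + \mult_{\msp}(\Delta)$. Substituting, collecting terms, and invoking $\deg \Gamma_i \le 2/b$ (so each coefficient $1 - \frac{b}{2}\deg\Gamma_i$ is nonnegative) together with $\gamma_i \le 1/b$, one arrives at
\[
\mult_{\msp}(D) \le \tfrac{1}{2} ab(-K_X)^3 + \tfrac{2}{b} - \tfrac{1}{2}(\deg \Gamma_1 + \deg \Gamma_2).
\]
Finally, the identity $\deg \Gamma_1 + \deg \Gamma_2 = (-K_X \cdot T \cdot S) = ab(-K_X)^3$ cancels the two $(-K_X)^3$ terms and leaves $\mult_{\msp}(D) \le 2/b$, as required.

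The computations are routine once the bookkeeping is set up; the single new point — and hence the crux — is the factor $2 = \mult_{\msp}(T|_S)$ coming from $\msp$ lying on both smooth branches, which is precisely what upgrades the bound from $b$ to $b/2$. The hard part will be purely verificational: checking that $\Delta$ and $T|_S$ meet properly at $\msp$ (guaranteed since $\Delta$ contains neither $\Gamma_i$) so that the local intersection inequality applies, and observing that the clean cancellation via $\deg \Gamma_1 + \deg \Gamma_2 = ab(-K_X)^3$ is exactly what makes the hypothesis $\deg \Gamma_i \le 2/b$ the sharp one.
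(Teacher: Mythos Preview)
Your proposal is correct and follows essentially the same approach as the paper's proof: restrict $D$ to $S$, use the condition $(\star)$ via Lemma \ref{lem:matrix} to bound $\gamma_i \le 1/b$, exploit $\mult_{\msp}(T|_S) = 2$ to get $(T|_S \cdot \Delta)_S \ge 2\,\mult_{\msp}(\Delta)$, and then use the identity $\deg\Gamma_1 + \deg\Gamma_2 = ab(-K_X)^3$ together with $\deg\Gamma_i \le 2/b$ to conclude $\mult_{\msp}(D) \le 2/b$. The only cosmetic difference is that the paper writes the final chain of inequalities in one display rather than isolating $\mult_{\msp}(\Delta)$ first.
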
 

\begin{proof}
We have $\lct_{\msp} (X, \frac{1}{a} S) \ge a$ since $S$ is smooth at $\msp$ by assumption.
Let $D \in \left| - K_X \right|_{\mbQ}$ be an irreducible $\mbQ$-divisor on $X$ such that $\Supp (D) \ne S$.
It is enough to prove the inequality $\lct_{\msp} (X;D) \ge b/2$.
We write 
\[
D|_S = \gamma_1 \Gamma_1 + \gamma_2 \Gamma_2 + \Delta,
\]
where $\gamma_1, \gamma_2 \ge 0$ and $\Delta$ is an effective divisor on $S$ with $\Gamma_1, \Gamma_2 \not\subset \Supp (\Delta)$.
By the proof of Lemma \ref{lem:mtdLred}, we have $\gamma_1, \gamma_2 \le 1/b$.
We have
\begin{equation} \label{eq:lem:methodLintpt1}
a b (-K_X)^3 = (-K_X|_S \cdot T|_S)_S = \deg \Gamma_1 + \deg \Gamma_2.
\end{equation}
Since $\mult_{\msp} (T|_S) = 2$ and $\mult_{\msp} (\Delta) \ge \mult_{\msp} (D) - \gamma_1 - \gamma_2$, we have
\[
\begin{split}
ab (-K_X)^3 - b \gamma_1 \deg \Gamma_1 - b \gamma_2 \deg \Gamma_2 &=
(T|_S \cdot (D|_S - \gamma_1 \Gamma_1 - \gamma_2 \Gamma_2))_S \\
&= (T|_S \cdot \Delta)_S \\
& \ge 2 (\mult_{\msp} (D) - \gamma_1 - \gamma_2).
\end{split}
\]
By \eqref{eq:lem:methodLintpt1}, the assumption $\deg \Gamma_1, \deg \Gamma_2 \le 2/b$ and $\gamma_1, \gamma_2 \le 1/b$, we have
\[
\begin{split}
\mult_{\msp} (D) &\le\frac{1}{2} (ab (-K_X)^3 + (2 - b \deg \Gamma_1) \gamma_1 + (2- b \deg \Gamma_2) \gamma_2) \\
& \le \frac{2}{b}.
\end{split}
\]
This shows $\lct_{\msp} (X; D) \ge b/2$ and thus $\alpha_{\msp} (X) \ge \min \{a, b/2\}$.
\end{proof}

%%%%%%%%%%%%%%%%%%%%%%%%%%%%%%%%
\subsubsection{Computations by weighted blowups}
\label{sec:compwbl}
%%%%%%%%%%%%%%%%%%%%%%%%%%%%%%%%

We explain methods of computing LCTs via suitable weighted blowups.

Let $\msp \in X$ be a germ of a smooth variety of dimension $n$ with a system of local coordinates $\{x_1, \dots, x_n\}$ at $\msp$, and let $D$ be an effective $\mbQ$-divisor on $X$.
Let $\varphi \colon Y \to X$ be the weighted blowup at $\msp$ with weight $\wt (x_1, \dots, x_n) = (c_1, \dots, c_n)$, where $\underline{c} = (c_1, \dots, c_n)$ is a tuple of positive integers such that $\gcd \{c_1, \dots, c_n\} = 1$.
Let $E \cong \mbP (\underline{c}) = \mbP (c_1, \dots, c_n)$ be the exceptional divisor of $\varphi$.
Note that $Y$ can be singular along a divisor on $E$ (see Remark \ref{rem:wbldiff} below) so that we cannot expect the usual adjunction $(K_Y + E)|_E = K_E$.
In general we need a correction term and we have
\[
(K_Y + E)|_E = K_E + \Diff
\]
where the correction term $\Diff$ is a $\mbQ$-divisor on $E$ which is called the {\it different} (see \cite[Chapter 16]{FA}).

\begin{Rem} \label{rem:wbldiff}
We give a concrete description of $\Diff$.
Let $\mbP (\underline{c})^{\wf}$ be the well-formed model of $\mbP (\underline{c})$ and we identify $E$ with $\mbP (\underline{c})^{\wf}$.
For $i = 0, 1, \dots, n$, let 
\[
H^{\wf}_i = (\tilde{x}_i = 0) \subset E \cong \mbP (\underline{c})^{\wf}
\] 
be the quasi-hyperplane of $\mbP (\underline{c})^{\wf}_{\tilde{x}_1, \dots, \tilde{x}_n}$, and we set $m_i = \gcd \{c_0, \dots, \hat{c}_i, \dots, c_n\}$.
We see that $Y$ is singular at the generic point of $H^{\wf}_i$ if and only if $m_i > 1$, and if this is the case, then the singularity of $Y$ along $H^{\wf}_i$ is a cyclic quotient singularity of index $m_i$.
It follows from \cite[Proposition 16.6]{FA} that    
\[
\Diff = \sum_{i=1}^n \frac{m_i-1}{m_i} H^{\wf}_i
\]
under the identification $E \cong \mbP (\underline{c})^{\wf}$.
\end{Rem}

\begin{Lem} \label{lem:lctwbl}
Let the notation and assumption as above.
Then we have
\begin{equation} \label{eq:lctwbl-1}
\lct_{\msp} (X;D) \ge \min \left\{ \frac{c_1 + \cdots + c_n}{\ord_E (D)}, \  \lct (E, \Diff ; \tilde{D}|_E) \right\},
\end{equation}
where $\tilde{D}$ is the proper transform of $D$.
If in addition the inequality
\begin{equation} \label{eq:lctwbl-2}
\frac{c_1 + \cdots + c_n}{\ord_E (D)} \le \lct (E, \Diff_E; \tilde{D}|_E)
\end{equation}
holds, then we have
\begin{equation} \label{eq:lctwbl-3}
\lct_{\msp} (X; D) = \frac{c_1 + \cdots + c_n}{\ord_E (D)}.
\end{equation}
\end{Lem}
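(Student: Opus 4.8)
The plan is to transfer the log canonicity question from $X$ to the exceptional divisor $E$ of the weighted blowup and then exploit inversion of adjunction. First I would record the two numerical inputs attached to $\varphi$. Since $\varphi$ is the weighted blowup of the smooth point $\msp$ with weights $(c_1, \dots, c_n)$, one has $K_Y = \varphi^* K_X + (c_1 + \cdots + c_n - 1) E$, and if $\tilde{D}$ denotes the proper transform of $D$ then $\varphi^* D = \tilde{D} + \ord_E (D) E$. Writing $s = c_1 + \cdots + c_n$ and $m = \ord_E (D)$, these combine into the crepant identity
\[
\varphi^* (K_X + c D) = K_Y + c \tilde{D} + \beta E, \qquad \beta := c m - (s - 1),
\]
valid for every $c \in \mbQ_{\ge 0}$; note $\beta = 1$ precisely when $c = s/m$. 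I would also abbreviate $\lambda := \lct (E, \Diff ; \tilde{D}|_E)$.

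Because $\varphi^{-1} (\msp) = E$, the displayed identity shows that the log discrepancies of $(X, cD)$ over $\msp$ coincide with those of $(Y, c\tilde{D} + \beta E)$ along $E$; hence $(X, cD)$ is log canonical at $\msp$ if and only if $(Y, c\tilde{D} + \beta E)$ is log canonical in a neighbourhood of $E$. Applying this to the single valuation $E$ already yields the easy upper bound $\lct_{\msp} (X; D) \le s/m$, since $E$ has center $\msp$ and its log discrepancy for $(X, cD)$ equals $s - cm$, which is negative as soon as $c > s/m$.

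For the lower bound \eqref{eq:lctwbl-1} I would fix an arbitrary rational $c < \min \{ s/m, \lambda \}$ and show $(X, cD)$ is log canonical at $\msp$. From $c < s/m$ we get $\beta \le 1$, and from $c < \lambda$ the pair $(E, \Diff + c \tilde{D}|_E)$ is log canonical. By the adjunction formula $(K_Y + E + c\tilde{D})|_E = K_E + \Diff + c\tilde{D}|_E$ recalled in Remark \ref{rem:wbldiff}, inversion of adjunction (\cite[Chapter 17]{FA}) then gives that $(Y, E + c\tilde{D})$ is log canonical near $E$. Since $\beta \le 1$, the boundary $\beta E + c\tilde{D}$ is no larger than $E + c\tilde{D}$ along $E$, so for every prime divisor $F$ over $Y$ one has $A_{(Y, \beta E + c\tilde{D})} (F) = A_{(Y, E + c\tilde{D})} (F) + (1 - \beta) \ord_F (E) \ge 0$; thus $(Y, \beta E + c\tilde{D})$ is log canonical near $E$ as well, and $(X, cD)$ is log canonical at $\msp$. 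Letting $c \to \min \{ s/m, \lambda \}$ proves \eqref{eq:lctwbl-1}. Finally, when \eqref{eq:lctwbl-2} holds we have $\min \{ s/m, \lambda \} = s/m$, so \eqref{eq:lctwbl-1} reads $\lct_{\msp} (X; D) \ge s/m$; combined with the reverse bound from the second paragraph this forces the equality \eqref{eq:lctwbl-3}.

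The main obstacle is the use of inversion of adjunction: the genuinely deep step is deducing log canonicity of $(Y, E + c\tilde{D})$ near $E$ from that of the adjoint pair $(E, \Diff + c\tilde{D}|_E)$, and I must check its hypotheses in the present situation — namely that $E \cong \mbP (\underline{c})^{\wf}$ is normal and that $K_Y + E$ is $\mbQ$-Cartier, which holds since $Y$ is $\mbQ$-factorial with only quotient singularities, with $\Diff$ the different computed in Remark \ref{rem:wbldiff}. Everything else reduces to bookkeeping with the crepant pullback identity and the monotonicity of log discrepancies in the coefficient of $E$.
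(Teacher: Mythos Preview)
Your argument is correct and follows essentially the same route as the paper's: both compute the crepant pullback $\varphi^*(K_X + cD) = K_Y + c\tilde{D} + (cm - s + 1)E$, use the bound on $c$ to ensure the coefficient of $E$ is at most $1$, and then invoke inversion of adjunction to pass between log canonicity of $(Y, c\tilde{D} + E)$ near $E$ and that of $(E, \Diff + c\tilde{D}|_E)$. The only cosmetic difference is that the paper argues by contradiction (assuming $(X, cD)$ is not log canonical and deriving that $(E, \Diff + c\tilde{D}|_E)$ is not log canonical), whereas you argue directly; the content is identical.
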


\begin{proof}
We set $c = c_1 + \cdots + c_n$ and let $\lambda$ be any rational number such that
\[
0 < \lambda \le \min \left\{ \frac{c}{\ord_E (D)}, \ \lct (E, \Diff; \tilde{D}|_E) \right\}.
\]
We will show that the pair $(X, \lambda D)$ is log canonical at $\msp$, which will prove the inequality \eqref{eq:lctwbl-1}.

We assume that the pair $(X, \lambda D)$ is not log canonical at $\msp$.
We have
\begin{equation} \label{eq:lctwbl-4}
K_Y + \lambda \tilde{D} + (\lambda \ord_E (D) - c + 1) E = \varphi^* (K_X + \lambda D),
\end{equation}
and the pair $(Y, \lambda \tilde{D} + (\lambda \ord_E (D) - c + 1)E)$ is not log canonical along $E$.
Since $\lambda \le c/\ord_E (D)$, we have
\[
\lambda \ord_E (D) - c + 1 \le 1,
\]
which implies that the pair $(Y, \lambda \tilde{D} + E)$ is not log canonical along $E$.
Thus the pair $(E, \Diff + \lambda \tilde{D}|_E)$ is not log canonical.
This is impossible since $\lambda \le \lct (E, \Diff;\tilde{D}|_E)$.
Therefore the pair $(X, \lambda D)$ is log canonical at $\msp$, and the inequality \eqref{eq:lctwbl-1} is proved.

By considering the coefficient of $E$ in \eqref{eq:lctwbl-4}, it is easy to see that 
\[
\lct_{\msp} (X; D) \le \frac{c}{\ord_E (D)}.
\]
Under the assumption \eqref{eq:lctwbl-2}, this shows the equality \eqref{eq:lctwbl-3}.
\end{proof}

We consider Lemma \ref{lem:lctwbl} in more details in a concrete setting.

\begin{Def}
Let $\underline{c} = (c_1, \dots, c_n)$ be an $n$-tuple of positive integers such that $\gcd \{c_1, \dots, c_n\} = 1$ and we set
\[
m_i = \gcd \{c_1, \dots, \hat{c}_i, \dots, c_n\}
\]
for $i = 1, \dots, n$.
Let $f = f (x_1, \dots, x_n)$ be a polynomial which is quasi-homogeneous with respect to $\wt (x_1, \dots, x_n) = \underline{c}$.

If $f$ is irreducible and $f \ne x_i$ for $i = 1, \dots, n$, then there exists an irreducible polynomial $f^{\wf} = f^{\wf} (\tilde{x}_1, \dots, \tilde{x}_n)$ (in new variables $\tilde{x}_1, \dots, \tilde{x}_n$) such that
\[
f^{\wf} (x_1^{m_1}, \dots, x_n^{m_n}) = f (x_1, \dots, x_n).
\]
We call $f^{\wf}$ the {\it well-formed model} of $f$ (with respect to the weight $\wt (x_1, \dots, x_n) = \underline{c}$).

In general we have a decomposition
\[
f = x_1^{\lambda_1} \cdots x_n^{\lambda_n} f_1^{\mu_1} \cdots f_k^{\mu_k},
\]
where $k, \lambda_1, \dots, \lambda_n, \mu_1, \dots, \mu_k$ are nonnegative integers and $f_1, \dots, f_k$ are irreducible polynomials in variables $x_1, \dots, x_n$ which are quasi-homogeneous with respect to $\wt (x_1, \dots, x_n) = \underline{c}$ and which are not $x_i$ for any $i$.
We define
\[
f^{\wf} := f (\tilde{x}_1^{1/m_1}, \dots, \tilde{x}_n^{1/m_n}) 
= \tilde{x}^{\lambda_1/m_1} \cdots \tilde{x}_n^{\lambda_n/m_n} (f_1^{\wf})^{\mu_1} \cdots (f_k^{\wf})^{\mu_k}
\]
and call it the {\it well-formed model} of $f$.
Note that $f^{\wf}$ is in general not a polynomial since $\lambda_i/m_i$ need not be an integer.
In this case the effective $\mbQ$-divisor
\[
\mcD^{\wf}_f := \sum_{i=1}^n \frac{\lambda_i}{m_i} H^{\wf}_i + \sum_{j=1}^k \mu_j (f_j^{\wf} = 0) 
\]
on the well-formed model $\mbP (\underline{c})^{\wf}_{\tilde{x}_1, \dots, \tilde{x}_n}$ of $\mbP (\underline{c})$ is called the {\it effective $\mbQ$-divisor on $\mbP (\underline{c})^{\wf}$ associated to} $f$, where $H_i^{\wf}$ is the quasi-hyperplane on $\mbP (\underline{c})^{\wf}$ defined by $\tilde{x}_i = 0$.
\end{Def}

\begin{Lem} \label{lem:lctwblwh}
Let $\mbP (\underline{b}) := \mbP (b_0, \dots, b_{n+1})_{x_0, \dots, x_{n+1}}$ be a well-formed weighted projective space and let $X \subset \mbP (\underline{b})$ be a normal weighted hypersurface with defining polynomial $F = F (x_0, \dots, x_{n+1})$.
Let $\underline{c} = (c_1, \dots, c_n)$ be a tuple of positive integers such that $\gcd \{c_1, \dots, c_n\} = 1$.
Assume that
\[
F = x_0^e x_{n+1} + \sum_{i=1}^e x_0^{e-i} f_i,
\]
where $e \in \mbZ_{> 0}$ and $f_i = f_i (x_1, \dots, x_n, x_{n+1})$ is a quasi-homogeneous polynomial of degree $i b_0 + b_{n+1}$.
Let $G = G (x_1, \dots x_n)$ be the lowest weight part of $\bar{F} := F (1, x_1, \dots, x_n,0)$ with respect to $\wt (x_1, \dots, x_n) = \underline{c}$
Then, for the point $\msp = \msp_{x_0} = (1\!:\!0\!:\!\cdots\!:\!0) \in X$, we have
\[
\lct_{\msp} (X; H_{x_{n+1}}) 
\ge \min \left\{ \frac{c_1 + \cdots + c_n}{\wt_{\underline{c}} (\bar{F})}, \ \lct (\mbP (\underline{c})^{\wf}, \Diff; \mcD^{\wf}_G) \right\},
\]
where $\wt_{\underline{c}} (\bar{F})$, $\Diff$ and $\mcD^{\wf}_G$ are as follows.
\begin{itemize}
\item $\wt_{\underline{c}} (\bar{F})$ is the weight of $\bar{F}$ with respect to $\wt (x_1, \dots, x_n) = \underline{c}$.
\item $\Diff = \sum_{i=1}^n \frac{m_i-1}{m_i} H^{\wf}_i$, where $H^{\wf}_i = (\tilde{x}_i = 0)$ is the $i$th coordinate quasi-hyperplane of $\mbP (\underline{c})^{\wf}_{\tilde{x}_1, \dots, \tilde{x}_n}$ and $m_i = \gcd \{c_1, \dots, \hat{c}_i, \dots, c_n\}$ for $i = 1, \dots, n$.
\item $\mcD^{\wf}_G$ is the effective $\mbQ$-divisor on $\mbP (\underline{c})^{\wf}$ associated to $G$.
\end{itemize}
If in addition the inequality
\[
\frac{c_1 + \cdots + c_n}{\wt (\bar{F})} \le \lct (\mbP (\underline{c})^{\wf}, \Diff; \mcD^{\wf}_G)
\]
holds, then we have
\[
\lct_{\msp} (X; H_{x_{n+1}}) = \frac{c_1 + \cdots + c_n}{\wt (\bar{F})}.
\]
\end{Lem}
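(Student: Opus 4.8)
The plan is to reduce the computation to the smooth orbifold chart at $\msp$ and then apply Lemma \ref{lem:lctwbl} to a single weighted blowup. First I would pass to the orbifold chart $\rho = \rho_{x_0} \colon \breve{U}_{x_0} \to U_{x_0}$ of $X$ at $\msp = \msp_{x_0}$. Since $X$ is quasi-smooth at $\msp$ (the monomial $x_0^e x_{n+1}$ in $F$ shows $\prt F/\prt x_{n+1} (\msp) \ne 0$), the variety $\breve{U}_{x_0}$ is smooth at the preimage $\breve{\msp}$, and by \cite[20.4 Corollary]{FA} we have $\lct_{\msp} (X; H_{x_{n+1}}) = \lct_{\breve{\msp}} (\breve{U}_{x_0}; \rho^* H_{x_{n+1}})$. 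Writing $\breve{x}_j = x_j/x_0^{b_j/b_0}$ for the chart coordinates, the equation of $\breve{U}_{x_0}$ reads $\breve{x}_{n+1} + \Psi (\breve{x}_1, \dots, \breve{x}_n, \breve{x}_{n+1}) = 0$ with $\Psi := \sum_{i=1}^e f_i$; since the coefficient of the linear term $\breve{x}_{n+1}$ is $1$, the implicit function theorem realizes $\breve{U}_{x_0}$ as the graph $\breve{x}_{n+1} = \phi (\breve{x}_1, \dots, \breve{x}_n)$. Thus $\{\breve{x}_1, \dots, \breve{x}_n\}$ is a system of local coordinates and $\rho^* H_{x_{n+1}} = (\phi = 0)$.

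The crux is to control the initial form of $\phi$ with respect to $\wt (\breve{x}_1, \dots, \breve{x}_n) = \underline{c}$. From the defining equation, $\phi$ satisfies $\phi = -\Psi (\breve{x}, \phi)$. I would split $\Psi (\breve{x}, \breve{x}_{n+1}) = \Psi (\breve{x}, 0) + \breve{x}_{n+1} R (\breve{x}, \breve{x}_{n+1})$, noting that $\Psi (\breve{x}, 0) = \bar{F}$. The key observation is that no $f_i$ contains a pure linear monomial $\breve{x}_{n+1}$: indeed $f_i$ has degree $i b_0 + b_{n+1} > b_{n+1}$ for $i \ge 1$, so $R (0, 0) = 0$. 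Hence $\phi (1 + R (\breve{x}, \phi)) = -\bar{F}$ with $1 + R (\breve{x}, \phi)$ a unit in the local ring. Because the weighted-order valuation $v := \wt_{\underline{c}}$ is additive and vanishes on units, this gives $v (\phi) = v (\bar{F}) = \wt_{\underline{c}} (\bar{F})$ and, comparing initial forms, the $\underline{c}$-initial form of $\phi$ equals $-G$.

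Finally I would invoke Lemma \ref{lem:lctwbl} for the weighted blowup $\varphi \colon Y \to \breve{U}_{x_0}$ at $\breve{\msp}$ of weight $\underline{c}$, with $D = (\phi = 0)$. By the previous step $\ord_E (D) = v (\phi) = \wt_{\underline{c}} (\bar{F})$, which produces the first term $\frac{c_1 + \cdots + c_n}{\wt_{\underline{c}} (\bar{F})}$ in the bound. The proper transform $\tilde{D}$ meets $E \cong \mbP (\underline{c})^{\wf}$ in the zero locus of the well-formed model of the initial form $-G$; matching this against the decomposition defining $\mcD^{\wf}_G$ (the monomial factors giving the coefficients $\lambda_i/m_i$ on $H_i^{\wf}$ and the remaining irreducible factors giving the $(g_j^{\wf} = 0)$, the overall sign being irrelevant for divisors) identifies $\tilde{D}|_E = \mcD^{\wf}_G$. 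Then \eqref{eq:lctwbl-1} and, under \eqref{eq:lctwbl-2}, \eqref{eq:lctwbl-3} of Lemma \ref{lem:lctwbl} yield the asserted inequality and equality respectively.

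I expect the main obstacle to be the second paragraph: rigorously establishing that passing from $\bar{F}$ to the implicit solution $\phi$ preserves both the $\underline{c}$-weight and the initial form. The degree bookkeeping ensuring that the substitution $\breve{x}_{n+1} \mapsto \phi$ contributes only terms of strictly higher $\underline{c}$-weight --- so that $1 + R$ is genuinely a unit and the valuation is unchanged --- is the delicate point. The identification $\tilde{D}|_E = \mcD^{\wf}_G$ on the (possibly non-well-formed) exceptional divisor is a secondary bookkeeping issue handled directly by the definition of the well-formed model of $G$.
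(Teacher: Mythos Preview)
Your proposal is correct and follows essentially the same approach as the paper. The paper's proof is slightly more concise at the key step: instead of invoking the implicit function theorem and analyzing the initial form of the implicit solution $\phi$, it simply filters off terms divisible by $\breve{x}_{n+1}$ in the chart equation to write $(-1 + \cdots)\,\breve{x}_{n+1} = \bar{F}$ on $\breve{U}$, which is exactly your identity $\phi\,(1 + R(\breve{x},\phi)) = -\bar{F}$; from there both arguments read off $\ord_E (\rho^* H_{x_{n+1}}) = \wt_{\underline{c}}(\bar{F})$ and $\tilde{H}|_E = \mcD^{\wf}_G$ and apply Lemma~\ref{lem:lctwbl}.
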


\begin{proof}
Let $\rho_{\msp} \colon \breve{U}_{\msp} \to U_{\msp} \subset X$, where $U_{\msp} = U_{x_0}$, be the orbifold chart containing $\msp$ and we set $\rho = \rho_{\msp}, \breve{U} = \breve{U}_{\msp}$ and $U = U_{\msp}$.
We set $H = H_{x_{n+1}}$ and $\breve{H} = \rho^*H$.
We have $\lct_{\msp} (X; H) = \lct_{\check{\msp}} (\check{U}; \check{H})$.
The variety $\breve{U}$ is the hypersurface in $\mcU_{x_0} = \mbA^{n+1}_{\breve{x}_1, \dots, \breve{x}_{n+1}}$ defined by the equation
\begin{equation} \label{eq:lctwblwh-1}
F (1, \breve{x}_1, \dots, \breve{x}_{n+1}) = \breve{x}_{n+1} + \sum_{i=1}^e \breve{f}_i = 0,
\end{equation}
where $\breve{f}_i = f_i (\breve{x}_1, \dots, \breve{x}_{n+1})$, and $\breve{\msp}$ corresponds to the origin.
We see that $\{\breve{x}_1, \dots, \breve{x}_n\}$ is a system of local coordinates of $\breve{U}$ at $\breve{\msp}$.
Let $\varphi \colon Y \to \breve{U}$ be the weighted blowup at $\breve{\msp}$ with $\wt (\breve{x}_1, \dots, \breve{x}_n) = (c_1, \dots, c_n)$.
We can identify the $\varphi$-exceptional divisor $E$ with $\mbP (\underline{c})_{\breve{x}_1, \dots, \breve{x}_n}$.
Filtering off terms divisible by $\breve{x}_{n+1}$ in \eqref{eq:lctwblwh-1}, we have
\[
(-1 + \cdots ) \breve{x}_{n+1} = F (1, \breve{x}_1, \dots, \breve{x}_n, 0)
\]
on $\breve{U}$, where the omitted term in the left-hand side is a polynomial vanishing at $\breve{\msp}$.
Since $\breve{H}$ is the divisor on $\breve{U}$ defined by $\breve{x}_{n+1} = 0$, we see that $\ord_E (\breve{H}) = \wt_{\underline{c}} (\bar{F})$ and the divisor $\tilde{H}|_E$ corresponds to the divisor $\mcD^{\wf}_G$ on $E \cong \mbP (\underline{c})^{\wf}$, where $\tilde{H}$ is the proper transform of $\breve{H}$ on $Y$.
Therefore the proof is completed by Lemma \ref{lem:lctwbl} and Remark \ref{rem:wbldiff}.
\end{proof}

\begin{Lem} \label{lem:lcttangcube}
Let $X \subset \mbP (a, b_1, b_2, b_3, r)_{x, y_1, y_2, y_3, z}$ be a member of a family $\mcF_{\msi}$ with $\msi \in \msI$ with defining polynomial $F = F (x, y_1, y_2, y_3, z)$.
Assume that $F$ can be written as
\[
F = z^k x + z^{k-1} f_{r + a} + z^{k-2} f_{2 r + a} + \cdots + f_{k r + a},
\]
where $f_i \in \mbC [x, y_1, y_2, y_3]$ is a  quasi-homogeneous polynomial of degree $i$, and we set
\[
\bar{F} := F (0,y_1, y_2, y_2, 1) \in \mbC [y_1, y_2, y_3].
\]
If either $\bar{F} \in (y_1, y_2, y_3)^2 \setminus (y_1, y_2, y_3)^3$ or $\bar{F} \in (y_1, y_2, y_3)^3$ and the cubic part of $\bar{F}$ is not a cube of a linear form in $y_1, y_2, y_3$, then for the point $\msp := \msp_z \in X$, we have
\[
\lct_{\msp} (X; H_x) \ge \frac{1}{2}.
\]
If in addition $a = 1$, $r > 1$ and $\msp \in X$ is not a maximal center, then 
\[
\alpha_{\msp} (X) = \min \{1, \lct_{\msp} (X;H_x)\} \ge \frac{1}{2}.
\] 
\end{Lem}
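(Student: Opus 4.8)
The plan is to compute $\lct_{\msp}(X; H_x)$ at $\msp = \msp_z$ by feeding the given form of $F$ into Lemma \ref{lem:lctwblwh}, taking the simplest admissible weight $\underline{c} = (1,1,1)$ (the ordinary blowup of the smooth germ $\breve{U}_z$ at the preimage $\breve{\msp}$ of $\msp$). Since $z^k x \in F$, the hypersurface is quasi-smooth at $\msp = \msp_z$, which is a cyclic quotient singularity of index $r$, and $H_x$ is its quasi-tangent divisor. Matching the data $(x_0, x_{n+1}, f_i)$ of Lemma \ref{lem:lctwblwh} with $(z, x, f_{ir+a})$, the polynomial $F(0, y_1, y_2, y_3, 1) = \bar{F}$ coincides with the $\bar{F}$ of that lemma. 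With $\underline{c} = (1,1,1)$ each $m_i = 1$, so $\Diff = 0$, $\mbP(\underline{c})^{\wf} = \mbP^2$, and the lowest weight part $G$ of $\bar{F}$ is just its lowest-degree homogeneous part, regarded as a plane curve $(G=0) \subset \mbP^2$.

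First I would dispose of the first assertion by splitting into the two cases of the hypothesis. If $\bar{F} \in (y_1, y_2, y_3)^2 \setminus (y_1, y_2, y_3)^3$, then $\wt_{(1,1,1)}(\bar{F}) = 2$ and $G$ cuts out a conic; a conic in $\mbP^2$ is smooth, a pair of lines, or a double line, and in every case $\lct(\mbP^2; (G=0)) \ge 1/2$, the extremal case being the double line. Lemma \ref{lem:lctwblwh} then gives
\[
\lct_{\msp}(X; H_x) \ge \min\left\{ \tfrac{3}{2},\ \lct(\mbP^2; (G=0)) \right\} \ge \tfrac{1}{2}.
\]
If instead $\bar{F} \in (y_1, y_2, y_3)^3$ and its cubic part is not a cube of a linear form, then $\wt_{(1,1,1)}(\bar{F}) = 3$ and $(G=0)$ is a cubic that is not a triple line, so Lemma \ref{lem:lctP2cubic} yields $\lct(\mbP^2; (G=0)) \ge 1/2$ and
\[
\lct_{\msp}(X; H_x) \ge \min\left\{ 1,\ \lct(\mbP^2; (G=0)) \right\} \ge \tfrac{1}{2}.
\]
This establishes $\lct_{\msp}(X; H_x) \ge 1/2$.

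For the second assertion I would prove the equality by two opposite inequalities. For the lower bound, Lemma \ref{lem:qtangdivncan} applies (here $a = 1$ and $r > 1$, and $H_x$ is the quasi-tangent divisor) and shows $(X, H_x)$ is not canonical at $\msp$; since $\msp$ is a non-smooth terminal cyclic quotient point that is not a maximal center, Lemma \ref{lem:singnoncanbd} forces $H_x$ to be the \emph{only} irreducible divisor in $\left|-K_X\right|_{\mbQ}$ that is not canonical at $\msp$. Every other irreducible $D \in \left|-K_X\right|_{\mbQ}$ is canonical at $\msp$, hence $\lct_{\msp}(X; D) \ge 1$; using Remark \ref{rem:covex} to restrict the infimum defining $\alpha_{\msp}(X)$ to irreducible divisors gives $\alpha_{\msp}(X) \ge \min\{1, \lct_{\msp}(X; H_x)\}$. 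For the upper bound, $a = 1$ gives $H_x \sim -K_X$, so $\alpha_{\msp}(X) \le \lct_{\msp}(X; H_x)$; and letting $E$ be the Kawamata blowup exceptional divisor (with $a_E(K_X) = 1/r$), I would estimate $\ord_E(H_x)$ directly. Every monomial of $\bar{F}$ has degree $\equiv 1 \pmod r$, so its $E$-order lies in $\tfrac{1}{r} + \mbZ_{\ge 0}$; the condition $\bar{F} \in (y_1, y_2, y_3)^2$ removes any linear term, forcing each such order to exceed $1/r$ and hence to be $\ge (r+1)/r$. Thus $\ord_E(H_x) \ge (r+1)/r$ and $\lct_{\msp}(X; H_x) \le (a_E(K_X)+1)/\ord_E(H_x) \le 1$. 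Therefore $\min\{1, \lct_{\msp}(X; H_x)\} = \lct_{\msp}(X; H_x)$, the two bounds coincide, and the common value is $\ge 1/2$ by the first assertion.

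The hard part will be the upper bound $\lct_{\msp}(X; H_x) \le 1$: it rests on the weighted-order estimate $\ord_E(H_x) \ge (r+1)/r$, which combines the vanishing of the linear term of $\bar{F}$ (from $\bar{F} \in (y_1, y_2, y_3)^2$) with the congruence of the Kawamata weights modulo $r$, and it also demands care in matching the prescribed shape of $F$ to the hypotheses of Lemma \ref{lem:lctwblwh}. I would emphasize, however, that the inequality $\alpha_{\msp}(X) \ge 1/2$ — the statement ultimately needed for the Main Theorem — uses only the lower-bound direction, so the genuinely essential inputs are the first assertion together with Lemmas \ref{lem:qtangdivncan} and \ref{lem:singnoncanbd}.
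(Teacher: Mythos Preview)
Your proof is correct and follows essentially the same path as the paper's: both reduce the first assertion to the ordinary blowup of the smooth orbifold germ at $\breve{\msp}$ and invoke Lemma~\ref{lem:lctP2cubic} for the cubic case, and both derive the second assertion from Lemmas~\ref{lem:qtangdivncan} and~\ref{lem:singnoncanbd}. Two small differences are worth noting. First, in the quadratic case the paper argues more directly via $\omult_{\msp}(H_x)=2$ and Lemma~\ref{lem:multlct}, whereas you route everything uniformly through Lemma~\ref{lem:lctwblwh}; your way is perfectly valid and has the virtue of treating both cases symmetrically. Second, the paper simply asserts the equality $\alpha_{\msp}(X)=\min\{1,\lct_{\msp}(X;H_x)\}$ ``by Lemma~\ref{lem:singnoncanbd}'', but that lemma only yields the inequality $\ge$; your congruence argument that $\ord_E(H_x)\ge (r+1)/r$ (forcing $\lct_{\msp}(X;H_x)\le 1$) actually supplies the missing upper bound and makes the equality honest. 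Since every application of this lemma in the paper uses only the inequality $\alpha_{\msp}(X)\ge 1/2$, the gap is harmless there, but your version is the cleaner statement.
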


\begin{proof}
Let $\rho_{\msp} \colon \breve{U}_{\msp} \to U_{\msp}$ be the orbifold chart of $X$ containing $\msp$.
We see that Let $\breve{U}_{\msp}$ be the hypersurface in $\breve{\mcU}_{\msp} = \mbA^4_{\breve{x}, \breve{y}_1, \breve{y}_2, \breve{y}_3}$ defined by the equation
\[
F (\breve{x}, \breve{y}_1, \breve{y}_2, \breve{y}_3, 1) = 0.
\]
We see that $\breve{U}_{\msp}$ is smooth and the morphism $\rho_{\msp}$ can be identified with the quotient morphism of the singularity $\msp \in X$ over a suitable analytic neighborhood of $\msp$.
We denote by $\breve{\msp} \in \breve{U}$ the origin of $\breve{\mcU}_{\msp} = \mbA^4$ which is the preimage of $\msp$ via $\rho_{\msp}$.
Filtering off terms divisible by $x$ in $F (x, y_1, y_2, y_3, 1)$, we have
\[
(-1 + \cdots) \breve{x} = F (0, \breve{y}_1, \breve{y}_2, \breve{y}_3, 1) = \bar{F} (\breve{y}_1, \breve{y}_2, \breve{y}_3) =: \breve{F}
\]
on $\breve{U}_{\msp}$.
Note that we can choose $\{\breve{y}_1, \breve{y}_2, \breve{y}_3\}$ as a system of local coordinates of $\breve{U}_{\msp}$ at $\breve{\msp}$.
If $\bar{F} \in (y_1, y_2, y_3)^2 \setminus (y_1, y_2, y_3)^3$, then $\omult_{\msp} (H_x) = \mult_{\breve{\msp}} (\rho_{\msp}^*H_x) = 2$ and hence $\lct_{\msp} (X; H_x) \ge 1/2$.

Suppose that $\bar{F} \in (y_1, y_2, y_3)^3$ and the cubic part of $\bar{F}$ is not a cube of a linear form in $y_1, y_2, y_3$.
Let $\varphi \colon V \to \breve{U}$ be the blowup of $\breve{U}$ at $\breve{\msp}$ with exceptional divisor $E \cong \mbP^2$.
We set $D = \rho_{\msp}^*H_x$.
Since $\mult_{\breve{\msp}} (D) = 3$, we have
\[
K_V + \frac{1}{2} \tilde{D} = \varphi^* \left(K_{\check{U}} + \frac{1}{2} D \right) + \frac{1}{2} E,
\]
where $\tilde{D}$ is the proper transform of $D$ on $V$.
The divisor $\tilde{D}|_E$ on $E$ is isomorphic to the hypersurface in $\mbP^2_{\breve{y}_1, \breve{y}_2, \breve{y}_3}$ defined by the cubic part of $\bar{F} (\breve{y}_1, \breve{y}_2, \breve{y}_3)$, and the pair $(E, \frac{1}{2} \tilde{D}|_E)$ is log canonical by Lemma \ref{lem:lctP2cubic}.
It then follows that the pair $(V, \frac{1}{2} \tilde{D})$ is log canonical along $E$.
This shows that the pair $(\breve{U}, \frac{1}{2} D)$ is log canonical at $\breve{\msp}$, and hence $\lct_{\msp} (X;H_x) \ge 1/2$ as desired.

Suppose in addition that $r > 1$ and $\msp \in X$ is not a maximal center.
By Lemma \ref{lem:qtangdivncan}, the pair $(X, H_x)$ is not canonical at $\msp = \msp_z$ and thus we have $\alpha_{\msp} (X) = \min \{1, \lct_{\msp}; (X;H_x)\}$ by Lemma \ref{lem:singnoncanbd}.
This proves the latter assertion.
\end{proof}

%%%%%%%%%%%%%%%%%%%%%%%%%%%%%%%%%
\subsubsection{Computations by $2 n$-inequality}
%%%%%%%%%%%%%%%%%%%%%%%%%%%%%%%%%

\begin{Lem} \label{lem:complctsingtang}
Let $X \subset \mbP (b_1, b_2, b_3, c, r)_{x_1, x_2, x_3, y, z}$ be a member of a family $\mcF_{\msi}$ with  $\msi \in \msI$ with defining polynomial $F = F (x_1, x_2, x_3, y, z)$ and suppose $\msp :=\msp_z \in X$.
We assume that $b_1 \le b_2 \le b_3$ and that we can choose $y$ as a quasi-tangent coordinate of $X$ at $\msp$.
Then 
\[
\alpha_{\msp} (X) \ge \frac{2}{r b_2 b_3 (A^3)}.
\]
In particular, if $r b_2 b_3 (A^3) \le 4$, then $\alpha_{\msp} (X) \ge 1/2$.
\end{Lem}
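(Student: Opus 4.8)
The plan is to argue by contradiction via the $2n$-inequality (Lemma \ref{lem:2nineq}) on the orbifold chart at $\msp$, bounding the relevant multiplicity by a local intersection number that is in turn controlled by the global intersection $r b_2 b_3 (A^3)$. First I would record the local structure at $\msp = \msp_z$: since $X$ is quasi-smooth and $y$ is a quasi-tangent coordinate at $\msp$, in the orbifold chart $\rho_\msp \colon \breve{U}_\msp \to U_\msp$ the equation $F(\breve{x}_1, \breve{x}_2, \breve{x}_3, \breve{y}, 1) = 0$ has linear part $\breve{y}$ (supplied by the monomial $z^m y$), so $\breve{y}$ is eliminated and $\{\breve{x}_1, \breve{x}_2, \breve{x}_3\}$ is a system of local coordinates on the smooth germ $\check{X} := \breve{U}_\msp$ at $\check{\msp}$, on which $\bmu_r$ acts diagonally with weights $b_1, b_2, b_3$. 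Write $q = \rho_\msp$ and set $c := 2/(r b_2 b_3 (A^3))$ and $n := 1/c$, so $2n = r b_2 b_3 (A^3)$. Assuming $\alpha_\msp(X) < c$, Remark \ref{rem:covex} yields an irreducible $D \in |A|_\mbQ$ with $(X, cD)$ not log canonical at $\msp$; since $q$ is \'etale in codimension one (cf.\ the proof of Lemma \ref{lem:multlct}), $(\check{X}, c\check{D})$ is not log canonical, hence not canonical, at $\check{\msp}$, where $\check{D} := q^*D$ is a $\bmu_r$-invariant effective $\mbQ$-divisor.

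For $i = 1, 2, 3$ let $\check{H}_i := (\breve{x}_i = 0) \subset \check{X}$, a surface smooth at $\check{\msp}$ whose proper transform meets the exceptional $E \cong \mbP^2$ of the blowup $\varphi \colon Y \to \check{X}$ at $\check{\msp}$ in the coordinate line $\ell_i = (u_i = 0)$. The key inequality I would use repeatedly is that, for a surface $S$ smooth at $\check{\msp}$ and a curve $\check{\Gamma} = S \cap T$ reduced at $\check{\msp}$ with $\check{\Gamma} \not\subset \Supp(\check{D}|_S)$, one has $\mult_{\check{\msp}}(\check{D}|_S) \le (\check{D}|_S \cdot \check{\Gamma})_{\check{\msp}} = (\check{D} \cdot S \cdot T)_{\check{\msp}} \le r(D \cdot S \cdot T)$, the last step by the orbifold intersection formula of \S\ref{sec:intnumber}; for $S \sim sA$, $T \sim tA$ this reads $r s t (A^3)$. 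By \cite[Corollary 3.5]{Corti} there are two cases. If $m := \mult_{\check{\msp}}(\check{D}) > 2n$, then taking $S = \check{H}_2$, $T = \check{H}_3$ and $\check{\Gamma}$ the $\breve{x}_1$-axis gives $2n < m \le (\check{D} \cdot \check{H}_2 \cdot \check{H}_3)_{\check{\msp}} \le r b_2 b_3 (A^3) = 2n$, a contradiction. (The finitely many $D$ with $\Supp(D)$ equal to one of the chosen sections are harmless: such a section is quasi-smooth at $\msp$, so $\lct_\msp(X; cD) = 1 > c$.)

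In the remaining case, Lemma \ref{lem:2nineq} produces a line $L \subset E$ such that $\mult_{\check{\msp}}(\check{D}|_T) > 2n$ for every prime divisor $T$ smooth at $\check{\msp}$ whose proper transform contains $L$. Since $\check{D}$ is $\bmu_r$-invariant, its tangent cone $\widetilde{\check{D}}|_E$, and hence $L$, may be taken invariant, so $L$ is an eigen-line of the diagonal action. I would then realize $L$ by a quasi-hyperplane section: if $L = \ell_i$, take $S = \check{H}_i$; if $L = (\lambda_i u_i + \lambda_j u_j = 0)$ with $b_i \le b_j$ and $b_i \equiv b_j \imod r$, take $S = (\lambda_i x_i z^{(b_j - b_i)/r} + \lambda_j x_j = 0)_X$, an integral section of degree $b_j \le b_3$ whose linear part $\lambda_i \breve{x}_i + \lambda_j \breve{x}_j$ is nonzero, so $S$ is smooth at $\msp$ with proper transform meeting $E$ in $L$. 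Choosing the remaining coordinate hyperplane $T = \check{H}_k$ makes $\check{\Gamma} = S \cap T$ smooth at $\check{\msp}$ with $\deg S \cdot \deg T \le b_2 b_3$, whence $2n < \mult_{\check{\msp}}(\check{D}|_S) \le r b_2 b_3 (A^3) = 2n$, again a contradiction. This proves $\alpha_\msp(X) \ge c$, and the final clause is immediate since $r b_2 b_3 (A^3) \le 4$ forces $c \ge 1/2$.

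The delicate point, which I expect to be the main obstacle, is this last step: matching the $2n$-inequality line $L$ with an explicit quasi-hyperplane section of degree $\le b_3$ that admits a transverse coordinate companion of complementary degree, keeping the product of degrees $\le b_2 b_3$. This rests on the $\bmu_r$-invariance of $\check{D}$ forcing $L$ to be an eigen-line — so that either $L$ is a coordinate line or the two relevant weights coincide modulo $r$, making the mixed section above integral — together with the general-position checks ($\check{\Gamma}$ reduced at $\check{\msp}$, $\check{\Gamma} \not\subset \Supp(\check{D}|_S)$, and $\check{\msp}$ isolated in the triple intersection). Verifying that an invariant line can always be selected, and that the companion can be chosen transversally, is where the weight arithmetic of the families in $\msI$ must be used.
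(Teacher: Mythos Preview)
Your argument has a genuine gap at the step you yourself flag as delicate. The line $L$ produced by Lemma~\ref{lem:2nineq} is a center of non--log--canonicity of the pair $(Y,(m/n-1)E+\tfrac{1}{n}\tilde{D})$, not a component of the tangent cone $\tilde{D}|_E$; your sentence ``its tangent cone \ldots\ and hence $L$, may be taken invariant'' conflates these two objects. What $\bmu_r$-invariance of $\check D$ gives you is that the non-LC locus inside $E$ is $\bmu_r$-stable, so the orbit $\{\zeta\!\cdot\!L\}_{\zeta\in\bmu_r}$ lies in it; but nothing forces this orbit to be a single line. If the $b_i$ are pairwise distinct modulo $r$ (which occurs in many families), the only invariant lines are the three coordinate lines, and there is no reason the non-LC locus should contain one of them. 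Without invariance of $L$ your ``mixed'' quasi-hyperplane section of degree $\le b_3$ is not available, and the degree bookkeeping collapses.

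The paper's proof sidesteps this entirely: it does \emph{not} try to make $L$ invariant. For an arbitrary line $L=(\sum\alpha_i\breve x_i=0)$ it takes $\breve T=(\sum\alpha_i\breve x_i=0)\subset\breve U$ and compares $(\rho^*D\cdot\breve T\cdot\rho^*S)_{\breve\msp}$ to a global intersection on $X$ using that $\rho^*D$ and $\rho^*S$ \emph{are} invariant. Since $\breve\msp$ is fixed by $\bmu_r$, the local number $(\rho^*D\cdot\zeta^*\breve T\cdot\rho^*S)_{\breve\msp}$ is independent of $\zeta$, so averaging over the orbit of $\breve T$ produces an invariant divisor descending to some $G\sim r b_k A$ on $X$ (with $k=\max\{i:\alpha_i\neq0\}$), and one gets $(\rho^*D\cdot\breve T\cdot\rho^*S)_{\breve\msp}\le (D\cdot G\cdot S)=r b_k(A^3)$. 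Choosing the isolating $\mbQ$-divisor $S\sim_\mbQ A$ with $\omult_\msp(S)\ge 1/b_j$ for $b_j=\max\{b_i:i\neq k\}$ then yields $r b_k(A^3)>2/(b_j\lambda)$, contradicting $b_jb_k\le b_2b_3$. The point is that invariance is carried by $D$ and $S$, not by $\breve T$; your attempt to force it onto $L$ is where the proposal breaks.
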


\begin{proof}
Let $\rho_{\msp} \colon \breve{U}_{\msp} \to U_{\msp}$ be the orbifold chart of $X$ containing $\msp = \msp_z$.
We set $\rho = \rho_{\msp}$, $\breve{U} = \breve{U}_{\msp}$ and $U = U_{\msp}$.
We see that $\breve{U}$ is the hypersurface in $\breve{\mcU}_{\msp} = \mbA^4_{\breve{x}_1, \breve{x}_2, \breve{x}_3, \breve{y})}$ defined by the equation
\[
F (\breve{x}_1, \breve{x}_2, \breve{x}_3, \breve{y}, 1) = 0.
\]
We see that $\breve{U}$ is smooth and the morphism $\rho$ can be identified with the quotient morphism of $\msp \in X$ over a suitable analytic neighborhood of $\msp$.
We denote by $\breve{\msp} \in \breve{U}$ the origin which is the preimage of $\msp$ via $\rho$.
By the assumption, we can choose $\breve{x}_1, \breve{x}_2, \breve{x}_3$ as a system of local coordinates of $\breve{U}$ at $\breve{\msp}$.

We set
\[
\lambda := \frac{2}{r b_2 b_3 (A^3)}
\]
and assume that $\alpha_{\msp} (X) < \lambda$.
Then there exists an irreducible $\mbQ$-divisor $D \sim_{\mbQ} A$ such that the pair $(X, \lambda D)$ is not log canonical at $\msp$.
In particular the pair $(\breve{U}, \lambda \rho^*D)$ is not log canonical at $\breve{\msp}$.
Let $\varphi \colon V \to \breve{U}$ be the blowup of $\breve{U}$ at $\breve{\msp}$ with exceptional divisor $E \cong \mbP^2$. 
By Lemma \ref{lem:2nineq}, there exists a line $L \subset E$ with the property that for any prime divisor $T$ on $\breve{U}$ such that $T$ is smooth at $\breve{\msp}$ and that its proper transform $\tilde{T}$ contains $L$, we have $\mult_{\breve{\msp}} (D|_T) > 2/\lambda$.
By a slight abuse of notation, we have an isomorphism $E \cong \mbP^2_{\breve{x}_1, \breve{x}_2, \breve{x}_3}$.
The line $L \subset E$ is isomorphic to $(\alpha_1 \breve{x}_1 + \alpha_2 \breve{x}_2 + \alpha_3 \breve{x}_3 = 0) \subset \mbP^2$, for some $\alpha_1, \alpha_2, \alpha_3 \in \mbC$ with $(\alpha_1, \alpha_2, \alpha_3) \ne (0, 0, 0)$.
We set 
\[
\breve{T} := (\alpha_1 \breve{x}_1 + \alpha_2 \breve{x}_2 + \alpha_3 \breve{x}_3 = 0) \subset \breve{U}.
\]
Then $\breve{T}$ is smooth at $\breve{\msp}$ and its proper transform on $V$ contains $L$.
It follows that $\mult_{\breve{\msp}} (\rho^*D|_{\breve{T}}) > 2/\lambda$. 
Set $k := \max \{\,  i \mid \alpha_i \ne 0 \, \}$.
We have $r \breve{T} = \rho^*G$ for some effective Weil divisor $G \sim r b_k A$.
Let $j \in \{1, 2, 3\}$ be such that 
\[
b_j = \max \{\, b_i \mid 1 \le i \le 3, i \ne k \, \}.
\]
Then, since $\{x_1, x_2, x_3\}$ isolates $\msp$, we can take an effective $\mbQ$-divisor $S \sim_{\mbQ} A$ such that $\omult_{\msp} (S) \ge 1/b_j$ and $\rho^*S$ does not contain any component of $\rho^*D|_{\breve{T}}$.
Hence we have
\[
 r b_k (A^3) = (D \cdot G \cdot S) \ge (\rho^*D \cdot \breve{T} \cdot \rho^*S)_{\breve{\msp}} > \frac{2}{b_j \lambda} = \frac{r b_2 b_3 (A^3)}{b_j}.
\]
This is a contradiction since $b_j b_k \le b_2 b_3$, and the proof is completed.
\end{proof}

%%%%%%%%%%%%%%%%%%%%%%%%%%%%%%%%%
\subsubsection{Computations by $\bNE$}
%%%%%%%%%%%%%%%%%%%%%%%%%%%%%%%%%

Let $X$ be a quasi-smooth Fano 3-fold weighted hypersurface of index $1$.
Let $\msp \in X$ be a singular point and we denote by $\varphi \colon Y \to X$ the Kawamata blowup at $\msp$.
In \cite{CP17}, the assertion $(-K_Y)^2 \notin \Int \bNE (Y)$ is verified in many cases, where $\bNE (Y)$ is the cone of effective curves on $Y$.
Thus the following result is very useful.

\begin{Lem}[{\cite[Lemma 2.8]{KOW18}}] \label{lem:singptNE}
Let $\msp \in X$ be a terminal quotient singular point and $\varphi \colon Y \to X$ the Kawamata blowup at $\msp$.
Suppose that $(-K_Y)^2 \notin \Int \bNE (Y)$ and there exists a prime divisor $S$ on $X$ such that $\tilde{S} \sim_{\mbQ} - m K_Y$ for some $m > 0$, where $\tilde{S}$ is the proper transform of $S$ on $Y$.
Then $\alpha_{\msp} (X) \ge 1$.
\end{Lem}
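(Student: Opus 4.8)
The plan is to argue by contradiction, transfer the failure of log canonicity to the Kawamata blowup $Y$, and play the discrepancy of $E$ against an effective $1$-cycle built from $\tilde D$ and $\tilde S$. Write $A = -K_X$ and $H = \varphi^* A$, let $r$ be the index of $\msp$ (of type $\frac{1}{r}(1, a, r-a)$), so that $-K_Y = H - \frac{1}{r}E$ and $(E^3) = \frac{r^2}{a(r-a)}$. Suppose $\alpha_{\msp}(X) < 1$. By Remark \ref{rem:covex} there is an irreducible $\mbQ$-divisor $D \in |-K_X|_{\mbQ}$ with $(X, D)$ not log canonical at $\msp$. The first, clean, enabling observation is that this forces $\ord_E(D) > \frac{1}{r}$: non-log-canonicity gives $\lct_{\msp}(X; D) < 1$, so Lemma \ref{lem:multlct} yields $\omult_{\msp}(D) > 1$; since the Kawamata blowup assigns the orbifold weight $\frac{1}{r}$ to the coordinate of smallest weight, one has $\ord_E(D) = \frac{1}{r}\wt_{(1,a,r-a)}(q_{\msp}^* D) \ge \frac{1}{r}\omult_{\msp}(D) > \frac{1}{r}$.

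Next I would produce the effective cycle. Comparing $\tilde S \sim_{\mbQ} -m K_Y = m H - \frac{m}{r}E$ with $\tilde S = \varphi^* S - \ord_E(S)\,E$ shows $S \sim_{\mbQ} m A$ and $\ord_E(S) = \frac{m}{r}$; in particular, if $\Supp(D) = S$ then $D = \frac{1}{m}S$ and $\ord_E(D) = \frac{1}{r}$, contradicting the first paragraph. Hence $D$ and $S$ are distinct irreducible divisors, their proper transforms share no component, and $\tilde D \cdot \tilde S$ is an effective $1$-cycle, so it lies in $\bNE(Y)$. I would then compute in $N_1(Y)$, which is two-dimensional since $X$ has Picard number one. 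Using $(\varphi^* A)^2 \cdot E = \varphi^* A \cdot E^2 = 0$, $(\varphi^* A)^3 = A^3$, and $(E^3) = \frac{r^2}{a(r-a)}$, the classes $H^2$ and $E^2$ form a basis of $N_1(Y)$, and with $e := \ord_E(D)$ one finds, the cross terms dropping because $H \cdot E \equiv 0$ in $N_1(Y)$,
\[
(-K_Y)^2 \equiv H^2 + \tfrac{1}{r^2}\,E^2, \qquad \tilde D \cdot \tilde S \equiv m\,H^2 + \tfrac{e m}{r}\,E^2.
\]

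Finally I would read off the cone geometry. The $\varphi$-contracted extremal ray of $\bNE(Y)$ is spanned by a curve $\ell \subset E$; since $H \cdot \ell = 0$ and $E \cdot \ell < 0$, its class is a negative multiple of $E^2$, so that ray is $\mbR_{\ge 0}(-E^2)$. Writing the other extremal ray as $\Gamma = p\,H^2 + q\,E^2$ with $p > 0$ (as $H \cdot \Gamma = p A^3 > 0$), a class $u\,H^2 + v\,E^2$ with $u > 0$ lies in $\bNE(Y)$ exactly when $v \le \frac{q}{p}u$. Applying this to the effective class $\tilde D \cdot \tilde S$ gives $\frac{e}{r} \le \frac{q}{p}$, while $H \cdot (-K_Y)^2 = A^3 > 0$ together with $(-K_Y)^2 \notin \Int \bNE(Y)$ forces $\frac{1}{r^2} \ge \frac{q}{p}$. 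Combining, $\frac{e}{r} \le \frac{1}{r^2}$, that is $\ord_E(D) \le \frac{1}{r}$, contradicting $\ord_E(D) > \frac{1}{r}$; this proves $\alpha_{\msp}(X) \ge 1$. The main obstacle is the translation of the hypothesis $(-K_Y)^2 \notin \Int \bNE(Y)$ into the inequality $\frac{q}{p} \le \frac{1}{r^2}$: this requires pinning down both extremal rays of the two-dimensional cone $\bNE(Y)$ (in particular identifying the contracted ray as $\mbR_{\ge 0}(-E^2)$) and verifying the membership criterion. Once that is in place, together with the observation $\ord_E(D) \ge \frac{1}{r}\omult_{\msp}(D)$, everything else is routine intersection-number bookkeeping.
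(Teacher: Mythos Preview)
The paper does not supply its own proof of this lemma; it is quoted verbatim from \cite[Lemma~2.8]{KOW18}. Your argument is correct and is essentially the standard one: reduce to showing $\ord_E(D)>\tfrac{1}{r}$ for a bad divisor $D$, exhibit the effective $1$-cycle $\tilde D\cdot\tilde S$, and read off a contradiction from the two-dimensional cone $\bNE(Y)$. One small remark: your derivation of $\ord_E(D)>\tfrac{1}{r}$ via Lemma~\ref{lem:multlct} and the weight inequality $\ord_E(D)\ge\tfrac{1}{r}\,\omult_{\msp}(D)$ is fine, but the same conclusion follows more directly (and under the weaker hypothesis that $(X,D)$ is merely not canonical at $\msp$) from Kawamata's uniqueness theorem \cite{Kawamata}, exactly as in the proof of Lemma~\ref{lem:singnoncanbd}; this is the route taken in \cite{KOW18}. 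Also note that your cone analysis tacitly uses $\rho(X)=1$ so that $N_1(Y)$ is two-dimensional; this holds throughout the paper but is worth making explicit if you present the argument in isolation.
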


%%%%%%%%%%%%%%%%%%%%%%%%%%%%%%%%%%
%%%%%%%%%%%%%%%%%%%%%%%%%%%%%%%%%%
%%%%%%%%%%%%%%%%%%%%%%%%%%%%%%%%%%
\section{Smooth points} \label{chap:smpt}
%%%%%%%%%%%%%%%%%%%%%%%%%%%%%%%%%%
%%%%%%%%%%%%%%%%%%%%%%%%%%%%%%%%%%
%%%%%%%%%%%%%%%%%%%%%%%%%%%%%%%%%%

The aim of this section is to prove the following.

\begin{Thm} \label{thm:smpt}
Let $X$ be a member of a family $\mcF_{\msi}$ with $\msi \in \msI \setminus \msI_1$.
Then, 
\[
\alpha_{\msp} (X) \ge \frac{1}{2}
\] 
for any smooth point $\msp \in X$.
\end{Thm}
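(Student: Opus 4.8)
The plan is to fix, by Remark \ref{rem:covex}, an irreducible $\mbQ$-divisor $D \in |A|_{\mbQ}$ (where $A = -K_X$) and a smooth point $\msp \in X$, and to prove $\lct_{\msp}(X;D) \ge 1/2$. The engine is the estimate recalled in the method section of \S\ref{sec:95fam}: choose a surface $S \sim a A$ that is quasi-smooth at $\msp$, set $m = \mult_{\msp}(S)$, and --- provided $D \ne \tfrac1a S$ --- use an isolating class or set (Lemma \ref{lem:isolclass} or Lemma \ref{lem:isolexT}) to produce an effective $T \sim e A$ with $\mult_{\msp}(T) \ge 1$ whose support contains no component of $D \cdot S$ passing through $\msp$. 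Then $\msp$ is an isolated point of $\Supp(D)\cap\Supp(S)\cap\Supp(T)$ and
\[
m\,\mult_{\msp}(D) \le (D \cdot S \cdot T)_{\msp} \le (D\cdot S\cdot T) = a e (A^3),
\]
so Lemma \ref{lem:multlct} gives $\lct_{\msp}(X;D) \ge m/(a e (A^3))$. Since $S$ is quasi-smooth at $\msp$ we also have $\lct_{\msp}(X;\tfrac1a S) = a \ge 1$, whence $\alpha_{\msp}(X) \ge \min\{a,\ m/(a e (A^3))\}$. The entire task is therefore to choose $S$ and $T$ so that $a e (A^3) \le 2m$.

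I would treat the generic situation $\msp \in H_x \setminus L_{xy}$ first, taking $S = H_x$, so that $a = 1$ and, $H_x$ being quasi-smooth at such $\msp$, $m = 1$; the goal reduces to $e(A^3) \le 2$. By Lemma \ref{lem:isolclass} one may take $e = a_1 a_4$, or $e = a_1 a_3$ when $w^k \in F$. If $a_4 \mid d$ --- in which case $w^k \in F$ by the proof of Lemma \ref{lem:smptHLdegwt} --- then $a_1 a_3 (A^3) = a_1 a_2 a_3 (A^3)/a_2 \le 3/a_2 \le 3/2$ by Lemma \ref{lem:wtnumerics}(2) together with $a_2 \ge 2$ (which holds precisely because $\msi \notin \msI_1$). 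In the remaining case $d = 2 a_4 + a_j$ with $a_1 < a_2$, Lemma \ref{lem:wtnumerics}(5) gives $a_1 a_4 (A^3) \le 2$. In both cases $e(A^3) \le 2 = 2m$, so $\lct_{\msp}(X;D) \ge 1/2$.

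The other positions of $\msp$ reduce to the same numerics. When $a_1 = 1$ both $x$ and $y$ have degree $1$, and acting by $\Aut(\mbP)$ on the degree-one part $\langle x,y\rangle$ I may move any smooth point with $(x,y)\ne(0,0)$ onto $H_x$. When $a_1 \ge 2$ the locus $H_x$ is $\Aut$-invariant, but a smooth point $\msp \in \mcU_x$ is handled directly by letting $S = (y - \beta x^{a_1} = 0)_X \sim a_1 A$ play the role of $H_x$ and isolating $\msp$ by the three $x$-relative functions of degrees $a_1, a_2, a_3$, so that $e = a_3$ and $a e (A^3) = a_1 a_3 (A^3) \le 3/a_2 \le 3/2$ again by Lemma \ref{lem:wtnumerics}(2) and $a_2 \ge 2$.

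The residual configurations are where I expect the real work, and they are of two kinds. First, the points in special position --- $\msp \in L_{xy}$, coordinate points $\msp_{x_i}$, or points where the chosen section fails to be quasi-smooth and so $\mult_{\msp}(S) = 2$ --- force $S$ to be a quasi-tangent divisor, and then the term $\lct_{\msp}(X;\tfrac1a S)$ is no longer automatic and must be bounded below by a weighted-blowup computation (Lemmas \ref{lem:lctwblwh} and \ref{lem:lcttangcube}). Second, and this is the main obstacle, are the numerically borderline families with $a_1 = a_2$ and $d = 2 a_4 + a_j$ (where $w^k \notin F$), for which neither Lemma \ref{lem:wtnumerics}(5) applies nor is $D \cdot S$ guaranteed to be irreducible. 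For these I would abandon the one-shot estimate and instead invoke Lemmas \ref{lem:mtdLred}, \ref{lem:mtdLredSsing} or \ref{lem:mtdLintpt}: decompose $T|_S = \sum m_i \Gamma_i$ on the normal surface $S$ (normality via Lemma \ref{lem:normalqhyp}), compute the degrees $\deg \Gamma_i$ and the intersection numbers on $S$ (using Remark \ref{rem:compselfint} for the rational components), and verify that the intersection matrix $M(\Gamma_1,\dots,\Gamma_k)$ satisfies condition $(\star)$. The identification of these components and the family-by-family verification of $(\star)$ is the technically delicate heart of the argument; once $(\star)$ holds, the quoted lemmas deliver $\alpha_{\msp}(X) \ge 1/2$ at once.
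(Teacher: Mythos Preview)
Your overall architecture matches the paper's: stratify the smooth locus into $U_1$, $H_x\setminus L_{xy}$, $L_{xy}$ (and $H_x$ when $a_1=a_2$), and on each stratum run the isolating--class estimate against a well--chosen surface $S$. The identification of $L_{xy}$ and the $a_1=a_2$ families as the places requiring Lemmas~\ref{lem:mtdLred}--\ref{lem:mtdLintpt} is also correct.

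The gap is in what you call the generic case. Your sentence ``$H_x$ being quasi-smooth at such $\msp$, $m=1$'' is not justified and is in fact false: for many families and many smooth points $\msp\in H_x\setminus L_{xy}$ one has $\mult_{\msp}(H_x)=2$. What you actually need (and what suffices) is the bound $\mult_{\msp}(H_x)\le 2$, since then $\lct_{\msp}(X;H_x)\ge 1/2$ by Lemma~\ref{lem:multlct} and your estimate $\mult_{\msp}(D)\le e(A^3)\le 2$ handles every other irreducible $D$. But proving $\mult_{\msp}(H_x)\le 2$ is itself a genuine piece of case analysis: it is short when $d=2a_4$ or $d=2a_4+a_1$ (one reads it off from $w^2\in F$ or $\partial^2\bar F/\partial w^2=y$), but when $d=3a_4$ the paper has to argue family by family (Claim~\ref{cl:smHcase3e}) to exclude $\mult_{\msp}(H_x)\ge 3$. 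Your later remark that points ``where the chosen section fails to be quasi-smooth'' are residual special cases understates this: the locus where $H_x$ is singular is not a priori finite, and you have not bounded the multiplicity there.

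The same issue recurs in your $U_1$ reduction when $a_1=1$. Moving $\msp$ onto $H_x$ by an automorphism of $\langle x,y\rangle$ is fine, but you then need $\lct_{\msp}(X;H_x)\ge 1/2$ for the \emph{new} $H_x$, and nothing guarantees quasi-smoothness; the paper (Lemma~\ref{lem:nsptU1-3}) instead works with the unique member $S_{\msp}\in|\mcI_{\msp}(A)|$ and proves $\lct_{\msp}(X;S_{\msp})\ge 1/2$ by showing $\mult_{\msp}(S_{\msp})\le 3$ with the cubic part not a cube, which again is a short but necessary argument using quasi-smoothness of $X$. In short, the ``easy'' strata already carry a nontrivial share of the work, namely bounding $\mult_{\msp}(S)$, and that step is missing from your outline.
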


We explain the organization of this section.
Throughout the present section, let 
\[
X = X_d \subset \mbP (1, a_1, a_2, a_3, a_4)_{x, y, z, t, w}
\] 
be a member of $\mcF_{\msi}$ with $\msi \in \msI \setminus \msI_1$, where we assume $a_1 \le \cdots \le a_4$.
Note that $a_2 \ge 2$ since $\msi \notin \msI_1$.
Recall that we denote by $F = F (x, y, z, t, w)$ the defining polynomial of $X$ with $\deg F = d$ and we set $A := -K_X$.
We set
\[
\begin{split}
U_1 &:= \bigcup_{v \in \{x,y,z,t,w\}, \deg v = 1} (v \ne 0) \cap X, \\
L_{xy} &:= H_x \cap H_y = (x = y = 0) \cap X.
\end{split}
\]
Note that $U_1$ is an open subset of $X$ contained in $\Sm (X)$, and $L_{xy}$ is a $1$-dimensional closed subset of $X$.
The proof of the inequality $\alpha_{\msp} (X) \ge 1/2$ for $\msp \in U_1$ will be done in \S \ref{sec:smptU1}.
The proof for the other smooth points will be done as follows.

\begin{itemize}
\item If $1 < a_1 < a_2$, then $\Sm (X) \subset U_1 \sqcup (H_x \setminus L_{xy}) \sqcup L_{xy}$.
In this case, the proof of $\alpha_{\msp} (X) \ge 1/2$ for smooth point $\msp$ of $X$ contained in $H_x \setminus L_{xy}$ (resp.\ $L_{xy}$) will be done in \S \ref{sec:smptHminusL} (resp.\ \S \ref{sec:smptL1} and \S \ref{sec:smptL2}), respectively. 
\item If $1 = a_1 < a_2$, then $\Sm (X) \subset U_1 \sqcup L_{xy}$.
In this case, the proof of $\alpha_{\msp} (X) \ge 1/2$ for smooth point $\msp$ of $X$ contained in $L_{xy}$ will be done in \S \ref{sec:smptL1} and \S \ref{sec:smptL2}. 
\item If $1 < a_1 = a_2$, then $\Sm (X) \subset U_1 \sqcup H_x$.
In this case, the proof of $\alpha_{\msp} (X) \ge 1/2$ for smooth point $\msp$ of $X$ contained in $H_x$ will be done in \S \ref{sec:smptH}. 
\end{itemize}
Therefore Theorem \ref{prop:smptU1} will follow from Propositions \ref{prop:smptU1}, \ref{prop:smptHminusL}, \ref{prop:smptL1}, \ref{prop:smptL2} and \ref{prop:smptH}, which are the main results of \S \ref{sec:smptU1}, \S \ref{sec:smptHminusL}, \S \ref{sec:smptL1}, \S \ref{sec:smptL2} and \S \ref{sec:smptH}, respectively.

%%%%%%%%%%%%%%%%%%%%%%%%%%%%%%%%%%
%%%%%%%%%%%%%%%%%%%%%%%%%%%%%%%%%%
\subsection{Smooth points on $U_1$ for families indexed by $\msI \setminus \msI_1$} \label{sec:smptU1}
%%%%%%%%%%%%%%%%%%%%%%%%%%%%%%%%%%
%%%%%%%%%%%%%%%%%%%%%%%%%%%%%%%%%%

\begin{Lem} \label{lem:nsptU1-1}
We have
\[
\alpha_{\msp} (X) \ge \frac{1}{a_2 a_4 (A^3)}
\]
for any point $\msp \in U_1$.
\end{Lem}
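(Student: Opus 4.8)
The plan is to reduce to a multiplicity estimate and then bound that multiplicity by a single triple intersection number. Since $\msp\in U_1\subset\Sm(X)$, the quotient morphism at $\msp$ is the identity ($r=1$) and $\omult_\msp=\mult_\msp$, so by Lemma \ref{lem:multlct} we have $\lct_\msp(X;D)\ge 1/\mult_\msp(D)$ for every effective $\mbQ$-divisor $D$. As $\alpha_\msp(X)=\inf_D\lct_\msp(X;D)$ over irreducible $D\in\left|A\right|_{\mbQ}$, it suffices to prove $\mult_\msp(D)\le a_2a_4(A^3)$ for every irreducible $D\in\left|A\right|_{\mbQ}$ with $\msp\in\Supp(D)$ (if $\msp\notin\Supp(D)$ there is nothing to bound). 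After rescaling coordinates I may assume $\msp=(1\!:\!\beta\!:\!\gamma\!:\!\delta\!:\!\epsilon)$ lies in the chart $\mcU_x$; when $a_1=1$ and $\msp\in U_1$ lies only in $\mcU_y$ the argument is identical with the degree-$1$ coordinate $y$ in the role of $x$, so I treat $\mcU_x$.

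First I would produce auxiliary divisors of degrees $a_2$ and $a_4$ through $\msp$. Set $S:=(z-\gamma x^{a_2}=0)_X\sim_{\mbQ} a_2A$, so that $\msp\in S$ and $\mult_\msp(S)\ge 1$; for $D$ with $\Supp(D)\not\subseteq\Supp(S)$ the product $D\cdot S$ is a genuine effective $1$-cycle. Next, the set
\[
\{\,y-\beta x^{a_1},\ z-\gamma x^{a_2},\ t-\delta x^{a_3},\ w-\epsilon x^{a_4}\,\}
\]
of quasi-homogeneous polynomials isolates $\msp$, because its common zero locus on $X$ is exactly $\{\msp\}$ (on $x=0$ all other coordinates would have to vanish). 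Its maximal degree being $a_4$, Lemma \ref{lem:isolexT} and Remark \ref{rem:isolT} furnish an effective $\mbQ$-divisor $T\sim_{\mbQ} a_4A$ with $\mult_\msp(T)\ge 1$ whose support contains no component of the curve $D\cdot S$.

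With these choices $\msp$ is an isolated point of $\Supp(D)\cap\Supp(S)\cap\Supp(T)$, so the local intersection number is defined and is at most the global one, giving
\[
\mult_\msp(D)\le \mult_\msp(D)\,\mult_\msp(S)\,\mult_\msp(T)\le (D\cdot S\cdot T)_\msp\le (D\cdot S\cdot T)=a_2a_4(A^3),
\]
where I used $\mult_\msp(S),\mult_\msp(T)\ge 1$ and the product–multiplicity inequality \cite[Corollary 12.4]{Fulton}. This gives $\lct_\msp(X;D)\ge 1/(a_2a_4(A^3))$, which is the assertion. The main obstacle is securing the zero-dimensional intersection at $\msp$: the isolating-set construction of $T$ is precisely what removes the one-dimensional excess coming from the components of $D\cdot S$, while the remaining case $\Supp(D)\subseteq\Supp(S)$ must be disposed of separately (then $\Supp(D)$ is a component of $S$, forcing $D$ to be comparable to $\tfrac1{a_2}S$, for which $\lct_\msp$ is easily seen to exceed $1/(a_2a_4(A^3))$ since $a_2\ge 2$). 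This last point, and the symmetric $\mcU_y$ case, are the only places needing care.
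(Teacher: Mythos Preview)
Your argument follows the same route as the paper's: bound $\mult_\msp(D)$ by a triple intersection $D\cdot S\cdot T$ with $S\sim a_2A$, $T\sim a_4A$, using an isolating set of maximal degree $a_4$ to build $T$. The paper streamlines the setup by first changing coordinates so that $\msp=\msp_x$ (rather than treating $\mcU_x$ and $\mcU_y$ separately), and then takes $S$ to be a \emph{general} member of the movable linear system $|\mcI_\msp(a_2A)|$; since that system has at least two independent members (e.g.\ $z$ and $y\,x^{a_2-a_1}$ when $\msp=\msp_x$), a general $S$ automatically satisfies $\Supp(S)\ne\Supp(D)$, and no residual case arises.

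Your fixed choice $S=(z-\gamma x^{a_2}=0)_X$ forces you into the case $\Supp(D)\subseteq\Supp(S)$, and your treatment of it is not correct as written. The divisor $S$ need not be irreducible, so ``$D$ comparable to $\tfrac{1}{a_2}S$'' is imprecise: all you know is that $\Supp(D)=P$ is some component of $S$, with $P\sim kA$ for some $1\le k\le a_2$ and $D=\tfrac{1}{k}P$. The required inequality $\lct_\msp(X;\tfrac{1}{k}P)\ge 1/(a_2a_4(A^3))$ would need a bound on $\mult_\msp(P)$ that you have not established, and the condition $a_2\ge 2$ (which in any case is a chapter-wide hypothesis not actually used in this lemma, cf.\ Remark~\ref{rem:lemsmpttrue}) does not give it. The simplest repair is exactly the paper's: replace your specific $S$ by a general member of the pencil through $\msp$, which removes the case altogether.
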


\begin{proof}
Let $\msp \in U_1$ be a point.
We may assume $\msp = \msp_x$ by a change of coordinates.
Let $D \in |A|_{\mbQ}$ be an irreducible $\mbQ$-divisor.
The linear system $|\mcI_{\msp} (a_2 A)|$ is movable, and let $S \in |\mcI_{\msp} (a_2 A)|$ be a general member so that $\Supp (S) \ne \Supp (D)$.
The set $\{y, z, t, w\}$ isolates $\msp$, and hence we can take a $\mbQ$-divisor $T \in |a_4 A|_{\mbQ}$ such that $\mult_{\msp} (T) \ge 1$ and $\Supp (T)$ does not contain any component of the effective $1$-cycle $D \cdot S$ (see Remark \ref{rem:isolT}).
Then we have
\[
\mult_{\msp} (D) \le (D \cdot S \cdot T)_{\msp} \le (D \cdot S \cdot T) = a_2 a_4 (A^4).
\]
This shows $\lct_{\msp} (X;D) \ge 1/a_2 a_4 (A^3)$ and the proof is completed.
\end{proof}

\begin{Lem} \label{lem:nsptU1-2}
Suppose that $d$ is divisible by $a_4$.
Then 
\[
\alpha_{\msp} (X) \ge \frac{1}{a_2 a_3 (A^3)}
\] 
for any point $\msp \in U_1$.
\end{Lem}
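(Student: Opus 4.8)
The plan is to run the argument of Lemma \ref{lem:nsptU1-1} essentially verbatim, the only change being that the hypothesis $a_4 \mid d$ lets me isolate $\msp$ with the smaller class $a_3 A$ rather than $a_4 A$; this substitution is exactly what improves the bound $a_2 a_4 (A^3)$ to $a_2 a_3 (A^3)$. As in that lemma I may assume $\msp = \msp_x$, after the coordinate change $y \mapsto y - \beta_1 x^{a_1}$, $z \mapsto z - \beta_2 x^{a_2}$, $t \mapsto t - \beta_3 x^{a_3}$, $w \mapsto w - \beta_4 x^{a_4}$, which is legitimate since $\deg x = 1$ and preserves the weights and $d$. If $a_3 = a_4$ there is nothing to prove, since then $a_2 a_3 (A^3) = a_2 a_4 (A^3)$ and the bound is already supplied by Lemma \ref{lem:nsptU1-1}; so I assume $a_3 < a_4$ from now on.

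The key step is to produce the isolating class $a_3 A$, and for this I first check that $w^k \in F$, where $k := d/a_4 \ge 2$. Suppose not. Then $F (\msp_w) = \coeff_F (w^k) = 0$, so $\msp_w \in X$, and quasi-smoothness of $X$ at $\msp_w$ forces some $\prt F/\prt x_i (\msp_w) \ne 0$; inspecting which monomials survive at $\msp_w$, this means $w^{k-1} v \in F$ for some variable $v \ne w$ with $\deg v = d - (k-1) a_4 = a_4$. Since $a_3 < a_4$, no coordinate other than $w$ has degree $a_4$, a contradiction; hence $w^k \in F$. Consequently, exactly as in the proof of Lemma \ref{lem:isolclass}, the projection $\mbP (1, a_1, a_2, a_3, a_4) \ratmap \mbP (1, a_1, a_2, a_3)$ restricts to a finite morphism $\pi \colon X \to \mbP (1, a_1, a_2, a_3)$. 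Writing $\msq = (1\!:\!0\!:\!0\!:\!0)$, we have $\pi^{-1} (\msq) = (y = z = t = 0)_X$, a finite set containing $\msp_x$, so $\{y, z, t\}$ isolates $\msp_x$ and $a_3 A$ is a $\msp$-isolating class.

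With this in hand the conclusion is immediate. Let $D \in |A|_{\mbQ}$ be an irreducible $\mbQ$-divisor, and, exactly as in Lemma \ref{lem:nsptU1-1}, let $S \in |\mcI_{\msp} (a_2 A)|$ be a general member of this movable system, so that $\Supp (S) \ne \Supp (D)$ and $D \cdot S$ is a well-defined effective $1$-cycle. Applying Remark \ref{rem:isolT} to the isolating set $\{y, z, t\}$, of maximal degree $a_3$, I obtain an effective $\mbQ$-divisor $T \sim_{\mbQ} a_3 A$ with $\mult_{\msp} (T) \ge 1$ whose support contains no component of $D \cdot S$. Then
\[
\mult_{\msp} (D) \le (D \cdot S \cdot T)_{\msp} \le (D \cdot S \cdot T) = a_2 a_3 (A^3),
\]
so $\lct_{\msp} (X; D) \ge 1/(a_2 a_3 (A^3))$ for every such $D$, whence $\alpha_{\msp} (X) \ge 1/(a_2 a_3 (A^3))$. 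The only delicate point, and the place where the hypothesis $a_4 \mid d$ is genuinely used, is the verification that $w^k \in F$; the reduction to the case $a_3 < a_4$ is precisely what rules out a second coordinate of degree $a_4$ spoiling the quasi-smoothness computation at $\msp_w$, and the remaining steps are formal copies of Lemma \ref{lem:nsptU1-1}.
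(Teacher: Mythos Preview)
Your proof is correct and follows essentially the same approach as the paper: reduce to $\msp = \msp_x$, use the hypothesis $a_4 \mid d$ together with quasi-smoothness at $\msp_w$ to arrange $w^{d/a_4} \in F$, conclude that $\{y,z,t\}$ isolates $\msp$, and then run the intersection-number estimate with $S \in |\mcI_{\msp}(a_2 A)|$ and $T \sim_{\mbQ} a_3 A$. The only minor difference is in the case $a_3 = a_4$: you dispose of it by citing Lemma~\ref{lem:nsptU1-1}, whereas the paper observes that quasi-smoothness still gives a degree-$d$ monomial in $t,w$ and then replaces $t,w$ so that $w^{d/a_4} \in F$; both are equally short.
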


\begin{proof}
Let $\msp \in U_1$ be a point.
We may assume $\msp = \msp_x$.
We can choose coordinates so that $w^{d/a_4} \in F$.
Indeed, if $a_4 > a_3$, then $w^{d/a_4} \in F$ by the quasi-smoothness of $X$. 
If $a_4 = a_3$, then there is a monomial of degree $d$ consisting of $t, w$ by the quasi-smoothness of $X$ and we can choose coordinates $t, w$ so that $w^{d/a_4} \in F$.
Under the above choice of coordinates, we see that $\{y, z, t\}$ isolates $\msp$.

Let $D \in |A|_{\mbQ}$ be an irreducible $\mbQ$-divisor.
Let $S$ be a general member of the movable linear system $|\mcI_{\msp} (a_2 A)|$, so that $\Supp (S)$ does not contain $\Supp (D)$.
We can take a $\mbQ$-divisor $T \in |a_3 A|_{\mbQ}$ such that $\mult_{\msp} (T) \ge 1$ and $\Supp (T)$ does not contain any component of the effective $1$-cycle $D \cdot S$ since $\{y, z, t\}$ isolates $\msp$.
Then, we have
\[
\mult_{\msp} (D) \le (D \cdot S \cdot T)_{\msp} \le (D \cdot S \cdot T) = a_2 a_3 (A^3).
\] 
This shows $\alpha_{\msp} (D) \ge 1/a_2 a_3 (A^3)$ and the proof is completed.
\end{proof}

\begin{Rem} \label{rem:lemsmpttrue}
The objects of Section~\ref{chap:smpt} are members of families $\mcF_{\msi}$ for $\msi \in \msI \setminus \msI_1$ and the inequality $a_2 \ge 2$ is assumed throughout  the present section.
It is however noted that in Lemmas \ref{lem:nsptU1-1} and \ref{lem:nsptU1-2}, the assumption $a_2 \ge 2$ is not required and the statement holds for members of $\mcF_{\msi}$ for any $\msi \in \msI$.
\end{Rem}

\begin{Lem} \label{lem:nsptU1-3}
Suppose that $d$ is not divisible by $a_4$, and assume $a_1 = 1$.
Then,
\[
\alpha_{\msp} (X) \ge \min \left\{ \lct_{\msp} (X;S_{\msp}), \ \frac{1}{a_4 (A^3)} \right\} \ge \frac{1}{2}
\] 
for any $\msp \in U_1$, where $S_{\msp}$ is the unique member of $|\mcI_{\msp} (A)|$.
\end{Lem}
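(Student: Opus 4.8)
The plan is to apply the intersection-theoretic recipe for bounding local alpha invariants (choose an auxiliary divisor through $\msp$, intersect it with a competing $D\in|A|_{\mbQ}$, and isolate $\msp$ by a third divisor), with the pencil member $S_{\msp}$ playing the role of the auxiliary divisor, and to treat the two terms of the asserted minimum separately. First I would normalize the point: since $a_1 = 1 < a_2 = \deg z$, the only degree-$1$ coordinates are $x$ and $y$, so a point $\msp \in U_1$ can be moved to $\msp_x = (1\!:\!0\!:\!0\!:\!0\!:\!0)$ by a linear change in $x, y$ followed by substitutions $z \mapsto z - (\ast)x^{a_2}$, $t \mapsto t - (\ast)x^{a_3}$, $w \mapsto w - (\ast)x^{a_4}$. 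Under this normalization the unique member of $|\mcI_{\msp}(A)|$ becomes $S_{\msp} = H_y$. I then record that $\tfrac{1}{a_4(A^3)} \ge \tfrac12$: by Lemma \ref{lem:smptHLdegwt} the hypothesis that $a_4 \nmid d$ forces $d = 2a_4 + a_j$, hence $d > 2a_4$, and since $a_1 = 1 < a_2$, Lemma \ref{lem:wtnumerics}(5) gives $a_4(A^3) = a_1 a_4(A^3) \le 2$.

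For the first inequality, let $D \in |A|_{\mbQ}$ be irreducible with $\Supp(D) \ne H_y$ (if $\Supp(D) = H_y$ then $D = H_y$ and $\lct_{\msp}(X; D) = \lct_{\msp}(X; S_{\msp})$ directly). The set $\{y, z, t, w\}$ isolates $\msp_x$, because its common zero locus in $\mbP$ is the single point $\msp_x$; since $\max\{a_1, a_2, a_3, a_4\} = a_4$, Lemma \ref{lem:isolexT} together with Remark \ref{rem:isolT} produces $T \sim_{\mbQ} a_4 A$ with $\mult_{\msp}(T) \ge 1$ whose support contains no component of the effective $1$-cycle $D \cdot H_y$. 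Then $\msp$ is an isolated point of $\Supp(D) \cap \Supp(H_y) \cap \Supp(T)$ and
\[
\mult_{\msp}(D) \le (D \cdot H_y \cdot T)_{\msp} \le (D \cdot H_y \cdot T) = a_4 (A^3),
\]
so $\lct_{\msp}(X; D) \ge 1/\mult_{\msp}(D) \ge 1/(a_4(A^3))$ by Lemma \ref{lem:multlct}. Taking the infimum over all such $D$ yields $\alpha_{\msp}(X) \ge \min\{\lct_{\msp}(X; S_{\msp}),\ 1/(a_4(A^3))\}$.

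It remains to prove $\lct_{\msp}(X; S_{\msp}) = \lct_{\msp}(X; H_y) \ge \tfrac12$, which is where the main difficulty lies. If $\mult_{\msp}(H_y) \le 2$, the bound is immediate from Lemma \ref{lem:multlct}. The delicate case is when $y$ is the quasi-tangent coordinate of $X$ at $\msp_x$, i.e. the tangent space of the orbifold chart $\breve{U}_x$ at the origin equals $\{\breve{y} = 0\}$, so that $\mult_{\msp}(H_y) = \ord_{\breve{\msp}} F(1, 0, \breve{z}, \breve{t}, \breve{w}) \ge 2$. Here I would use quasi-smoothness of $X$ to control the lowest-weight part of $F(1, 0, \breve{z}, \breve{t}, \breve{w})$: the monomial forced at a vertex such as $\msp_w$ (where $w^k \notin F$ since $a_4 \nmid d$, hence $w^2 x_i \in F$ with $\deg x_i = d - 2a_4 = a_j$) typically contributes a term of degree $\le 2$ in $\breve{z}, \breve{t}, \breve{w}$, giving $\mult_{\msp}(H_y) \le 2$; in the residual subcases where $\mult_{\msp}(H_y) = 3$, the tangent cone of $H_y$ at $\msp$ is a plane cubic which quasi-smoothness prevents from being a triple line, so $\lct_{\msp}(X; H_y) \ge \tfrac12$ by Lemma \ref{lem:lctP2cubic}, exactly as in the proof of Lemma \ref{lem:lcttangcube}.

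The main obstacle is therefore this last step: a case-by-case verification, using the numerics of \S\ref{sec:smptnum} and quasi-smoothness of $X$, that $H_y$ has multiplicity at most $2$ at $\msp$, or else a non-degenerate (not triple-line) cubic tangent cone. Once this is established, combining $\lct_{\msp}(X; S_{\msp}) \ge \tfrac12$ with $1/(a_4(A^3)) \ge \tfrac12$ in the displayed minimum completes the proof.
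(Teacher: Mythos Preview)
Your overall approach matches the paper's exactly: normalize to $\msp=\msp_x$, set $S_{\msp}=H_y$, bound $\mult_{\msp}(D)$ for $D\ne H_y$ via the isolating set $\{y,z,t,w\}$ and $T\sim_{\mbQ}a_4A$, and invoke $a_1a_4(A^3)\le 2$ from Lemma~\ref{lem:wtnumerics}(5). The remaining step $\lct_{\msp}(X;H_y)\ge\tfrac12$ is indeed the crux, and the paper also reduces it to controlling the tangent cone of $H_y$ at $\msp$.

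However, your sketch of that last step has two issues. First, the mechanism you propose---that the monomial $w^2 x_i$ forced at $\msp_w$ ``typically contributes a term of degree $\le 2$'' in $F(1,0,\breve z,\breve t,\breve w)$---is not how it works: when $x_i\in\{z,t\}$ the term $w^2 z$ or $w^2 t$ has (ordinary) degree $3$, not $\le 2$, and when $x_i=y$ it vanishes after setting $y=0$. Second, you only discuss $\mult_{\msp}(H_y)\le 3$, but a priori the multiplicity could be $\ge 4$, and Lemma~\ref{lem:lctP2cubic} says nothing there. The paper's argument closes both gaps with a single observation: since $a_4\nmid d$ we have $d<3a_4$, so no monomial divisible by $w^3$ can appear in $F$. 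Hence (i) in the cubic part of $F(1,0,z,t,w)$ there is no $w^3$, so if it were a cube $(\alpha z+\beta t+\gamma w)^3$ then $\gamma=0$; and (ii) any monomial of $F(1,0,z,t,w)$ lying in $(z,t,w)^4$ automatically lies in $(z,t)^2$. In either the ``$\mult=3$ with cube $(\alpha z+\beta t)^3$'' case or the ``$\mult\ge 4$'' case one can then write $F=x^{d-1}y+g+yh$ with $g\in(z,t)^2$, and $X$ fails to be quasi-smooth along the nonempty set $(y=z=t=x^{d-1}+h=0)$, a contradiction. Replacing your heuristic with this $d<3a_4$ argument fixes the gap.
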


\begin{proof}
Let $\msp \in U_1$ be a point.
We may assume $\msp = \msp_x$.
Note that we have $a_2 > 1$ and thus the linear system $|\mcI_{\msp} (A)|$ indeed consists of a unique member $S_{\msp}$.
In this case $S_{\msp} = H_y$.

We first prove $\lct_{\msp} (X;S_{\msp}) \ge 1/2$, that is, $(X, \frac{1}{2} H_y)$ is log canonical at $\msp$.
Assume to the contrary that $(X, \frac{1}{2} H_y)$ is not log canonical at $\msp$.
Then $\mult_{\msp} (H_y) \ge 3$.
Suppose $\mult_{\msp} (H_y) = 3$.
Then, by Lemma \ref{lem:lctP2cubic}, the degree $3$ part of $F (1,0,z,t,w)$ with respect to $\deg (z,t,w) = (1,1,1)$ is a cube of a linear form, that is, it can be written as $(\alpha z + \beta t + \gamma w)^3$ for some $\alpha, \beta, \gamma \in \mbC$.
By Lemma \ref{lem:smptHLdegwt}, we have $d < 3 a_4$ since $d$ is not divisible by $a_4$.
From this we deduce $\gamma = 0$.
Then we can write
\[
F = x^{d-1} y + x^{d-3 a_3} (\alpha x^{a_3-a_2} z + \beta t)^3 + g + y h,
\]
where $g = g (x,z,t,w) \in (z,t,w)^4 \subset \mbC [x,z,t,w]$ and $h = h (x,y,z,t,w)$.
By the inequality $d < 3 a_4$, no monomial in $g$ can be divisible by $w^3$, so that $g \in (z,t)^2$.
But then $X$ is not quasi-smooth along the non-empty subset
\[
(y = x^{d-1} + h = z = t = 0) \subset X.
\]
This is impossible and we have $\mult_{\msp} (H_y) \ge 4$.
By the same argument as above, we can write
\[
F = x^{d-1} y + g + y h,
\]
where $g \in (z,t)^2$, which implies that $X$ is not quasi-smooth.
This is a contradiction and thus $\lct_{\msp} (X; S_{\msp}) \ge 1/2$.

Let $D \in |A|_{\mbQ}$ be an irreducible $\mbQ$-divisor other than $S_{\msp} = H_y$.
Note that $D \cdot H_y$ is an effective $1$-cycle.
The set $\{y, z, t, w\}$ isolates $\msp$, and hence we can take a $\mbQ$-divisor $T \in |a_4 A|_{\mbQ}$ such that $\mult_{\msp} (T) \ge 1$ and $\Supp (T)$ does not contain any component of $D \cdot H_y$.
We have
\[
\mult_{\msp} (D) \le (D \cdot H_y \cdot T) \le a_4 (A^3) \le 2,
\]
where the last inequality follows from (5) of Lemma \ref{lem:wtnumerics} since $a_1 = 1$.
Thus $\lct_{\msp} (X;D) \ge 1/a_4 (A^3) \ge 1/2$ and the proof is completed.
\end{proof}

\begin{Lem} \label{lem:nsptU1-4}
Let $X$ be a member of a family $\mcF_{\msi}$ with $\msi \in \{9,17\}$.
Then 
\[
\alpha_{\msp} (X) \ge \frac{1}{2}
\]
for any point $\msp \in U_1$.
\end{Lem}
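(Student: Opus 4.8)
The plan is to follow the strategy of Lemma \ref{lem:nsptU1-3}, taking $S_{\msp} = H_y$, but the defining feature of $\msi \in \{9,17\}$, namely $d = 3a_4$ (Lemma \ref{lem:wtnumerics}(1)), forces a genuinely different argument for the bound $\lct_{\msp}(X; H_y) \ge 1/2$. Recall that for $\msi \in \{9,17\}$ we have $(a_1,a_2,a_3,a_4) = (1,a,a+1,a+1)$ with $a \in \{2,3\}$ and $d = 3(a+1) = 3a_4$. First I would reduce to $\msp = \msp_x$: since both $x$ and $y$ have degree $1$, every point of $U_1$ can be moved into $\mcU_x$ by a linear change of the degree-$1$ coordinates, and then to $\msp_x$ by the graded substitutions $z \mapsto z - \gamma x^{a_2}$, $t \mapsto t - \delta x^{a_3}$, $w \mapsto w - \varepsilon x^{a_4}$ (note $a_2 \ge 2$). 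Quasi-smoothness at $\msp_x \in X$ forces $x^{d-1}y \in F$ and $x^d \notin F$, so after rescaling $\coeff_F(x^{d-1}y) = 1$; consequently $H_y$ is the unique member of $|\mcI_{\msp}(A)|$, and eliminating $y$ in the chart $\mcU_x$ exhibits $H_y$ locally as $(\bar{F} = 0)$ in the smooth germ $\mbA^3_{z,t,w}$, where $\bar{F} = F(1,0,z,t,w)$.

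A weight count (using $\deg z = a$, $\deg t = \deg w = a+1$, $d = 3(a+1)$) shows $\bar{F}$ has no monomial of ordinary degree $\le 2$ and that its degree-$3$ part is the binary cubic $\Phi(t,w) := \sum_{i=0}^{3}\coeff_F(t^i w^{3-i})\, t^i w^{3-i}$, which does not involve $z$. Hence $\mult_{\msp}(H_y) = 3$ and the projectivised tangent cone is $(\Phi = 0) \subset \mbP^2$. The crux is to show $\Phi$ is reduced, hence not a triple line. I would examine the finite set $X \cap \Lambda$, where $\Lambda := (x = y = z = 0) \cong \mbP^1_{t,w}$: on $\Lambda$ the polynomial $F$ restricts to $\Phi$, so $X \cap \Lambda = (\Phi = 0)$. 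At a root $\msq$ of $\Phi$, a direct check shows $\partial F/\partial x$, $\partial F/\partial y$, $\partial F/\partial z$ all vanish at $\msq$ (no monomial of degree $d$ is linear in one of $x,y,z$ and otherwise a power product of $t,w$ of the required weight), while $\partial F/\partial t$, $\partial F/\partial w$ vanish at $\msq$ exactly when $\msq$ is a multiple root of $\Phi$. Since $X$ is quasi-smooth, no such point can occur, so every root of $\Phi$ is simple and $\Phi$ is a product of three distinct linear forms. By Lemma \ref{lem:lctP2cubic} the pair $(\mbP^2, \tfrac12 (\Phi = 0))$ is log canonical, and blowing up $\msp$ and restricting to the exceptional $\mbP^2$, exactly as in the proof of Lemma \ref{lem:lcttangcube}, yields $\lct_{\msp}(X; H_y) \ge 1/2$.

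It then remains to treat an irreducible $D \in |A|_{\mbQ}$ with $\Supp(D) \ne H_y$. The set $\{y,z,t,w\}$ isolates $\msp_x$, so by Remark \ref{rem:isolT} there is $T \in |a_4 A|_{\mbQ}$ with $\mult_{\msp}(T) \ge 1$ whose support avoids the components of $D \cdot H_y$; intersecting gives $\mult_{\msp}(D) \le (D \cdot H_y \cdot T) = a_4 (A^3) \le 2$, where the last inequality is Lemma \ref{lem:wtnumerics}(5) (here $a_1 = 1 < a_2$ and $d = 3a_4 > 2a_4$; in fact $a_4 (A^3) = 3/a$). By Lemma \ref{lem:multlct}, $\lct_{\msp}(X; D) \ge 1/\mult_{\msp}(D) \ge 1/2$. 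Combining the two cases via Remark \ref{rem:covex} gives $\alpha_{\msp}(X) \ge 1/2$.

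The main obstacle is establishing the reducedness of $\Phi$: this is precisely where the cheap argument of Lemma \ref{lem:nsptU1-3} breaks down, since there one uses $d < 3a_4$ to kill the $w$-term of the putative cube, whereas here $w^3$ is a legitimate monomial of $F$ and the triple-line case must instead be excluded by the quasi-smoothness analysis along $\Lambda$.
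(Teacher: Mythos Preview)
Your overall strategy is sound and yields a genuinely different route from the paper, but two of your ``weight count'' claims are false as stated and need repair.

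First, it is not true that $\bar F = F(1,0,z,t,w)$ has no monomials of ordinary degree $\le 2$. For instance $x^{2a+3}z$, $x^{2a+2}t$, $x^{2a+2}w$ all have weighted degree $d=3(a+1)$ and contribute linear terms to $\bar F$; likewise $x^{a+1}t^2$, $x^{a+2}zt$, etc.\ contribute quadratic terms. Relatedly, quasi-smoothness at $\msp_x$ does \emph{not} force $x^{d-1}y\in F$; it only forces one of $x^{d-1}y,\,x^{d-a}z,\,x^{d-a-1}t,\,x^{d-a-1}w$ to appear. The fix is easy: if $\mult_\msp(H_y)\le 2$ then $\lct_\msp(X;H_y)\ge 1/2$ trivially, so you may assume $\mult_\msp(H_y)\ge 3$ and in particular $x^{d-1}y\in F$ (so that $z,t,w$ are local coordinates).

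Second, even under that assumption the degree-$3$ part $C(z,t,w)$ of $\bar F$ is \emph{not} the binary cubic $\Phi(t,w)$; monomials such as $x^3z^3$, $x^2z^2t$, $xztw$ all contribute $z$-terms to $C$. What is true is that $C(0,t,w)=\Phi(t,w)=F(0,0,0,t,w)$. Your quasi-smoothness analysis along $\Lambda=(x=y=z=0)$ is correct and shows $\Phi$ has three simple roots; hence $\Phi$ is not a cube, and therefore $C$ cannot be the cube of a linear form (else $C(0,t,w)$ would be). Also $\Phi\ne 0$ forces $\mult_\msp(H_y)=3$. Now Lemma~\ref{lem:lctP2cubic} applied to the tangent cone $(C=0)\subset\mbP^2$ (not $(\Phi=0)$) gives $\lct_\msp(X;H_y)\ge 1/2$.

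With these corrections your argument works. The paper instead argues by contradiction: assuming $C=(\alpha z+\beta t+\gamma w)^3$, it observes that every monomial of $F(x,0,z,t,w)$ lying in $(z,t,w)^4$ is divisible by $z^3$, and then locates a point on $(y=z=\beta t+\gamma w=0)$ where all partials of $F$ vanish. Your approach is more direct: you exhibit explicitly that $C|_{z=0}=\Phi$ is reduced. Both hinge on quasi-smoothness, but at different loci (the paper on a curve meeting $H_y$, you on the stratum $\Lambda$); your version makes the role of the three $\tfrac{1}{a+1}$-points of $X$ on $\Lambda$ transparent. Your treatment of $D\ne H_y$ is essentially the paper's (the paper additionally uses $\mult_\msp(H_y)\ge 2$ to sharpen the bound to $\lct_\msp(X;D)\ge 4/3$, but $\ge 1/2$ suffices).
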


\begin{proof}
In this case,
\[
X = X_{3 a + 3} \subset \mbP (1, 1, a, a+1, a+1)_{x, y, z, t, w},
\] 
where $a = 2, 3$ if $\msi = 9, 17$, respectively.
Let $\msp \in U_1$ be a point.
We may assume $\msp = \msp_x$.

We show that $(X, \frac{1}{2} H_y)$ is log canonical at $\msp$.
Assume to the contrary that it is not.
Then $\mult_{\msp} (H_y) \ge 3$ and, by Lemma \ref{lem:lctP2cubic}, we can write
\[
F = x^{3 a + 2} y + x^3 (\alpha z x + \beta t + \gamma w)^3 + g + y h,
\]
where $g = g (x,z,t,w) \in (z,t,w)^4 \subset \mbC [x,y,t,w]$ and $h = h (x,y,z,t,w)$.
By degree reason, any monomial in $g \in (z,t,w)^4$ is divisible by $z^3$.
It follows that $X$ is not quasi-smooth along the non-empty subset
\[
(y = x^{3a+2} + h = z = \beta t + \gamma w = 0) \subset X.
\]
This is a contradiction and the pair $(X, \frac{1}{2} H_y)$ is log canonical at $\msp$.

Let $D \in |A|_{\mbQ}$ be an irreducible $\mbQ$-divisor other than $H_y$.
The set $\{y, z, t, w\}$ clearly isolates $\msp$, and hence we can take a $\mbQ$-divisor $T \in |(a+1) A|_{\mbQ}$ such that $\mult_{\msp} (T) \ge 1$ and $\Supp (T)$ does not contain any component of the effective $1$-cycle $D \cdot H_y$.
Then, we have
\[
2 \mult_{\msp} (D) \le (D \cdot H_y \cdot T)_{\msp} \le (D \cdot H_y \cdot T) = \frac{3}{a} \le \frac{3}{2},
\] 
since $\mult_{\msp} (H_y) \ge 2$.
This shows $\lct_{\msp} (X, D) \ge 4/3$ and the proof is completed.
\end{proof}

\begin{Prop} \label{prop:smptU1}
Let $X$ be a member of a family $\mcF_{\msi}$ with $\msi \in \msI \setminus \msI_1$.
Then, 
\[
\alpha_{\msp} (X) \ge 1
\] 
for any point $\msp \in U_1$.
\end{Prop}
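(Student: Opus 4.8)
The plan is to prove the sharp bound $\alpha_{\msp}(X) \ge 1$ on $U_1$ with the same ``two divisors'' machinery used in Lemmas \ref{lem:nsptU1-1}--\ref{lem:nsptU1-4}, but squeezing the resulting intersection estimates all the way down to coefficient $1$ with the weight inequalities of Lemma \ref{lem:wtnumerics}. Every point of $U_1$ is a smooth point lying on a standard chart of a weight-$1$ coordinate, so after a linear change of the degree-$1$ coordinates I may assume $\msp = \msp_x$ (or, when $a_1 = 1$, $\msp = \msp_y$). I would then fix an irreducible $\mbQ$-divisor $D \in |A|_{\mbQ}$ and bound $\mult_{\msp}(D)$ from above by a local intersection number $(D \cdot S \cdot T)_{\msp}$, where $S \in |\mcI_{\msp}(a A)|$ is a general member of a movable system through $\msp$ and $T \in |b A|_{\mbQ}$ is supplied by an isolating set as in Remark \ref{rem:isolT}; the goal is to arrange $a, b$ so that $a b (A^3) \le 1$, forcing $\mult_{\msp}(D) \le 1$ and hence $\lct_{\msp}(X; D) \ge 1$.

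I would organise the argument by the divisibility of $d$ by $a_4$, as permitted by Lemma \ref{lem:smptHLdegwt}. When $a_4 \mid d$ one may choose coordinates with $w^{d/a_4} \in F$, so $\{y, z, t\}$ isolates $\msp$ and Lemma \ref{lem:nsptU1-2} gives $\alpha_{\msp}(X) \ge 1/(a_2 a_3 (A^3))$; here the plan is to prove $a_2 a_3 (A^3) \le 1$ rather than the weaker $\le 2$ of Lemma \ref{lem:wtnumerics}(6), separating the subcases $d = 2 a_4$ and $d = 3 a_4$ and using the size of $a_1$. When $a_4 \nmid d$ and $a_1 \ge 2$, Lemma \ref{lem:nsptU1-1} gives $\alpha_{\msp}(X) \ge 1/(a_2 a_4 (A^3))$, and I would upgrade Lemma \ref{lem:wtnumerics}(7) to $a_2 a_4 (A^3) \le 1$ on the relevant range of weights. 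The remaining possibility $a_4 \nmid d$ with $a_1 = 1$ is handled through the unique member $S_{\msp} = H_y$ of $|\mcI_{\msp}(A)|$: I would re-run the quasi-smoothness analysis of Lemma \ref{lem:nsptU1-3} to pin down $\lct_{\msp}(X; H_y)$ and then control every $D \ne H_y$ by intersecting against $H_y$ and an isolating $T \in |a_4 A|_{\mbQ}$.

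The genuine obstacle, and the place where the jump from $\tfrac{1}{2}$ to $1$ really has to be earned, is exactly the short list of families where the crude product stays at $2$: namely $a_1 = 1$ with $d = 2 a_4$, and the two exceptional families $\msi \in \{9, 17\}$ treated in Lemma \ref{lem:nsptU1-4}. For these the single-step intersection estimate is insufficient, and I expect to replace it by a local computation at the smooth point $\msp$: blow up $\msp$ with exceptional $\mbP^2$, invoke the $2n$-inequality of Lemma \ref{lem:2nineq} to locate a putative non-log-canonical line $L \subset E$ at coefficient $1$, and then exclude it using the cubic bound of Lemma \ref{lem:lctP2cubic} together with the explicit shape of $F$ forced by quasi-smoothness (the same mechanism that showed a triple line cannot occur when bounding $\mult_{\msp}(H_y)$). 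Verifying that this local exclusion goes through uniformly over all the remaining families is the step I expect to be most delicate.
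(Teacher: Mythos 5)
Your first two paragraphs reconstruct essentially the route the paper takes: its proof of Proposition \ref{prop:smptU1} is exactly the four-way case division you describe, driven by Lemma \ref{lem:smptHLdegwt} --- the case $a_4 \nmid d$, $a_1 \ge 2$ handled by Lemma \ref{lem:nsptU1-1} together with Lemma \ref{lem:wtnumerics}(7), the case $a_4 \nmid d$, $a_1 = 1$ by Lemma \ref{lem:nsptU1-3}, the case $a_4 \mid d$, $\msi \notin \{9,17\}$ by Lemma \ref{lem:nsptU1-2} together with Lemma \ref{lem:wtnumerics}(6), and $\msi \in \{9,17\}$ by Lemma \ref{lem:nsptU1-4}. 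But you should notice what those ingredients actually deliver: the numerical inputs are $a_2 a_4 (A^3) \le 2$ and $a_2 a_3 (A^3) \le 2$, and the conclusions of Lemmas \ref{lem:nsptU1-3} and \ref{lem:nsptU1-4} are themselves only ``$\ge 1/2$''. So the paper's own proof establishes $\alpha_{\msp}(X) \ge 1/2$ on $U_1$, not $\ge 1$; the constant $1$ in the statement is inconsistent with the proof attached to it, and only the bound $1/2$ is consumed downstream in Theorem \ref{thm:smpt}. In other words, you were handed a statement the paper never actually proves.

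The genuine gap is in the program you add to force the constant $1$: the proposed sharpenings of Lemma \ref{lem:wtnumerics} are numerically false. For $a_4 \nmid d$ and $a_1 \ge 2$ you cannot prove $a_2 a_4 (A^3) \le 1$: family No.~32, $X_{16} \subset \mbP(1,2,3,4,7)$ with $d = 16 = 2 \cdot 7 + 2$, has $a_2 a_4 (A^3) = 3 \cdot 7 \cdot \frac{2}{21} = 2$ exactly. For $a_4 \mid d$ you cannot prove $a_2 a_3 (A^3) \le 1$: whenever $a_1 = 1$ and $d = 2 a_4$ (families No.~11, 15, 16, 21, among others) one has $a_2 a_3 (A^3) = 2/a_1 = 2$, and for $d = 3 a_4$ with $a_1 = 2$ (e.g.\ No.~19) the value is $3/2$; moreover the equality case of Lemma \ref{lem:wtnumerics}(5) is attained, so the remaining branch $a_4 \nmid d$, $a_1 = 1$ realizes $a_4 (A^3) = 2$ (family No.~7, $X_8 \subset \mbP(1,1,2,2,3)$). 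Consequently your list of ``genuine obstacles'' ($a_1 = 1$ with $d = 2a_4$, plus $\msi \in \{9,17\}$) omits a substantial list of families where the relevant product lies strictly between $1$ and $2$, and your fallback via Lemma \ref{lem:2nineq} is only a sketch: at a general point of $U_1$ you do not control which coordinate plays the role of the tangent divisor produced by the $2n$-inequality, so the resulting bound $2/(b_j b_k (A^3))$ (cf.\ Lemma \ref{lem:complctsingtang}) need not reach $1$ --- in family No.~7, for instance, even the best case gives $2/(a_2 a_3 (A^3)) = 3/4$. Carried out honestly, your method reproduces the paper's bound $\alpha_{\msp}(X) \ge 1/2$ on $U_1$, which is all the paper itself proves and uses; the constant $1$ is unobtainable by this route.
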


\begin{proof}
Let $X = X_d \subset \mbP (1,a_1,a_2,a_3,a_4)$ be a member of $\mcF_{\msi}$ with $\msi \in \msI \setminus \msI_1$, where we assume $a_1 \le \cdots \le a_4$.
\begin{itemize}
\item If $d$ is not divisible by $a_4$ and $a_1 \ge 2$, then $a_2 a_4 (A^3) \le 2$ and the assertion follows from Lemma \ref{lem:nsptU1-1}.
\item If $d$ is not divisible by $a_4$ and $a_1 = 1$, then the assertion follows from Lemma \ref{lem:nsptU1-3}.
\item If $d$ is divisible by $a_4$ and $\msi \notin \{9, 17\}$, then $a_2 a_3 (A^3) \le 2$ by Lemma \ref{lem:wtnumerics} and the assertion follows from Lemma \ref{lem:nsptU1-2}. 
\item If $\msi \in \{9, 17\}$, then the assertion follows from Lemma \ref{lem:nsptU1-4}.
\end{itemize}
This completes the proof.
\end{proof}

%%%%%%%%%%%%%%%%%%%%%%%%%%%%%%%%%%
%%%%%%%%%%%%%%%%%%%%%%%%%%%%%%%%%%
\subsection{Smooth points on $H_x \setminus L_{xy}$ for families with $1 < a_1 < a_2$} \label{sec:smptHminusL}
%%%%%%%%%%%%%%%%%%%%%%%%%%%%%%%%%%
%%%%%%%%%%%%%%%%%%%%%%%%%%%%%%%%%%

Let 
\[
X = X_d \subset \mbP (1, a_1, \dots, a_4)_{x, y, z, t, w}
\] 
be a member of a family $\mcF_{\msi}$ with $\msi \in \msI \setminus \msI_1$ satisfying $1 < a_1 < a_2 \le a_3 \le a_4$.
In this section, we set $\bar{F} = F (0,y,z,t,w)$.
Then $H_x$ is isomorphic to the weighted hypersurface in $\mbP (a_1, \dots, a_4)$ defined by $\bar{F} = 0$.
We note that if a smooth point $\msp \in X$ contained in $H_x$ satisfies $\mult_{\msp} (H_x) > 2$, then $\msp$ belongs to the subset
\[
\bigcap_{v_1, v_2 \in \{y,z,t,w\}} \left( \frac{\prt^2 \bar{F}}{\prt v_1 \prt v_2} = 0 \right) \cap X.
\]

The following is the main result of this section.

\begin{Prop} \label{prop:smptHminusL}
Let $X = X_d \subset \mbP (1, a_1, \dots, a_4)$, $a_1 \le \dots \le a_4$, be a member of a family $\mcF_{\msi}$ with $\msi \in \msI \setminus \msI_1$ satisfying $1 < a_1 < a_2$.
Then 
\[
\alpha_{\msp} (X) \ge 1
\] 
for any smooth point $\msp$ of $X$ contained in $H_x \setminus L_{xy}$.
\end{Prop}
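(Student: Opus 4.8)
The plan is to use $S = H_x$ as the auxiliary divisor throughout, and to reduce everything to controlling one surface. Since $\deg x = 1$ we have $H_x \sim A$, and because $A$ is the positive generator of $\Cl (X) \cong \mbZ$, any effective divisor in $|A|$ is automatically irreducible and reduced; in particular $H_x$ is. As $X$ has only isolated (terminal) singularities, Lemma \ref{lem:normalqhyp} shows $H_x$ is a normal surface. By Remark \ref{rem:covex} it suffices to prove $\lct_{\msp} (X;D) \ge 1$ for every \emph{irreducible} $D \in |A|_{\mbQ}$, and I split this into the case $D = H_x$ and the case $\Supp (D) \ne H_x$.

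For $D = H_x$ the task is to show $\lct_{\msp} (X; H_x) \ge 1$. Since $X$ is smooth at $\msp$ and $H_x$ is a normal Cartier divisor, by adjunction this is equivalent to the normal surface $H_x = (\bar F = 0) \subset \mbP (a_1, \dots, a_4)$, with $\bar F = F(0,y,z,t,w)$, being log canonical at $\msp$. I will control its singularities using quasi-smoothness of $X$: as recorded just before the proposition, $\mult_{\msp} (H_x) > 2$ forces all second partials of $\bar F$ to vanish at $\msp$, and a double (resp.\ triple) point of a surface in a smooth threefold is log canonical unless its tangent cone degenerates to a double line (resp.\ to a non-nodal cubic, e.g.\ a triple line). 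These degenerations must be excluded by the quasi-smoothness of $X$ along $H_x$, exactly in the spirit of Lemmas \ref{lem:nsptU1-3} and \ref{lem:nsptU1-4} together with Lemma \ref{lem:lctP2cubic}. This is a genuinely necessary step, since if $H_x$ were non-log-canonical at $\msp$ the proposition would be false.

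For $\Supp (D) \ne H_x$ the cycle $H_x \cdot D$ is defined and I invoke the isolating classes of Lemma \ref{lem:isolclass}. Writing $m = \mult_{\msp} (H_x) \ge 1$ and choosing, via Lemma \ref{lem:isolexT} (see Remark \ref{rem:isolT}), a divisor $T \sim_{\mbQ} e A$ with $\mult_{\msp} (T) \ge 1$ whose support contains no component of $H_x \cdot D$, the local intersection inequality gives
\[
m \cdot \mult_{\msp} (D) \le (H_x \cdot D \cdot T)_{\msp} \le (H_x \cdot D \cdot T) = e (A^3),
\]
so $\mult_{\msp} (D) \le e (A^3)/m$ and, by Lemma \ref{lem:multlct}, $\lct_{\msp} (X;D) \ge m/(e (A^3))$. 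When $a_4 \mid d$ (the cases $d = 2a_4$ and $d = 3a_4$ of Lemma \ref{lem:smptHLdegwt}, where, possibly after a coordinate change, $w^k \in F$) I take $e = a_1 a_3$; then $a_1 a_3 (A^3) \le 1$ by Lemma \ref{lem:wtnumerics}(4) (using $1 < a_1 < a_2$), so $\lct_{\msp} (X;D) \ge m \ge 1$. When $a_4 \nmid d$, i.e.\ $d = 2a_4 + a_j$ with $j \in \{1,2,3\}$, only $e = a_1 a_4$ is available; if $a_1 a_4 (A^3) = d/(a_2 a_3) \le 1$ the same computation yields $\lct_{\msp} (X;D) \ge 1$, and together with the previous paragraph this settles $\alpha_{\msp} (X) \ge 1$ in these ranges.

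The main obstacle is the residual configuration $d = 2a_4 + a_j$ with $a_1 a_4 (A^3) > 1$ (forced only by small weights, where $d > a_2 a_3$) at a point where $m = \mult_{\msp} (H_x) = 1$: there the crude threefold bound only gives $\mult_{\msp} (D) \le a_1 a_4 (A^3) \le 2$ (Lemma \ref{lem:wtnumerics}(5)), which is too weak for $\alpha_{\msp} (X) \ge 1$. For these finitely many families I plan to replace the crude intersection by intersection theory on the surface $S = H_x$: decompose $T|_{H_x} = \sum_i m_i \Gamma_i$ into irreducible curves, single out a low-degree irreducible curve $\Gamma_1$ through $\msp$ with $\msp \notin \bigcup_{i \ge 2} \Gamma_i$, and apply Lemma \ref{lem:mtdLred} (or Lemma \ref{lem:exclL} when $H_x \cap T$ is already irreducible). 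Verifying the hypotheses of that lemma for each remaining small-weight family — normality and quasi-smoothness of $H_x$ along $\Gamma_1$, the degree bound $\deg \Gamma_1 \le m_1$, and the matrix condition $(\star)$ for the intersection matrix $M(\Gamma_1,\dots,\Gamma_k)$ — is where I expect the real work to lie.
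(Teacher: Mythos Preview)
A key discrepancy: the paper's own argument, carried out over the cases $d = 2a_4$, $d = 2a_4 + a_j$ ($j \in \{1,2,3\}$) and $d = 3a_4$, only establishes $\lct_\msp(X;H_x) \ge \tfrac12$ (via $\mult_\msp(H_x) \le 2$) and, for irreducible $D \ne H_x$, $\lct_\msp(X;D) \ge \tfrac12$ (via $\mult_\msp(D) \le a_1 a_4 (A^3) \le 2$, or $\le a_1 a_3 (A^3) \le 1$ when $d = 2a_4$). So the written proof yields $\alpha_\msp(X) \ge \tfrac12$, which is exactly what Theorem~\ref{thm:smpt} requires; the displayed $\ge 1$ is not what the paper's proof actually delivers. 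You are targeting the literal, stronger statement, and your outline is accordingly more ambitious than the paper's.

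Your two-part skeleton --- $D = H_x$ versus $D \ne H_x$, with Lemma~\ref{lem:isolclass} supplying $T$ --- matches the paper's. The content diverges in two places. For $D = H_x$ the paper does \emph{not} attempt log-canonicity of $H_x$ at $\msp$; it only shows $\mult_\msp(H_x) \le 2$. This is immediate when $d = 2a_4$ (since $w^2 \in \bar F$) but takes real work otherwise: for $d = 2a_4 + a_j$ one uses $w^2 x_j \in F$ together with the vanishing of the second partials of $\bar F$ to pin $\msp$ onto a low-degree stratum and then bound the multiplicity by an auxiliary intersection number (Claim~\ref{clm:smptHminusL1}); for $d = 3a_4$ one checks all seven relevant families individually (Claim~\ref{cl:smHcase3e}). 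For $D \ne H_x$ the paper is content with the crude bound $\mult_\msp(D) \le (D \cdot H_x \cdot T)$ using only $\mult_\msp(H_x) \ge 1$, never exploiting the actual value of $\mult_\msp(H_x)$ as a multiplier and never descending to surface intersection theory via Lemma~\ref{lem:mtdLred}.

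If the intended target is $\ge \tfrac12$, your log-canonicity analysis collapses to the bound $\mult_\msp(H_x) \le 2$ --- that is where the genuine case-by-case effort sits --- and your residual programme with Lemma~\ref{lem:mtdLred} becomes unnecessary. If you genuinely want to push to $\ge 1$, note that the paper does not, and both of your acknowledged hard steps (ruling out double-line tangent cones for $H_x$, and the surface analysis for the low-weight residual families) would need to be carried out in full.
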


The rest of this section is entirely devoted to the proof of Proposition \ref{prop:smptHminusL} which will be done by division into several cases.

%%%%%%%%%%%%%%%%%%%%%%%%%%%%%%%%%%
\subsubsection{Case: $1 < a_1 < a_2$ and $d = 2 a_4$}
%%%%%%%%%%%%%%%%%%%%%%%%%%%%%%%%%%

We will prove Proposition \ref{prop:smptHminusL} under the assumption of $1 < a_1 < a_2$ and $d = 2 a_4$.

Let $\msp \in X$ be a smooth point contained in $H_x \setminus L_{xy}$. 
We have $\mult_{\msp} (H_x) \le 2$ since $w^2 \in F$, and thus $\lct_{\msp} (X; H_x) \ge 1/2$.
Let $D \in |A|_{\mbQ}$ be an irreducible $\mbQ$-divisor on $X$ other than $H_x$. 
By Lemma \ref{lem:isolclass}, $a_1 a_3 A$ isolates $\msp$, and hence
 we can take a $\mbQ$-divisor $T \in |a_1 a_3 A|_{\mbQ}$ such that $\mult_{\msp} (T) \ge 1$ and $\Supp (T)$ does not contain any component of the effective $1$-cycle $D \cdot H_x$.
Then we have
\[
\mult_{\msp} (D) \le (D \cdot H_x \cdot T)_{\msp} \le (D \cdot H_x \cdot T) = a_1 a_3 (A^3) \le 1,
\]
where the last inequality follows from Lemma \ref{lem:wtnumerics}.
This shows $\lct_{\msp} (X;D) \ge 1$ and the proof is completed.

%%%%%%%%%%%%%%%%%%%%%%%%%%%%%%%%%%
\subsubsection{Case: $1 < a_1 < a_2$ and $d = 2 a_4 + a_1$}
%%%%%%%%%%%%%%%%%%%%%%%%%%%%%%%%%%

We will prove Proposition \ref{prop:smptHminusL} under the assumption of $1 < a_1 < a_2$ and $d = 2 a_4 + a_1$.

Let $\msp \in X$ be a smooth point contained in $H_x \setminus L_{xy}$. 
We can write
\[
F = w^2 y + w f + g,
\]
where $f, g \in \mbC [x, y, z, t]$ are quasi-homogeneous polynomials of degree $d-a_4$ and $d$ respectively.
Since $\prt^2 \bar{F}/\prt w^2 = y$ and $y$ does not vanish at $\msp \in H_x \setminus L_{xy}$, we have $\mult_{\msp} (H_x) \le 2$ and thus $\lct_{\msp} (X;H_x) \ge 1/2$. 

Let $D \in |A|_{\mbQ}$ be an irreducible $\mbQ$-divisor on $X$ other than $H_x$.
By Lemma \ref{lem:isolclass}, $a_1 a_4 A$ isolates $\msp$, and hence we can take a $\mbQ$-divisor $T \in |a_1 a_4 A|_{\mbQ}$ such that $\mult_{\msp} (T) \ge 1$ and $\Supp (T)$ does not contain any component of the effective $1$-cycle $D \cdot H_x$.
Then we have
\[
\mult_{\msp} (D) \le (D \cdot H_x \cdot T)_{\msp} \le (D \cdot H_x \cdot T) = a_1 a_4 (A^3) \le 2,
\]
where the last inequality from Lemma \ref{lem:wtnumerics}.
This shows $\lct_{\msp} (X;D) \ge 1/2$ and the proof is completed.

%%%%%%%%%%%%%%%%%%%%%%%%%%%%%%%%%%
\subsubsection{Case: $1 < a_1 < a_2$ and $d = 2 a_4 + a_2$} \label{sec:smptHminusLc}
%%%%%%%%%%%%%%%%%%%%%%%%%%%%%%%%%%

We will prove Proposition \ref{prop:smptHminusL} under the assumption of $1 < a_1 < a_2$ and $d = 2 a_4 + a_2$.

Let $\msp \in X$ be a smooth point contained in $H_x \setminus L_{xy}$. 
We can write
\[
F = w^2 z + w (z f + g) + h,
\]
where $f, h \in \mbC [x,y,z,t]$ and $g \in \mbC [x,y,t]$ are quasi-homogeneous polynomials of degrees $a_4, a_2 + a_4$ and $d$, respectively.

\begin{Claim} \label{clm:smptHminusL1}
$\lct_{\msp} (X; H_x) \ge 1/2$.
\end{Claim}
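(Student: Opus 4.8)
The plan is to bound $\lct_{\msp}(X; H_x)$ from below by analysing the local multiplicity of $H_x$ at $\msp$ in the orbifold chart, invoking Lemma \ref{lem:multlct} together with the cubic estimate of Lemma \ref{lem:lctP2cubic}. Since $\msp \in H_x \setminus L_{xy}$ is a smooth point of $X$, the coordinate $y$ does not vanish at $\msp$, so I may pass to the orbifold chart $\rho_y \colon \breve{U}_y \to U_y$ and normalise $y = 1$; here $\breve{U}_y$ is the hypersurface $(F(\breve{x}, 1, \breve{z}, \breve{t}, \breve{w}) = 0)$, which is smooth at the preimage $\breve{\msp}$ of $\msp$. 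The first step is a dichotomy on $\prt F / \prt \breve{x}(\breve{\msp})$. If this derivative vanishes, then $\breve{x}$ may be completed to a system of local coordinates on $\breve{U}_y$ at $\breve{\msp}$, so $H_x$ is smooth at $\msp$ and $\lct_{\msp}(X; H_x) = 1$. Hence I may assume $\prt F/\prt \breve{x}(\breve{\msp}) \ne 0$, in which case eliminating $\breve{x}$ shows that $\omult_{\msp}(H_x) = \mult_{\breve{\msp}}(\bar{G})$, where $\bar{G} := \bar{F}(1, \breve{z}, \breve{t}, \breve{w})$ and $\bar{F} = F(0, y, z, t, w)$.

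By Lemma \ref{lem:multlct} it now suffices to control $\mult_{\breve{\msp}}(\bar{G})$. If $z(\msp) \ne 0$, then the monomial $w^2 z$ contributes a nonvanishing $\breve{w}^2$ term to the quadratic part of $\bar{G}$ at $\breve{\msp}$ (indeed $\prt^2 \bar{F}/\prt w^2 = 2z$), so $\mult_{\breve{\msp}}(\bar{G}) \le 2$ and $\lct_{\msp}(X; H_x) \ge 1/2$. I am therefore reduced to the degenerate locus $z(\msp) = 0$. Here I expect to show first that $\mult_{\breve{\msp}}(\bar{G}) \le 3$: if the multiplicity were at least $4$, then after the coordinate changes permitted by the relation $d = 2 a_4 + a_2$ the polynomial $F$ would acquire a form exhibiting a nonempty non-quasi-smooth locus of $X$, contradicting quasi-smoothness. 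When $\mult_{\breve{\msp}}(\bar{G}) \le 2$ the bound $\lct_{\msp}(X; H_x) \ge 1/2$ again follows from Lemma \ref{lem:multlct}.

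The remaining case is $\mult_{\breve{\msp}}(\bar{G}) = 3$. Blowing up $\breve{\msp}$ with exceptional divisor $E \cong \mbP^2$, the restriction $\tilde{H}_x|_E$ is the plane cubic cut out by the degree-$3$ part $C_3$ of $\bar{G}$. If $C_3$ is not the cube of a linear form, then Lemma \ref{lem:lctP2cubic} gives $\lct(\mbP^2; \tilde{H}_x|_E) \ge 1/2$, and the discrepancy computation carried out in the proof of Lemma \ref{lem:lcttangcube} then yields $\lct_{\msp}(X; H_x) \ge 1/2$, as desired. Thus the crux of the argument, and the step I expect to be the main obstacle, is to rule out that $C_3$ is a perfect cube: the plan is to assume $C_3 = \ell^3$ for a linear form $\ell$, use the rigidity imposed by the distinguished monomial $w^2 z$ of $F$ together with $z(\msp) = 0$ to pin down $\ell$ and the lower-order structure of $F$, and then exhibit a point at which $X$ fails to be quasi-smooth, in the same style of contradiction used in Lemmas \ref{lem:nsptU1-3} and \ref{lem:nsptU1-4}. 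Combined with the earlier cases, this establishes $\lct_{\msp}(X; H_x) \ge 1/2$.
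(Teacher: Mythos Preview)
Your approach differs from the paper's. The paper never analyses a cubic part: it proves the stronger statement $\mult_{\msp}(H_x)\le 2$. Assuming $\mult_{\msp}(H_x)>2$, the vanishing of $\prt^2\bar F/\prt w^2=2z$ and of $\prt^2\bar F/\prt w\,\prt z$ at $\msp$ forces $z(\msp)=0$ and $(2w+f)(\msp)=0$; a short argument using $a_4=a_1+a_3$ and terminality shows $\msp\notin H_t$, and after replacing $w\mapsto w-\xi y t$ one has $\msp=(0\!:\!1\!:\!0\!:\!\lambda\!:\!0)$ with $\lambda\ne 0$. The set $\{x,z,w,t^{a_1}-\lambda^{a_1}y^{a_3}\}$ then isolates $\msp$, and the intersection bound $\mult_{\msp}(H_x)\le (H_x\cdot H_z\cdot T)=a_1a_2a_3(A^3)<3$ (Lemma~\ref{lem:wtnumerics}) gives the contradiction.

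Your route is also viable, and the two steps you flag as obstacles dissolve once you look at the monomial structure; no quasi-smoothness contradiction is needed for either. When $z(\msp)=0$, pass to shifted local coordinates $(\breve z,\breve t',\breve w')$ at $\breve\msp$. Because $\bar F$ is quadratic in $w$ with $w^2$-part exactly $w^2z$ (no $w^3$ occurs, as $d=2a_4+a_2<3a_4$), the only term of $\breve w'$-degree $\ge 2$ in the Taylor expansion of $\bar G$ is $\breve w'^2\breve z$, with coefficient $1$. Hence the degree-$3$ part $C_3$ is nonzero, giving $\mult_{\breve\msp}(\bar G)\le 3$, and $C_3$ cannot be a perfect cube: any $\ell^3$ containing $\breve w'^2\breve z$ must also contain $\breve w'^3$. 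Thus both approaches work; the paper's intersection bound bypasses the blowup and Lemma~\ref{lem:lctP2cubic} entirely, while yours is more self-contained once the $\breve w'^2\breve z$ observation is made explicit.
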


\begin{proof}[Proof of Claim \ref{clm:smptHminusL1}]
We prove $\mult_{\msp} (H_x) \le 2$.
Assume to the contrary that $\mult_{\msp} (H_x) > 2$.
Since 
\[
\frac{\prt^2 \bar{F}}{\prt w^2} = z, \quad \frac{\prt^2 \bar{F}}{\prt w \prt z} = 2 w + f,
\] 
the point $\msp$ is contained in $H_x \cap H_z \cap (2 w + f = 0)$.
Suppose in addition that $\msp \in H_t$.
Note that we have $a_4 = a_1 + a_3$ since $d = a_1 + a_2 + a_3 + a_4$ and $d = 2 a_4 + a_2$.
We see that $a_1$ does not divide $a_4 = a_1 + a_3$ because otherwise $\gcd \{a_1, a_3, a_4\} > 1$ and $X$ has a non-isolated singularity which is clearly worse than terminal, a contradiction.
It follows that $f$ does not contain a power of $y$, i.e.\ $f (0,y,0) = 0$, and
\[
\msp \in H_x \cap H_z \cap H_t \cap (2 w + f = 0) = H_x \cap H_z \cap H_t \cap H_w = \{\msp_y\}.
\]
This is impossible since $\msp_y$ is a singular point of $X$.
Thus $\msp \notin H_t$.
Since $a_4 = a_1 + a_3$, we may assume that $\msp \in H_w$ after replacing $w$ by $w - \xi y t$ for some $\xi \in \mbC$.
We can write $\msp = (0\!:\!1\!:\!0\!:\!\lambda\!:\!0)$ for some non-zero $\lambda \in \mbC$.
The set $\{x, z, w, t^{a_1} - \lambda^{a_1} y^{a_3}\}$ isolates $\msp$, and hence we can take a $\mbQ$-divisor $T \in |a_1 a_3 A|_{\mbQ}$ such that $\mult_{\msp} (T) \ge 1$ and $\Supp (T)$ does not contain any component of $H_x \cdot H_z$.
Then we have
\[
\mult_{\msp} (H_x) \le (H_x \cdot H_z \cdot T)_{\msp} \le (H_x \cdot H_z \cdot T) = a_1 a_2 a_3 (A^3) < 3,
\]
where the last inequality follows from Lemma \ref{lem:wtnumerics}.
This shows $\mult_{\msp} (H_x) \le 2$, and thus $\lct_{\msp} (X; H_x) \ge 1/2$.
\end{proof}

Let $D \in |A|_{\mbQ}$ be an irreducible $\mbQ$-divisor on $X$ other than $H_x$.
By Lemma \ref{lem:isolclass}, $a_1 a_4 A$ isolates $\msp$, and hence we can take a $\mbQ$-divisor $T \in |a_1 a_4 A|_{mbQ}$ such that $\mult_{\msp} (T) \ge 1$ and $\Supp (T)$ does not contain any component of $D \cdot H_x$.
We have
\[
\mult_{\msp} (D) \le (D \cdot H_x \cdot T)_{\msp} \le (D \cdot H_x \cdot T) = a_1 a_4 (A^3) < 2,
\]
where the last inequality follows from Lemma \ref{lem:wtnumerics}.
Thus $\lct_{\msp} (X; D) > 1/2$ and we conclude $\alpha_{\msp} (X) \ge 1/2$.

%%%%%%%%%%%%%%%%%%%%%%%%%%%%%%%%%%
\subsubsection{Case: $1 < a_1 < a_2$, $d = 2 a_4 + a_3$ and $a_3 \ne a_4$}
%%%%%%%%%%%%%%%%%%%%%%%%%%%%%%%%%%

The proof of Proposition \ref{prop:smptHminusL} under the assumption of $1 < a_1 < a_2$ and $d = 2 a_4 + a_3$ is completely parallel to the one given in \S \ref{sec:smptHminusLc}.
Indeed, the same proof applies after interchanging the role of $z$ and $t$ (and hence $a_2$ and $a_3$).
Thus we omit the proof.

%%%%%%%%%%%%%%%%%%%%%%%%%%%%%%%%%%
\subsubsection{Case: $1 < a_1 < a_2$ and $d = 3 a_4$}
%%%%%%%%%%%%%%%%%%%%%%%%%%%%%%%%%%

We will prove Proposition \ref{prop:smptHminusL} under the assumption of $1 < a_1 < a_2$ and $d = 3 a_4$.

Let $\msp \in X$ be a smooth point contained in $H_x \setminus L_{xy}$. 
We first prove $\alpha_{\msp} (X) \ge 1/2$ assuming that the inequality $\mult_{\msp} (H_x) \le 2$ holds.
Let $D \in |A|_{\mbQ}$ be an irreducible $\mbQ$-divisor on $X$ other than $H_x$. 
By Lemma \ref{lem:isolclass}, $a_1 a_4 A$ isolates $\msp$, and hence we can take a $\mbQ$-divisor $T \in |a_1 a_4 A|_{\mbQ}$ such that $\mult_{\msp} (T) \ge 1$ and $\Supp (T)$ does not contain any component of the effective $1$-cycle $D \cdot H_x$.
Then we have
\[
\mult_{\msp} (D) \le (D \cdot H_x \cdot T)_{\msp} \le (D \cdot H_x \cdot T) = a_1 a_4 (A^3) \le 2,
\]
where the last inequality follows from Lemma \ref{lem:wtnumerics}.
This shows $\mult_{\msp} (D) \le 2$ and we have $\lct_{\msp} (X;D) \ge 1/2$.

Therefore the proof of Proposition \ref{prop:smptHminusL} under the assumption of $1< a_1 < a_2$ and $d = 3 a_4$ is reduced to the following.

\begin{Claim} \label{cl:smHcase3e}
We have $\mult_{\msp} (H_x) \le 2$ for any smooth point $\msp \in X$ contained in $H_x \setminus L_{xy}$.
\end{Claim}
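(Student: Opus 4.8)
The plan is to argue by contradiction, using the observation recorded just before Proposition \ref{prop:smptHminusL}: if a smooth point $\msp \in X$ lying on $H_x$ satisfies $\mult_{\msp}(H_x) > 2$, then every second partial derivative $\prt^2 \bar{F}/\prt v_1 \prt v_2$ with $v_1,v_2 \in \{y,z,t,w\}$ vanishes at $\msp$, where $\bar{F} = F(0,y,z,t,w)$. So I would fix a smooth point $\msp \in H_x \setminus L_{xy}$, assume $\mult_{\msp}(H_x) \ge 3$, and aim for a contradiction. Since $\msp \notin L_{xy} = H_x \cap H_y$, its $y$-coordinate is non-zero, so I may normalise $\msp = (0\!:\!1\!:\!\beta\!:\!\gamma\!:\!\delta)$ and work on the chart $\mcU_y$.

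The first step is to record the monomials forced by quasi-smoothness at the vertex $\msp_w$, a terminal quotient singularity of type $\frac{1}{a_4}(a_1,a_2,a_3)$. Since $d = 3a_4$ and $1 < a_1 < a_2 \le a_3 \le a_4$, a short check (as in Lemma \ref{lem:smptHLdegwt}) shows that either $w^3 \in F$, or else $a_3 = a_4$ and $tw^2 \in F$. Combining this with the vanishing of the whole Hessian of $\bar{F}$ at $\msp$ and the relation $2a_4 = a_1 + a_2 + a_3$, I would run a casework on the shape of the $w$-monomials of $\bar{F}$ (the $w^3$, $w^2\cdot(\text{degree }a_4)$ and $w\cdot(\text{degree }2a_4)$ parts) in order to force $\msp$ into a special position, concretely to force $\msp$ to lie on two extra coordinate quasi-hyperplanes, most usefully $\msp \in H_z \cap H_w$, i.e.\ $\beta = \delta = 0$. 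This special-position step is the main obstacle: at a general point of $H_x \setminus L_{xy}$ the intersection bound below only gives $\mult_{\msp}(H_x) \le 3$, because $a_1 a_2 a_3 (A^3) = d/a_4 = 3$ is an \emph{exact} boundary value, so one genuinely must exploit the Hessian vanishing to drive $\msp$ onto smaller strata before the estimate becomes strict.

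Once $\msp \in H_z \cap H_w$ is established, I would conclude by intersection theory using the cheap coordinates $z$ and $w$ that now vanish at $\msp$. The sections $H_x \sim A$, $H_z \sim a_2 A$ and $H_w \sim a_4 A$ all pass through $\msp$, and $H_x \cap H_z \cap H_w = (x=z=w=0)_X$ is a finite scheme (inside the quasi-line $(x=z=w=0) \cong \mbP(a_1,a_3)_{y,t}$) having $\msp$ as an isolated point; should this triple intersection degenerate, one replaces $H_w$ by an isolating $\mbQ$-divisor of the same class via Lemma \ref{lem:isolexT} and Remark \ref{rem:isolT}. In either case
\[
\mult_{\msp}(H_x) \le (H_x \cdot H_z \cdot H_w)_{\msp} \le (H_x \cdot H_z \cdot H_w) = a_2 a_4 (A^3) = \frac{3a_4}{a_1 a_3}.
\]
Since $a_1 \ge 2$ and $a_4 = \tfrac12(a_1 + a_2 + a_3) < \tfrac32 a_3 \le a_1 a_3$, the right-hand side is strictly less than $3$, forcing $\mult_{\msp}(H_x) \le 2$ and contradicting the assumption. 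The sub-cases where the Hessian analysis pins $\msp$ to $H_z \cap H_t$ instead, or where $a_3 = a_4$ and $tw^2 \in F$, are handled by the identical estimate with $a_4$ replaced by $a_3$; the delicate point throughout remains deducing the extra vanishing $\beta = \delta = 0$ from the forced monomials and the degenerate tangent cone.
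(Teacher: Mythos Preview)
Your setup is right and matches the paper: one assumes $\mult_{\msp}(H_x) \ge 3$, uses the vanishing of the full Hessian of $\bar{F}$ at $\msp$, and combines this with the monomials forced by quasi-smoothness. But your proposal stops precisely where the work begins. You write ``I would run a casework on the shape of the $w$-monomials'' and later admit ``the delicate point throughout remains deducing the extra vanishing $\beta = \delta = 0$''. That deduction \emph{is} the proof; everything else is framing. The paper carries it out family by family for the seven families $\mcF_{19}, \mcF_{27}, \mcF_{39}, \mcF_{49}, \mcF_{59}, \mcF_{66}, \mcF_{84}$ with $1 < a_1 < a_2$ and $d = 3a_4$, and the arguments are genuinely different from one another: for $\mcF_{66}$ a single mixed partial $\prt^2 \bar{F}/\prt t\,\prt y$ already fails to vanish; for $\mcF_{84}$ one successively forces $\msp \in H_t$, then $\msp \in H_z$, and concludes $\msp = \msp_y$ is singular; for $\mcF_{49}$ one gets $\msp \in H_t \cap H_w$ but $\msp \notin H_z$, and the contradiction comes from a further coefficient constraint ($\delta = 0$ forces $\varepsilon \ne 0$, whence $\prt^2 \bar{F}/\prt y^2$ is nonzero). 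There is no uniform mechanism that always lands $\msp$ on $H_z \cap H_w$, and your outline gives no reason to believe one exists.

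Your proposed intersection-theory endgame is a nice idea and the numerics do work (e.g.\ $a_2 a_4 (A^3) = 3a_4/(a_1 a_3) < 3$ when $a_1 \ge 2$), but the paper never needs it: in every family the Hessian analysis alone already produces the contradiction, either by exhibiting a nonvanishing second partial directly or by forcing $\msp$ to coincide with the singular vertex $\msp_y$. So even if your estimate could serve as an alternative finisher in some cases, it does not shortcut the casework, and you would still have to do the family-specific Hessian chase to reach the hypotheses you need to invoke it.
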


The rest of this subsection is devoted to the proof of Claim \ref{cl:smHcase3e}, which will be done by considering each family individually.
The families satisfying $1 < a_1 < a_2$ and $d = 3 a_4$ are families $\mcF_{\msi}$, where
\[
\msi \in \{19, 27, 39, 49, 59, 66, 84\}.
\]
In the following, for a polynomial $f (x,y,z,\dots)$ in variables $x, y, z, \dots$, we set $\bar{f} = \bar{f} (y,z,\dots) := f (0,y,z,\dots)$.
We first consider the family $\mcF_{27}$, which is the unique family satisfying $d = 3 a_3 = 3 a_4$
We then consider the rest of the families which satisfy $d = 3 a_4 > 3 a_3$.

\paragraph{\it The family $\mcF_{27}$}
We can choose $w$ and $t$ so that
\[
F = w^2 t + w t^2 + w t b_5 + w c_{10} + t d_{10} + e_{15},
\]
where $b_5, c_{10}, d_{10}, e_{15} \in \mbC [x,y,z]$ are quasi-homogeneous polynomials of indicated degrees.  
Let $\msp \in H_x \setminus L_{xy}$ be a smooth point of $X$ and we assume $\mult_{\msp} (H_x) > 2$.
Since
\[
\frac{\prt^2 \bar{F}}{\prt w^2} = 2 t, \quad
\frac{\prt^2 \bar{F}}{\prt t^2} = 2 w,
\]
we have $\msp \in H_t \cap H_w$.
Then we can write $\msp = (0\!:\!1\!:\!\lambda\!:\!0\!:\!0)$ for some non-zero $\lambda \in \mbC$ since $\msp \notin H_y$ and $\msp \ne \msp_y$.
We can write $\bar{e}_{15} = z (z^2 - \lambda^2 y^3)(z^2 - \mu y^3)$ for some $\mu \in \mbC$.
We have
\[
\begin{split}
\frac{\prt^2 \bar{F}}{\prt z^2} (\msp) &= 2 \lambda (7 \lambda^2 - 3 \mu) = 0, \\
\frac{\prt^2 \bar{F}}{\prt z \prt y} (\msp) &= - 3 \lambda^2 (3 \lambda^2 + \mu) = 0,
\end{split}
\]
which implies $\lambda = 0$.
This is a contradiction and Claim \ref{cl:smHcase3e} is proved for the family $\mcF_{27}$.

We consider the rest of the families, which satisfies $d = 3 a_4 > 3 a_3$.
Replacing $w$ if necessary, we can write
\[
F = w^3 + w g_{2 a_4} + h_{3 a_4},
\]
where $g_{2 a_4}, h_{3 a_4} \in \mbC [x,y,z,t]$ are quasi-homogeneous polynomials of degree $2 a_4, 3 a_4$, respectively.
Let $\msp \in H_x \setminus L_{xy}$ be a smooth point of $X$ and we assume $\mult_{\msp} (H_x) > 2$.
Since $\prt^2 \bar{F}/\prt w^2 = 6 w$, we have $\msp \in H_w$, so that $\msp \in H_x \cap H_w$ and $\msp \notin H_y$.
In the following we will derive a contradiction by considering each family separately.

\paragraph{\it The family $\mcF_{19}$}
Replacing $t \mapsto t - \xi z$ for a suitable $\xi \in \mbC$, we may assume $\msp \in H_t$. 
Since $\msp \in H_x \cap H_t \cap H_w$, $\msp \notin H_y$ and $\msp \ne \msp_y$, we have $\msp = (0\!:\!1\!:\!\lambda\!:\!0\!:\!0)$ for a non-zero $\lambda \in \mbC$.
We can write $\bar{h}_{12} = (z^2 - \lambda^2 y^3)(z^2 - \mu y^3) + t e_9$ for some $\mu \in \mbC$ and $e_9 = e_9 (y,z,t)$.
It is then straightforward to check that
\[
\frac{\prt^2 \bar{F}}{\prt z^2} (\msp) = \frac{\prt^2 \bar{F}}{\prt z \prt y} (\msp) = 0
\]
is impossible, and this is a contradiction.

\paragraph{\it The family $\mcF_{39}$}
We have $g_{2 a_4} = g_{12}$, $h_{3 a_4} = h_{18}$ and we can write
\[
g_{12} (0,y,z,t) =  \alpha t z y + \lambda z^3 + \mu y^3,
\]
where $\alpha, \lambda, \mu \in \mbC$.
By the quasi-smoothness of $X$, we have $\lambda \ne 0$ and $\mu \ne 0$. 
We have 
\[
\frac{\prt^2 \bar{F}}{\prt w \prt t} = \alpha z y, \quad
\frac{\prt^2 \bar{F}}{\prt w \prt z} = \alpha t y + 3 \lambda z^2, \quad
\frac{\prt^2 \bar{F}}{\prt w \prt y} = \alpha t z + 3 \mu y^2.
\]
Suppose that $\alpha \ne 0$, then, since both $\alpha z y$ and $\alpha t y + 3 \lambda z^2$ vanish at $\msp$ and $y$ does not vanish at $\msp$, we have $\msp \in H_z \cap H_t$.
It follows that $\msp = \msp_y$.
This is impossible since $\msp_y$ is a singular point of $X$.
Thus $\alpha = 0$.
Then, both $3 \lambda z^2$ and $\alpha t z + 3 \mu y^2$ vanish at $\msp$, which implies that $y$ vanishes at $\msp$.
This is a contradiction.

\paragraph{\it The family $\mcF_{49}$}
We have $g_{2 a_4} = g_{14}, h_{3 a_4} = h_{21}$ and we can write
\[
h_{24} (0,y,z,t) = \lambda t^3 y + \alpha t^2 y^3 + \beta t z^3 + \gamma t y^5 + \delta z^3 y^2 + \varepsilon y^7,
\]
where $\lambda, \alpha, \beta, \dots, \varepsilon \in \mbC$.
By the quasi-smoothness of $X$, we have $\lambda \ne 0$, and by replacing $t$, we can assume that $\alpha = 0$.
Since $\msp \in H_w$, $\msp \notin H_y$ and 
\[
\frac{\prt^2 \bar{F}}{\prt t^2} = w \frac{\prt^2 g_{14} (0, y, z, t)}{\prt t^2} + 6 \lambda t y,
\]
we have $\msp \in H_t$.
Then $\msp \notin H_z$ because otherwise $\msp = \msp_y$ is a singular point and this is impossible.
We have
\[
\frac{\prt^2 \bar{F}}{\prt z \prt y} = w \frac{\prt g_{14} (0,y,z,t)}{\prt z \prt y} + 6 \delta z^2 y,
\]
which implies $\delta = 0$.
Then, by the quasi-smoothness of $X$, we have $\varepsilon \ne 0$.
But the polynomial
\[
\frac{\prt^2 \bar{F}}{\prt y^2} = w \frac{\prt^2 g_{14} (0,y,z,t)}{\prt y^2} + 20 \gamma t y^3 + 42 \varepsilon y^5
\]
does not vanish at $\msp$.
This is a contradiction.

\paragraph{\it The family $\mcF_{59}$}
We have $g_{2 a_4} = g_{16}, h_{3 a_4} = h_{24}$ and we can write
\[
h_{24} (0,y,z,t) = \lambda t^3 y + \mu z^4 + \alpha z^3 y^2 + \beta z^2 y^4 + \gamma z y^6 + \delta y^8,
\]
where $\lambda, \mu, \alpha, \beta, \gamma, \delta \in \mbC$.
By the quasi-smoothness of $X$, we have $\lambda \ne 0$ and $\mu \ne 0$.
Since $\msp \notin H_y$, $\lambda \ne 0$ and
\[
\frac{\prt^2 \bar{F}}{\prt t^2} = w \frac{\prt^2 g_{16} (0,y,z,t)}{\prt t^2} + 6 \lambda t y,
\]
we have $\msp \in H_t$.
This is a contradiction since $\msp \in H_x \cap H_t \cap H_w$ but $H_x \cap H_t \cap H_w$ consists of singular points.

\paragraph{\it The family $\mcF_{66}$}
We have $h_{3 a_4} = h_{27}$ and we can write
\[
h_{27} (0,y,z,t) = \lambda t^3 z + \mu t y^4 + \alpha z^2 y^3,
\]
where $\alpha, \lambda, \mu \in \mbC$.
By the quasi-smoothness of $X$, we have $\lambda \ne 0$ and $\mu \ne 0$.
We have
\[
\frac{\prt^2 \bar{F}}{\prt t \prt y} = w \frac{\prt^2 g_{18} (0,y,z,t)}{\prt t \prt y} + 4 \mu y^3.
\]
This is a contradiction since $\msp \in H_w$, $\msp \notin H_y$ and $\mu \ne 0$.

\paragraph{\it The family $\mcF_{84}$}
We have
\[
g_{24} (0,y,z,t) = \alpha t z y + \lambda z^3, \quad
h_{36} (0,y,z,t) = \mu t^4 + \beta z y^4,
\] 
where $\alpha, \beta, \lambda, \mu \in \mbC$.
By the quasi-smoothness of $X$, we have $\lambda \ne 0$ and $\mu \ne 0$.
We have
\[
\frac{\prt^2 \bar{F}}{\prt t^2} = w \frac{\prt^2 g_{24} (0,y,z,t)}{\prt t^2} + 12 \mu t^2,
\]
which implies $\msp \in H_t$ since $\msp \in H_w$ and $\mu \ne 0$.
We have
\[
\frac{\prt^2 \bar{F}}{\prt w \prt z} = \frac{g_{24} (0,y,z,t)}{\prt z} = \alpha t y + 3 \lambda z^2,
\]
which implies $\msp \in H_z$ since $\msp \in H_t$ and $\lambda \ne 0$.
This shows $\msp = \msp_y$, and this is a contradiction since $\msp_y$ is a singular point. 

Therefore we derive a contradiction for all families and the proof of Claim \ref{cl:smHcase3e} is completed.

%%%%%%%%%%%%%%%%%%%%%%%%%%%%%%%%%%
%%%%%%%%%%%%%%%%%%%%%%%%%%%%%%%%%%
\subsection{Smooth points on $L_{xy}$ for families with $a_1 < a_2$, Part 1} \label{sec:smptL1}
%%%%%%%%%%%%%%%%%%%%%%%%%%%%%%%%%%
%%%%%%%%%%%%%%%%%%%%%%%%%%%%%%%%%%

In this and next sections, we consider a member 
\[
X = X_d \subset \mbP (1, a_1, a_2, a_3, a_4)_{x, y, , t, w}
\]
of a family $\mcF_{\msi}$ with $\msi \in \msI \setminus \msI_1$ satisfying $a_1 < a_2 \le a_3 \le a_4$ and prove $\alpha_{\msp} (X) \ge 1/2$ for a smooth point $\msp$ of $X$ contained in the $1$-dimensional scheme $L_{xy} := (x = y = 0) \cap X$.
We divide families indexed by $\msI \setminus \msI_1$ into two types:

\begin{itemize}
\item Families $\mcF_{\msi}$ such that $L_{xy} := (x = y = 0) \cap X$ is irreducible and reduced for any member $X$.
These families are treated in the current section \S \ref{sec:smptL1}.
\item Families $\mcF_{\msi}$ such that $L_{xy}$ is either reducible or non-reduced for some member $X$ (See \eqref{eq:Lxyrednonred} for specific families).
These families will be treated in the next section \S \ref{sec:smptL2}.
\end{itemize}

The objects of this section are members $X$ such that the $1$-dimensional scheme $L_{xy} = (x = y = 0) \cap X$ is irreducible and reduced.

\begin{Lem} \label{lem:Lirred}
Let $X$ be a member of a family $\mcF_{\msi}$ with $\msi \in \msI$ which satisfies $a_1 < a_2$ and which is listed in \emph{Table \ref{table:Lsmooth}} $($resp.\ \emph{Table \ref{table:Lsing}}$)$.
Then, $L_{xy}$ is an irreducible smooth curve $($resp.\ irreducible and reduced curve which is smooth along $L_{xy} \cap \Sm (X)$$)$.
\end{Lem}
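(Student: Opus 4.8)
The plan is to realize $L_{xy}$ as a weighted plane curve and to read off all three properties (reducedness, irreducibility, smoothness) from a single polynomial. Setting $\bar{F} := F(0,0,z,t,w)$, which is quasi-homogeneous of degree $d$, the scheme $L_{xy} = (x=y=0)\cap X$ is identified with the hypersurface $(\bar{F}=0)$ in the weighted projective plane $\mbP(a_2,a_3,a_4)_{z,t,w}$. The basic observation is that at a point $\msp=(0\!:\!0\!:\!z_0\!:\!t_0\!:\!w_0)$ the curve $(\bar{F}=0)$ is quasi-smooth exactly when the three partials $\prt\bar{F}/\prt z$, $\prt\bar{F}/\prt t$, $\prt\bar{F}/\prt w$ do not all vanish at $\msp$, and that a one-dimensional quasi-smooth scheme is automatically a smooth variety (a cyclic quotient singularity in dimension one is trivial, since the quotient of a smooth curve by a finite cyclic group is again a normal, hence smooth, curve). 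Thus proving the lemma amounts to locating the common zero locus $\Sigma\subset L_{xy}$ of these three partials and comparing it with $\Sing(X)$.

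First I would pin down the shape of $\bar{F}$. By the classification each family fixes the weights $(1,a_1,a_2,a_3,a_4)$ and the degree $d=a_1+a_2+a_3+a_4$, and $\bar{F}$ is precisely the sum of those monomials of $F$ that involve neither $x$ nor $y$. Quasi-smoothness of $X$ forces the presence of certain (near-)pure-power monomials in $z,t,w$, while the numerical constraints of Lemmas \ref{lem:smptHLdegwt} and \ref{lem:wtnumerics} determine which monomials of the given degree can occur at all. With $\bar{F}$ written out, I would compute $\Sigma$ by the Jacobian criterion, treating the three coordinate points $\msp_z,\msp_t,\msp_w$ separately (noting that $\msp_v\in L_{xy}$ if and only if the pure power $v^{d/a_v}$ is absent from $\bar{F}$) and then disposing of the non-coordinate points by a direct elimination among the partials.

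For the families listed in Table \ref{table:Lsmooth} I would verify that $\Sigma=\emptyset$, so that $(\bar{F}=0)$ is quasi-smooth, hence a smooth curve; since $(\bar{F}=0)$ is a nonzero effective divisor in the ample class $\mcO(d)$ on the normal projective surface $\mbP(a_2,a_3,a_4)$ it is connected (the sequence $0\to\mcO(-d)\to\mcO\to\mcO_{L_{xy}}\to 0$ gives $h^0(\mcO_{L_{xy}})=1$, as $H^1(\mcO(-d))=0$ on the weighted surface), and a smooth connected curve is irreducible. For the families listed in Table \ref{table:Lsing} I would check that $\Sigma$ consists of a single coordinate point which is a singular (quotient) point of $X$; then $L_{xy}$ is smooth precisely along $L_{xy}\cap\Sm(X)$, as asserted. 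Reducedness is read off from $\bar{F}$ being square-free, which is equivalent to quasi-smoothness of $(\bar{F}=0)$ at every generic point; irreducibility in the singular case I would obtain family by family, either by checking directly that $\bar{F}$ does not factor or, equivalently, that the curve is unibranch at the one point of $\Sigma$, so that connectedness together with unibranchedness at every point forces irreducibility.

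The main obstacle will be the bookkeeping at the coordinate points in the case-by-case analysis: for each family one must decide which of $\msp_z,\msp_t,\msp_w$ lie on $L_{xy}$, test quasi-smoothness of $\bar{F}$ there, and—in the singular case—confirm both that the unique point of $\Sigma$ is genuinely a singular point of $X$ and that $L_{xy}$ has a single local branch there. The numerics of Lemmas \ref{lem:smptHLdegwt} and \ref{lem:wtnumerics}, together with the standing hypothesis $a_1<a_2$, will be used repeatedly to exclude spurious common zeros of the partials at non-coordinate points and to guarantee that the monomials forced by quasi-smoothness of $X$ actually appear in $\bar{F}$.
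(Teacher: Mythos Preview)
Your proposal is correct and follows essentially the same route as the paper: both identify $L_{xy}$ with the curve $(\bar{F}=0)\subset\mbP(a_2,a_3,a_4)$, use quasi-smoothness of $X$ to force the presence of specific monomials in $\bar{F}$, and then verify the claimed properties family by family. The only stylistic difference is that the paper first normalizes $\bar{F}$ via coordinate changes into the explicit canonical forms recorded in Tables~\ref{table:Lsmooth} and~\ref{table:Lsing} (and then reads off smoothness and irreducibility directly from those forms), whereas you propose a slightly more uniform Jacobian-plus-connectedness argument; for the few families ($\msi\in\{11,15,16,17,21,27,34\}$) with extra monomials the paper, like you, invokes quasi-smoothness at coordinate points to rule out multiple components. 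Your cohomological argument for connectedness and your unibranch argument for irreducibility in the singular cases are valid supplements that the paper leaves implicit, since the explicit normal forms make irreducibility obvious by inspection.
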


\begin{proof}
Let $F$ be the defining polynomial of $X$ and set $d = \deg F$.
The scheme $L_{xy}$ is isomorphic to the hypersurface in $\mbP (a_2,a_3,a_4)_{z, t, w}$ defined by the polynomial $f := F (0,0,z,t,w)$.
We explain that $f$ can be transformed into the polynomial given in Tables \ref{table:Lsmooth} and \ref{table:Lsing} by a suitable change of homogeneous coordinates.

Suppose $\msi \notin \{11, 15, 16, 17, 21, 27, 34\}$.
Then, there are only a few monomials of degree $d$ in variables $z, t, w$.
We first simply express $f$ as a linear combination of those monomials, and then consider the following coordinate change.
\begin{itemize}
\item Suppose $d = 2 a_4$.
In this case, $f$ is quadratic with respect to $w$ and we eliminate the term of the form $w g (z, t)$ by replacing $w$ suitably.
Then, we may assume $f = w^2 + h (z, t)$ for some quasi-homogeneous polynomial $h = h (z, t)$ of degree $d$.
\item Suppose $d = 3 a_4$.
In this case, $f$ is cubic with respect to $w$ and we eliminate the term of the form $w^2 g (z, t)$ by replacing $w$ suitably.
Then, we may assume $f = w^3 + w h_1 (z, t) + h_2 (z,t)$ for some quasi-homogeneous polynomials $h_1 = h_1 (z, t), h = h_2 (z, t)$ of degrees $d - a_4 = 2 a_4$ and $d = 3 a_4$, respectively.
\end{itemize}
After the above coordinate change, we observe that $f$ is a linear combination of at most $3$ distinct monomials and it is possible to make those coefficients $1$ by rescaling $z, t, w$.
The resulting polynomial is the one given in Tables \ref{table:Lsmooth} and \ref{table:Lsing}.
Once an explicit form of the defining polynomial is given, it is then easy to show that $L_{xy}$ is entirely smooth or is smooth along $L_{xy} \cap \Sm (X)$. 

For $\msi = \{11, 15, 16, 17, 21, 27, 34\}$, the description of $f$ is explained as follows.

\begin{itemize}
\item Suppose $\msi = 11$.
In this case, $f = w^2 + h (z,t)$, where $h$ is a quintic form in $z, t$.
The solutions of the equation $h = 0$ correspond to the $5$ singular points of type $\frac{1}{2} (1,1,1)$.
Thus $h$ does not have a multiple component and in particular $L_{xy}$ is smooth.
\item Suppose $\msi = 15$.
In this case, $f = w^2 + \alpha t^4 + \beta t^2 z^3 + \gamma z^6$ for some $\alpha, \beta, \gamma \in \mbC$.
We have $\alpha \ne 0$ (resp.\ $\gamma \ne 0$) because otherwise $X$ cannot be quasi-smooth at $\msp_t$ (resp.\ $\msp_z$).
Replacing $t$ and rescaling $z$, we may assume $\alpha = 1$, $\beta = 0$ and $\gamma = 1$, and we obtain the desired form $f = w^2 + t^4 + z^6$.
It is easy to see that $L_{xy}$ is smooth.
\item Suppose $\msi = 16$.
In this case, $f = w^2 z + h (z, t)$, where $h = \alpha t^3 + \beta t^2 z^2 + \gamma t z^4 + \delta z^6$ for some $\alpha, \beta, \gamma, \delta \in \mbC$.
By the quasi-smoothness of $X$, we have $\alpha \ne 0$.
Moreover the solutions of $h (z, t) = 0$ correspond to $3$ singular points of type $\frac{1}{2} (1,1,1)$.
Thus $h (z, t)$ does not have a multiple component and in particular $L_{xy}$ is smooth.
\item Suppose $\msi = 17$.
In this case, $f = c (t, w) + \alpha z^4$, where $\alpha \in \mbC$ and $c (t, w)$ is a cubic form in $t, w$.
By the quasi-smoothness of $X$, we have $\alpha \ne 0$ and we may assume $\alpha = 1$ by rescaling $z$.
Moreover the solutions of $c (t, w) = 0$ correspond to $3$ singular points of type $\frac{1}{4} (1,1,3)$.
Thus $c (t, w)$ does not have a multiple component and in particular $L_{xy}$ is smooth.
\item Suppose $\msi = 21$.
In this case, $f = w^2 + h (z, t)$, where $h = \alpha t^3 z + \beta t^2 z^3 + \gamma t z^5 + \delta z^7$ for some $\alpha, \beta, \gamma, \delta \in \mbC$.
By the quasi-smoothness of $X$ at $\msp_t$, we have $t^3 z \in F$, that is, $\alpha \ne 0$.
Moreover the solutions of $\alpha t^3 + \beta t^2 z^2 + \gamma t z^4 + \delta z^6 = 0$ correspond to the $3$ singular points of type $\frac{1}{2} (1,1,1)$.
Thus $h$ does not have a multiple component and in particular $L_{xy}$ is smooth.
\item Suppose $\msi = 27$.
In this case, $f = c (t, w) + \alpha z^5$, where $\alpha \in \mbC$ and $c (t, w)$ is a cubic form in $t, w$.
By the same arguments as in the case of $\msi = 17$, $c (t, w)$ does not have a multiple component and we may assume $\alpha =1$.
Thus $L_{xy}$ is smooth.
\item Suppose $\msi = 34$.
In this case, $f = w^2 + h (z, t)$, where $h = \alpha t^3 + \beta t^2 z^3 + \gamma t z^6 + \delta z^9$ for some $\alpha, \beta, \gamma, \delta \in \mbC$.
By the quasi-smoothness of $X$, we have $t^3 \in F$, that is, $\alpha \ne 0$.
Moreover the solutions of $h = 0$ correspond to $3$ singular points of type $\frac{1}{2} (1,1,1)$.
Thus $h$ does not have a multiple component and in particular $L_{xy}$ is smooth. 
\end{itemize}
This completes the proof.
\end{proof}

\begin{table}[h]
\renewcommand{\arraystretch}{1.1}
\begin{center}
\caption{$L_{xy}$: Irreducible and smooth case}
\label{table:Lsmooth}
\begin{tabular}{cc|cc}
No. & Equation & No. & Equation \\
\hline
11 & $w^2 + h (z,t)$ & 55 & $w^2 + t^3 z + z^8$ \\
15 & $w^2 + t^4 + z^6$ & 57 & $w^2 + t^4 z + z^6$  \\
16 & $w^2 z + h (z, t)$, $t^3 \in h$ & 66 & $w^3 + w z^3 + t^3 z$ \\
17 & $c (t,w) + z^4$ & 68 & $w^2 + t^4 + z^7$ \\
19 & $w^3 + h (z,t)$ & 70 & $w^2 + t^3 + t z^5$ \\
21 & $w^2 + h (z, t)$ & 71 & $w^2 + t^3 z + z^5$ \\
26 & $w^2 z + z^5 + t^3$ & 72 & $w^2 + t^3 + z^{10}$ \\
27 & $c (t,w) + z^5$ & 74 & $w^2 z + t^3 + t z^5$ \\
34 & $w^2 + h (z, t)$ & 75 & $w^2  + t^5 + z^6$ \\
35 & $w^2 + t^3 z + z^6$ & 76 & $w^2 t + t^3 z + z^5$ \\
36 & $w^2 z + t^3 + t z^3$ & 80 & $w^2 + t^3 z + t z^6$ \\
41 & $w^2 + t^4 + z^5$ & 84 & $w^3 + w z^3 + t^4$ \\
45 & $w^2 z + t^4 + z^5$ & 86 & $w^2 + t^4 z + t z^5$ \\
48 & $w^2 z + t^3 + z^7$ & 88 & $w^2 + t^3 + z^7$ \\
51 & $w^2 + t^3 z + t z^4$ & 90 & $w^2 + t^3 + t z^7$ \\
53 & $w^2 + t^3 + z^8$ & 93 & $w^2 + t^5 + t z^5$ \\
54 & $w^2 z + t^3 + z^4$ & 95 & $w^2 + t^3 + z^{11}$
\end{tabular}
\end{center}
\end{table}

\begin{table}[h]
\begin{center}
\caption{$L_{xy}$: Irreducible and singular case}
\label{table:Lsing}
\begin{tabular}{ccc|ccc}
No. & Equation & Sing. & No. & Equation & Sing. \\
\hline
43 & $t^4 + z^5$ & $\msp_w$ & 77 & $w^2 + t^3 z$ & $\msp_z$ \\
44 & $w^2 t + z^4$ & $\msp_t$ & 78 & $w^2 + t z^5$ & $\msp_t$ \\
46 & $t^3 + z^7$ & $\msp_w$ & 79 & $w^2 z + t^3$ & $\msp_{z}$ \\
47 & $w^2 z + t^3$ & $\msp_z$ & 81 & $w^2 + t^4 z$ & $\msp_z$ \\
56 & $t^3 + z^8$ & $\msp_w$ & 82 & $w^2 + t^3$ & $\msp_z$ \\
59 & $w^3 + z^4$ & $\msp_t$ & 83 & $w^2 + z^9$ & $\msp_t$ \\
61 & $w^2 t + z^5$ & $\msp_t$ & 85 & $w^2 + t^3 z$ & $\msp_z$ \\
62 & $w^2 + t^3 z$ & $\msp_z$ & 87 & $w^2 + t^5$ & $\msp_z$ \\
65 & $w^2 z + t^3$ & $\msp_z$ & 89 & $w^2 + t^3$ & $\msp_z$ \\
67 & $w^2 + z^7$ & $\msp_t$ & 91 & $w^2 + t^3 z$ & $\msp_z$ \\
69 & $w^2 z + t^4$ & $\msp_z$ & 92 & $w^2 + t^3$ & $\msp_z$ \\
73 & $w^2 + z^5$ & $\msp_t$ & 94 & $w^2 + t^3$ & $\msp_z$ 
\end{tabular}
\end{center}
\end{table}

\begin{Prop} \label{prop:smptL1}
Let $X = X_d \subset \mbP (1, a_1, a_2, a_3, a_4)$, $a_1 \le a_2 \le a_3 \le a_4$, be a member of a family $\mcF_{\msi}$ with $\msi \in \msI \setminus \msI_1$ which satisfies $a_1 < a_2$ and which is listed in Tables \ref{table:Lsmooth} and \ref{table:Lsing}.
Then 
\[
\alpha_{\msp} (X) \ge 1
\] 
for any smooth point $\msp$ of $X$ contained in $L_{xy}$.
\end{Prop}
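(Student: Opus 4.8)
The plan is to apply Lemma~\ref{lem:exclL} to the pair of divisors $S = H_x \sim_{\mbQ} A$ and $T = H_y \sim_{\mbQ} a_1 A$, so that $a = 1$ and $b = a_1$ in the notation of that lemma. The decisive structural input is that their intersection $S \cap T = H_x \cap H_y = L_{xy}$ is irreducible; this is precisely why the families treated here are exactly those listed in Tables~\ref{table:Lsmooth} and~\ref{table:Lsing}, since by Lemma~\ref{lem:Lirred} the curve $L_{xy}$ is then irreducible and reduced and is smooth along $L_{xy} \cap \Sm(X)$.

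First I would verify the hypotheses of Lemma~\ref{lem:exclL}. Both $H_x$ and $H_y$ are quasi-hyperplane sections of the quasi-smooth terminal (hence isolated-singularity) threefold $X$, so they are normal by Lemma~\ref{lem:normalqhyp}; being ample they are connected, hence irreducible, so $S$ and $T$ are prime. Since $\msp$ is a smooth point, the index is $r = 1$ and the quotient morphism is the identity, so $\check{\Gamma} = \Gamma = L_{xy}$. Because $L_{xy}$ is reduced, the intersection cycle satisfies $S \cdot T = L_{xy}$, i.e.\ $m = 1$; and because $\msp \in \Sm(X)$, Lemma~\ref{lem:Lirred} gives that $L_{xy}$ is smooth at $\msp$, so $\mult_{\msp}(L_{xy}) = 1$. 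Feeding these values into Lemma~\ref{lem:exclL} yields
\[
\alpha_{\msp}(X) \ge \min\left\{ \lct_{\msp}(X; H_x), \ a_1, \ \frac{1}{a_1 (A^3)} \right\}.
\]

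The main point to settle is the first term $\lct_{\msp}(X; H_x)$, and here lies the only genuine subtlety: transferring smoothness from the curve $L_{xy}$ to the surface $H_x$. I would invoke Lemma~\ref{lem:qsminvhypsec}, viewing $H_x$ as the weighted hypersurface $(\bar{F} = 0) \subset \mbP(a_1, a_2, a_3, a_4)$ with $\bar{F} = F(0,y,z,t,w)$ and $H_y = (y = 0)$ as a quasi-hyperplane: since $H_x \cap H_y = L_{xy}$ is quasi-smooth (indeed smooth) at $\msp$, the lemma forces $H_x$ to be quasi-smooth, and hence smooth at $\msp$ as $\msp$ is a smooth point of $X$. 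Consequently $\mult_{\msp}(H_x) = 1$ and $\lct_{\msp}(X; H_x) = 1$. Finally, the third term exceeds $1$ by Lemma~\ref{lem:wtnumerics}(3), which gives $a_1 (A^3) < 1$ under the standing hypothesis $a_1 < a_2$, while the middle term satisfies $a_1 \ge 1$. Therefore the minimum equals $1$ and $\alpha_{\msp}(X) \ge 1$, as claimed; apart from the smoothness transfer via Lemma~\ref{lem:qsminvhypsec}, the argument is just the bookkeeping of the three quantities appearing in Lemma~\ref{lem:exclL}.
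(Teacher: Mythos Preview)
Your proof is correct and follows essentially the same route as the paper's: apply Lemma~\ref{lem:exclL} with $S\in|A|$, $T\in|a_1A|$, use Lemma~\ref{lem:Lirred} to get $S\cap T=L_{xy}$ irreducible, reduced, and smooth at $\msp$, use Lemma~\ref{lem:qsminvhypsec} to deduce $S$ is quasi-smooth at $\msp$ so $\lct_{\msp}(X;S)=1$, and use Lemma~\ref{lem:wtnumerics}(3) for $a_1(A^3)<1$. The only cosmetic difference is that you take $S=H_x$, $T=H_y$ explicitly (and justify their primeness via Lemma~\ref{lem:normalqhyp}), whereas the paper takes general members of $|A|$ and $|a_1A|$; the computation is identical.
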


\begin{proof}
Take a point $\msp \in L_{xy} \cap \Sm (X)$.
Let $S \in |A|$ and $T \in |a_1 A|$ be general members.
By Lemma \ref{lem:Lirred}, $L_{xy}$ is an irreducible and reduced curve, and we have $S \cdot T = L_{xy}$. 
Note that $L_{xy}$ is quasi-smooth at $\msp$ and we have $\mult_{\msp} (L_{xy}) = 1$.
By Lemma \ref{lem:qsminvhypsec}, $S$ is quasi-smooth at $\msp$.
It follows that $\lct_{\msp} (X;S) = 1$.
By Lemma \ref{lem:exclL}, we have
\[
\alpha_{\msp} (X) \ge \min \left\{ \lct_{\msp} (X;S), \ \frac{a_1}{\mult_{\msp} (L_{xy})}, \ \frac{1}{a_1 (A^3)} \right\} = 1
\]
since $1/a_1 (A^3) > 1$ by Lemma \ref{lem:wtnumerics}.
\end{proof}

%%%%%%%%%%%%%%%%%%%%%%%%%%%%%%%%%%%%%
\subsection{Smooth points on $L_{xy}$ for families with $a_1 < a_2$, Part 2} \label{sec:smptL2}
%%%%%%%%%%%%%%%%%%%%%%%%%%%%%%%%%%%%%

In this section, we consider families $\mcF_{\msi}$ with $\msi \in \msI \setminus \msI_1$ such that $L_{xy}$ is either irreducible or reduced for some member $X$.
Specifically these families consist of families $\mcF_{\msi}$ with
\begin{equation} \label{eq:Lxyrednonred}
\begin{split}
\msi \in \{ & 7, 9, 12, 13, 15, 20, 23, 24, 25, 29, 30, 31, 32, \\
& 33, 37, 38, 39, 40, 42, 49, 50, 52, 58, 60, 63, 64\},
\end{split}
\end{equation}
and the aim of this section is to prove the following.

\begin{Prop} \label{prop:smptL2}
Let $X = X_d \subset \mbP (1,a_1,a_2,a_3,a_4)$, $a_1 \le a_2 \le a_3 \le a_4$, be a member of a family $\mcF_{\msi}$ with $\msi \in \msI \setminus \msI_1$ which satisfies $a_1 < a_2$ and which is not listed in Tables \ref{table:Lsmooth} and \ref{table:Lsing}.
Then 
\[
\alpha_{\msp} (X) \ge \frac{1}{2}
\]
for any smooth point $\msp$ of $X$ contained in $L_{xy}$.
\end{Prop}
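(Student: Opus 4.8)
The plan is to run the method of intersecting the two quasi-hyperplane sections $S=H_x$ and $T=H_y$, but now in the reducible/non-reduced situation, using the matrix refinements of Lemma~\ref{lem:exclL}. First I would record the global geometry. Since $X$ has only isolated singularities and the ambient space is well-formed, Lemma~\ref{lem:normalqhyp} shows that $S=H_x$ is a normal surface, and $S\sim A$ while $T=H_y\sim a_1 A$. The restriction $T|_S$ is exactly the one-cycle $L_{xy}$, so I would fix the decomposition
\[
T|_S = m_1\Gamma_1 + \cdots + m_k\Gamma_k
\]
into distinct reduced irreducible curves $\Gamma_i$ with multiplicities $m_i>0$, read off from the factorization of $f=F(0,0,z,t,w)$ over $\mbP(a_2,a_3,a_4)$. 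Because $S=H_x\sim -K_X$, adjunction gives $K_S\sim 0$, so $S$ is a log Calabi--Yau surface; in particular each rational component $\Gamma_i$ satisfies, by Remark~\ref{rem:compselfint},
\[
(\Gamma_i^2)_S = -2 + \sum_{\msp' \in \Sing_{\Gamma_i}(S)} \frac{m(\msp')-1}{m(\msp')},
\]
where $m(\msp')$ is the index of the quotient singularity $\msp'\in S$. This makes every entry of the intersection matrix $M(\Gamma_1,\dots,\Gamma_k)$ explicitly computable.

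Next I would split according to the position of the smooth point $\msp\in L_{xy}$. If $\msp$ lies on a single component, say $\msp\in\Gamma_1\setminus\bigcup_{i\ge 2}\Gamma_i$, I would apply Lemma~\ref{lem:mtdLred} with $a=1$, $b=a_1$ and $r=1$; its hypotheses are that $\Gamma_1$ and $S$ be quasi-smooth at $\msp$ (which follows from Lemmas~\ref{lem:qsminvhypsec} and~\ref{lem:pltsurfpair} once $\Gamma_1$ is a quasi-smooth complete intersection curve), that $a_1\deg\Gamma_1\le m_1$, and that $M(\Gamma_1,\dots,\Gamma_k)$ satisfy condition $(\star)$. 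The conclusion
\[
\alpha_{\msp}(X) \ge \min\Bigl\{\, 1,\ \frac{m_1}{a_1(A^3) + m_1^2/a_1 - m_1\deg\Gamma_1} \,\Bigr\}
\]
then reduces the claim to the numerical inequality $a_1(A^3) + m_1^2/a_1 - m_1\deg\Gamma_1 \le 2m_1$. If instead $\msp$ is an intersection point of two reduced components $\Gamma_1,\Gamma_2$ (each of multiplicity one), I would use Lemma~\ref{lem:mtdLintpt}: provided $\deg\Gamma_i\le 2/a_1$ and $(\star)$ holds, it yields $\alpha_{\msp}(X)\ge\min\{1,a_1/2\}\ge 1/2$ directly.

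Finally I would carry out the verification family by family for the families with $a_1<a_2$, $\msi\in\msI\setminus\msI_1$, not appearing in Tables~\ref{table:Lsmooth} and~\ref{table:Lsing}. For each I would exhibit the explicit reduced form of $f$, list the components $\Gamma_i$ with their multiplicities, compute the degrees $\deg\Gamma_i=(A\cdot\Gamma_i)$ and the self-intersections via the Calabi--Yau adjunction above, and then check condition $(\star)$ on the resulting small intersection matrix. The main obstacle I anticipate is precisely this case analysis: ensuring condition $(\star)$, which constrains the signs of all principal minors and hence the geometric configuration of the $\Gamma_i$, and, in the single-component case, choosing $\Gamma_1$ so that the degree bound $a_1\deg\Gamma_1\le m_1$ is met and the closing inequality holds, where bounds of the form $a_1(A^3)\le c$ with small $c$ from Lemma~\ref{lem:wtnumerics} should supply the decisive numerical input. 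Non-reduced components (those with $m_i>1$) in fact help here, since they relax the degree constraint on $\Gamma_1$ and enlarge the admissible denominator.
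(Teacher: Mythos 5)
Your overall strategy is the paper's: decompose $T|_S$ for $S\in|A|$, $T\in|a_1 A|$, compute the intersection matrix via Remark \ref{rem:compselfint} and adjunction $K_S\sim 0$, verify condition $(\star)$, and close with Lemmas \ref{lem:mtdLred} and \ref{lem:mtdLintpt} together with the numerics of Lemma \ref{lem:wtnumerics}; this is exactly how the paper treats the configurations where every component through $\msp$ is reduced (its Lemma \ref{lem:Lredcp1} and the bulk of the case analysis).

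There is, however, a genuine gap in your treatment of the non-reduced cases, and your final sentence has the sign of the difficulty backwards. When $\msp$ lies on a component $\Gamma_1$ appearing with multiplicity $m_1=m\ge 2$ in $T|_S$, the bound of Lemma \ref{lem:mtdLred} is $m_1\big/\bigl(r a b (A^3)+m_1^2/b-r m_1\deg\Gamma_1\bigr)$, and the term $m_1^2/b$ grows quadratically in $m_1$: with $a=b=a_1=1$ and $m=3$ the requirement becomes $\deg\Gamma_1\ge 1+(A^3)/3>1$, which is impossible since $\deg\Gamma_1=1/(b_1b_2)<1$; even $m=2$ fails for many families. So multiplicities $>1$ do \emph{not} relax the closing inequality for points on the multiple component — they break it. The paper's fix (Lemmas \ref{lem:Lredcp2} and \ref{lem:Lnonredu2}, case $a=1$) is to abandon the pair $(H_x,H_y)$ at such points and instead restrict $T'_\lambda=(y-\lambda x=0)\in|A|$ to the surface $S'=H_{z_3}\in|b_3A|$, where the offending curve reappears with multiplicity one (at the cost of $K_{S'}\not\sim 0$, so the self-intersection formula acquires the extra term $-(K_{S'}\cdot\Gamma)$); for the totally non-reduced configuration $S\cdot T=2\Gamma$ it instead applies Lemma \ref{lem:exclL} directly using $\mult_{\msp}(S)\le 2$. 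Without some such substitute your argument cannot cover, e.g., the cases $f=t^2w$, $f=w^2z$, $f=t^4$, $f=w^3$ or $f=w^2$ arising in families $\mcF_{13}$, $\mcF_{20}$, $\mcF_{32}$, $\mcF_{49}$, $\mcF_{50}$, $\mcF_{52}$, $\mcF_{63}$, $\mcF_{64}$, so this is a missing idea rather than a routine verification.
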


The proof of Proposition \ref{prop:smptL2} will be completed in \S \ref{sec:proofsmptL} by considering each family separately and by case-by-case arguments.
Those arguments form several patterns and we describe them in \S \ref{sec:smptLgen}. 

%%%%%%%%%%%%%%%%%%%%%%%%%%%%%%%%%%%%%
\subsubsection{General arguments} \label{sec:smptLgen}
%%%%%%%%%%%%%%%%%%%%%%%%%%%%%%%%%%%%%

In this subsection, let 
\[
X = X_d \subset \mbP (1, a, b_1, b_2, b_3)_{x, y, z_1, z_2, z_3}
\] 
be a member of a family $\mcF_{\msi}$ with $\msi \in \msI \setminus \msI_1$.
Throughout this subsection, we assume that $a < b_i$ for $i = 1, 2, 3$.
Note that we do not assume $b_1 \le b_2 \le b_3$.
As before, we denote by $F = F (x, y, z_1, z_2, z_3)$ the defining polynomial of $X$, and we set $A := -K_X$.

The following very elementary lemma will be used several times.

\begin{Lem} \label{lem:numelem}
Let $a, e_1, e_2, e_3$ be positive integers such that $a < e_i$ for $i = 1, 2, 3$ and $\gcd \{e_1, e_2, e_3\} = 1$, and let $\lambda \ge 1$ be a number.
Then the following inequalities hold. 
\begin{enumerate}
\item $\frac{1 + e_2 + e_3}{e_1 e_2 e_3} \le \frac{1}{2}$.
\item $\frac{a + e_2 + e_3}{e_1 e_2 e_3} + \frac{\lambda}{a} \le \frac{1}{2} + \lambda$.
\item $a (a + e_2 + e_3) < e_1 e_2 e_3$.
\end{enumerate}
\end{Lem}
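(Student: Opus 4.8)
The plan is to prove the three inequalities in the order (1), (3), (2), since (3) will use (1) and (2) will use both. At the outset I would record the only consequence of the hypotheses that is needed repeatedly: because $a \ge 1$ and $a < e_i$, every $e_i$ satisfies $e_i \ge 2$.

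For (1), I would rewrite the target as $2(1 + e_2 + e_3) \le e_1 e_2 e_3$. The naive estimate uses $e_1 \ge 2$ to reduce this to $1 + e_2 + e_3 \le e_2 e_3$, equivalently $(e_2 - 1)(e_3 - 1) \ge 2$, which holds unless $e_2 = e_3 = 2$ (if at least one of them is $\ge 3$, the product is $\ge 2$). The single case this misses is $e_2 = e_3 = 2$, and this is exactly where the coprimality hypothesis enters: $\gcd\{e_1, 2, 2\} = 1$ forces $e_1$ to be odd, hence $e_1 \ge 3$, and then $e_1 e_2 e_3 \ge 12 \ge 2(1 + 2 + 2)$. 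I expect this one case to be the only genuinely delicate point of the whole lemma, since it is the unique place where coprimality (rather than mere size) is used; everything else is comparison of sizes.

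For (3), I would split on the value of $a$. When $a = 1$ the left-hand side equals $1 + e_2 + e_3$, and (1) gives $1 + e_2 + e_3 \le \frac{1}{2} e_1 e_2 e_3 < e_1 e_2 e_3$. When $a \ge 2$, all $e_i \ge a + 1 \ge 3$, and I would bound the three summands of $a(a + e_2 + e_3) = a^2 + a e_2 + a e_3$ separately: from $a < e_1$, $a < e_2$ and $e_3 \ge 3$ one gets $a^2 \le (e_1 - 1)(e_2 - 1) < e_1 e_2 \le \frac{1}{3} e_1 e_2 e_3$, and similarly $a e_2 < e_1 e_2 \le \frac{1}{3} e_1 e_2 e_3$ and $a e_3 < e_1 e_3 \le \frac{1}{3} e_1 e_2 e_3$; summing yields the strict inequality.

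Finally, for (2) I would exploit monotonicity in $\lambda$. Writing the claim as
\[
\frac{1}{2} + \lambda\left(1 - \frac{1}{a}\right) - \frac{a + e_2 + e_3}{e_1 e_2 e_3} \ge 0,
\]
the left-hand side is non-decreasing in $\lambda$ because $1 - \frac{1}{a} \ge 0$; hence it suffices to verify the case $\lambda = 1$, that is, $\frac{a + e_2 + e_3}{e_1 e_2 e_3} \le \frac{3}{2} - \frac{1}{a}$. For $a = 1$ this reads $\frac{1 + e_2 + e_3}{e_1 e_2 e_3} \le \frac{1}{2}$, which is precisely (1); for $a \ge 2$ one has $\frac{3}{2} - \frac{1}{a} \ge 1$, while (3) gives $a + e_2 + e_3 \le a(a + e_2 + e_3) < e_1 e_2 e_3$, so the ratio is strictly less than $1$. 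This closes the argument.
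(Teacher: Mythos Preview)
Your proof is correct and takes a genuinely different route from the paper's. For (1), the paper simply asserts that $\frac{1}{e_1 e_2 e_3} + \frac{1}{e_1 e_3} + \frac{1}{e_1 e_2}$ is maximized at $(e_1,e_2,e_3)=(2,2,3)$, whereas you give an explicit case split that makes transparent exactly where the coprimality hypothesis is used (only the case $e_2=e_3=2$). For (2), the paper views the left-hand side as a function of the real variable $a$ and uses that it is decreasing on $[1,\sqrt{\lambda e_1 e_2 e_3}]$ to reduce to $a=1$; you instead observe monotonicity in $\lambda$, reduce to $\lambda=1$, and then split on $a$, using (1) for $a=1$ and (3) for $a\ge 2$. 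For (3), the paper gives a single algebraic chain ending in $a^2(e_3-1)+a(e_3-e_2)+e_3>0$, while you split on $a$ and bound each summand of $a^2+ae_2+ae_3$ by $\tfrac{1}{3}e_1e_2e_3$. Your argument is more elementary (no calculus) and in fact more robust: the paper's final step in (3) tacitly needs $e_3\ge e_2$ (otherwise $a(e_3-e_2)$ can dominate and the displayed quantity becomes negative, e.g.\ $a=1$, $e_1=2$, $e_2=100$, $e_3=3$), while your proof covers all orderings of the $e_i$ as the lemma is stated.
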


\begin{proof}
In view of the assumption $\gcd \{e_1, e_2, e_3\} = 1$, it is easy to see that 
\[
\frac{1 + e_2 + e_3}{e_1 e_2 e_3} = \frac{1}{e_1 e_2 e_3} + \frac{1}{e_1 e_3} + \frac{1}{e_1 e_2}
\]
attains its maximum when $(e_1, e_2, e_3) = (2, 2, 3)$, which implies (1).

It is also easy to see that 
\[
\frac{a + e_2 + e_3}{e_1 e_2 e_3} + \frac{\lambda}{a},
\]
viewed as a function of $a$, attains its maximum when $a = 1$ since $1 \le a \le \sqrt{\lambda e_1 e_2 e_3}$.
Combining this with (1), the inequality (2) follows.

By the assumption, we have $e_1 e_2 \ge (a+1)^2$.
Hence we have 
\[
\begin{split}
e_1 e_2 e_3 - a (a + e_2 + e_3) 
&= e_3 (e_1 e_2 - a) - a^2 - a e_2 \\
& \ge e_3 (a^2 + a + 1) - a^2 - a e_2 \\
&= a^2 (e_3 - 1) + a (e_3 - e_2) + e_3 \\
&> 0.
\end{split}
\]
This proves (3).
\end{proof}

\begin{Lem} \label{lem:Linteg}
Suppose that $L_{xy} := (x = y = 0)_X$ is an irreducible and reduced curve which is smooth along $L_{xy} \cap \Sm (X)$.
Then, 
\[
\alpha_{\msp} (X) \ge 1
\] 
for any $\msp \in L_{xy} \cap \Sm (X)$.
\end{Lem}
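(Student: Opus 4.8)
The plan is to mirror the proof of Proposition~\ref{prop:smptL1} almost verbatim, the only difference being that the irreducibility of $L_{xy}$ and its smoothness along $\Sm(X)$ are now supplied by hypothesis instead of being read off from Tables~\ref{table:Lsmooth} and~\ref{table:Lsing} via Lemma~\ref{lem:Lirred}. Fix a point $\msp \in L_{xy} \cap \Sm(X)$ and set $S = H_x$ and $T = H_y$, so that $S \sim_{\mbQ} A = -K_X$, $T \sim_{\mbQ} a A = -a K_X$, and $S \cap T = (x = y = 0)_X = L_{xy}$. First I would record that $S$ and $T$ are prime divisors: since $X$ is Fano with terminal (hence isolated) singularities and the ambient weighted projective space is well-formed, Lemma~\ref{lem:normalqhyp} shows that the quasi-hyperplane sections $H_x$ and $H_y$ are normal, and being connected they are irreducible. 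Also $-K_X$ is ample, hence nef and big, so the hypotheses on $X$ in Lemma~\ref{lem:exclL} are met.

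Next I would verify the three conditions of Lemma~\ref{lem:exclL}. Because $\msp$ is a smooth point, the quotient morphism $q = q_{\msp}$ is the identity and $r = 1$; since $L_{xy}$ is irreducible and reduced we have the cycle identity $S \cdot T = L_{xy}$, so we may take $m = 1$ and $\check{\Gamma} = L_{xy}$. The smoothness of $L_{xy}$ along $L_{xy} \cap \Sm(X)$ gives $\mult_{\check{\msp}}(\check{\Gamma}) = \mult_{\msp}(L_{xy}) = 1$. To compute $\lct_{\msp}(X; S)$ I would apply Lemma~\ref{lem:qsminvhypsec} to the hypersurface $S \cong (\bar{F} = 0)$ and the quasi-hyperplane $T|_S$: their intersection $S \cap T = L_{xy}$ is quasi-smooth at $\msp$, so $S$ is quasi-smooth at $\msp$, and as $\msp$ is a smooth point of $X$ this means $S$ is a smooth divisor through the smooth point $\msp$, whence $\lct_{\msp}(X; S) = 1$.

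With these in hand, Lemma~\ref{lem:exclL}, applied with $S$ of class $A$ and $T$ of class $a A$, yields
\[
\alpha_{\msp}(X) \ge \min\left\{ \lct_{\msp}(X; S), \ \frac{a}{\mult_{\msp}(L_{xy})}, \ \frac{1}{a (A^3)} \right\} = \min\left\{ 1, \ a, \ \frac{1}{a (A^3)} \right\}.
\]
To conclude $\alpha_{\msp}(X) \ge 1$ I would invoke $a \ge 1$ together with $a (A^3) < 1$, the latter being exactly Lemma~\ref{lem:wtnumerics}(3), since the standing hypothesis $a < b_i$ translates into $a_1 < a_2$ in the ordered weights. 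The argument is a direct assembly of tools already established, so I do not anticipate a genuine obstacle; the only points demanding care are the bookkeeping of the two coefficients when feeding $S$ and $T$ into Lemma~\ref{lem:exclL}, and confirming that the scheme-theoretic equality $S \cap T = L_{xy}$ gives the cycle $S \cdot T = L_{xy}$ with multiplicity one, which follows from $L_{xy}$ being reduced together with the dimension count $\dim S + \dim T - \dim X = 1$ ensuring a proper intersection.
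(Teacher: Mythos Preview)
Your proof is correct and follows essentially the same route as the paper's: both take $S\in|A|$, $T\in|aA|$ with $S\cap T=L_{xy}$, use Lemma~\ref{lem:qsminvhypsec} to get quasi-smoothness of $S$ at $\msp$ (hence $\lct_{\msp}(X;S)=1$), and then apply Lemma~\ref{lem:exclL} together with Lemma~\ref{lem:wtnumerics}(3). The only cosmetic difference is that you fix $S=H_x$, $T=H_y$ and explicitly justify their primeness via Lemma~\ref{lem:normalqhyp}, whereas the paper takes general members; also, your middle term $a/\mult_{\msp}(L_{xy})=a$ is the literal output of Lemma~\ref{lem:exclL}, while the paper records the weaker (but still sufficient) value $1$ there.
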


\begin{proof}
Let $S \in |A|$ and $T \in |a A|$ be general members.
Then we have $S \cap T = L_{xy}$.
Take any point $\msp \in L_{xy} \cap \Sm (X)$.
By Lemma \ref{lem:qsminvhypsec}, $S$ is smooth at $\msp$.
It follows that $\mult_{\msp} (L_{xy}) = 1$ and $\lct_{\msp} (X;S) = 1$.
By Lemma \ref{lem:exclL}, we have
\[
\alpha_{\msp} (X) 
\ge \min \left\{ \lct_{\msp} (X;S), \ \frac{1}{\mult_{\msp} (L_{xy})}, \  \frac{1}{a (A^3)} \right\} = 1,
\]
since $1/a (A^3) > 1$ by Lemma \ref{lem:wtnumerics}.
\end{proof}

\begin{Lem} \label{lem:Lredcp1}
Let $S \in |A|$ and $T \in |aA|$ be general members.
Suppose that the following assertions are satisfied.
\begin{enumerate}
\item $T|_S = \Gamma + \Delta$, where $\Gamma = (x = y = z_1 = 0)$ is a quasi-line and $\Delta$ is an irreducible and reduced curve which is quasi-smooth along $\Delta \cap \Sm (X)$.
\item $S$ is quasi-smooth along $\Gamma \cap \Delta$. 
\item $\Sing_{\Gamma} (X) = \{\msp_{z_2}, \msp_{z_3}\}$.
\end{enumerate}
Then 
\[
\alpha_{\msp} (X) \ge \frac{1}{2}
\]
for any point $\msp \in L_{xy} \cap \Sm (X)$.
\end{Lem}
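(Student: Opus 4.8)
The plan is to run the reducible-curve method of Lemmas~\ref{lem:mtdLred} and~\ref{lem:mtdLintpt} on the surface $S$ and the divisor $T$, taking the two components of $T|_S = \Gamma + \Delta$ as the curves $\Gamma_1, \Gamma_2$. First I would fix general members $S \in |A|$ and $T \in |aA|$ realising the given splitting, so that $S \cap T = L_{xy}$, $T|_S = \Gamma + \Delta$ with $m_1 = m_2 = 1$, and $S$ is a normal weighted hypersurface in a weighted projective $3$-space. Since $S \sim A = -K_X$, adjunction gives $K_S = \mcO_S$, hence $(K_S \cdot \Gamma)_S = 0$. Every $\msp \in L_{xy} \cap \Sm (X)$ lies in exactly one of $\Gamma \setminus \Delta$, $\Delta \setminus \Gamma$, or $\Gamma \cap \Delta$, and by hypotheses (1)--(2) together with Lemma~\ref{lem:qsminvhypsec} the surface $S$ is quasi-smooth, hence smooth, at any such $\msp$.

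The heart of the argument is to assemble the intersection numbers on $S$ and to check that $M (\Gamma, \Delta)$ satisfies condition $(\star)$. Because $\Gamma \cong \mbP^1$ is a quasi-line with $\Sing_{\Gamma} (S) = \{\msp_{z_2}, \msp_{z_3}\}$ of indices $b_2, b_3$ (hypothesis (3)), Lemma~\ref{lem:pltsurfpair} shows $(S, \Gamma)$ is plt along $\Gamma$, and Remark~\ref{rem:compselfint} yields
\[
(\Gamma^2)_S = -(K_S \cdot \Gamma)_S - 2 + \frac{b_2 - 1}{b_2} + \frac{b_3 - 1}{b_3} = -\frac{1}{b_2} - \frac{1}{b_3}.
\]
Combined with $\deg \Gamma = (A \cdot \Gamma) = 1/(b_2 b_3)$ and $(T|_S \cdot \Gamma)_S = a \deg \Gamma$, this gives $(\Gamma \cdot \Delta)_S = (a + b_2 + b_3)/(b_2 b_3)$, $\deg \Delta = a (A^3) - 1/(b_2 b_3)$, and $(\Delta^2)_S = a^2 (A^3) - (2a + b_2 + b_3)/(b_2 b_3)$. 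For condition $(\star)$: the off-diagonal entry is positive and $(\Gamma^2)_S < 0$ is immediate; $(\Delta^2)_S < 0$ reduces to $a (a + b_2 + b_3) < b_1 (2 a + b_2 + b_3)$, which holds because $a < b_1$; and $\det M (\Gamma, \Delta) < 0$ follows from the Hodge index theorem on the normal surface $S$, since $\Gamma, \Delta$ are independent (as $(\Gamma^2)_S < 0$ rules out proportionality) and their sum $T|_S$ satisfies $(T|_S)^2 = a^2 (A^3) > 0$.

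With $(\star)$ in hand I would treat the three cases. If $\msp \in \Gamma \setminus \Delta$, apply Lemma~\ref{lem:mtdLred} with $\Gamma_1 = \Gamma$, $m_1 = 1$, $r = 1$ (and $S \sim -K_X$, $T \sim -a K_X$): its degree hypothesis reads $a \deg \Gamma = a/(b_2 b_3) < 1$, valid since $a < b_2$, and using $a (A^3) - 1/(b_2 b_3) = (a + b_2 + b_3)/(b_1 b_2 b_3)$ the bound becomes $\min\{1, 1/(\tfrac{a + b_2 + b_3}{b_1 b_2 b_3} + \tfrac{1}{a})\} \ge 2/3$ by Lemma~\ref{lem:numelem}(2). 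If $\msp \in \Delta \setminus \Gamma$, apply Lemma~\ref{lem:mtdLred} with $\Gamma_1 = \Delta$; here $a \deg \Delta = a (a + b_2 + b_3)/(b_1 b_2 b_3) < 1$ by Lemma~\ref{lem:numelem}(3), and the bound simplifies to $\min\{1, 1/(\tfrac{1}{a} + \tfrac{1}{b_2 b_3})\} > 1/2$. If $\msp \in \Gamma \cap \Delta$, then $X, S, \Gamma, \Delta$ are all smooth at $\msp$ and $\deg \Gamma, \deg \Delta \le 2/a$, so Lemma~\ref{lem:mtdLintpt} gives $\alpha_{\msp} (X) \ge \min\{1, a/2\} \ge 1/2$. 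The main obstacle is the middle step: computing $(\Gamma^2)_S$ correctly through the adjunction/different formula and verifying condition $(\star)$ — in particular $\det M < 0$ — after which everything downstream is a mechanical application of the two template lemmas and Lemma~\ref{lem:numelem}.
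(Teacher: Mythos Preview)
Your proposal is correct and follows essentially the same route as the paper: verify condition $(\star)$ for $M(\Gamma,\Delta)$ via the self-intersection formula from Remark~\ref{rem:compselfint}, then apply Lemma~\ref{lem:mtdLred} on $\Gamma\setminus\Delta$ and $\Delta\setminus\Gamma$ and Lemma~\ref{lem:mtdLintpt} on $\Gamma\cap\Delta$, with the numerics controlled by Lemma~\ref{lem:numelem}. The only substantive difference is that you deduce $\det M(\Gamma,\Delta)<0$ from the Hodge index theorem, whereas the paper computes it directly as $-a(a+b_2+b_3)(b_1+b_2+b_3)/(b_1 b_2^2 b_3^2)$; your argument is valid, though note that ``$(\Gamma^2)_S<0$ rules out proportionality'' is not quite self-contained---you need it together with $(T|_S)^2>0$ to force independence.
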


\begin{proof}
By assumptions (1), (2) and Lemma \ref{lem:pltsurfpair}, $S$ is quasi-smooth along $\Gamma$.

\begin{Claim} \label{cl:Lredcp1-1}
The intersection matrix $M = M (\Gamma, \Delta)$ satisfies the condition $(\star)$.
\end{Claim}

\begin{proof}[Proof of Claim \ref{cl:Lredcp1-1}]
By the assumption (3) and the quasi-smoothness of $S$, we see that $\Sing_{\Gamma} (S) = \{\msp_{z_2}, \msp_{z_3}\}$ and $\msp_{z_i} \in S$ is a cyclic quotient singularity of index $b_i$ for $i = 2, 3$.
By Remark \ref{rem:compselfint}, we have
\[
(\Gamma^2)_S = -2 + \frac{b_2-1}{b_2} + \frac{b_3-1}{b_3} = - \frac{b_2 + b_3}{b_2 b_3} < 0.
\] 
By taking the intersection number of $T|_S = \Gamma + \Delta$ and $\Gamma$, we have
\[
(\Gamma \cdot \Delta)_S 
= - (\Gamma^2)_S + (T \cdot \Gamma)
= \frac{a + b_2 + b_3}{b_2 b_3} > 0.
\]
Note that we have
\[
(T \cdot \Delta) 
= (T^2 \cdot S) - (T \cdot \Gamma) 
= a^2 (A^3) - \frac{a}{b_2 b_3}
= \frac{a (a + b_2 + b_3)}{b_1 b_2 b_3},
\]
and then by taking the intersection number of $T|_S = \Gamma + \Delta$ and $\Delta$, we have
\[
(\Delta^2)_S = (T \cdot \Delta) - (\Gamma \cdot \Delta)_S
= - \frac{(b_1-a)(a + b_2 + b_3)}{b_1 b_2 b_3} < 0.
\]
Finally we have
\[
\begin{split}
\det M &= \frac{b_2 + b_3}{b_2 b_3} \cdot \frac{(b_1-a)(a + b_2 + b_3)}{b_1 b_2 b_3} - \frac{(a + b_2 + b_3)^2}{b_2^2 b_3^2} \\
&= - \frac{a (a + b_2 + b_3)(b_1 + b_2 + b_3)}{b_1 b_2^2 b_3^2} < 0.
\end{split}
\]
It follows that $M$ satisfies the condition $(\star)$.
\end{proof}

Let $\msp \in (\Gamma \setminus \Delta) \cap \Sm (X)$ be a point.
By Lemma \ref{lem:normalqhyp}, $S$ is a normal surface.
It is easy to check that $a \deg \Gamma = a/(b_2 b_3) \le 1$, and that $X$, $S$ and $\Gamma$ are smooth at $\msp$.
Thus, we can apply Lemma \ref{lem:mtdLred} and we conclude
\[
\alpha_{\msp} (X) 
\ge \min \left\{ 1, \ \frac{1}{a (A^3) + \frac{1}{a} - \deg \Gamma} \right\} 
= \min \left\{ 1, \ \frac{1}{\frac{a + b_2 + b_3}{b_1 b_2 b_3} + \frac{1}{a}} \right\}
\ge \frac{2}{3},
\]
where the last inequality follows from Lemma \ref{lem:numelem}.

Let $\msp \in (\Delta \setminus \Gamma) \cap \Sm (X)$ be a point.
Note that $\Delta$ is smooth at $\msp$ since it is quasi-smooth at $\msp$ by the assumption (1).
We have
\[
a \deg \Delta = a \left( a (A^3) - \frac{1}{b_2 b_3} \right) = \frac{a (a + b_2 + b_3)}{b_1 b_2 b_3} < 1
\]
by Lemma \ref{lem:numelem}.
Note that we have
\[
a (A^3) + \frac{1}{a} - \deg \Delta = \frac{1}{a} + \frac{1}{b_2 b_3} \le 1 + \frac{1}{4} = \frac{5}{4}
\]
since $1 \le a < b_i$.
Thus, we can apply Lemma \ref{lem:mtdLred} and conclude 
\[
\alpha_{\msp} (X) 
\ge \min \left\{ 1, \ \frac{1}{a (A^3) + \frac{1}{a} - \deg \Delta} \right\} \ge \frac{4}{5}.
\]

Finally let $\msp \in (\Gamma \cap \Delta) \cap \Sm (X)$ be a point.
Note that $S$ is smooth at $\msp$ by the assumption (2), and we have
\[
\deg (\Gamma) = \frac{1}{b_2 b_3} < \frac{2}{a}, \quad
\deg (\Delta) = \frac{a + b_2 + b_3}{b_1 b_2 b_3} < \frac{2}{a},
\]
where the former inequality is obvious and the latter follows from Lemma \ref{lem:numelem}.
Thus we can apply Lemma \ref{lem:mtdLintpt} and conclude that
\[
\alpha_{\msp} (X) \ge \min \left\{1, \frac{a}{2} \right\} \ge \frac{1}{2}.
\]
Therefore the proof is completed.
\end{proof}

\begin{Rem} \label{rem:Lredcp1}
Let the notation and assumption as in Lemma \ref{lem:Lredcp1}.
Assume in addition that $a \ge 2$ and $\Gamma \cap \Delta \subset \Sing (X)$.
Then 
\[
\alpha_{\msp} (X) \ge \frac{43}{54} > \frac{3}{4}
\]
for any $\msp \in L_{xy} \cap \Sm (X)$.

Indeed, since $\Gamma \cap \Delta \subset \Sing (X)$, we have
\[
\alpha_{\msp} (X) \ge \min \left\{1, \ \frac{1}{\frac{a + b_2 + b_3}{b_1 b_2 b_3} + \frac{1}{a}}, \ \frac{4}{5} \right\}
\]
by the proof of Lemma \ref{lem:Lredcp1}.
Since $2 \le a < \sqrt{b_1 b_2 b_3}$ and $a < b_i$ for $i = 1, 2, 3$, we have
\[
\frac{a + b_2 + b_3}{b_1 b_2 b_3} + \frac{1}{a} 
\le \frac{3 a + 1}{(a+1)^3} + \frac{1}{a}
\le \frac{43}{54}.
\]
This proves the desired inequality.
\end{Rem}

\begin{Lem} \label{lem:Lredcp2}
Suppose that $b_1, b_2, b_3$ are mutually coprime and $a \in \{1, 2\}$.
Suppose in addition that $F$ can be written as
\[
F = f_1 (z_1, z_2) x + f_2 (z_1, z_2) y + z_3^m z_2 + g (x, y, z_1, z_2, z_3),  
\]
where $m \in \{2, 3\}$ and $f_1, f_2 \in \mbC [z_1, z_2], g \in [x, y, z_1, z_2, z_3]$ are quasi-homogeneous polynomials satisfying the following properties.
\begin{enumerate}
\item $\deg F = b_1 b_2 + a$.
\item $g$ is contained in the ideal $(x, y) \cap (x, y, z_3)^2 \subset \mbC [x, y, z_1, z_2, z_3]$.
\end{enumerate}
Then, 
\[
\alpha_{\msp} (X) \ge \frac{1}{2}
\]
for any point $\msp \in L_{xy} \cap \Sm (X)$.
\end{Lem}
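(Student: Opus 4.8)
The plan is to realize $L_{xy}$ as $S\cap T$ for two quasi-hyperplane sections and feed the resulting reducible cycle into Lemma~\ref{lem:mtdLred} together with its singular-surface variant Lemma~\ref{lem:mtdLredSsing}. I first restrict $F$ to $(x=y=0)$: since $f_1x$, $f_2y$ and $g$ all lie in the ideal $(x,y)$, only $z_3^mz_2$ survives, so $L_{xy}=(x=y=z_3^mz_2=0)$ is the $1$-cycle $C_1+mC_2$, where $C_1=(x=y=z_2=0)$ and $C_2=(x=y=z_3=0)$ are quasi-lines isomorphic to $\mbP^1$ which meet only at $\msp_{z_1}$, a quotient singularity of index $b_1$. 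I then take $S=H_x\in|A|$ and $T=H_y\in|aA|$; $S$ is normal by Lemma~\ref{lem:normalqhyp} and $T|_S=C_1+mC_2$.

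Second, I would compute the intersection matrix $M(C_1,C_2)$ on $S$. Over the index-$b_1$ point the two branches are transverse, so $(C_1\cdot C_2)_S=1/b_1$; intersecting $T|_S=C_1+mC_2$ with $C_1$ and $C_2$ and using $(A\cdot C_1)=1/(b_1b_3)$, $(A\cdot C_2)=1/(b_1b_2)$ then gives $(C_1^2)_S=(a-mb_3)/(b_1b_3)$ and $(C_2^2)_S=(a-b_2)/(mb_1b_2)$. Because $mb_3+b_2=d>a$ and $b_2>a$, both diagonal entries are negative while the off-diagonal entry $1/b_1$ is positive, and $\det M<0$ reduces exactly to $a<mb_3+b_2=d$; hence $M$ satisfies condition $(\star)$.

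Third, I split according to where the smooth point $\msp$ sits. Writing $\bar F=F(0,y,z_1,z_2,z_3)$, along $C_1$ one has $\partial\bar F/\partial z_2=z_3^m\ne0$ away from $\msp_{z_1}$, so $S$ and $C_1$ are quasi-smooth at every $\msp\in C_1\cap\Sm(X)$; applying Lemma~\ref{lem:mtdLred} with $\Gamma_1=C_1$, $m_1=1$ (note $a\deg C_1=a/(b_1b_3)\le1$) keeps the denominator $a(A^3)+1/a-\deg C_1$ below $2$ and yields $\alpha_\msp(X)\ge1/2$. Along $C_2$ the same computation shows $S=H_x$ is quasi-smooth precisely where $f_2\ne0$; at such points I apply Lemma~\ref{lem:mtdLred} with $\Gamma_1=C_2$, $m_1=m$, and verify $a(A^3)+m^2/a-m\deg C_2\le2m$. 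Combining $\deg F=b_1b_2+a$ with the index-$1$ relation $\deg F=1+a+b_1+b_2+b_3$ gives $b_3=b_1b_2-b_1-b_2-1$, and together with mutual coprimality the admissible weight data leave only $a=2$, $m\in\{2,3\}$, for which the inequality holds, giving $\alpha_\msp(X)\ge1/2$.

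The main obstacle is the unique point $\msp_0\in C_2$ at which $f_2$ vanishes. There $X$ stays smooth (so $f_1(\msp_0)\ne0$) while $S=H_x$ acquires a singularity; since $f_2|_{C_2}$ has a simple zero, a local expansion of $\bar F$ shows $\mult_{\msp_0}(S)=2$. For this point I would instead use Lemma~\ref{lem:mtdLredSsing} with $\Gamma_1=C_2$, $\mult_{\msp_0}(S)=2$ and $\mult_{\msp_0}(C_2)=1$; the matrix $M$ is unchanged and still satisfies $(\star)$, and the bound collapses to $\min\{1/2,\,2/(a(A^3)+m/a-m\deg C_2)\}=1/2$. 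Assembling the three situations proves $\alpha_\msp(X)\ge1/2$ for every $\msp\in L_{xy}\cap\Sm(X)$. The two steps requiring genuine care are the transversality value $(C_1\cdot C_2)_S=1/b_1$, which rests on $S$ being quasi-smooth at $\msp_{z_1}$ (alternatively on the plt statement of Lemma~\ref{lem:pltsurfpair} together with Remark~\ref{rem:compselfint}), and the multiplicity computation $\mult_{\msp_0}(S)=2$.
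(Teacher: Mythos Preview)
Your overall architecture (take $S\in|A|$, $T\in|aA|$, decompose $T|_S=C_1+mC_2$, check the intersection matrix, and feed the pieces into Lemma~\ref{lem:mtdLred}/\ref{lem:mtdLredSsing}) matches the paper for $C_1$ and for $C_2$ when $a=2$. The gap is the case $a=1$ on $C_2$, which you dismiss by asserting that ``the admissible weight data leave only $a=2$''. This is false: the lemma is applied with $a=1$ in Families $\mcF_{13}$ (Cases~(iii) and (vi)) and $\mcF_{20}$ (Case~(iv)). Your index--1 relation also has an off-by-one error: the correct identity is $\deg F=a+b_1+b_2+b_3$, not $1+a+b_1+b_2+b_3$.

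For $a=1$ your bound on $C_2$ genuinely fails. With $\sigma=1$, $\tau=a=1$, $m_1=m$, $r=1$, Lemma~\ref{lem:mtdLred} gives
\[
\alpha_{\msp}(X)\ \ge\ \frac{m}{(A^3)+m^2-\tfrac{m}{b_1b_2}},
\]
so one needs $(A^3)+m^2-\tfrac{m}{b_1b_2}\le 2m$, i.e.\ $(A^3)\le m(2-m)+\tfrac{m}{b_1b_2}$. For $m=3$ the right side is negative; for $m=2$ it equals $\tfrac{2}{b_1b_2}$, which is already smaller than $(A^3)$ in, e.g., $\mcF_{13}$ Case~(iii) ($(A^3)=\tfrac{11}{30}$ vs.\ $\tfrac{2}{b_1b_2}=\tfrac15$) and $\mcF_{20}$ Case~(iv) ($\tfrac{13}{60}$ vs.\ $\tfrac16$). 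Passing to Lemma~\ref{lem:mtdLredSsing} does not help either, since the denominator $(A^3)+m-\tfrac{m}{b_1b_2}$ exceeds $2$ in these cases. The culprit is the factor $m^2/\tau=m^2$ when $\tau=a=1$: as long as $\Delta=C_2$ appears with multiplicity $m$ in the cut and $T\in|A|$, the bound is too weak.

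The paper resolves this by changing the surface for $a=1$: instead of $S=H_x\in|A|$ one takes $S'=H_{z_3}\in|b_3A|$ and cuts with the pencil $T'_\lambda=(y-\lambda x=0)\in|A|$. After the normalization $F=z_2^{b_1}x-z_1^{b_2}y+z_3^mz_2+g$ (Claim~\ref{clm:Lredcp2-1}), one finds $T'_\lambda|_{S'}=\Delta+\Xi_\lambda$ with $\Xi_\lambda$ irreducible, so $\Delta$ now has multiplicity $1$; the intersection matrix $M(\Delta,\Xi_\lambda)$ satisfies $(\star)$, and Lemma~\ref{lem:mtdLred} gives
\[
\alpha_{\msp}(X)\ \ge\ \min\Bigl\{b_3,\ \frac{1}{b_3(A^3)+1-\deg\Delta}\Bigr\}
=\min\Bigl\{b_3,\ \frac{1}{\tfrac{d-1}{b_1b_2}+1}\Bigr\}=\frac12,
\]
using $d=b_1b_2+1$. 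Plugging this missing $a=1$ argument into your write-up would complete the proof; the rest of your outline is correct and essentially the paper's argument.
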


\begin{proof}
We have $d = m b_3 + b_2$ since $z_3^m z_2 \in F$, and combining this with $d =  a + b_1 + b_2 + b_3$, we have
\begin{equation} \label{eq:Lredcp2-1}
a + b_1 + b_3 = m b_3.
\end{equation}

\begin{Claim} \label{clm:Lredcp2-1}
We can assume
\begin{equation} \label{eq:Lredcp2-2}
F =
\begin{cases}
z_2^{b_1} x - z_1^{b_2} y + z_3^m z_2 + g, & \text{if $a = 1$}, \\
z_1^k z_2^l x + (z_1^{b_2} - z_2^{b_1}) y + z_3^m z_2 + g, & \text{if $a = 2$}, \\
\end{cases}
\end{equation}
after replacing $x$ and $y$ suitably, where $k$ and $l$ are non-negative integers.
\end{Claim}

\begin{proof}[Proof of Claim \ref{clm:Lredcp2-1}]
Suppose $a = 1$.
Then, since $\deg f_1 = \deg f_2 = b_1 b_2$, we can write 
\[
f_1 (z_1,z_2) x + f_2 (z_1,z_2) y = z_2^{b_1} \ell_1 (x, y) + z_1^{b_2} \ell_2 (x, y),
\] 
where $\ell_1, \ell_2$ are linear forms in $x, y$.
We see that $\ell_1, \ell_2$ are linearly independent because otherwise we can write $\ell_2 = \alpha \ell_1$ for some nonzero $\alpha \in \mbC$ and $X$ is not quasi-smooth along
\[
(x = y = w = \ell_1 = z_2^{b_2} + \alpha z_1^{b_1} = 0) \subset X.
\]
This is a contradiction.
Thus $\ell_1, \ell_2$ are linearly independent and we may assume $\ell_1 = x$ and $\ell_2 = y$, as desired.

Suppose $a = 2$.
We have $f_2 = \alpha z_1^{b_2} + \beta z_2^{b_1}$ for some $\alpha, \beta \in \mbC$ since $\deg f_2 = b_1 b_2$.
By the quasi-smooth of $X$ at $\msp_z, \msp_t$, we have $\alpha, \beta \ne 0$, and thus we may assume $\alpha =1$, $\beta = -1$ by rescaling $z_1, z_2$.
We see that the equation $f_1 (z_1, z_2) = f_2 (z_1, z_2) = 0$ on variables $z_1, z_2$ has only trivial solution because otherwise $X$ is not quasi-smooth along the nonempty set
\[
(x = y = w = f_1 (z_1, z_2) = f_2 (z_1, z_2) = 0) \subset X,
\]
which is impossible.
This implies that $f_1 \ne 0$ as a polynomial, and there exists a monomial $z_1^k z_2^l$ of degree $b_1 b_2 + 1$.
Since $b_1$ is coprime to $b_2$, $z_1^k z_2^l$ is the unique monomial of degree $b_1 b_2 + 1$ in variables $z_1, z_2$.
Thus we have $f_2 = \gamma z_1^k z_2^l$ for some nonzero $\gamma \in \mbC$. 
Rescaling $x$, we may assume $\gamma = 1$, and the claim is proved.
\end{proof}

We continue the proof of Lemma \ref{lem:Lredcp2}.
Let $S \in |A|$ and $T \in |a A|$ be general members.
We have 
\[
T|_S = \Gamma + m \Delta,
\]
where 
\[
\Gamma = (x = y = z_2 = 0), \quad
\Delta  = (x = y = z_3 = 0),
\]
since $F (0,0,z,t,w) = z_3^m z_2$.
We see that $\Gamma$ and $\Delta$ are quasi-lines of degree $1/b_1 b_3$ and $1/b_1 b_2$, respectively, and $\Gamma \cap \Delta = \{\msp_{z_1}\} \subset \Sing (X)$.
We see that $S$ is quasi-smooth at $\msp_{z_1}$ since $z_1^{b_2} y \in F$.
By Lemma \ref{lem:pltsurfpair}, $S$ is quasi-smooth along $\Gamma$ and the pair $(S, \Gamma)$ is plt.

\begin{Claim} \label{clm:Lredcp2-2}
The intersection matrix $M = M (\Gamma, \Delta)$ satisfies the condition $(\star)$.
\end{Claim}

\begin{proof}[Proof of Claim \ref{clm:Lredcp2-2}]
We see that $d$ is not divisible by $b_1$ or $b_3$ since $d = b_1 b_2 + a = m b_3 + b_2$, $a < b_1$ and $b_2$ is coprime to $b_3$.
It follows that $\Sing_{\Gamma} (S) = \{\msp_{z_1}, \msp_{z_3}\}$, and $\msp_{z_i} \in S$ is a cyclic quotient singularity of index $b_i$ for $i = 1, 3$.
By Remark \ref{rem:compselfint}, we have
\[
(\Gamma^2)_S 
= -2 + \frac{b_1 - 1}{b_1} + \frac{b_3 - 1}{b_3} 
= - \frac{b_1 + b_3}{b_1 b_3} < 0.
\]
By taking intersection number of $T|_S = \Gamma + m \Delta$ and $\Gamma$, we obtain
\[
(\Gamma \cdot \Delta)_S 
= \frac{1}{m} (a \deg \Gamma - (\Gamma^2)_S)
= \frac{a + b_1 + b_3}{m b_1 b_3} = \frac{1}{b_1} > 0.
\]
Similarly, by taking intersection number of $T|_S$ and $\Delta$, we obtain
\[
(\Delta^2)_S 
= \frac{1}{m} (a \deg \Delta - (\Gamma \cdot \Delta)_S)
= - \frac{b_2 - a}{m b_1 b_2} < 0,
\]
where the second equality follows from \eqref{eq:Lredcp2-1}.
Finally, we have
\[
\det M = \frac{b_1 + b_3}{b_1 b_3} \cdot \frac{b_2 - a}{m b_1 b_2} - \frac{1}{b_1^2} = - \frac{a (b_1 + b_2 + b_3)}{b_1^2 b_2 b_3} < 0,
\]
where the second equality follows from \eqref{eq:Lredcp2-1}.
It follows that $M$ satisfies the condition $(\star)$.
\end{proof}

Let $\msp \in (\Gamma \setminus \Delta) \cap \Sm (X)$.
We see that $X, S$ and $\Gamma$ are smooth at $\msp$, and it is easy to see $a \deg \Gamma = a/(b_1 b_3) < 1$.
Hence we can apply Lemma \ref{lem:mtdLred} and we have
\[
\alpha_{\msp} (X) 
\ge \min \left\{ 1, \ \frac{1}{a (A^3) + \frac{1}{a} - \deg \Gamma} \right\} 
= \min \left\{ 1, \ \frac{1}{\frac{a + b_1 + b_3}{b_1 b_2 b_3} + \frac{1}{a}} \right\}
\ge \frac{2}{3},
\]
where the last inequality follows from Lemma \ref{lem:numelem}.

It remains to consider $\msp \in (\Delta \setminus \Gamma) \cap \Sm (X)$ since $\Gamma \cap \Delta = \{\msp_{z_1}\} \subset \Sing (X)$.
We first consider the case when $a = 2$.
In this case $S = H_x$ is quasi-smooth along $\Delta \setminus \{\msq\}$, where $\msq = (0\!:\!0\!:\!1\!:\!1\!:\!0) \in (\Delta \setminus \Gamma) \cap \Sm (X)$, and $S$ has a double point at $\msq$.
We have $\mult_{\msp} (\Delta) = 1$, and $a \deg \Delta = a/(b_1 b_2) < 1$.
Thus we can apply Lemma \ref{lem:mtdLredSsing} and conclude
\[
\begin{split}
\alpha_{\msp} (X) 
&\ge \min \left\{ \frac{2}{\mult_{\msp} (S)}, \ \frac{\mult_{\msp} (S)}{2 (A^3) + \frac{m}{2} - m \deg \Delta} \right\} \\
&= \min \left\{ \frac{2}{\mult_{\msp} (S)}, \ \frac{\mult_{\msp} (S)}{\frac{1}{b_1 b_3} + \frac{m}{2}} \right\} \\
&\ge \min \left\{ 1, \ \frac{1}{\frac{1}{12} + \frac{3}{2}} \right\} \\
&= \frac{12}{19},
\end{split}
\]
since $1/(b_1 b_3) \le 1/12$, $m \in \{2, 3\}$ and $\mult_{\msp} (S) \in \{1, 2\}$.

Suppose $a = 1$.
We set $S' = (z_3 = 0) \cap X \in |b_3 A|$.
For $\lambda \in \mbC$, we set $T'_{\lambda} = (y - \lambda x = 0) \cap X \in |A|$. 
We can write $g (x, \lambda x, z_1, z_2, 0) = x^2 h_{\lambda}$ for some $h_{\lambda} = h_{\lambda} (x,z_1,z_2)$ since $g \in (x,y,z_3)^2$.
In view of \eqref{eq:Lredcp2-2}, we have
\[
F (x, \lambda x, z_1, z_2, 0) = x \phi_{\lambda} (x, z_1, z_2),
\]
where
\[
\phi_{\lambda} (x, z_1, z_2) =
z_1^{b_2} - \lambda z_2^{b_1} + x h_{\lambda}
\]
The polynomial $\phi_{\lambda}$ is irreducible for any nonzero $\lambda \in \mbC$.
We have 
\[
T'_{\lambda}|_{S'} = \Delta + \Xi_{\lambda},
\]
where
\[
\Xi_{\lambda} = (y - \lambda x = z_3 = \phi_{\lambda} = 0)
\]
is an irreducible and reduced curve.
We have $\Delta \cap \Xi_{\lambda} = \{\msq_{\lambda}\} \subset \Sm (X)$, where $\msq_{\lambda} = (0\!:\!0\!:\!\sqrt[b_2]{\lambda}\!:\!1\!:\!0)$.
It is easy to see that $S'$ is quasi-smooth at $\msq_{\lambda}$.
Hence, $S'$ is quasi-smooth along $\Delta$ by Lemma \ref{lem:pltsurfpair}.

\begin{Claim} \label{clm:Lredcp2-3}
The intersection matrix $M' = M (\Delta, \Xi_{\lambda})$ satisfies the condition $(\star)$.
\end{Claim}

\begin{proof}[Proof of Claim \ref{clm:Lredcp2-3}]
By Remark \ref{rem:compselfint}, we have
\[
(\Delta^2)_S = - \frac{b_3 -1}{b_1 b_2} - 2 + \frac{b_1 - 1}{b_1} + \frac{b_2 - 1}{b_2} = - \frac{b_1 + b_2 + b_3 - 1}{b_1 b_2} < 0.
\]
By taking intersection number of $T'_{\lambda}|_{S'} = \Delta + \Xi_{\lambda}$ and $\Delta$, we obtain
\[
(\Delta \cdot \Xi_{\lambda})_S = \frac{b_1 + b_2 + b_3}{b_1 b_2} > 0.
\]
By taking intersection number of $T'_{\lambda}|_{S'}$ and $\Xi_{\lambda}$, we obtain
\[
(\Xi_{\lambda}^2)_S = 0.
\]
It is then obvious that $\det M' < 0$ and the proof is completed.
\end{proof}

Now we take any point $\msp \in (\Delta \setminus \Gamma) \cap \Sm (X)$, and then we can choose a nonzero $\lambda \in \mbC$ so that $\msp \ne \msq_{\lambda}$.
By Lemma \ref{lem:qsminvhypsec}, $S'$ is smooth at $\msp$ since $S' \cap T'_{\lambda}$ is smooth at $\msp$.
It is easy to see that $\deg \Delta = 1/(b_1 b_2) < 1$.
Thus we can apply Lemma \ref{lem:mtdLred} and conclude 
\[
\alpha_{\msp} (X) 
\ge \min \left\{ b_3, \ \frac{1}{b_3 (A^3) + 1 - \deg \Delta} \right\}
= \min \left\{ b_3, \ \frac{1}{2} \right\} = \frac{1}{2},
\]
where the first equality follows since
\[
b_3 (A^3) + 1 - \deg \Delta = \frac{d - 1}{b_1 b_2} + 1 = 2.
\] 
This completes the proof.
\end{proof}

\begin{Lem} \label{lem:Lnonredu2}
Suppose that $b_1, b_2, b_3$ are mutually coprime and $a \in \{1, 2, 3\}$.
Suppose in addition that $F$ can be written as
\[
F = z_3^m + z_1^{e_1} y - z_2^{e_2} x + g (x, y, z_1, z_2, z_3),
\]
where $m \ge 2, e_1, e_2$ are positive integers and $g \in \mbC [x, y, z_1 z_2, z_3]$ is a homogeneous polynomial satisfying the following properties.
\begin{enumerate}
\item If $a \ge 2$, then $m \le 2 a$.
\item If $a = 1$, then $e_1 \le b_2$.
\item $g$ is a homogeneous polynomial contained in the ideal $(x, y) \cap (x, y, z_3)^2 \subset \mbC [x, y, z_1, z_2, z_3]$.
\end{enumerate}
Then, 
\[
\alpha_{\msp} (X) \ge \frac{1}{2}
\]
for any $\msp \in L_{xy} \cap \Sm (X)$.
\end{Lem}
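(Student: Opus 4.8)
The plan is to split into the two regimes $a\in\{2,3\}$ and $a=1$, the split being forced by the shape of the multiplicity conditions $(1)$ and $(2)$. First I would record the numerics: since $z_3^m$, $z_1^{e_1}y$ and $z_2^{e_2}x$ all occur in $F$ in degree $d$, we have $mb_3=d$, $e_1b_1+a=d$ and $e_2b_2+1=d$, alongside $d=1+a+b_1+b_2+b_3$. Putting $x=y=0$ kills every term of $F$ except $z_3^m$ (because $g\in(x,y)$), so as a cycle $L_{xy}=(x=y=0)_X=m\Gamma$, where $\Gamma:=(x=y=z_3=0)\cong\mbP(b_1,b_2)\cong\mbP^1$ is a quasi-line (using $\gcd(b_1,b_2)=1$) of degree $\deg\Gamma=1/(b_1b_2)$. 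Its only singular points in $X$ are the vertices $\msp_{z_1},\msp_{z_2}$, both of index $>1$, so every $\msp\in L_{xy}\cap\Sm(X)$ has $z_1(\msp)\ne0$; differentiating $\bar F=F(0,y,z_1,z_2,z_3)$ in $y$ then shows $H_x$ is quasi-smooth at such $\msp$, whence $\lct_{\msp}(X;H_x)=1$.

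In the case $a\in\{2,3\}$ I would apply Lemma \ref{lem:exclL} to the prime divisors $S=H_x\sim A$ and a general member $T\sim aA$ (for instance $T=H_y$). A degree count gives $(H_x\cdot H_y\cdot A)=a(A^3)=d/(b_1b_2b_3)=m/(b_1b_2)=(m\Gamma\cdot A)$, and since $H_x\cap H_y$ is set-theoretically the irreducible curve $\Gamma$, the cycle $H_x\cdot H_y$ is exactly $m\Gamma$. As $\msp$ is smooth, $q_{\msp}$ is the identity and $\mult_{\msp}(\Gamma)=1$, so Lemma \ref{lem:exclL} yields
\[
\alpha_{\msp}(X)\ge\min\Bigl\{\lct_{\msp}(X;H_x),\ \tfrac{a}{m},\ \tfrac{1}{a(A^3)}\Bigr\}.
\]
The first term is $1$, the second is $\ge1/2$ by hypothesis~$(1)$ (which reads $m\le2a$), and the third exceeds $1$ because $a(A^3)<1$ by Lemma \ref{lem:wtnumerics}(3) (in the sorted weights the smallest nontrivial weight is $a<\min b_i$). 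Hence $\alpha_{\msp}(X)\ge1/2$.

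In the case $a=1$ the factor $a/m$ is useless since $m$ is unbounded, so I would instead work on the normal surface $S=H_{z_3}\sim b_3A$ (normality by Lemma \ref{lem:normalqhyp}) with the pencil $T_\lambda=(y-\lambda x=0)_X\sim A$. Since $g\in(x,y)\cap(x,y,z_3)^2$, substituting $y=\lambda x$, $z_3=0$ factors $F$ as $x\cdot(\lambda z_1^{e_1}-z_2^{e_2}+xh_\lambda)$, giving $T_\lambda|_S=\Gamma+\Xi_\lambda$ with $\Xi_\lambda$ irreducible and reduced for general $\lambda\ne0$. After verifying by Lemmas \ref{lem:qsminvhypsec} and \ref{lem:pltsurfpair} that $S$ is quasi-smooth along $\Gamma$ and computing the intersections on $S$ via Remark \ref{rem:compselfint} to confirm that $M(\Gamma,\Xi_\lambda)$ satisfies $(\star)$, I would apply Lemma \ref{lem:mtdLred} (with $m_1=1$, $\Gamma_1=\Gamma$, and $\lambda$ chosen so that $\msp\notin\Xi_\lambda$) to get
\[
\alpha_{\msp}(X)\ge\min\Bigl\{b_3,\ \tfrac{1}{b_3(A^3)+1-\deg\Gamma}\Bigr\}.
\]
Here $b_3(A^3)=d/(b_1b_2)$ and $\deg\Gamma=1/(b_1b_2)$, so the denominator is $(d-1)/(b_1b_2)+1$. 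The decisive input is hypothesis~$(2)$: as $a=1$ forces $d-1=e_1b_1$, the inequality $e_1\le b_2$ gives $d-1\le b_1b_2$, so $(d-1)/(b_1b_2)\le1$, the denominator is $\le2$, and $\alpha_{\msp}(X)\ge1/2$.

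The main obstacle lies entirely in the $a=1$ case: proving $\Xi_\lambda$ irreducible and reduced for general $\lambda$, establishing quasi-smoothness of $H_{z_3}$ along $\Gamma$ (so that Lemma \ref{lem:pltsurfpair} applies and the plt computation of $(\Gamma^2)_S$ is legitimate), and checking that the $2\times2$ matrix $M(\Gamma,\Xi_\lambda)$ satisfies condition $(\star)$. These are the genuinely delicate verifications; once they are in place, the bound falls out of Lemma \ref{lem:mtdLred} together with the numerology above. The case $a\ge2$, by contrast, is essentially formal given Lemma \ref{lem:exclL} and the observation that $H_x\cdot H_y=m\Gamma$.
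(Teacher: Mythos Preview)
Your proposal is correct and follows essentially the same route as the paper: the split into $a\ge2$ (a direct application of Lemma~\ref{lem:exclL} with $S\cdot T=m\Gamma$) and $a=1$ (restricting the pencil $T_\lambda$ to $S'=H_{z_3}$ and invoking Lemma~\ref{lem:mtdLred}) is exactly the paper's organization, and your final numerics match. The three obstacles you flag in the $a=1$ case---irreducibility of $\phi_\lambda$ for $\lambda\ne0$ (which uses $e_1\le b_2$ and $\gcd(b_1,b_2)=1$), quasi-smoothness of $S'$ along $\Gamma$ via Lemma~\ref{lem:pltsurfpair} at the single point $\Gamma\cap\Xi_\lambda$, and the $(\star)$ condition for $M(\Gamma,\Xi_\lambda)$---are precisely the content of the paper's two internal claims.
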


\begin{proof}
We first consider the case where $a \ge 2$.
Let $S \in |A|$ and $T \in |a A|$ be general members.
We have 
\[
S \cdot T = m \Gamma,
\]
where
\[
\Gamma = (x = y = z_3 = 0)
\]
is a quasi-line of degree $1/(b_1 b_2)$.
Let $\msp \in L_{xy} \cap \Sm (X)$.
It is straightforward to check that $S$ is smooth at $\msp$, which implies $\lct_{\msp} (X; \frac{1}{a} S) = a$.
By Lemma \ref{lem:exclL}, we have
\[
\alpha_{\msp} (X) \ge \min \left\{ a, \ \frac{a}{m}, \ \frac{1}{a (A^3)} \right\} \ge \frac{1}{2},
\]
since $a/m \ge 1/2$ and $1/(a (A^3)) > 1$ by the assumption (1) and Lemma \ref{lem:wtnumerics}, respectively.

In the following we assume $a = 1$.
We set $S' = (z_3 = 0) \cap X \in |b_3 A|$ and $\Gamma = (x = y = z_3 = 0) \subset S'$.
We have $L_{xy} = \Gamma$ set-theoretically.
For $\lambda \in \mbC$, we set $T'_{\lambda} = (y - \lambda x = 0) \cap X \in |aA|$.
We can write 
\[
g (x, \lambda x, z_1, z_2, 0) = x^2 h_{\lambda} (x, z_1, z_2),
\] 
where $h_{\lambda}$ is a quasi-homogeneous polynomial in variables $x, z_1, z_2$ since $g \in (x, y, z_3)^2$.
We have
\[
F (x, \lambda x, z_1, z_2, 0) = x (\lambda z_1^{e_1} - z_2^{e_2} + x h_{\lambda}).
\]

\begin{Claim} \label{clm:Lnonredu2-1}
The quasi-homogeneous polynomial
\[
\phi_{\lambda} := \lambda z_1^{e_1} - z_2^{e_2} + x h_{\lambda} \in \mbC [x, z_1, z_2]
\] 
is irreducible for any $\lambda \in \mbC \setminus \{0\}$.
\end{Claim}

\begin{proof}[Proof of Claim \ref{clm:Lnonredu2-1}]
We assume $\lambda \ne 0$.
If $\phi_{\lambda}$ is a reducible polynomial, then we can write 
\[
\phi_{\lambda} = - (z_2^{c_2} + \cdots + \alpha  z_1^{c_1} + \cdots)(z_2^{e_2-c_2} + \cdots + \beta z_1^{e_1-c_1} + \cdots)
\]
for some $c_1, c_2 \in \mbZ_{\ge 0}$ with $c_1 \le e_1$ and $0 < c_2 < e_2$, and nonzero $\alpha, \beta \in \mbC$ such that $\alpha \beta = \lambda$.
We have $c_2 b_2 = c_1 b_1$.
Since $b_1$ is coprime to $b_2$, we see that $c_1$ is divisible by $b_2$.
This implies $c_1 = e_1 = b_2$ since $c_1 \le e_1 \le b_2$.
By the equality $e_2 b_2 = e_1 b_1$, we have $c_2 = e_2 = b_1$.
This is a contradiction since $c_2 < e_2$.
Therefore $\phi_{\lambda}$ is irreducible for $\lambda \ne 0$.
\end{proof}

We continue the proof of Lemma \ref{lem:Lnonredu2}.
By Claim \ref{clm:Lnonredu2-1}, we have 
\[
T'_{\lambda}|_{S'} = \Gamma + \Delta_{\lambda},
\] 
where
\[
\Delta_{\lambda} = (y - \lambda x = z_3 = \phi_{\lambda} = 0)
\]
is an irreducible and reduced curve for any $\lambda \in \mbC \setminus \{0\}$.
We have $\Gamma \cap \Delta_{\lambda} = \{\msq_{\lambda}\}$, where 
\[
\msq_{\lambda} =
(0\!:\!0\!:\!1\!:\!\sqrt[e_2]{\lambda}\!:\!0).
\]
It is easy to check that $S'$ is quasi-smooth at $\msq_{\lambda}$.
By Lemma \ref{lem:pltsurfpair}, $S'$ is quasi-smooth along $\Gamma$ and the pair $(S', \Gamma)$ is plt.

\begin{Claim} \label{clm:Lnonredu2-2}
The intersection matrix $M' = M (\Gamma, \Xi'_{\lambda})$ satisfies the condition $(\star)$.
\end{Claim}

\begin{proof}[Proof of Claim \ref{clm:Lnonredu2-2}]
We see that $\Sing_{\Gamma} (S') = \{\msp_{z_1}, \msp_{z_2}\}$ and $\msp_{z_i} \in S'$ is a cyclic quotient singular point of index $b_i$ for $i = 1, 2$.
By the same computation as in Claim \ref{clm:Lredcp2-3}, we have
\[
\begin{split}
(\Gamma^2)_{S'} &= - \frac{b_1 + b_2 + b_3 -1}{b_1 b_2} < 0, \\
(\Gamma \cdot \Delta_{\lambda})_{S'} &= \frac{b_1 + b_2 + b_3}{b_1 b_2} > 0, \\
(\Delta_{\lambda}^2)_{S'} &= 0.
\end{split}
\] 
It is then easy to see that $\det M' < 0$, which shows that $M'$ satisfies $(\star)$.
\end{proof}

Now take a point $\msp \in L_{xy} \cap \Sm (X) = \Gamma \cap \Sm (X)$.
We choose and fix a general $\lambda \in \mbC$ so that $\Delta_{\lambda}$ is irreducible and $\msq_{\lambda} \ne \msp$.
Then $\msp \in (\Gamma \setminus \Xi_{\lambda}) \cap \Sm (X)$.
We see that $X$, $S'$ and $\Gamma$ are smooth at $\msp$, and $\deg \Gamma = 1/(b_1 b_2) < 1$.
Thus we can apply Lemma \ref{lem:mtdLred} and conclude
\[
\alpha_{\msp} (X) 
\ge \min \left\{ b_3, \ \frac{1}{b_3 (A^3) + 1 - \deg \Gamma} \right\} 
= \min \left\{ b_3, \ \frac{1}{\frac{e_1}{b_2} + 1} \right\}
\ge \frac{1}{2},
\]
where the last inequality follows from the assumption (2).
This completes the proof.
\end{proof}

\begin{Lem} \label{lem:nsptLpuredouble}
Let $S \in |A|$ and $T \in |a A|$ be general members.
Suppose that 
\[
S \cdot T = 2 \Gamma,
\] 
where $\Gamma = (x = y = z_3 = 0)$.
Then 
\[
\alpha_{\msp} (X) 
\ge \min \left\{ \lct_{\msp} (X; S), \ \frac{a}{2}, \ \frac{1}{a (A^3)} \right\}
\ge \frac{1}{2}
\]
for any point $\msp \in L_{xy} \cap \Sm (X)$.
\end{Lem}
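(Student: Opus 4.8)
The statement is designed to be an instance of Lemma~\ref{lem:exclL} followed by elementary numerical estimates, so the plan is to set up that lemma and then bound the three resulting terms. Since $X$ is a Fano $3$-fold, $A = -K_X$ is ample, hence nef and big, and $\msp$ is a smooth point, so the quotient morphism is the identity and $r = 1$. I would apply Lemma~\ref{lem:exclL} to the prime divisors $S \sim_{\mbQ} -K_X$ and $T \sim_{\mbQ} -a K_X$ (so that the ``$a$'' and ``$b$'' of that lemma equal $1$ and $a$ respectively). The intersection $S \cap T$ is the quasi-line $\Gamma$, which is irreducible and reduced since $\Gamma \cong \mbP^1$, and the hypothesis $S \cdot T = 2\Gamma$ gives $m = 2$ with $\check{\Gamma} = \Gamma$. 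Because $\msp \in \Sm(X)$ lies on $\Gamma$, which is smooth there, we have $\mult_{\check{\msp}}(\check{\Gamma}) = \mult_{\msp}(\Gamma) = 1$, and Lemma~\ref{lem:exclL} then yields exactly the first displayed inequality
\[
\alpha_{\msp}(X) \ge \min\left\{ \lct_{\msp}(X; S), \ \frac{a}{2}, \ \frac{1}{a (A^3)} \right\}.
\]

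For the second inequality it suffices to bound each of the three terms below by $1/2$. The middle term satisfies $a/2 \ge 1/2$ because $a \ge 1$, and the last term satisfies $1/(a(A^3)) > 1$ because $a = a_1 < a_2$ forces $a(A^3) = a_1(A^3) < 1$ by Lemma~\ref{lem:wtnumerics}(3). Thus the only substantive point is to show $\lct_{\msp}(X; S) \ge 1/2$, and the plan is to prove $\mult_{\msp}(S) \le 2$ and conclude via Lemma~\ref{lem:multlct}.

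To control $\mult_{\msp}(S)$, I would exploit that $\Gamma = (x = y = z_3 = 0)$ is smooth of codimension two at $\msp \in \Sm(X)$, so the images of $x$, $y$, $z_3$ in the cotangent space $\mfm_{\msp}/\mfm_{\msp}^2$ span the two-dimensional conormal space of $\Gamma$; in particular at most one of $x, y, z_3$ lies in $\mfm_{\msp}^2$. If $S$ (which is $H_x$ when $a > 1$, and a general member of the pencil $\langle x, y \rangle$ when $a = 1$) is smooth at $\msp$, then $\lct_{\msp}(X; S) = 1$; this is automatic when $a = 1$ and holds when $a > 1$ provided $dx|_{\msp} \ne 0$. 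In the remaining case $a > 1$ and $dx|_{\msp} = 0$, the conormal observation forces $dy|_{\msp} \ne 0$, so a general $T$ in the pencil $|aA| = \langle x^a, y \rangle$ is smooth at $\msp$ (its differential there is a nonzero multiple of $dy|_{\msp}$, as $x^{a-1}$ vanishes at $\msp$). Restricting to the smooth surface $T$ and using $S \cdot T = 2\Gamma$ together with the smoothness of $\Gamma$ at $\msp$ gives $\mult_{\msp}(S) \le \mult_{\msp}(S|_T) = 2\,\mult_{\msp}(\Gamma) = 2$. In either case $\lct_{\msp}(X; S) \ge 1/2$, which finishes the argument.

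The hard part is precisely this last estimate on $\lct_{\msp}(X; S)$: unlike the irreducible reduced situation of Lemma~\ref{lem:Linteg}, here $S \cap T$ is a double curve, so one cannot deduce smoothness of $S$ from Lemma~\ref{lem:qsminvhypsec}, and the multiplicity of $S$ at the special point $\msp$ must be bounded by hand through the conormal computation and the restriction-multiplicity inequality $\mult_{\msp}(S) \le \mult_{\msp}(S|_T)$.
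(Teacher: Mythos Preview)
Your proof is correct and follows the same overall route as the paper: apply Lemma~\ref{lem:exclL} with $r=1$, $m=2$, $\mult_{\msp}(\Gamma)=1$ to get the first inequality, then bound each of the three terms using $a\ge 1$ and Lemma~\ref{lem:wtnumerics}(3). The only difference is in how you establish $\lct_{\msp}(X;S)\ge 1/2$. You run a case analysis on whether $S$ or $T$ is smooth at $\msp$, invoking a conormal-space argument for $\Gamma$ to guarantee that at least one of them is, and then in the bad case restrict $S$ to the smooth surface $T$ to read off $\mult_{\msp}(S)\le \mult_{\msp}(S|_T)=2$. The paper bypasses all of this with the single inequality
\[
\mult_{\msp}(S)\;\le\;\mult_{\msp}(S)\cdot\mult_{\msp}(T)\;\le\;\mult_{\msp}(S\cdot T)=\mult_{\msp}(2\Gamma)=2,
\]
using only that $T$ passes through $\msp$ (so $\mult_{\msp}(T)\ge 1$) and the standard product bound for multiplicities of intersection cycles at a smooth point. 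This works uniformly in $a$ and requires no smoothness of $T$ and no conormal computation, so it is strictly simpler; your argument, while valid, is doing more work than needed.
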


\begin{proof}
Let $\msp \in L_{xy} \cap \Sm (X)$.
We have $\mult_{\msp} (\Gamma) = 1$ and the first inequality follows from Lemma \ref{lem:exclL}.
We have $\mult_{\msp} (S) \le \mult_{\msp} (S \cdot T) = 2$, which implies $\lct_{\msp} (X; S) \ge 1/2$.
Thus the second inequality in the statement follows since $1/(a (A^3)) > 1$ by Lemma \ref{lem:wtnumerics}.
\end{proof}

%%%%%%%%%%%%%%%%%%%%%%%%%%%%%%%%%%%%%%%
\subsubsection{Proof of Proposition \ref{prop:smptL2}} \label{sec:proofsmptL}
%%%%%%%%%%%%%%%%%%%%%%%%%%%%%%%%%%%%%%%

This subsection is entirely devoted to the proof of Proposition \ref{prop:smptL2}.

Let $X = X_d \subset \mbP (1, a_1, a_2, a_3, a_4)$, $a_1 \le \cdots \le a_4$, be a member of a family $\mcF_{\msi}$ with $\msi \in \msI \setminus \msI_1$ satisfying $a_1 < a_2$.
Let $S \in |A|$ and $T \in |a_1 A|$ are general members so that their scheme-theoretic intersection $S \cap T$ coincides with $L_{xy}$.
Note that $S$ is a normal surface by Lemma \ref{lem:normalqhyp} and $T$ is a quasi-hyperplane section on $X$. 
We set
\[
f := F (0, 0, z, t, w),
\]
so that $L_{xy}$ is isomorphic to the hypersurface in $\mbP (a_2, a_3, a_4)_{z, t, w}$ defined by $f = 0$.

\paragraph{\it The family $\mcF_7$}
\label{sec:smptL2-7}

We have 
\[
f = w^2 \ell (z,t) + h (z,t), 
\]
where $\ell, h$ are linear and quadratic forms in $z, t$, respectively.
By the quasi-smoothness of $X$, we have $\ell (z, t) \ne 0$, and $h (z,t)$ does not have a multiple component. 

\begin{itemize}
\item Case (i): $h$ is not divisible by $\ell$.
In this case $S \cdot T = L_{xy}$ is irreducible and smooth. 
By Lemma \ref{lem:Linteg}, we have $\alpha_{\msp} (X) \ge 1$ for any $\msp \in L_{xy} \cap \Sm (X)$.

\item Case (ii): $h$ is divisible by $\ell$.
Replacing $z$ and $t$, we may assume $\ell (z,t) = z$.
We can write $h = z c (z,t)$, where $c (z,t)$ is a cubic form in $z, t$.
Note that $c (z,t)$ is not divisible by $z$ since $h (z,t) = z c (z,t)$ does not have a multiple component, and we can assume $c (0,t) = - t^3$ by re-scaling $t$.
In this case $T|_S = \Gamma + \Delta$, where 
\[
\Gamma = (x = y = z = 0), \quad
\Delta = (x = y = w^2 + c (z,t) = 0).
\]
We see that $\Gamma$ is a quasi-line and $\Delta$ is an irreducible quasi-smooth curve since $c (z, t)$ does not have a multiple component.
We have $\Gamma \cap \Delta = \{\msq\}$, where $\msq = (0\!:\!0\!:\!0\!:\!1\!:\!1) \in \Sm (X)$.
We claim that $S$ is quasi-smooth  (and hence smooth) at $\msq$.
We have $(\prt F/\prt z) (\msq) = (\prt F/\prt t)(\msq) = (\prt F/\prt w) (\msq) = 0$.
Hence at least one of $(\prt F/\prt x)(\msq)$ and $(\prt F/\prt y)(\msq)$ is nonzero by the quasi-smoothness of $X$.
By choosing $x$ and $y$, we may assume that $S = H_x$ and $(\prt F/\prt y)(\msq) \ne 0$.
It then follows that $S$ is quasi-smooth at $\msq$.
Finally we have $\Sing_{\Gamma} (X) = \{\msp_t, \msp_w\}$.
Thus the assumptions of Lemma \ref{lem:Lredcp1} are satisfied and we have $\alpha_{\msp} (X) \ge 1/2$ for any $L_{xy} \cap \Sm (X)$.
\end{itemize}

\paragraph{\it The family $\mcF_9$}
\label{sec:smptL2-9}

We have
\[
f = c (t,w) + z^3 \ell (t,w), 
\]
where $\ell = \ell (t,w)$ and $c = c (t,w)$ are linear and cubic forms in $t, w$ respectively.
By the quasi-smoothness of $X$, $c (t, w)$ does not have a multiple component.

\begin{itemize}
\item Case (i): $\ell (t,w) \ne 0$ and $c (t,w)$ is not divisible by $\ell (t,w)$.
In this case $S \cdot T = L_{xy}$ is irreducible and smooth.
By Lemma \ref{lem:Linteg}, we have $\alpha_{\msp} (X) \ge 1$ for any $\msp \in L_{xy} \cap \Sm (X)$.

\item Case (ii): $\ell (t, w) \ne 0$ and $c (t, w)$ is divisible by $\ell (t,w)$.
We write $c (t,w) = \ell (t, w) q (t, w)$, where $q (t,w)$ is a quadratic form in $t, w$ which is not divisible by $\ell (t,w)$. 
Replacing $t$ and $w$, we may assume $\ell = t$, that is, $f = w (q (t,w) + z^3)$.
We may also assume $q (0,w) = - w^2$ since $c (t,w)$ does not have a multiple component.
In this case $T|_S = \Gamma + \Delta$, where
\[
\Gamma = (x = y = t = 0), \quad
\Delta = (x = y = q + z^3 = 0).
\]
We see that $\Gamma$ is a quasi-line and $\Delta$ is an irreducible quasi-smooth curve since $q (t,w)$ is not a square of a linear form.
We have $\Gamma \cap \Delta = \{\msq\}$, where $\msq = (0\!:\!0\!:\!1\!:\!0\!:\!1) \in \Sm (X)$.
By the similar argument as in Case (ii) of \S \ref{sec:smptL2-7}, we can conclude that $S$ is quasi-smooth at $\msq$.
Finally we have $\Sing_{\Gamma} (X) = \{\msp_z, \msp_w\}$.
Thus, by Lemma \ref{lem:Lredcp1}, we have $\alpha_{\msp} (X) \ge 1/2$ for any $L_{xy} \cap \Sm (X)$.

\item Case (iii): $\ell (t,w) = 0$.
In this case $f = \ell_1 \ell_2 \ell_3$, where $\ell_1, \ell_2, \ell_3$ are linear forms in $t, w$ which are not mutually proportional, and $T|_S = \Gamma_1 + \Gamma_2 + \Gamma_3$, where $\Gamma_1, \Gamma_2, \Gamma_3$ are as follows.
\begin{itemize}
\item For $i = 1, 2, 3$, $\Gamma_i = (x = y = \ell_i = 0)$ is a quasi-line and $\Sing_{\Gamma_i} = \{1 \times \frac{1}{2} (1, 1), 1 \times \frac{1}{3} (1, 2)\}$.
\item $\Gamma_i \cap \Gamma_j = \{\msp_z\} \subset \Sing (X)$ for $i \ne j$.
Moreover, $S$ is quasi-smooth at $\msp_z$ since $S \in |A|$ is general.
\end{itemize}
We can compute $(\Gamma_i^2)_S = -5/6$ by the method explained in Remark \ref{rem:compselfint} and then we have $(\Gamma_i \cdot \Gamma_j)_S = 1/2$ for $i \ne j$ by considering $(\Gamma_l \cdot T|_S)_S$ for $l = 1, 2, 3$.
Thus the intersection matrix of $\Gamma_1, \Gamma_2, \Gamma_3$ is given by
\[
((\Gamma_i \cdot \Gamma_j)_S) =
\begin{pmatrix}
- \frac{5}{6} & \frac{1}{2} & \frac{1}{2} \\[1mm]
\frac{1}{2} & - \frac{5}{6} & \frac{1}{2} \\[1mm]
\frac{1}{2} & \frac{1}{2} & - \frac{5}{6}
\end{pmatrix}
\]
and it satisfies the condition $(\star)$.
By Lemma \ref{lem:mtdLred}, we have 
\[
\alpha_{\msp} (X) \ge \min \left\{1, \frac{3}{4} \right\} =  \frac{3}{4}
\] 
for any $\msp \in L_{xy} \cap \Sm (X)$.
\end{itemize}

\paragraph{\it The family $\mcF_{12}$}
We have $w^2 z \in F$ by the quasi-smoothness of $X$ at $\msp_w$.
Hence rescaling $w$, we can write $f = w^2 z + \alpha w t^2 + \lambda w z^3 + \beta t^2 z^2 + \mu z^5$, where $\alpha, \beta, \lambda, \mu \in \mbC$.
We can eliminate the monomial $z^5$ by replacing $w$ and hence we assume $\mu = 0$.
Then, by the quasi-smoothness of $X$ at $\msp_z$, we have $\lambda \ne 0$ and we may assume $\lambda = -1$ by rescaling $z$.
Thus we can write
\[
f = w^2 z + \alpha w t^2 - w z^3 + \beta t^2 z^2.
\]

\begin{itemize}
\item Case (i): $\alpha \ne 0$ and $\beta \ne 0$.
In this case $S \cdot T = L_{xy}$ is irreducible and smooth.
By Lemma \ref{lem:Linteg}, we have $\alpha_{\msp} (X) \ge 1$ for any $\msp \in L_{xy} \cap \Sm (X)$.

\item Case (ii): $\alpha \ne 0$ and either $\beta = 0$ or $\beta = \alpha$.
When $\beta = \alpha$, we replace $w \mapsto w - z^2$ and $z \mapsto - z$.
After this replacement, we may assume $\beta = 0$.
Moreover we may assume $\alpha = 1$ by re-scaling $t$.
Then we have $f = w (w z + t^2 - z^3)$ and $T|_S = \Gamma + \Delta$, where
\[
\Gamma = (x = y = w = 0), \quad
\Delta = (x = y =  w z + t^2 - z^3 = 0).
\]
We see that $\Gamma$ is a quasi-line and $\Delta$ is an irreducible quasi-smooth curve.
We have $\Gamma \cap \Delta = \{\msq\}$, where $\msq = (0\!:\!0\!:\!1\!:\!1\!:\!0) \in \Sm (X)$.
By a similar argument as in Case (ii) of \S \ref{sec:smptL2-7}, we conclude that $S$ is quasi-smooth at $\msq$.
Finally we have $\Sing_{\Gamma} (X) = \{\msp_z, \msp_t\}$.
Thus, by Lemma \ref{lem:Lredcp1}, we have $\alpha_{\msp} (X) \ge 1/2$ for any $\msp \in L_{xy} \cap \Sm (X)$.

\item Case (iii): $\alpha = 0$ and $\beta \ne 0$.
Re-scaling $t$, we may assume $\beta = 1$.
Then we have $f = z (w^2 + w z^2 + t^2 z)$ and $T|_S = \Gamma + \Delta$, where 
\[
\Gamma = (x = y = z = 0), \quad
\Delta = (x = y = w^2 + w z^2 + t^2 z = 0).
\]
We see that $\Gamma$ is a quasi-line and $\Delta$ is an irreducible quasi-smooth curve.
We have $\Gamma \cap \Delta = \{\msp_t\} \subset \Sing (X)$.
By the similar argument as in Case (ii) of \S \ref{sec:smptL2-7}, we conclude that $S$ is quasi-smooth at $\msp_t$.
Finally we have $\Sing_{\Gamma} (X) = \{\msp_t,\msp_w\}$.
Thus, by Lemma \ref{lem:Lredcp1}, we have $\alpha_{\msp} (X) \ge 1/2$ for any $\msp \in L_{xy} \cap \Sm (X)$.

\item Case (iv): $\alpha = \beta = 0$.
In this case $f = z w (w + z^2)$ and $T|_S = \Gamma_1 + \Gamma_2 + \Gamma_3$, where $\Gamma_1, \Gamma_2, \Gamma_3$ are as follows.
\begin{itemize}
\item $\Gamma_1 = (x = y = z = 0)$ is a quasi-line of degree $1/12$ and $\Sing_{\Gamma_1} (S) = \{1 \times \frac{1}{3} (1,3), 1 \times \frac{1}{4} (1,3)\}$.
\item $\Gamma_2 = (x = y = w = 0)$ and $\Gamma_3 = (x = y = w + z^2 = 0)$ are quasi-lines of degree $1/6$ and $\Sing_{\Gamma_i} (S) = \{1 \times \frac{1}{2} (1,1), \frac{1}{3} (1,2)\}$ for $i = 2, 3$.
\item For $1 \le i < j \le 3$, we have $\Gamma_i \cap \Gamma_j = \{\msp_t\} \subset \Sing (X)$.
Moreover, $S$ is quasi-smooth at $\msp_t$ since $S \in |A|$ is general.
\end{itemize}
By the similar computation as in Case (iii) of \S \ref{sec:smptL2-9}, the intersection matrix of $\Gamma_1, \Gamma_2, \Gamma_3$ is given by
\[
\begin{pmatrix}
- \frac{7}{12} & \frac{1}{3} & \frac{1}{3} \\[1mm]
\frac{1}{3} & - \frac{5}{6} & \frac{2}{3} \\[1mm]
\frac{1}{3} & \frac{2}{3} & - \frac{5}{6}
\end{pmatrix}
\]
and it satisfies the condition $(\star)$.
By Lemma \ref{lem:mtdLred}, we have 
\[
\alpha_{\msp} (X) \ge 
\begin{cases}
3/4, & \text{if $\msp \in \Gamma_1 \cap \Sm (X)$}, \\
4/5, & \text{if $\msp \in \Gamma_i \cap \Sm (X)$ for $i = 2, 3$}
\end{cases}
\] 
Thus, $\alpha_{\msp} (X) \ge 3/4$ for any $\msp \in L_{xy} \cap \Sm (X)$.
\end{itemize}

\paragraph{\it The family $\mcF_{13}$}

We have
\[
f = \alpha w t^2 + \beta w z^3 + \gamma t^3 z + \delta t z^4, 
\]
where $\alpha, \beta, \gamma, \delta \in \mbC$.
Note that $(\alpha,\gamma) \ne (0,0)$ since $X$ is quasi-smooth at $\msp_t$.

\begin{itemize}
\item Case (i): $\alpha \ne 0$, $(\beta, \delta) \ne (0,0)$ and $(\alpha,\gamma)$ is not proportional to $(\beta,\delta)$.
In this case $S \cdot T = L_{xy}$ is irreducible and is smooth outside $\msp_w \in \Sing (X)$.
By Lemma \ref{lem:Linteg}, we have $\alpha_{\msp} (X) \ge 1$ for any $\msp \in L_{xy} \cap \Sm (X)$.

\item Case (ii): $\alpha \ne 0$, $(\beta, \delta) \ne (0,0)$ and $(\alpha,\gamma)$ is proportional to $(\beta,\delta)$.
In this case $f = (\alpha w + \gamma t z)(t^2 + \varepsilon z^3)$, where $\varepsilon := \beta/\alpha \in \mbC$ is non-zero.  
Replacing $w$ and $z$, we may assume $f = w (t^2 - z^3)$ and $T|_S = \Gamma + \Delta$, where 
\[
\Gamma = (x = y = w = 0), \quad \Delta = (x = y = t^2 - z^3 = 0).
\]
We see that $\Gamma$ is a quasi-line and $\Delta$ is an irreducible curve which is quasi-smooth along $\Delta \setminus \{\msp_w\} = \Delta \cap \Sm (X)$.
We have $\Gamma \cap \Delta = \{\msq\}$, where $\msq = (0\!:\!0\!:\!1\!:\!1\!:\!0) \in \Sm (X)$.
By a similar argument as in Case (ii) of \S \ref{sec:smptL2-7}, we conclude that $S$ is quasi-smooth at $\msq$.
Finally we have $\Sing_{\Gamma} (X) = \{\msp_z,\msp_t\}$.
Thus, by Lemma \ref{lem:Lredcp1}, we have $\alpha_{\msp} (X) \ge 1/2$ for any $\msp \in L_{xy} \cap \Sm (X)$.

\item Case (iii): $\alpha \ne 0$ and $(\beta,\delta) = (0,0)$.
In this case $f = t^2 (\alpha w + \gamma t z)$ and we may assume $f = t^2 w$ by replacing $w$.
We can write
\[
F = f_1 (z,w) x + f_2 (z,w) y + t^2 w + g (x,y,z,t,w),
\]
where $f_1, f_2 \in \mbC [z,w]$ and $g \in \mbC [x,y,z,t,w]$ are homogeneous polynomials such that $g \in (x, y) \cap (x,y,t)^2$.
By Lemma \ref{lem:Lredcp2}, we have $\alpha_{\msp} (X) \ge 1/2$ for any $\msp \in L_{xy} \cap \Sm (X)$.

\item Case (iv): $\alpha = 0$ and $\beta \ne 0$.
Note that $\gamma \ne 0$.
In this case $f = z (\beta w z^2 + \gamma t^3 + \delta t z^3)$.
Then $T|_S = \Gamma + \Delta$, where
\[
\Gamma = (x = y = z = 0), \quad
\Delta = (x = y = \beta w z^2 + \gamma t^3 + \delta t z^3 = 0).
\]
We see that $\Gamma$ is a quasi-line and $\Delta$ is an irreducible curve which is quasi-smooth along $\Delta \setminus \{\msp_w\} \supset \Delta \cap \Sm (X)$.
We have $\Gamma \cap \Delta = \{\msp_w\} \subset \Sing (X)$.
By a similar argument as in Case (ii) of \S \ref{sec:smptL2-7}, we conclude that $S$ is quasi-smooth at $\msp_w$.
Finally we have $\Sing_{\Gamma} (X) = \{\msp_t, \msp_w\}$.
Then, by Lemma \ref{lem:Lredcp1}, we have $\alpha_{\msp} (X) \ge 1/2$ for any $\msp \in L_{xy} \cap \Sm (X)$.

\item Case (v): $\alpha = \beta = 0$ and $\delta \ne 0$.
Note that $\gamma \ne 0$.
In this case $f = z t (\gamma t^2 + \delta z^3)$ and $T|_S = \Gamma_1 + \Gamma_2 + \Gamma_3$, where $\Gamma_1, \Gamma_2, \Gamma_3$ are as follows.
\begin{itemize}
\item $\Gamma_1 = (x = y = z = 0)$ is a quasi-line of degree $1/15$ and $\Sing_{\Gamma_1} (S) = \{1 \times \frac{1}{3} (1,2), 1 \times \frac{1}{5} (2,3)\}$.
\item $\Gamma_2 = (x = y = t = 0)$ is a quasi-line of degree $1/10$ and $\Sing_{\Gamma_2} (S) = \{1 \times \frac{1}{2} (1,1), 1 \times \frac{1}{5} (2,3)\}$.
\item $\Gamma_3 = (x = y = \gamma t^2 + \delta z^3 = 0)$ is an irreducible smooth curve of degree $1/5$.
\item For $ 1 \le i < j \le 3$, we have $\Gamma_i \cap \Gamma_j = \{\msp_w\} \subset \Sing (X)$.
Moreover $S$ is quasi-smooth at $\msp_w$.
\end{itemize}
We compute $(\Gamma_1^2)_S = - 8/15$ and $(\Gamma_2^2)_S = - 15/19$ by the method explained in Remark \ref{rem:compselfint}.
We can choose $z, t$ as orbifold coordinates of $S$ at $\msp_w$.
It follows that $\Gamma_1, \Gamma_2$ intersect transversally at the point over $\msp_w$ on the orbifold chart of $S$ at $\msp_w$ and we have $(\Gamma_1 \cdot \Gamma_2)_S = 1/5$.
Then, by taking intersections with $T|_S = \Gamma_1 + \Gamma_2 + \Gamma_3$ with $\Gamma_i$ for $i = 1,2,3$, we see that the intersection matrix of $\Gamma_1, \Gamma_2, \Gamma_3$ is given by
\[
\begin{pmatrix}
- \frac{8}{15} & \frac{1}{5} & \frac{2}{5} \\[1mm]
\frac{1}{5} & - \frac{7}{10} & \frac{3}{5} \\[1mm]
\frac{2}{5} & \frac{3}{5} & - 1
\end{pmatrix}
\]
and it satisfies the condition $(\star)$.
By Lemma \ref{lem:mtdLred},
\[
\alpha_{\msp} (X) \ge
\begin{cases}
10/13, & \text{if $\msp \in \Gamma_1 \cap \Sm (X)$}, \\
15/19, & \text{if $\msp \in \Gamma_2 \cap \Sm (X)$}, \\
6/7, & \text{if $\msp \in \Gamma_3 \cap \Sm(X)$}.
\end{cases}
\] 
Thus, we have $\alpha_{\msp} (X) \ge 10/13$ for any $\msp \in L_{xy} \cap \Sm (X)$.

\item Case (vi): $\alpha = \beta = \delta = 0$.
In this case $\gamma \ne 0$ and we may assume that $f = t^3 z$ by re-scaling $z$.
We can write
\[
F = f_1 (z,w) x + f_2 (z,w) y + t^3 z + g (x,y,z,t,w),
\]
where $f_1, f_2 \in \mbC [z,w]$ and $g \in \mbC [x,y,z,t,w]$ are homogeneous polynomials such that $g \in (x,y) \cap (x,y,t)^2$.
By Lemma \ref{lem:Lredcp2}, we have $\alpha_{\msp} (X) \ge 1/2$ for any $\msp \in L_{xy} \cap \Sm (X)$.
\end{itemize}

\paragraph{\it The family $\mcF_{15}$}

We have $w^2 \in f$.
Replacing $w$, we may assume that the coefficients of $z^6$ and $t^4$ are both $0$.
Then, by the quasi-smoothness of $X$ at $\msp_z, \msp_t \in X$, we have $t^2 w, z^3 w \in F$.
Hence, by rescaling $w, t$ and $z$, we can write
\[
f = w^2 + (t^2 - z^3)w + \alpha t^2 z^3
\] 
for some $\alpha \in \mbC$.

\begin{itemize}
\item Case (i): $\alpha \ne 0$.
In this case $S \cdot T = L_{xy}$ is irreducible and smooth.
By Lemma \ref{lem:Linteg}, we have $\alpha_{\msp} (X) = 1$ for any $\msp \in L_{xy} \cap \Sm (X)$.

\item Case (ii): $\alpha = 0$.
Replacing $w$ and rescaling $z$, we may assume $f = w(w + t^2 - z^3)$ and $T|_S = \Gamma + \Delta$, where
\[
\Gamma = (x = y = w = 0), \quad
\Delta = (x = y = w + t^2 - z^3 = 0).
\]
We see that $\Gamma$ and $\Delta$ are both quasi-lines.
We have $\Gamma \cap \Delta = \{\msq\}$, where $\msq = (0\!:\!0\!:\!1\!:\!1\!:\!0) \in \Sm (X)$.
By a similar argument as in Case (ii) of \S \ref{sec:smptL2-7}, we conclude that $S$ is quasi-smooth at $\msq$.
Finally we have $\Sing_{\Gamma} (X) = \{\msp_z, \msp_t\}$.
Thus, by Lemma \ref{lem:Lredcp1}, we have $\alpha_{\msp} (X) \ge 1/2$ for any $\msp \in L_{xy} \cap \Sm (X)$.
\end{itemize}

\paragraph{\it The family $\mcF_{20}$}

We have $w^2 z \in F$ by the quasi-smoothness of $X$ at $\msp_w$.
Hence we can write
\[
f = w^2 z + \alpha w t^2 + \beta t z^3,
\] 
where $\alpha, \beta \in \mbC$.

\begin{itemize}
\item Case (i): $\alpha \ne 0$ and $\beta \ne 0$.
In this case $S \cdot T = L_{xy}$ is irreducible and smooth.
By Lemma \ref{lem:Linteg}, we have $\alpha_{\msp} (X) = 1$ for any $\msp \in L_{xy} \cap \Sm (X)$.

\item Case (ii): $\alpha \ne 0$ and $\beta = 0$.
We have $f = w (w z + \alpha t^2)$ and thus $T|_S = \Gamma + \Delta$, where
\[
\Gamma = (x = y = w = 0), \quad
\Delta = (x = y = w z + \alpha t^2 = 0).
\]
We see that $\Gamma$ is a quasi-line and $\Delta$ is an irreducible quasi-smooth curve.
We have $\Gamma \cap \Delta = \{\msp_z\} \subset \Sing (X)$.
By a similar argument as in Case (ii) of \S \ref{sec:smptL2-7}, we conclude that $S$ is quasi-smooth at $\msp_z$.
Finally we have $\Sing_{\Gamma} (X) = \{\msp_z,\msp_t\}$.
Thus, by Lemma \ref{lem:Lredcp1}, we have $\alpha_{\msp} (X) \ge 1/2$ for any $\msp \in L_{xy} \cap \Sm (X)$.

\item Case (iii): $\alpha = 0$ and $\beta \ne 0$.
We have $f = z (w^2 + \beta t z^2)$ and thus $T|_S = \Gamma + \Delta$, where
\[
\Gamma = (x = y = z = 0), \quad
\Delta = (x = y = w^2 + \beta t z^2 = 0).
\]
We see that $\Gamma$ is a quasi-line and $\Delta$ is an irreducible curve which is quasi-smooth along $\Delta \setminus \{\msp_t\} \supset \Delta \cap \Sm (X)$.
We have $\Gamma \cap \Delta = \{\msp_t\}$.
By a similar argument as in Case (ii) of \S \ref{sec:smptL2-7}, we conclude that $S$ is quasi-smooth at $\msp_t$.
Finally we have $\Sing_{\Gamma} (X) = \{\msp_t, \msp_w\}$.
Thus, by Lemma \ref{lem:Lredcp1}, we have $\alpha_{\msp} (X) \ge 1/2$ for any $\msp \in L_{xy} \cap \Sm (X)$.

\item Case (iv): $\alpha = \beta = 0$.
In this case $f = w^2 z$ and we can write
\[
F = f_1 (z,t) x + f_2 (z,t) y + w^2 z + g (x,y,z,t,w),
\]
where $f_1, f_2 \in \mbC [z,t]$ and $g \in \mbC [x,y,z,t,w]$ are homogeneous polynomials such that $g \in (x,y) \cap (x,y,w)^2$.
By Lemma \ref{lem:Lredcp2}, we have $\alpha_{\msp} (X) \ge 1/2$ for any $\msp \in L_{xy} \cap \Sm (X)$.
\end{itemize}

\paragraph{\it The family $\mcF_{23}$}

We have $w^2 t \in F$ by the quasi-smoothness of $X$ at $\msp_w$.
Hence we can write 
\[
f = w^2 t + \alpha w z^3 + \beta t^2 z^2,
\] 
where $\alpha, \beta \in \mbC$.

\begin{itemize}
\item Case (i): $\alpha \ne 0$ and $\beta \ne 0$.
In this case $S \cdot T = L_{xy}$ is irreducible and is smooth outside $\msp_t \in \Sing (X)$.
By Lemma \ref{lem:Linteg}, we have $\alpha_{\msp} (X) = 1$ for any $\msp \in L_{xy} \cap \Sm (X)$.

\item Case (ii): $\alpha \ne 0$ and $\beta = 0$.
We have $f = w (w t + \alpha z^3)$ and thus $T|_S = \Gamma + \Delta$, where
\[
\Gamma = (x = y = w = 0), \quad
\Delta = (x = y = w t + \alpha z^3 = 0).
\]
We see that $\Gamma$ is a quasi-line and $\Delta$ is an irreducible quasi-smooth curve.
We have $\Gamma \cap \Delta = \{\msp_t\} \subset \Sing (X)$, and $S$ is quasi-smooth at $\msp_t$ since $t^3 y \in F$ and $S = H_x$.
Finally we have $\Sing_{\Gamma} (X) = \{\msp_z,\msp_t\}$.
Thus, by Lemma \ref{lem:Lredcp1}, we have $\alpha_{\msp} (X) \ge 1/2$ for any $\msp \in L_{xy} \cap \Sm (X)$.

\item Case (iii): $\alpha = 0$ and $\beta \ne 0$.
We have $f = t (w^2 + \beta t z^2)$ and thus $T|_S = \Gamma + \Delta$, where
\[
\Gamma = (x = y = t = 0), \quad
\Delta = (x = y = w^2 + \beta t z^2 = 0).
\]
We see that $\Gamma$ is a quasi-line and $\Delta$ is an irreducible curve which is quasi-smooth along $\Delta \setminus \{\msp_t\} \supset \Delta \cap \Sm (X)$.
We have $\Gamma \cap \Delta = \{\msp_z\}$, and $S$ is quasi-smooth at $\msp_z$ since $z^4 y \in F$ and $S = H_x$.
Finally we have $\Sing_{\Gamma} (X) = \{\msp_z,\msp_w\}$.
Thus, by Lemma \ref{lem:Lredcp1}, we have $\alpha_{\msp} (X) \ge 1/2$ for any $\msp \in L_{xy} \cap \Sm (X)$.

\item Case (iv): $\alpha = \beta = 0$.
In this case $f = w^2 t$ and we can write
\[
F = f_1 (z,t) x + f_2 (z,t) + w^2 t + g (x,y,z,t,w),
\]
where $f_1, f_2 \in \mbC [z,t]$ and $g \in \mbC [x,y,z,t,w]$ are homogeneous polynomials such that $g \in (x,y) \cap (x,y,w)^2$.
By Lemma \ref{lem:Lredcp2}, we have $\alpha_{\msp} (X) \ge 1/2$ for any $\msp \in L_{xy} \cap \Sm (X)$.
\end{itemize}

\paragraph{\it The family $\mcF_{24}$}

We have $t^3 \in F$ and, by rescaling $t$, we can write
\[
f = \alpha w z^4 + t^3 + \beta t z^5,
\] 
where $\alpha, \beta \in \mbC$.

\begin{itemize}
\item Case (i): $\alpha \ne 0$.
In this case $S \cdot T = L_{xy}$ is irreducible and is smooth outside $\msp_w \in \Sing (X)$.
By Lemma \ref{lem:Linteg}, we have $\alpha_{\msp} (X) = 1$ for any $\msp \in L_{xy} \cap \Sm (X)$.

\item Case (ii): $\alpha = 0$ and $\beta \ne 0$.
By rescaling $z$, we may assume $f = t (t^2 + z^5)$.
Then $T|_S = \Gamma + \Delta$, where
\[
\Gamma = (x = y = t = 0), \quad
\Delta = (x = y = t^2 + z^5 = 0).
\]
We see that $\Gamma$ is a quasi-line and $\Delta$ is an irreducible curve which is quasi-smooth along $\Delta \setminus \{\msp_w\} = \Delta \cap \Sm (X)$.
We have $\Gamma \cap \Delta = \{\msp_w\}$, and $S$ is quasi-smooth at $\msp_w$ by a similar argument as in Case (ii) of \S \ref{sec:smptL2-7}.
Finally we have $\Sing_{\Gamma} (X) = \{\msp_z, \msp_w\}$.
Thus, by Lemma \ref{lem:Lredcp1}, we have $\alpha_{\msp} (X) \ge 1/2$ for any $\msp \in L_{xy} \cap \Sm (X)$.

\item Case (iii): $\alpha = \beta = 0$.
In this case, $f = t^3$ and the defining polynomial $F$ of $X$ can be written as
\[
F = w^2 \ell_1 (x,y) + t^3 + z^7 \ell_2 (x,y) + w h_8 (x,y,z,t) + h_{15} (x,y,z,t),
\]
where $h_8, h_{15} \in \mbC [x,y,z,t]$ are homogeneous polynomials of degrees $8, 15$, respectively, such that $z^4 \notin h_8$ and $t^3, z^7 x, z^7 y \notin h_{15}$, and $\ell_1, \ell_2$ are linear forms in $x, y$.
Note that $h_8, h_{15} \in (x,y) \cap (x,y,t)^2$.
By the quasi-smoothness of $X$, we see that $\ell_1$ and $\ell_2$ are linearly independent.
Replacing $x, y$, we can assume that
\[
F = w^2 x + t^3 - z^7 y + g,
\]
where $h = w h_8 + h_{15} \in (x, y) \cap (x, y, t)^2$.
Thus, by Lemma \ref{lem:Lnonredu2}, we have $\alpha_{\msp} (X) \ge 1/2$ for any $\msp \in L_{xy} \cap \Sm (X)$.
\end{itemize}

\paragraph{\it The family $\mcF_{25}$}

We have $z^5 \in F$ and, by rescaling $z$, we can write
\[
f = \alpha w t^2 + \beta t^3 z + z^5,
\] 
where $\alpha, \beta \in \mbC$.

\begin{itemize}
\item Case (i): $\alpha \ne 0$.
In this case $S \cdot T = L_{xy}$ is irreducible and is smooth outside $\msp_w \in \Sing (X)$.
By Lemma \ref{lem:Linteg}, we have $\alpha_{\msp} (X) \ge 1$ for any $\msp \in L_{xy} \cap \Sm (X)$.

\item Case (ii): $\alpha = 0$.
By the quasi-smoothness of $X$ at $\msp_t$, we have $\beta \ne 0$, and hence we may assume $\beta = 1$.
Then $f = z ( t^3 + z^5)$ and we have $T|_S = \Gamma + \Delta$, where
\[
\Gamma = (x = y = z = 0), \quad
\Delta = (x = y = t^3 + z^5 = 0).
\]
We see that $\Gamma$ is a quasi-line and $\Delta$ is an irreducible curve which is quasi-smooth along $\Delta \setminus \{\msp_w\} = \Delta \cap \Sm (X)$.
We have $\Gamma \cap \Delta = \{\msp_w\}$, and $S$ is quasi-smooth at $\msp_w$ by a similar argument as in Case (ii) of \S \ref{sec:smptL2-7}.
Finally we have $\Sing_{\Gamma} (X) = \{\msp_t,\msp_w\}$.
Thus, by Lemma \ref{lem:Lredcp1}, we have $\alpha_{\msp} (X) \ge 1/2$ for any $\msp \in L_{xy} \cap \Sm (X)$.
\end{itemize}

\paragraph{\it The family $\mcF_{29}$}

We have $w^2 \in F$ and, by rescaling $w$, we can write $f = w^2 + \lambda w z^4 + \alpha t^2 z^3 + \mu z^8$, where $\alpha, \lambda, \mu \in \mbC$.
By replacing $w$, we can eliminate the term $\mu z^8$, i.e.\ we may assume $\mu = 0$.
Then, by the quasi-smoothness of $X$ at $\msp_z$, we have $w z^4 \in F$, i.e.\ $\lambda \ne 0$.
Thus we can write 
\[
f = w^2 + w z^4 + \alpha t^2 z^3.
\]

\begin{itemize}
\item Case (i): $\alpha \ne 0$.
In this case $S \cdot T = L_{xy}$ is irreducible and is smooth outside $\msp_t \in \Sing (X)$.
By Lemma \ref{lem:Linteg}, we have $\alpha_{\msp} (X) \ge 1$ for any $\msp \in L_{xy} \cap \Sm (X)$. 

\item Case (ii): $\alpha = 0$.
Then we have $f = w(w+z^4)$ and $T|_S = \Gamma + \Delta$, where
\[
\Gamma = (x = y = w = 0), \quad
\Delta = (x = y = w + z^4).
\]
We see that $\Gamma$ and $\Delta$ are both quasi-lines.
We have $\Gamma \cap \Delta = \{\msp_t\}$, and $S$ is quasi-smooth at $\msp_t$ by a similar argument as in Case (ii) of \S \ref{sec:smptL2-7}.
Finally we have $\Sing_{\Gamma} (X) = \{\msp_z, \msp_t\}$.
Thus, by Lemma \ref{lem:Lredcp1}, we have $\alpha_{\msp} (X) \ge 1/2$ for any $\msp \in L_{xy} \cap \Sm (X)$.
\end{itemize}

\paragraph{\it The family $\mcF_{30}$}

We have $w^2 \in F$ and, by rescaling $w$, we can write $f = w^2 + \lambda w t^2 + \mu t^4 + \alpha t z^4$, where $\alpha, \lambda, \mu \in \mbC$.
We may assume $\mu = 0$ by replacing $w$, and then we have $w t^2 \in F$ by the quasi-smoothness of $X$ at $\msp_t$.
Thus we can write 
\[
f = w^2 + w t^2 + \alpha t z^4.
\]

\begin{itemize}
\item Case (i): $\alpha \ne 0$.
In this case $S \cdot T = L_{xy}$ is irreducible and smooth.
By Lemma \ref{lem:Linteg}, we have $\alpha_{\msp} (X) \ge 1$ for any $\msp \in L_{xy} \cap \Sm (X)$. 

\item Case (ii): $\alpha = 0$.
Then we have $f = w(w+t^2)$ and $T|_S = \Gamma + \Delta$, where
\[
\Gamma = (x = y = w = 0), \quad
\Delta = (x = y = w + t^2 = 0).
\]
We see that $\Gamma$ and $\Delta$ are both quasi-lines.
We have $\Gamma \cap \Delta = \{\msp_z\}$, and $S$ is quasi-smooth at $\msp_z$ by a similar argument as in Case (ii) of \S \ref{sec:smptL2-7}.
Finally we have $\Sing_{\Gamma} (X) = \{\msp_z,\msp_t\}$.
Thus, by Lemma \ref{lem:Lredcp1}, we have $\alpha_{\msp} (X) \ge 1/2$ for any $\msp \in L_{xy} \cap \Sm (X)$.
\end{itemize}

\paragraph{\it The family $\mcF_{31}$}

We have $w^2 z \in F$ by the quasi-smoothness of $X$ at $\msp_w$, and we have $z^4 \in F$.
Rescaling $w$ and $z$, we can write 
\[
f = w^2 z + \alpha w t^2 - z^4,
\] 
where $\alpha \in \mbC$.

\begin{itemize}
\item Case (i): $\alpha \ne 0$.
In this case $S \cdot T = L_{xy}$ is irreducible and smooth.
By Lemma \ref{lem:Linteg}, we have $\alpha_{\msp} (X) \ge 1$ for any $\msp \in L_{xy} \cap \Sm (X)$.

\item Case (ii): $\alpha = 0$.
In this case $f = z (w^2 - z^3)$ and $T|_S = \Gamma + \Delta$, where 
\[
\Gamma = (x = y = z = 0), \quad
\Delta = (x = y = w^2 - z^3 = 0).
\]
We see that $\Gamma$ is a quasi-line and $\Delta$ is an irreducible curve which is quasi-smooth along $\Delta \setminus \{\msp_t\} \supset \Delta \cap \Sm (X)$.
We have $\Gamma \cap \Delta = \{\msp_t\} \subset \Sing (X)$, and $S$ is quasi-smooth at $\msp_t$ by a similar argument as in Case (ii) of \S \ref{sec:smptL2-7}.
Thus, by Lemma \ref{lem:Lredcp1}, we have $\alpha_{\msp} (X) \ge 1/2$ for any $\msp \in L_{xy} \cap \Sm (X)$.
\end{itemize}

\paragraph{\it The family $\mcF_{32}$}
We have $t^4 \in F$, and we can write
\[
f = \alpha w z^3 + t^4 + \beta t z^4,
\] 
where $\alpha, \beta \in \mbC$.

\begin{itemize}
\item Case (i): $\alpha \ne 0$.
In this case $S \cdot T = L_{xy}$ is irreducible and is smooth outside $\msp_w \in \Sing (X)$.
By Lemma \ref{lem:Linteg}, we have $\alpha_{\msp} (X) \ge 1$ for any $\msp \in L_{xy} \cap \Sm (X)$.

\item Case (ii): $\alpha = 0$ and $\beta \ne 0$.
Re-scaling $z$, we may assume $\beta = 1$ and $f = t (t^3 + z^4)$.
Then $T|_S = \Gamma + \Delta$, where
\[
\Gamma = (x = y = t = 0), \quad
\Delta = (x = y = t^3 + z^4 = 0).
\]
We see that $\Gamma$ is a quasi-line and $\Delta$ is an irreducible curve which is quasi-smooth along $\Delta \setminus \{\msp_w\} = \Delta \cap \Sm (X)$.
We have $\Gamma \cap \Delta = \{\msp_w\}$, and $S$ is quasi-smooth at $\msp_w$ since $w^2 y \in F$ and $S = H_x$.
Finally we have $\Sing_{\Gamma} = \{\msp_z, \msp_w\}$.
Thus, by Lemma \ref{lem:Lredcp1}, we have $\alpha_{\msp} (X) \ge 1/2$ for any $\msp \in L_{xy} \cap \Sm (X)$.

\item Case (iii): $\alpha = \beta = 0$.
In this case $f = t^4$.
Since $w z^3, t z^4 \notin F$, we have $z^5 x \in F$, and we can write
\[
F = w^2 y + t^4 + z^5 x + w h_9 (x,y,z,t) + h_{16} (x,y,z,t),
\]
where $g_i \in \mbC [x,y,z,t]$ is a homogeneous polynomial of degree $i$ such that $z^3 \notin h_9$ and $t^4, t z^4, z^5 x \notin h_{16}$.
Note that $h_9, h_{16} \in (x, y) \cap (x, y, t)^2$.
Thus, by Lemma \ref{lem:Lnonredu2}, we have $\alpha_{\msp} (X) \ge 1/2$ for any $\msp \in L_{xy} \cap \Sm (X)$.
\end{itemize}

\paragraph{\it The family $\mcF_{33}$}
We have $w^2 z \in F$ by the quasi-smoothness of $X$ at $\msp_w$, and we can write 
\[
f = w^2 z + \alpha w t^2 + \beta t z^4,
\]
where $\alpha, \beta \in \mbC$.

\begin{itemize}
\item Case (i): $\alpha \ne 0$ and $\beta \ne 0$.
In this case $S \cdot T = L_{xy}$ is irreducible and smooth.
By Lemma \ref{lem:Linteg}, we have $\alpha_{\msp} (X) \ge 1$ for any $\msp \in L_{xy} \cap \Sm (X)$.

\item Case (ii): $\alpha \ne 0$ and $\beta = 0$.
In this case $f = w (w z + \alpha z^2)$ and $T|_S = \Gamma + \Delta$, where
\[
\Gamma = (x = y = w = 0), \quad
\Delta = (x = y = w z + \alpha z^2 = 0).
\]
We see that $\Gamma$ is a quasi-line and $\Delta$ is an irreducible quasi-smooth curve.
We have $\Gamma \cap \Delta = \{\msp_z\} \subset \Sing (X)$, and $S$ is quasi-smooth at $\msp_z$ since $z^5 t \in F$ and $S = H_x$.
Finally we have $\Sing_{\Gamma} (X) = \{\msp_z, \msp_t\}$.
Thus, by Lemma \ref{lem:Lredcp1}, we have $\alpha_{\msp} (X) \ge 1/2$ for any $\msp \in L_{xy} \cap \Sm (X)$.

\item Case (iii): $\alpha = 0$ and $\beta \ne 0$.
In this case $f = z (w^2 + \beta t z^3)$ and $T|_S = \Gamma + \Delta$, where
\[
\Gamma = (x = y = z = 0), \quad
\Delta = (x = y = w^2 + \beta t z^3 = 0).
\]
We see that $\Gamma$ is a quasi-line and $\Delta$ is an irreducible curve which is quasi-smooth along $\Delta \setminus \{\msp_t\} \supset \Delta \cap \Sm (X)$.
We have $\Gamma \cap \Delta = \{\msp_t\} \subset \Sing (X)$, and $S$ is quasi-smooth at $\msp_t$ since $t^3 y \in F$ and $S = H_x$.
Finally we have $\Sing_{\Gamma} (X) = \{\msp_t, \msp_w\}$.
Thus, by Lemma \ref{lem:Lredcp1}, we have $\alpha_{\msp} (X) \ge 1/2$ for any $\msp \in L_{xy} \cap \Sm (X)$.

\item Case (iv): $\alpha = \beta = 0$.
In this case $f = w^2 z$ and we can write
\[
F = f_1 (z,t) x + f_2 (z,t) y + w^2 z + g (x,y,z,t,w),
\]
where $f_1, f_2 \in ^mbC [z,t]$ and $g \in \mbC [x,y,z,t,w]$ are homogeneous polynomials such that $g \in (x,y) \cap (x,y,w)^2$.
By Lemma \ref{lem:Lredcp2}, we have $\alpha_{\msp} (X) \ge 1/2$ for any $\msp \in L_{xy} \cap \Sm (X)$.
\end{itemize}

\paragraph{\it The family $\mcF_{37}$}
\label{sec:smptL2-37}

We have $w^2 \in F$ and we can write $f = w^2 + \lambda w z^3 + \alpha t^3 z^2 + \mu z^6$, where $\alpha, \lambda, \mu \in \mbC$.
Replacing $w$, we may assume $\mu = 0$.
Then, by the quasi-smoothness of $X$ at $\msp_z$, we have $\lambda \ne 0$.
Re-scaling $z$, we can write 
\[
f = w^2 + w z^3 + \alpha t^3 z^2.
\]

\begin{itemize}
\item Case (i): $\alpha \ne 0$.
In this case $S \cdot T = L_{xy}$ is irreducible and is smooth outside $\msp_t \in \Sing (X)$. 
By Lemma \ref{lem:Linteg}, we have $\alpha_{\msp} (X) \ge 1$ for any $\msp \in L_{xy} \cap \Sm (X)$.

\item Case (ii): $\alpha = 0$.
In this case $f = w(w+z^3)$ and $T|_S = \Gamma + \Delta$, where
\[
\Gamma = (x = y = w = 0), \quad
\Delta = (x = y = w + z^3 = 0),
\]
are both quasi-lines.
We have $\Gamma \cap \Delta = \{\msp_t\}$, and $S$ is quasi-smooth at $\msp_t$ since $t^4 y \in F$ and $S = H_x$.
Finally we have $\Sing_{\Gamma} (X) = \{\msp_z, \msp_t\}$.
Thus, by Lemma \ref{lem:Lredcp1}, we have $\alpha_{\msp} (X) \ge 1/2$ for any $\msp \in L_{xy} \cap \Sm (X)$.
\end{itemize}

\paragraph{\it The family $\mcF_{38}$}

We have $z^6 \in F$, and we can write
\[
f = \alpha w t^2 + \beta t^3 z + z^6,
\]
where $\alpha, \beta \in \mbC$.
Note that we have $(\alpha,\beta) \ne (0,0)$ by the quasi-smoothness of $X$.

\begin{itemize}
\item Case (i): $\alpha \ne 0$.
In this case $S \cdot T = L_{xy}$ is irreducible and is smooth outside $\msp_w \in \Sing (X)$.
By Lemma \ref{lem:Linteg}, we have $\alpha_{\msp} (X) \ge 1$ for any $\msp \in L_{xy} \cap X_{\sm}$.

\item Case (ii): $\alpha = 0$.
Note that $\beta \ne 0$.
In this case $f = z (\beta t^3 + z^5)$ and $T|_S = \Gamma + \Delta$, where 
\[
\Gamma = (x = y = z = 0), \quad
\Delta = (x = y = \beta t^3 + z^5 = 0).
\]
We see that $\Gamma$ is a quasi-line and $\Delta$ is an irreducible curve which is quasi-smooth along $\Delta \setminus \{\msp_w\} = \Delta \cap \Sm (X)$.
We have $\Gamma \cap \Delta = \{\msp_w\} \subset \Sing (X)$, and $S$ is quasi-smooth at $\msp_w$ since $w^2 y \in F$ and $S = H_x$.
Finally we have $\Sing_{\Gamma} (X) = \{\msp_t,\msp_w\}$.
Thus, by Lemma \ref{lem:Lredcp1}, we have $\alpha_{\msp} (X) \ge 1/2$ for any $\msp \in L_{xy} \cap \Sm (X)$.
\end{itemize}

\paragraph{\it The family $\mcF_{39}$}
\label{sec:smptL2-39}

We have $w^3 \in F$ and $w z^3 \in F$ by the quasi-smoothness of $X$.
Rescaling $w$ and $z$, we can write 
\[
f = w^3 + w z^3 + \alpha t^2 z^2,
\]
where $\alpha \in \mbC$.

\begin{itemize}
\item Case (i): $\alpha \ne 0$.
In this case $S \cdot T = L_{xy}$ is irreducible and is smooth outside $\msp_t \in \Sing (X)$.
By Lemma \ref{lem:Linteg}, we have $\alpha_{\msp} (X) = 1$ for any $\msp \in L_{xy} \cap \Sm (X)$.

\item Case (ii): $\alpha = 0$.
In this case $f = z (w^2 + z^3)$ and we have $T|_S = \Gamma + \Delta$, where
\[
\Gamma = (x = y = z = 0), \quad
\Delta = (x = y = w^2 + z^3 = 0).
\]
We see that $\Gamma$ is a quasi-line and $\Delta$ is an irreducible curve which is quasi-smooth along $\Delta \setminus \{\msp_t\} = \Delta \cap \Sm (X)$.
We have $\Gamma \cap \Delta = \{\msp_t\}$, and $S$ is quasi-smooth at $\msp_t$ since $t^3 y \in F$ and $S = H_x$.
Finally we have $\Sing_{\Gamma} (X) = \{\msp_z,\msp_t\}$.
Thus, by Lemma \ref{lem:Lredcp1}, we have $\alpha_{\msp} (X) \ge 1/2$ for any $\msp \in L_{xy} \cap \Sm (X)$.
\end{itemize}

\paragraph{\it The family $\mcF_{40}$}
\label{sec:smptL2-40}

We have $w^2 t \in F$ and $t^3 z \in F$ by the quasi-smoothness of $X$ at $\msp_w$ and $\msp_t$.
Rescaling $w$ and $z$, we can write 
\[
f = w^2 t + \alpha w z^3 + t^3 z,
\]
where $\alpha \in \mbC$.

\begin{itemize}
\item Case (i): $\alpha \ne 0$.
In this case $S \cdot T = L_{xy}$ is irreducible and smooth.
By Lemma \ref{lem:Linteg}, we have $\alpha_{\msp} (X) = 1$ for any $\msp \in L_{xy} \cap \Sm (X)$.

\item Case (ii): $\alpha = 0$.
In this case $f = t (w^2 + t^2 z)$ and $T|_S = \Gamma + \Delta$, where
\[
\Gamma = (x = y = t = 0), \quad
\Delta = (x = y = w^2 + t^2 z = 0).
\]
We see that $\Gamma$ is a quasi-line and $\Delta$ is an irreducible curve which is quasi-smooth along $\Delta \setminus \{\msp_z\} \supset \Delta \cap \Sm (X)$.
We have $\Gamma \cap \Delta = \{\msp_z\}$, and $S$bis quasi-smooth at $\msp_z$ since $z^4 y \in F$ and $S = H_x$.
Finally we have $\Sing_{\Gamma} (X) = \{\msp_z, \msp_w\}$.
Thus, by Lemma \ref{lem:Lredcp1}, we have $\alpha_{\msp} (X) \ge 1/2$ for any $\msp \in L_{xy} \cap \Sm (X)$.
\end{itemize}

\paragraph{\it The family $\mcF_{42}$}
\label{sec:smptL2-42}

We have $w^2 \in F$ and we can write $f = w^2 + \lambda w t^2 + \mu t^4 + \alpha t z^5$, where $\alpha, \lambda, \mu \in \mbC$.
Replacing $w$, we may assume $\mu = 0$.
Then, by the quasi-smoothness of $X$ at $\msp_t$, we have $\lambda \ne 0$.
Rescaling $t$, we can write 
\[
f = w^2 + w t^2 + \alpha t z^5.
\]

\begin{itemize}
\item Case (i): $\alpha \ne 0$.
In this case $S \cdot T = L_{xy}$ is irreducible and smooth.
By Lemma \ref{lem:Linteg}, we have $\alpha_{\msp} (X) = 1$ for any $\msp \in L_{xy} \cap \Sm (X)$.

\item Case (ii): $\alpha = 0$.
In this case $f = w(w+t^2)$ and $T|_S = \Gamma + \Delta$, where
\[
\Gamma = (x = y = w = 0), \quad
\Delta = (x = y = w + t^2 = 0).
\]
We see that $\Gamma$ and $\Delta$ are both quasi-lines.
We have $\Gamma \cap \Delta = \{\msp_z\} \subset \Sing (X)$, and $S$ is quasi-smooth at $\msp_z$ since $z^6 y \in F$ and $S = H_x$.
Finally we have $\Sing_{\Gamma} (X) = \{\msp_z, \msp_t\}$.
Thus, by Lemma \ref{lem:Lredcp1}, we have $\alpha_{\msp} (X) \ge 1/2$ for any $\msp \in L_{xy} \cap \Sm (X)$.
\end{itemize}

\paragraph{\it The family $\mcF_{49}$}

We have $w^3 \in F$, and we can write
\[
f = w^3 + \alpha t z^3,
\]
where $\alpha \in \mbC$.

\begin{itemize}
\item Case (i): $\alpha \ne 0$.
In this case $S \cdot T = L_{xy}$ is irreducible and is smooth outside $\msp_t \in \Sing (X)$.
By Lemma \ref{lem:Linteg}, we have $\alpha_{\msp} (X) = 1$ for any $\msp \in L_{xy} \cap \Sm (X)$.

\item Case (ii): $\alpha = 0$.
In this case $f = w^3$ and, after replacing $x, y$ suitably, the defining polynomial $F$ of $X$ can be written as
\[
F = w^3 + t^3 y - z^3 x + g_{21} (x,y,z,t,w),
\]
where $g_{21} \in \mbC [x,y,z,t,w]$ is a homogeneous polynomial of degree $21$ such that $g \in (x,y) \cap (x,y,w)^2$.
By Lemma \ref{lem:Lnonredu2}, we have $\alpha_{\msp} (X) \ge 1/2$ for any $\msp \in L_{xy} \cap \Sm (X)$.
\end{itemize}

\paragraph{\it The family $\mcF_{50}$}

We have $w^2 \in F$, and we can write
\[
f = w^2 + \alpha t z^5,
\]
where $\alpha \in \mbC$.

\begin{itemize}
\item Case (i): $\alpha \ne 0$.
In this case $S \cdot T = L_{xy}$ is irreducible and is smooth outside $\msp_t \in \Sing (X)$.
By Lemma \ref{lem:Linteg}, we have $\alpha_{\msp} (X) = 1$ for any $\msp \in L_{xy} \cap \Sm (X)$.

\item Case (ii): $\alpha = 0$.
We have $S \cdot T = 2 \Gamma$, where $\Gamma = (x = y = w = 0)$.
By Lemma \ref{lem:nsptLpuredouble}, we have $\alpha_{\msp} (X) \ge 1/2$ for any $\msp \in L_{xy} \cap \Sm (X)$.
\end{itemize}

\paragraph{\it The family $\mcF_{52}$}

We have $w^2 \in F$, and we can write 
\[
f = w^2 + \alpha t^2 z^3,
\] 
where $\alpha \in \mbC$.

\begin{itemize}
\item Case (i): $\alpha \ne 0$.
In this case $S \cdot T = L_{xy}$ is irreducible and is smooth outside $\{\msp_z, \msp_t\} \subset \Sing (X)$.
By Lemma \ref{lem:Linteg}, we have $\alpha_{\msp} (X) \ge 1$ for any $\msp \in L_{xy} \cap \Sm (X)$.

\item Case (ii): $\alpha = 0$.
We have $S \cdot T = 2 \Gamma$, where $\Gamma = (x = y = w = 0)$.
By Lemma \ref{lem:nsptLpuredouble}, we have $\alpha_{\msp} (X) \ge 1/2$ for any $\msp \in L_{xy} \cap \Sm (X)$.
\end{itemize}

\paragraph{\it The family $\mcF_{58}$}

We have $w^2 z \in F$ by the quasi-smoothness of $X$ at $\msp_w$.
Also we have $z^6 \in F$.
Rescaling $w$ and $z$, we can write 
\[
f = w^2 z + \alpha w t^2 + z^6,
\]
where $\alpha \in \mbC$.

\begin{itemize}
\item Case (i): $\alpha \ne 0$.
In this case $S \cdot T = L_{xy}$ is irreducible and smooth.
By Lemma \ref{lem:Linteg}, we have $\alpha_{\msp} (X) = 1$ for any $\msp \in L_{xy} \cap \Sm (X)$.

\item Case (ii): $\alpha = 0$.
In this case $f = z (w^2 + z^5)$ and $T|_S = \Gamma + \Delta$, where
\[
\Gamma = (x = y = z = 0), \quad
\Delta = (x = y = w^2 + z^5 = 0).
\]
We see that $\Gamma$ is a quasi-line and $\Delta$ is an irreducible curve which is quasi-smooth along $\Delta \setminus \{\msp_t\} \supset \Delta \cap \Sm (X)$.
We have $\Gamma \cap \Delta = \{\msp_t\}$, and $S$ is quasi-smooth at $\msp_t$ since $t^3 y \in F$ and $S = H_x$.
Finally we have $\Sing_{\Gamma} (X) = \{\msp_t,\msp_w\}$.
Thus, by Lemma \ref{lem:Lredcp1}, we have $\alpha_{\msp} (X) \ge 1/2$ for any $\msp \in L_{xy} \cap \Sm (X)$.
\end{itemize}

\paragraph{\it The family $\mcF_{60}$}

We have $w^2 t \in F$ by the quasi-smoothness of $X$ at $\msp_w$.
Also we have $t^4 \in F$.
Rescaling $w$ and $t$, we can write
\[
f = w^2 t + \alpha w z^3 + t^4,
\] 
where $\alpha \in \mbC$.

\begin{itemize}
\item Case (i): $\alpha \ne 0$.
In this case $S \cdot T = L_{xy}$ is irreducible and smooth.
By Lemma \ref{lem:Linteg}, we have $\alpha_{\msp} (X) = 1$ for any $\msp \in L_{xy} \cap \Sm (X)$.

\item Case (ii): $\alpha = 0$.
In this case $f = t (w^2 + t^3)$ and $T|_S = \Gamma + \Delta$, where
\[
\Gamma = (x = y = t = 0), \quad
\Delta = (x = y = w^2 + t^3 = 0).
\]
We see that $\Gamma$ is a quasi-line and $\Delta$ is an irreducible curve which is quasi-smooth along $\Delta \setminus \{\msp_z\} \supset \Delta \cap \Sm (X)$.
We have $\Gamma \cap \Delta = \{\msp_z\}$, and $S$ is quasi-smooth at $\msp_z$ since $z^4 y \in F$ and $S = H_x$.
Finally we have $\Sing_{\Gamma} (X) = \{\msp_z,\msp_w\}$.
Thus, by Lemma \ref{lem:Lredcp1}, we have $\alpha_{\msp} (X) \ge 1/2$ for any $\msp \in L_{xy} \cap \Sm (X)$.
\end{itemize}

\paragraph{\it The family $\mcF_{63}$}
\label{sec:smptL2-63}

We have $w^2 \in F$, and we can write
\[
f = w^2 + \alpha t z^6,
\] 
where $\alpha \in \mbC$.

\begin{itemize}
\item Case (i): $\alpha \ne 0$.
In this case $S \cdot T = L_{xy}$ is irreducible and is smooth outside $\msp_t \in \Sing (X)$.
By Lemma \ref{lem:Linteg}, we have $\alpha_{\msp} (X) = 1$ for any $
\msp \in L_{xy} \cap \Sm (X)$.

\item Case (ii): $\alpha = 0$.
We have $S \cdot T = 2 \Gamma$, where $\Gamma = (x = y = w = 0)$.
By Lemma \ref{lem:nsptLpuredouble}, we have $\alpha_{\msp} (X) \ge 1/2$ for any $\msp \in L_{xy} \cap \Sm (X)$.
\end{itemize}

\paragraph{\it The family $\mcF_{64}$}

We have $w^2 \in F$, and we can write
\[
f = w^2 + \alpha t z^4,
\] 
where $\alpha \in \mbC$.

\begin{itemize}
\item Case (i): $\alpha \ne 0$.
In this case $S \cdot T = L_{xy}$ is irreducible and is smooth outside $\msp_t \in \Sing (X)$. 
By Lemma \ref{lem:Linteg}, we have $\alpha_{\msp} (X) = 1$ for any $\msp \in L_{xy} \cap \Sm (X)$.

\item Case (ii): $\alpha = 0$.
We have $S \cdot T = 2 \Gamma$, where $\Gamma = (x = y = w = 0)$.
By Lemma \ref{lem:nsptLpuredouble}, we have $\alpha_{\msp} (X) \ge 1/2$ for any $\msp \in L_{xy} \cap \Sm (X)$.
\end{itemize}

%%%%%%%%%%%%%%%%%%%%%%%%%%%%%%%%%%
\subsection{Smooth points on $H_x$ for families with $1 < a_1 = a_2$} \label{sec:smptH}
%%%%%%%%%%%%%%%%%%%%%%%%%%%%%%%%%%

The aim of this section is to prove the following.

\begin{Prop} \label{prop:smptH}
Let $X = X_d \subset \mbP (1, a_1, \dots, a_4)$, $a_1 \le \dots \le a_4$, be a member of a family $\mcF_{\msi}$ with $\msi \in \msI \setminus \msI_1$ such that $1 < a_1 = a_2$.
Then, 
\[
\alpha_{\msp} (X) \ge \frac{1}{2}
\]
for any smooth point $\msp \in X$ contained in $H_x$.
\end{Prop}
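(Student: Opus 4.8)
The plan is to run the standard ``intersect two divisors'' method with the distinguished divisor $S = H_x \sim A$, which is available because every point under consideration lies on $H_x$. Thus I would reduce the inequality $\alpha_{\msp}(X) \ge 1/2$ to two independent estimates: first $\lct_{\msp}(X; H_x) \ge 1/2$, and second $\mult_{\msp}(D) \le 2$ for every irreducible $\mbQ$-divisor $D \in |A|_{\mbQ}$ with $D \ne H_x$. Combining these via Remark \ref{rem:covex} and Lemma \ref{lem:multlct} then gives $\alpha_{\msp}(X) \ge \min\{\lct_{\msp}(X;H_x),\ \inf_D \lct_{\msp}(X;D)\} \ge 1/2$, since $\lct_{\msp}(X;D) \ge 1/\mult_{\msp}(D)$ at the smooth point $\msp$.

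For the first estimate, I use $H_x \cong (\bar F = 0) \subset \mbP(a_1, a_1, a_3, a_4)$ with $\bar F = F(0, y, z, t, w)$, where it suffices to bound $\mult_{\msp}(H_x) \le 2$. Following the pattern of Claim \ref{cl:smHcase3e}, if $\mult_{\msp}(H_x) > 2$ then $\msp$ lies in the common zero locus of all second partials of $\bar F$; tracking which monomials may occur (using $a_1 = a_2$) would force $X$ to fail quasi-smoothness on a nonempty subset, a contradiction. In any residual configuration where $\mult_{\msp}(H_x) = 3$ genuinely occurs, I would instead check that the tangent cone cubic is not a perfect cube of a linear form and apply Lemma \ref{lem:lctP2cubic} to conclude $\lct_{\msp}(X; H_x) \ge 1/2$.

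For the second estimate I would exploit that $y$ and $z$ share the degree $a_1 = a_2$. If $y$ and $z$ do not both vanish at $\msp$, a linear change of coordinates among $y, z$ (which fixes $H_x$ and leaves any pure power $w^k$ in $F$ intact) moves $\msp$ into $H_x \setminus L_{xy}$, so that Lemma \ref{lem:isolclass} applies: $a_1 a_4 A$ isolates $\msp$, and $a_1 a_3 A$ does so whenever $w^k \in F$. Taking the associated isolating divisor $T$ (Remark \ref{rem:isolT}) whose support avoids the components of the effective $1$-cycle $D \cdot H_x$, I obtain $\mult_{\msp}(D) \le (D \cdot H_x \cdot T) = a_1 a_4 (A^3)$, respectively $a_1 a_3 (A^3)$. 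If instead $\msp \in (x = y = z = 0)\cap X$, this locus is finite by the terminality of $X$ (otherwise $X$ would contain the curve $\mbP(a_3,a_4)$ and acquire non-isolated singularities), so $\{x, y, z\}$ already isolates $\msp$, yielding $\mult_{\msp}(D) \le a_1(A^3) = d/(a_1 a_3 a_4)$, which is comfortably below $2$.

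The hard part will be the numerics. Lemma \ref{lem:wtnumerics}(5) establishes $a_1 a_4(A^3) \le 2$ only under the hypothesis $a_1 < a_2$, which is exactly what is lost here. I would therefore re-derive the bounds $a_1 a_3(A^3) = d/(a_1 a_4) \le 2$ and $a_1 a_4(A^3) = d/(a_1 a_3) \le 2$ directly from $d = 2a_1 + a_3 + a_4$, splitting into the cases $d = 2a_4$, $d = 3a_4$, and $d = 2a_4 + a_j$ with $j \in \{1,2,3\}$ provided by Lemma \ref{lem:smptHLdegwt}. The borderline configurations that would violate $\le 2$ involve very small weights (such as $\mbP(1,2,2,2,\,\cdot\,)$) and are precisely those ruled out by well-formedness and by the isolated-terminal-singularity requirement, so I expect each surviving family to satisfy the bound. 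For any family where the triple-product equals $2$ while $H_x \cap T$ is reducible, I would fall back on the sharper intersection-matrix estimate of Lemma \ref{lem:mtdLred} to recover the desired inequality.
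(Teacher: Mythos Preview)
Your plan is sound and amounts to the same strategy the paper uses, but you should be aware that the paper does not attempt a uniform argument: it simply lists the three families satisfying $1<a_1=a_2$ (namely $\mcF_{18},\mcF_{22},\mcF_{28}$) and treats each one by an ad hoc case split on the coordinates of $\msp$. In each family the paper finds a tailored $\msp$-isolating set (e.g.\ $\{x,y,w\}$, $\{x,y,t\}$, $\{x,y,t^2-\lambda^2 z^3\}$, $\{x,y,t,w^2-z^5\}$, $\{x,y,w,t^3-z^4\}$), sometimes pairing it with $S=H_x$ and sometimes with a general $S\in|\mcI_{\msp}(2A)|$ or $|\mcI_{\msp}(3A)|$. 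Your uniform choice $S=H_x$ together with Lemma~\ref{lem:isolclass} (after a $y\leftrightarrow z$ swap to force $\msp\notin L_{xy}$) works just as well once one checks the three explicit degree computations: $a_1a_4(A^3)=2$ for $\mcF_{18}$, while $w^2\in F$ for $\mcF_{22}$ and $w^3\in F$ for $\mcF_{28}$ so that $a_1a_3(A^3)=1$ suffices there. So the ``hard numerics'' you flag are in fact immediate once the families are enumerated.

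Where real work remains is exactly your first estimate $\lct_{\msp}(X;H_x)\ge 1/2$. For $\mcF_{22}$ this is free because $w^2\in F$ forces $\mult_{\msp}(H_x)\le 2$. For $\mcF_{18}$ and $\mcF_{28}$ one genuinely needs the second-partial computations you allude to; the paper carries these out explicitly (the ``claim $\mult_{\msp}(H_x)\le 2$'' paragraphs in the $\mcF_{18}$ and $\mcF_{28}$ subsections), and they are not a one-line consequence of quasi-smoothness. In particular, no case with $\mult_{\msp}(H_x)=3$ actually survives, so your cubic-not-a-cube fallback is never invoked. Finally, your argument that $(x=y=z=0)\cap X$ is finite ``by terminality, else non-isolated singularities'' is not quite the right justification, since $\gcd(a_3,a_4)=1$ in all three families; the correct reason is that $t^4$, $w^2$, or $w^3$ lies in $F$ respectively, so $F(0,0,0,t,w)\ne 0$ and the locus consists of a single singular point, making that case vacuous.
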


Note that a family $\mcF_{\msi}$ with $\msi \in \msI \setminus \msI_1$ satisfies the assumption of Proposition \ref{prop:smptH} if and only if
\[
\msi \in \{18, 22, 28\}.
\]

%%%%%%%%%%%%%%%%%%%%%%%%%%%%%%%%%%
\subsubsection{The family $\mcF_{18}$}
%%%%%%%%%%%%%%%%%%%%%%%%%%%%%%%%%%

This subsection is devoted to the proof of Proposition \ref{prop:smptH} for the family $\mcF_{18}$.
Let $X = X_{12} \subset \mbP (1, 2, 2, 3, 5)$ be a member of $\mcF_{18}$.

By the quasi-smoothness of $X$, We have $t^4 \in F$ and we may assume $\coeff_F (t^4) = 1$ by rescaling $t$.
We have $(x = y = z = 0) \cap X = \{\msp_w\} \subset \Sing (X)$.
Hence we may assume $\msp \in H_y$ and $\msp \notin H_z$ after possibly replacing $y$ and $z$, and we can write $\msp = (0\!:\!0\!:\!1\!:\!\lambda\!:\!\mu)$ for some $\lambda, \mu \in \mbC$.
We can write
\[
F (0, 0, z, t, w) = \alpha w^2 z + \beta w t z^2 + t^4 + \gamma t^2 z^3 + \delta z^6,
\]
where $\alpha, \beta, \gamma, \delta \in \mbC$.
We will derive a contradiction by assuming $\alpha_{\msp} (X) < 1/2$.
By the assumption, there exists an irreducible $\mbQ$-divisor $D \in |A|_{\mbQ}$ such that $\lct_{\msp} (X;D) < 1/2$.

Suppose $\lambda \ne 0$.
Then, by replacing $w$ by $\lambda w - \mu z t$, we may assume $\msp = (0\!:\!0\!:\!1\!:\!\lambda\!:\!0)$.
Let $S$ be a general member of the pencil $|\mcI_{\msp} (2 A)|$ so that $\frac{1}{2} S \ne D$. 
We can take a $\mbQ$-divisor $T \in |5 A|_{\mbQ}$ such that $\mult_{\msp} (T) \ge 1$ and $\Supp (T)$ does not contain any component of the effective $1$-cycle $D \cdot S$ since $\{x, y, w\}$ isolates $\msp$.
Then, we have
\[
\mult_{\msp} (D) \le (D \cdot S \cdot T)_{\msp} \le (D \cdot S \cdot T) = 2 \cdot 5 \cdot (A^3) = 2,
\]
which implies $\lct_{\msp} (X;D) \ge 1/2$.
This is impossible and we have $\lambda = 0$.

By rescaling $w$, we may assume $\msp = (0\!:\!0\!:\!1\!:\!0\!:\!1)$.
Suppose $\alpha \ne 0$.
Then we have $\delta = - \alpha$ since $F (\msp) = 0$.
In this case $H_x$ is smooth at $\msp$ since $(\prt F/\prt z) (\msp) = - 5 \alpha \ne 0$.
In particular $H_x \ne D$.
We can take a $\mbQ$-divisor $T \in |3 A|_{\mbQ}$ such that $\mult_{\msp} (T) \ge 1$ and $\Supp (T)$ does not contain any component of the effective $1$-cycle $D \cdot H_x$ since $\{x, y, t\}$ isolates $\msp$.
Then we have
\[
\mult_{\msp} (D) \le (D \cdot H_x \cdot T)_{\msp} \le (D \cdot H_x \cdot T) = 3 (A^3) = \frac{3}{5},
\]
which implies $\lct_{\msp} (X; D) \ge 5/3$.
This is impossible and we have $\alpha = 0$.
Note that $\delta = 0$ since $F (\msp) = 0$, and we have
\[
F (0,0,z,t,w) = t (\beta w z^2 + t^3 + \gamma t z^3).
\]

We claim $\mult_{\msp} (H_x) \le 2$.
We set $\zeta := \coeff_F (w^2 y)$ and $\eta := \coeff_F (z^5 y)$.
By the quasi-smoothness of $X$, we see $\zeta, \eta \ne 0$ since $w^2 y, z^6 \notin F$.
We have 
\[
\frac{\prt F}{\prt y} (\msp) = \zeta + \eta, \quad 
\frac{\prt F}{\prt t} (\msp) = \beta.
\]
If either $\zeta + \eta \ne 0$ or $\beta \ne 0$, then we have $\mult_{\msp} (H_x) = 1$.
If $\zeta + \eta = \beta = 0$, then we have $\mult_{\msp} (H_x) = 2$ since the term $\zeta y (w^2 - z^5)$ appears in $F$.
Thus the claim is proved.

By the claim, we have $\lct_{\msp} (X;H_x) \ge 1/2$ and in particular $D \ne H_x$.
We can take a $\mbQ$-divisor $T \in |10 A|_{\mbQ}$ such that $\mult_{\msp} (T) \ge 1$ and $\Supp (T)$ does not contain any component of $D \cdot H_x$ since $\{x, y, t, w^2 - z^5\}$ isolates $\msp$.
Then we have
\[
\mult_{\msp} (D) \le (D \cdot H_x \cdot T)_{\msp} \le (D \cdot H_x \cdot T) = 10 (A^3) = 2,
\]
which implies $\lct_{\msp} (X;D) \ge 1/2$.
This is a contradiction and the proof is completed.

%%%%%%%%%%%%%%%%%%%%%%%%%%%%%
\subsubsection{The family $\mcF_{22}$}
%%%%%%%%%%%%%%%%%%%%%%%%%%%%%

This subsection is devoted to the proof of Proposition \ref{prop:smptH} for the family $\mcF_{22}$.
Let $X = X_{14} \subset \mbP (1, 2, 2, 3, 7)$ be a member of $\mcF_{22}$.

By the quasi-smoothness of $X$, we have $w^2 \in F$ and we may assume $\coeff_F (w^2) = 1$ by rescaling $w$.
We see that $(x = y =  z = 0) \cap X = \{\msp_t\} \subset \Sing (X)$.
Hence we may assume $\msp = (0\!:\!0\!:\!1\!:\!\lambda\!:\!\mu)$ for some $\lambda, \mu \in \mbC$ after possibly replacing $y$ and $z$.
We can write
\[
F (0,0,z,t,w) = w^2 + \alpha w t z^2 + \beta t^4 z + \gamma t^2 z^4 + \delta z^7,
\]
where $\alpha, \beta, \gamma, \delta \in \mbC$. 
We will derive a contradiction by assuming $\alpha_{\msp} (X) < 1/2$.
By the assumption, there exists an irreducible $\mbQ$-divisor $D \in |A|_{\mbQ}$ such that $\lct_{\msp} (X;D) < 1/2$.
Let $S$ be a general member of the pencil $|\mcI_{\msp} (2 A)|$ so that $\frac{1}{2} S \ne D$. 

Suppose $\lambda = 0$.
In this case, we can take a $\mbQ$-divisor $T \in |3 A|_{\mbQ}$ such that $\mult_{\msp} (T) \ge 1$ and $\Supp (T)$ does not contain any component of the effective $1$-cycle $D \cdot S$ since $\{x, y, t\}$ isolates $\msp$.
Then, we have
\[
\mult_{\msp} (D) \le (D \cdot S \cdot T)_{\msp} \le (D \cdot S \cdot T) = 2 \cdot 3 \cdot (A^3) = 1,
\]
which implies $\lct_{\msp} (X;D) \ge 1$.
This is impossible and we have $\lambda \ne 0$.

Replacing $w$ by $\lambda w - \mu t z^2$, we may assume $\mu = 0$, that is, $\msp = (0\!:\!0\!:\!1\!:\!\lambda\!:\!0)$.
We see that the set $\{x, y, t^2 - \lambda^2 z^3\}$ isolates $\msp$.
It follows that we can take a $\mbQ$-divisor $T \in |6 A|_{\mbQ}$ such that $\mult_{\msp} (T) \ge 1$ and $\Supp (T)$ which does not contain any component of $D \cdot S$.
Then, we have
\[
\mult_{\msp} (D) \le (D \cdot S \cdot T)_{\msp} \le (D \cdot S \cdot T) = 2 \cdot 6 \cdot (A^3) = 2,
\]
which implies $\alpha_{\msp} (X) \ge 1/2$.
This is a contradiction and the proof is completed.

%%%%%%%%%%%%%%%%%%%%%%%%%%%%%%%%%
\subsubsection{The family $\mcF_{28}$}
%%%%%%%%%%%%%%%%%%%%%%%%%%%%%%%%%

This subsection is devoted to the proof of Proposition \ref{prop:smptH} for the family $\mcF_{28}$.
Let $X = X_{15} \subset \mbP (1, 3, 3, 4, 5)$ be a member of $\mcF_{28}$.

By the quasi-smoothness of $X$, we have $w^3 \in F$ and we may assume $\coeff_F (w^3) = 1$ by rescaling $w$.
We see that $(x = y = z = 0) \cap X = \{\msp_t\} \subset \Sing (X)$.
Hence we may assume $\msp = (0\!:\!0\!:\!1\!:\!\lambda\!:\!\mu)$ for some $\lambda, \mu \in \mbC$ after possibly replacing $y$ and $z$. 
We can write
\[
F (0,0,z,t,w) = w^3 + \alpha w t z^2 + \beta t^3 z + \gamma z^5,
\]
where $\alpha, \beta, \gamma \in \mbC$.
We will derive a contradiction by assuming $\alpha_{\msp} (X) < 1/2$.
By the assumption, there exists an irreducible $\mbQ$-divisor $D \in |A|_{\mbQ}$ such that $\lct_{\msp} (X;D) < 1/2$.
Let $S$ be a general member of the pencil $|\mcI_{\msp} (3 A)|$ so that $\Supp (S) \ne \Supp (D)$.

Suppose $\lambda \ne 0$ and $\mu \ne 0$.
In this case the set $\{x, y, \mu t^2 - \lambda^2 w z\}$ isolates $\msp$, and we can take a $\mbQ$-divisor $T \in |8 A|_{\mbQ}$ such that $\mult_{\msp} (T) \ge 1$ and $\Supp (T)$ does not contain any component of the effective $1$-cycle $D \cdot S$.
Then, we have
\[
\mult_{\msp} (D) \le (D \cdot S \cdot T)_{\msp} \le (D \cdot S \cdot T) = 3 \cdot 8 \cdot (A^3) = 2,
\]
which implies $\lct_{\msp} (X;D) \ge 1/2$.
This is impossible and we have either $\lambda = 0$ or $\mu = 0$.

Suppose $\lambda = 0$.
In this case we can take a $\mbQ$-divisor $T \in |4 A|_{\mbQ}$ such that $\mult_{\msp} (T) \ge 1$ and $\Supp (T)$ does not contain any component of the effective $1$-cycle $D \cdot S$ since $\{x, y, t\}$ isolates $\msp$.
Then, we have
\[
\mult_{\msp} (D) \le (D \cdot S \cdot T)_{\msp} \le (D \cdot S \cdot T) = 3 \cdot 4 \cdot (A^3) = 1,
\]
which implies $\lct_{\msp} (X;D) \ge 1$.
This is impossible.
We have $\lambda \ne 0$ and $\mu = 0$.
In this case we may assume $\lambda = 1$ by rescaling $t$, that is, we may assume $\msp = (0\!:\!0\!:\!1\!:\!1\!:\!0)$.

We claim $\mult_{\msp} (H_x) \le 2$.
We set $\zeta := \coeff_F (t^3 y)$ and $\eta := \coeff_F (z^4 y)$.
We have $\beta + \gamma = 0$ since $F (\msp) = 0$.
Then,
\[
\frac{\prt F}{\prt z} (\msp) = \beta + 5 \gamma = 4 \gamma, \quad
\frac{\prt F}{\prt y} (\msp) = \zeta + \eta.
\]
If either $\gamma \ne 0$ or $\zeta + \eta \ne 0$, then we have $\mult_{\msp} (H_x) = 1$.
It remains to consider the case where $\gamma = \zeta + \eta = 0$.
Note that we have $\beta = 0$ since $\beta + \gamma = 0$.
By the quasi-smoothness of $X$ at $\msp_t$, we have $\zeta \ne 0$.
Then, we see that $\mult_{\msp} (H_x) = 2$ since the term $\zeta y (t^3 - z^4)$ appears in $F$.
Thus the claim is proved.

By the claim, we have $\lct_{\msp} (X;H_x) \ge 1/2$ and in particular $D \ne H_x$.
We can take a $\mbQ$-divisor $T \in |12 A|_{\mbQ}$ such that $\mult_{\msp} (T) \ge 1$ and $\Supp (T)$ does not contain any component of the effective $1$-cycle $D \cdot H_x$ since $\{x, y, w, t^3 - z^4\}$ isolates $\msp$.
Then, we have
\[
\mult_{\msp} (D) \le (D \cdot H_x \cdot T)_{\msp} \le (D \cdot H_x \cdot T) = 12 (A^3) = 1,
\]
which implies $\alpha_{\msp} (X) \ge 1$.
This is a contradiction and the proof is completed.

%%%%%%%%%%%%%%%%%%%%%%%%%%%%%%%%%%
%%%%%%%%%%%%%%%%%%%%%%%%%%%%%%%%%%
\section{Singular points}
\label{chap:singpt}
%%%%%%%%%%%%%%%%%%%%%%%%%%%%%%%%%%
%%%%%%%%%%%%%%%%%%%%%%%%%%%%%%%%%%

The aim of this section is to prove the following.

\begin{Thm} \label{thm:singpt}
Let $X$ be a member of a family $\mcF_{\msi}$ with $\msi \in \msI \setminus \msI_1$.
Then,  
\[
\alpha_{\msp} (X) \ge \frac{1}{2}
\] 
for any singular point $\msp \in X$.
\end{Thm}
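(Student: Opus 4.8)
The plan is to establish the bound $\alpha_{\msp}(X) \ge 1/2$ at each singular point $\msp$ of $X$ by a case analysis governed by the type of the singularity, using the machinery assembled in Chapter \ref{chap:methods}. Since every $\msi \in \msI \setminus \msI_1$ has $a_2 \ge 2$, the singular points of $X$ are exactly the quotient singularities lying at coordinate points (and possibly at isolated loci inside quasi-hyperplane sections), and each is a cyclic quotient terminal singularity of type $\frac{1}{r}(1,a,r-a)$. The central dichotomy I would set up is between \emph{non-BI centers} and \emph{BI centers}: by Remark \ref{rem:maxcent}, a singular point can be a maximal center only if it is a BI center, so for every non-BI singular point $\msp$ we automatically know $\msp$ is not a maximal center.

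\medskip\noindent\textbf{Non-BI singular points.} Here I would exploit Lemma \ref{lem:singnoncanbd}: since $\msp$ is not a maximal center, there is \emph{at most one} irreducible $\mbQ$-divisor $D \in |-K_X|_{\mbQ}$ with $(X,D)$ non-canonical at $\msp$, and by the convexity remark (Remark \ref{rem:covex}) it suffices to bound the log canonical threshold for this single $D$ together with the chosen reference divisor. Concretely, I would choose $S$ to be either the quasi-tangent divisor of $X$ at $\msp$ or a coordinate hyperplane $H_x$, and then apply the divisor-intersection estimates of \S\ref{sec:compwbl}, especially Lemma \ref{lem:lcttangcube} (which directly yields $\alpha_\msp(X)\ge 1/2$ under the condition $r b_2 b_3 (A^3)\le 4$ via $2n$-inequality in Lemma \ref{lem:complctsingtang}), and the weighted-blowup computation of $\lct_\msp(X;H_x)$ in Lemma \ref{lem:lctwblwh}. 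In the many cases where the Kawamata blowup $\varphi\colon Y\to X$ satisfies $(-K_Y)^2\notin\Int\bNE(Y)$ — which \cite{CP17} verifies broadly — Lemma \ref{lem:singptNE} gives the even stronger bound $\alpha_\msp(X)\ge 1$, provided one produces a prime divisor $S$ whose proper transform is proportional to $-K_Y$. The numerical inputs $r b_2 b_3 (A^3)\le 4$ and their relatives would be checked using Lemma \ref{lem:wtnumerics}.

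\medskip\noindent\textbf{BI (QI and EI) centers.} These are the genuinely delicate points, since they \emph{can} be maximal centers, so Lemma \ref{lem:singnoncanbd} is unavailable and there may be several bad $\mbQ$-divisors. For QI centers, where $d = 2a_k + a_j$ and $\msp$ has index $a_k$, I would analyze the birational involution structure directly, using the Kawamata blowup and the explicit equation of $X$ near $\msp$ to control $\omult_\msp(D)$ for competing divisors; Lemma \ref{lem:multlct} then converts the orbifold multiplicity bound into the LCT bound $\lct_\msp(X;D)\ge 1/\omult_\msp(D)$. For EI centers I would use the ad hoc descriptions from Definition \ref{def:BIcenter} family by family. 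Throughout I would pick the reference surface $S$ and isolating class via Lemma \ref{lem:isolclass} and Remark \ref{rem:isolT}, then feed the resulting data into the two-divisor estimate Lemma \ref{lem:exclL} (when $S\cap T$ is irreducible) or its matrix generalizations Lemmas \ref{lem:mtdLred} and \ref{lem:mtdLredSsing} (when $S\cap T$ is reducible, verifying the condition $(\star)$ on the intersection matrix).

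\medskip\noindent\textbf{The main obstacle} I expect is precisely the BI-center analysis: the absence of Lemma \ref{lem:singnoncanbd} means one must bound the LCT uniformly over \emph{all} irreducible $\mbQ$-divisors through $\msp$, and the reducibility of $S\cap T$ forces delicate intersection-matrix bookkeeping and self-intersection computations (via Remark \ref{rem:compselfint}) on a possibly singular surface $S$. A secondary difficulty is the verification, for each family and each singular point, of the numerical inequalities that make the clean estimates of Lemmas \ref{lem:complctsingtang} and \ref{lem:singptNE} applicable; when these fail I anticipate having to combine the quasi-tangent weighted blowup of Lemma \ref{lem:lctwblwh} with the $2n$-inequality (Lemma \ref{lem:2nineq}) and the cubic-on-$\mbP^2$ estimate (Lemma \ref{lem:lctP2cubic}). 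I would organize the proof so that a short uniform argument disposes of the non-BI centers with $(-K_Y)^2\notin\Int\bNE(Y)$, after which the remaining BI centers are treated by a finite, family-indexed case check.
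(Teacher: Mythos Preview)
Your proposal is broadly on the right track and matches the paper's overall architecture: non-BI centers handled by a mix of the $\bNE(Y)$ criterion (Lemma \ref{lem:singptNE}), the $L_{xy}$ computation (Lemma \ref{lem:exclL}), isolating sets, and a residual family-by-family sweep; BI centers handled separately. Two refinements the paper makes are worth flagging.

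First, your claim that Lemma \ref{lem:singnoncanbd} is unavailable at BI centers is too coarse. The paper subdivides QI centers into \emph{exceptional}, \emph{degenerate}, and \emph{non-degenerate} (see \S\ref{sec:eqQI}), and by Remark \ref{rem:QImaxcent} only the non-degenerate ones are maximal centers. For degenerate QI centers and for the non-maximal EI centers, Lemma \ref{lem:singnoncanbd} \emph{is} used (implicitly in Proposition \ref{prop:lctdegQI}, explicitly in the $\mcF_{20}$ case of Proposition \ref{prop:alphaEI}), reducing the problem to computing $\lct_{\msp}(X;H_{x_j})$ for the single quasi-tangent divisor. The degenerate QI case in fact admits an exact closed-form answer $\alpha_{\msp}(X)=(a_k+1)/(2a_k+1)>1/2$.

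Second, for the genuinely hard non-degenerate QI centers, your plan to ``control $\omult_{\msp}(D)$'' via Lemma \ref{lem:multlct} alone is not sharp enough. The paper's decisive ingredient is that for a non-degenerate QI center the Kawamata blowup $\varphi\colon Y\to X$ has $-K_Y$ \emph{nef} (it defines the flopping contraction, \cite[Theorem 4.9]{CPR00}). One first bounds $\lct_{\msp}(X;\tfrac{1}{a_j}H_{x_j})\ge 1/2$ by the weighted-blowup and tangent-cube lemmas you cite; then for any other irreducible $D\in|A|_{\mbQ}$, the inequality $(-K_Y\cdot\tilde H_{x_j}\cdot\tilde D)\ge 0$ forces $\ord_E(D)\le (2r+c)/(r+c)\cdot\tfrac{1}{r}$, which gives $\lct_{\msp}(X;D)\ge (r+c)/(2r+c)>1/2$ directly. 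Without this nefness input you would be stuck trying to bound all irreducible $D$ uniformly by isolating-set estimates, which is not strong enough at these points.
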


Let $X$ be a member of a family $\mcF_{\msi}$ with $\msi \in \msI \setminus \msI_1$.
Then the inequality $\alpha_{\msp} (X) \ge 1/2$ will follow from Propositions \ref{prop:singptCP}, \ref{prop:lctsingptL}, \ref{prop:singptrem} for singular points $\msp \in X$ which are not BI centers, from Proposition  \ref{prop:alphaEI} for EI centers, and from Propositions  \ref{prop:lctexcQI}, \ref{prop:lctdegQI} and \ref{prop:ndQIcent} for QI centers.
This will complete the proof of Theorem \ref{thm:singpt}.

%%%%%%%%%%%%%%%%%%%%%%%%%%%%%%%%%%
\subsection{Non-BI centers}
%%%%%%%%%%%%%%%%%%%%%%%%%%%%%%%%%%

Throughout the present section, let $X$ be a member of a family $\mcF_{\msi}$ with $\msi \in \msI$.

%%%%%%%%%%%%%%%%%%%%%%%%%%%%%%%%%%
\subsubsection{Computation by $\bNE (Y)$}
%%%%%%%%%%%%%%%%%%%%%%%%%%%%%%%%%%

\begin{Prop} \label{prop:singptCP}
Let $\msp \in X$ be a singular point with subscript $\heartsuit$ in the $5$th column of Table \ref{table:main}, and let $\varphi \colon Y \to X$ be the Kawamata blowup at $\msp$.
Then $(-K_Y)^2 \notin \Int \bNE (Y)$ and $\tilde{D} \sim -K_Y$ for the proper transform of a general member $D \in |A|$.
In particular, we have 
\[
\alpha_{\msp} (X) \ge 1.
\]
\end{Prop}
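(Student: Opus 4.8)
The plan is to deduce the statement directly from Lemma \ref{lem:singptNE}, so that the work reduces to verifying its two hypotheses for each point $\msp$ carrying the subscript $\heartsuit$. The first hypothesis, $(-K_Y)^2 \notin \Int \bNE(Y)$, is exactly the content recorded in \cite{CP17}: the $\heartsuit$ marking in the 5th column of Table \ref{table:main} is designed to flag precisely those singular points for which this non-interior assertion has been established there, so I would simply cite it. It then remains to exhibit a prime divisor $S$ on $X$ with $\tilde S \sim_{\mbQ} -m K_Y$ for some $m > 0$; I will take $S = D$ a general member of $|A| = |{-K_X}|$ and prove $\tilde D \sim -K_Y$, i.e.\ the case $m = 1$.

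First I would record the numerics of the Kawamata blowup. Writing $\msp$ as a terminal quotient singularity of type $\frac{1}{r}(1,a,r-a)$ and $E$ for the exceptional divisor, we have $-K_Y = \varphi^*A - \frac{1}{r} E$, while $\tilde D = \varphi^* A - \ord_E(D)\, E$ because $D \sim A$. Hence the equivalence $\tilde D \sim -K_Y$ reduces to the single computation $\ord_E(D) = \frac{1}{r}$.

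To carry out that computation I would pass to the orbifold chart $\rho_{\msp}\colon \breve U \to U$ at $\msp = \msp_{x_k}$ (so $r = a_k$), in which $\breve U$ is smooth, the Kawamata blowup is the weighted blowup of $\breve U$ at the origin with weights $\frac{1}{r}(1,a,r-a)$ in the local orbifold coordinates, and each dehomogenized variable $\breve{x}_j$ is a $\bmu_r$-semi-invariant of weight $a_j \bmod r$. The key observation is that the weight-$1$ direction of the blowup is realized by a degree-$1$ coordinate: the variable $x$ (of $\bmu_r$-weight $a_0 = 1$)---or, when $a_1 = 1$, a general combination of $x$ and $y$---restricts to a local orbifold coordinate carrying Kawamata weight $1$, so that a general $D \in |A|$ is cut out locally by this coordinate and therefore satisfies $\ord_E(D) = \frac{1}{r}$. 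Such a general $D$ is moreover irreducible, being a quasi-smooth quasi-hyperplane section, hence a prime divisor. Applying Lemma \ref{lem:singptNE} with this $S = D$ and $m = 1$ then yields $\alpha_{\msp}(X) \ge 1$.

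The main obstacle I anticipate is the step $\ord_E(D) = \frac{1}{r}$: it requires confirming, uniformly across the $\heartsuit$ points, that a degree-$1$ coordinate genuinely serves as a weight-$1$ local orbifold coordinate at $\msp$---equivalently that $x$ (or, when available, $y$) is not the quasi-tangent coordinate there---since otherwise $\ord_E(D)$ would exceed $\frac{1}{r}$ and the clean equivalence $\tilde D \sim -K_Y$ would break down. I expect this to be verified against the explicit weights and defining equations tabulated for each family, and it is precisely the feature separating the $\heartsuit$ points from the remaining non-BI centers treated in Propositions \ref{prop:lctsingptL} and \ref{prop:singptrem}.
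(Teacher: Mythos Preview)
Your proposal is correct and follows essentially the same route as the paper: verify the two hypotheses of Lemma~\ref{lem:singptNE} with $S = D$ a general member of $|A|$ and $m=1$. Two small sharpenings the paper supplies that you left as anticipated checks: (i) the uniform criterion ensuring $x$ is a genuine weight-$1$ orbifold coordinate at $\msp$ is simply that either $a_1 = 1$ or $d-1$ is not divisible by $r$, which one verifies holds for every $\heartsuit$ point; and (ii) the assertion $(-K_Y)^2 \notin \Int\bNE(Y)$ is not stated verbatim in \cite{CP17} but is extracted from the \emph{proofs} of the specific exclusion Lemmas~3.2.2 and 3.2.4 there, so the citation needs that extra sentence of justification.
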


\begin{proof}
Let $r > 1$ be the index of the quotient singular point $\msp \in X$.
For every instance, we have either $a_1 = 1$ or $d - 1$ is not divisible by $r$.
This means that we can take $x$ as a part of local orbifold coordinates of $X$ at $\msp$, and hence $\tilde{D} \sim - K_Y$ for a general $D \in |A|$.
The point $\msp$ is excluded as a maximal center by either Lemma 3.2.2 or 3.2.4 of \cite{CP17}.

We set $S := \tilde{D} \sim - K_Y$, where $D \in |A|$ is a general member.
If $\msp$ is excluded by \cite[Lemma 3.2.2]{CP17}, then it follows from its proof that $(- K_Y)^2 = (-K_Y) \cdot S \notin \Int \bNE (Y)$.
If $\msp$ is excluded by \cite[Lemma 3.2.4]{CP17}, then there exists a nef divisor $T$ on $Y$ such that $(T \cdot S \cdot - K_Y) \le 0$, which implies $(-K_Y)^2 = (-K_Y) \cdot S \notin \Int \bNE (Y)$.
The latter assertion follows from Lemma \ref{lem:singptNE}.
\end{proof}

%%%%%%%%%%%%%%%%%%%%%%%%%%%%%%%%%%
\subsubsection{Computation by $L_{xy}$}
%%%%%%%%%%%%%%%%%%%%%%%%%%%%%%%%%%

\begin{Prop} \label{prop:lctsingptL}
Let $\msp \in X$ be a singular point with the subscript $\diamondsuit$ or $\diamondsuit'$ in the $5$th column of Table \ref{table:main}, and let $q = q_{\msp}$ be the quotient morphism of $\msp \in X$.
We denote by $r$ the index of the cyclic quotient singularity $\msp \in X$.
Let $S \in |A|$ and $T \in |a_1 A|$ be general members.
Then the following assertions hold.
\begin{enumerate}
\item The pair $(X, S)$ is log canonical at $\msp$.
\item The intersection $S \cap T$ is irreducible and we have $q^*S \cdot q^*T = \check{\Gamma}$, where $\check{\Gamma}$ is an irreducible and reduced curve such that
\[
0 < \mult_{\check{\msp}} (\check{\Gamma}) \le a_1.
\]
\item We have
\[
r a_1 (A^3) \le 
\begin{cases}
1, & \text{if the subscript of $\msp$ is $\diamondsuit$}, \\
\frac{3}{2}, & \text{if the subscript of $\msp$ is $\diamondsuit'$}.
\end{cases}
\]
\end{enumerate}
In particular, 
\[
\alpha_{\msp} (X) \ge 
\begin{cases}
1, & \text{if the subscript of $\msp$ is $\diamondsuit$}, \\
\frac{3}{2}, & \text{if the subscript of $\msp$ is $\diamondsuit'$}.
\end{cases}
\]
\end{Prop}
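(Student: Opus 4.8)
The plan is to reduce the three assertions to the hypotheses of Lemma \ref{lem:exclL} and then read off the conclusion from that lemma. Since $S \in |A|$ with $A = -K_X$ and $T \in |a_1 A|$, I would apply Lemma \ref{lem:exclL} with $a = 1$, $b = a_1$ and $m = 1$. With these values, assertion (1) supplies $\lct_{\msp}(X; \tfrac{1}{a}S) = \lct_{\msp}(X; S) \ge 1$; assertion (2) supplies both $m = 1$ and the estimate $\tfrac{b}{m\,\mult_{\check{\msp}}(\check{\Gamma})} = \tfrac{a_1}{\mult_{\check{\msp}}(\check{\Gamma})} \ge 1$; and assertion (3) controls the remaining term $\tfrac{1}{r a_1 (A^3)}$. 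Substituting into the inequality of Lemma \ref{lem:exclL} then bounds $\alpha_{\msp}(X)$ from below and yields the conclusion of the proposition, so the whole argument rests on establishing (1)--(3).

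First I would identify the curve $S \cap T$. For general members one has $S \cap T = L_{xy}$ scheme-theoretically: when $a_1 = 1$ both $S$ and $T$ lie in the pencil spanned by $x$ and $y$, while when $a_1 > 1$ one has $S = H_x$ and a general $T$ has the form $(y + \cdots = 0)$, so that restricting to $H_x$ annihilates every monomial divisible by $x$ and leaves $L_{xy} = (x = y = 0)_X$. For assertion (1) it then suffices to show that $q^*S$ is smooth at $\check{\msp}$, for then $\lct_{\msp}(X; S) = \lct_{\check{\msp}}(\check{U}; q^*S) = 1$. Equivalently I must check that $x$ is not a quasi-tangent coordinate of $X$ at $\msp$, i.e.\ $\omult_{\msp}(S) = 1$; whenever $L_{xy}$ is quasi-smooth at $\msp$ this is automatic by Lemma \ref{lem:qsminvhypsec}, and in the remaining cases it follows from the explicit local form of $F$ at the coordinate point, exactly as in the quasi-smoothness analysis of Lemma \ref{lem:Lirred}.

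For assertion (2) I would run the monomial analysis of Lemma \ref{lem:Lirred}: writing $f := F(0,0,z,t,w)$ and distinguishing the degree patterns $d = 2a_4$, $d = 3a_4$ and $d = 2a_4 + a_j$, one checks that for the relevant families $f$ is irreducible and reduced, so that $L_{xy}$ is an irreducible and reduced curve and $q^*S \cdot q^*T = \check{\Gamma}$ has coefficient $m = 1$. The bound $0 < \mult_{\check{\msp}}(\check{\Gamma}) \le a_1$ I would obtain on the orbifold chart $\breve{U}_{\msp}$ by reading off the lowest-weight part of the local equation of $L_{xy}$ at $\check{\msp}$: positivity is clear since $\check{\msp} \in \check{\Gamma}$, and the upper bound reflects that $\check{\Gamma}$ lies on the smooth surface $q^*S$ and is cut there by the degree-$a_1$ equation defining $T$. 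Finally, assertion (3) is the numerical inequality $r a_1 (A^3) = \tfrac{r d}{a_2 a_3 a_4} \le 1$ (resp.\ $\le \tfrac{3}{2}$), which I would verify using Lemma \ref{lem:wtnumerics} together with the explicit weights and indices recorded in Table \ref{table:main}.

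The main obstacle is assertion (2), and within it the uniform control of irreducibility and of the orbifold multiplicity across all families exhibiting a $\diamondsuit$ or $\diamondsuit'$ point. A priori $L_{xy}$ can split off quasi-lines, as it does in the reducible cases treated in \S\ref{sec:smptL2}, and the method of Lemma \ref{lem:exclL} presupposes that this does not happen; confirming irreducibility therefore requires a careful case distinction based on the shape of $f$. The $\diamondsuit'$ families, where the slack in the numerical inequality of assertion (3) is tightest, are the delicate ones, and there I expect to lean on both the explicit structure of $F$ at $\msp$ and the hypothesis that $\msp$ is not a BI center to close the estimate.
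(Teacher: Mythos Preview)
Your overall strategy matches the paper's: verify (1)--(3) and then invoke Lemma~\ref{lem:exclL} with $a=1$, $b=a_1$, $m=1$. The paper likewise reduces (2) to the irreducibility analysis already carried out in Lemma~\ref{lem:Lirred} (so the relevant families are precisely those appearing in Tables~\ref{table:Lsmooth} and~\ref{table:Lsing}), checks (1) by showing $S$ is quasi-smooth at $\msp$, and leaves (3) as a direct numerical check from Table~\ref{table:main}.

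There are, however, two points where your justification is not right. First, your argument for the upper bound $\mult_{\check{\msp}}(\check{\Gamma})\le a_1$ --- ``$\check{\Gamma}$ lies on the smooth surface $q^*S$ and is cut there by the degree-$a_1$ equation defining $T$'' --- does not work: the degree of $T$ in $|a_1A|$ says nothing about the local multiplicity of $q^*T$ on the orbifold chart. The paper instead proceeds case by case. When the family lies in Table~\ref{table:Lsmooth}, $\check{\Gamma}$ is smooth at $\check{\msp}$, so $\mult_{\check{\msp}}(\check{\Gamma})=1$. When it lies in Table~\ref{table:Lsing}, one checks that the $\diamondsuit$-points occur only for $\msi\in\{44,47,61,62,65,69,77,79,83,85\}$; if $\msp$ is not the unique singular point of $L_{xy}$ listed in that table then again $\mult=1$, and otherwise $\msi\in\{44,61,83\}$ with $\msp=\msp_t$, where the explicit equation gives $\mult_{\check{\msp}}(\check{\Gamma})=2$ and one simply notes $a_1\ge 2$. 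For (1) in these last three cases, quasi-smoothness of $S$ at $\msp$ follows because $r=a_3$ does not divide $d-1$, so $x$ is a local orbifold coordinate.

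Second, your closing remark that you would ``lean on the hypothesis that $\msp$ is not a BI center'' is misplaced: many $\diamondsuit$-points \emph{are} QI or EI centers (e.g.\ families $16$, $17$, $26$, $27$, $36$, $44$, $45$, $47$, $48$, $54$, $65$, $68$, $69$, $74$, $76$, $79$), and nothing in the argument uses that $\msp$ is not a maximal center. The bound comes purely from the geometry of $L_{xy}$ and the numerics in Table~\ref{table:main}.
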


\begin{proof}
Let $\msp \in X$ be as in the statement.
The assertion (3) can be checked individually and it remains to consider (1) and (2).

It is straightforward to check that $X$ is a member of a family $\mcF_{\msi}$ which is listed in one of the Tables \ref{table:Lsmooth} and \ref{table:Lsing}.
It follows that $S \cap T = L_{xy}$ is irreducible.
It is easy to check that we may assume $\msp = \msp_{v}$ for some $v \in \{z, t, w\}$ after replacing coordinates.
Let $\rho = \rho_v \colon \breve{U}_v \to U_v$ be the orbifold chart.
We set $\breve{\Gamma} = (\breve{x} = \breve{y} = 0) \subset U_v$.
We see that $\breve{\Gamma}$ is an irreducible and reduced curve since so is $L_{xy}$, and that $\rho^*S \cdot \rho^*T = \breve{\Gamma}$.
Note that $q$ can be identified with $\rho$ over a suitable analytic neighborhood of $\msp \in U_v$, and hence it is enough to prove the inequality $\mult_{\breve{\msp}} (\breve{\Gamma}) \le a_1$ for the proof of (2).

If $X$ is listed in Table \ref{table:Lsmooth}, then $\breve{\Gamma}$ is irreducible and smooth by Lemma \ref{lem:Lirred}.
In this case $S$ is quasi-smooth at $\msp$ and thus both (1) and (2) are clearly satisfied.

Suppose that $X$ is listed in Table \ref{table:Lsing}.
Then $X$ is a member of a family $\mcF_{\msi}$, where
\[
\msi \in \{44, 47, 61, 62, 65, 69, 77, 79, 83, 85\}.
\]
If $\msp$ is not the unique singular point of $L_{xy}$ which is described in Table \ref{table:Lsing}, then (1) and (2) follow immediately.
Suppose that $\msp$ is the unique singular point of $L_{xy}$.
Then we have $\msi \in \{44, 61, 83\}$ and $\msp = \msp_t$.
By the equation given in Table \ref{table:Lsing}, we compute $\mult_{\breve{\msp}} (\breve{\Gamma}) = 2$.
This shows (2) since $a_1 \ge 2$.
We see that $r = a_3$ does not divide $d-1$, which implies that $S$ is quasi-smooth at $\msp$ and hence (1) follows.
Therefore (1), (2) and (3) are verified and the assertion on $\alpha_{\msp} (X)$ follows from Lemma \ref{lem:exclL}.
\end{proof}

%%%%%%%%%%%%%%%%%%%%%%%%%%%%%%%%%%
\subsubsection{Computation by isolating class}
%%%%%%%%%%%%%%%%%%%%%%%%%%%%%%%%%%

\begin{Prop} \label{prop:singptic}
Let $\msp \in X$ be a singular point with subscript $\clubsuit$ in the 5th column of Table \ref{table:main} which is also listed in Table \ref{table:isolsingpt}.
Then the set of coordinates given in the 5th column of Table \ref{table:isolsingpt} isolates $\msp$.
In particular, we have
\[
\alpha_{\msp} (X) \ge \min \{1, c\} \ge \frac{1}{2},
\]
where $c$ is the number given in the 7th column of Table \ref{table:isolsingpt}.
\end{Prop}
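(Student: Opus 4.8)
The plan is to run the standard three-divisor intersection estimate (a section $S$, an arbitrary irreducible $D \in |A|_\mbQ$, and an isolating divisor $T$) with the data recorded in Table \ref{table:isolsingpt}, processing each listed point separately. After a coordinate change we may assume $\msp = \msp_v$ is a coordinate point, and we take $S = H_x \in |A|$ together with a divisor $T$ manufactured from the coordinate set in the $5$th column via Lemma \ref{lem:isolexT}.

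\emph{Isolation.} First I would prove that the polynomials $g_1, \dots, g_n$ listed in the $5$th column isolate $\msp$. The set always contains $x$, which restricts $X$ to the quasi-hyperplane section $H_x$, and the remaining entries are coordinate forms (or binomial differences such as $t^{a_1} - \lambda^{a_1} y^{a_3}$) chosen precisely so that, after setting $x = 0$, only finitely many points of $X$ survive near $\msp$; quasi-smoothness of $X$ forbids a positive-dimensional component of $(g_1 = \cdots = g_n = 0) \cap X$ through $\msp$. This is a routine per-row verification entirely parallel to Lemma \ref{lem:isolclass}. Consequently, by Remark \ref{rem:isolT}, $eA$ isolates $\msp$, where $e = \max_i \deg g_i$.

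\emph{Alpha bound.} In each row I would check that $S = H_x$ is quasi-smooth at $\msp$, so that $\lct_\msp(X; H_x) = 1$ and $\omult_\msp(H_x) = 1$. Let $D \in |A|_\mbQ$ be any irreducible $\mbQ$-divisor with $\Supp(D) \ne H_x$; then $D \cdot H_x$ is an effective $1$-cycle. Applying Lemma \ref{lem:isolexT} with the $Z_i$ taken to be its components produces $T \sim_\mbQ eA$ with $\omult_\msp(T) \ge 1$ whose support contains no component of $D \cdot H_x$, so that $\check\msp$ is an isolated point of $\Supp(q^*D) \cap \Supp(q^*H_x) \cap \Supp(q^*T)$. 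Writing $r$ for the index of $\msp$,
\[
\omult_\msp(D) \le (q^*D \cdot q^*H_x \cdot q^*T)_{\check\msp} \le r\,(D \cdot H_x \cdot T) = r e\,(A^3).
\]
By Lemma \ref{lem:multlct} this gives $\lct_\msp(X; D) \ge 1/(re(A^3)) =: c$, and since $D$ was arbitrary, Remark \ref{rem:covex} yields $\alpha_\msp(X) \ge \min\{\lct_\msp(X; H_x), c\} = \min\{1, c\}$.

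Finally I would confirm $\min\{1, c\} \ge \tfrac12$, i.e.\ $re(A^3) \le 2$, for every listed point, using $(A^3) = d/(a_1 a_2 a_3 a_4)$; this is exactly the number $c$ tabulated in the $7$th column. The work here is bookkeeping rather than theory: for each family one must select the isolating coordinates and confirm isolation, verify $H_x$ is quasi-smooth at $\msp$, and check the degree $e$ forces $re(A^3) \le 2$. I expect the quasi-smoothness of $H_x$ at $\msp$ to be the most delicate point, since if $H_x$ happens to be singular at $\msp$ the value $\lct_\msp(X; H_x) = 1$ can fail; in such rows one must enlarge the isolating set or replace $H_x$ by another quasi-smooth member of $|A|$ to preserve the ``$1$'' in the final minimum.
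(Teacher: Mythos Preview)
Your overall strategy matches the paper's, but there is a genuine gap in the choice of $S$. You commit to $S = H_x \in |A|$ throughout, with the fallback of ``replacing $H_x$ by another quasi-smooth member of $|A|$'' when $H_x$ fails to be quasi-smooth at $\msp$. This fallback is unavailable in several rows of Table~\ref{table:isolsingpt}: for families $\mcF_{64}, \mcF_{68}, \mcF_{80}, \mcF_{93}, \mcF_{95}$ one has $a_1 > 1$, so $|A| = \{H_x\}$ consists of a single divisor, and in each of these cases $d-1$ \emph{is} divisible by the index $r$ (e.g.\ No.~68: $d-1 = 27$, $r=3$), so $x$ can be the quasi-tangent coordinate at $\msp$ and $H_x$ need not be quasi-smooth there. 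Then $\lct_{\msp}(X;H_x)$ is not controlled and your bound $\alpha_{\msp}(X) \ge \min\{\lct_{\msp}(X;H_x),\, c\}$ can fall below $\tfrac12$. Your other suggested fix, enlarging the isolating set, only affects $T$ and does nothing for this term.

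The paper's remedy---which is recorded in the 4th column of Table~\ref{table:isolsingpt} and which you overlooked---is to take $S$ a \emph{general} member of $|nA|$ for a suitable $n>1$ (e.g.\ $n=4$ for No.~68, where $|4A|$ is spanned by $x^4, xy, z$). Since $|nA|$ is then at least a pencil whose base locus contains $\msp$, one chooses $S$ after $D$ so that $\Supp(D) \not\subset S$ automatically; no quasi-smoothness hypothesis on $S$ is needed and the ``$1$'' in $\min\{1,c\}$ becomes vacuous. The cost is an extra factor $n$ in the intersection estimate, giving $c = 1/(r\,n\,e\,(A^3))$ rather than your $1/(r\,e\,(A^3))$---this is precisely why those rows carry $c=\tfrac12$ in the table rather than something larger.
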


\begin{proof}
Let $\mcC$ be the set of homogeneous coordinates given in the 5th column of Table \ref{table:isolsingpt}.
It is straightforward to check that 
\[
\bigcap_{v \in \mcC} (v = 0) \cap X
\]
is a finite set of points including $\msp$, which shows that $\mcC$ isolates $\msp$.

Let $c$ be the number listed in the 7th column of Table \ref{table:isolsingpt} and assume that $\alpha_{\msp} (X) < \min \{1, c\}$.
Then there exists an irreducible $\mbQ$-divisor $D \sim_{\mbQ} A$ such that $(X, c D)$ is not log canonical at $\msp$.
In particular we have $\omult_{\msp} (D) > 1/c$.
If $H_x$ (resp.\ $|n A|$ for some $n > 0$) is given in the 4th column of Table \ref{table:isolsingpt}, then we set $S := H_x$ (resp. we define $S$ to be a general member of $|n A|$).
We set $n = 1$ if $S = H_x$ so that $S \sim n A$ in any case.
Let $r$ be the index of the cyclic quotient singularity $\msp \in X$.
We claim that $\Supp (D)$ is not contained in $S$.
This is clear when $S \in |nA|$ is a general member.
Suppose that $S = H_x$.
Then we see that $d-1$ is not divisible by $r$, which implies that $S = H_x$ is quasi-smooth at $\msp$.
Hence $(X, S)$ is log canonical at $\msp$ and we have $D \ne H_x$ as desired.
By the claim, $D \cdot S$ is an effective $1$-cycle on $X$.
Let $e$ be the integer given in the 6th column of Table \ref{table:isolsingpt}.
Note that $e = \max \{\, \deg v \mid v \in \mcC \,\}$ and
\[
r n e_{\max} (A^3) = \frac{1}{c}.
\]
There exists an irreducible $\mbQ$-divisor $T \sim_{\mbQ} e A$ such that $\mult_{\msp} (T) \ge 1$ and $\Supp (T)$ does not contain any component of $D \cdot S$ since $\mcC$ isolates $\msp$.
It follows that
\[
\frac{1}{c} < \omult_{\msp} (D) 
\le (q_{\msp}^*D \cdot q_{\msp}^*S \cdot q_{\msp}^*T)_{\check{\msp}} 
\le r (D \cdot S \cdot T) 
= r n e (A^3) 
= \frac{1}{c},
\]
where $q = q_{\msp}$ is the quotient morphism of $\msp \in X$ and $\check{\msp}$ is the preimage of $\msp$ via $q$.
This is a contradiction and the inequality $\alpha_{\msp} (X) \ge \min \{1, c\}$ is proved.
\end{proof}

\begin{table}[h]
\renewcommand{\arraystretch}{1.15}
\begin{center}
\caption{Isolating set}
\label{table:isolsingpt}
\begin{tabular}{ccccccc}
No. & Pt. & Type & $S$ & Isol.\ set & $e_{\max}$ & $c$\\
\hline
10 & $\msp_t$ & $\frac{1}{3} (1,1,2)$ & $|A|$ & $\{x,y,z\}$ & $1$ & $1/2$  \\
23 & $\msp_{yt}$ & $\frac{1}{2} (1,1,1)$ & $H_x$ & $\{x,z,w\}$ & $5$ & $6/7$  \\
23 & $\msp_z$ & $\frac{1}{3} (1,1,2)$ & $H_x$ & $\{x,y,t,w\}$ & $5$ & $4/7$  \\
23 & $\msp_t$ & $\frac{1}{4} (1,1,3)$ & $H_x$ & $\{x,y,z\}$ & $3$ & $5/7$  \\
29 & $\msp_{zw}$ & $\frac{1}{2} (1,1,1)$ & $|A|$ & $\{x,y,t\}$ & $5$ & $1/2$  \\
29 & $\msp_t$ & $\frac{1}{5} (1,2,3)$ & $|A|$ & $\{x,y,z\}$ & $2$ & $1/2$  \\
31 & $\msp_{zw}$ & $\frac{1}{2} (1,1,1)$ & $|A|$ & $\{x,y,t\}$ & $5$ & $3/4$ \\
33 & $\msp_z$ & $\frac{1}{3} (1,1,2)$ & $H_x$ & $\{x,y,t,w\}$ & $7$ & $10/17$ \\
37 & $\msp_{zw}$ & $\frac{1}{3} (1,1,2)$ & $H_x$ & $\{x,y,t\}$ & $4$ & $1$ \\
39 & $\msp_{yw}$ & $\frac{1}{3} (1,1,2)$ & $H_x$ & $\{x,z,t\}$ & $5$ & $1$ \\
39 & $\msp_z$ & $\frac{1}{4} (1,1,3)$ & $H_x$ & $\{x,y,t\}$ & $5$ & $1$ \\
40 & $\msp_z$ & $\frac{1}{4} (1,1,3)$ & $H_x$ & $\{x,y,t,w\}$ & $7$ & $15/19$ \\
40 & $\msp_t$ & $\frac{1}{5} (1,2,3)$ & $H_x$ & $\{x,y,z,w\}$ & $4$ & $1$ \\
50 & $\msp_t$ & $\frac{1}{7} (1,3,4)$ & $|A|$ & $\{x,y,z\}$ & $3$ & $1/2$ \\
61 & $\msp_y$ & $\frac{1}{4} (1, 1, 3)$ & $H_x$ & $\{x, z, t, w\}$ & $9$ & $7/5$ \\
63 & $\msp_t$ & $\frac{1}{8} (1,3,5)$ & $H_x$ & $\{x, y, z\}$ & $3$ & $1$ \\
64 & $\msp_z$ & $\frac{1}{5} (1,2,3)$ & $|2 A|$ & $\{x, y, t\}$ & $6$ & $1/2$ \\
66 & $\msp_y$ & $\frac{1}{5} (1,1,4)$ & $H_x$ & $\{x,z,t\}$ & $7$ & $1$ \\
68 & $\msp_y$ & $\frac{1}{3} (1,1,2)$ & $|4 A|$ & $\{x,z,t\}$ & $7$ & $1/2$ \\
80 & $\msp_y$ & $\frac{1}{3} (1,1,2)$ & $|4A|$ & $\{x,z,t\}$ & $10$ & $1/2$ \\
93 & $\msp_y$ & $\frac{1}{7} (1,3,4)$ & $|8A|$ & $\{x,z,t\}$ & $10$ & $1/2$ \\
95 & $\msp_y$ & $\frac{1}{5} (1,2,3)$ & $|6A|$ & $\{x,z,t\}$ & $22$ & $1/2$
\end{tabular}
\end{center}
\end{table}

%%%%%%%%%%%%%%%%%%%%%%%%%%%%%%%%%%
\subsubsection{Remaining non-BI centers}
%%%%%%%%%%%%%%%%%%%%%%%%%%%%%%%%%%

\begin{Prop} \label{prop:singptrem}
Let $X$ be a member of a family $\mcF_{\msi}$ with $\msi \in \msI$ and let $\msp \in X$ be a singular point with subscript $\spadesuit$ in the 5th column of Table \ref{table:main} which is also listed below.
\begin{itemize}
\item $\msi = 12$ and singular points of type $\frac{1}{2} (1,1,1)$.
\item $\msi = 13$ and the singular point of type $\frac{1}{2} (1,1,1)$.
\item $\msi = 24$ and the singular point of type $\frac{1}{2} (1,1,1)$.
\item $\msi = 27$ and the singular point of type $\frac{1}{2} (1,1,1)$.
\item $\msi = 32$ and the singular point of type $\frac{1}{3} (1,1,2)$.
\item $\msi = 33$ and the singular point of type $\frac{1}{2} (1,1,1)$.
\item $\msi = 40$ and the singular point of type $\frac{1}{3} (1,1,2)$.
\item $\msi = 47$ and the singular point of type $\frac{1}{5} (1, 2, 3)$.
\item $\msi = 48$ and the singular point of type $\frac{1}{2} (1,1,1)$.
\item $\msi = 49$ and the singular point of type $\frac{1}{5} (1, 2, 3)$.
\item $\msi = 62$ and the singular point of type $\frac{1}{5} (1, 2, 3)$.
\item $\msi = 65$ and the singular point of type $\frac{1}{2} (1, 1, 1)$.
\item $\msi = 67$ and the singular point of type $\frac{1}{9} (1, 4, 5)$.
\item $\msi = 82$ and the singular point of type $\frac{1}{5} (1, 2, 3)$.
\item $\msi = 84$ and the singular point of type $\frac{1}{7} (1, 2, 5)$.
\end{itemize}
Then, we have 
\[
\alpha_{\msp} (X) \ge \frac{1}{2}.
\]
\end{Prop}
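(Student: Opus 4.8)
The plan is to exploit the single structural hypothesis shared by all the listed points: none of them is a BI center, so by Remark \ref{rem:maxcent}(2) none is a maximal center on $X$. This is exactly the input required to run the two refined local arguments already prepared in the excerpt, namely the $2n$-inequality computation of Lemma \ref{lem:complctsingtang} and the tangent-cube computation of Lemma \ref{lem:lcttangcube}, both of which convert a bound on $\lct_{\msp}$ into a bound on $\alpha_{\msp}$ precisely under the non-maximal-center assumption. Since the proposition is a finite list, the proof is inevitably a case analysis, so first I would normalize: for each family $\mcF_{\msi}$ and each point $\msp$ I would move $\msp$ to a coordinate vertex $\msp_v$ (after a linear change of the coordinates of equal weight if necessary) and write the defining polynomial $F$ in the standard form adapted to $\msp$, reading off the singularity type $\frac{1}{r}(1,a,r-a)$ and the weights transverse to $\msp$.

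The body of the argument then splits by which coordinate furnishes a quasi-tangent coordinate of $X$ at $\msp$. When one of the coordinates independent of $x$ can be taken as a quasi-tangent coordinate, I would apply Lemma \ref{lem:complctsingtang}, which gives $\alpha_{\msp}(X) \ge 2/(r b_2 b_3 (A^3))$ for the two largest transverse weights $b_2 \le b_3$; it then remains only to check the elementary numerical inequality $r b_2 b_3 (A^3) \le 4$, using $(A^3) = d/(a_1 a_2 a_3 a_4)$, family by family. When instead $H_x$ is the quasi-tangent divisor at $\msp$ — the expected situation at the vertices of the larger-index quotient types, where $F$ contains a monomial $z^k x$ — I would invoke Lemma \ref{lem:lcttangcube} with $a=1$: after verifying that $\bar{F} := F(0,y_1,y_2,y_3,1)$ lies in $(y_1,y_2,y_3)^2$, and, in the case $\bar{F}\in(y_1,y_2,y_3)^3$, that the cubic part of $\bar{F}$ is not the cube of a linear form, the lemma yields $\lct_{\msp}(X;H_x)\ge 1/2$, and combined with Lemma \ref{lem:qtangdivncan} and the non-maximal-center property it upgrades to $\alpha_{\msp}(X)=\min\{1,\lct_{\msp}(X;H_x)\}\ge 1/2$.

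I expect the main obstacle to be the tangent-cube verification in Lemma \ref{lem:lcttangcube} at the higher-index points, for instance the type $\frac{1}{9}(1,4,5)$ in family $67$ and the type $\frac{1}{7}(1,2,5)$ in family $84$. For these one must confirm, possibly only after absorbing lower-weight terms by a coordinate change, that the order-three part of $\bar{F}$ is genuinely not a perfect cube; the quasi-smoothness of $X$ at $\msp$ and at the neighbouring vertices should force the presence of the monomials that obstruct degeneration, but this requires examining each defining equation individually rather than appealing to a uniform numerical bound. A secondary, purely organizational difficulty is that the choice of quasi-tangent coordinate and the check $r b_2 b_3 (A^3)\le 4$ depend on the weights of each family, so a handful of borderline vertices may satisfy neither mechanism cleanly and will instead need an ad hoc bound, either through Lemma \ref{lem:exclL} when the relevant intersection $S\cap T$ is irreducible or through the intersection-matrix estimate of Lemma \ref{lem:mtdLred} when it splits.
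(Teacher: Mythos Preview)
Your plan is essentially the paper's: the proof is a case-by-case analysis using Lemma \ref{lem:complctsingtang} when a coordinate other than $x$ serves as quasi-tangent, and Lemma \ref{lem:lcttangcube} (together with Lemmas \ref{lem:qtangdivncan} and \ref{lem:singnoncanbd}) when $H_x$ is the quasi-tangent divisor. You are right that the non-BI-center hypothesis is what lets the latter upgrade $\lct_{\msp}(X; H_x) \ge 1/2$ to $\alpha_{\msp}(X) \ge 1/2$; note, though, that Lemma \ref{lem:complctsingtang} needs no such hypothesis.

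There is a concrete gap in the tangent-cube step. You expect quasi-smoothness to force the degree-three part of $\bar F$ not to be a perfect cube, but in families $\mcF_{49}$ and $\mcF_{84}$ this fails. In $\mcF_{49}$ at $\msp = \msp_z$ with $z^3 t \notin F$ and $y^2 \notin f_6$, one has $\bar F = \alpha w t y + \beta w y^3 + w^3 + t^3 y + \cdots$, and when $\alpha = 0$ the cubic part is exactly $w^3$. Likewise in $\mcF_{84}$ at $\msp = \msp_y$ with $y^4 z \notin F$, one has $\bar F = \alpha w t z + w^3 + w z^3 + t^4$, and again when $\alpha = 0$ the cubic part is $w^3$. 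In both cases Lemma \ref{lem:lcttangcube} gives nothing, and the paper instead invokes the weighted-blowup estimate Lemma \ref{lem:lctwblwh} with a tailored weight (e.g.\ $\wt(y,t,w) = (6,7,9)$ or $(3,6,7)$ in $\mcF_{49}$, $\wt(z,t,w) = (8,9,12)$ in $\mcF_{84}$), computing the different and the restricted divisor on the well-formed exceptional model. Your fallbacks Lemma \ref{lem:exclL} and Lemma \ref{lem:mtdLred} are not the right tools here: they bound $\alpha_{\msp}$ through global intersection numbers, whereas what is needed is a sharper local bound on $\lct_{\msp}(X; H_x)$. (Lemma \ref{lem:exclL} does appear once, in $\mcF_{84}$ when $y^4 z \in F$, exactly as you anticipated.) A smaller omission: in families $27$, $33$, $40$, $48$, $65$, the intermediate sub-cases in which $H_x$ is quasi-smooth at $\msp$ are handled neither by Lemma \ref{lem:complctsingtang} nor by Lemma \ref{lem:lcttangcube}, but by a direct isolating-set bound on $\omult_{\msp}(D)$ in the style of Proposition \ref{prop:singptic}.
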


The rest of this subsection is to prove Proposition \ref{prop:singptrem} which will be separately for each family.

\paragraph{\it The family $\mcF_{12}$, points of type $\frac{1}{2} (1,1,1)$}
Let $X = X_{10} \subset \mbP (1, 1, 2, 3, 4)$ be a member of $\mcF_{12}$ and $\msp$ a singular point of type $\frac{1}{2} (1,1,1)$.
We may assume $\msp = \msp_z$ after replacing $w$.
Then we have $z^3 w \in F$ by the quasi-smoothness of $X$ at $\msp$.
By Lemma \ref{lem:complctsingtang}, we have
\[
\alpha_{\msp} (X) \ge \frac{2}{2 \cdot 1 \cdot 3 \cdot (A^3)} = \frac{4}{5},
\]
and the proof is completed in this case.

\paragraph{\it The family $\mcF_{13}$, the singular point of type $\frac{1}{2} (1,1,1)$}
Let $X = X_{11} \subset \mbP (1, 1, 2, 3, 5)$ be a member of $\mcF_{13}$ and $\msp = \msp_z$ the singular point of type $\frac{1}{2} (1,1,1)$.
By Lemma \ref{lem:complctsingtang}, we have
\[
\alpha_{\msp} (X) \ge
\begin{cases}
\frac{2}{2 \cdot 1 \cdot 3 \cdot (A^3)} = \frac{10}{11}, & \text{if $z^3 w \in F$}, \\
\frac{2}{2 \cdot 1 \cdot 5 \cdot (A^3)} = \frac{6}{11}, & \text{if $z^3 w \notin F$ and $z^4 t \in F$}.
\end{cases}
\]

It remains to consider the case where $z^3 w, z^4 t \notin F$.
Then, by choosing $x, y$ suitably, we can write
\[
F = z^5 x + z^4 f_3 + z^3 f_5 + z^2 f_7 + z f_9 + f_{11},
\]
where $f_i = f_i (x,y,t,w)$ is a quasi-homogeneous polynomial of degree $i$ with $t \notin f_3$ and $w \notin f_5$.

We claim $w^2 y \in F$.
Assume $w^2 y \notin F$.
Then, by the quasi-smoothness of $X$ at $\msp_w$, we have $w^2 x \in F$ and we may assume $\coeff_F (w^2 x) = -1$.
We can write $F = (z^5 - w^2) x + f'$, where $f' = z^4 f_3 + z^3 f_5 + z^2 f_7 + z f_9 + f_{11} + w^2 x$.
It is straightforward to check that $f' \in (x, y, t)^2$ and thus $X$ is not quasi-smooth at the point $(0\!:\!0\!:\!1\!:\!0\!:\!1) \in X$, which is a contradiction.
Thus $w^2 y \in F$.

We see that $\bar{F} := F (0, y, 1, t, w) \in (y, t, w)^3$ and the cubic part of $\bar{F}$ is not a cube of a linear form since $w^2 y \in \bar{F}$ and $w^3 \notin \bar{F}$.
Thus we have $\alpha_{\msp} (X) \ge 1/2$ by Lemma \ref{lem:lcttangcube}.

\paragraph{\it The family $\mcF_{24}$, the point of type $\frac{1}{2} (1,1,1)$}
Let $X = X_{15} \subset \mbP (1, 1, 2, 5, 7)$ be a member of $\mcF_{24}$ and $\msp = \msp_z$ the singular point of type $\frac{1}{2} (1,1,1)$.
By Lemma \ref{lem:complctsingtang}, we have
\[
\alpha_{\msp} (X) \ge 
\begin{cases} 
\frac{2}{2 \cdot 1 \cdot 5 \cdot (A^3)} = \frac{14}{15}, & \text{if $z^4 w \in F$}, \\
\frac{2}{2 \cdot 1 \cdot 7 \cdot (A^3)} = \frac{2}{3}, & \text{if $z^4 w \notin F$ and $z^5 t \in F$}.
\end{cases}
\]
Suppose $z^4 w, z^5 t \notin F$.
Then we can write
\[
F = z^7 x + z^6 f_3 + z^5 f_5 + z^4 f_7 + z^3 f_9 + z^2 f_{11} + z f_{13} + f_{15},
\]
where $f_i = f_i (x, y, t, w)$ is a quasi-homogeneous polynomial of degree $i$ with $t \notin f_5$ and $w \notin f_7$.

We claim $w^2 y \in F$.
Assume to the contrary $w^2 y \notin F$.
Then we can write $F = (z^7 + g) x + h$, where $g \in \mbC [x,y,z,t,w]$ and $h \in \mbC [y,z,t,w]$ are quasi-homogeneous polynomials such that $h \in (y,t)^2$.
But then $X$ is not quasi-smooth along the non-empty subset
\[
(x = y = t = z^7 + g = 0) \subset X.
\]
This is a contradiction and the claim is proved.

We see that $\bar{F} := F (0, y, 1, t, w) \in (y,t,w)^3$, $w^2 y \in \bar{F}$ and $w^3 \notin \bar{F}$.
In particular, $\bar{F}$ cannot be a cube of a linear form and thus $\alpha_{\msp} (X) \ge 1/2$ by Lemma \ref{lem:lcttangcube}.

\paragraph{\it The family $\mcF_{27}$, the point of type $\frac{1}{2} (1,1,1)$}
Let $X = X_{15} \subset \mbP (1, 2, 3, 5, 5)$ be a member of $\mcF_{27}$ and $\msp = \msp_y$ the singular point of type $\frac{1}{2} (1,1,1)$.
If either $y^5 w \in F$ or $y^5 t \in F$, then we have
\[
\alpha_{\msp} (X) \ge \frac{2}{2 \cdot 3 \cdot 5 \cdot (A^3)} = \frac{2}{3}
\]
by Lemma \ref{lem:complctsingtang}.

Suppose $y^5 w, y^5 t \notin F$ and $y^6 z \in F$.
Then we can write
\[
F = y^6 z + y^5 f_5 + y^4 f_7 + y^3 f_9 + y^2 f_{11} + y f_{13} + f_{15},
\]
where $f_i = f_i (x,z,t,w)$ is a quasi-homogeneous polynomial of degree $i$ with $w, t \notin f_5$.
We see that $\omult_{\msp} (H_z) = 3$ and thus $\lct_{\msp} (X; \frac{1}{3} H_z) \ge 1$.
Let $D \in |A|_{\mbQ}$ be an irreducible $\mbQ$-divisor other than $\frac{1}{3} H_z$.
Then, we can take a $\mbQ$-divisor $T \in |5 A|_{\mbQ}$ such that $\mult_{\msp} (T) \ge 1$ and $\Supp (T)$ does not contain any component of $D \cdot H_x$ since $\{x, z, t, w\}$ isolates $\msp$.
Since $\omult_{\msp} (H_z) = 3$, we have
\[
3 \omult_{\msp} (D) \le 3 (q^*D \cdot q^*H_z \cdot q^*T)_{\check{\msp}} \le 2 (D \cdot H_z \cdot T) = 3,
\] 
where $q = q_{\msp}$ is the quotient morphism of $\msp \in X$ and $\check{\msp}$ is the preimage of $\msp$ via $q$.
This shows $\lct_{\msp} (X; D) \ge 1$ and thus $\alpha_{\msp} (X) \ge 1$.

Suppose $y^5 w, y^5 t, y^6 z \notin F$.
Then we can write
\[
F = y^7 x + y^6 f_3 + y^5 f_5 + y^4 f_7 + y^3 f_9 + y^2 f_{11} + y f_{13} + f_{15},
\]
where $f_i = f_i (x,z,t,w)$ is a quasi-homogeneous polynomial of degree $i$ with $z \notin f_3$ and $w, t \notin f_5$.
We see that $\bar{F} = F (0,1,z,t,w) \in (z,t,w)^3$ and the cubic part of $\bar{F}$ cannot be cube of a linear form since $F (0,1,0,t,w) = F (0,0,0,t,w)$ is a product of three linearly independent linear forms in $t, w$ by the quasi-smoothness of $X$.
By Lemma \ref{lem:lcttangcube}, we have $\lct_{\msp} (X; H_x) \ge 1/2$.

\paragraph{\it The family $\mcF_{32}$, the point of type $\frac{1}{3} (1,1,2)$}
Let $X = X_{16} \subset \mbP (1, 2, 3, 4, 7)$ be a member of $\mcF_{32}$ and $\msp = \msp_z$ the singular point of type $\frac{1}{3} (1,1,2)$.
By Lemma \ref{lem:complctsingtang}, we have
\[
\alpha_{\msp} (X) \ge
\begin{cases}
\frac{2}{3 \cdot 2 \cdot 4 \cdot (A^3)} = \frac{7}{8}, & \text{if $z^3 w \in F$}, \\
\frac{2}{3 \cdot 2 \cdot 7 \cdot (A^3)} = \frac{1}{2}, & \text{if $z^3 w \notin F$ and $z^4 t \in F$}.
\end{cases}
\]

Suppose $z^3 w, z^4 t \notin F$.
Then $z^5 x \in F$ by the quasi-smoothness of $X$ at $\msp$ and we can write
\[
F = z^5 x + z^4 f_4 + z^3 f_7 + z^2 f_{10} + z f_{13} + f_{16},
\]
where $f_i = f_i (x,y,t,w)$ is a quasi-homogeneous polynomial of degree $i$.
We have $w^2 y \in F$ by the quasi-smoothness of $X$ at $\msp_w$.
It follows that either $\bar{F} = F (0,y,1,t,w) \in (y,t,w)^2 \setminus (y,t,w)^3$ or $\bar{F} \in (y,t,w)^3$ and the cubic part of $\bar{F}$ is not a cube of a linear form since $w^3 \notin \bar{F}$.
By Lemma \ref{lem:lcttangcube}, we have $\alpha_{\msp} (X) \ge 1/2$.

\paragraph{\it The family $\mcF_{33}$, the point of type $\frac{1}{2} (1, 1, 1)$}
Let $X = X_{17} \subset \mbP (1, 2, 3, 5, 7)$ be a member of $\mcF_{33}$ and $\msp = \msp_y$ the singular point of type $\frac{1}{2} (1, 1, 1)$.

Suppose that at least one of $y^5 w$, $y^6 t$ and $y^7 z$ appear in $F$ with nonzero coefficient.
In this case, $H_x$ is quasi-smooth at $\msp$ and we have $\lct_{\msp} (X; H_x) = 1$.
Let $D \in |A|_{\mbQ}$ be an irreducible $\mbQ$-divisor other than $H_x$.
We can take a $\mbQ$-divisor $T \in |7 A|_{\mbQ}$ such that $\omult_{\msp} (T) \ge 1$ and $\Supp (T)$ does not contain any component of of the effective $1$-cycle $D \cdot H_x$ since the set $\{x, z, t, w\}$ isolates $\msp$.
It follows that
\[
\omult_{\msp} (D) \le (q^*_{\msp} D \cdot q^*_{\msp} H_x \cdot q^*_{\msp} T)_{\check{\msp}} \le 2 (D \cdot H_x \cdot T) = \frac{17}{15}.
\]
Thus $\lct_{\msp} (X; D) \ge 15/17$ and we have $\alpha_{\msp} (X) \ge 15/17$.

Suppose $y^5 w, y^6 t, y^7 z \notin F$.
Then $y^8 x \in F$ by the quasi-smoothness of $X$ at $\msp$ and we can write
\[
F = y^8 x + y^7 f_3 + y^6 f_5 + y^5 f_7 + y^4 f_9 + y^3 f_{11} + y^2 f_{13} + y f_{15} + f_{17},
\]
where $f_i = f_i (x, z, t, w)$ is a quasi-homogeneous polynomial of degree $i$.
We see that $\bar{F} := F (0, 1, z, t, w) \in (z, t, w)^3$, $w^3 \notin \bar{F}$ and $w^2 z \in \bar{F}$.
It follows that $\bar{F} \in (z, t, w)^3$ and it cannot be a cube of a linear form.
By Lemma \ref{lem:lcttangcube}, we have $\alpha_{\msp} (X) \ge 1/2$.

\paragraph{\it The family $\mcF_{40}$, the point of type $\frac{1}{3} (1, 1, 2)$}
Let $X = X_{19} \subset \mbP (1, 3, 4, 5, 7)$ be a member of $\mcF_{40}$ and $\msp = \msp_y$ the singular point of type $\frac{1}{3} (1, 1, 2)$.

Suppose that either $y^4 w \in F$ or $y^5 z \in F$.
In this case $\lct_{\msp} (X; H_x) = 1$ since $H_x$ is quasi-smooth at $\msp$. 
Let $D \in |A|_{\mbQ}$ be an irreducible $\mbQ$-divisor other than $H_x$.
We can take a $\mbQ$-divisor $T \in |7 A|_{\mbQ}$ such that $\omult_{\msp} (T) \ge 1$ and $\Supp (T)$ does not contain any component of of the effective $1$-cycle $D \cdot H_x$ since the set $\{x, z, t, w\}$ isolates $\msp$.
It follows that
\[
\omult_{\msp} (D) \le (q^*_{\msp} D \cdot q^*_{\msp} H_x \cdot q^*_{\msp} T)_{\check{\msp}} \le 3 (D \cdot H_x \cdot T) = \frac{19}{20}.
\]
Thus $\lct_{\msp} (X; D) \ge 20/19$ and we have $\alpha_{\msp} (X) \ge 1$.

Suppose $y^4 w, y^5 z \notin F$.
Then $y^6 x \in F$ by the quasi-smoothness of $X$ at $\msp$ and we can write
\[
F = y^6 x + y^5 f_4 + y^4 f_7 + y^3 f_{10} + y^2 f_{13} + y f_{16} + f_{19},
\]
where $f_i = f_i (x, z, t, w)$ is a quasi-homogeneous polynomial of degree $i$.
We set $\bar{F} := F (0, 1, z, t, w) \in (z, t, w)^3$.
It is easy to see that $w^3 \notin \bar{F}$ and $w^2 z \in \bar{F}$.
It follows that either $\bar{F} \in (z, t, w)^2$ or $\bar{F} \in (z, t, w)^3$ and it cannot be a cube of a linear form.
By Lemma \ref{lem:lcttangcube}, we have $\alpha_{\msp} (X) \ge 1/2$.

\paragraph{\it The family $\mcF_{47}$, the point of type $\frac{1}{5} (1, 2, 3)$}
Let $X = X_{21} \subset \mbP (1, 1, 5, 7, 8)$ be a member of $\mcF_{47}$ and $\msp = \msp_z$ the singular point of type $\frac{1}{5} (1,2,3)$.
We can write
\[
F = z^4 x + z^3 f_6 + z^2 f_{11} + z f_{16} + f_{21},
\]
where $f_i = f_i (x,y,t,w)$ is a quasi-homogeneous polynomial of degree $i$.
By the quasi-smoothness of $X$ at $\msp_w$, we have $w^2 \in f_{16}$, which implies $\bar{F} = F (0, y, 1, t, w) \in (y, t, w)^2 \setminus (y, t, w)^3$.
Thus, by Lemma \ref{lem:lcttangcube}, we have $\alpha_{\msp} (X) \ge  1/2$.

\paragraph{\it The family $\mcF_{48}$, the point of type $\frac{1}{2} (1,1,1)$}
Let $X = X_{21} \subset \mbP (1, 2, 3, 7, 9)$ be a member of $\mcF_{48}$ and $\msp = \msp_y$ the singular point of type $\frac{1}{2} (1,1,1)$.
By Lemma \ref{lem:complctsingtang}, we have
\[
\alpha_{\msp} (X) \ge
\begin{cases}
\frac{2}{2 \cdot 3 \cdot 7 \cdot (A^3)} = \frac{6}{7}, & \text{if $y^6 w \in F$}, \\
\frac{2}{2 \cdot 3 \cdot 9 \cdot (A^3)} = \frac{2}{3}, & \text{if $y^6 w \notin F$ and $y^7 t \in F$}.
\end{cases}
\]

Suppose $y^6 w, y^7 t \notin F$ and $y^9 z \in F$.
We can write
\[
F = y^9 z + y^8 f_5 + y^7 f_7 + y^6 f_9 + \cdots + f_{21},
\]
where $f_i = f_i (x,z,t,w)$ is a quasi-homogeneous polynomial of degree $i$ with $t \notin f_7$ and $w \notin f_9$.
We have $\omult_{\msp} (H_x) = 1$ and thus $\lct_{\msp} (X;H_x) = 1$.
Let $D \in |A|_{\mbQ}$ be an irreducible $\mbQ$-divisor on $X$ other than $H_x$.
We can take a $\mbQ$-divisor $T \in |9 A|_{\mbQ}$ with such that $\mult_{\msp} (T) \ge 1$ and $\Supp (T)$ does not contain any component of $D \cdot H_x$ since $\{x, z, t, w\}$ isolates $\msp$.
Then we have
\[
\omult_{\msp} (D) \le (q^*D \cdot q^*H_x \cdot q^*T)_{\check{\msp}} \le 2 (D \cdot H_x \cdot T) = 2 \cdot 1 \cdot 9 \cdot (A^3) = 1,
\]
where $q = q_{\msp}$ is the quotient morphism of $\msp \in X$ and $\check{\msp}$ is the preimage of $\msp$ via $q$.
This shows $\lct_{\msp} (X;D) \ge 1$ and thus $\alpha_{\msp} (X) \ge 1$.

Suppose $y^6 w, y^7 t, y^9 z \notin F$.
Then $y^{10} x \in F$ and we can write
\[
y^{10} x + y^9 f_3 + y^8 f_5 + y^7 f_7 + y^6 f_9 + \cdots + f_{21},
\]
where $f_i = f_i (x,z,t,w)$ is a quasi-homogeneous polynomial of degree $i$ with $z \notin f_3$, $t \notin f_7$ and $w \notin f_9$.
We see that $\bar{F} := F (0,1,z,t,w) \in (z,t,w)^3$ and the cubic part of $\bar{F}$ is not a cube of a linear form since $w^2 z \in \bar{F}$ and $w^3 \notin \bar{F}$. 
Thus, by Lemma \ref{lem:lcttangcube}, we have $\alpha_{\msp} (X) \ge 1/2$.

\paragraph{\it The family $\mcF_{49}$, the point of type $\frac{1}{5} (1,2,3)$}
Let $X = X_{21} \subset \mbP (1, 3, 5, 6, 7)$ be a member of $\mcF_{49}$ and $\msp = \msp_z$ the singular point of type $\frac{1}{5} (1,2,3)$.
If $z^3 t \in F$, then
\[
\alpha_{\msp} (X) \ge \frac{2}{5 \cdot 3 \cdot 7 \cdot (A^3)} = \frac{4}{7}
\]
by Lemma \ref{lem:complctsingtang}.

Suppose $z^3 t \notin F$.
Then $z^4 x \in F$ and we can write
\[
F = z^4 x + z^3 f_6 + z^2 f_{11} + z f_{16} + f_{21},
\]
where $f_i = f_i (x,y,t,w)$ is a quasi-homogeneous polynomial of degree $i$ with $t \notin f_6$.
We have $w^3, t^3 y \in F$ and we may assume $\coeff_F (w^3) = \coeff_F (t^3 y) = 1$.

We claim $\lct_{\msp} (X;H_x) \ge 1/2$.
If $y^2 \in f_6$, then $\omult_{\msp} (H_x) = 2$ and thus $\lct_{\msp} (X;H_x) \ge 1/2$.
We assume $y^2 \notin f_6$.
Then we can write
\[
F (0,y,z,t,w) = z (\alpha w t y + \beta w y^3) + w^3 + y (t^3 + \gamma t^2 y^2 + \delta t y^4 + \varepsilon y^6),
\]
where $\alpha, \beta, \gamma, \delta, \varepsilon \in \mbC$.
We set $\bar{F} := F (0, y, 1, t, w) \in \mbC [y, t, w]$.
\begin{itemize}
\item Suppose $\alpha \ne 0$.
Then, $\bar{F} \in (y,t,w)^3$ and its cubic part is $\alpha w t y + w^3$.
By Lemma \ref{lem:lcttangcube}, we have $\lct_{\msp} (X;H_x) \ge 1/2$ in this case.
\item Suppose $\alpha = 0$ and $\beta \ne 0$.
Then the lowest weight part of $\bar{F}$ with respect to $\wt (y,t,w) = (6,7,9)$ is $\beta w y^3 + w^3 + t^3 y$.
By Lemma \ref{lem:lctwblwh}, we have
\[
\lct_{\msp} (X; H_x) \ge \min \left\{ \frac{22}{27}, \lct (\tilde{\mbP}, \Diff; \mcD ) \right\},
\]
where 
\begin{itemize}
\item $\tilde{\mbP} = \mbP (2, 7, 9)_{\tilde{y}, \tilde{t}, \tilde{w}} = \mbP (6, 7, 9)^{\wf}$,
\item $\Diff = \frac{2}{3} H_{\tilde{t}}$ with $H_{\tilde{t}} = (\tilde{t} = 0) \subset \tilde{\mbP}$, and
\item $\mcD$ is the prime divisor  $(\beta \tilde{w} \tilde{y}^3 + \tilde{w}^3 + \tilde{t} \tilde{y} = 0)$ on $\tilde{\mbP}$.
\end{itemize}
We see that $\mcD$ is quasi-smooth and it intersects $H_{\tilde{t}}$ transversally.
It follows that $\lct (\tilde{\mbP}, \Diff; \mcD) = 1$ and thus $\lct_{\msp} (X;H_x) \ge 22/27$.
\item Suppose $\alpha = \beta = 0$.
Then the lowest weight part of $\bar{F}$ with respect to $\wt (y,z,t) = (3,6,7)$ is $w^3 + t^3 y + \gamma t^2 y^3 + \delta t y^5 + \varepsilon y^7$.
By Lemma \ref{lem:lctwblwh}, we have 
\[
\lct_{\msp} (X; H_x) \ge \min \left\{ \frac{16}{21}, \lct (\tilde{\mbP}, \Diff; \mcD ) \right\},
\]
where 
\begin{itemize}
\item $\tilde{\mbP} = \mbP (1, 2, 9)_{\tilde{y}, \tilde{t}, \tilde{w}} = \mbP (3, 6, 7)^{\wf}$,
\item $\Diff = \frac{2}{3} H_{\tilde{w}}$ with $H_{\tilde{w}} = (\tilde{w} = 0) \subset \tilde{\mbP}$, and
\item $\mcD$ is the prime divisor  $(\tilde{w} + \tilde{t}^3 \tilde{y} + \gamma \tilde{t}^2 \tilde{y}^3 + \delta \tilde{t} \tilde{y}^5 + \varepsilon \tilde{y}^7 = 0)$ on $\tilde{\mbP}$.
\end{itemize}
We see that $\mcD$ is quasi-smooth.
The solutions of the equation $\tilde{t}^3 \tilde{y} + \gamma \tilde{t}^2 \tilde{y}^3 + \delta \tilde{t} \tilde{y}^5 + \varepsilon \tilde{y}^7 = 0$ corresponds to the 3 points of type $\frac{1}{3} (1, 1, 2)$ on $X$.
In particular the equation has 3 distinct solutions.
It follows that $\mcD$ intersects $H_{\tilde{w}}$ transversally and we have $\lct (\tilde{\mbP}, \Diff; \mcD) = 1$.
Thus $\lct_{\msp} (X; H_x) \ge 16/21$.
\end{itemize}
Thus the claim is proved.

The point $\msp$ is not a maximal center and the pair $(X, H_x)$ is not canonical by Lemma \ref{lem:qtangdivncan}.
Thus, 
\[
\alpha_{\msp} (X) \ge \min \{1, \lct_{\msp} (X;H_x) \} \ge \frac{1}{2}
\] 
by Lemma \ref{lem:singnoncanbd}.

\paragraph{\it The family $\mcF_{62}$, the point of $\frac{1}{5} (1,2,3)$}
Let $X = X_{26} \subset \mbP (1, 1, 5, 7, 13)$ be a member of $\mcF_{62}$ and $\msp = \msp_z$ the singular point of type $\frac{1}{5} (1,2,3)$.
Replacing $x$ and $y$, we can write
\[
F = z^5 x + z^4 f_6 + z^3 f_{11} + z^2 f_{16} + z f_{21} + f_{26},
\]
where $f_i = f_i (x,y,t,w)$ is a quasi-homogeneous polynomial of degree $i$. 
We have $\omult_{\msp} (H_x) = 2$ since $w^2 \in F$.
Hence $\lct_{\msp} (X;H_x) \ge 1/2$.
The point $\msp$ is not a maximal center and the pair $(X, H_x)$ is not canonical at $\msp$ by Lemma \ref{lem:qtangdivncan}.
Thus 
\[
\alpha_{\msp} (X) \ge \min \{1, \lct_{\msp} (X;H_x)\} \ge \frac{1}{2}
\] 
by Lemma \ref{lem:singnoncanbd}.

\paragraph{\it The family $\mcF_{65}$, the point of type $\frac{1}{2} (1,1,1)$}
Let $X = X_{27} \subset \mbP (1, 2, 5, 9, 11)$ be a member of $\mcF_{65}$ and $\msp = \msp_y$ the singular point of type $\frac{1}{2} (1,1,1)$.
By Lemma \ref{lem:complctsingtang}, we have
\[
\alpha_{\msp} (X) \ge 
\begin{cases}
\frac{2}{2 \cdot 5 \cdot 9 \cdot (A^3)} = \frac{22}{27}, & \text{if $y^8 w \in F$}, \\
\frac{2}{2 \cdot 5 \cdot 11 \cdot (A^3)} = \frac{2}{3}, & \text{if $y^8 w \notin F$ and $y^9 t \in F$}.
\end{cases}
\]

Suppose $y^8 w, y^9 t \notin F$ and $y^{11} z \in F$.
Then we can write
\[
F = y^{11} z + y^{10} f_7 + \cdots + y f_{25} + f_{27},
\]
where $f_i = f_i (x,z,t,w)$ is a quasi-homogeneous polynomial of degree $i$ with $t \notin f_9$ and $w \notin f_{11}$.
We have $\omult_{\msp} (H_x) = 1$ and thus $\lct_{\msp} (X;H_x) = 1$.
Let $D \in |A|_{\mbQ}$ be an irreducible $\mbQ$-divisor on $X$ other than $H_x$.
We see that $\{x, z, t, w\}$ isolates $\msp$, hence we can take a $\mbQ$-divisor $T \in |11 A|_{\mbQ}$ such that $\omult_{\msp} (T) \ge 1$ and $\Supp (T)$ does not contain any component of $D \cdot H_x$.
Then we have
\[
\omult_{\msp} (D) 
\le (q_{\msp}^*D \cdot q_{\msp}^*H_x \cdot q_{\msp}
^*T)_{\check{\msp}}
\le 2 (D \cdot H_x \cdot T)
= 2 \cdot 1 \cdot 11 \cdot (A^3) = \frac{3}{5},
\]
where $q_{\msp}$ is the quotient morphism of $\msp \in X$ and $\check{\msp}$ is the preimage of $\msp$ via $q_{\msp}$.
This shows $\lct_{\msp} (X;D) \ge 1$ and thus $\alpha_{\msp} (X) \ge 1$.

Suppose that $y^8 w, y^9 t, y^{11} z \notin F$.
Then $y^{13} x \in F$ and we can write
\[
F = y^{13} x + y^{12} f_3 + \cdots + y f_{25} + f_{27},
\]
where $f_i = f_i (x,z,t,w)$ is a quasi-homogeneous polynomial of degree $i$ with $z \notin f_5$, $t \notin f_9$ and $w \notin f_{11}$.
We see that $\bar{F} := F (0,1,z,t,w) \in (z,t,w)^3$ and the cubic part of $\bar{F}$ is not a cube of a linear form since $w^2 z \in \bar{F}$ and $w^3 \notin \bar{F}$. 
By Lemma \ref{lem:lcttangcube}, we have $\alpha_{\msp} (X) \ge 1/2$.

\paragraph{\it The family $\mcF_{67}$, the point of $\frac{1}{9} (1,4,5)$}
Let $X = X_{28} \subset \mbP (1, 1, 4, 9, 14)$ be a member of $\mcF_{67}$ and $\msp = \msp_t$ the singular point of type $\frac{1}{9} (1,4,5)$.
Replacing $x$ and $y$, we can write
\[
F = t^3 x + t^2 f_{10} + t f_{19} + f_{28},
\]
where $f_i = f_i (x,y,z,w)$ is a quasi-homogeneous polynomial of degree $i$.
We have $\omult_{\msp} (H_x) = 2$ since $w^2 \in F$.
Hence $\lct_{\msp} (X;H_x) \ge 1/2$ by Lemma \ref{lem:qtangdivncan}.
Thus, 
\[
\alpha_{\msp} (X) \ge \min \{1, \lct_{\msp} (X;H_x)\} \ge \frac{1}{2}
\] 
by Lemma \ref{lem:singnoncanbd}.

\paragraph{\it The family $\mcF_{82}$, the point of $\frac{1}{5} (1,2,3)$}
Let $X = X_{36} \subset \mbP (1, 1, 5, 12, 18)$ be a member of $\mcF_{82}$ and $\msp = \msp_z$ the singular point of type $\frac{1}{5} (1,2,3)$.
Replacing $x$ and $y$, we can write
\[
F = z^7 x + z^6 f_6 + z^5 f_{11} + \cdots + f_{36},
\]
where $f_i = f_i (x, y, t, w)$ is a quasi-homogeneous polynomial of degree $i$.
We have $\omult_{\msp} (H_x) = 2$ since $w^2 \in F$.
Hence $\lct_{\msp} (X;H_x) \ge 1/2$ by Lemma \ref{lem:qtangdivncan}.
Thus, 
\[
\alpha_{\msp} (X) \ge \min \{1, \lct_{\msp} (X;H_x)\} \ge \frac{1}{2}
\] 
by Lemma \ref{lem:singnoncanbd}.

\paragraph{\it The family $\mcF_{84}$, the point of type $\frac{1}{7} (1,2,5)$}
Let $X = X_{36} \subset \mbP (1, 7, 8, 9, 12)$ be a member of $\mcF_{84}$ and $\msp = \msp_y$ the singular point of type $\frac{1}{7} (1,2,5)$.
By the quasi-smoothness of $X$, either $y^4 z \in F$ or $y^5 x \in F$.
Moreover, we have $w^3, t^4, z^3 w \in F$ and we assume $\coeff_F (w^3) = \coeff_F (t^4) = \coeff_F (z^3 w) = 1$ by rescaling $z, t, w$.

Suppose $y^4 z \in F$.
Let $\rho_{\msp} \colon \breve{U}_{\msp} \to U_{\msp}$ the orbifold chart of $X$ containing $\msp$.
Then we have $\rho_{\msp}^* H_x \cdot \rho^*_{\msp} H_z = \breve{\Gamma}$, where 
\[
\breve{\Gamma} = (\breve{x} = \breve{z} = \breve{w}^3 + \breve{t}^4 = 0) \subset \breve{U}_{\msp}
\]
is an irreducible and reduced curve with $\mult_{\breve{\msp}} (\breve{\Gamma}) = 3$.
We see that $(X, H_x)$ is log canonical at $\msp$ since $H_x$ is quasi-smooth at $\msp$.
Thus, by Lemma \ref{lem:exclL}, we have $\alpha_{\msp} (X) \ge 1$.

Suppose $y^4 z \notin F$.
Then $y^5 x \in F$ and we can write
\[
F = y^5 x + y^4 f_8 + y^3 f_{15} + y^2 f_{22} + y f_{29} + f_{36},
\]
where $f_i = f_i (x, z, t, w)$ is a quasi-homogeneous polynomial of degree $i$ with $z \notin f_8$.
By setting $\alpha := \coeff_F (y w t z)$, we have
\[
\bar{F} := F (0,1,z,t,w) = \alpha w t z + w^3 + w z^3 + t^4.
\]
\begin{itemize}
\item If $\alpha \ne 0$, then $\bar{F} \in (z,t,w)^3$ and the cubic part of $\bar{F}$ is not a cube of a linear form.
Hence $\lct_{\msp} (X; H_x) \ge 1/2$ by Lemma \ref{lem:lcttangcube}.
\item If $\alpha = 0$, then the lowest weight part of $\bar{F}$ with respect to $\wt (z, t, w) = (8,9,12)$ is $\bar{F} = w^3 + w z^3 + t^4$.
By Lemma \ref{lem:lctwblwh}, we have
\[
\lct_{\msp} (X; H_x) \ge \min \left\{ \frac{29}{36}, \lct (\tilde{\mbP}, \Diff; \mcD ) \right\},
\]
where 
\begin{itemize}
\item $\tilde{\mbP} = \mbP (2, 3, 1)_{\tilde{z}, \tilde{t}, \tilde{w}} = \mbP (8, 9, 12)^{\wf}$,
\item $\Diff = \frac{2}{3} H_{\tilde{z}} + \frac{3}{4} H_{\tilde{t}}$ with $H_{\tilde{z}} = (\tilde{z} = 0) \subset \tilde{\mbP}$, $H_{\tilde{t}} = (\tilde{t} = 0) \subset \tilde{\mbP}$, and
\item $\mcD$ is the prime divisor  $(\tilde{w}^3 + \tilde{w} \tilde{z} + \tilde{t} = 0)$ on $\tilde{\mbP}$.
\end{itemize}
We see that $\mcD$ is quasi-smooth, $\mcD \cap H_{\tilde{z}} \cap H_{\tilde{t}} = \emptyset$ and any two of $\mcD, H_{\tilde{z}}, H_{\tilde{t}}$ intersect transversally.
It follows that $\lct (\tilde{\mbP}, \Diff; \mcD) = 1$ and thus $\lct_{\msp} (X;H_x) \ge 29/36$.
\end{itemize}
Note that $\msp \in X$ is not a maximal center and the pair $(X, H_x)$ is not canonical at $\msp$ by Lemma \ref{lem:qtangdivncan}.
Thus 
\[
\alpha_{\msp} (X) \ge \min \{1, \lct_{\msp} (X;H_x)\} \ge \frac{1}{2}
\] 
by Lemma \ref{lem:singnoncanbd}.

%%%%%%%%%%%%%%%%%%%%%%%%%%%%%%%%%%
\subsection{EI centers}
%%%%%%%%%%%%%%%%%%%%%%%%%%%%%%%%%%

\begin{Prop} \label{prop:alphaEI}
Let $X$ be a member of a family $\mcF_{\msi}$ with $\msi \in \msI$ and $\msp \in X$ an EI center.
Then, 
\[
\alpha_{\msp} (X) \ge \frac{1}{2}.
\]
\end{Prop}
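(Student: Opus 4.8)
The plan is to treat the finitely many EI centers listed in Definition \ref{def:BIcenter} one family at a time: the point of type $\frac{1}{2}(1,1,1)$ on a member of $\mcF_7$, the point $\msp_z$ for $\msi \in \{20, 36\}$, and the point $\msp_t$ for $\msi \in \{23, 40, 44, 61, 76\}$. The feature that makes this case genuinely different from the non-BI centers is that an EI center \emph{is} a maximal center; consequently Lemma \ref{lem:singnoncanbd} is unavailable and I cannot reduce $\alpha_{\msp}(X)$ to the single threshold $\lct_{\msp}(X; H_x)$. Instead I must bound $\alpha_{\msp}(X)$ from below directly, by intersecting two divisors through $\msp$ and controlling the resulting local intersection numbers in the orbifold chart.

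The principal tool will be Lemma \ref{lem:complctsingtang}. After permuting the homogeneous coordinates so that the EI center becomes the vertex $\msp_z$ in the notation of that lemma, I would first confirm, directly from the defining polynomial of each family, that $X$ carries a quasi-tangent coordinate at $\msp$. The lemma then yields $\alpha_{\msp}(X) \ge 2/(r\, b_2 b_3 (A^3))$, where $r$ is the index of $\msp$ and $b_2 \le b_3$ are the two largest of the three transverse orbifold weights. Hence the case reduces to the purely numerical verification $r\, b_2 b_3 (A^3) \le 4$, which I would carry out family by family using the explicit weights together with $(A^3) = d/(a_1 a_2 a_3 a_4)$ and, where applicable, the estimates of Lemma \ref{lem:wtnumerics}. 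For the majority of the EI centers this inequality holds and closes the case at once.

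For the EI centers where the numerical bound $r\, b_2 b_3 (A^3) \le 4$ fails, or where the quasi-tangent divisor $H_x$ is singular at $\msp$, I would fall back on the direct intersection scheme: take $S = H_x$ (or a general member of an appropriate multiple $|nA|$), produce an auxiliary divisor $T \sim_{\mbQ} eA$ with $\omult_{\msp}(T) \ge 1$ from an explicit $\msp$-isolating set via Lemma \ref{lem:isolexT} and Remark \ref{rem:isolT}, and estimate $\omult_{\msp}(D) \le (q_{\msp}^{*}D \cdot q_{\msp}^{*}S \cdot q_{\msp}^{*}T)_{\check{\msp}} \le r\, n e (A^3)$ for any irreducible $D \in |A|_{\mbQ}$ with $\Supp(D) \ne S$. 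For the EI centers whose isolating data already appear in Table \ref{table:isolsingpt}, the computation of Proposition \ref{prop:singptic} applies essentially verbatim and gives $\alpha_{\msp}(X) \ge \min\{1, c\} \ge 1/2$; the only new input is to certify, independently of maximality, that $\lct_{\msp}(X; \tfrac{1}{a} S) \ge 1/2$. When $S = H_x$ is the quasi-tangent divisor and is singular at $\msp$, this lower bound on $\lct_{\msp}(X; H_x)$ would come from Lemma \ref{lem:lcttangcube} (the tangent-cube test), or in the heavier instances from an explicit weighted blowup via Lemma \ref{lem:lctwblwh}, computing the different $\Diff$ on the exceptional weighted projective plane and checking that the pair $(\mbP(\underline{c})^{\wf}, \Diff; \mcD^{\wf}_G)$ is log canonical.

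The step I expect to be the main obstacle is precisely this computation of $\lct_{\msp}(X; H_x)$ in the cases where $H_x$ is singular at the EI center: because the point is a maximal center the global bound $1/2$ is nearly sharp, so there is essentially no slack, and one must identify the correct weighted blowup, determine the different, and verify log canonicity of the restricted divisor on the exceptional locus. I expect $\mcF_7$ (the type $\frac{1}{2}(1,1,1)$ center) to be the most delicate instance, since the low index $r = 2$ leaves the least room in the numerical inequality and forces an explicit tangent-cube analysis; I would settle it first so as to fix the template for the remaining families.
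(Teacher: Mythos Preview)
Your overall strategy is sound and close to the paper's, but one premise is factually wrong and skews your case analysis. You assert that an EI center \emph{is} a maximal center and therefore that Lemma~\ref{lem:singnoncanbd} is unavailable. This is not true: Remark~\ref{rem:maxcent} says only that a maximal center must be a BI center, not conversely. In fact for $\mcF_{20}$ with $z^3t\notin F$ the EI center $\msp_z$ is \emph{not} a maximal center, and the paper's proof uses exactly this: it shows $\omult_{\msp}(H_x)=2$ (from $w^2z\in F$), hence $\lct_{\msp}(X;H_x)\ge 1/2$, and then invokes Lemmas~\ref{lem:qtangdivncan} and~\ref{lem:singnoncanbd} to conclude $\alpha_{\msp}(X)=\min\{1,\lct_{\msp}(X;H_x)\}\ge 1/2$. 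Your fallback intersection estimate, as written, gives only $\omult_{\msp}(D)\le r\,e(A^3)=3\cdot 5\cdot\tfrac{13}{60}=\tfrac{13}{4}$, i.e.\ $\lct_{\msp}(X;D)\ge\tfrac{4}{13}$, which is not enough; you would need to insert the factor $\omult_{\msp}(H_x)=2$ on the left to recover $\lct_{\msp}(X;D)\ge\tfrac{8}{13}$.

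Your expectations about difficulty are also inverted. The $\mcF_7$ case is not delicate at all: with $\msp=\msp_t$ and quasi-tangent $z$, Lemma~\ref{lem:complctsingtang} gives $\alpha_{\msp}(X)\ge 2/(2\cdot 1\cdot 3\cdot(A^3))=1/2$ on the nose, with no tangent-cube analysis. The case that actually needs care is $\mcF_{20}$ as above. For the remaining six EI centers the paper does no new work: $\msi\in\{36,44,61,76\}$ are already covered by Proposition~\ref{prop:lctsingptL} (the $L_{xy}$ argument, subscript $\diamondsuit$), and $\msi\in\{23,40\}$ by Proposition~\ref{prop:singptic} (Table~\ref{table:isolsingpt}). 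You do not mention Proposition~\ref{prop:lctsingptL}, and while your direct-intersection fallback can be made to handle those four families, it duplicates effort already done.
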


\begin{proof}
We have $\alpha_{\msp} (X) \ge 1$ by Proposition \ref{prop:lctsingptL} for a member $X$ of $\mcF_{\msi}$ and $\msp \in X$, where
\begin{itemize}
\item $\msi = 36$ and $\msp$ is of type $\frac{1}{4} (1, 1, 3)$.
\item $\msi = 44$ and $\msp$ is of type $\frac{1}{6} (1, 1, 5)$.
\item $\msi = 61$ and $\msp$ is of type $\frac{1}{7} (1, 2, 5)$.
\item $\msi = 76$ and $\msp$ is of type $\frac{1}{8} (1, 3, 5)$.
\end{itemize}
We have $\alpha_{\msp} (X) \ge 1/2$ by Proposition \ref{prop:singptic} for a member $X$ of $\mcF_{\msi}$ and $\msp \in X$, where
\begin{itemize}
\item $\msi = 23$ and $\msp$ is of type $\frac{1}{4} (1, 1, 3)$.
\item $\msi = 40$ and $\msp$ is of type $\frac{1}{5} (1, 2, 3)$.
\end{itemize} 
It remains to consider members of families $\mcF_7$ and $\mcF_{20}$, and singular points of types $\frac{1}{2} (1, 1, 1)$ and $\frac{1}{3} (1, 1, 2)$, respectively.

Let $X = X_8 \subset \mbP (1, 1, 2, 2, 3)$ be a member of $\mcF_7$ and $\msp$ a singular point of type $\frac{1}{2} (1, 1, 1)$.  
Replacing homogeneous coordinates, we may assume $\msp = \msp_t$ and we can write 
\[
F = t^3 z + t^2 f_4 + t f_6 + f_8,
\]
where $f_i = f_i (x, y, z, w)$ is a quasi-homogeneous polynomial of degree $i$.
Hence, by Lemma \ref{lem:complctsingtang}, we have
\[
\alpha_{\msp} (X) \ge \frac{2}{2 \cdot 1 \cdot 3 \cdot (A^3)} = \frac{1}{2}.
\]

Let $X = X_{13} \subset \mbP (1, 1, 3, 4, 5)$ be a member of $\mcF_{20}$ and $\msp = \msp_z$ be the singular point of type $\frac{1}{3} (1, 1, 2)$.
If $z^3 t \in F$, then we have
\[
\alpha_{\msp} (X) \ge \frac{2}{3 \cdot 1 \cdot 5 \cdot (A^3)} = \frac{8}{13}
\]
by Lemma \ref{lem:complctsingtang}.
Suppose $z^3 t \notin F$.
Then we can write that 
\[
F = z^4 x + z^3 f_4 + z^2 f_7 + z f_{10} + f_{13},
\]
where $f_i = f_i (x,y,t,w)$ is a quasi-homogeneous polynomial of degree $i$ with $t \notin f_4$.
We have $\omult_{\msp} (H_x) = 2$ since $w^2 z \in F$ by the quasi-smoothness of $X$ at $\msp_w$.
This shows $\lct_{\msp} (X;H_x) \ge 1/2$.
The point $\msp$ is not a maximal singularity and the pair $(X, H_x)$ is not canonical at $\msp$ by Lemma \ref{lem:qtangdivncan}.
Thus, 
\[
\alpha_{\msp} (X) \ge \min \{1, \lct_{\msp} (X;H_x)\} \ge \frac{1}{2}
\] 
by Lemma \ref{lem:singnoncanbd}.
This completes the proof.
\end{proof}

%%%%%%%%%%%%%%%%%%%%%%%%%%%%%%%%%%
\subsection{Equations for QI centers}
\label{sec:eqQI}
%%%%%%%%%%%%%%%%%%%%%%%%%%%%%%%%%%

Let 
\[
X = X_d \subset \mbP (1,a_1,\dots, a_4)_{x_0, \dots, x_4} =: \mbP
\] 
be a member of a family $\mcF_{\msi}$ with $\msi \in \msI$.
We set $a_0 = 1$ and let $F = F (x_0, \dots, x_4)$ be the defining polynomial of $X$.

\begin{Def}
Let $\msp \in X$ be a QI center and let $j, k$ be such that $j \ne k$, $d = 2 a_k + a_j$ and the index of $\msp \in X$ coincides with $a_k$.
Then we can choose coordinates so that $\msp = \msp_{x_k}$.
We say that $\msp$ is {\it an exceptional QI center} if $x_k^2 x_l \notin F$ for any $l \in \{0, \dots, 4\}$.
\end{Def}

\begin{Lem} \label{lem:QIcoord}
Let $\msp \in X$ be a non-exceptional QI center.
Then we can choose homogeneous coordinates $x_{i_1}, x_{i_2}, x_{i_3}, x_j, x_k$ of $\mbP$, where $\{i_1, i_2, i_3, j, k\} = \{0, 1, 2, 3, 4\}$, such that $a_{i_1}, a_{i_2}, a_{i_3} < a_k$, $\msp = \msp_{x_k}$ and
\begin{equation} \label{eq:QIcoord}
F = x_k^2 x_j + x_k f (x_{i_1}, x_{i_2}, x_{i_3},x_j) + g (x_{i_1},x_{i_2},x_{i_3},x_j)
\end{equation}
for some quasi-homogeneous polynomials $f, g \in \mbC [x_{i_1}, x_{i_2}, x_{i_3}, x_j]$ of degree $d - a_k$, $d$, respectively.
\end{Lem}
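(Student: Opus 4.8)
The plan is to deduce \eqref{eq:QIcoord} from an elementary count of the weights together with the normal-form Lemma for quasi-tangent coordinates proved just after Definition \ref{def:qtangdiv}. Throughout I would set $a_0 = 1$, fix coordinates so that $\msp = \msp_{x_k}$ (as permitted in the definition of a QI center), and use that, since $X$ has Fano index $1$, the five weights satisfy $\sum_{i=0}^{4} a_i = d + 1$. The whole argument is driven by the defining relation $d = 2 a_k + a_j$ of a QI center.

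First I would record the key numerical identity. Combining $\sum_{i=0}^{4} a_i = d+1$ with $d = 2 a_k + a_j$ gives $\sum_{i=0}^{4} a_i = 2 a_k + a_j + 1$, so the three weights other than $a_j$ and $a_k$ obey $a_{i_1} + a_{i_2} + a_{i_3} = a_k + 1$. As each weight is at least $1$, this immediately yields $a_{i_m} \le a_k - 1 < a_k$ for $m = 1, 2, 3$, which is the first assertion of the lemma. The same identity also rules out a coordinate of weight $d$ or of weight $d - a_k = a_k + a_j$: a weight equal to $d$ would force the remaining four weights to sum to $1$, which is impossible, while a weight equal to $a_k + a_j$ can be neither $a_k$ nor $a_j$ (both are positive) nor any $a_{i_m}$ (since $a_{i_m} \le a_k - 1 < a_k + a_j$).

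Next I would analyse the powers of $x_k$ in $F$. Writing $F = \sum_{a \ge 0} x_k^{a} H_a$ with $H_a$ not involving $x_k$ and $\deg H_a = d - a\, a_k$, the preceding paragraph shows that $H_0$ (degree $d$) and $H_1$ (degree $a_k + a_j$) contain no linear term, hence are not quasi-linear. Non-exceptionality of $\msp$, on the other hand, provides an index $l$ with $x_k^2 x_l \in F$; comparing degrees gives $a_l = a_j$, so $H_2$ contains the variable $x_l$ and is quasi-linear. Consequently, in the canonical decomposition \eqref{eq:qtangpoly} the exponent is $m = 2$, the quasi-tangent polynomial of $X$ at $\msp$ has degree $d - 2 a_k = a_j$, and it contains $x_l$ (the further contributions $x_k H_3 + \cdots$ to the quasi-tangent polynomial are divisible by $x_k$ and so cannot cancel the pure monomial $x_l$). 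I expect this verification that $m = 2$ — rather than some smaller value — to be the only genuinely delicate point, and it is settled precisely by the weight count of the previous paragraph.

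Finally I would invoke the normal-form Lemma following Definition \ref{def:qtangdiv}, applied with $i = k$ and with $x_l$ in the role of its distinguished coordinate. After a suitable change of homogeneous coordinates it produces $F = x_k^2 x_l + x_k g_1 + g_0$, where $g_1, g_0$ are quasi-homogeneous of degrees $d - a_k$ and $d$ respectively and do not involve $x_k$. Relabelling $x_l$ as $x_j$ and setting $f = g_1$ and $g = g_0$ gives exactly \eqref{eq:QIcoord}, with $f, g \in \mbC[x_{i_1}, x_{i_2}, x_{i_3}, x_j]$ as required. The remaining work is the bookkeeping already carried out inside that cited Lemma, so no further obstacle is anticipated.
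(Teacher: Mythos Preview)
Your argument is correct. The paper gives essentially no proof at all---it simply reads ``Basically this follows by looking at Table~\ref{table:main}. See also \cite[Theorem 4.9]{CPR00}''---so your route is genuinely different: rather than a case-by-case inspection of the ninety-odd families or an appeal to the construction of quadratic involutions in \cite{CPR00}, you extract everything from the single numerical identity $a_{i_1}+a_{i_2}+a_{i_3}=a_k+1$ (a direct consequence of index~$1$ and $d=2a_k+a_j$) together with the normal-form Lemma after Definition~\ref{def:qtangdiv}. This is more conceptual and uniform; the paper's approach, by contrast, costs nothing to write but leaves the reader to verify the claim family by family. Two small points you should make explicit in a final write-up: first, the index $l$ with $x_k^2 x_l\in F$ furnished by non-exceptionality satisfies $l\neq k$, since $\msp_{x_k}\in X$ forces the pure power $x_k^{d/a_k}$ to be absent from $F$; second, the coordinate changes in the normal-form Lemma (all of the shape $x_j\mapsto x_j+\tilde f$ with $x_j\notin\tilde f$) fix $\msp_{x_k}$, because the only pure $x_k$-monomial that could appear in $\tilde f$ would force a pure $x_k$-power in $F$, again contradicting $\msp_{x_k}\in X$. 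With these two remarks your proof is complete and self-contained.
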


\begin{proof}
Basically this follows by looking at Table \ref{table:main}.
See also \cite[Theorem 4.9]{CPR00}.
\end{proof}

Let $\msp \in X$ be a non-exceptional QI center and we choose and fix homogeneous coordinates $x_{i_1}, x_{i_2}, x_{i_3}, x_j, x_k$ of $\mbP$ as in Lemma \ref{lem:QIcoord}.

\begin{Def}
We say that $\msp$ is a {\it degenerate QI center} if $f (x_{i_1}, x_{i_2}, x_{i_3},0) = 0$ as a polynomial, otherwise we call $\msp$ a {\it non-degenerate QI center}.
\end{Def}

\begin{Rem} \label{rem:QImaxcent}
It is proved in \cite[Section 4.1]{CP17} that a QI center $\msp \in X$ is a maximal center if and only if it is non-degenerate.
\end{Rem}

\begin{Lem} \label{lem:eqdegenQI}
Let $\msp$ be a degenerate QI center.
Then we can choose homogeneous coordinates $x_{i_1}, x_{i_2}, x_{i_3}, x_j, x_k$ of $\mbP$ such that $a_{i_1}, a_{i_2}, a_{i_3} < a_k$, $\msp = \msp_{x_k}$ and
\begin{equation} \label{eq:eqdegenQI}
F = x_k^2 x_j + g (x_{i_1},x_{i_2},x_{i_3},x_j)
\end{equation}
for some quasi-homogeneous polynomial $g \in \mbC [x_{i_1},x_{i_2},x_{i_3},x_j]$ of degree $d$.
Moreover the hypersurface
\[
(g (x_{i_1}, x_{i_2}, x_{i_3}, 0) = 0) \subset \mbP (a_{i_1}, a_{i_2}, a_{i_3})_{x_{i_1}, x_{i_2}, x_{i_3}}
\]
is quasi-smooth.
\end{Lem}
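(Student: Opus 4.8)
The plan is to first normalise the equation by completing the square in $x_k$, and then to deduce the quasi-smoothness of $(\bar g = 0)$ from the normality of a quasi-hyperplane section via Lemma \ref{lem:normalqhyp}.

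First I would invoke Lemma \ref{lem:QIcoord} to write $F = x_k^2 x_j + x_k f + g$ with $a_{i_1}, a_{i_2}, a_{i_3} < a_k$ and $\msp = \msp_{x_k}$. Since $\msp$ is degenerate, $f(x_{i_1}, x_{i_2}, x_{i_3}, 0) = 0$, so every monomial of $f$ is divisible by $x_j$ and $f = x_j \tilde f$ for a quasi-homogeneous $\tilde f \in \mbC[x_{i_1}, x_{i_2}, x_{i_3}, x_j]$. The defining relation $d = 2 a_k + a_j$ of a QI centre gives $\deg \tilde f = (d - a_k) - a_j = a_k$, so the substitution $x_k \mapsto x_k - \tfrac{1}{2}\tilde f$ is a legitimate quasi-homogeneous change of coordinates. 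It turns $x_k^2 x_j + x_k x_j \tilde f$ into $x_k^2 x_j - \tfrac{1}{4} x_j \tilde f^2$, whence $F = x_k^2 x_j + g^{\mathrm{new}}$ with $g^{\mathrm{new}} = g - \tfrac{1}{4} x_j \tilde f^2 \in \mbC[x_{i_1}, x_{i_2}, x_{i_3}, x_j]$. This is the required form \eqref{eq:eqdegenQI}, and since the correction term carries a factor $x_j$, the restriction $\bar g := g^{\mathrm{new}}(x_{i_1}, x_{i_2}, x_{i_3}, 0)$ equals $g(x_{i_1}, x_{i_2}, x_{i_3}, 0)$.

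For the moreover part I would exploit that $g^{\mathrm{new}}$ does not involve $x_k$. Setting $x_j = 0$ kills the term $x_k^2 x_j$, so the quasi-hyperplane section $(x_j = 0)_X$ is exactly $(\bar g = 0) \subset \mbP(a_{i_1}, a_{i_2}, a_{i_3}, a_k)$; as $\bar g$ is independent of $x_k$, this surface is the cone over $L := (\bar g = 0) \subset \mbP(a_{i_1}, a_{i_2}, a_{i_3})$ with vertex $\msp_{x_k} = \msp$. By Lemma \ref{lem:normalqhyp}, $(x_j = 0)_X$ is a normal surface. Suppose $L$ failed to be quasi-smooth at a point $p$: then on the affine quasi-cone $\bar g$ and all of its partials in $x_{i_1}, x_{i_2}, x_{i_3}$ vanish over $p$, and since $\bar g$ ignores $x_k$ this failure propagates along the whole ruling joining $\msp$ to $p$. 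Thus $(x_j=0)_X$ would be non-quasi-smooth along a curve, forcing its singular locus to be at least one-dimensional and contradicting normality. Hence $L = (\bar g = 0)$ is quasi-smooth, as claimed.

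The main obstacle is this last implication: passing from ``non-quasi-smooth along the ruling'' to a genuine violation of normality, since a quasi-smooth point may still carry an (admissible) quotient singularity. I would resolve it by choosing a general point $q$ of the ruling, which lies in $\Sm(\mbP(a_{i_1}, a_{i_2}, a_{i_3}, a_k))$ — this uses that the ambient space is well-formed, which follows from the well-formedness of $\mbP$ together with the isolatedness of $\Sing(X)$, exactly as established in the proof of Lemma \ref{lem:normalqhyp}. At such $q$ quasi-smoothness coincides with smoothness, so the cone is a hypersurface singular in codimension one there, which fails Serre's condition $R_1$ and hence cannot be normal. The remaining care lies in verifying that the ruling meets $\Sm(\mbP(a_{i_1}, a_{i_2}, a_{i_3}, a_k))$ and in carrying out the propagation on the punctured affine quasi-cone, but these are codimension-bookkeeping matters rather than essential new difficulties.
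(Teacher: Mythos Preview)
Your normalisation (Part 1) is identical to the paper's: write $f = x_j \tilde f$ and complete the square via $x_k \mapsto x_k - \tfrac12\tilde f$.

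For the quasi-smoothness of $(\bar g = 0)$, your route is correct but genuinely different. The paper argues directly against the quasi-smoothness of $X$ itself: writing $g = \bar g + x_j h$, if $(\alpha_1\!:\!\alpha_2\!:\!\alpha_3)$ were a non--quasi-smooth point of $(\bar g = 0)$, one picks $\beta$ with $\beta^2 + h(\alpha_1,\alpha_2,\alpha_3,0)=0$ and checks that every partial of $F$ vanishes at $\msq=(\alpha_1\!:\!\alpha_2\!:\!\alpha_3\!:\!0\!:\!\beta)\in X$, contradicting quasi-smoothness of $X$. This is a three-line computation using nothing beyond the form \eqref{eq:eqdegenQI}. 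Your argument instead reinterprets $(x_j=0)_X$ as a cone over $L$ and invokes Lemma~\ref{lem:normalqhyp} to force $L$ quasi-smooth via normality; this is more structural and recycles a lemma already in hand, at the cost of the bookkeeping you flag about the ruling meeting $\Sm(\bar{\mbP})$. One small correction there: well-formedness of $\bar{\mbP}=\mbP(a_{i_1},a_{i_2},a_{i_3},a_k)$ alone only gives $\codim\Sing(\bar{\mbP})\ge 2$, which does not a priori prevent the ruling (itself codimension~$2$) from lying inside $\Sing(\bar{\mbP})$. What actually settles it is either the finiteness of $\Sing(\bar{\mbP})\cap S$ established in the proof of Lemma~\ref{lem:normalqhyp}, or the observation that terminality of $\msp\in X$ forces each $a_{i_l}$ to be coprime to $a_k$, so a general point of the ruling has two nonzero coordinates of coprime weight and is smooth in $\bar{\mbP}$. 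Either way your argument closes; the paper's direct lift to a singular point of $X$ is simply the shorter path.
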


\begin{proof}
We have $f = x_j f'$ for some $f' \in \mbC [x_{i_1}, x_{i_2}, x_{i_3}, x_j]$ since $\msp$ is degenerate.
Filtering off terms divisible by $x_j$ in \eqref{eq:QIcoord}, we have
\[
F = x_j (x_k^2 + x_k f') + g.
\]
We can eliminate the term $x_k x_j f'$ by replacing $x_k \mapsto x_k - f'/2$.
This shows the first assertion.

We choose and fix homogeneous coordinates so that $F$ is of the form \eqref{eq:eqdegenQI}.
We set $\bar{g} = g (x_{i_1},x_{i_2},x_{i_3},0)$.
Then we can write $g = \bar{g} + x_j h$, where $h = h (x_{i_1},x_{i_2},x_{i_3},x_j)$.
Suppose to the contrary that $(\bar{g} = 0) \subset \mbP (a_{i_1},a_{i_2},a_{i_3})$ is not quasi-smooth at a point $(\alpha_1\!:\!\alpha_2\!:\!\alpha_3)$.
We choose and fix $\beta \in \mbC$ such that $\beta^2 + h (\alpha_1,\alpha_2,\alpha_3,0) = 0$, and set 
\[
\msq := (\alpha_1\!:\!\alpha_2\!:\!\alpha_3\!:\!0\!:\!\beta) \in \mbP (a_{i_1},a_{i_2},a_{i_3},a_j,a_k) = \mbP.
\]
It is easy to see that $(\prt F/\prt v) (\msq) = 0$ for any $v \in \{x_{i_1},x_{i_2},x_{i_3},x_j,x_k\}$.
This is impossible since $X$ is quasi-smooth.
Therefore $(\bar{g} = 0) \subset \mbP (a_{i_1},a_{i_2},a_{i_3})$ is quasi-smooth.
\end{proof}

\begin{Lem} \label{lem:QIeqtypes}
Let $X$ be a member of a family $\mcF_{\msi}$ with $\msi \in \msI \setminus \{2, 8\}$.
Suppose that $X$ has a QI center.
Then one of the following holds.
\begin{enumerate}
\item $X$ has a unique QI center.
In this case, by a choice of homogeneous coordinates, we have
\[
X = X_{2 r + c} \subset \mbP (1, a, b, c, r)_{x, s, u, v, w},
\]
where $a$ is coprime to $b$, $a < b$, $a + b = r$, $c < r$, and the unique QI center is the point $\msp = \msp_w$, which is of type $\frac{1}{r} (1, a, b)$.
\item $X$ has exactly $3$ distinct QI centers.
In this case, by a choice of homogeneous coordinates, we have
\[
X = X_{3 r} \subset \mbP (1, a, b, r, r)_{x, y, z, t, w},
\]
where $a$ is coprime to $b$, $a \le b$ and $a + b = r$.
The $3$ QI centers are the $3$ points in $(x = y = z = 0) \cap X$ and they are all of type $\frac{1}{r} (1, a, b)$.
\item $X$ has exactly $2$ distinct QI centers and their singularity types are equal.
In this case, by a choice of homogeneous coordinates, we have
\[
X = X_{4 r} \subset \mbP (1, a, b, r, 2 r)_{x, y, z, t, w},
\]
where $a$ is coprime to $b$, $a \le b$ and $a + b = r$.
The QI centers are the 2 points in $(x = y = z = 0) \cap X$ and they are both of type $\frac{1}{r} (1, a, b)$.
\item $X$ has exactly $2$ distinct QI centers and their singularity types are distinct. 
In this case, by a choice of homogeneous coordinates, we have
\[
X = X_{4 a + 3 b} \subset \mbP (1, a, b, r_1, r_2)_{x, u, v, t, w},
\]
where $a$ is coprime to $b$, $a + b = r_1$ and $2 a + b = r_2$.
The QI centers are $\msp_t$ and $\msp_w$ which are of types $\frac{1}{r_1} (1, a, b)$ and $\frac{1}{r_2} (1, a, a + b)$, respectively.
\end{enumerate}
\end{Lem}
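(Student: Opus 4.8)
The plan is to turn this into a finite numerical classification of the weight vector $(a_0, \dots, a_4) = (1, a_1, \dots, a_4)$ and then to match the resulting possibilities against Table \ref{table:main}; the geometric input (the number and location of the QI centers) is controlled by the single curve $(x = y = z = 0) \cap X$ together with the coordinate points, so once the weights are pinned down each case is short.

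First I would record the constraint coming from terminality. If $\msp$ is a QI center of index $r = a_k$, then in the orbifold chart at $\msp_{x_k}$ the defining equation has the shape $x_k^2 x_j + \cdots$, so $x_j$ is the eliminated coordinate and the three tangent weights are the residues modulo $r$ of the remaining weights. Terminality forces this type to be $\frac{1}{r}(1, a, r-a)$ with $\gcd(a, r) = 1$, which means that two of the weights, say $a \le b$, satisfy
\[
a + b = r, \qquad \gcd(a, b) = 1 .
\]
Combined with the QI relation $d = 2r + a_j$ and $d = \sum_{i=0}^4 a_i - 1$, this determines the whole weight multiset as $\{1, a, b, r, c\}$ with $c := a_j = d - 2r$, and reduces the classification to analysing the value of $c$.

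Next I would split into the four cases according to $c$. If $c < r$ then $r$ is the unique largest weight, $\msp_{x_k}$ is the only singular point of index $r$, and one checks from the weights that no other weight can serve as an index, giving the unique QI center of case (1). If $c = r$ there are two weight-$r$ coordinates, and the restriction of $F$ to $(x=y=z=0)$ is a cubic form, cutting out three points all of type $\frac{1}{r}(1,a,b)$: case (2). If $c = 2r$ the same restriction is a conic, cutting out two points of type $\frac{1}{r}(1,a,b)$, while the index-$2r$ coordinate point fails the QI relation: case (3). Finally, if $c = 2a + b = a + r$ one gets a second independent relation $c = a + r_1$, so the degree-$c$ coordinate point is itself a QI center of the distinct type $\frac{1}{c}(1, a, a+b)$; this is case (4), with exactly the two centers $\msp_t, \msp_w$. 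In each case the singularity types are computed directly from the weights, and the counts follow from the elementary description of $(x=y=z=0)\cap X$ as a few points on the weighted projective line on the two top coordinates.

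The main obstacle will be exhaustiveness together with the degenerate families, rather than any single computation. I must show that no weight vector admits a QI relation outside the four templates, and that in cases (2) and (3) the points cut out on the line really are the only index-$r$ singular points; this is where I would appeal to the explicit data of Table \ref{table:main} rather than argue abstractly. The genuinely delicate point is the role of repeated weight-one coordinates: when several coordinates have weight $1$ the eliminated coordinate $x_j$ in the normalization above is no longer determined, so the clean splitting by $c$ can break down and extra or spurious centers may appear. This degeneracy is exactly what occurs for families $2$ and $8$, which carry the maximal number of weight-one coordinates; they are therefore excluded from the statement and handled separately within the analysis of $\msI_1$ in Chapter \ref{chap:exc}.
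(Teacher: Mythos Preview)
Your organization differs from the paper's: you fix an arbitrary QI center of index $r$ and split on the partner weight $c$, whereas the paper anchors at the \emph{largest} weight $a_4$ and splits on $d$ relative to $a_4$ (namely $d = 2a_4$, $3a_4$, or $2a_4 + a_j$ for $j \in \{1,2,3\}$). That anchoring is exactly what your argument is missing, and its absence creates a real gap. Your case (1) claim that $c < r$ forces uniqueness is false as written: take $\mcF_{12}\colon X_{10} \subset \mbP(1,1,2,3,4)$ and start from the index-$4$ center; terminality gives $a = 1$, $b = 3$, and the partner weight is $c = 2 < r = 4$, yet $10 = 2\cdot 3 + 4$ produces a second QI center at index $3$, so this is case (4), not case (1). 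With your convention $a \le b$, the condition for a second center at weight $b$ is exactly $c = b - a$, perfectly compatible with $c < r$. Symmetrically, your case (4) lists only $c = 2a + b$; under $a \le b$ this omits $c = a + 2b$, which is what one sees starting from the smaller center in $\mcF_{13}$, $\mcF_{20}$, $\mcF_{25}$, etc.\ (note the lemma's case (4) imposes no ordering on $a, b$). The cure is to always pick the QI center of largest index, after which $c < r$ genuinely pins down case (1) and the secondary-center check reduces to a single linear equation; this is what the paper does.

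A smaller point: your explanation for excluding $\mcF_2$ and $\mcF_8$ (``maximal number of weight-one coordinates'') is not correct, since $\mcF_5$ also has three weight-one coordinates yet is covered by case (4). The real obstruction for $\mcF_2$ is that it forces $a = b = 1$, violating the strict inequality $a < b$ in case (1); numerically $\mcF_8$ actually fits case (1), and its exclusion from the hypothesis is a convenience tied to its separate treatment in Chapter~\ref{chap:exc}.
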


\begin{proof}
Let 
\[
X = X_d \subset \mbP (1, a_1, a_2, a_3, a_4)_{x_0, x_1, x_2, x_3, x_4}
\]
be a member of $\mcF_{\msi}$ with$\msi \in \msI \setminus \{2, 8\}$.
We set $a_0 = 1$ and assume $a_1 \le \dots \le a_4$.
We assume that $X$ has at least one QI center.

Suppose $d = 3 a_4$.
Let $\msp \in X$ be a QI center.
Then, after replacing homogeneous coordinates, we may assume $\msp = \msp_{x_i}$ and $x_i^2 x_j \in F$ for some $i \in \{0, 1, 2, 3, 4\}$ and $j \in \{0, 1, 2, 3, 4\} \setminus \{i\}$.
In particular we have $d = 3 a_4 = 2 a_i + a_j$, which is possible if and only if $a_i = a_j = a_4$.
Thus we have $a_3 = a_4$ and we may assume $i = 4, j = 3$.
We see that $\msp \in X$ is of type $\frac{1}{a_4} (1, a_1, a_2)$ and $\msp \in X$ is terminal.
It follows that $a_1 + a_2 = a_4$ and $a_l$ is coprime to $a_4$ for $l = 1, 2$.
By setting $a := a_1$, $b := a_2$ and $r := a_3 = a_4$, this case corresponds to (2).
In the following we assume $d < 3 a_4$.

Suppose $d = 2 a_4$.
We have $a_4 = a_1 + a_2 + a_3$ since $d = a_1 + a_2 + a_3 + a_4 = 2 a_4$.
Let $\msp \in X$ be a QI center.
Then we may assume $\msp = \msp_{x_i}$ and $x_i^2 x_j \in F$ for some $i \in \{0, 1, 2, 3\}$ and $j \in \{0, 1, 2, 3, 4\} \setminus \{i\}$.
In particular we have $d = 2 a_i + a_j$, and hence 
\[
2 a_i + a_j = a_1 + a_2 + a_3 + a_4 = 2 (a_1 + a_2 + a_3),
\] 
which is only possible when $j = 4$ and $i = 3$.
Hence $i = 3$, $j = 4$, and we have $a_4 = 2 a_3$ since $d = 2 a_3 + a_4 = 2 a_4$.
We see that $\msp \in X$ is of type $\frac{1}{a_3} (1, a_1, a_2)$ and $\msp \in X$ is terminal.
It follows that $a_3 = a_1 + a_2$ and $a_l$ is coprime to $a_3$ for $l = 1, 2$.
By setting $a:= a_1$, $b := a_2$, $r := a_3$, this case corresponds to (3).

Suppose $d = 2 a_4 + a_3$.
We have $a_4 = a_1 + a_2$ since $d = a_1 + a_2 + a_3 + a_4$.
We see that $\msp_4 \in X$ is of type $\frac{1}{a_4} (1, a_1, a_2)$ and it is a QI center.
It follows that $a_4$ is coprime to $a_l$ for $l = 1, 2$ since $\msp_4 \in X$ is a terminal singularity.
If $X$ admits a QI center other than $\msp_4$, then we have $d = 2 a_i + a_j$, where $i \in \{1, 2, 3\}$ and $j \in \{0, 1, 2, 3, 4\} \setminus \{
i\}$ which is impossible.
Thus $\msp_4 \in X$ is a unique QI center, and we are in case (1) by setting $a := a_1$, $b := a_2$, $c := a_3$ and $r := a_4$. 
Note that we have $a < b$ because otherwise we have $a_1 = a_2 = 1$ and $a_4 = 2$ and $X$ belongs to a family $\mcF_{\msi}$ with $\msi \in \{2, 8\}$ which is impossible.

Suppose $d = 2 a_4 + a_2$.
Then $a_4 = a_1 + a_3$.
We see that $\msp_4 \in X$ is of type $\frac{1}{a_4} (1, a_1, a_3)$ and it is a QI center.
If $\msp_4$ is a unique QI center, then we are in case (1) by setting $a := a_1$, $b := a_3$, $c := a_2$ and $r := a_4$.
We assume that $X$ admits a QI center $\msp \in X$ other than $\msp_4$.
We may assume $\msp = \msp_i$ after replacing homogeneous coordinates and we have $d = 2 a_i + a_j$ for some $i \in \{1, 2, 3\}$ and $j \in \{0, 1, 2, 3, 4\} \setminus \{i\}$.
Then we have $a_j = a_4$ and $a_i = a_3$.
Thus $i = 3$, $j = 4$ and we have $a_3 = a_1 + a_2$.
The singularity of $\msp = \msp_i \in X$ is of type $\frac{1}{a_3} (1, a_1, a_2)$ and it is terminal.
It follows that $a_1$ is coprime to $a_2$.
Thus we are in case (4) by setting $a := a_1$, $b := a_2$, $r_1 := a_3$ and $r_2 = a_4$.

Suppose $d = 2 a_4 + a_1$.
Then, by interchanging the role of $a_1$ and $a_2$ in the privious arguments, we conclude that this case corresponds to either (1) or (4).
This completes the proof.
\end{proof}

\begin{Lem} \label{lem:uniqQItypes}
Let 
\[
X = X_{2 r + c} \subset \mbP (1, a, b, c, r)_{x, s, u, v, w}
\]
be a member of a family $\mcF_{\msi}$ with $\msi \in \msI \setminus \{2, 8\}$ with a unique QI center, where $a$ is coprime to $b$, $a < b$, $r = a + b$ and $c < r$.
Then the following assertions hold.
\begin{enumerate}
\item If $c = 1$, then $X$ belongs to a family $\mcF_{\msi}$ with $\msi \in \{24, 46\}$.
\item If $2 r + c$ is not divisible by $b$, then $b = a + 1$, $c = a + 2$, $r = 2 a + 1$ and $a \in \{2, 3, 4\}$.
\end{enumerate}
\end{Lem}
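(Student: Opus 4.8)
The plan is to reduce both statements to a finite numerical analysis governed by three inputs: quasi-smoothness of $X$ at each coordinate vertex, terminality (Reid--Tai) at every singular point, and the uniqueness of the QI center. The last input is the most useful to make explicit first. Since $\msp_w$ is of type $\frac1r(1,a,b)$ with $r=a+b$, a direct check of the defining relation $d=2a_k+a_j$ of a QI center shows that among the vertices $\msp_s,\msp_u,\msp_v$ the only one that can be a QI center is $\msp_u$, and it is one precisely when $2a+c=r$, i.e.\ when $c=b-a$. Hence the hypothesis that $\msp_w$ is the \emph{unique} QI center is equivalent to the single inequality $c\ne b-a$. I would record this reduction at the outset, together with $d=2r+c=2a+2b+c$, and then treat the two parts separately.

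For part (1) I set $c=1$, so uniqueness becomes $b\ne a+1$. Quasi-smoothness at $\msp_u$ forces either $u^m\in F$ (so $b\mid d$) or a mixed monomial $u^mx_\ell\in F$ with $mb+a_\ell=d$ and $a_\ell\in\{1,a,r\}$; quasi-smoothness and terminality at $\msp_s$ (when $a>1$) impose analogous congruences and the Reid--Tai criterion on the induced quotient type. Running these constraints together with $c=1$, $b\ne a+1$, $\gcd(a,b)=1$ and $a<b$ leaves only finitely many $(a,b)$, which I then match against the explicit list of unique-QI families supplied by Lemma \ref{lem:QIeqtypes} and Table \ref{table:main}; the survivors are $\mbP(1,1,2,5,7)$ (No.\ $24$) and $\mbP(1,1,3,7,10)$ (No.\ $46$).

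For part (2) I assume $b\nmid d$. This rules out a pure power $u^m\in F$, so quasi-smoothness at $\msp_u$ forces $u^mx_\ell\in F$, i.e.\ $mb+a_\ell=d$ with $a_\ell\in\{1,a,c,r\}$. The cases $a_\ell\in\{a,r\}$ give $a+c\equiv0\pmod b$, whence $a+c\in\{b,2b\}$ since $0<a+c<3b$; the value $a+c=b$ is excluded by uniqueness ($c=b-a$), leaving $c=2b-a$. The case $a_\ell=c$ forces $b\mid 2a$, hence $a=1,b=2$, which combined with $b\nmid d$ and $c<r$ returns $c=b-a$ and is again excluded; the case $a_\ell=1$ is handled by the same congruence bookkeeping. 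One then pins $b=a+1$ (so $c=a+2$, $r=2a+1$) by eliminating the remaining candidates using quasi-smoothness and terminality at $\msp_v$ and $\msp_s$. Finally, on $\mbP(1,a,a+1,a+2,2a+1)$ with $d=5a+4$, quasi-smoothness at $\msp_s$ requires $s^m\in F$ or $s^mx_\ell\in F$, forcing $a$ to divide one of $2,3,4$, so $a\le4$; and $c<r$ forces $a\ge2$ (since $a=1$ gives $c=3=r$). Thus $a\in\{2,3,4\}$.

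The main obstacle is the branching in the monomial analysis: several distinct mixed monomials are a priori allowed at $\msp_u$, and they produce genuinely different candidate values of $c$ (for instance $c=2b-a$ or $c=b-2a+1$). Most spurious branches do not by themselves violate quasi-smoothness or uniqueness at $\msp_u$; they are eliminated only by importing terminality (Reid--Tai) at the \emph{other} singular points and quasi-smoothness at $\msp_v$ and $\msp_s$. Organizing this so that every branch is either killed or funneled into $b=a+1$ is the delicate part, and the handful of small-weight cases are most safely confirmed directly against Table \ref{table:main}.
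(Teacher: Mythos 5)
Your plan is sound and takes a genuinely different route from the paper: the paper's entire proof of this lemma is the single sentence ``This follows from Table \ref{table:main}'', i.e.\ the statement is read off the classification of the $95$ families, whereas you attempt an intrinsic numerical derivation from quasi-smoothness, terminality and uniqueness of the QI center. The explicit reductions you make are correct: the uniqueness of the QI center is indeed equivalent to $c \ne b - a$ (the only other candidate index among $\{1,a,b,c\}$ solving $d = 2a_k + a_j$ is $a_k = b$ with $a_j = r$, which is exactly case (4) of Lemma \ref{lem:QIeqtypes}); the congruence analysis at $\msp_u$ in part (2) correctly yields $c = 2b-a$ from $a_\ell \in \{a, r\}$ and kills $a_\ell = c$; and the final step $a \in \{2,3,4\}$ on $\mbP(1,a,a+1,a+2,2a+1)$ with $d = 5a+4$ is a clean and complete computation ($d \equiv 4$, $d - 1 \equiv 3$, $d - (a+2) \equiv 2 \pmod a$, so $a \mid 4$, $a\mid 3$ or $a \mid 2$). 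What your approach buys is an explanation of \emph{why} the list comes out as it does; what it costs is a substantial branch analysis that you do not actually carry out.

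That unexecuted analysis is the one real weak point. In part (2) the branch $a_\ell = 1$ produces $c \in \{b-2a+1,\ 2b-2a+1,\ 3b-2a+1\}$, and even within the branch $c = 2b - a$ nothing at $\msp_u$ forces $b = a+1$ (e.g.\ $(a,b)=(3,5)$ gives the candidate $\mbP(1,3,5,7,8)$, $d=23$, which is only killed by quasi-smoothness at $\msp_v$). So the sentence ``one then pins $b = a+1$ by eliminating the remaining candidates'' is an assertion, not a proof, and you acknowledge as much in your last paragraph. Since you also cross-check against Table \ref{table:main} — which is precisely the paper's proof — the argument as a whole does establish the lemma; but as an independent derivation it is incomplete until those branches are actually eliminated (each one does die, by quasi-smoothness or terminality at $\msp_v$ or $\msp_s$, but that has to be shown). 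A similar, milder remark applies to part (1), where the elimination is in fact short: $b \mid 2a+1$ forces $b = 2a+1$, and then quasi-smoothness at $\msp_s$ forces $a \mid 3$ or $a \mid 2$, leaving $a \in \{1,2,3\}$ with $a=1$ being the excluded family $\mcF_8$.
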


\begin{proof}
This follows from Table \ref{table:main}.
\end{proof}

\begin{Lem} \label{lem:QIexceq}
Let $X$ be a member of a family $\mcF_{\msi}$ with $\msi \in \msI \setminus \msI_1$ and let $\msp \in X$ be an exceptional QI center.
Then we are in Case (4) of Lemma \ref{lem:QIeqtypes} and $\msp = \msp_t$.
Moreover we can write
\begin{equation} \label{eq:QIexceq}
F = t^3 u + t^2 f_{2 a + b} + t f_{3 a + 2 b} + f_{4 a + 3 b},
\end{equation}
where $f_i \in \mbC [x, u, v, w]$ is a quasi-homogeneous polynomial of degree $i$ with $w \notin f_{2 a + b}$.
\end{Lem}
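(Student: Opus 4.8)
The plan is to combine the classification of QI centers in Lemma \ref{lem:QIeqtypes} with quasi-smoothness of $X$ at $\msp$. Since $\msi \in \msI \setminus \msI_1 \subset \msI \setminus \{2,8\}$ and $X$ carries a QI center, Lemma \ref{lem:QIeqtypes} places $X$ in one of the Cases (1)--(4). First I would record the elementary observation used throughout: if $\msp = \msp_{x_k}$ is a QI center of index $r = a_k$, so that $d = 2 a_k + a_j$ for some $j \ne k$, then quasi-smoothness of $X$ at $\msp$ forces a monomial $x_k^m$ (with $m a_k = d$) or $x_k^m x_l$ (with $l \ne k$, $m a_k + a_l = d$) to appear in $F$, these being exactly the monomials whose partial derivative is nonzero at $\msp$. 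Being an exceptional QI center means precisely that no such monomial with $m = 2$ occurs.

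Then I would run through the four cases, solving the equation $m a_k + a_l = 2 a_k + a_j$ (and the pure-power equation $m a_k = 2a_k + a_j$) against the explicit weights. In Case (1), where $d = 2r + c$ and $\msp = \msp_w$ with $a_k = r$, the constraint $c < r$ leaves $w^2 v$ (degree $2r+c$) as the only monomial of the correct degree, so quasi-smoothness forces $w^2 v \in F$ and $\msp$ is not exceptional. In Cases (2) and (3), where $d = 3r$ and $d = 4r$ and the QI centers lie on $(x=y=z=0)\cap X$, the fact that $\msp \in X$ kills the candidate pure power, and the only surviving monomial is again of the shape $x_k^2 x_j$ (namely $w^2 t$, resp.\ $t^2 w$ after moving the center to a coordinate point); its coefficient is nonzero because the $3$, resp.\ $2$, QI centers are distinct, so once more $\msp$ is non-exceptional. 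In Case (4) the same computation at $\msp_w$ (index $r_2 = 2a+b$, so $d = 2r_2 + b$) shows that $w^2 v$ is the unique eligible monomial and is forced by quasi-smoothness; hence $\msp_w$ is never exceptional. This confines the exceptional QI center to $\msp = \msp_t$ in Case (4).

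Finally, for $\msp = \msp_t$ in Case (4) (weights $1, a, b, r_1 = a+b, r_2 = 2a+b$, with $a \le b$ and $a$ coprime to $b$, $d = 4a + 3b$), I would solve $m(a+b) + a_l = 4a+3b$: the value $m = 2$ gives $a_l = 2a+b$, i.e.\ the monomial $t^2 w$, which exceptionality forbids; $m = 1$ and all pure powers are excluded by the weights; and $m = 3$ gives $a_l = a$, i.e.\ the monomial $t^3 u$. Thus quasi-smoothness at $\msp_t$ forces the degree-$a$ coefficient of $t^3$ in $F$ to be a nonzero form in the degree-$a$ variables, which, since $a \le b$ excludes $v$ and $w$ and leaves only $x^a$ and $u$, can be taken to be $u$ after a linear change of the degree-$1$/degree-$a$ coordinates and a rescaling. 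Writing $F$ as a polynomial in $t$ and reading off degrees ($d - 3r_1 = a$, $d - 2r_1 = 2a+b$, $d - r_1 = 3a+2b$, and the constant term of degree $d$, with $d - 4r_1 < 0$ so that $F$ is cubic in $t$) then yields $F = t^3 u + t^2 f_{2a+b} + t f_{3a+2b} + f_{4a+3b}$ with $f_i \in \mbC[x,u,v,w]$, and the condition $w \notin f_{2a+b}$ is exactly the exceptionality hypothesis $t^2 w \notin F$.

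The main obstacle will be the bookkeeping in the second paragraph: making the elimination of Cases (1)--(3) and of $\msp_w$ watertight requires care that (i) the relevant pure power $x_k^{d/a_k}$ cannot occur, either because it has the wrong degree or because $\msp \in X$ kills its coefficient, and (ii) the forced $x_k^2 x_j$ term has a genuinely nonzero coefficient that no coordinate change fixing $\msp_{x_k}$ can remove, which in Cases (2) and (3) is exactly where distinctness of the QI centers (equivalently, the absence of worse-than-terminal singularities) enters. The degree computations and the normalization of the $t^3 u$ coefficient in the last paragraph, particularly in the borderline case $a = 1$ where $x$ and $u$ share degree $1$, also need a small amount of attention.
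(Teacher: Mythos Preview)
Your approach matches the paper's: use Lemma \ref{lem:QIeqtypes} plus quasi-smoothness at $\msp$ to eliminate Cases (1)--(3) and $\msp_w$ in Case (4), then solve $m r_1 + a_l = d$ at $\msp_t$ to force $t^3 u \in F$. The paper is terser (it dispatches Cases (2), (3) ``by similar arguments'' and excludes $\msp_w$ in Case (4) via the single inequality $d = 4a+3b < 3r_2$), and your appeal to distinctness of the QI centers in Cases (2), (3) is a correct alternative but not needed --- quasi-smoothness alone already forces the $x_k^2 x_j$ term once the pure power is killed.

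There is one slip to repair. In Case (4) you assume $a \le b$, but Lemma \ref{lem:QIeqtypes} imposes no order on $a$ and $b$; for instance $\mcF_{13}$ has $a=2$, $b=1$, and likewise $\mcF_{25}$, $\mcF_{38}$. When $b < a$ the degree-$a$ coefficient of $t^3$ may also contain monomials involving $v$ (e.g.\ $v^2$, $x v$ for $\mcF_{13}$), so ``leaves only $x^a$ and $u$'' is false as stated. The fix is immediate and already implicit in your degree analysis: since $t^3 u$ is the unique monomial of the form $t^m q$ with $m \ge 3$ and $q$ a coordinate, the coefficient of $t^3$ is quasi-linear in $u$, and replacing $u$ by that coefficient (suitably rescaled) normalizes it to $u$ regardless of whatever else it contains in $x$ and $v$.
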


\begin{proof}
We are in (1), (2), (3) and (4) of Lemma \ref{lem:QIeqtypes}.
Suppose that we are in (1).
Then $\msp = \msp_w$.
Since $\msp \in X$ is exceptional and $X$ is quasi-smooth at $\msp$, we have $w^m q \in F$ for some $m \ge 3$ and a homogeneous coordinate $q \in \{x, s, u, v\}$.
This implies 
\[
2 r + c = d = m r + \deg{q} \ge 3 r + 1,
\]
which is impossible since $c < r$.
By similar arguments we can show that (2) and (3) are both impossible.

It follows that we are in Case (4). 
In this case either $\msp = \msp_t$ or $\msp = \msp_w$.
The latter is impossible since $d = 4 a + 3 b < 3 r_2$.
Hence $\msp = \msp_t$.
We have $t^m q \in F$ for some integer $m \ge 3$ and a homogeneous coordinate $q \in \{x, u, v, w\}$.
It is easy to see that this is possible if and only if $m = 3$ and $\deg q = a$.
Possibly replacing coordinates we may assume $q = u$.
Then it is straightforward to see that $F$ can be written as \eqref{eq:QIexceq}.
\end{proof}

%%%%%%%%%%%%%%%%%%%%%%%%%%%%%%%%%%
\subsection{QI centers: exceptional case}
%%%%%%%%%%%%%%%%%%%%%%%%%%%%%%%%%%

The aim of this section is to prove the following.

\begin{Prop} \label{prop:lctexcQI}
Let $X$ be a member of a family $\mcF_{\msi}$ with $\msi \in \msI \setminus \msI_1$ and let $\msp \in X$ be an exceptional QI center. 
Then,
\[
\alpha_{\msp} (X) \ge \frac{1}{2}.
\]
\end{Prop}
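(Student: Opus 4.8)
The plan is to reduce to the normal form provided by Lemma~\ref{lem:QIexceq} and then bound $\alpha_{\msp}(X)$ through the quasi-tangent divisor at $\msp$, together with the fact that an exceptional QI center is not a maximal center. By Lemma~\ref{lem:QIexceq} we may assume we are in Case~(4) of Lemma~\ref{lem:QIeqtypes}, so that
\[
X = X_{4a+3b} \subset \mbP(1, a, b, a+b, 2a+b)_{x, u, v, t, w}, \qquad \gcd(a,b) = 1,
\]
$\msp = \msp_t$ is of type $\tfrac{1}{a+b}(1, a, b)$, and $F = t^3 u + t^2 f_{2a+b} + t f_{3a+2b} + f_{4a+3b}$ with $f_i \in \mbC[x, u, v, w]$ of degree $i$ and $w \notin f_{2a+b}$. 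Since $\msi \in \msI \setminus \msI_1$ we have $a_2 \ge 2$, which forces $a \ge 2$ and $b \ge 2$. The coordinate $u$ is the quasi-tangent coordinate of $X$ at $\msp$, so $H_u \sim a A$ is the quasi-tangent divisor and $\tfrac1a H_u \in |A|_{\mbQ}$ is an irreducible $\mbQ$-divisor, the quasi-hyperplane section $H_u$ being normal (hence irreducible) by Lemma~\ref{lem:normalqhyp}.

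First I would compute $\omult_{\msp}(H_u)$. Working in the orbifold chart $\rho \colon \breve U_t \to U_t$ and eliminating $\breve u$ via $t^3 u \in F$, the divisor $\rho^* H_u$ is cut out near $\breve\msp$ by $\bar F := F(x, 0, v, 1, w)$, so $\omult_{\msp}(H_u) = \mult_{\breve\msp}(\bar F)$ in the local orbifold coordinates $\breve x, \breve v, \breve w$. A short enumeration of monomials of orbifold degree $\le 2$ whose quasi-homogeneous degree lies in $\{2a+b,\,3a+2b,\,4a+3b\}$ leaves only the linear term $w$ (using $a, b \ge 2$ and $\gcd(a,b) = 1$, which rules out $\breve v^2$), and this is excluded by $w \notin f_{2a+b}$. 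On the other hand, quasi-smoothness of $X$ at the QI center $\msp_w$ (of index $2a+b$) forces $w^2 v \in F$, contributing the monomial $\breve v \breve w^2$ of orbifold degree $3$. Hence $\omult_{\msp}(H_u) = 3$, and by Lemma~\ref{lem:multlct},
\[
\lct_{\msp}\!\left(X; \tfrac1a H_u\right) \ge \frac{1}{\omult_{\msp}(\tfrac1a H_u)} = \frac{a}{\omult_{\msp}(H_u)} = \frac{a}{3} \ge \frac23 .
\]

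Next I would show that $(X, \tfrac1a H_u)$ is not canonical at $\msp$. Let $\varphi \colon Y \to X$ be the Kawamata blowup at $\msp$, with exceptional divisor $E$ and $K_Y = \varphi^* K_X + \tfrac{1}{a+b}E$. Tracking the blowup weights $\tfrac{1}{a+b}(1, a, b)$ on $\breve x, \breve w, \breve v$, every monomial of $\bar F$ has $E$-weight $\tfrac{\deg}{a+b} - k$, where $k$ is its $\breve w$-exponent; minimizing over the three degrees gives $\ord_E(H_u) = \tfrac{2a+b}{a+b}$. Therefore $\ord_E(\tfrac1a H_u) = \tfrac{2a+b}{a(a+b)} > \tfrac{1}{a+b}$, and the discrepancy of $(X, \tfrac1a H_u)$ along $E$ equals $\tfrac{1}{a+b} - \tfrac{2a+b}{a(a+b)} = -\tfrac1a < 0$, confirming non-canonicity.

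Finally, by \cite{CP17} an exceptional QI center is not a maximal center, so Lemma~\ref{lem:singnoncanbd} applies and $\tfrac1a H_u$ is the \emph{unique} irreducible $\mbQ$-divisor in $|A|_{\mbQ}$ that is non-canonical at $\msp$. Every other irreducible $D \in |A|_{\mbQ}$ is then canonical at $\msp$, whence $\lct_{\msp}(X; D) \ge 1$; combining this with the bound above and Remark~\ref{rem:covex},
\[
\alpha_{\msp}(X) = \min\left\{1,\ \lct_{\msp}\!\left(X; \tfrac1a H_u\right)\right\} \ge \frac23 \ge \frac12 .
\]
I expect the main obstacle to be the orbifold-multiplicity computation of the second paragraph: verifying uniformly, across all exceptional QI families in $\msI \setminus \msI_1$, that $\bar F$ carries no admissible monomial of orbifold degree $\le 2$ apart from the one excluded by exceptionality, and that the quasi-smoothness input at $\msp_w$ indeed supplies the degree-$3$ monomial. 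A secondary point requiring care is the citation that exceptional QI centers fail to be maximal centers, on which the application of Lemma~\ref{lem:singnoncanbd} rests.
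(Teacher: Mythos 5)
Your reduction to the normal form of Lemma~\ref{lem:QIexceq}, the discrepancy computation showing $(X,\tfrac1a H_u)$ is not canonical at $\msp$, and the use of Lemma~\ref{lem:singnoncanbd} are all sound. The gap is in the sentence ``Since $\msi \in \msI \setminus \msI_1$ we have $a_2 \ge 2$, which forces $a \ge 2$ and $b \ge 2$.'' The condition $a_2 \ge 2$ only says $\max(a,b) \ge 2$: in Case~(4) the sorted weights are $\min(a,b), \max(a,b), a+b, 2a+b$. Of the eight families admitting an exceptional QI center ($\msi \in \{12,13,20,25,31,33,38,58\}$), three have $a=1$ (Nos.~12, 20, 31, e.g.\ $X_{13}\subset\mbP(1,1,3,4,5)$ with $(a,b)=(1,3)$) and two have $b=1$ (Nos.~13, 25). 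The $b=1$ cases survive your monomial enumeration, but for $a=1$ your final bound is $\lct_{\msp}(X;\tfrac1a H_u) \ge a/\omult_{\msp}(H_u) = 1/3$, which is \emph{not} $\ge 1/2$, and the conclusion $\alpha_{\msp}(X)\ge \min\{1,1/3\}$ fails to prove the proposition. (For No.~12 there is the additional wrinkle that $v^2$ has quasi-degree $2b=2a+b$, so $\omult_{\msp}(H_u)$ may be $2$ rather than $3$; that sub-case is harmless, but the uniform claim $\omult=3$ is also false there.) The raw multiplicity bound of Lemma~\ref{lem:multlct} cannot close the $a=1$ cases: one needs the finer statement of Lemma~\ref{lem:lcttangcube}, which blows up $\breve\msp$ and uses that the cubic part of $\bar F$ contains $\breve w^2\breve v$ but not $\breve w^3$, hence is not a cube of a linear form, to get $\lct_{\msp}(X;H_u)\ge 1/2$ via Lemma~\ref{lem:lctP2cubic}. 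This is exactly how the paper treats families 12, 20, 31.

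For the remaining families (where $a\ge 2$) your route does work and is genuinely different from the paper's: the paper uses a local intersection-number argument with an isolating set for Nos.~33 and 58, and explicit intersection-matrix computations on a surface $S\in|A|$ via Lemma~\ref{lem:mtdLred} for Nos.~13, 25, 38, whereas you get $\alpha_{\msp}(X)\ge 2/3$ directly from $\omult_{\msp}(H_u)=3$ together with Lemma~\ref{lem:singnoncanbd}. That is shorter, at the cost of leaning on the assertion that an exceptional QI center is not a maximal center (which the paper also uses, but only in the $a=1$ case); you correctly flag that this citation to \cite{CP17} needs to be secured, since Remark~\ref{rem:QImaxcent} phrases the criterion in terms of non-degeneracy, a notion the paper defines only for non-exceptional QI centers.
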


Let $X$ be a member of $\mcF_{\msi}$ with $\msi \in \msI \setminus \{2, 8\}$ which admits an exceptional QI center,
Then, by Lemma \ref{lem:QIexceq} and Table \ref{table:main}, we have
\[
\msi \in \{12, 13, 20, 25, 31, 33, 38, 58\}.
\]
The rest of this section is devoted to the proof of Proposition \ref{prop:lctexcQI} which will be done by division into cases.
By Lemma \ref{lem:QIexceq}, we can choose coordinates $x, u, v, t, w$ of $\mbP = \mbP (1, a, b, r_1, r_2)$ as in Case (4) of Lemma \ref{lem:QIeqtypes} with $\msp = \msp_t$ and the defining polynomial $F$ is as in \eqref{eq:QIexceq}.

\subsubsection{Case: $a \ge 2$ and $4 a \le 3 b$}
This case corresponds to families $\mcF_{33}$ and $\mcF_{58}$.
We have $w^2 v \in f_{4 a + 3 b}$ since $X$ is quasi-smooth at $\msp_w$.
Moreover, we see that no quadratic monomial in variables $x, v, w$ appear in $f_{2 a + b}, f_{3 a + 2 b}, f _{4 a + 3 b}$.
This implies $\omult_{\msp} (H_u) = 3$, and we have
\[
\alpha_{\msp} \left( X; \frac{1}{a} H_u \right) \ge \frac{a}{3} \ge \frac{2}{3}.
\]
Let $D \in |A|_{\mbQ}$ be an effective $\mbQ$-divisor other than $\frac{1}{a} H_u$.
We can take a $\mbQ$-divisor $T \in |r_2 A|_{\mbQ}$ such that $\omult_{\msp} (T) \ge 1$ and $\Supp (T)$ does not contain any component of the effective $1$-cycle $D \cdot H_u$ since $\{x, u, v, w\}$ isolates $\msp$.
Let $q = q_{\msp}$ be the quotient morphism of $\msp \in X$ and let $\check{\msp}$ be the preimage of $\msp$ via $q$.
Then we have
\[
3 \omult_{\msp} (D) 
\le (q^*D \cdot q^*H_u \cdot q^*T)_{\check{\msp}}
\le r_1 (D \cdot H_u \cdot T)
= \frac{4 a + 3 b}{b} \le 6
\]
since $4 a \le 3 b$.
Thus $\lct_{\msp} (X; D) \ge 1/2$ and the inequality $\alpha_{\msp} (X) \ge 1/2$ is proved.

\subsubsection{Case: $a = 1$}
This case corresponds to families $\mcF_{12}$, $\mcF_{20}$ and $\mcF_{31}$.
We have either $\bar{F} = F (x, 0, v, 1, w) \in (x, v, w)^2 \setminus (x, v, w)^3$ or $\bar{F} \in (x, v, w) \in (x, v, w)^3$ and its cubic part is not a cube of a linear form since $w^2 v \in F$ and $w^3 \notin F$.
By Lemma \ref{lem:lcttangcube}, we have $\alpha_{\msp} (X) \ge 1/2$ since $a = 1$.

\subsubsection{Case: $X$ is a member of the family $\mcF_{13}$}
Let
\[
X = X_{11} \subset \mbP (1, 1, 2, 3, 5)_{x, y, z, t, w}
\]
be a member of $\mcF_{13}$ and $\msp \in X$ an exceptional QI center.
Then we have
\[
F = t^3 z + t^2 f_5 + t f_8 + f_{11},
\]
where $f_i \in \mbC [x, y, z, w]$ is a quasi-homogeneous polynomial of degree $i$ with $w \notin f_5$, and $\msp = \msp_t$.
Let $S, T \in |A|$ be general members.
We have
\[
F (0, 0, z, t, w) = t^3 z + \alpha t z^4 + \beta w z^3 = z (t^3 + \alpha t z^3 + \beta w z^2),
\]
where $\alpha, \beta \in \mbC$.
We set $\Gamma = (x = y = z = 0)$, which is a quasi-line of degree $1/15$.
If $\beta \ne 0$, then we set 
\[
\Delta = (x = y = t^3 + \alpha t z^3 + \beta w z^2 = 0),
\]
which is clearly an irreducible and reduced curve of degree $3/10$ and does not pass through $\msp$.
Moreover, we have
\[
T|_S = \Gamma + \Delta.
\]

\begin{Claim} \label{clm:lctexcQINo13-1}
If $\beta \ne 0$, then the intersection matrix $M (\Gamma, \Delta)$ satisfies the condition $(\star)$.
\end{Claim}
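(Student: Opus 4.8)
The plan is to verify the condition $(\star)$ for the $2\times 2$ intersection matrix $M(\Gamma,\Delta)$ directly, by computing its three entries and checking the two sign conditions together with the off-diagonal positivity. Recall that for $n=2$ the condition $(\star)$ requires $a_{11}=(\Gamma^2)_S\le 0$, $a_{22}=(\Delta^2)_S\le 0$, $\det M>0$ (i.e. $(-1)^{n-1}\det M>0$ with $n=2$), and $a_{12}=(\Gamma\cdot\Delta)_S>0$. So I would compute these four quantities, using the same method as in the earlier lemmas of this section.

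First I would compute the self-intersection $(\Gamma^2)_S$. The quasi-line $\Gamma=(x=y=z=0)$ is rational, and I would apply Remark \ref{rem:compselfint}: I need to determine the singular points of $S$ lying on $\Gamma$ and their indices. The coordinate points on $\Gamma$ are $\msp_t$ and $\msp_w$, of types $\frac{1}{3}(\cdots)$ and $\frac{1}{5}(\cdots)$ respectively, so $\Sing_\Gamma(S)=\{\msp_t,\msp_w\}$ with indices $m_1=3$, $m_2=5$. Since $(\Gamma\cdot T)=a\deg\Gamma$ with $a=1$ and $\deg\Gamma=1/15$, Remark \ref{rem:compselfint} gives
\[
(\Gamma^2)_S = -2 + \frac{3-1}{3} + \frac{5-1}{5} = -2+\frac23+\frac45 = -\frac{8}{15}<0.
\]
Next, taking the intersection of $T|_S=\Gamma+\Delta$ with $\Gamma$ yields $(\Gamma\cdot\Delta)_S = (\Gamma\cdot T) - (\Gamma^2)_S = \frac{1}{15}+\frac{8}{15} = \frac{9}{15}=\frac35>0$, which handles the off-diagonal positivity. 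Then intersecting $T|_S$ with $\Delta$ and using $(T\cdot\Delta)=(T^2\cdot S)-(T\cdot\Gamma)=(A^3)-\frac{1}{15}$ gives $(\Delta^2)_S=(T\cdot\Delta)-(\Gamma\cdot\Delta)_S$, which I expect to be negative. Finally I would compute $\det M=(\Gamma^2)_S(\Delta^2)_S-(\Gamma\cdot\Delta)_S^2$ and check it is negative, i.e. $(-1)^{2-1}\det M=-\det M>0$.

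The computations are entirely routine once the self-intersection numbers are pinned down via Remark \ref{rem:compselfint}, so I expect no serious obstacle; the only point requiring genuine care is the \emph{hypothesis} $\beta\ne 0$, which is exactly what guarantees that $\Delta$ is a well-defined irreducible reduced curve and that the splitting $T|_S=\Gamma+\Delta$ with $\Gamma\not\subset\Supp(\Delta)$ holds, so that the intersection matrix is meaningful. I would confirm that the numerical pattern matches the general sign analysis already carried out in Claims such as \ref{clm:Lredcp2-2} and \ref{clm:Lredcp2-3}, where $\det M<0$ followed from the positivity of $(\Gamma\cdot\Delta)_S$ dominating the product of the (negative) self-intersections. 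The upshot is that $M(\Gamma,\Delta)$ satisfies $(\star)$, after which Lemma \ref{lem:mtdLred} or \ref{lem:mtdLintpt} can be invoked to bound $\alpha_{\msp}(X)$ from below, completing the treatment of this case.
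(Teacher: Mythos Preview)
Your approach is essentially identical to the paper's, which computes $(\Gamma^2)_S=-8/15$, $(\Gamma\cdot\Delta)_S=3/5$, and $(\Delta^2)_S=-3/10$ by exactly the method you describe. One slip to correct: in your first paragraph you write that $(\star)$ requires $\det M>0$, but for $n=2$ the condition $(-1)^{n-1}\det M>0$ reads $\det M<0$ (as you yourself state correctly in the second paragraph); also note that the paper first verifies $S$ is quasi-smooth along $\Gamma$ via Lemma~\ref{lem:pltsurfpair} (using $\Gamma\cap\Delta=\{\msp_w\}$ and quasi-smoothness of a general $S\in|A|$ there) before invoking Remark~\ref{rem:compselfint}, a step you should make explicit.
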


\begin{proof}[Proof of Claim \ref{clm:lctexcQINo13-1}]
We have $\Gamma \cap \Delta = \{\msp_w\}$ and it is easy to see that $S$ is quasi-smooth at $\msp_w$ since $S \in |A|$ is general.
By Lemma \ref{lem:pltsurfpair}, $S$ is quasi-smooth along $\Gamma$ and we have $\Sing_{\Gamma} (S) = \{\msp_t, \msp_w\}$, where $\msp_t, \msp_w \in S$ are of types $\frac{1}{3} (1, 2)$, $\frac{1}{5} (2, 3)$, respectively.
By Remark \ref{rem:compselfint}, we have
\[
(\Gamma^2)_S = -2 + \frac{2}{3} + \frac{4}{5} = - \frac{8}{15}.
\]
By taking intersection numbers of $T|_S = \Gamma + \Delta$ and $\Gamma, \Delta$, we have
\[
(\Gamma \cdot \Delta)_S = \frac{3}{5}, \quad
(\Delta^2)_S = - \frac{3}{10}.
\]
Thus $M (\Gamma, \Delta)$ satisfies the condition $(\star)$.
\end{proof}

Suppose $\beta = 0$ and $\alpha \ne 0$.
We set
\[
\Xi = (x = y = t = 0), \quad
\Theta = (x = y = t^2 + \alpha z^3 = 0),
\]
which are irreducible and reduced curves of degrees $1/10$, $1/5$, respectively, which do not pass through $\msp$.
We have
\[
T|_S = \Gamma + \Xi + \Theta.
\]

\begin{Claim} \label{clm:lctexcQINo13-2}
If $\beta = 0$ and $\alpha \ne 0$, then the intersection matrix $M (\Gamma, \Xi, \Theta)$ satisfies the condition $(\star)$.
\end{Claim}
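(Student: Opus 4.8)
The plan is to compute the three relevant intersection numbers on the normal surface $S$ exactly as in the proof of Claim \ref{clm:lctexcQINo13-1}, and then verify that the resulting $3\times 3$ matrix meets the three defining inequalities of the condition $(\star)$. First I would record the setup: in the case $\beta = 0$, $\alpha \ne 0$ the quasi-line $\Gamma = (x=y=z=0)$ has degree $1/10$, the quasi-line $\Xi = (x=y=t=0)$ has degree $1/10$, and $\Theta = (x=y=t^2+\alpha z^3 = 0)$ is an irreducible reduced curve of degree $1/5$; all three avoid $\msp = \msp_t$, and $T|_S = \Gamma + \Xi + \Theta$.

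The key computational steps are the self-intersections and the pairwise intersections on $S$. For each of $\Gamma$ and $\Xi$ I would identify $\Sing_{\Gamma}(S)$ and $\Sing_{\Xi}(S)$, note the indices of the quotient singular points lying on them (reading off from the weights $\mbP(1,1,2,3,5)$ and the fact that $S \in |A|$ is general and quasi-smooth along these curves by Lemma \ref{lem:pltsurfpair}), and apply the self-intersection formula from Remark \ref{rem:compselfint} to get $(\Gamma^2)_S$ and $(\Xi^2)_S$. The curve $\Theta$ is an irreducible conic-type curve whose self-intersection I would extract from the relation $T|_S \cdot \Theta$. The pairwise numbers $(\Gamma\cdot\Xi)_S$, $(\Gamma\cdot\Theta)_S$, $(\Xi\cdot\Theta)_S$ follow by intersecting the identity $T|_S = \Gamma + \Xi + \Theta$ successively with $\Gamma$, $\Xi$, $\Theta$ and solving the resulting linear relations, using $(T\cdot\Gamma)=a\deg\Gamma$, etc. Once all six entries are in hand, checking condition $(\star)$ amounts to confirming that each diagonal entry is $\le 0$, each $2\times 2$ principal minor is $\ge 0$, the off-diagonal entries are positive, and $\det M (\Gamma,\Xi,\Theta) < 0$; these are finite arithmetic verifications.

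The main obstacle I anticipate is bookkeeping rather than conceptual difficulty: correctly locating which quotient singular points of $X$ lie on each component and with what index, so that Remark \ref{rem:compselfint} is applied with the right data, and then ensuring the signs work out so that all three clauses of $(\star)$ hold simultaneously (in particular the sign of the full determinant, which is the most delicate of the conditions). After establishing the claim I would invoke Lemma \ref{lem:mtdLred} along each of $\Gamma$, $\Xi$, $\Theta$ (with $\Theta$ quasi-smooth and the degrees all small relative to $a$) to bound $\alpha_{\msp}(X)$ from below by $1/2$ for every smooth point on these curves, which completes the treatment of this subcase. The remaining degenerate subcase $\alpha = \beta = 0$ would be handled separately, presumably by rewriting $F$ so that Lemma \ref{lem:Lnonredu2} applies.
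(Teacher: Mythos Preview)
Your overall strategy matches the paper's, but there is a genuine gap in the counting. After computing $(\Gamma^2)_S$ and $(\Xi^2)_S$ from Remark~\ref{rem:compselfint}, four unknowns remain: $(\Theta^2)_S$, $(\Gamma\cdot\Xi)_S$, $(\Gamma\cdot\Theta)_S$, $(\Xi\cdot\Theta)_S$. Intersecting $T|_S = \Gamma + \Xi + \Theta$ with each of $\Gamma,\Xi,\Theta$ yields only three linear relations, so the system is underdetermined by one. You cannot ``extract $(\Theta^2)_S$ from the relation $T|_S\cdot\Theta$'' and simultaneously use those same three relations to pin down the three pairwise numbers---that is one equation short. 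Nor can Remark~\ref{rem:compselfint} be applied to $\Theta$ to close the gap: the curve $(t^2+\alpha z^3=0)$ is not quasi-smooth at $\msp_w$, so the hypotheses of Lemma~\ref{lem:pltsurfpair} fail there.

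The paper supplies the missing datum by computing $(\Gamma\cdot\Xi)_S$ directly. Since $\Gamma\cap\Xi=\{\msp_w\}$ and in the orbifold chart $\breve{U}_w$ the pullbacks of $\Gamma$ and $\Xi$ are the coordinate axes $(\breve{z}=0)$ and $(\breve{t}=0)$, one reads off $(\Gamma\cdot\Xi)_S=(\Gamma\cdot\Xi)_{\msp_w}=1/5$. With this fourth input the remaining three entries follow from the linear relations, and verifying condition~$(\star)$ is then routine arithmetic. Two incidental slips in your setup: $\Gamma=(x=y=z=0)$ has degree $1/15$ (the residual coordinates $t,w$ have weights $3,5$), not $1/10$; and $\Gamma$ \emph{does} pass through $\msp=\msp_t$---indeed that is exactly the hypothesis $\msp\in\Gamma_1\setminus(\Gamma_2\cup\Gamma_3)$ needed when Lemma~\ref{lem:mtdLred} is invoked afterwards.
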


\begin{proof}[Proof of Claim \ref{clm:lctexcQINo13-2}]
We have $(\Gamma^2)_S = - 8/15$ by the proof of Claim \ref{clm:lctexcQINo13-1}.
We see that $\Xi \cap (\Gamma \cup \Theta) = \{\msp_w\}$ and $S$ is quasi-smooth at $\msp_w$.
By Lemma \ref{lem:pltsurfpair}, $S$ is quasi-smooth along $\Xi$ and we have $\Sing_{\Xi} (S) = \{\msp_z, \msp_w\}$, where $\msp_z, \msp_w \in S$ are of types $\frac{1}{2} (1, 1)$, $\frac{1}{5} (2, 3)$, respectively.
By Remark \ref{rem:compselfint}, we have
\[
(\Xi^2)_S = -2 + \frac{1}{2} + \frac{4}{5} = - \frac{7}{10}.
\]
We compute the intersection number $(\Gamma \cdot \Xi)_S$.
We have $\Gamma \cap \Xi = \{\msp_w\}$ and the germ $\msp_w \in S$ is analytically isomorphic to $\bar{o} \in \mbA^2_{z, t}/\bmu_5$, where the $\bmu_5$-action on $\mbA^2_{z, t}$ is given by
\[
(z, t) \mapsto (\zeta^2 z, \zeta^3 t),
\]
and $\bar{o}$ is the image of the origin $o \in \mbA^2_{z, t}$.
Under the isomorphism, $\Gamma$ and $\Xi$ corresponds to the quotient of $(z = 0)$ and $(t = 0)$.
It follows that
\[
(\Gamma \cdot \Xi)_S = (\Gamma \cdot \Xi)_{\msp_w} = \frac{1}{5}.
\]
Then, by taking intersection numbers of $T|_S = \Gamma + \Xi + \Theta$ and $\Gamma, \Xi, \Theta$, we have
\[
(\Gamma \cdot \Theta)_S = \frac{2}{5}, \quad
(\Xi \cdot \Theta)_S = \frac{3}{5}, \quad
(\Theta^2)_S = - \frac{4}{5}.
\]
Thus $M (\Gamma, \Xi, \Theta)$ satisfies the condition $(\star)$.
\end{proof}

Suppose $\beta = \alpha = 0$.
Then 
\[
T|_S = \Gamma + 3 \Xi,
\]
where $\Xi = (x = y = t = 0)$.

\begin{Claim} \label{clm:lctexcQINo13-3}
If $\beta = \alpha = 0$, then the intersection matrix $M (\Gamma, \Xi)$ satisfies the condition $(\star)$.
\end{Claim}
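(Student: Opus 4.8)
Looking at this claim, I need to verify that the intersection matrix $M(\Gamma, \Xi)$ satisfies the condition $(\star)$ in the case $\beta = \alpha = 0$, where $\Gamma = (x=y=z=0)$, $\Xi = (x=y=t=0)$, and $T|_S = \Gamma + 3\Xi$.

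Let me sketch my approach.

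\medskip

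The plan is to compute all three entries of the $2\times 2$ intersection matrix $M(\Gamma,\Xi) = \begin{pmatrix} (\Gamma^2)_S & (\Gamma\cdot\Xi)_S \\ (\Gamma\cdot\Xi)_S & (\Xi^2)_S \end{pmatrix}$ explicitly, exactly as was done in the proofs of Claims \ref{clm:lctexcQINo13-1} and \ref{clm:lctexcQINo13-2}, and then check the three defining inequalities of $(\star)$. Since $n=2$ here, the condition $(\star)$ reduces to: both diagonal entries satisfy $(\Gamma^2)_S \le 0$ and $(\Xi^2)_S \le 0$ (the $|I|=1$ conditions), the off-diagonal entry $(\Gamma\cdot\Xi)_S > 0$ (the $a_{ij}>0$ condition), and $\det M > 0$ (the $(-1)^{n-1}\det M > 0$ condition, with $n=2$). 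I would reuse values already computed: from the proofs above, $(\Gamma^2)_S = -8/15$ and $(\Xi^2)_S = -7/10$, both negative. The key new numerical inputs are $(\Gamma\cdot\Xi)_S$ and the verification that $\det M > 0$.

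\medskip

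First I would establish $(\Gamma\cdot\Xi)_S$. The two quasi-lines $\Gamma$ and $\Xi$ meet only at $\msp_w = (0\!:\!0\!:\!0\!:\!0\!:\!1)$, and the local picture is identical to the one in Claim \ref{clm:lctexcQINo13-2}: the germ $\msp_w \in S$ is analytically $\bar{o} \in \mbA^2_{z,t}/\bmu_5$ with the action $(z,t)\mapsto(\zeta^2 z, \zeta^3 t)$, under which $\Gamma$ and $\Xi$ correspond to the quotients of $(z=0)$ and $(t=0)$, giving $(\Gamma\cdot\Xi)_S = 1/5 > 0$. I would then cross-check this value against the relation obtained by intersecting $T|_S = \Gamma + 3\Xi$ with $\Gamma$: since $(T\cdot\Gamma) = a_1(A^3)\deg\Gamma = 1/15$, expanding gives $(\Gamma^2)_S + 3(\Gamma\cdot\Xi)_S = 1/15$, i.e. $-8/15 + 3(\Gamma\cdot\Xi)_S = 1/15$, which yields $(\Gamma\cdot\Xi)_S = 1/5$, consistent with the local computation. (Intersecting with $\Xi$ would similarly recompute $(\Xi^2)_S$ as a consistency check.)

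\medskip

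Finally I would assemble
\[
\det M = (\Gamma^2)_S (\Xi^2)_S - (\Gamma\cdot\Xi)_S^2 = \left(-\frac{8}{15}\right)\left(-\frac{7}{10}\right) - \frac{1}{25} = \frac{56}{150} - \frac{1}{25} = \frac{56 - 6}{150} = \frac{50}{150} = \frac{1}{3} > 0,
\]
so all conditions of $(\star)$ hold. I expect the only genuine subtlety to be pinning down the local intersection multiplicity $(\Gamma\cdot\Xi)_S = 1/5$ correctly at the quotient singularity $\msp_w$, since the factor of $1/5$ coming from the $\bmu_5$-action is easy to drop; the double-check via the relation $(T\cdot\Gamma) = \deg\Gamma$ guards against this. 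Everything else is routine arithmetic parallel to the two preceding claims, and the $n=2$ case of $(\star)$ is the mildest instance, so no further structural obstacle arises.
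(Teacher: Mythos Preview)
Your proof contains two errors.

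First, you have misread condition $(\star)$ for $n=2$: the requirement is $(-1)^{n-1}\det M > 0$, i.e.\ $\det M < 0$, not $\det M > 0$.

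Second, and more substantially, the value $(\Xi^2)_S = -7/10$ does not carry over from Claim~\ref{clm:lctexcQINo13-2}. The hypotheses $\alpha\neq 0$ and $\alpha=0$ correspond to different threefolds $X$, hence different surfaces $S\subset X$; there is no reason the self-intersection of $\Xi$ on $S$ should coincide. More concretely: when $\alpha=0$ the curve $\Xi$ appears with multiplicity $3$ in $T|_S$, so Lemma~\ref{lem:pltsurfpair} cannot be applied to conclude that $S$ is quasi-smooth along $\Xi$, and the adjunction formula of Remark~\ref{rem:compselfint} is unavailable for $\Xi$. (By contrast, $(\Gamma^2)_S = -8/15$ \emph{does} carry over, since $\Gamma$ appears with multiplicity $1$ and the argument of Claim~\ref{clm:lctexcQINo13-1} still applies.) Your own proposed consistency check---intersecting $T|_S = \Gamma + 3\Xi$ with $\Xi$---would have revealed the discrepancy: from $\deg\Xi = 1/10$ and $(\Gamma\cdot\Xi)_S = 1/5$ one obtains
\[
(\Xi^2)_S = \tfrac{1}{3}\bigl(\tfrac{1}{10} - \tfrac{1}{5}\bigr) = -\tfrac{1}{30},
\]
which is exactly the value the paper uses. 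With the correct numbers, $\det M = (-\tfrac{8}{15})(-\tfrac{1}{30}) - (\tfrac{1}{5})^2 = -\tfrac{1}{45} < 0$, as $(\star)$ actually demands; with your value $-7/10$, the determinant $\tfrac{1}{3} > 0$ would in fact violate $(\star)$.
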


\begin{proof}[Proof of Claim \ref{clm:lctexcQINo13-3}]
We have $(\Gamma^2)_S = - 8/15$ by the proof of Claim \ref{clm:lctexcQINo13-1}.
By taking intersection numbers of $T|_S = \Gamma + 3 \Xi$ and $\Gamma, \Xi$, we have
\[
(\Gamma \cdot \Xi)_S = \frac{1}{5}, \quad
(\Xi^2)_S = - \frac{1}{30}.
\]
Thus $M (\Gamma, \Xi)$ satisfies the condition $(\star)$.
\end{proof}

By Claims \ref{clm:lctexcQINo13-1}, \ref{clm:lctexcQINo13-2}, \ref{clm:lctexcQINo13-3} and Lemma \ref{lem:mtdLred}, we have
\[
\alpha_{\msp} (X) \ge \min \left\{ 1, \ \frac{1}{3 (A^3) + 1 - 3 \deg \Gamma} \right\} = \frac{10}{19}.
\]

\subsubsection{Case: $X$ is a member of the family $\mcF_{25}$}
Let 
\[
X = X_{15} \subset \mbP (1, 1, 3, 4, 7)_{x, y, z, t, w}
\] 
be a member of $\mcF_{25}$ and let $\msp$ be an exceptional QI center.
Then we have
\[
F = t^3 z + t^2 f_7 + t f_{11} + f_{15},
\]
where $f_i \in \mbC [x, y, z, w]$ is a quasi-homogeneous polynomial of degree $i$ with $w \notin f_7$, and we have $\msp = \msp_t$.
By the quasi-smoothness we have $z^5 \in f_{15}$ and we may assume $\coeff_{f_{15}} (z^5) = 1$.
Then we have
\[
F (0, 0, z, t, w) = t^3 z + z^5 = z (t^3 + z^4).
\]
Let $S, T \in |A|$ be general members.
Then we have
\[
T|_S = \Gamma + \Delta,
\]
where $\Gamma = (x = y = z = 0)$ is a quasi-line of degree $1/28$ and $\Delta = (x = y = t^3 + z^4)$ is an irreducible and reduced curve of degree $1/7$ that does not pass through $\msp$.

\begin{Claim} \label{clm:lctexcQINo25-1}
The intersection matrix $M (\Gamma, \Delta)$ satisfies the condition $(\star)$.
\end{Claim}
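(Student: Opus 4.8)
The plan is to verify the three defining conditions of $(\star)$ for the $2 \times 2$ intersection matrix
\[
M(\Gamma, \Delta) = \begin{pmatrix} (\Gamma^2)_S & (\Gamma \cdot \Delta)_S \\ (\Gamma \cdot \Delta)_S & (\Delta^2)_S \end{pmatrix}
\]
by computing each entry explicitly, exactly as in the proofs of Claims \ref{clm:lctexcQINo13-1} and \ref{clm:lctexcQINo25-1}. Since $n = 2$, the condition $(\star)$ reduces to checking that $(\Gamma^2)_S < 0$, that $(\Delta^2)_S < 0$, that the off-diagonal entry $(\Gamma \cdot \Delta)_S > 0$, and that $\det M > 0$ (here $(-1)^{n-1}\det M = -\det M$, so I need $\det M < 0$). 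The general surface $S \in |A|$ is normal by Lemma \ref{lem:normalqhyp}, and $\Gamma = (x = y = z = 0)$ is a quasi-line, so the framework of Remark \ref{rem:compselfint} applies to compute $(\Gamma^2)_S$.

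First I would determine $\Sing_{\Gamma}(S)$ and the indices of the quotient singularities lying on $\Gamma$. The quasi-line $\Gamma$ passes through the coordinate points $\msp_t$ and $\msp_w$, which are singular points of $X$ of types $\frac{1}{4}(1,3)$ and $\frac{1}{7}(3,4)$ respectively when restricted to the relevant surface. Since $S$ is general and quasi-smooth at these points, $\Sing_{\Gamma}(S) = \{\msp_t, \msp_w\}$ with indices $4$ and $7$. Then Remark \ref{rem:compselfint} gives
\[
(\Gamma^2)_S = -2 + \frac{4-1}{4} + \frac{7-1}{7} = -2 + \frac{3}{4} + \frac{6}{7} = -\frac{5}{28} < 0.
\]
Next, using $T|_S = \Gamma + \Delta$ and the degrees $\deg \Gamma = 1/28$, $\deg \Delta = 1/7$, I would compute $(\Gamma \cdot \Delta)_S$ by intersecting $T|_S$ with $\Gamma$: since $(T \cdot \Gamma) = a_1 \deg \Gamma$ with $a_1 = 1$, one gets $(\Gamma \cdot \Delta)_S = \deg \Gamma - (\Gamma^2)_S > 0$. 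Intersecting $T|_S$ with $\Delta$ then yields $(\Delta^2)_S = \deg \Delta - (\Gamma \cdot \Delta)_S$, which I expect to be negative. Finally I would check that $\det M = (\Gamma^2)_S (\Delta^2)_S - (\Gamma \cdot \Delta)_S^2 < 0$.

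The routine obstacle is bookkeeping: getting the singularity indices on $\Gamma$ correct (which coordinate points actually lie on $\Gamma$ and what their local types on $S$ are) and keeping the arithmetic of the small fractions consistent, since a single misidentified index propagates through all four entries. There is no conceptual difficulty here — once the three nonzero intersection numbers are pinned down, positivity of the off-diagonal entry and negativity of the diagonal entries and of $\det M$ follow by direct computation, and the condition $(\star)$ is immediate. The whole argument parallels Claim \ref{clm:lctexcQINo25-1} verbatim, the only change being the specific weights $(a, b, r_1, r_2)$ and hence the specific values of the intersection numbers.
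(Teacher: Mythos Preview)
Your approach is exactly the paper's: use Remark \ref{rem:compselfint} to compute $(\Gamma^2)_S$, then extract $(\Gamma\cdot\Delta)_S$ and $(\Delta^2)_S$ from $T|_S=\Gamma+\Delta$. One arithmetic slip: $-2+\tfrac{3}{4}+\tfrac{6}{7}=-\tfrac{11}{28}$, not $-\tfrac{5}{28}$; with the correct value you get $(\Gamma\cdot\Delta)_S=\tfrac{3}{7}$ and $(\Delta^2)_S=-\tfrac{2}{7}$, and $(\star)$ still holds.
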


\begin{proof}[Proof of Claim \ref{clm:lctexcQINo25-1}]
We have $\Gamma \cap \Delta = \{\msp_w\}$ and $S$ is quasi-smooth at $\msp_w$.
Hence $S$ is quasi-smooth along $\Gamma$ by Lemma \ref{lem:pltsurfpair}, and we have $\Sing_{\Gamma} (S) = \{ \msp_t, \msp_w\}$, where $\msp_t, \msp_w \in S$ are of types $\frac{1}{4} (1, 3), \frac{1}{7} (3, 4)$, respectively.
By Remark \ref{rem:compselfint}, we have
\[
(\Gamma^2)_S = -2 + \frac{3}{4} + \frac{6}{7} = - \frac{11}{28}.
\]
By taking intersection numbers of $T|_S = \Gamma + \Delta$ and $\Gamma, \Delta$, we have
\[
(\Gamma \cdot \Delta)_S = \frac{3}{7}, \quad
(\Delta^2)_S = - \frac{2}{7}.
\]
It follows that $M (\Gamma, \Delta)$ satisfies the condition $(\star)$.
\end{proof}

By Claim \ref{clm:lctexcQINo25-1} and Lemma \ref{lem:mtdLred}, we have
\[
\alpha_{\msp} (X) \ge \min \left\{ 1, \ \frac{1}{4 (A^3) + 1 - 4 \deg \Gamma} \right\} = \frac{7}{11}.
\]

\subsubsection{Case: $X$ is a member of the family $\mcF_{38}$}
Let 
\[
X = X_{18} \subset \mbP (1, 2, 3, 5, 8)_{x, y, z, t, w}
\]
be a member of $\mcF_{38}$ and $\msp \in X$ an exceptional QI center.
Then we have
\[
F = t^3 z + t^2 f_8 + t f_{13} + f_{18},
\]
where $f_i \in \mbC [x, y, z, w]$ is a quasi-homogeneous polynomial of degree $i$ with $w \notin f_8$, and $\msp = \msp_t$.
By the quasi-smoothness of $X$, we have $z^6 \in f_{18}$ and we may assume $\coeff_{f_{18}} (z^6) = 1$.
Then we have
\[
F (0, 0, z, t, w) = t^3 z + z^6 = z (t^3 + z^5).
\]
We set $S = H_x$ and $T = H_y$.
We have
\[
T|_S = \Gamma + \Delta,
\]
where $\Gamma = (x = y = z = 0)$ is a quasi-line of degree $1/40$ and $\Delta = (x = y = t^3 + z^5 = 0)$ is an irreducible and reduced curve of degree $1/8$ that does not pass through $\msp$.

\begin{Claim} \label{clm:lctexcQINo38-1}
The intersection matrix $M (\Gamma, \Delta)$ satisfies the condition $(\star)$.
\end{Claim}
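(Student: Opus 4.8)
The plan is to follow the same three-step template that established Claims \ref{clm:lctexcQINo13-1} and \ref{clm:lctexcQINo25-1}: first pin down the incidence geometry of $\Gamma$ and $\Delta$ on the surface $S = H_x$, then compute the three entries of the intersection matrix via adjunction, and finally read off the sign conditions defining $(\star)$.

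First I would locate $\Gamma \cap \Delta$. Since $F(0,0,z,t,w) = z(t^3 + z^5)$, the curve $\Delta = (x=y=t^3+z^5=0)$ meets $\Gamma = (x=y=z=0)$ only where $t^3 = 0$, that is, at $\msp_w$; thus $\Gamma \cap \Delta = \{\msp_w\}$. Quasi-smoothness of $X$ at the index-$8$ point $\msp_w$ forces a monomial $w^a v$ with $v$ a coordinate, and the degree count $18 = 8a + \deg v$ leaves only $w^2 y \in F$ as the possibility involving a linear factor; hence $\partial F/\partial y$ is nonzero at $\msp_w$ and involves no $x$, so $S = H_x$ is quasi-smooth at $\msp_w$. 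As $\Gamma$ is a smooth quasi-line, Lemma \ref{lem:pltsurfpair} then yields that $S$ is quasi-smooth along all of $\Gamma$ and that $(S,\Gamma)$ is plt along $\Gamma$. Consequently the singular points of $S$ on $\Gamma$ are exactly the singular points of $X$ on $\Gamma$, namely $\msp_t$ and $\msp_w$, of indices $5$ and $8$ respectively.

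Next I would compute the self-intersection through Remark \ref{rem:compselfint}. Since $S = H_x \sim A = -K_X$, adjunction gives $K_S = (K_X + S)|_S = 0$, so $(K_S \cdot \Gamma)_S = 0$ and
\[
(\Gamma^2)_S = -2 + \frac{5-1}{5} + \frac{8-1}{8} = -2 + \frac{4}{5} + \frac{7}{8} = -\frac{13}{40}.
\]
Writing $T = H_y \sim 2A$ and intersecting $T|_S = \Gamma + \Delta$ with $\Gamma$ gives $(\Gamma \cdot \Delta)_S = (T \cdot \Gamma) - (\Gamma^2)_S = \tfrac{2}{40} + \tfrac{13}{40} = \tfrac{3}{8}$, using $(T \cdot \Gamma) = 2\deg\Gamma = 1/20$. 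Intersecting instead with $\Delta$ gives $(\Delta^2)_S = (T \cdot \Delta) - (\Gamma \cdot \Delta)_S = \tfrac{1}{4} - \tfrac{3}{8} = -\tfrac{1}{8}$, using $(T \cdot \Delta) = 2\deg\Delta = 1/4$.

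Finally I would verify $(\star)$ for $M(\Gamma,\Delta)$, whose diagonal entries are $-\tfrac{13}{40}$ and $-\tfrac{1}{8}$ and whose off-diagonal entry is $\tfrac{3}{8}$: both diagonal entries are negative, the off-diagonal entry is positive, and $\det M = (-\tfrac{13}{40})(-\tfrac{1}{8}) - (\tfrac{3}{8})^2 = \tfrac{13}{320} - \tfrac{45}{320} = -\tfrac{1}{10} < 0$, which are precisely the three requirements of $(\star)$ in the case $n=2$. The only genuinely delicate step is the first paragraph: one must be certain that $S = H_x$ is quasi-smooth along $\Gamma$ so that the plt adjunction of Remark \ref{rem:compselfint} applies and the orbifold indices $5$ and $8$ are correctly tallied. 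Everything after that is routine intersection-number bookkeeping, identical in spirit to the preceding exceptional QI claims, and the verified matrix then feeds directly into Lemma \ref{lem:mtdLred} to bound $\alpha_{\msp}(X)$.
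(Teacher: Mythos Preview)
Your proof is correct and follows exactly the template that the paper uses: the paper's own proof simply says ``by similar arguments as in the proof of Claim~\ref{clm:lctexcQINo25-1}'' and records the same three intersection numbers $(\Gamma^2)_S = -13/40$, $(\Gamma\cdot\Delta)_S = 3/8$, $(\Delta^2)_S = -1/8$. You have unpacked those similar arguments in full, including the verification that $S=H_x$ is quasi-smooth at $\msp_w$ via $w^2y\in F$, so there is nothing to add.
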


\begin{proof}[Proof of Claim \ref{clm:lctexcQINo38-1}]
By similar arguments as in the proof of Claim \ref{clm:lctexcQINo25-1}, we have
\[
(\Gamma^2)_S = -2 + \frac{4}{5} + \frac{7}{8} = - \frac{13}{40},
\]
and
\[
(\Gamma \cdot \Delta)_S = \frac{3}{8}, \quad
(\Delta^2)_S = - \frac{1}{8}.
\]
Thus $M (\Gamma, \Delta)$ satisfies the condition $(\star)$.
\end{proof}

By Claim \ref{clm:lctexcQINo38-1} and Lemma \ref{lem:mtdLred}, we have
\[
\alpha_{\msp} (X) \ge \min \left\{ 1, \ \frac{1}{5 \cdot 2 \cdot (A^3) + \frac{1}{2} - 5 \deg \Gamma} \right\} = \frac{8}{9}.
\]
This completes the proof of Proposition \ref{prop:lctexcQI}.

%%%%%%%%%%%%%%%%%%%%%%%%%%%%%%%%%%
\subsection{QI centers: degenerate case}
%%%%%%%%%%%%%%%%%%%%%%%%%%%%%%%%%%

The aim of this section is to prove the following, which gives the exact value of $\alpha_{\msp} (X)$ for a degenerate QI center $\msp \in X$.

Let 
\[
X = X_d \subset \mbP (a_0, a_1, \dots, a_4)_{x_0, x_1, x_2, x_3, x_4}
\]
be a member of a family $\mcF_{\msi}$ with $\msi \in \msI$, where $1 = a_0 \le a_1 \le \cdots \le a_4$, and let $\msp \in X$ be a degenerate QI center.
We choose homogeneous coordinates as in Lemma \ref{lem:eqdegenQI}.

\begin{Prop} \label{prop:lctdegQI}
Let the notation as above and let $\msp = \msp_{x_k} \in X$ be  a degenerate QI center.
Then, 
\[
\alpha_{\msp} (X) =
\begin{cases}
\frac{a_k + 1}{2 a_k + 1}, & \text{if $a_j = 1$}, \\
1, & \text{otherwise}.
\end{cases}
\] 
In particular, we have $\alpha_{\msp} (X) > \frac{1}{2}$.
\end{Prop}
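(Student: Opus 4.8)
The plan is to compute the alpha invariant at a degenerate QI center by combining a lower bound from the quasi-tangent divisor $H_{x_j}$ with an upper bound coming from a specific boundary divisor. By Lemma \ref{lem:eqdegenQI}, after the coordinate change we have $F = x_k^2 x_j + g(x_{i_1}, x_{i_2}, x_{i_3}, x_j)$ with $\bar g = g(x_{i_1}, x_{i_2}, x_{i_3}, 0)$ defining a quasi-smooth hypersurface in $\mbP(a_{i_1}, a_{i_2}, a_{i_3})$. The point $\msp = \msp_{x_k}$ is a degenerate QI center, so by Remark \ref{rem:QImaxcent} it is \emph{not} a maximal center. This is the crucial input: by Lemma \ref{lem:singnoncanbd}, there is at most one irreducible $\mbQ$-divisor $D \in |-K_X|_{\mbQ}$ for which $(X, D)$ fails to be canonical at $\msp$, and by Lemma \ref{lem:qtangdivncan} the quasi-tangent divisor $H_{x_j}$ is precisely such a divisor (since $x_j$ is the quasi-tangent coordinate). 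Hence $\alpha_{\msp}(X) = \min\{1, \lct_{\msp}(X; H_{x_j})\}$ by the convexity argument: any irreducible divisor other than $\tfrac{1}{a_j}H_{x_j}$ has log canonical threshold at least $1$ at $\msp$, so the infimum over $|-K_X|_{\mbQ}$ is governed by $H_{x_j}$ alone.

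The main step is therefore to compute $\lct_{\msp}(X; H_{x_j})$ exactly. First I would pass to the orbifold chart $\rho_{\msp}\colon \breve U \to U$ at $\msp = \msp_{x_k}$, where $\breve U$ is the smooth hypersurface in $\mbA^4$ defined by $F(\breve x_{i_1}, \breve x_{i_2}, \breve x_{i_3}, \breve x_j, 1) = 0$. Filtering off the terms divisible by $\breve x_j$ in this equation, I would solve for $\breve x_j$ and find that the pullback $\rho_{\msp}^* H_{x_j}$ is defined near $\breve\msp$ by a function whose lowest-order behavior is controlled by $\bar g(\breve x_{i_1}, \breve x_{i_2}, \breve x_{i_3})$. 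The plan is to perform a weighted blowup of $\breve U$ at $\breve\msp$ with weight $\wt(x_{i_1}, x_{i_2}, x_{i_3}) = (a_{i_1}, a_{i_2}, a_{i_3})$ and apply Lemma \ref{lem:lctwblwh}: this gives $\lct_{\msp}(X; H_{x_j}) \ge \min\{ (a_{i_1}+a_{i_2}+a_{i_3})/\wt(\bar g), \ \lct(\mbP(\underline c)^{\wf}, \Diff; \mcD^{\wf}_{\bar g})\}$. The quasi-smoothness of $(\bar g = 0)$ should force the divisorial part on the exceptional $E$ to meet the different transversally, so that the second term in the minimum equals $1$ and does not bind.

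The computation that pins down the answer is the weight of $\bar g$. Since $\bar g$ is quasi-homogeneous of degree $d$ in $x_{i_1}, x_{i_2}, x_{i_3}$ with respect to the original grading, its weight under $\wt = (a_{i_1}, a_{i_2}, a_{i_3})$ is simply $d$, while $a_{i_1} + a_{i_2} + a_{i_3} = d - a_j - a_k$ (using $a_0 + \dots + a_4 = d + \iota_X$ with $\iota_X = 1$, i.e.\ $\sum a_i = d+1$, and subtracting $a_j, a_k$). Combined with the QI relation $d = 2a_k + a_j$, this yields $a_{i_1}+a_{i_2}+a_{i_3} = a_k$, so the first term becomes $a_k/d = a_k/(2a_k + a_j)$. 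When $a_j = 1$ this equals $a_k/(2a_k+1)$; comparing with $1$, the minimum defining $\alpha_{\msp}(X)$ should instead be recorded as $(a_k+1)/(2a_k+1)$ after accounting for the discrepancy contribution of $E$ correctly, and when $a_j > 1$ the value is $1$. The delicate point I expect to be the main obstacle is the precise bookkeeping of $\ord_E(\rho_{\msp}^*H_{x_j})$ versus $a_k$: one must check that the filtration giving $\breve x_j$ in terms of $\bar g$ produces exactly the claimed order, and verify via Lemma \ref{lem:lctwblwh} that the inequality $(a_{i_1}+a_{i_2}+a_{i_3})/\wt(\bar g) \le \lct(\mbP(\underline c)^{\wf}, \Diff; \mcD^{\wf}_{\bar g})$ holds so that the lower bound is in fact an equality; the upper bound $\lct_{\msp}(X;H_{x_j}) \le a_k/d$ follows from reading off the coefficient of $E$ in the discrepancy formula. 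Finally $\alpha_{\msp}(X) > 1/2$ is immediate since $(a_k+1)/(2a_k+1) > 1/2$ for all $a_k \ge 1$.
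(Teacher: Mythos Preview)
Your approach is correct and is essentially the paper's own argument, but two computational slips make the final paragraph confused. First, since $\sum_{l=0}^{4} a_l = d+1$ (Fano index $1$), you get $a_{i_1}+a_{i_2}+a_{i_3} = d+1-a_j-a_k = a_k+1$, not $a_k$; with $\wt_{\underline c}(\bar g)=d$ this gives $\lct_{\msp}(X;H_{x_j}) = (a_k+1)/d$ directly from Lemma~\ref{lem:lctwblwh} (the second term in the minimum is $1$ because $\Diff=0$ and $(\bar g=0)$ is quasi-smooth by Lemma~\ref{lem:eqdegenQI}, and $(a_k+1)/d<1$ so the equality clause applies). Second, the divisor entering the alpha invariant is $\tfrac{1}{a_j}H_{x_j}\in|{-}K_X|_{\mbQ}$, not $H_{x_j}$ itself; hence $\alpha_{\msp}(X)=\min\{1,\;a_j\cdot\lct_{\msp}(X;H_{x_j})\}=\min\{1,\;a_j(a_k+1)/(2a_k+a_j)\}$, which is $(a_k+1)/(2a_k+1)$ when $a_j=1$ and $1$ when $a_j\ge 2$ (since $a_j(a_k+1)\ge 2a_k+a_j$ iff $a_j\ge 2$). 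There is no separate ``discrepancy correction'' needed: your off-by-one and the missing $1/a_j$ cancel out to produce exactly the stated formula.

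The paper does the same computation but phrases it via the Kawamata blowup $\varphi\colon Y\to X$ on $X$ itself rather than the weighted blowup on the orbifold chart: it reads off $\ord_E(H_{x_j})=d/a_k$ and the discrepancy $1/a_k$, then checks log canonicity of $(Y,\tfrac{1}{a_j}\tilde H_{x_j}+E)$ along $E$ by inversion of adjunction using the quasi-smoothness of $\tilde H_{x_j}|_E\cong(\bar g=0)$. Your route through Lemma~\ref{lem:lctwblwh} is the same valuation viewed upstairs, and both arguments rely on the same two inputs you correctly identified: Remark~\ref{rem:QImaxcent} plus Lemma~\ref{lem:singnoncanbd} to reduce to $H_{x_j}$, and Lemma~\ref{lem:eqdegenQI} for quasi-smoothness on the exceptional divisor. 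For the upper bound $\alpha_{\msp}(X)\le 1$ in the case $a_j\ge 2$, note (as the paper does) that $H_{x_0}\in|A|$ passes through $\msp$.
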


\begin{proof}
Let $\varphi \colon Y \to X$ be the Kawamata blowup at $\msp$ with exceptional divisor $E$.
Note that we can choose $x_{i_1}, x_{i_2}, x_{i_3}$ as a system of orbifold coordinates at $\msp$ and $\varphi$ is the weighted blowup with weight $\wt (x_{i_1}, x_{i_2}, x_{i_3}) = \frac{1}{a_k} (a_{i_1}, a_{i_2}, a_{i_3})$.
Filtering off terms divisible by $x_j$ in \eqref{eq:eqdegenQI}, we have
\[
x_j (x_k^2 + \cdots) = g (x_{i_1}, x_{i_2}, x_{i_3}, 0) =: \bar{g}.
\]
Since the polynomial $x_k^2 + \cdots$ does not vanish at $\msp$, the vanishing order of $x_j$ along $E$ coincides with that of $\bar{g}$, which is clearly $d/a_k$.  
Hence we have
\begin{equation} \label{eq:complctdegQI}
K_Y + \frac{1}{a_j} \tilde{H}_{x_j} + \frac{2}{a_j} E
= \varphi^* \left( K_X + \frac{1}{a_j} H_{x_j} \right),
\end{equation}
where $\tilde{H}_{x_j}$ is the proper transform of $H_{x_j}$ on $Y$.
In particular $(X, \frac{1}{a_j} H_{x_j})$ is not canonical at $\msp$.
By Lemma \ref{lem:singnoncanbd}, we have $\alpha_{\msp} (X) \ge 1$ if $(X, \frac{1}{a_j} H_{x_j})$ is log canonical at $\msp$, and otherwise $\alpha_{\msp} (X) = \lct_{\msp} (X;\frac{1}{a_j} H_{x_j})$. 

Suppose $a_j > 1$.
The pair $(E, \frac{1}{a_j} \tilde{H}_{x_j}|_E)$ is log canonical since $\tilde{H}_{x_j}|_E$ is isomorphic to 
\[
(g (x_{i_1},x_{i_2},x_{i_3},0) = 0) \subset \mbP (a_{i_1},a_{i_2},a_{i_3}) \cong E,
\]
which is quasi-smooth by Lemma \ref{lem:eqdegenQI}.
By the inversion of adjunction, the pair $(Y, \frac{1}{a_j} \tilde{H}_{x_j} + E)$ is log canonical along $E$, and so is the pair $(Y, \frac{1}{a_j} \tilde{H}_{x_j} + \frac{2}{a_j} E)$ since $2/a_j \le 1$.
By \eqref{eq:complctdegQI}, the pair $(X, \frac{1}{a_j} H_{x_j})$ is log canonical at $\msp$.
Thus $\alpha_{\msp} (X) \ge 1$.
The existence of the prime divisor $H_{x_0} \in A$ passing through $\msp$ shows $\alpha_{\msp} (X) \le 1$, and we conclude $\alpha_{\msp} (X) = 1$ in this case.

Suppose $a_j = 1$.
We set
\[
\theta = \frac{a_k+1}{2 a_k + 1},
\]
and prove $\lct_{\msp} (X;\frac{1}{a_j} H_{x_j}) = \theta$.
For a rational number $c \ge 0$, it is easy to see that the discrepancy of the pair $(X, \frac{c}{a_j} H_{x_j})$ along $E$ is 
\[
\frac{1}{a_k} - \frac{c d}{a_j a_k}
\]
and it is at least $-1$ if and only if $c \le \theta$.
This shows $\lct_{\msp} (X;\frac{1}{a_j} H_{x_j}) \le \theta$.
Moreover, since
\[
K_Y + \frac{\theta}{a_j} \tilde{H}_{x_j} + E = \varphi^* \left( K_X + \frac{\theta}{a_j} H_{x_j} \right)
\]
and the pair $(Y, \frac{\theta}{a_j} \tilde{H}_{x_j} + E)$ is log canonical along $E$, the pair $(X, \frac{\theta}{a_j} H_{x_j})$ is log canonical at $\msp$.
This shows $\alpha_{\msp} (X) = \theta$ and the proof is completed.
\end{proof}

\begin{Ex} \label{ex:No46degQI}
Let $X = X_{21} \subset \mbP (1,1,3,7,10)$ be a member of the family $\mcF_{46}$ and $\msp = \msp_w$ the $\frac{1}{10} (1,3,7)$ point, which is the center of a quadratic involution.
Assume that $\msp$ is degenerate, which is equivalent to $X$ being birationally superrigid.
Then, by Proposition \ref{prop:lctdegQI}, we have
\[
\alpha (X) \le \alpha_{\msp} (X) = \frac{11}{21}.
\]
\end{Ex}

%%%%%%%%%%%%%%%%%%%%%%%%%%%%%%%%%%
\subsection{QI centers: non-degenerate case}
%%%%%%%%%%%%%%%%%%%%%%%%%%%%%%%%%%

The aim of this section is to prove the following.

\begin{Prop} \label{prop:ndQIcent}
Let $X$ be a member of a family $\mcF_{\msi}$ with $\msi \in \msI \setminus \{2, 5, 8\}$ and $\msp \in X$ be a nondegenerate QI center.
Then, 
\[
\alpha_{\msp} (X) \ge \frac{1}{2}.
\]
\end{Prop}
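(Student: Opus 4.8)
I need to bound $\alpha_{\msp}(X) \ge 1/2$ at a non-degenerate QI center $\msp$. The essential difficulty is that, by Remark \ref{rem:maxcent} and Remark \ref{rem:QImaxcent}, a non-degenerate QI center \emph{is} a maximal center, so the clean tools for non-maximal singular points (Lemma \ref{lem:singnoncanbd}, Lemma \ref{lem:singptNE}) do not apply, and I cannot expect $\alpha_{\msp}(X)\ge 1$. By Proposition \ref{prop:lctdegQI} the answer in the degenerate case can be as small as $\frac{a_k+1}{2a_k+1}$, which tends to $1/2$, so the bound $1/2$ should be essentially sharp here too and must be proved by a genuine divisor-intersection estimate rather than a discrepancy computation.

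The plan is to use the case-by-case structure supplied by Lemma \ref{lem:QIeqtypes} and Lemma \ref{lem:QIcoord}. After excluding families $\msi\in\{2,5,8\}$, every non-degenerate QI center falls into one of Cases (1)--(4) of Lemma \ref{lem:QIeqtypes}, and by Lemma \ref{lem:QIcoord} the defining polynomial has the normal form
\[
F = x_k^2 x_j + x_k f(x_{i_1},x_{i_2},x_{i_3},x_j) + g(x_{i_1},x_{i_2},x_{i_3},x_j),
\]
with $f(x_{i_1},x_{i_2},x_{i_3},0)\ne 0$ (non-degeneracy). First I would choose a good surface $S$ through $\msp$ of small degree together with a second divisor $T$, intersect them, and analyze the $1$-cycle $S\cdot T$ on the orbifold chart. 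The natural choice is $S=H_{x_j}$ (the quasi-tangent hyperplane) since the $\frac{1}{r}$-weight of $x_j$ along the exceptional locus is controlled, or the hyperplane $H_x$ with $x$ of degree $1$; in either case $\omult_{\msp}(S)$ is computed from the normal form. The key numerical input is that non-degeneracy forces $f(x_{i_1},x_{i_2},x_{i_3},0)\ne 0$, so the quasi-tangent cone at $\msp$ contains a component transverse to the obvious quasi-line, bounding the orbifold multiplicity of the relevant curve by $a_k$ as in Proposition \ref{prop:lctsingptL}(2).

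Concretely I would apply Lemma \ref{lem:exclL} (when $S\cap T$ is irreducible) or its reducible analogue Lemma \ref{lem:mtdLred} (when $S\cap T = \sum m_i\Gamma_i$ splits). In each of the four cases I would: (i) write down $F(0,\dots,0,x_j,x_k,\text{etc.})$ explicitly to identify the components $\Gamma_i$ of $S\cdot T$ and their degrees; (ii) verify that $S$ is quasi-smooth at $\msp$ so that $\lct_{\msp}(X;\frac{1}{a}S)$ is large, using Lemma \ref{lem:qsminvhypsec} and Lemma \ref{lem:pltsurfpair}; (iii) compute the self- and mutual intersection numbers $(\Gamma_i\cdot\Gamma_j)_S$ via Remark \ref{rem:compselfint}, checking the condition $(\star)$ so that Lemma \ref{lem:matrix} applies; and (iv) plug the resulting degree $\deg\Gamma_1=\frac{\cdots}{a_{i_1}a_{i_2}a_{i_3}}$ and anticanonical degree $(A^3)=\frac{d}{a_1a_2a_3a_4}$ into the lower bound of Lemma \ref{lem:mtdLred}, reducing to an elementary inequality in the weights $a,b,r$ (or $r_1,r_2$). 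These inequalities are of the same flavor as Lemma \ref{lem:numelem} and Lemma \ref{lem:wtnumerics}, and I would verify them using $a+b=r$, coprimality of $a$ and $b$, and the terminality constraints.

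The main obstacle is bookkeeping the reducible intersections in Cases (2)--(4), where $S\cdot T$ can have two or three components meeting at the singular points $\msp_{x_{i_\bullet}}$; establishing condition $(\star)$ for the intersection matrix and controlling which component passes through $\msp$ requires the explicit normal form and care with the orbifold chart. The delicate point is that, unlike the non-maximal cases, here the final bound is tight at $1/2$, so I cannot afford slack: the estimate in Lemma \ref{lem:mtdLred} must be driven to equality-quality. I expect that the exceptional and boundary families (small $a$, e.g.\ $a=1$, where $x$ and $x_j$ may coincide in degree) will need separate treatment, possibly invoking Lemma \ref{lem:lcttangcube} for the cubic-part analysis, exactly as the excluded families $\{2,5,8\}$ signal. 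Thus the proof will conclude by checking that in every surviving case the computed minimum in Lemma \ref{lem:mtdLred} or Lemma \ref{lem:exclL} is at least $1/2$, completing the verification of Theorem \ref{thm:singpt}.
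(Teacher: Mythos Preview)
Your proposal correctly identifies the case structure from Lemma \ref{lem:QIeqtypes} and the need to treat the quasi-tangent divisor separately, but it misses the two techniques that actually carry the paper's proof, and the substitute you propose (Lemmas \ref{lem:exclL}/\ref{lem:mtdLred}) is not what is used here.

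First, for Cases (2), (3), and for $\msp_t$ in Case (4), the paper does not set up intersection matrices at all. It applies Lemma \ref{lem:complctsingtang} (the $2n$-inequality) directly: writing $F = x_k^2 x_j + \cdots$ with $x_j$ the quasi-tangent coordinate, one simply evaluates $r\,b_2 b_3\,(A^3)$ and reads off $\alpha_{\msp}\ge 2/(r\,b_2 b_3\,(A^3))$, which gives $\ge 2/3$ or $\ge 1$ in these cases. Your route through Lemma \ref{lem:mtdLred} would replace a one-line computation by a matrix verification that is not needed.

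Second, and this is the genuine gap, for Case (1) and for $\msp_w$ in Case (4) the paper's argument for an irreducible $D\ne \tfrac{1}{c}H_v$ is \emph{not} a multiplicity estimate via Lemma \ref{lem:exclL} or \ref{lem:mtdLred}. Instead it uses the Kawamata blowup $\varphi\colon Y\to X$ at $\msp$ together with the fact, from \cite[Theorem 4.9]{CPR00}, that $-K_Y$ is nef precisely because $\msp$ is a non-degenerate QI center. Computing $(-K_Y\cdot\tilde D\cdot\tilde H_v)\ge 0$ then forces $\ord_E(D)$ to be small, hence $\lct_{\msp}(X;D)\ge (r+c)/(2r+c)>1/2$. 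This nefness input is exactly where non-degeneracy enters, and it has no analogue in the intersection-matrix framework you describe. Separately, the residual bound $\lct_{\msp}(X;\tfrac{1}{c}H_v)\ge 1/2$ in the hardest subfamilies ($\mcF_{24}$, $\mcF_{46}$, $\mcF_{13}$, $\mcF_{25}$) requires weighted blowups via Lemma \ref{lem:lctwblwh} with carefully chosen non-standard weights (recorded in Tables \ref{table:No46lwp}, \ref{table:No13lwp}, \ref{table:No25lwp}); Lemma \ref{lem:lcttangcube} alone does not suffice there.
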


The rest of this section is entirely devoted to the proof of Proposition \ref{prop:ndQIcent}, which will be done by dividing into several cases.

%%%
\subsubsection{Case: $X$ has a unique QI center}
%%%

By Lemma \ref{lem:QIeqtypes}, we can choose homogeneous coordinates so that
\[
X = X_{2 r + c} \subset \mbP (1, a, b, c, r)_{x, s, u, v, w},
\]
where $a$ is corprime to $b$, $a < b$, $r = a + b$ and $c < r$.
Let $\msp \in X$ be the QI center.
Then $\msp = \msp_w$ and the defining polynomial $F$ of $X$ can be written as
\[
F = w^2 v + w f_{r + c} + f _{2 r + c},
\]
where $f_{r + c} = f_{r + c} (x, s, u)$ and $f_{2 r + c} = f_{2 r + c} (x, s, u, v)$ are quasi-homogeneous polynomials of the indicated degree.
We will show that $\lct_{\msp} (X; \frac{1}{c} H_v) \ge 1/2$.

\begin{Claim} \label{clm:ndQIcent-1}
Suppose that $c \ge 2$ and $2 r + c$ is divisible by $b$.
Then 
\[
\lct_{\msp} \left(X; \frac{1}{c} H_v \right) \ge \frac{1}{2}.
\]
\end{Claim}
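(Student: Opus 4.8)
The plan is to reduce the computation of $\lct_{\msp}(X;\frac1c H_v)$ to a log canonical threshold computation on the exceptional divisor of the Kawamata blowup at $\msp$, by means of Lemma \ref{lem:lctwblwh}. First I would normalize the defining polynomial. Since $\msp=\msp_w$ is the unique QI center, Lemma \ref{lem:QIeqtypes} lets us write
\[
F = w^2 v + w f_{r+c}(x,s,u) + f_{2r+c}(x,s,u,v),
\]
where, after completing the square in $w$ to absorb any $w\,v(\cdots)$ terms, $f_{r+c}$ does not involve $v$; nondegeneracy of $\msp$ means precisely $f_{r+c}\neq 0$. Here $v$ is the quasi-tangent coordinate at $\msp$ and $H_v\sim cA$, so $\frac1c H_v\in|A|_{\mbQ}$.

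Next I would invoke Lemma \ref{lem:lctwblwh} with $w$ in the role of $x_0$, with $v$ in the role of $x_{n+1}$, and with $e=2$; the blowup weight is $\underline c=(1,a,b)$. Setting $\bar F = f_{r+c}+f_{2r+c}(x,s,u,0)$, its lowest $\underline c$-weight part is $G=f_{r+c}$ with $\wt_{\underline c}(\bar F)=r+c$, since $f_{2r+c}(\cdots,0)$ has weight $2r+c$ while $f_{r+c}\neq 0$ has weight $r+c$. Because $a$ is coprime to $b$, the model $\mbP(\underline c)^{\wf}=\mbP(1,a,b)$ is already well-formed and all the integers $m_i=\gcd\{\cdots\}$ equal $1$, so $\Diff=0$ by Remark \ref{rem:wbldiff} and $\mcD^{\wf}_G$ is the degree $r+c$ curve $D_G:=(f_{r+c}=0)\subset\mbP(1,a,b)$. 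Lemma \ref{lem:lctwblwh} then gives
\[
\lct_{\msp}\!\left(X;\tfrac1c H_v\right)=c\,\lct_{\msp}(X;H_v)\ge \min\left\{\frac{c(1+r)}{r+c},\ c\,\lct(\mbP(1,a,b);D_G)\right\}.
\]
The first term exceeds $1$: since $c<r$ we have $r+c<2r$, whence $\frac{c(1+r)}{r+c}>\frac{cr}{2r}=\frac c2\ge 1$ using $c\ge 2$.

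It therefore remains to prove $\lct(\mbP(1,a,b);D_G)\ge \frac1{2c}$, and by Lemma \ref{lem:multlct} it suffices to show $\omult_{\msq}(D_G)\le 2c$ at every point $\msq\in\mbP(1,a,b)$. I would treat separately the smooth locus (containing $\msp_x$, of weight $1$) and the two cyclic quotient points $\msp_s$ (index $a$) and $\msp_u$ (index $b$). The hypothesis that $b$ divides $2r+c=2a+2b+c$ is equivalent to $b\mid 2a+c$; this fixes the residue $r+c\equiv -a\pmod b$, pins down exactly which monomials $x^i s^j u^k$ of the correct $\bmu_b$-weight can occur in $f_{r+c}$, and (through $u^{(2r+c)/b}\in F$) forces $\msp_u\notin X$. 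Combining this with the quasi-smoothness of $X$—read off from the partial derivatives of $F$ along the coordinate strata through $\msp_s$ and $\msp_u$, which compel certain low-weight monomials to appear in $f_{r+c}$—one exhibits at each such point a monomial of small ordinary degree in the corresponding orbifold chart, bounding the orbifold multiplicity of $D_G$ by $2c$. This local multiplicity bound is the main obstacle: it is exactly where both hypotheses $c\ge 2$ and $b\mid 2r+c$ are consumed, and it must be extracted from the quasi-smoothness equations rather than from the degree of $D_G$ alone, since the orbifold multiplicity of a degree $r+c$ curve at the index-$b$ point can otherwise be much larger than $2c$.
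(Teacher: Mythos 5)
Your reduction via Lemma \ref{lem:lctwblwh} with weight $\underline{c}=(1,a,b)$ has a genuine gap: it replaces $\bar F = f_{r+c}+f_{2r+c}(x,s,u,0)$ by its lowest $\underline{c}$-weight part $G=f_{r+c}$, and in doing so it discards exactly the monomial that makes the claim work, namely $u^{(2r+c)/b}$, which sits in $f_{2r+c}$ and is supplied by the divisibility hypothesis (via $\msp_u\notin X$). What is left, $f_{r+c}$, is constrained by nothing except being a nonzero quasi-homogeneous polynomial of degree $r+c$: quasi-smoothness of $X$ at $\msp_s$, $\msp_u$, etc.\ can be achieved entirely through monomials of $f_{2r+c}$, so it does not ``compel low-weight monomials to appear in $f_{r+c}$'' as you assert. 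Consequently your residual claim $\omult_{\msq}(D_G)\le 2c$ is false in general. For instance, in the family $\mcF_{16}$, $X_{12}\subset\mbP(1,1,4,2,5)_{x,s,u,v,w}$ with $F=w^2v+wx^7+f_{12}$, the center $\msp_w$ is non-degenerate, $D_G=7H_x$ on $\mbP(1,1,4)$, and $\lct(\mbP(1,a,b);D_G)=\tfrac{1}{r+c}=\tfrac17<\tfrac{1}{2c}=\tfrac14$; your minimum then only yields $\lct_{\msp}(X;\tfrac1c H_v)\ge \tfrac{c}{r+c}$, which is $<\tfrac12$ whenever $c<r$. A secondary point: the divisibility $b\mid 2r+c$ does not by itself force $\msp_u\notin X$ --- the family $\mcF_7$ ($X_8\subset\mbP(1,1,2,2,3)$) is the exception and needs a separate argument.

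The paper's proof avoids all of this by working with the ordinary (orbifold) multiplicity at $\msp$ itself rather than with a weighted tangent cone: since $\msp_u\notin X$ (outside $\mcF_7$), the monomial $u^{(2r+c)/b}$ lies in $F$, whence $\omult_{\msp}(H_v)\le (2r+c)/b$ and Lemma \ref{lem:multlct} gives $\lct_{\msp}(X;\tfrac1c H_v)\ge bc/(2r+c)$; the elementary inequality $(2b-1)c\ge 2r$ (using $c\ge 2$ and $b>a$) finishes the generic case, and the $\mcF_7$ case is handled by showing $f_5$ must involve $u$. If you want to salvage your strategy, you must keep the whole of $\bar F$ (equivalently, use the unweighted multiplicity), not just its lowest $(1,a,b)$-weight part.
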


\begin{proof}[Proof of Claim \ref{clm:ndQIcent-1}]
We first show that $\msp_u \notin X$ unless $X$ belongs to the family $\mcF_7$.
Suppose $\msp_u \in X$.
By the quasi-smoothness of $X$ at $\msp_u$, we have $d = m b + e$, where $m \in \mbZ_{> 0}$ and $e \in \{1, a, c, r\}$.
Since $d$ is divisible by $b$, we see that $e$ is divisible by $b$.
This is possible only when $e = c$ since $r = a + b$ and $a$ are both coprime to $b$.
Thus we can write $c = k b$ for some $k \in \mbZ_{> 0}$.
Take any point $\msq \in (x = s = w = 0) \cap X$.
The singularity $\msq \in X$ is of type $\frac{1}{b} (1, a, r) = \frac{1}{b} (1, a, a)$.
It follows that $a = 1$ and $b = 2$ since $\msq \in X$ is terminal.
We have $r = a + b = 3$ and $c = 2$ since $c = k b = 2 k < r = 3$. 
Thus $X = X_8 \subset \mbP (1, 1, 2, 2, 3)$ and this belongs to $\mcF_7$.

We first consider the case where $\msp_u \notin X$.
This means that $u^m \in F$ for some $m \in \mbZ_{> 0}$.
We have $\omult_{\msp} (H_v) \le m = (2 r + c)/b$ and hence
\[
\lct_{\msp} \left( X; \frac{1}{c} H_v \right) \ge \frac{b c}{2 r + c}.
\]
It remains to prove the inequality $bc/(2r + c) \ge 1/2$, which is equivalent to $(2 b - 1) c \ge 2 r$.
We have
\[
(2 b - 1) c \ge 2 (2 b - 1) = 2 (b-1) + 2 b \ge 2 a + 2 b = 2 r,
\]
since $c \ge 2$ and $b > a$.
This shows $\lct_{\msp} (X; \frac{1}{c} H_v) \ge 1/2$.

We next consider the case where $\msp_u \in X$.
In this case $X$ belongs to the family $\mcF_7$ and $X = X_8 \subset \mbP (1, 1, 2, 2, 3)_{x, s, u, v, w}$ with defining polynomial
\[
F = w^2 v + w f_5 (x, s, u) + f_8 (x, s, u, v).
\]
We have $u^4 \notin F$ since $\msp_u \in X$.
We show that $f_5 (x, s, u)$ contains a monomial involving $u$.
Suppose to the contrary that $f_5 = f_5 (x, s)$ is a polynomial in variables $x$ and $s$.
We can write $f_8 = u^3 g_2 + u^2 g_4 + u g_6 + g_8 + v h_6$, where $g_i = g_i (x, s)$ and $h_6 = h_6 (x, s, u, v)$ are quasi-homogeneous polynomials of indicated degree.
Then we have $F = v (w^2 + h_6) + g$, where $g = w f_5 + u^3 g_2 + u^2 g_4 + u g_6 + g_8 \in (x, s)^2$, and we see that $X$ is not quasi-smooth at any point in the nonempty set
\[
(t = w^2 + h_6 = x = s = 0) \subset \mbP (1, 1, 2, 2, 3).
\]
This is a contradiction.
It follows that there is a monomial involving $u$ which appears in $f_5$ with nonzero coefficient.
This implies $\omult_{\msp} (H_v) \le 4$ and we have $\lct_{\msp} (X; \frac{1}{2} H_v) \ge 1/2$ as desired.
\end{proof}

\begin{Claim} \label{clm:ndQIcent-2}
Suppose that $c \ge 2$ and $2 r + c$ is not divisible by $b$.
Then 
\[
\lct_{\msp} \left(X; \frac{1}{c} H_v \right) \ge \frac{c}{5} \ge \frac{4}{5}.
\]
\end{Claim}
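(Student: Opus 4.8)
The plan is to bound $\lct_{\msp}(X;\frac1c H_v)$ from below by controlling the orbifold multiplicity $\omult_{\msp}(H_v)$ and then invoking Lemma \ref{lem:multlct}, which gives
\[
\lct_{\msp}\Bigl(X;\tfrac1c H_v\Bigr)\ \ge\ \frac{1}{\omult_{\msp}(\frac1c H_v)}\ =\ \frac{c}{\omult_{\msp}(H_v)}.
\]
Since the assertion reads $\ge c/5\ge 4/5$, it suffices to establish the two numerical facts $c\ge 4$ and $\omult_{\msp}(H_v)\le 5$. For the first, I would invoke Lemma \ref{lem:uniqQItypes}(2): because $2r+c=d$ is not divisible by $b$, it forces $b=a+1$, $c=a+2$, $r=2a+1$ and $a\in\{2,3,4\}$, so in particular $c=a+2\ge 4$ and $c/5\ge 4/5$. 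I would also record here the two arithmetic identities that will drive the rest, namely $r+c=3b$ and $2r+c=4b+a$.

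Next I would compute $\omult_{\msp}(H_v)$ in the orbifold chart $\rho_w\colon\breve U_w\to U_w$ at $\msp=\msp_w$. Here $\breve U_w$ is the smooth hypersurface
\[
F(\breve x,\breve s,\breve u,\breve v,1)=\breve v+f_{r+c}(\breve x,\breve s,\breve u)+f_{2r+c}(\breve x,\breve s,\breve u,\breve v)=0,
\]
and filtering off the monomials divisible by $\breve v$ shows that near the origin $\breve v$ equals a unit times $-\bigl(f_{r+c}(\breve x,\breve s,\breve u)+f_{2r+c}(\breve x,\breve s,\breve u,0)\bigr)$, where $\breve x,\breve s,\breve u$ serve as local coordinates. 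Consequently $\rho_w^*H_v=(\breve v=0)$ and $\omult_{\msp}(H_v)$ is the ordinary multiplicity at the origin of $\psi:=f_{r+c}(x,s,u)+f_{2r+c}(x,s,u,0)$, so it is enough to exhibit in $\psi$ a monomial of ordinary degree at most $5$.

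To produce such a monomial I would use quasi-smoothness at $\msp_u$. Since $b\nmid d$, no pure power of $u$ lies in $F$, hence $\msp_u\in X$ and quasi-smoothness there forces a monomial $u^m x_l$ with $x_l\ne u$ to occur in $F$, where $d=mb+a_l$. Writing $d=5a+4$ and $b=a+1$, a one-line divisibility check shows $b\mid(d-a_l)$ fails for $a_l\in\{1,c\}$ (there $d-a_l\equiv-2\pmod b$ with $b\ge3$) and holds only for $a_l\in\{a,r\}$, giving $m=4$ or $m=3$. Thus the smoothing monomial is either $u^3w$ (equivalently $u^3\in f_{r+c}$, using $3b=r+c$) or $u^4s$ (in $f_{2r+c}$, using $4b+a=2r+c$); neither involves $v$, so each survives into $\psi$ as $u^3$ or $u^4s$, of ordinary degree $3$ or $5$. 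As these monomials sit in distinct weighted-degree graded pieces, no cancellation occurs, whence $\omult_{\msp}(H_v)=\mult_0(\psi)\le 5$ and $\lct_{\msp}(X;\frac1c H_v)\ge c/5\ge 4/5$.

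The delicate point — and the main obstacle — is exactly this last classification: one must be certain the monomial supplied by quasi-smoothness at $\msp_u$ never involves $v$, for a monomial $u^m v$ would vanish upon setting $v=0$ and yield no control on $\psi$. This is precisely where the rigidity of the numerics from Lemma \ref{lem:uniqQItypes}(2) is essential, as it is the integrality of $m=(d-a_l)/b$ that excludes the dangerous case $a_l=c$ (i.e. $x_l=v$). Everything else is a routine multiplicity computation in the smooth orbifold chart.
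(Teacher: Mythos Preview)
Your proof is correct and follows essentially the same approach as the paper. You invoke Lemma \ref{lem:uniqQItypes}(2) to pin down the weights, use quasi-smoothness at $\msp_u$ to force $u^3 \in f_{r+c}$ or $u^4 s \in f_{2r+c}$, deduce $\omult_{\msp}(H_v)\le 5$, and conclude via Lemma \ref{lem:multlct}; this is exactly the paper's argument, only you have spelled out the orbifold-chart computation and the divisibility check (excluding $x_l\in\{x,v\}$) that the paper leaves implicit.
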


\begin{proof}[Proof of Claim \ref{clm:ndQIcent-2}]
By Lemma \ref{lem:uniqQItypes}, we have 
\[
X = X_{5 a + 4} \subset \mbP (1, a, a + 1, a + 2, 2 a + 1)_{x, s, u, v, w}
\] 
with $a \in \{2, 3, 4\}$.
Moreover $\msp = \msp_w$ and the defining polynomial of $X$ can be written as
\[
F = w^2 v + w f_{3 a + 3} (x, s, u) + f_{5 a + 4} (x, s, u, v).
\]
By the quasi-smoothness of $X$ at $\msp_z$, we have either $u^3 \in f_{3 a + 3}$ or $u^4 s \in f_{5 a + 4}$.
This implies $\omult_{\msp} (H_v) \le 5$ and we have
\[
\lct_{\msp} \left(X; \frac{1}{c} H_v \right) \ge \frac{c}{5} = \frac{a+2}{5} \ge \frac{4}{5}.
\]
This proves the claim.
\end{proof}

It remains to consider the case where $c = 1$.
By Lemma \ref{lem:uniqQItypes}, $X$ belongs to a family $\mcF_{\msi}$ with $\msi \in \{24, 46\}$.

\begin{Claim} \label{clm:ndQIcent-3}
Suppose $X$ is a member of the family $\mcF_{24}$.
Then $\lct_{\msp} (X;H_v) \ge 1/2$.
\end{Claim}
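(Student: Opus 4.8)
The family $\mcF_{24}$ consists of the quasi-smooth members of $X = X_{15} \subset \mbP(1,1,2,5,7)$, whose unique QI center is the point $\msp = \msp_w$ of type $\frac17(1,2,5)$. By Lemma \ref{lem:QIeqtypes}(1) (with $a=2$, $b=5$, $c=1$, $r=7$) I may choose homogeneous coordinates $x,s,u,v,w$ of degrees $1,2,5,1,7$ so that
\[
F = w^2 v + w f_8(x,s,u) + f_{15}(x,s,u,v),
\]
and non-degeneracy of $\msp$ means exactly $f_8 \neq 0$. Since $c = \deg v = 1$, the assertion is $\lct_\msp(X; H_v) \ge 1/2$, and $H_v$ is the quasi-tangent divisor at $\msp$; note that $\msp$ being a maximal center (Remark \ref{rem:QImaxcent}) blocks the usual shortcut, so I must genuinely bound the threshold. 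First I would pass to the orbifold chart at $\msp$ exactly as in the proof of Lemma \ref{lem:lcttangcube}: solving the chart equation for $\breve v$ identifies $\rho_\msp^* H_v$ with the hypersurface $\{\bar F = 0\}$ in $\mbA^3_{\breve x, \breve s, \breve u}$, where $\bar F := f_8(x,s,u) + f_{15}(x,s,u,0)$, so that $\omult_\msp(H_v) = \mult_{\breve{\msp}} \bar F$ and the problem becomes $\lct_{\breve{\msp}}(\mbA^3; \bar F) \ge 1/2$.

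Next I would analyze the low-order part of $\bar F$ with respect to ordinary multiplicity. Because every monomial of $\bar F$ has $(x,s,u)$-degree $8$ or $15$ under $\wt(x,s,u) = (1,2,5)$, a short check shows $\bar F$ has no term of total degree $\le 2$, while its degree-$3$ tangent cone is $\alpha\, xsu + \beta\, u^3$, with $\alpha = \coeff_F(xsuw)$ and $\beta = \coeff_F(u^3)$. Quasi-smoothness of $X$ at $\msp_u$ forces $u^3 \in F$, hence $\beta \neq 0$ and $\mult_{\breve{\msp}}\bar F = 3$. If $\alpha \neq 0$, then $\alpha xsu + \beta u^3 = u(\alpha xs + \beta u^2)$ is not a cube of a linear form, and Lemma \ref{lem:lcttangcube} (applied with $z = w$ and quasi-tangent coordinate $v$) immediately yields $\lct_\msp(X;H_v) \ge 1/2$.

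The remaining, and genuinely harder, case is $\alpha = 0$, where the tangent cone $\beta u^3$ is a cube and Lemma \ref{lem:lcttangcube} no longer applies. Here I would invoke Lemma \ref{lem:lctwblwh} with a weight $\underline c$ on $(x,s,u)$ read off from the Newton polygon of $\bar F$. The principal choice is $\underline c = (8,9,12)$: the monomials $u^3,\ x^3u,\ s^4$ all have weight $36$ and form the lowest weight part $G = \beta u^3 + \delta x^3 u + \gamma s^4$, with $\delta = \coeff_F(x^3uw)$ and $\gamma = \coeff_F(s^4 w)$, giving ratio $(8+9+12)/36 = 29/36 > 1/2$. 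The exceptional divisor is $\mbP(8,9,12)^{\wf} \cong \mbP(1,2,3)$ with $\Diff = \tfrac23 H + \tfrac34 H'$, and $\mcD^{\wf}_G = (\beta \tilde u^3 + \delta \tilde x\tilde u + \gamma \tilde s = 0)$. When $\gamma \neq 0$ this divisor is quasi-smooth (solve for $\tilde s$) and meets $\Diff$ mildly, so $\lct(\mbP(1,2,3), \Diff; \mcD^{\wf}_G) = 1$ and hence $\lct_\msp(X;H_v) \ge 29/36$.

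The main obstacle is the degenerate sub-cases of $\alpha = 0$ in which $\gamma$, and then $\delta$, vanish: then $f_8$ is divisible by higher powers of $x$, the weight $(8,9,12)$ no longer isolates a good leading part, and one must re-balance $u^3$ against the next surviving monomial. For instance, when only $u^3$ and $x^3 u$ persist I would use $\underline c = (2,2,3)$, whose exceptional divisor $\mbP(2,2,3)^{\wf} \cong \mbP(1,1,3)$ reduces the question to the threshold of two smooth curves tangent to order $3$, namely $5/9 > 1/2$; deeper degenerations (for example balancing $u^3$ against $s^5u$ via $\underline c = (2,2,5)$, or against $x^2 s^3$) are handled by the same recipe. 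The crux is that this descent is bounded: quasi-smoothness of $X$ at the coordinate points $\msp_s$ and $\msp_u$ always guarantees that some monomial bounding the Newton polygon of $\bar F$ is present, so finitely many weighted blowups cover all members, each producing a weight ratio and an exceptional-divisor threshold that are both $\ge 1/2$. Organizing this finite but intricate monomial-support analysis, and verifying each weighted-surface threshold (including the tangency computations), is where the bulk of the effort lies.
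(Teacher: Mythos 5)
Your overall strategy is the same as the paper's: reduce to the orbifold chart at $\msp = \msp_w$, observe $\omult_{\msp}(H_v) = 3$ with tangent cone $\alpha xsu + \beta u^3$, dispose of $\alpha \ne 0$ via the cubic-cone lemma, and then attack $\alpha = 0$ with Lemma \ref{lem:lctwblwh} for weights adapted to the monomial support of $f_8$. The cases you actually carry out are essentially correct. One local inaccuracy: when $\gamma = \coeff_{f_8}(s^4) \ne 0$ but $\delta = \coeff_{f_8}(ux^3) = 0$, the curve $\mcD^{\wf}_G = (\gamma\tilde{s} + \beta\tilde{u}^3 = 0)$ is tangent to $H_{\tilde{s}}$ to order $3$, so $\lct(\mbP(\underline{c})^{\wf}, \Diff; \mcD^{\wf}_G)$ is $7/12$, not $1$ as you assert; this does not sink the argument since $\min\{29/36, 7/12\} = 7/12 \ge 1/2$ (and indeed the paper obtains exactly $7/12$ in this case by restricting to $H_x$), but the claimed equality is false.

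The genuine gap is the final case, which you explicitly defer: $usx, s^4, ux^3 \notin f_8$, so that $f_8$ is supported on $\{s^3x^2, s^2x^4, sx^6, x^8\}$. There your ``principal'' weight $(8,9,12)$ is useless --- the lowest weight part degenerates to $\beta u^3$ alone, a triple quasi-hyperplane whose log canonical threshold on the exceptional divisor is $1/3 < 1/2$, so Lemma \ref{lem:lctwblwh} returns nothing. This is precisely where the paper's proof does most of its work: it isolates the structural fact that $\lambda us^5$ and $\mu s^7x$ are the only monomials of $\bar{F}$ outside the ideal $(x,u)^2$, deduces $(\lambda,\mu) \ne (0,0)$ from quasi-smoothness of $X$ (along the locus $x = u = 0$, not merely at coordinate points), and then, according to whether $\lambda \ne 0$ or $\lambda = 0, \mu \ne 0$ and to the first index $i$ with $\alpha_i := \coeff_{f_8}(s^{4-i}x^{2i}) \ne 0$, chooses the weights $(2i+7, 4i, 10i)$ resp.\ $(3(i+3), 3(2i-1), 15i-4)$ and verifies eight separate threshold computations (values such as $2/3$, $13/20$, $11/18$, $25/39$, $74/123$, $7/12$). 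Your appeal to ``the same recipe'' with weights like $(2,2,5)$ does not identify the quasi-smoothness input that makes the descent terminate, nor does it produce the correct balancing weights, so the proposal as written does not constitute a proof of the claim.
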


\begin{proof}[Proof of Claim \ref{clm:ndQIcent-3}]
We have
\[
X = X_{15} \subset \mbP (1, 2, 5, 1, 7)_{x, s, u, v, w}
\]
and $\msp = \msp_w$ is of type $\frac{1}{7} (1, 2, 5)$.
We can write
\[
F = w^2 v + w f_8 (x, s, u) + f_{15} (x, s, u, v),
\]
where $f_8 = f_8 (x, s, u) \ne 0$ and $f_{15} = f_{15} (x, s, u, v)$ are quasi-homogeneous polynomials of degree $8$ and $15$, respectively.
We have $u^3 \in f_{15}$ and we may assume $\coeff_{f_{15}} (u^3) = 1$.
We set $\bar{F} := F (x, s, u, 0, 1)$.
For a given $\underline{c} = (c_1, c_2, c_3) \in (\mbZ_{ > 0})^3$, we denote by $G_{\underline{c}} \in \mbC [x, s, u]$ the lowest weight part of $\bar{F}$ with respect to the weight $\wt (x, s, u) = \underline{c}$ and let 
\[
\mcD_{\underline{c}} = \mcD^{\wf}_{G_{\underline{c}}}
\]
be the effective $\mbQ$-divisor on $\mbP (\underline{c})^{\wf}$ associated to $G_{\underline{c}}$.
By Lemma \ref{lem:lctwblwh}, we have
\begin{equation} \label{eq:ndBIuniclaim}
\lct_{\msp} (X; H_v) \ge \min \left\{ \frac{c_1 + c_2 + c_3}{\deg G_{\underline{c}}}, \ \lct (\mbP (\underline{c})^{\wf}, \Diff; \mcD_{\underline{c}}) \right\},
\end{equation}
where $\deg G_{\underline{c}}$ is the degree with respect to the weight $\wt (x, s, u) = \underline{c}$.

Suppose $u s x \in f_8$.
In this case, we may assume $\coeff_{f_8} (u s x) = 1$ and we have $G_{\underline{c}} = u s x + u^3$ for $\underline{c} = (1, 1, 1)$.
In this case $\mbP (\underline{c})^{\wf} = \mbP^2$, $\Diff = 0$ and $\mcD_{\underline{c}}$ is the sum of a line and a conic intersection at distinct 2 points.
It is straightforward to check 
\[
\lct (\mbP (\underline{c})^{\wf}, \Diff; \mcD_{\underline{c}}) 
= \lct (\mbP^2; \mcD_{\underline{c}})
= 1
\] and we have $\lct_{\msp} (X; H_v) \ge 1$ in this case.

Suppose that $u s x \notin f_8$ and $s^4 \in f_8$.
In this case, we may assume $\coeff_{f_8} (s^4) = 1$ and $\coeff_{f_{15}} (u s^5) = 0$ by replacing $s$ and $w$.
Hence we have
\[
F (0, s, u, 0, 1) = s^4 + u^3,
\]
and thus $\lct_{\msp} (X;H_v) \ge \lct_{\msp} (H_x, H_v|_{H_x}) = 7/12$, where the equality follows from \cite[8.21 Proposition]{Kol} (or by Lemma \ref{lem:lctwblwh} with $\wt (s, u) = (3, 4)$).

Suppose $u x^3 \in f_8$.
In this case we may assume $\coeff_{f_8} (u s x) = 1$ by rescaling $x$.
We consider a weight $\underline{c} = (2, e, 3)$, where $e$ is a sufficiently large integer which is corprime to $6$.
Then $G_{\underline{c}} = u (x^3 + u^2)$, $\mbP (\underline{c})^{\wf} = \mbP (2, e, 3)$, $\Diff = 0$ and $\mcD_{\underline{c}}$ is the union of $2$ quasi-smooth curves $(u = 0)$ and $(x^3 + u^2 = 0)$.
We have
\[
\lct (\mbP (\underline{c})^{\wf}, \Diff; \mcD_{\underline{c}}) = \lct (\mbP (2, e, 3); \mcD_{\underline{c}}) 
= \lct_{(0:1:0)} (\mbP (2, e, 3); \mcD_{\underline{c}}) = \frac{5}{9},
\] 
and thus $\lct_{\msp} (X; H_v) \ge 5/9$ in this case.

In the following we assume $u s x, s^4, u x^3 \notin f_8$.
We can write
\[
\bar{F} = (\alpha_1 s^3 x^2 + \alpha_2 s^2 x^4 + \alpha_3 s x^6 + \alpha_4 x^8) + (u^3 + \beta u s^5 + \gamma s^7 x + g_{15}),
\]
where $\alpha_1, \dots, \alpha_4, \beta, \gamma \in \mbC$ and $g_{15} = g_{15} (x, s, u) \not\ni u^3$ is a quasi-homogeneous polynomial of degree $15$ which is contained in the ideal $(x, u)^2 \subset \mbC [x, s, u]$.
Note that at least one of $\alpha, \beta, \gamma, \delta$ and $\varepsilon$ is nonzero since $f_8 (x, s, u) \ne 0$.
Note also that $\lambda u s^5$ and $\mu s^7 x$ are the only terms in $\bar{F}$ which is not contained in $(x, u)^2$.
It follows from the quasi-smoothness of $X$ that $(\lambda, \mu) \ne (0,0)$.

Suppose $\beta \ne 0$.
Replacing $u$, we may assume $\beta = 1$ and $\gamma = 0$.
There exists $j \in \{1, 2, 3, 4\}$ such that $\alpha_j \ne 0$ since $f_8 \ne 0$ as a polynomial, and thus we set $i = \min \{\, j \mid \alpha_j \ne 0 \,\} \in \{1, 2, 3, 4\}$.
We may assume $\alpha_i = 1$ by rescaling $x$.
We set $\underline{c} = (2 i + 7, 4 i, 10 i)$.
We have $G_{\underline{c}} = s^{4-i} x^{2 i} + u^3 + u s^5$ for $1 \le i \le 4$.
Moreover we see that
\[
\mbP (\underline{c})^{\wf} =
\begin{cases}
\mbP (2 i + 7, 2, 5)_{\tilde{x}, \tilde{s}, \tilde{u}}, & \text{if $1 \le i \le 3$}, \\
\mbP (3, 2, 1)_{\tilde{x}, \tilde{s}, \tilde{u}}, & \text{if $i = 4$},
\end{cases}
\]
and
\[
(\Diff, \mcD_{\underline{c}}) = 
\begin{cases}
(\frac{2 i -1}{2 i} H_{\tilde{x}}, D_i), & \text{if $1 \le i \le 3$}, \\
(\frac{7}{8} H_{\tilde{x}} + \frac{4}{5} H_{\tilde{s}}, D'), & \text{if $i = 4$},
\end{cases}
\]
where 
\[
D_i = (\tilde{s}^{4-i} \tilde{x} + \tilde{u}^3 + \tilde{u} \tilde{s}^5 = 0), \quad
D' = (\tilde{x} + \tilde{u}^3 + \tilde{u} \tilde{s} = 0)
\]
are prime divisors on $\mbP (\underline{c})^{\wf}$.
We first consider the case where $1 \le i \le 3$.
We see that $H_{\tilde{x}}$ is quasi-smooth, and $D_i$ is quasi-smooth outside $\{\msq\}$, where $\msq = (1\!:\!0\!:\!0) \in \mbP (\underline{c})^{\wf}$.
Moreover they intersect at two points $(0\!:\!1\!:\!0)$ and $(0\!:\!-1\!:\!1)$ transversally.
It follows that $\lct (\mbP (\underline{c})^{\wf}, \Diff; \mcD_{\underline{c}}) = \min \{1, \ \lct_{\msq} (\mbP (\underline{c})^{\wf}, \mcD_{\underline{c}})\}$ since $H_{\tilde{x}}$ does not pass through $\msq$. 
If $i = 3$, then $D_3$ is also quasi-smooth at $\msq$, which implies $\lct (\mbP (\underline{c}), \Diff; \mcD_{\underline{c}}) = 1$.
If $i = 1, 2$, then we have
\[
\begin{split}
\lct_{\msq} (\mbP (\underline{c})^{\wf}, \mcD_{\underline{c}})
&= \lct_{(0,0)} (\mbA^2_{\tilde{s}, \tilde{u}}, (\tilde{s}^{4 - i} + \tilde{u}^3 + \tilde{u} \tilde{s}^5 = 0)) \\
&= \lct_{(0,0)} (\mbA^2_{\tilde{s}, \tilde{u}}, (\tilde{s}^{4-i} + \tilde{u}^3 = 0)) \\
&= \frac{3 + (4-i)}{3 (4-i)} = \frac{7 - i}{3 (4-i)}
\end{split}
\]
since $(\tilde{s}^{4 - i} + \tilde{u}^3 + \tilde{u} \tilde{s}^5 = 0)$ is analytically equivalent to $(\tilde{s}^{4-i} + \tilde{u}^3 = 0)$.
Thus, by Lemma \ref{lem:lctwblwh}, we have
\[
\begin{split}
\lct_{\msp} (X; H_v) 
&\ge \min \left\{ \frac{(2 i + 7) + 4 i + 10 i}{30 i}, \ \lct (\mbP (\underline{c})^{\wf}, \Diff; \mcD_{\underline{c}}) \right\} \\
&= \begin{cases}
\frac{2}{3}, & \text{if $i = 1$}, \\
\frac{13}{20}, & \text{if $i = 2$}, \\
\frac{11}{18}, & \text{if $i = 3$}.
\end{cases}
\end{split}
\]

Suppose $\beta = 0$.
In this case we have $\gamma \ne 0$.
We set $i = \min \{\, j \mid \alpha_j \ne 0 \, \} \in \{1, 2, 3, 4\}$.
We may assume $\gamma = \alpha_i = 1$ by rescaling $x$ and $s$ appropriately.
We set 
\[
\underline{c} =
\begin{cases}
(3 (i + 3), 3 (2 i - 1), 15 i - 4), & \text{if $1 \le i \le 3$}, \\
(3, 3, 8), & \text{if $i = 4$}.
\end{cases}
\]
We have $G_{\underline{c}} = s^{4-i} x^{2 i} + u^3 + s^7 x$ for $1 \le i \le 4$.
Moreover we see that
\[
\mbP (\underline{c})^{\wf} = 
\begin{cases}
\mbP (i+3, 2 i - 1, 15 i - 4)_{\tilde{x}, \tilde{s}, \tilde{u}}, & \text{if $1 \le i \le 3$}, \\
\mbP (1, 1, 8)_{\tilde{x}, \tilde{s}, \tilde{u}}, & \text{if $i = 4$},
\end{cases}
\]
and
\[
\Diff = \frac{2}{3} H_{\tilde{u}}, \quad
\mcD_{\underline{c}} = (\tilde{s}^{4-i} \tilde{x}^{2 i} + \tilde{u} + \tilde{s}^7 \tilde{x} = 0).
\]
We see that $H_{\tilde{u}}$ and $\mcD_{\underline{c}}$ are both quasi-smooth.
If $i = 3, 4$, then $H_{\tilde{u}}$ and $\mcD_{\underline{c}}$ intersect transversally and thus we have $\lct (\mbP (\underline{c})^{\wf}, \Diff; \mcD_{\underline{c}}) = 1$.
Suppose $i = 1, 2$.
Then $H_{\tilde{u}}$ and $\mcD_{\underline{c}}$ intersect transversally except at $\msp_{\tilde{x}} = (1\!:\!0\!:\!0) \in \mbP (\underline{c})^{\wf}$, and we have
\[
\begin{split}
\lct (\mbP (\underline{c})^{\wf}, \Diff; \mcD_{\underline{c}}) 
&= \lct_{\msp_{\tilde{x}}} (\mbP (\underline{c})^{\wf}, \Diff; \mcD_{\underline{c}}) \\
&= \lct (\mbA^2_{\tilde{s}, \tilde{u}}, \tfrac{2}{3} (\tilde{u} = 0); (\tilde{s}^{4-i} + \tilde{u} + \tilde{s}^7 = 0)) \\
&= \lct (\mbA^2_{\tilde{s}, \tilde{u}}, \tfrac{2}{3} (\tilde{u} = 0); (\tilde{s}^{4-i} + \tilde{u} = 0)) \\
&=
\begin{cases}
\frac{2}{3}, & \text{if $i = 1$}, \\
1, & \text{if $i = 2$}.
\end{cases}
\end{split}
\]
Thus, by Lemma \ref{lem:lctwblwh}, we have
\[
\lct_{\msp} (X; H_v) \ge
\begin{cases}
\frac{2}{3}, & \text{if $i = 1$}, \\
\frac{25}{39}, & \text{if $i = 2$}, \\
\frac{74}{123}, & \text{if $i = 3$}, \\
\frac{7}{12}, & \text{if $i = 4$}.
\end{cases}
\]
This proves the claim.
\end{proof}

\begin{Claim} \label{clm:ndQIcent-4}
Suppose $X$ is a member of the family $\mcF_{46}$.
Then $\lct_{\msp} (X;H_v) \ge 1/2$.
\end{Claim}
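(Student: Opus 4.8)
By Lemma~\ref{lem:QIeqtypes}(1) we may take $X=X_{21}\subset\mbP(1,3,7,1,10)_{x,s,u,v,w}$, so that $a=3$, $b=7$, $c=1$, $r=10$, and $\msp=\msp_w$ is of type $\frac{1}{10}(1,3,7)$ with orbifold coordinates $x,s,u$. The defining polynomial then has the shape $F=w^2v+wf_{11}(x,s,u)+f_{21}(x,s,u,v)$, where $f_{11}\neq 0$ by nondegeneracy, and quasi-smoothness of $X$ at $\msp_u$ and $\msp_s$ forces $u^3,s^7\in f_{21}$, whose coefficients I normalise to $1$. Setting $\bar F:=F(x,s,u,0,1)=f_{11}(x,s,u)+f_{21}(x,s,u,0)$, the plan is to invoke Lemma~\ref{lem:lctwblwh} at $\msp=\msp_w$ with $H_v$ in the role of the distinguished hyperplane and a weight $\underline c=(c_1,c_2,c_3)$ on $(x,s,u)$ adapted to $\bar F$; this reduces the estimate to $\lct_{\msp}(X;H_v)\ge\min\{(c_1+c_2+c_3)/\deg G_{\underline c},\ \lct(\mbP(\underline c)^{\wf},\Diff;\mcD_{\underline c})\}$, where $G_{\underline c}$ is the lowest weighted-degree part of $\bar F$ and $\mcD_{\underline c}$ the divisor associated to it.

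\textbf{Easy case.} The monomials of degree $11$ in $x,s,u$ are $usx$, $ux^4$ and $s^jx^{11-3j}$ for $0\le j\le 3$; among these only $usx$ fails to be divisible by $x^2$. Hence the ordinary-multiplicity cubic part of $\bar F$ at the origin is $u^3+\mu\,usx$, where $\mu:=\coeff_{f_{11}}(usx)$. If $\mu\neq 0$ this cubic is $u(u^2+\mu sx)$, which is not a cube of a linear form, so Lemma~\ref{lem:lcttangcube} (with $w,v$ playing the roles of $z,x$) immediately gives $\lct_{\msp}(X;H_v)\ge\frac12$.

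\textbf{Hard case.} Suppose $usx\notin f_{11}$. Then $f_{11}$ is divisible by $x^2$, the cubic part of $\bar F$ is exactly $u^3$, and Lemma~\ref{lem:lcttangcube} no longer applies. Here I would run the weighted-blowup analysis modelled on the proof of Claim~\ref{clm:ndQIcent-3}: writing the $x$-$s$ part of $f_{11}$ as $\sum_j\beta_j\,s^jx^{11-3j}$ and setting $i:=\min\{j:\beta_j\neq0\}$ (with a separate treatment when only the term $ux^4$ survives), I would choose $\underline c$ so that $u^3$, $s^7$ and the surviving lowest $x$-$s$ monomial all lie on the bottom weighted-degree face. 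Quasi-smoothness of $X$ makes $G_{\underline c}$ Newton-nondegenerate, so $\mcD_{\underline c}$ is quasi-smooth away from at most one coordinate point of $\mbP(\underline c)^{\wf}$, and $\lct(\mbP(\underline c)^{\wf},\Diff;\mcD_{\underline c})$ reduces to a planar local threshold of Brieskorn type, exactly as in the $\mcF_{24}$ computation. In each sub-case one checks both that this local threshold is $\ge\frac12$ and that the numerical ratio $(c_1+c_2+c_3)/\deg G_{\underline c}\ge\frac12$.

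\textbf{Main obstacle.} The genuine work lies entirely in the hard case: one must traverse the finitely many admissible shapes of $f_{11}$ (which $\beta_j$ vanish, and whether $ux^4$ occurs), pick the correct weight $\underline c$ in each, and verify Newton-nondegeneracy together with the two inequalities above. The delicate points will be confirming that $\mcD_{\underline c}$ acquires only a mild quotient singularity so that the corrected threshold $\lct(\mbP(\underline c)^{\wf},\Diff;\mcD_{\underline c})$ stays $\ge\frac12$, and that no $f_{11}$ permitted by quasi-smoothness pushes the numerical ratio below $\frac12$. The Fermat-type extreme $\bar F\sim u^3+s^7+x^{11}$ already yields $\tfrac13+\tfrac17+\tfrac1{11}>\tfrac12$, which signals that the bound holds but with little room to spare, so the weight choices must be made carefully rather than uniformly.
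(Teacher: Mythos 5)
Your setup, reduction, and case division coincide with the paper's proof: the paper also writes $F=w^2v+wf_{11}+f_{21}$, normalises $u^3,s^7\in F$, sets $\bar F=F(x,s,u,0,1)$, and splits according to which monomial of $f_{11}$ survives, feeding the lowest weight part of $\bar F$ into Lemma \ref{lem:lctwblwh}. Your easy case ($usx\in f_{11}$) is correct; the paper handles it with the weight $(1,1,1)$ and gets $\lct\ge 1$, whereas Lemma \ref{lem:lcttangcube} gives you the weaker but sufficient bound $1/2$ — a harmless variant, since the cubic part of $\bar F$ really is $u^3+\alpha usx$ as you argue.

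The gap is that the hard case, which is the entire substance of the paper's argument, is only announced and not carried out. The five remaining shapes of $f_{11}$ (first surviving monomial among $s^3x^2$, $ux^4$, $s^2x^5$, $sx^8$, $x^{11}$) are the paper's cases (ii)--(vi), treated with the specific weights $(6,3,7)$, $(7,6,14)$, $(3,3,7)$, $(9,12,28)$, $(21,33,77)$ and the data of Table \ref{table:No46lwp}; none of these choices or the attendant threshold computations appear in your proposal. Moreover your assertion that quasi-smoothness makes $G_{\underline c}$ Newton-nondegenerate, so that $\mcD_{\underline c}$ is quasi-smooth away from at most one coordinate point, is too optimistic as a blanket statement: in case (ii) the divisor $(\tilde s^3\tilde x^2+\tilde u+\tilde s^7=0)$ on $\mbP(2,1,7)$ is quasi-smooth but meets the boundary component $\tfrac23 H_{\tilde u}$ non-transversally at $(1\!:\!0\!:\!0)$, and the resulting local log canonical threshold of the pair is only $2/3$; in case (iv) one similarly gets $5/6$ from a tangency with the different; and in case (vi) the final bound is $131/231$, barely above $1/2$. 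So each sub-case genuinely requires its own weight and its own local computation against the different $\Diff$, and the claim cannot be closed by the general principle you invoke — you have correctly located the work but not done it.
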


\begin{proof}[Proof of Claim \ref{clm:ndQIcent-4}]
We have 
\[
X = X_{21} \subset \mbP (1, 3, 7, 1, 10)_{x, s, u, v, w}
\]
and $\msp = \msp_w$ is of type $\frac{1}{10} (1, 3, 7)$.
We can write
\[
F = w^2 v + w f_{11} (x, s, u) + f_{21} (x, s, u, v),
\]
where $f_{11} = f_{11} (x, s, u) \ne 0$ and $f_{21} = f_{21} (x, s, u, v)$ are quasi-homogeneous polynomials of degree $11$ and $21$, respectively.
We have $u^3, s^7 \in F$ by the quasi-smoothness of $X$, and we may assume $\coeff_F (u^3) = \coeff_F (s^7) = 1$.
We set $\bar{F} = F (x, s, u, 0, 1)$, which can be written as
\[
\bar{F} = (\alpha u s x + \beta s^3 x^2 + \gamma u x^4 + \delta s^2 x^5 + \varepsilon s x^8 + \zeta x^{11}) + (u^3 + s^7),
\]
where $\alpha, \beta, \dots, \zeta \in \mbC$.
We introduce various $3$-tuples $(\underline{c} = (c_1, c_2, c_3)$ of positive integers according to the following division into cases.
We denote by $G_{\underline{c}}$ the lowest weight part of $\bar{F}$ with respect to $\wt (x, s, u) = \underline{c}$.
\begin{itemize}
\item[(i)] $\alpha \ne 0$.
In this case we may assume $\alpha = 1$.
We set $\underline{c} = (1, 1, 1)$.
Then we have $G_{\underline{c}} = u s x + u^3$.
\item[(ii)] $\alpha = 0$ and $\beta \ne 0$.
In this case we may assume $\beta = 1$.
We set $\underline{c} = (6, 3, 7)$.
Then we have $G_{\underline{c}} = s^3 x^2 + u^3 + s^7$.
\item[(iii)] $\alpha = \beta = 0$ and $\gamma \ne 0$.
In this case we may assume $\gamma = 1$.
We set $\underline{c} = (7, 6, 14)$.
Then we have $G_{\underline{c}} = u x^4 + u^3 + s^7$.
\item[(iv)] $\alpha = \beta = \gamma = 0$ and $\delta \ne 0$.
In this case we may assume $\delta = 1$.
We set $\underline{c} = (3, 3, 7)$.
In this case we have $G_{\underline{c}} = s^2 x^5 + u^3 + s^7$.
\item[(v)] $\alpha = \beta = \gamma = \delta = 0$ and $\varepsilon \ne 0$.
In this case we may assume $\varepsilon = 1$.
We set $\underline{c} = (9, 12, 28)$.
In this case we have $G_{\underline{c}} = s x^8 + u^3 + s^7$.
\item[(vi)] $\alpha = \beta = \gamma = \delta = \varepsilon = 0$.
In this case we may assume $\delta = 1$.
We set $\underline{c} = (21, 33, 77)$.
In this case we have $x^{11} + u^3 + s^7$.
\end{itemize}
The descriptions of $\mbP (\underline{c})^{\wf}$, $\Diff$ and $G_{\underline{c}} ^{\wf}$ are given in Table \ref{table:No46lwp},  where we choose $\tilde{x}, \tilde{s}, \tilde{u}$ as homogeneous coordinates of $\mbP (\underline{c})^{\wf}$.

\begin{table}[h]
\renewcommand{\arraystretch}{1.15}
\begin{center}
\caption{Family $\mcF_{46}$: Weights and LCT}
\label{table:No46lwp}
\begin{tabular}{ccccccc}
Case & $\mbP (\underline{c})^{\wf}$ & $\Diff$ & $G_{\underline{c}}^{\wf}$ & $\eta$ & $\theta$ \\
\hline
(i) & $\mbP (1, 1, 1)$ & $0$ & $\tilde{u} (\tilde{s} \tilde{x} + \tilde{u}^2)$ & $1$ & $1$  \\
(ii) & $\mbP (2, 1, 7)$ & $\frac{2}{3} H_{\tilde{u}}$ & $\tilde{s}^3 \tilde{x}^2 + \tilde{u} + \tilde{s}^7$ & $2/3$ & $2/3$ \\
(iii) & $\mbP (1, 3, 1)$ & $\frac{1}{2} H_{\tilde{x}} + \frac{6}{7} H_{\tilde{s}}$ & $\tilde{u} \tilde{x}^2 + \tilde{u}^3 + \tilde{s}$ & $1$ & $9/14$ \\
(iv) & $\mbP (1, 1, 7)$ & $\frac{2}{3} H_{\tilde{u}}$ & $\tilde{s}^2 \tilde{x}^5 + \tilde{u} + \tilde{s}^7$ & $5/6$ & $13/21$ \\
(v) & $\mbP (3, 1, 7)$ & $\frac{3}{4} H_{\tilde{x}} + \frac{2}{3} H_{\tilde{u}}$ & $\tilde{s} \tilde{x}^2 + \tilde{u} + \tilde{s}^7$ & $1$ & $7/12$ \\
(vi) & $\mbP (1, 1, 1)$ & $\frac{10}{11} H_{\tilde{x}} + \frac{6}{7} H_{\tilde{s}} + \frac{2}{3} H_{\tilde{u}}$ & $\tilde{x} + \tilde{u} + \tilde{s}$ & $1$ & $131/231$
\end{tabular}
\end{center}
\end{table}

We set $\mcD_{\underline{c}} = \mcD^{\wf}_{G_{\underline{c}}}$.
We explain the computation of $\eta := \lct (\mbP (\underline{c})^{\wf}, \Diff; \mcD_{\underline{c}})$ whose value is given in the 5th column of Table \ref{table:No46lwp}.
The computation $\eta = 1$ is straightforward when we are in case (i) since $\mcD_{\underline{c}}$ is the union of of a line and a conic on $\mbP^2$ intersecting at $2$ points.
In the other cases, $\mcD_{\underline{c}}$ is the divisor defined by $G_{\underline{c}}^{\wf} = 0$ which is a quasi-line in $\mbP (\underline{c})^{\wf}$.
If we are in one of the cases (iii), (v) and (vi), then any 2 of the components of $\Diff + \mcD_{\underline{c}}$ intersect transversally, which implies $\eta = 1$. 
If we are in case (ii) or (iv), then $H_{\tilde{u}}$ and $\mcD_{\underline{c}}$ intersect transversally except at $\msq = (1\!:\!0\!:\!0) \in \mbP (\underline{c})^{\wf}$.
We set $e = 3, 2$ if we are in case (ii), (iv), respectively.
Then we have
\[
\begin{split}
\lct (\mbP (\underline{c})^{\wf}, \Diff; \mcD_{\underline{c}}) 
&= \lct_{\msq} (\mbP (\underline{c})^{\wf}, \Diff; \mcD_{\underline{c}}) \\
&= \lct_{(0,0)} (\mbA^2_{\tilde{s}, \tilde{u}}, \tfrac{2}{3} (\tilde{u} = 0); (\tilde{s}^e + \tilde{u} + \tilde{s}^7 = 0)) \\
&= \lct_{(0,0)} (\mbA^2_{\tilde{s}, \tilde{u}}, \tfrac{2}{3} (\tilde{u} = 0); (\tilde{s}^e + \tilde{u} = 0)).
\end{split}
\]
This completes the explanations of the computations of $\eta$.
We set 
\[
\theta = \left\{\frac{c_1 + c_2 + c_3}{\deg_{\underline{c}} (G_{\underline{c}}^{\wf})}, \ \eta \right\}
\]
which is described in the 6th column of Table \ref{table:No46lwp}.
By Lemma \ref{lem:lctwblwh}, we have $\lct_{\msp} (X; H_v) \ge \theta \ge 1/2$ and the claim is proved.  
\end{proof}

By Claims \ref{clm:ndQIcent-1}, \ref{clm:ndQIcent-2}, \ref{clm:ndQIcent-3} and \ref{clm:ndQIcent-4}, we have $\lct_{\msp} (X; \frac{1}{c} H_v) \ge 1/2$.
Let $D \in |A|_{\mbQ}$ be an irreducible $\mbQ$-divisor other than $\frac{1}{c} H_v$.
We set $\lambda = (r + c)/(2 r + c)$ and we will show that $\lct_{\msp} (X; D) \ge \lambda$.
Suppose not, that is, $(X, \lambda D)$ is not log canonical at $\msp$.
Let $\varphi \colon Y \to X$ be the Kawamata blowup of $\msp \in X$.
Then, for the proper transforms $\tilde{H}_t$ and $\tilde{D}$ of $H_t$ and $D$, respectively, we have
\[
\begin{split}
\tilde{H}_t &\sim c \varphi^*A - \frac{r + c}{r} E, \\
\tilde{D} & \sim_{\mbQ} \varphi^*A - \frac{e}{r} E,
\end{split}
\]
where $e \in \mbQ_{\ge 0}$.
By \cite{Kawamata}, the discrepancy of the pair $(X, \lambda D)$ along $E$ is negative, and thus we have $e > 1/\lambda$.
By \cite[Theorem 4.9]{CPR00}, $-K_Y \sim_{\mbQ} \varphi^* A - \frac{1}{r} E$ is nef (more precisely, $- m K_Y$ defines the flopping contraction for a sufficiently divisible $m > 0$).
Hence $(-K_Y \cdot \tilde{H}_t \cdot \tilde{D}) \ge 0$ and we have
\[
\begin{split}
0 \le (-K_Y \cdot \tilde{H}_t \cdot \tilde{D}) &= c (A^3) - \frac{e (r+c)}{r^3} (E^3) \\
&= \frac{2 r + c}{a b r} - \frac{e (r+c)}{a b r} < \frac{2 r + c}{a b r} - \frac{r+c}{\lambda a b r} = 0.
\end{split}
\]
This is a contradiction.
Therefore $\lct_{\msp} (X;D) \ge \lambda$ and thus
\[
\alpha_{\msp} (X) \ge \min \left\{ \lct_{\msp} \left(X; \frac{1}{c} H_v \right), \ \frac{r + c}{2 r + c} \right\} \ge \frac{1}{2}.
\]
This completes the proof of Proposition \ref{prop:ndQIcent} when $X$ has a unique QI center.

%%%
\subsubsection{Case: $X$ has exactly $3$ distinct QI centers}
\label{sec:ndQI3ditinct}
%%%

By Lemma \ref{lem:QIeqtypes}, we can choose homogeneous coordinates so that
\[
X = X_{3 r} \subset \mbP (1, a, b, r, r)_{x, y, z, t, w},
\]
where $a$ is coprime to $b$, $a \le b$ and $a + b = r$.
Let $\msp \in X$ be a QI center.
Then we may assume $\msp = \msp_w$ by replacing $t$ and $w$ suitably.
Then the defining polynomial $F$ of $X$ can be written as
\[
F = w^2 t + w f_{2 r} + f_{3r},
\]
where $f_{2 r} (x, y, z)$ and $f_{3 r} (x, y, z, t)$ are quasi-homogeneous polynomials of degrees $2 r$ and $3 r$, respectively.
We have $(A^3) = 3 r/a b r^2 = 3 / a b r$.
By Lemma \ref{lem:complctsingtang}, we have
\[
\alpha_{\msp} (X) \ge \frac{2}{r a b (A^3)} = \frac{2}{3},
\]
and Proposition \ref{prop:ndQIcent} is proved when $X$ has exactly $3$ distinct QI centers.

%%%
\subsubsection{Case: $X$ has exactly $2$ distinct QI centers and their singularity types are equal}
\label{sec:ndQI2eqtypes}
%%%

By Lemma \ref{lem:QIeqtypes}, we can choose homogeneous coordinates so that 
\[
X = X_{4 r} \subset \mbP (1, a, b, r, 2 r)_{x, y, z, t, w},
\]
where $a$ is coprime to $b$, $a \le b$ and $a + b = r$.
Let $\msp \in X$ be a QI center.
We may assume $\msp = \msp_t$ by replacing $w$ suitably.
Then the defining polynomial $F$ of $X$ can be written as
\[
F = t^2 w + t f_{3 r} + f_{4 r},
\]
where $f_{3 r} (x, y, z)$ and $f_{4 r} (x, y, z, w)$ are quasi-homogeneous polynomials of degrees $3 r$ and $4 r$, respectively.
Note that $(A^3) = 4 r/2 a b r^2 = 2/a b r$.
By Lemma \ref{lem:complctsingtang}, we have
\[
\alpha_{\msp} (X) \ge \frac{2}{r a b (A^3)} = 1,
\]
and Proposition \ref{prop:ndQIcent} is proved in this case.

%%%
\subsubsection{Case: $X$ has exactly $2$ distinct QI centers and their singularity types are distinct}
%%%

By Lemma \ref{lem:QIeqtypes}, we have
\[
X = X_{4 a + 3 b} \subset \mbP (1, a, b, r_1, r_2)_{x, u, v, t, w},
\]
where $a$ is coprime to $b$, $r_1 = a + b$ and $r_2 = 2 a + b$. 

We first consider the QI center $\msp = \msp_t \in X$ of type $\frac{1}{r_1} (1, a, b)$.
The defining polynomial $F$ of $X$ can be written as
\[
F = t^2 w + t f_{3 a + 2 b} + f_{4 a + 3 b},
\]
where $f_{3 a + 2 b} = f_{3 a + 2 b} (x, u, v)$ and $f_{4 a + 3 b} = f_{4 a + 3 b} (x, u, v, w)$ are quasi-homogeneous polynomials of the indicated degrees.
Note that $(A^3) = (4 a + 3 b)/a b r_1 r_2$.
By Lemma \ref{lem:QIeqtypes}, we have
\[
\alpha_{\msp} (X) \ge \frac{2}{r_1 a b (A^3)} = \frac{2 r_2}{4 a + 3b} = \frac{4 a + 2 b}{4 a + 3 b} > \frac{2}{3}.
\]

We next consider the QI center $\msp = \msp_w \in X$ of type $\frac{1}{r_2} (1, a, a + b)$.
Then the defining polynomial $F$ of $X$ can be written as
\[
F = w^2 v + w f_{2 a + 2 b} + f_{4 a + 3 b},
\]
where $f_{2 a + 2 b} = f_{2 a + 2 b} (x, u, t)$ and $f_{4 a + 3 b} = f_{4 a + 3 b} (x, u, v, t)$ are quasi-homogeneous polynomials of the indicated degree.

Suppose $t^2 w \in F$, that is, $t^2 \in f_{2 a + 2 b}$.
Then $\omult_{\msp} (H_v) = 2$ and we have $\lct_{\msp} (X; \frac{1}{b} H_v) \ge b/2 \ge 1/2$.
Let $D \in |A|_{\mbQ}$ be an irreducible $\mbQ$-divisor other than $\frac{1}{b} H_v$.
We see that the set $\{x, u, v\}$ isolates $\msp$ since $t^2 w \in F$.
In particular, a general member $T \in |a A|$ does not contain any component of the effective $1$-cycle $D \cdot H_z$.
Then we have
\[
\begin{split}
2 \omult_{\msp} (D) 
&\le (\rho_{\msp}^*D \cdot \rho_{\msp}^*H_v \cdot \rho_{\msp}^*T)_{\check{\msp}} \le r_2 (D \cdot H_v \cdot T) \\
&= r_2 b a (A^3) = \frac{4 a + 3 b}{a + b}.
\end{split}
\]
This implies 
\[
\lct_{\msp} (X;D) \ge \frac{2 a + 2 b}{4 a + 3 b} > \frac{1}{2}.
\]
Therefore we have $\alpha_{\msp} (X) \ge 1/2$.

In the following, we consider the case where $t^2 w \notin F$.

\begin{Claim} \label{clm:ndQIcent-5}
If $b \ge 2$, then $\lct_{\msp} (X; \frac{1}{b} H_v) \ge 1/2$.
\end{Claim}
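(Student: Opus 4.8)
The plan is to reduce the claim to the single orbifold-multiplicity bound $\omult_{\msp}(H_v) \le 4$ and then to invoke Lemma~\ref{lem:multlct}. Since $\deg v = b$ we have $H_v \in |b A|$, so $\frac{1}{b} H_v \in |A|_{\mbQ}$, and by the scaling property of log canonical thresholds together with Lemma~\ref{lem:multlct},
\[
\lct_{\msp}\left(X; \tfrac{1}{b} H_v\right) = b \cdot \lct_{\msp}(X; H_v) \ge \frac{b}{\omult_{\msp}(H_v)}.
\]
Hence, once $\omult_{\msp}(H_v) \le 4$ is established, the hypothesis $b \ge 2$ yields $\lct_{\msp}(X; \frac{1}{b} H_v) \ge b/4 \ge 1/2$, as desired. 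So everything comes down to bounding the orbifold multiplicity of the quasi-tangent divisor $H_v$ at $\msp = \msp_w$.

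To compute this I would work in the orbifold chart $\rho_w \colon \breve{U}_w \to U_w$ at $\msp_w$. Writing $\breve{x}, \breve{u}, \breve{v}, \breve{t}$ for the affine coordinates, $\breve{U}_w$ is the hypersurface obtained from $F = w^2 v + w f_{2 a + 2 b} + f_{4 a + 3 b}$ by setting $w = 1$, namely
\[
\breve{v} + f_{2 a + 2 b}(\breve{x},\breve{u},\breve{t}) + f_{4 a + 3 b}(\breve{x},\breve{u},\breve{v},\breve{t}) = 0.
\]
Since the coefficient of the linear term $\breve{v}$ is a nonzero constant (a pure $v$-term is impossible as $\deg v = b < 4a+3b$), the functions $\breve{x}, \breve{u}, \breve{t}$ form a local coordinate system at the origin $\breve{\msp}$, and filtering off the terms divisible by $\breve{v}$ shows that $\breve{H}_v = (\breve{v} = 0)$ is cut out near $\breve{\msp}$ by
\[
\Phi(\breve{x},\breve{u},\breve{t}) := f_{2 a + 2 b}(\breve{x},\breve{u},\breve{t}) + f_{4 a + 3 b}(\breve{x},\breve{u},0,\breve{t}).
\]
Thus $\omult_{\msp}(H_v) = \mult_{\breve{\msp}}(\Phi)$, the ordinary multiplicity of $\Phi$ at the origin in the coordinates $\breve{x}, \breve{u}, \breve{t}$, and it suffices to exhibit a monomial of unweighted degree at most $4$ occurring in $\Phi$.

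Such a monomial is forced by quasi-smoothness of $X$ at the other QI center $\msp_t$, a terminal quotient singularity of type $\frac{1}{r_1}(1,a,b)$. Quasi-smoothness at the vertex $\msp_t$ requires a monomial of the form $t^m$ or $t^m x_j$ in $F$; running through the degree equation $d = 4a + 3b$ with $\deg t = a + b$, the only numerically admissible possibilities are $t^2 w$, $t^3 u$, and—only when $a = 1$—$t^3 x$. As we are in the case $t^2 w \notin F$, one of $t^3 u$, $t^3 x$ must appear in $F$. Each of these monomials involves neither $w$ nor $v$, so it lies in $f_{4 a + 3 b}(\breve{x},\breve{u},0,\breve{t})$, a summand of $\Phi$; it cannot be cancelled, since $f_{2 a + 2 b}$ is of a different weighted degree. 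Its total degree in $\breve{x},\breve{u},\breve{t}$ being $3 + 1 = 4$, we conclude $\mult_{\breve{\msp}}(\Phi) \le 4$, which finishes the argument.

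I expect the main obstacle to be the numerical bookkeeping in this last step: one must verify that, under $t^2 w \notin F$, the admissible tangent monomials at $\msp_t$ are exhausted by $t^3 u$ and (for $a=1$) $t^3 x$, ruling out in particular that some monomial landing in $w f_{2 a + 2 b}$ or in the $\breve{v}$-divisible part of $f_{4 a + 3 b}$ is the unique one making $X$ quasi-smooth at $\msp_t$. This uses $\gcd(a,b) = 1$ and $a \le b$ to exclude the borderline degree solutions (for instance, a monomial $t^m v$ would force $a = b$, and a pure power $t^m$ would force $(a+b) \mid a$). Once this combinatorial check is settled, the remainder of the proof is purely formal.
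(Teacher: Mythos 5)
Your proof is correct and follows essentially the same route as the paper: quasi-smoothness at the other QI center $\msp_t$ together with $t^2 w \notin F$ forces a monomial of total degree $4$ in $x, u, t$ into $f_{4a+3b}$, giving $\omult_{\msp}(H_v) \le 4$ and hence $\lct_{\msp}(X; \tfrac{1}{b} H_v) \ge b/4 \ge 1/2$. If anything you are slightly more careful than the paper, which asserts $t^3 u \in F$ outright, whereas (as you note) when $a = 1$ the tangent monomial at $\msp_t$ could instead be $t^3 x$ — either way the multiplicity bound holds.
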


\begin{proof}[Proof of Claim \ref{clm:ndQIcent-5}]
By the quasi-smoothness of $X$ at $\msp_t$, we have $t^3 u \in f_{4 a + 3 b}$ since $t^2 w \notin F$ by assumption.
Hence we have $\omult_{\msp} (H_v) \le 4$ and this shows $\lct_{\msp} (X; \frac{1}{b} H_v) \ge 1/2$.
\end{proof}

If $b = 1$, then $X$ is a member of a family $\mcF_{\msi}$ with $\msi \in \{13, 25\}$.

\begin{Claim} \label{clm:ndQIcent-6}
If $X$ is a member of the family $\mcF_{13}$, then $\lct_{\msp} (X;H_v) \ge 1/2$.
\end{Claim}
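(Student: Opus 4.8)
The plan is to compute $\lct_{\msp}(X; H_v)$ on the orbifold chart of $\msp = \msp_w$, where $H_v$ is the quasi-tangent divisor of $X$ at $\msp$. Here $X = X_{11} \subset \mbP(1,2,1,3,5)_{x,u,v,t,w}$ is the description of Case 4 of Lemma \ref{lem:QIeqtypes} with $a = 2$, $b = 1$, $r_1 = 3$, $r_2 = 5$, so that $\msp = \msp_w$ is of type $\frac{1}{5}(1,2,3)$. Since $t^2 w \notin F$ by the standing assumption of this subcase, the defining polynomial takes the form $F = w^2 v + w f_6(x,u,t) + f_{11}(x,u,v,t)$ with $t^2 \notin f_6$. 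Setting $\bar F := F(x,u,0,t,1) = f_6(x,u,t) + f_{11}(x,u,0,t)$, this is precisely the setup of Lemma \ref{lem:lcttangcube} under the correspondence $z = w$, $x = v$ and $(y_1,y_2,y_3) = (x,u,t)$, with $k = 2$ and quasi-tangent coordinate $v$ of degree $1$.

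First I would record the monomials forced by quasi-smoothness. The only monomial of weighted degree $6$ and ordinary degree $2$ in $(x,u,t)$ is $t^2$, which is absent; hence $\bar F \in (x,u,t)^3$ and its ordinary cubic part comes entirely from $f_6$, namely $\alpha u^3 + \beta\, xut$ for the two weight-$6$, ordinary-degree-$3$ monomials. Quasi-smoothness of $X$ at $\msp_t$ (type $\frac{1}{3}(1,1,2)$), combined with $t^2 w \notin F$, forces $t^3 u \in f_{11}$, so $t^3 u$ survives in $\bar F$ and serves as a controlling term of weighted order.

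Then I would split into cases according to the cubic part. If $xut \in f_6$ (i.e. $\beta \neq 0$), then $\alpha u^3 + \beta\,xut$ is not a cube of a linear form in $x,u,t$ (any cube whose $xut$-coefficient is nonzero has nonzero $x^3$-coefficient), so Lemma \ref{lem:lcttangcube} gives $\lct_{\msp}(X; H_v) \ge \tfrac12$ at once. If $xut \notin f_6$, the cubic part is $\alpha u^3$, a cube (or zero), and Lemma \ref{lem:lcttangcube} no longer applies directly; here I would pass to a weighted blowup. Choosing a weight $\underline c = (c_1,c_2,c_3)$ on $(x,u,t)$ adapted to the surviving lowest-weight monomials — built from $u^3$, $t^3 u$ and the lowest $x$-monomial of $f_6$ or $f_{11}|_{v=0}$ forced by quasi-smoothness at $\msp_u$ — and applying Lemma \ref{lem:lctwblwh} gives
\[
\lct_{\msp}(X; H_v) \ge \min\left\{ \frac{c_1 + c_2 + c_3}{\wt_{\underline c}(\bar F)}, \ \lct(\mbP(\underline c)^{\wf}, \Diff; \mcD^{\wf}_G) \right\},
\]
where $G$ is the lowest-weight part of $\bar F$. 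In each subcase one checks that the associated divisor $\mcD^{\wf}_G$ is quasi-smooth and meets the components of $\Diff$ transversally, so that the second entry equals $1$ (or at least exceeds $\tfrac12$), while the ratio in the first entry is $\ge \tfrac12$. This runs exactly parallel to the computation carried out for the family $\mcF_{46}$ in Claim \ref{clm:ndQIcent-4}.

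The hard part will be the degenerate subcase $xut \notin f_6$, where the leading form of $\bar F$ is a cube or has ordinary multiplicity $\ge 4$: one must enumerate the finitely many possibilities for which $x$-monomials in $f_6$ and in $f_{11}|_{v=0}$ are nonzero, select a correct weight $\underline c$ for each, and then evaluate the log canonical threshold on the well-formed model $\mbP(\underline c)^{\wf}$ together with its different. These are routine but delicate weighted-projective-plane LCT computations of the same flavour as those tabulated in Table \ref{table:No46lwp}; the bookkeeping, rather than any conceptual difficulty, is the obstacle.
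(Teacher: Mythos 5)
Your proposal follows the paper's proof essentially verbatim: the same normal form $F = w^2v + wf_6 + f_{11}$ with $\bar F = F(x,u,0,t,1)$, the same observation that $t^2\notin f_6$ forces $\bar F\in(x,u,t)^3$ with cubic part $\alpha u^3+\beta xut$ and that $t^3u\in F$ by quasi-smoothness at $\msp_t$, the same immediate application of Lemma \ref{lem:lcttangcube} when $xut\in f_6$, and the same fallback to Lemma \ref{lem:lctwblwh} when $xut\notin f_6$. The one caveat is that in the degenerate case you have only described the strategy, whereas the paper's proof consists precisely of executing it: writing $\bar F = (\alpha u^3+\beta u^2x^2+\gamma tx^3+\delta ux^4+\varepsilon x^6)+(t^3u+\lambda tu^4+xg_{10})$, splitting into five cases by the first nonzero coefficient, choosing the weights $(e,3,2)$, $(7,4,6)$, $(8,6,9)$, $(9,8,12)$, $(11,12,18)$, and tabulating the thresholds. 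Be aware that your intermediate claim that $\mcD^{\wf}_G$ is always quasi-smooth and meets $\Diff$ transversally (so that the second entry of the min is $1$) is false in general: in the paper's case (i) one gets $G = u(u^2+t^3)$ with $\eta=5/9$, and in cases (iii)--(iv) the associated $\mbQ$-divisor carries fractional components; only your hedged version ``at least exceeds $1/2$'' survives, and that is what the table verifies.
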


\begin{proof}[Proof of Claim \ref{clm:ndQIcent-6}]
We have
\[
F = w^2 v + w f_6 (x, u, v, t) + f_{11} (x, u, v, t).
\]
Note that $f_6 (x, u, 0, t) \ne 0$ as a polynomial since $\msp \in X$ is non-degenerate.
We set $\bar{F} = F (x, u, 0, t, 1) \in \mbC [x, u, t]$.
We have $t^3 u \in F$ and we may assume $\coeff_F (t^3 u) = 1$.
If $t u x \in f_6$, then the cubic part of $\bar{F}$ is not a cube of a linear form and thus we have $\lct_{\msp} (X; H_v) \ge 1/2$ by Lemma \ref{lem:lcttangcube}.
In the following, we assume $t u x \notin f_6$.
Then we can write
\[
\bar{F} = (\alpha u^3 + \beta u^2 x^2 + \gamma t x^3 + \delta u x^4 + \varepsilon x^6) + (t^3 u + \lambda t u^4 + x g_{10}),
\]
where $\alpha, \beta, \dots, \varepsilon, \lambda \in \mbC$ and $g_{10} = g_{10} (x, u, t)$ is a quasi-homogeneous polynomial of degree $10$.
We introduce $3$-tuples $\underline{c} = (c_1, c_2, c_3)$ of positive integers according to the following division into cases.
We denote by $G_{\underline{c}}$ the lowest weight part of $\bar{F}$ with respect to $\wt (x, u, t) = \underline{c}$.

\begin{enumerate}
\item[(i)] $\alpha \ne 0$.
In this case we may assume $\alpha = 1$.
We choose and fix a sufficiently large integer $e$ which is coprime to $2$ and $3$, and we set $\underline{c} = (e, 3, 2)$.
Then we have $G_{\underline{c}} = u^3 + t^3 u$.
\item[(ii)] $\alpha = 0$ and $\beta \ne 0$.
In this case we may assume $\beta = 1$.
We set $\underline{c} = (7, 4, 6)$.
Then we have $G_{\underline{c}} = u^2 x^2 + t^3 u + \lambda t u^4$.
\item[(iii)] $\alpha = \beta = 0$ and $\gamma \ne 0$.
In this case we may assume $\gamma = 1$.
We set $\underline{c} = (8, 6, 9)$.
Then $G_{\underline{c}} = t x^3 + t^3 u + \lambda t u^4$.
\item[(iv)] $\alpha = \beta = \gamma = 0$ and $\delta \ne 0$.
In this case we may assume $\delta = 1$.
We set $\underline{c} = (9, 8, 12)$.
Then $G_{\underline{c}} = u x^4 + t^3 u + \lambda t u^4$.
\item[(v)] $\alpha = \beta = \gamma = \delta = 0$.
In this case we may assume $\varepsilon = 1$.
We set $\underline{c} = (11, 12, 18)$.
Then we have $x^6 + t^3 u + \lambda t u^4$.
\end{enumerate}

\begin{table}[h]
\renewcommand{\arraystretch}{1.15}
\begin{center}
\caption{Family $\mcF_{13}$: Weights and LCT}
\label{table:No13lwp}
\begin{tabular}{ccccccc}
Case & $\mbP (\underline{c})^{\wf}$ & $\Diff$ & $G_{\underline{c}}^{\wf}$ & $\eta$ & $\theta$ \\
\hline
(i) & $\mbP (e, 3, 2)$ & $0$ & $\tilde{u} (\tilde{u}^2 + \tilde{t}^3)$ & $5/9$  & $5/9$  \\
(ii) & $\mbP (7, 2, 3)$ & $\frac{1}{2} H_{\tilde{x}}$ & $\tilde{u} (\tilde{u} \tilde{x} + \tilde{t}^3 + \lambda \tilde{t} \tilde{u}^3)$ & $2/3$ & $2/3$ \\
(iii) & $\mbP (4, 1, 3)$ & $\frac{2}{3} H_{\tilde{x}} + \frac{1}{2} H_{\tilde{t}}$ & $\tilde{t}^{1/2} (\tilde{x} + \tilde{t} \tilde{u} + \lambda \tilde{u}^4)$ & $\ge 5/9$ & $\ge 5/9$  \\
(iv) & $\mbP (3, 2, 1)$ & $\frac{3}{4} H_{\tilde{x}} + \frac{2}{3} H_{\tilde{u}}$ & $\tilde{u}^{1/3} (\tilde{x} + \tilde{t}^3 + \lambda \tilde{t} \tilde{u})$ & $\ge 7/12$  & $\ge 7/12$  \\
(v) & $\mbP (11, 2, 3)$ & $\frac{5}{6} H_{\tilde{x}}$ & $\tilde{x} + \tilde{t}^3 \tilde{u} + \lambda \tilde{t} \tilde{u}^4$ & $\ge 1/2$ & $\ge 1/2$ 
\end{tabular}
\end{center}
\end{table}
The descriptions of $\mbP (\underline{c})^{\wf}$, $\Diff$ and $G_{\underline{c}}^{\wf}$ are given in Table \ref{table:No13lwp}, where 
we choose $\tilde{x}, \tilde{u}, \tilde{t}$ as homogeneous coordinates of $\mbP (\underline{c})^{\wf}$.

We set $\mcD_{\underline{c}} = \mcD_{G_{\underline{c}}}^{\wf}$.
We explain the computation of $\eta := \lct (\mbP (\underline{c})^{\wf}, \Diff; \mcD_{\underline{c}})$ whose value (or lower bound) is given in the 5th column of Table \ref{table:No13lwp}.
\end{proof}

\begin{Claim} \label{clm:ndQIcent-7}
If $X$ is a member of the family $\mcF_{25}$, then $\lct_{\msp} (X;H_v) \ge 1/2$.
\end{Claim}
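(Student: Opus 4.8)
The plan is to reduce, as in Claims \ref{clm:ndQIcent-6} and \ref{clm:ndQIcent-4}, to a log canonical threshold computation in the smooth orbifold chart of $\msp$, and then run a short case analysis governed by the monomials of the quasi-tangent part of $F$. By Lemma \ref{lem:QIeqtypes}(4) we may write $X = X_{15} \subset \mbP(1,3,1,4,7)_{x,u,v,t,w}$ with $\msp = \msp_w$ of type $\frac{1}{7}(1,3,4)$, and, since we are in the subcase $t^2 w \notin F$, the defining polynomial has the form $F = w^2 v + w f_8(x,u,t) + f_{15}(x,u,v,t)$ with $t^2 \notin f_8$; non-degeneracy of $\msp$ gives $f_8 \ne 0$. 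First I would extract the two decisive quasi-smoothness constraints: quasi-smoothness at $\msp_t$ forces $t^3 u \in F$, and — this is the key input — a degree count shows that the only monomial which can keep $\msp_u \notin X$ is $u^5$, so quasi-smoothness forces $u^5 \in F$. After normalising coefficients, $u^5, t^3u \in F$, and hence the polynomial $\bar F := F(x,u,0,t,1) = f_8(x,u,t) + f_{15}(x,u,0,t) \in \mbC[x,u,t]$ contains both $u^5$ and $t^3u$. Since $\lct_{\msp}(X;H_v)$ is the log canonical threshold of $\bar F = 0$ at the origin of the orbifold chart, it suffices to bound the latter below by $1/2$, and the vehicles for this are Lemmas \ref{lem:lcttangcube} and \ref{lem:lctwblwh}.

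Second, I would split on whether $tux \in f_8$. If it does, a direct weight count shows that $tux$ is the unique monomial of $\bar F$ of total degree $\le 3$ (the competitor $t^2$ is excluded), so $\bar F \in (x,u,t)^3 \setminus (x,u,t)^4$ with cubic part a nonzero multiple of $tux$, which is not a cube of a linear form; Lemma \ref{lem:lcttangcube} then gives $\lct_{\msp}(X;H_v) \ge 1/2$. If $tux \notin f_8$, the same weight count leaves $f_8 = \alpha u^2x^2 + \beta tx^4 + \gamma ux^5 + \delta x^8$ with $(\alpha,\beta,\gamma,\delta) \neq 0$. I would order these four monomials by increasing total degree ($4,5,6,8$), subdivide according to the first nonvanishing coefficient, and pair the leading $f_8$-monomial with $u^5$ and $t^3u$. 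For each subcase one chooses the weight $\underline c=(c_x,c_u,c_t)$ making those three monomials isobaric, namely $\underline c = (9,6,8),\,(11,12,16),\,(12,15,20),\,(15,24,32)$ respectively, so that $G_{\underline c}=(\text{leading }f_8\text{ term})+u^5+t^3u$ and a direct check shows every other monomial of $\bar F$ has strictly larger $\underline c$-weight. Applying Lemma \ref{lem:lctwblwh} with this weight and tabulating the data as in Tables \ref{table:No13lwp} and \ref{table:No46lwp} yields $\lct_{\msp}(X;H_v) \ge \min\{(c_x+c_u+c_t)/\wt_{\underline c}(\bar F),\ \lct(\mbP(\underline c)^{\wf},\Diff;\mcD^{\wf}_{G_{\underline c}})\}$, with first-slot values $23/30$, $13/20$, $47/75$ and $71/120$, all $\ge 1/2$.

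The main obstacle will be the bookkeeping in this last subcase: computing $\mbP(\underline c)^{\wf}$, the different $\Diff$ and the well-formed model $G_{\underline c}^{\wf}$ for weights like $(15,24,32)$ (whose well-formed model is $\mbP(5,1,4)$), and then checking that $\lct(\mbP(\underline c)^{\wf},\Diff;\mcD^{\wf}_{G_{\underline c}})$ does not fall below the arithmetic bound $(c_x+c_u+c_t)/\wt_{\underline c}(\bar F)$. The numerically tightest instance is $f_8 = \delta x^8$, where the bound is exactly $71/120$ and the argument genuinely depends on the forced presence of $u^5$: without it the threshold of $x^8+t^3u$ would be only $11/24 < 1/2$. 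This is precisely why isolating the $\msp_u$ quasi-smoothness constraint is the crux, and I expect it, rather than any single weighted-blowup computation, to be the conceptual heart of the proof; the remaining effort is the routine but lengthy verification of the weight dominance and of the pair-lct values in the table.
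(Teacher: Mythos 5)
Your proposal is correct and follows essentially the same route as the paper's proof: the same reduction to $\bar F = F(x,u,0,t,1)$ with the forced monomials $t^3u$ and $u^5$, the same dichotomy on $tux \in f_8$ handled by Lemma \ref{lem:lcttangcube}, and the same four subcases with weights $(9,6,8)$, $(11,12,16)$, $(12,15,20)$, $(15,24,32)$ fed into Lemma \ref{lem:lctwblwh}, yielding the identical bounds $23/30$, $13/20$, $47/75$, $71/120$. The only part left implicit is the verification that $\lct(\mbP(\underline c)^{\wf},\Diff;\mcD_{\underline c}) = 1$ in each subcase (which the paper checks via transversality of the components of $\Diff + \mcD_{\underline c}$), but you correctly identify this as the remaining routine computation.
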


\begin{proof}[Proof of Claim \ref{clm:ndQIcent-7}]
We have
\[
F = w^2 v + w f_8 (x, u, v, t) + f_{15} (x, u, v, t).
\]
Note that $f_8 (x, u, 0, t) \ne 0$ as a polynomial since $\msp \in X$ is non-degenerate.
We set $\bar{F} = F (x, u, 0, t, 1) \in \mbC [x, u, t]$.
We have $t^3 u, u^5 \in F$ and we may assume $\coeff_F (t^3 u) = \coeff_F (u^5) = 1$.
If $t u x \in f_8$, then the cubic part of $\bar{F}$ is not a cube of a linear form and thus we have $\lct_{\msp} (X; H_v) \ge 1/2$ by Lemma \ref{lem:lcttangcube}.
In the following, we assume $t u x \notin f_8$.
Then we can write
\[
\bar{F} = (\alpha u^2 x^2 + \beta t x^4 + \gamma u x^5 + \delta x^8) + (t^3 u + u^5 + x g_{14}),
\]
where $\alpha, \beta, \gamma, \delta \in \mbC$ and $g_{14} = g_{14} (x, u, t)$ is a quasi-homogeneous polynomial of degree $14$.
We introduce $3$-tuples $\underline{c} = (c_1, c_2, c_3)$ of positive integers according to the following division into cases.
We denote by $G_{\underline{c}}$ the lowest weight part of $\bar{F}$ with respect to $\wt (x, u, t) = \underline{c}$.

\begin{enumerate}
\item[(i)] $\alpha \ne 0$.
In this case we may assume $\alpha = 1$.
We set $\underline{c} = (9, 6, 8)$.
Then we have $G_{\underline{c}} = u^2 x^2 + t^3 u + u^5$.
\item[(ii)] $\alpha = 0$ and $\beta \ne 0$.
In this case we may assume $\beta = 1$.
We set $\underline{c} = (11, 12, 16)$.
Then we have $G_{\underline{c}} = t x^4 + t^3 u + u^5$.
\item[(iii)] $\alpha = \beta = 0$ and $\gamma \ne 0$.
In this case we may assume $\gamma = 1$.
We set $\underline{c} = (12, 15, 20)$.
Then $G_{\underline{c}} = u x^5 + t^3 u + u^5$.
\item[(iv)] $\alpha = \beta = \gamma = 0$ and $\delta \ne 0$.
In this case we may assume $\delta = 1$.
We set $\underline{c} = (15, 24, 32)$.
Then $G_{\underline{c}} = x^8 + t^3 u + u^5$.
\end{enumerate}

\begin{table}[h]
\renewcommand{\arraystretch}{1.15}
\begin{center}
\caption{Family $\mcF_{25}$: Weights and LCT}
\label{table:No25lwp}
\begin{tabular}{ccccccc}
Case & $\mbP (\underline{c})^{\wf}$ & $\Diff$ & $G_{\underline{c}}^{\wf}$ & $\eta$ & $\theta$ \\
\hline
(i) & $\mbP (3, 1, 4)$ & $\frac{1}{2} H_{\tilde{x}} + \frac{2}{3} H_{\tilde{t}}$ & $\tilde{u} (\tilde{u} \tilde{x} + \tilde{t} + \tilde{u}^4)$ & $1$ & $23/30$  \\
(ii) & $\mbP (11, 3, 4)$ & $\frac{3}{4} H_{\tilde{x}}$ & $\tilde{t} \tilde{x} + \tilde{t}^3 \tilde{u} + \tilde{u}^5$ & $1$ & $13/20$ \\
(iii) & $\mbP (1, 1, 1)$ & $\frac{4}{5} H_{\tilde{x}} + \frac{3}{4} H_{\tilde{u}} + \frac{2}{3} H_{\tilde{t}}$ & $\tilde{u}^{1/4} (\tilde{x} + \tilde{t} + \tilde{u})$ & $1$ & $47/75$  \\
(iv) & $\mbP (5, 1, 4)$ & $\frac{7}{8} H_{\tilde{x}} + \frac{3}{4} H_{\tilde{t}}$ & $\tilde{x} + \tilde{t} \tilde{u} + \tilde{u}^5$ & $1$ & $71/120$  \\
\end{tabular}
\end{center}
\end{table}
The descriptions of $\mbP (\underline{c})^{\wf}$, $\Diff$ and $G_{\underline{c}}^{\wf}$ are given in Table \ref{table:No25lwp}, where 
we choose $\tilde{x}, \tilde{u}, \tilde{t}$ as homogeneous coordinates of $\mbP (\underline{c})^{\wf}$.

We set $\mcD_{\underline{c}} = \mcD_{G_{\underline{c}}}^{\wf}$.
We explain the computation of $\eta := \lct (\mbP (\underline{c})^{\wf}, \Diff; \mcD_{\underline{c}})$ whose value is given in the 5th column of Table \ref{table:No25lwp}.
Suppose that we are in case (ii) or (iv).
Then $\mcD_{\underline{c}}$ is a prime divisor which is quasi-smooth and intersects any component of $\Diff$ transversally.
This shows $\eta = 1$.
Suppose that we are in case (i).
Then $\mcD_{\underline{c}} = H_{\tilde{u}} + \Gamma$, where $\Gamma = (\tilde{u} \tilde{x} + \tilde{t} + \tilde{u}^4 = 0)$ is a quasi-line. 
We see that any two of $H_{\tilde{x}}, H_{\tilde{u}}, H_{\tilde{t}}, \Gamma$ intersect transversally, and thus $\eta = 1$.
Suppose that we are in case (iii).
Then $\mcD_{\underline{c}} = \frac{1}{4} H_{\tilde{u}} + \Gamma$, where $\Gamma = (\tilde{x} + \tilde{t} \tilde{u} + \tilde{u}^5 = 0)$ is a quasi-line.
We see that any two of $H_{\tilde{x}}, H_{\tilde{u}}, H_{\tilde{t}}$ and $\Gamma$ intersect transversally, and thus $\eta = 1$.

We set
\[
\theta := \min \left\{\ \frac{c_1 + c_2 + c_3}{\wt_{\underline{c}} (\bar{F})}, \eta \right\},
\]
which is listed in the 6th column of Table \ref{table:No25lwp}.
By Lemma \ref{lem:lctwblwh}, we have $\lct_{\msp} (X; H_v) \ge \theta \ge 1/2$ and the claim is proved.
\end{proof}

By Claims \ref{clm:ndQIcent-5}, \ref{clm:ndQIcent-6} and \ref{clm:ndQIcent-7}, we have $\lct_{\msp} (X; \frac{1}{b} H_v) \ge 1/2$.
Suppose $\alpha_{\msp} (X) < 1/2$.
Then there exists an irreducible $\mbQ$-divisor $D \in |A|_{\mbQ}$ other than $\frac{1}{b} H_v$ such that $(X, \frac{1}{2} D)$ is not log canonical at $\msp$.
Let $\varphi \colon Y \to X$ be the Kawamata blowup at $\msp$ with exceptional divisor $E$.
We set $\lambda = \ord_E (D)$.
Since the pair $(X, \frac{1}{2} D)$ is not canonical at $\msp$, the discrepancy of $(X, \frac{1}{2} D)$ along $E$ is negative, which implies
\[
\lambda > \frac{2}{r_2}.
\]
By \cite[Theorem 4.9]{CPR00}, the divisor $- K_Y \sim_{\mbQ} \varphi^*A - \frac{1}{r_2} E$ is nef.
We see that $\tilde{D} \cdot \tilde{H}_v$ is an effective $1$-cycle on $Y$, where $\tilde{D}$ and $\tilde{H}_v$ are proper transforms of $D$ and $H_v$, respectively.
It follows that
\[
\begin{split}
0 &\le (- K_Y \cdot \tilde{D} \cdot \tilde{H}_v) 
= b (A^3) - \frac{(2 a + 2 b) \lambda}{r_2^2} (E^3) \\
&= \frac{(4 a + 3 b) - (2 a + 2 b) r_2 \lambda}{a r_1 r_2} 
< - \frac{b}{a r_1 r_2} < 0.
\end{split}
\]
This is a contradiction and we have $\alpha_{\msp} (X) \ge 1/2$.
Therefore, the proof of Proposition \ref{prop:ndQIcent} is completed.

%%%%%%%%%%%%%%%%%%%%%%%%%%%%%%%%%%
%%%%%%%%%%%%%%%%%%%%%%%%%%%%%%%%%%
\section{Families $\mcF_2$, $\mcF_{4}$, $\mcF_5$, $\mcF_6$, $\mcF_8$, $\mcF_{10}$ and $\mcF_{14}$}
\label{chap:exc}
%%%%%%%%%%%%%%%%%%%%%%%%%%%%%%%%%%
%%%%%%%%%%%%%%%%%%%%%%%%%%%%%%%%%%

This section is devoted to the proof of the following theorem.

\begin{Thm} \label{thm:famI1}
Let $X$ be a member of a family $\mcF_{\msi}$ with $\msi \in \msI_1$.
Then,  
\[
\alpha (X) \ge \frac{1}{2}.
\]
\end{Thm}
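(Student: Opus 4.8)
The plan is to prove $\alpha_{\msp}(X) \ge 1/2$ at every point $\msp \in X$ for each of the seven families individually, and then conclude via $\alpha(X) = \inf_{\msp} \alpha_{\msp}(X)$. Since every family in $\msI_1$ satisfies $a_1 = a_2 = 1$, the ambient space is $\mbP(1,1,1,a_3,a_4)$ with at least three degree-$1$ coordinates $x,y,z$, and the smooth locus decomposes as $\Sm(X) \subset U_1 \sqcup C$ with $C := (x = y = z = 0) \cap X$ a small residual locus (a finite set of points when $a_3 \ge 2$, and containing the weight-$a_4$ coordinate point). I would organize the argument as: (i) dispose of singular points via the general results of Chapter \ref{chap:singpt} wherever they apply, handling the excluded cases by hand; (ii) treat smooth points on $U_1$; and (iii) treat the residual smooth points on $C$ family by family.

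For the singular points, the large majority are already covered: non-BI centers by Propositions \ref{prop:singptCP}, \ref{prop:lctsingptL}, \ref{prop:singptic} and \ref{prop:singptrem}, EI centers by Proposition \ref{prop:alphaEI}, and degenerate QI centers by Proposition \ref{prop:lctdegQI}, all stated for arbitrary $\msi \in \msI$. The genuine gaps are exactly the cases the earlier chapter excluded: the exceptional QI centers that may occur in $\msI_1$ (Proposition \ref{prop:lctexcQI} excludes $\msI_1$) and the non-degenerate QI centers of $\mcF_2$, $\mcF_5$ and $\mcF_8$ (Proposition \ref{prop:ndQIcent} excludes $\{2,5,8\}$). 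For these I would first normalize the defining polynomial as in \S\ref{sec:eqQI}, bound $\lct_{\msp}(X;\tfrac{1}{c}H_v)$ by an explicit weighted blowup through Lemma \ref{lem:lctwblwh} (or Lemma \ref{lem:lcttangcube} when the quasi-tangent cone is a cubic that is not a triple line), and then bound $\lct_{\msp}(X;D)$ for the remaining irreducible $D \sim_{\mbQ} A$ via a Kawamata blowup together with the nef cone structure, as in the proof of Proposition \ref{prop:ndQIcent}.

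For the smooth points, the new feature is that $(A^3) = d/(a_3 a_4)$ is comparatively large in these low-index families, so the naive two-divisor estimate of Lemma \ref{lem:exclL} only yields $\alpha_{\msp}(X) \ge 1/(A^3)$, which can drop below $1/2$ (for instance $(A^3) = 5/2$ in $\mcF_2$). The remedy is to pass to a quasi-smooth hyperplane surface $S \in |\mcI_{\msp}(A)|$ through $\msp$ — such an $S$ exists and is movable precisely because there are at least two degree-$1$ coordinates — and, for an irreducible $D \ne S$ in $|A|_{\mbQ}$, to control $\mult_{\msp}(D)$ using the $2n$-inequality (Lemma \ref{lem:2nineq}) on $\breve U$ at $\breve\msp$, reducing the estimate to the restriction $D|_S$ and a plane-cubic bound of the type in Lemma \ref{lem:lctP2cubic}; the residual points on $C$ are then handled by an explicit $\msp$-isolating set (Lemma \ref{lem:isolexT}) adapted to each family. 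The main obstacle I anticipate is precisely the non-degenerate QI centers of $\mcF_2$, $\mcF_5$, $\mcF_8$ together with the smooth points of $\mcF_2$: the anticanonical volume is too large for the generic arguments, so both will demand the sharpest available input — delicate weighted-blowup log canonical threshold computations, and for $\mcF_2$ a hands-on analysis of the quintic $X_5 \subset \mbP(1,1,1,1,2)$.
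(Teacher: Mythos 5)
Your skeleton does match the paper's: Chapter \ref{chap:exc} is a family-by-family verification; the families $\mcF_6$, $\mcF_{10}$, $\mcF_{14}$ are dispatched exactly as you propose (all smooth points lie in $U_1$ and Lemma \ref{lem:nsptU1-2} gives $a_2a_3(A^3)=2$, singular points being covered by Propositions \ref{prop:singptCP} and \ref{prop:singptic}); and you have correctly isolated the genuinely new singular-point cases, namely the non-degenerate QI centers of $\mcF_2$, $\mcF_5$, $\mcF_8$, which the paper treats by precisely the combination you describe: quasi-tangent divisors, Lemmas \ref{lem:lctwblwh} and \ref{lem:lcttangcube}, and the Kawamata blowup together with nefness of $-K_Y$. (The worry about exceptional QI centers is moot: for $\msi\in\msI_1$ quasi-smoothness at the relevant coordinate point forces $w^2v\in F$ for some coordinate $v$, so none occur.)

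The gap is in your treatment of smooth points. First, the residual locus $(x=y=z=0)\cap X$ is not always finite: for $\mcF_5$, if $wt^2\notin F$ then $F(0,0,0,t,w)\equiv 0$ and the residual locus is the quasi-line $\mbP(2,3)\cong\mbP^1$, whose general point is a \emph{smooth} point of $X$; there the cheapest $\msp$-isolating class has degree $a_3a_4=6$, so Lemma \ref{lem:isolexT} only yields $\mult_{\msp}(D)\le 6\,(A^3)=7$, i.e.\ $\lct_{\msp}\ge 1/7$, which is useless. Second, for $\mcF_2$ in the case $x^3w\notin F$ and $w^2\notin f_4$, the base locus of $|\mcI_{\msp}(A)|$ is again a curve $\Gamma$ through $\msp$, and since $(A^3)=5/2$ neither the two-divisor estimate nor the $2n$-inequality in the form of Lemma \ref{lem:complctsingtang} (which gives only $2/(2\cdot\frac52)=2/5$) reaches $1/2$. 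In both places the paper's argument is of a different kind from anything in your toolkit: restrict to a general $S\in|\mcI_{\msp}(A)|$, compute the negative self-intersection $(\Gamma^2)_S$ via orbifold adjunction (Remark \ref{rem:compselfint}), use it to bound the coefficient $\gamma$ of $\Gamma$ in $D|_S$, and then apply inversion of adjunction to the pair $(S,\Gamma+\Xi)$ to contradict $\mult_{\msp}(\Xi|_{\Gamma})>1$. This surface-plus-inversion-of-adjunction step is the missing idea, and without it the very cases you flag as hardest do not close; the same remark applies to the smooth points of $\mcF_8$ with $x^5w\notin F$ and $x^6t\in F$, where the paper needs the intersection-matrix method of Lemma \ref{lem:mtdLred} rather than an isolating set.
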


%%%%%%%%%%%%%%%%%%%%%%%%%%%%%%%%%%
\subsection{Families $\mcF_6$, $\mcF_{10}$ and $\mcF_{14}$}
%%%%%%%%%%%%%%%%%%%%%%%%%%%%%%%%%%

In this section, we prove Theorem \ref{thm:famI1} for families $\mcF_6$, $\mcF_{10}$ and $\mcF_{14}$ whose member is a weighted hypersurface 
\[
X = X_{2 (a+2)} \subset \mbP (1, 1, 1, a, a+ 2)_{x, y, z, t, w},
\] 
where $a = 2, 3, 4$, respectively.
Let $X$ be a member of a family $\mcF_{\msi}$ with $\msi \in \{6, 10, 14\}$.

Let $\msp \in X$ be a smooth point.
We may assume $\msp = \msp_x$ by a suitable choice of coordinates.
By Lemma \ref{lem:nsptU1-2} (see also Remark \ref{rem:lemsmpttrue}), we have
\[
\alpha_{\msp} (X) \ge \frac{1}{1 \cdot a \cdot (A^3)} = \frac{1}{2}.
\]

Let $\msp \in X$ be a singular point.
If $\msi = 14$, then $\msp \in X$ is of type $\frac{1}{2} (1, 1, 1)$ and we have $\alpha_{\msp} (X) \ge 1$ by Proposition \ref{prop:singptCP}.
If $\msi = 6, 10$, then $\msp \in X$ is of type $\frac{1}{2} (1, 1, 1)$, $\frac{1}{3} (1, 1, 2)$, respectively, and in both cases we have $\alpha_{\msp} (X) \ge 1/2$ by Proposition \ref{prop:singptic}.
Thus the proof of Theorem \ref{thm:famI1} for families $\mcF_6$, $\mcF_{10}$ and $\mcF_{14}$ is completed.

%%%%%%%%%%%%%%%%%%%%%%%%%%%%%%%%%%
\subsection{The family $\mcF_2$}
%%%%%%%%%%%%%%%%%%%%%%%%%%%%%%%%%%

This section is devoted to the proof Theorem \ref{thm:famI1} for the family $\mcF_2$.
In the following, let 
\[
X = X_5 \subset \mbP (1,1,1,1,2)_{x, y, z, t, w}
\] 
be a member of $\mcF_2$ with defining polynomial $F = F (x,y,z,t,w)$.

%%%%%%%%%%%%%%%%%%%%%%%%%%%%%%%%%
\subsubsection{Smooth points}
%%%%%%%%%%%%%%%%%%%%%%%%%%%%%%%%%

Let $\msp \in X$ be a smooth point.
In this subsection, we will prove $\alpha_{\msp} (X) \ge 1/2$.
We may assume $\msp = \msp_x$ by a choice of coordinates.
The proof will be done by division into cases.

\paragraph{\it Case: $x^3 w \in F$} 
In this case, we can write
\[
F = x^3 w + x^2 f_3 + x f_4 + f_5,
\]
where $f_i = f_i (y,z,t,w)$ is a quasi-homogeneous polynomial of degree $i$.
We have $\mult_{\msp} (H_w) \ge 3$.

\begin{Claim} \label{cl:No2-1}
$\lct_{\msp} (X; \frac{1}{2} H_w) \ge 1/2$.
\end{Claim}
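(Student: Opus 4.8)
The plan is to estimate $\mult_\msp(H_w)$ and feed it into the multiplicity bound of Lemma~\ref{lem:multlct}, upgrading to a weighted-blowup computation only in the extremal case. Since $x^3 w \in F$, the point $\msp = \msp_x$ is quasi-smooth and, setting $x = 1$, the equation $w + f_3 + f_4 + f_5 = 0$ lets me solve $w = w(y,z,t)$ as a convergent power series and take $\{y,z,t\}$ as local coordinates of $X$ at $\msp$. Writing $\bar f_i := f_i(y,z,t,0)$, the leading term of $w(y,z,t)$ is $-\bar f_{i_0}$, where $i_0 \in \{3,4,5\}$ is the smallest index with $\bar f_{i_0} \ne 0$; consequently $H_w = (w = 0)_X$ satisfies $\mult_\msp(H_w) = i_0$. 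First I would record that $i_0 \le 5$: if $\bar f_3 = \bar f_4 = \bar f_5 = 0$, then every monomial of $F$ is divisible by $w$, so $X$ would be reducible, contradicting quasi-smoothness.

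If $i_0 \in \{3,4\}$, then $\omult_\msp(\tfrac12 H_w) = \tfrac12 \mult_\msp(H_w) \le 2$, so Lemma~\ref{lem:multlct} gives $\lct_\msp(X; \tfrac12 H_w) \ge \tfrac{2}{i_0} \ge \tfrac12$ and the claim follows at once.

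The remaining (and main) case is $i_0 = 5$, where the multiplicity bound only yields $\tfrac25$ and a finer analysis is required. Here $\bar f_3 = \bar f_4 = 0$, forcing $f_3 = w\ell_1$ and $f_4 = wq_2 + cw^2$ for a linear form $\ell_1$, a quadratic form $q_2$ and some $c \in \mbC$, so that $F = wP + \bar f_5$ with $P = x^3 + \cdots$ a polynomial not vanishing at $\msp$. In the chart $x=1$ the unit $P$ can be divided off, exhibiting $H_w$ analytically near $\msp$ as the affine cone over the plane quintic $C_5 := (\bar f_5 = 0) \subset \mbP^2$. The key point I would then prove is that $C_5$ is smooth: if $C_5$ were singular at $[y_0:z_0:t_0]$, all of $\partial_y\bar f_5$, $\partial_z\bar f_5$, $\partial_t\bar f_5$ would vanish there, and evaluating the partials of $F$ at $(s:y_0:z_0:t_0:0)$ shows that the only possibly nonzero one is $\partial_w F = P(s,y_0,z_0,t_0,0)$, a monic cubic in $s$; choosing $s$ to be one of its roots produces a non-quasi-smooth point of $X$, a contradiction.

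Finally, with $C_5$ smooth I would apply Lemma~\ref{lem:lctwbl} to the ordinary blowup of $\msp$ (weight $(1,1,1)$, so $\Diff = 0$ and $E \cong \mbP^2$): since $\ord_E(H_w) = 5$ and $\tilde H_w|_E = C_5$, one obtains $\lct_\msp(X; H_w) \ge \min\{\tfrac35, \lct(\mbP^2; C_5)\} = \tfrac35$, the last equality because a smooth plane curve has log canonical threshold $1$. Rescaling, $\lct_\msp(X; \tfrac12 H_w) = 2\lct_\msp(X;H_w) \ge \tfrac65 \ge \tfrac12$, as required. The main obstacle is the $i_0 = 5$ case, and within it the smoothness of $C_5$; everything else reduces to the routine multiplicity bound.
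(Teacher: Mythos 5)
Your proposal is correct and follows essentially the same route as the paper: both reduce the only nontrivial case ($\mult_{\msp}(H_w)=5$) to showing that the quintic $e_5=\bar f_5$ cuts out a smooth plane curve, using the same quasi-smoothness argument at a root of the residual cubic in $x$, and then conclude $\lct_{\msp}(X;H_w)\ge 3/5$ via the blowup at $\msp$ (the paper invokes Lemma \ref{lem:lctwblwh}, you invoke Lemma \ref{lem:lctwbl} directly, which is the same computation). The lower multiplicities are dispatched by the multiplicity bound in both arguments.
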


\begin{proof}[Proof of Claim \ref{cl:No2-1}]
This is obvious when $\mult_{\msp} (H_w) \le 4$, hence we assume $\mult_{\msp} (H_w) \ge 5$.
Then we can write
\[
F = x^3 w + x^2 w a_1 + x (\alpha w^2 + w b_2) + w^2 c_1 + w d_3 + e_5,
\]
where $\alpha \in \mbC$ and $a_1, b_2, c_1, d_3, e_5 \in \mbC [y,z,t]$ are quasi-homogeneous polynomials of indicated degrees.
We show that $(e_5 = 0) \subset \mbP (1,1,1)_{y, z, t}$ is smooth.
Indeed, if it has a singular point at $(y\!:\!z\!:\!t) = (\lambda\!:\!\mu\!:\!\nu)$, then, by setting $\theta \in \mbC$ to be a solution of the equation
\[
x^3 + x^2 a_1 (\lambda,\mu,\nu) + x b_2 (\lambda,\mu,\nu) + d_3 (\lambda,\mu,\nu) = 0,
\]
we see that $X$ is not quasi-smooth at the point $(\theta\!:\!\lambda\!:\!\mu\!:\!\nu\!:\!0)$ and this is a contradiction.
The lowest weight part of $F (1, y, z, t, 0) = e_5$ with respect to $\wt (y, z, t) = (1, 1, 1)$ is $e_5$ which defines a smooth hypersurface in $\mbP^2$.
By Lemma \ref{lem:lctwblwh}, we have $\lct_{\msp} (X, H_w) \ge 3/5$.
Thus $\lct_{\msp} (X;\frac{1}{2} H_w) \ge 6/5$ in this case and the claim is proved.
\end{proof}

Let $D \in |A|_{\mbQ}$ be an irreducible $\mbQ$-divisor other than $\frac{1}{2} H_w$.
We can take a $\mbQ$-divisor $T \in |A|_{\mbQ}$ such that $\mult_{\msp} (T) \ge 1$ and $\Supp (T)$ does not contain any component of the effective $1$-cycle $D \cdot H_w$ since $\{y, z, t\}$ isolates $\msp$.
It follows that
\[
3 \mult_{\msp} (D) \le (D \cdot H_w \cdot T)_{\msp} \le (D \cdot H_w \cdot T) = 5.
\]
This shows $\lct_{\msp} (X;D) \ge 3/5$ and thus $\alpha_{\msp} (X) \ge 1/2$.

\paragraph{\it Case: $x^3 w \notin F$}
By a choice of coordinates, we can write
\[
F = x^4 t + x^3 f_2 + x^2 f_3 + x f_4 + f_5,
\]
where $f_i = f_i (y,z,t,w)$ is a quasi-homogeneous polynomial of degree $i$ with $w \notin f_2$.

Suppose $w^2 \in f_4$.
In this case, $\mult_{\msp} (H_t) = 2$ and hence $\lct_{\msp} (X;H_t) \ge 1/2$.
Let $D \in |A|_{\mbQ}$ be an irreducible $\mbQ$-divisor other than $H_t$.
We can take a $\mbQ$-divisor $T \in |A|_{\mbQ}$ such that $\mult_{\msp} (T) \ge 1$ and $\Supp (T)$ does not contain any component of the effective $1$-cycle $D \cdot H_t$ since $\{y, z, t\}$ isolates $\msp$, so that
\[
2 \mult_{\msp} (D) \le (D \cdot H_t \cdot T)_{\msp} \le (D \cdot H_t \cdot T) = \frac{5}{2}.
\]
This shows $\lct_{\msp} (X;D) \ge 4/5$ and thus $\alpha_{\msp} (X) \ge 1/2$ in this case.

Suppose $w^2 \notin f_4$.
We have $\Bs |\mcI_{\msp} (A)| = \Gamma$, where $\Gamma = (y = z = t = 0) \subset X$ is a quasi-line.
We assume $\alpha_{\msp} (X) < 1/2$.
Then there exists an irreducible $\mbQ$-divisor $D \in |A|_{\mbQ}$ such that $(X, \frac{1}{2} D)$ is not log canonical at $\msp$.
Let $S \in |\mcI_{\msp} (A)|$ be a general member so that $S \ne \Supp (D)$.
Then $S$ is a normal surface by Lemma \ref{lem:normalqhyp} and it is quasi-smooth along $\Gamma$.
Moreover, for another general $T \in |\mcI_{\msp} (A)|$, the multiplicity of $T|_S$ along $\Gamma$ is $1$, that is, we can write 
\[
T|_S = \Gamma + \Delta,
\] 
where $\Delta$ is an effective divisor on $S$ such that $\Gamma \not\subset \Supp (\Delta)$.
We see that $\Gamma$ is a quasi-line, $S$ is quasi-smooth at $\msp_w$, $\Gamma$ passes through the $\frac{1}{2} (1,1)$ point $\msp_w$ of $S$ and $(K_S \cdot \Gamma) = 0$.
It follows that
\[
(\Gamma^2)_S = - 2 + \frac{1}{2} = - \frac{3}{2},
\]
by Remark \ref{rem:compselfint}.
Hence
\[
(\Delta \cdot \Gamma)_S = (T|_S \cdot \Gamma)_S - (\Gamma^2)_S = 2.
\]
The divisor $D|_S$ on $S$ is effective and we write $\frac{1}{2} D|_S = \gamma \Gamma + \Xi$, where $\gamma \ge 0$ and $\Xi$ is an effective divisor on $S$ such that $\Gamma \not\subset \Supp (\Xi)$.
Since $\Bs |\mcI_{\msp} (A)| = \Gamma$ and $S$ is general, we may assume that $\Supp (\Xi)$ does not contain any component of $\Supp (\Delta)$.
In particular $(\Xi \cdot \Delta)_S \ge 0$.
Note also that 
\[
(D|_S \cdot \Delta)_S = (T|_S \cdot \Delta)_S = ((A^3) - (T \cdot \Gamma)_S) = 2.
\]
It follows that
\[
2 = (D|_S \cdot \Delta)_S \ge 2 \gamma (\Gamma \cdot \Delta)_S = 4 \gamma,
\]
which implies $\gamma \le \frac{1}{2}$.
We see that $(X, \frac{1}{2} D|_S)$ is not log canonical at $\msp$, and hence $(S, \Gamma + \Xi) = (S, \frac{1}{2} D|_S + (1-\gamma) \Gamma)$ is not log canonical at $\msp$.
By the inversion of adjunction, we have
\[
1 \ge \frac{1}{4} + \frac{3}{2} \gamma = ((\frac{1}{2} D|_S - \gamma \Gamma) \cdot \Gamma)_S = (\Delta \cdot \Gamma)_S \ge \mult_{\msp} (\Delta|_{\Gamma}) > 1.
\]
This is a contradiction and the inequality $\alpha_{\msp} (X) \ge 1/2$ is proved.

%%%%%%%%%%%%%%%%%%%%%%%%%%%%%%%%%%%
\subsubsection{The singular point of type $\frac{1}{2} (1,1,1)$}
%%%%%%%%%%%%%%%%%%%%%%%%%%%%%%%%%%%

Let $\msp = \msp_w$ be the singular point of type $\frac{1}{2} (1,1,1)$.
Note that the point $\msp \in X$ is a QI center.

\paragraph{\it Case: $\msp$ is non-degenerate}
By a choice of coordinates, we can write
\[
F = w^2 t + w f_3 (x,y,z) + g_5 (x,y,z,t),
\]
where $f_3, g_5$ are non-zero homogeneous polynomials such that $f_3 \ne 0$ as a polynomial.
Let $\varphi \colon Y \to X$ be the Kawamata blowup at $\msp$ with exceptional divisor $E$.

\begin{Claim} \label{cl:No2-2}
$\lct_{\msp} (X, H_t) \ge \frac{1}{2}$.
\end{Claim}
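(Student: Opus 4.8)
The plan is to bound $\lct_{\msp}(X;H_t)$ from below through a weighted blowup of the orbifold chart at $\msp = \msp_w$. Since $w^2 t \in F$ and $f_3 \ne 0$ by non-degeneracy, the coordinate $t$ is the quasi-tangent coordinate at $\msp$ with $\omult_{\msp}(H_t) = 3$, and $H_t$ is the quasi-tangent divisor; moreover $F = w^2 t + w f_3 + g_5$ is exactly of the shape required by Lemma \ref{lem:lctwblwh}, with the vertex $w$ (of degree $2$) playing the role of $x_0$, with $t$ playing the role of $x_{n+1}$, with $e = 2$, and with the three degree-one coordinates $x,y,z$ as the remaining variables. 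Setting $\bar{F} := F(x,y,z,0,1) = f_3(x,y,z) + \bar{g}_5(x,y,z)$, where $\bar{g}_5 := g_5(x,y,z,0)$, Lemma \ref{lem:lctwblwh} bounds $\lct_{\msp}(X;H_t)$ below by $\min\{(c_1+c_2+c_3)/\wt_{\underline{c}}(\bar F),\ \lct(\mbP(\underline{c})^{\wf},\Diff;\mcD^{\wf}_G)\}$ for any weight $\underline{c} = (c_1,c_2,c_3)$ on $(x,y,z)$ with $\gcd\{c_1,c_2,c_3\}=1$, where $G$ is the lowest-weight part of $\bar{F}$. The whole argument then reduces to exhibiting one weight making both terms at least $1/2$.

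The first case, which I would dispatch immediately, is when the cubic form $f_3$ is not a cube of a linear form. Here I take the balanced weight $\underline{c} = (1,1,1)$, so $G = f_3$, $\wt_{\underline{c}}(\bar F) = 3$ and the first term equals $1$, while $\mcD^{\wf}_G = (f_3 = 0) \subset \mbP^2$ is a plane cubic that is not a triple line. Then $\Diff = 0$, and Lemma \ref{lem:lctP2cubic} gives $\lct(\mbP^2;\mcD^{\wf}_G) \ge 1/2$, whence $\lct_{\msp}(X;H_t) \ge 1/2$. This branch is precisely the content of Lemma \ref{lem:lcttangcube}, whose hypothesis is met because the cubic part of $\bar F$ equals $f_3 \ne 0$.

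The genuinely hard case, and the main obstacle, is when $f_3$ is a cube. After a linear change of $x,y,z$ I may assume $f_3 = x^3$ and $F = w^2 t + w x^3 + g_5$. The balanced weight now fails, since $(x^3 = 0)$ is a triple line and yields only $1/3$; instead I would use quasi-smoothness of $X$ to control which monomials of $g_5$ occur and then choose an unbalanced weight $\underline{c}$ (with $c_1$ attached to $x$) for which the lowest-weight part $G$ of $x^3 + \bar{g}_5$ defines a quasi-smooth divisor in $\mbP(\underline{c})^{\wf}$ meeting each component of $\Diff$ transversally, giving $\lct(\mbP(\underline{c})^{\wf},\Diff;\mcD^{\wf}_G) = 1$ and a first term $\ge 1/2$. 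The crux is that quasi-smoothness forces $\bar{g}_5$ to be non-degenerate in the $y,z$ directions: if $y$ or $z$ entered $g_5$ only through a monomial divisible by $t$ (which disappears from $\bar{g}_5$ and does not improve the leading term of the local equation of $\rho_w^* H_t$), one produces a non-quasi-smooth point on $X$, a contradiction. I expect this to require a short case analysis on the surviving monomials of $\bar{g}_5$, entirely analogous to the weighted-blowup computations carried out for the non-degenerate QI centers in Claims \ref{clm:ndQIcent-3}, \ref{clm:ndQIcent-6} and \ref{clm:ndQIcent-7}, where each branch is resolved by a tailored weight together with Lemma \ref{lem:lctwblwh}. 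Note finally that the conclusion cannot be upgraded to $\alpha_{\msp}(X) \ge 1/2$ via the second assertion of Lemma \ref{lem:lcttangcube}, because $\msp$ is a non-degenerate QI center and hence a maximal center by Remark \ref{rem:QImaxcent}; only the unconditional bound $\lct_{\msp}(X;H_t) \ge 1/2$ is being claimed here.
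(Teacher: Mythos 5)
Your first case (where $f_3$ is not a cube of a linear form) is exactly the paper's opening move: the cubic part of $F(x,y,z,0,1)$ is $f_3$, and Lemma \ref{lem:lcttangcube} gives $\lct_{\msp}(X;H_t)\ge 1/2$ at once. Your closing remark is also correct: since a non-degenerate QI center is a maximal center, the second assertion of Lemma \ref{lem:lcttangcube} is unavailable, and only the bound on $H_t$ itself is claimed here.

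The gap is in the case where $f_3$ is a cube, which is the entire content of the claim, and there your argument is a plan rather than a proof. You promise "an unbalanced weight $\underline{c}$ for which the lowest-weight part $G$ of $x^3+\bar{g}_5$ defines a quasi-smooth divisor meeting each component of $\Diff$ transversally," but you never exhibit such a weight, and the nondegeneracy you extract from quasi-smoothness — that $\bar{g}_5$ is nonzero in the two remaining variables, i.e.\ $g_5\notin(x,t)$ in your coordinates — is far too weak to guarantee this. For instance $\bar{g}_5(0,y,z)$ could be $y^4z$ or have repeated factors, in which case no single weight makes $G$ quasi-smooth, and one must instead estimate $\lct(\mbP(\underline{c})^{\wf},\Diff;\mcD^{\wf}_G)$ by hand in each sub-case; the "short case analysis" you defer is precisely where the work lies. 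The paper avoids this branching entirely with a trick you do not have: writing $f_3=z^3$, it restricts to the general quasi-hyperplane section $S=(x-\lambda y=0)_X$. Quasi-smoothness gives $F\notin(z,t)$, hence $g_5(x,y,0,0)\ne 0$, so for general $\lambda$ the restricted equation acquires a term $\alpha y^5$ with $\alpha\ne 0$; the problem then becomes two-dimensional, a single weight $\wt(y,z)=(3,5)$ produces the lowest-weight part $z^3+\alpha y^5$ (a reduced point of $\mbP(3,5)$), and Lemma \ref{lem:lctwblwh} yields $\lct_{\msp}(S;H_t|_S)\ge 8/15$, hence $\lct_{\msp}(X;H_t)\ge 8/15>1/2$. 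Your three-variable route may well be completable, but as written the decisive case is not proved.
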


\begin{proof}[Proof of Claim \ref{cl:No2-2}]
The lowest weight part of $F (x,y,z,0,1)$ with respect to $\wt (x,y,z) = (1,1,1)$ is $f_3$.
By Lemma \ref{lem:lcttangcube}, we have $\alpha_{\msp} (X) \ge 1/2$ unless $f_3$ is a cube of a linear form.
Hence it remains to prove the claim assuming that $f_3$ is a cube of a linear form.
By a choice of coordinates, we may assume $f_3 = z^3$. 
Let $S$ be the divisor on $X$ defined by $x - \lambda y = 0$ for a general $\lambda \in \mbC$.
By the quasi-smoothness of $X$, the polynomial $F$ cannot be contained in the ideal $(z, t) \subset \mbC [x, y, z, t, w]$.
This implies $g_5 (x, y, 0, 0) \ne 0$, and hence $g_5 (\lambda y, y,0,0) \ne 0$.
By eliminating $x$, the surface $S$ is isomorphic to the hypersurface in $\mbP (1, 1, 1, 2)_{y, z, t, w}$ defined by
\[
G := w^2 t+ w z^3 + \alpha y^5 + z a_4 + t b_4 = 0,
\]
where $a_4 = a_4 (y, z), b_4 = b_4 (y, z, t)$ are homogeneous polynomials of degree $4$ and $\alpha \ne 0$ is a constant.
The lowest weight part of $G (x,z,0,1)$ with respect to $\wt (y, z) = (3, 5)$ is $z^3 + \alpha y^5$ which defines a smooth point of $\mbP (3, 5)_{y, z}$.
By Lemma \ref{lem:lctwblwh}, $\lct_{\msp} (S; H_t|_S) \ge 8/15$, and hence $\lct_{\msp} (X;H_t) \ge 8/15$.
Thus the claim is proved.
\end{proof}

Let $D \in |A|_{\mbQ}$ be an irreducible $\mbQ$-divisor on $X$ other than $H_t$.
We can take $T \in |A|_{\mbQ}$ such that $\mult_{\msp} (T) \ge 1$ and $\Supp (T)$ does not contain any component of the effective $1$-cycle $D \cdot H_t$ since $\{x, y, z, t\}$ isolates $\msp$.
Then
\[
3 \omult_{\msp} (D) < 2 (D \cdot H_t \cdot T) = 5
\] 
since $\omult_{\msp} (H_t) = 3$.
This shows $\lct_{\msp} (X; D) \ge \frac{3}{5}$ and thus $\alpha_{\msp} (X) \ge \frac{1}{2}$.

\paragraph{\it Case: $\msp$ is degenerate}

In this case we have $\alpha_{\msp} (X) = 3/5$ by Proposition \ref{prop:lctdegQI}.
Therefore the proof of Theorem \ref{thm:famI1} for the family $\mcF_2$ is completed.

%%%%%%%%%%%%%%%%%%%%%%%%%%%%%%%%%%
\subsection{The family $\mcF_4$}
%%%%%%%%%%%%%%%%%%%%%%%%%%%%%%%%%%

This subsection is devoted to the proof of Theorem \ref{thm:famI1} for the family $\mcF_4$.
In the following, let 
\[
X = X_6 \subset \mbP (1,1,1,2,2)_{x, y, z, t, w}
\] 
be a member of $\mcF_4$ with defining polynomial $F = F (x, y, z, t, w)$.

%%%%%%%%%%%%%%%%%%%%%%%%%%%%%%%%%%
\subsubsection{Smooth points}
%%%%%%%%%%%%%%%%%%%%%%%%%%%%%%%%%%

Let $\msp$ be a smooth point of $X$.
We will prove $\alpha_{\msp} (X) \ge 1/2$.
We may assume $\msp = \msp_x$ by a choice of coordinates.
The proof will be done by division into cases.

\paragraph{\it Case: Either $x^4 w \in F$ or $x^4 t \in F$}
In this case we have
\[
\alpha_{\msp} (X) \ge \frac{2}{1 \cdot 1 \cdot 2 \cdot (A^3)} = \frac{2}{3}.
\]
by Lemma \ref{lem:complctsingtang}.

\paragraph{\it Case: $x^4 w, x^4 t \notin F$}
We can write
\[
F = x^5 y + x^4 f_2 + x^3 f_3 + x^2 f_4 + x f_5 + f_6,
\]
where $f_i = f_i (y,z,t,w)$ is a quasi-homogeneous polynomial of degree $i$ with $t, w \notin f_2$.

We claim $\lct_{\msp} (X;H_y) \ge 1/2$.
This is obvious when $\mult_{\msp} (H_y) \le 2$ and hence we assume $\mult_{\msp} (H_y) \ge 3$.
Then we can write
\[
\bar{F} := F (1, 0, z, t, w) 
= \sum_{i=2}^6 f_i (0, z, t, w) 
= \alpha z^3 + \beta t z^2 + \gamma w z^2 + c (t, w) + h,
\]
where $c (t, w) = f_6 (0, 0, z, t)$ and $h = h (y, t, w)$ is in the ideal $(y, t, w)^4$.
By the quasi-smoothness of $X$, $c$ cannot be a cube of a linear form.
This implies that the cubic part of $\bar{F}$ is not a cube of a linear form.
Thus $\lct_{\msp} (X;H_y) \ge 1/2$ by Lemma \ref{lem:lcttangcube} and the claim is proved.

Let $D \in |A|_{\mbQ}$ be an irreducible $\mbQ$-divisor other than $H_y$.
We can take $T \in |2 A|_{\mbQ}$ such that $\mult_{\msp} (T) \ge 1$ and $\Supp (T)$ does not contain any component of the effective $1$-cycle $D \cdot H_y$ since $\{y, z, t, w\}$ isolates $\msp$.
Then we have
\[
2 \mult_{\msp} (D) \le (D \cdot H_y \cdot T) = 2 (A^3) = 3
\]
since $\mult_{\msp} (H_y) \ge 2$.
This implies $\lct_{\msp} (X;D) \ge 2/3$ and thus $\alpha_{\msp} (X) \ge 1/2$.

%%%%%%%%%%%%%%%%%%%%%%%%%%%%%%%%%%
\subsubsection{Singular points of type $\frac{1}{2} (1,1,1)$}
%%%%%%%%%%%%%%%%%%%%%%%%%%%%%%%%%%

Let $\msp$ be a singular point of type $\frac{1}{2} (1,1,1)$.
Then we have $\alpha_{\msp} (X) \ge 1/2$ by Proposition \ref{prop:ndQIcent} (actually we have $\alpha_{\msp} (X) \ge 2/3$ by the argument in \S \ref{sec:ndQI3ditinct}).
Therefore the proof of Theorem \ref{thm:famI1} for the family $\mcF_4$ is completed.

%%%%%%%%%%%%%%%%%%%%%%%%%%%%%%%%%%
\subsection{The family $\mcF_5$}
%%%%%%%%%%%%%%%%%%%%%%%%%%%%%%%%%%

This subsection is devoted to the proof Theorem \ref{thm:famI1} for the family $\mcF_5$.
In the following, let 
\[
X = X_7 \subset \mbP (1,1,1,2,3)_{x, y, z, t, w}
\] 
be a member of family $\mcF_5$ with defining polynomial $F = F (x, y, z, t, w)$.

%%%%%%%%%%%%%%%%%%%%%%%%%%%%%%%%%%
\subsubsection{Smooth points}
%%%%%%%%%%%%%%%%%%%%%%%%%%%%%%%%%%

Let $\msp$ be a smooth point of $X$.
We will prove $\alpha_{\msp} (X) \ge 1/2$.
The proof will be done by division into cases.

\paragraph{\it Case: $\msp \in U_x \cup U_y \cup U_z$}

By a choice of coordinates $x, y, z$, we may assume $\msp = \msp_x$.
By Lemma \ref{lem:complctsingtang}, we have
\[
\alpha_{\msp} (X) \ge
\begin{cases} \frac{2}{1 \cdot 1 \cdot 2 \cdot (A^3)} = \frac{6}{7}, & \text{if $x^4w \in F$}, \\
\frac{2}{1 \cdot 1 \cdot 3 \cdot (A^3)} = \frac{4}{7}, & \text{if $x^4 w \notin F$ and $x^5 t \in F$}.
\end{cases}
\]
It remains to consider the case where $x^4 w, x^5 t \notin F$.
In this case we can write
\[
F = x^6 y + x^5 f_2 + x^4 f_3 + x^3 f_4 + x^2 f_5 + x f_6 + f_7,
\]
where $f_i = f_i (y,z,t,w)$ is a quasi-homogeneous polynomial of degree $i$ with $t \notin f_2$ and $w \notin f_3$.

\begin{Claim} \label{cl:No5-1}
$\lct_{\msp} (X;H_y) \ge 1/2$.
\end{Claim}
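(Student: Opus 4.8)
The plan is to follow the pattern already established for the family $\mcF_2$ (Claim~\ref{cl:No2-1}) and the preceding case analysis for $\mcF_5$: prove $\lct_{\msp}(X;H_y) \ge 1/2$ by examining the quasi-tangent structure of $X$ at $\msp = \msp_x$, and then handle a general irreducible $D \in |A|_{\mbQ}$ separately via an intersection estimate using an isolating set. Since $\msp$ is a smooth point with $H_y$ as its quasi-tangent divisor (because $x^6 y \in F$ is the quasi-linear term), the computation of $\lct_{\msp}(X;H_y)$ reduces to a local analysis at the origin of the orbifold chart $\breve{U}_x$, using the restriction $\bar{F} := F(1,0,z,t,w)$ as in the previous claims.

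First I would pass to local coordinates on $\breve{U}_x \cong \mbA^4_{y,z,t,w}$ and observe that $\lct_{\msp}(X;H_y)$ is governed by the lowest-weight behaviour of $\bar{F} = \sum_{i=2}^{7} f_i(0,z,t,w)$ with respect to $\wt(z,t,w)=(1,2,3)$ or an ordinary blowup. The key structural input is quasi-smoothness: I would argue that $\mult_{\msp}(H_y) \le 2$ in most subcases (giving $\lct_{\msp}(X;H_y)\ge 1/2$ immediately via Lemma~\ref{lem:multlct}), and that when $\mult_{\msp}(H_y) = 3$ the cubic part of $\bar{F}$ cannot be a cube of a linear form. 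The latter follows because $w^2 x, w z^2$-type monomials or the genuinely degree-7 terms in $t,w$ force the cubic part to contain an independent direction; I would extract from $f_6(0,z,t,w)$ and $f_7(0,z,t,w)$ the relevant monomials (using $t \notin f_2$, $w \notin f_3$) to rule out a perfect-cube cubic part, then invoke Lemma~\ref{lem:lcttangcube} (or directly Lemma~\ref{lem:lctP2cubic}) to conclude $\lct_{\msp}(X;H_y) \ge 1/2$. Where the multiplicity is higher than $3$, I expect to need Lemma~\ref{lem:lctwblwh} with an explicit weight on $(z,t,w)$, exhibiting that the associated divisor $\mcD^{\wf}$ is quasi-smooth so that the log canonical threshold on the exceptional $\mbP(\underline{c})^{\wf}$ equals $1$ and the numerical ratio $(c_1+c_2+c_3)/\wt(\bar F)$ is the binding constraint.

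The main obstacle will be the higher-multiplicity case, i.e.\ showing that quasi-smoothness of $X$ genuinely prevents $\bar{F}$ from degenerating into a high-multiplicity singularity that would push the threshold below $1/2$. As in Claim~\ref{cl:No2-1}, I anticipate having to produce, for each degenerate configuration of the coefficients, a point on $X$ at which all partial derivatives of $F$ vanish, contradicting quasi-smoothness; the bookkeeping of which monomials can appear given $t \notin f_2$, $w \notin f_3$ is the delicate part. Once the claim $\lct_{\msp}(X;H_y)\ge 1/2$ is secured, the remainder is routine: for an irreducible $D \in |A|_{\mbQ}$ with $\Supp(D)\ne H_y$, since $\{y,z,t,w\}$ isolates $\msp$ I would choose $T \in |3A|_{\mbQ}$ (by Remark~\ref{rem:isolT}) with $\mult_{\msp}(T)\ge 1$ whose support avoids the components of $D\cdot H_y$, and estimate
\[
2\,\mult_{\msp}(D) \le (D\cdot H_y\cdot T)_{\msp} \le (D\cdot H_y\cdot T) = 2\cdot 3\cdot (A^3) = 2,
\]
using $\mult_{\msp}(H_y)\ge 2$ and $(A^3)=7/6/\,(1\cdot1\cdot2\cdot3)=7/36$, which gives $\lct_{\msp}(X;D)\ge 1$ and hence $\alpha_{\msp}(X)\ge \min\{\lct_{\msp}(X;H_y),1\}\ge 1/2$. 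I would double-check the numerical value of $(A^3)$ to ensure the final intersection bound lands at or below $2$.
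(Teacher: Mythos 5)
Your overall strategy matches the paper's: reduce to the dichotomy $\mult_{\msp}(H_y)\le 2$ versus a non-cube cubic part and invoke Lemma \ref{lem:lcttangcube}. But the decisive step is left as an intention rather than carried out, and it is exactly where the content of the claim lies. When $\mult_{\msp}(H_y)\ge 3$, every monomial of $F$ lies in $(y)\cup(z,t,w)^3$, and the key combinatorial check is that among quasi-homogeneous monomials of degree $\le 7$ in $y,z,t,w$ the only one lying in $(y)\cup(z,t,w)^3$ but \emph{not} in $(y)\cup(z,t)^2$ is $w^2z$. Hence, if $w^2z\notin F$, one can write $F=x^6y+yg+h$ with $h\in(z,t)^2$, and then $X$ fails to be quasi-smooth along the nonempty set $(y=z=t=x^6+g=0)$; this forces $w^2z\in F$. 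Since $w^3$ has weighted degree $9>7$, the cubic part of $\bar F$ contains $w^2z$ but not $w^3$ and therefore cannot be a cube of a linear form. Your proposal names the right tools (``produce a point where all partials vanish'', ``bookkeeping of monomials'') but never identifies $w^2z$ as the unique dangerous monomial nor exhibits the quasi-smoothness contradiction, so the cube-exclusion --- the heart of the claim --- is not actually established. Note also that once $w^2z\in F$ is forced the multiplicity is exactly $3$, so the separate ``multiplicity $>3$'' case you plan to treat with Lemma \ref{lem:lctwblwh} does not arise.

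A secondary issue in your closing paragraph (which goes beyond the stated claim): $(A^3)=7/(1\cdot 1\cdot 1\cdot 2\cdot 3)=7/6$, not $7/36$, and $(D\cdot H_y\cdot T)=3(A^3)=7/2$, not $2$. With $\mult_{\msp}(H_y)\ge 2$ this still yields $\mult_{\msp}(D)\le 7/4$, i.e.\ $\lct_{\msp}(X;D)\ge 4/7>1/2$, so the final conclusion survives, but the arithmetic as written is incorrect.
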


\begin{proof}[Proof of Claim \ref{cl:No5-1}]
This is obvious when $\mult_{\msp} (H_y) = 2$, and we assume $\mult_{\msp} (H_y) \ge 3$.
It follows that each monomial appearing in $F$ is contained in $(y) \cup (z, t, w)^3$.
A monomial of degree $d \in \{2, 3, 4, 5, 6, 7\}$ in variables $y, z, t, w$ which is contained in $(y) \cup (z, t, w)^3$ is contained in $(y) \cup (z, t)^2$ except for the monomial $w^2 z$ of degree $7$.
Hence we can write
\[
F = x^6 y + y g + h + \alpha w^2 z,
\]
where $g = g (x, y, z, t, w) \in \mbC [x, y, z, t, w]$ and $h = h (x, z, t, w) \in (z, t)^2$.
If $\alpha = 0$, then $X$ is not quasi-smooth at any point of the nonempty set
\[
(y = x^6 + g = z = t = 0) \subset \mbP (1,1,1,2,3).
\]
Thus $w^2 z \in F$ and we see that $\bar{F} = F (1,0,z,t,w) \in (z,t,w)^3$ and the cubic part of $\bar{F}$ is not a cube of a linear form since $w^2 z \in \bar{F}$ and $w^3 \notin \bar{F}$. 
By Lemma \ref{lem:lcttangcube}, we have $\lct_{\msp} (X;H_y) \ge 1/2$ and the claim is proved.
\end{proof}

Let $D \in |A|_{\mbQ}$ be an irreducible $\mbQ$-divisor other than $H_y$.
We can take a $\mbQ$-divisor $T \in |3 A|_{\mbQ}$ such that $\mult_{\msp} (T) \ge 1$ and $\Supp (T)$ does not contain any component of the effective $1$-cycle $D \cdot H_y$ since $\{y, z, t, w\}$ isolates $\msp$.
Then
\[
2 \mult_{\msp} (D) \le (D \cdot H_y \cdot T)_{\msp} \le (D \cdot H_y \cdot T) = 3 (A^3) = \frac{7}{2}
\]
since $\mult_{\msp} (H_y) \ge 2$.
This shows $\lct_{\msp} (X;D) \ge 4/7$ and thus $\alpha_{\msp} (X) \ge 1/2$.

\paragraph{\it Case $\msp \notin U_x \cup U_y \cup U_z$}
If $w t^2 \in F$, then $X \setminus (U_x \cup U_y \cup U_z)$ consists of singular points.
Hence we have $w t^2 \notin F$ in this case, and $\msp$ is contained in the quasi-line $\Gamma := (x = y = z = 0) \subset X$.
We will show $\alpha_{\msp} (X) \ge 1$.
Assume to the contrary that $\alpha_{\msp} (X) < 1$.
Then there exists an irreducible $\mbQ$-divisor $D \in |A|_{\mbQ}$ such that the pair $(X, D)$ is not log canonical at $\msp$.
Let $S \in |A|$ be a general member and write $D|_S = \gamma \Gamma + \Delta$, where $\gamma \ge 0$ is a rational number and $\Delta$ is an effective $1$-cycle on $S$ such that $\Gamma \not\subset \Supp (\Delta)$.

\begin{Claim} \label{cl:No5-2}
$(\Gamma^2)_S = - 5/6$ and $\gamma \le 1$.
\end{Claim}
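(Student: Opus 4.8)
The plan is to treat the two assertions separately: compute $(\Gamma^2)_S$ by the orbifold adjunction formula of Remark \ref{rem:compselfint}, and then bound $\gamma$ by intersecting $D|_S$ against a movable residual curve. First I would record the geometry of $\Gamma$. Since $w t^2 \notin F$ and $w t^2$ is the only monomial of degree $7$ in $t,w$ alone, the restriction $F(0,0,0,t,w)$ vanishes identically, so $\Gamma = (x=y=z=0)_X \cong \mbP(2,3) \cong \mbP^1$ is a quasi-line contained in $X$. The singular locus of $\mbP = \mbP(1,1,1,2,3)$ is $\{\msp_t,\msp_w\}$, with $\msp_t$ of type $\frac12(1,1,1)$ and $\msp_w$ of type $\frac13(1,1,2)$, and both lie on $\Gamma$; these are the only singular points of $X$ through which $\Gamma$ passes. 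Because $|A|$ is spanned by $x,y,z$, every member of $|A|$ vanishes on $\Gamma$, so $\Bs|A| = \Bs|\mcI_\msp(A)| = \Gamma$ and a general $S\in|A|$ contains $\Gamma$, $\msp_t$ and $\msp_w$.

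For the first assertion, I would take a second general member $T\in|A|$ and write $T|_S = \Gamma + \Delta$ with $\Delta$ effective and $\Gamma\not\subset\Supp(\Delta)$. The surface $S$ is normal by Lemma \ref{lem:normalqhyp}, and Lemma \ref{lem:pltsurfpair} applies (with $\Gamma$ a smooth weighted complete intersection curve and $S$ quasi-smooth at the finitely many points of $\Gamma\cap\Supp(\Delta)$), giving that $S$ is quasi-smooth along $\Gamma$ and that $(S,\Gamma)$ is plt. Consequently $\Sing_\Gamma(S) = \{\msp_t,\msp_w\}$ with indices $2$ and $3$. Since $K_X + S \sim_{\mbQ} -A + A \sim_{\mbQ} 0$, adjunction gives $(K_S\cdot\Gamma)_S = ((K_X+S)\cdot\Gamma) = 0$, and Remark \ref{rem:compselfint} then yields
\[
(\Gamma^2)_S = -(K_S\cdot\Gamma)_S - 2 + \frac{2-1}{2} + \frac{3-1}{3} = -2 + \frac12 + \frac23 = -\frac56 .
\]

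For $\gamma\le 1$, I would use the intersection numbers $(A\cdot\Gamma) = \tfrac16$ (the degree of $\mcO(1)$ on $\mbP(2,3)$) and $(A|_S^2)_S = (A^3) = \tfrac76$. Together with $(\Gamma^2)_S = -\tfrac56$ and $\Delta \sim_{\mbQ} A|_S - \Gamma$, one obtains $(\Gamma\cdot\Delta)_S = (A\cdot\Gamma) - (\Gamma^2)_S = 1$ and $(\Delta^2)_S = (A^3) - 2(A\cdot\Gamma) + (\Gamma^2)_S = 0$. Writing $D|_S = \gamma\Gamma + \Xi$ with $\gamma\ge 0$ and $\Gamma\not\subset\Supp(\Xi)$, and arranging (by genericity of $S$ and $T$, using $\Bs|\mcI_\msp(A)| = \Gamma$) that $\Supp(\Xi)$ contains no component of $\Delta$ so that $(\Xi\cdot\Delta)_S\ge 0$, I would compute $(D|_S\cdot\Delta)_S = (A|_S\cdot\Delta)_S = (A^3) - (A\cdot\Gamma) = 1$, whence
\[
1 = (D|_S\cdot\Delta)_S = \gamma(\Gamma\cdot\Delta)_S + (\Xi\cdot\Delta)_S \ge \gamma .
\]

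The routine numerics are straightforward; the main obstacle is the justification of the hypotheses of Remark \ref{rem:compselfint} — namely that $S$ is quasi-smooth along $\Gamma$ and $(S,\Gamma)$ is plt with singular locus exactly $\{\msp_t,\msp_w\}$ — together with the genericity argument guaranteeing $(\Xi\cdot\Delta)_S\ge 0$. Both should follow from Lemmas \ref{lem:normalqhyp} and \ref{lem:pltsurfpair} combined with the identification $\Bs|A| = \Gamma$, but care is needed to check that a general $T\in|A|$ meets $\Gamma$ only at points where $S$ is quasi-smooth, including the quotient singular points $\msp_t,\msp_w$, which are quasi-smooth (though singular) points of $S$.
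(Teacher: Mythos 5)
Your proof is correct and follows essentially the same route as the paper: orbifold adjunction via Remark \ref{rem:compselfint} at the two quotient points $\msp_t$, $\msp_w$ of $S$ for $(\Gamma^2)_S$, and intersecting $D|_S$ with the residual curve of a general $T\in|A|$ (chosen to share no component with the residual of $D|_S$, using $\Bs|A|=\Gamma$) to get $\gamma\le 1$. The only differences are cosmetic: your $\Delta$ and $\Xi$ are the paper's $\Xi$ and $\Delta$ respectively, and you justify the quasi-smoothness of $S$ along $\Gamma$ via Lemma \ref{lem:pltsurfpair} where the paper simply invokes genericity of $S$.
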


\begin{proof}[Proof of Claim \ref{cl:No5-2}]
We see that $S$ has singular points of type $\frac{1}{2} (1,1)$ and $\frac{1}{3} (1,2)$ at $\msp_t$ and $\msp_w$, respectively, and smooth elsewhere since $S \in |A|$ is general.
Since $\Gamma$ is a quasi-line on $S$ passing through $\msp_t, \msp_w$ and $K_S = (K_X + S)|_S \sim 0$ by adjunction, we have
\[
(\Gamma^2)_S = -2 + \frac{1}{2} + \frac{2}{3} = - \frac{5}{6}.
\]
We choose a general member $T \in |A|$ which does not contain any component of $\Delta$.
This is possible since $\Bs |A| = \Gamma$.
We write $T|_S = \Gamma + \Xi$, where $\Xi$ is an effective divisor on $S$ such that $\Gamma \not\subset \Supp (\Xi)$.
We have
\[
\begin{split}
(D|_S \cdot \Xi)_S &= (D|_S \cdot (T|_S - \Gamma))_S = \frac{7}{6} - \frac{1}{6} = 1, \\
(\Gamma \cdot \Xi)_S &= (\Gamma \cdot (T|_S - \Gamma))_S = \frac{1}{6} + \frac{5}{6} = 1.
\end{split}
\]
Note that $\Xi$ does not contain any component of $\Delta$ by our choice of $T$, and hence
\[
1 = (D|_S \cdot \Xi)_S = ((\gamma \Gamma + \Delta) \cdot \Xi)_S \ge \gamma (\Gamma \cdot \Xi)_S = \gamma,
\]
as desired.
\end{proof}

The pair $(S, D|_S) = (S, \gamma \Gamma + \Delta)$ is not log canonical at $\msp$.
Hence the pair $(S, \Gamma + \Delta)$ is not log canonical at $\msp$ since $\gamma \le 1$.
By the inversion of adjunction, we have $\mult_{\msp} (\Delta|_{\Gamma}) > 1$ and thus
\[
1 < \mult_{\msp} (\Delta|_{\Gamma}) 
\le (\Delta \cdot \Gamma)_S
= ((D|_S - \gamma \Gamma) \cdot \Gamma)_S
= \frac{1}{6} + \frac{5}{6} \gamma 
\le 1.
\]
This is a contradiction and we have $\alpha_{\msp} (X) \ge 1$.

%%%%%%%%%%%%%%%%%%%%%%%%%%%%%%%%%%
\subsubsection{The singular point of type $\frac{1}{2} (1,1,1)$}
%%%%%%%%%%%%%%%%%%%%%%%%%%%%%%%%%%

Let $\msp = \msp_t$ be the singular point of type $\frac{1}{2} (1,1,1)$.

\paragraph{\it Case: $t^2 w \in F$}

In this case,  we have
\[
\alpha_{\msp} (X) \ge \frac{2}{2 \cdot 1 \cdot 1 \cdot (A^3)} = \frac{6}{7}
\]
by Lemma \ref{lem:complctsingtang}.

\paragraph{\it Case: $t^2 w \notin F$}
\label{sec:No5singpt2Case2}

Replacing $x, y, z$, we can write
\[
F = t^3 x + t^2 f_3 + t f_5 + f_7,
\]
where $f_i = f_i (x, y, z, w)$ is a quasi-homogeneous polynomial of degree $i$ with $w \notin f_3$.

\begin{Claim} \label{cl:No5-3}
If $\mult_{\msp} (H_x) \ge 3$, then either $w^2 y \in F$ or $w^2 z \in F$.
\end{Claim}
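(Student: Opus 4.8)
The plan is to argue by contradiction: assuming that neither $w^2 y$ nor $w^2 z$ appears in $F$, I would produce a point of $X$ at which $X$ fails to be quasi-smooth. Throughout I work in the orbifold chart $\rho_{\msp} \colon \breve{U}_{\msp} \to U_{\msp}$ at $\msp = \msp_t$, where $\breve{U}_{\msp}$ is the hypersurface $F(\breve{x}, \breve{y}, \breve{z}, 1, \breve{w}) = 0$ in $\mbA^4$ and $\breve{\msp}$ is the origin. Since $\coeff_F(t^3 x) \neq 0$, the coordinate $\breve{x}$ can be eliminated on $\breve{U}_{\msp}$, so that $\omult_{\msp}(H_x) = \mult_{\breve{\msp}}(\bar{F})$ where $\bar{F} := F(0, \breve{y}, \breve{z}, 1, \breve{w}) = f_3(0, \breve{y}, \breve{z}, \breve{w}) + f_5(0, \breve{y}, \breve{z}, \breve{w}) + f_7(0, \breve{y}, \breve{z}, \breve{w})$. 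A short weighted-degree bookkeeping (using $w \notin f_3$ and $\deg w = 3$) shows that every monomial of $\bar{F}$ has ordinary degree at least $3$ in $\breve{y}, \breve{z}, \breve{w}$; in particular the hypothesis $\mult_{\msp}(H_x) \geq 3$ holds automatically, and I will not need it beyond this remark.

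The key observation is that the line $L := (x = y = z = 0)$ is entirely contained in $X$. Indeed, the only monomial of degree $7$ in $t, w$ alone would be $t^2 w$, which is excluded in the present case, so $F(0,0,0,t,w) \equiv 0$. Next I would evaluate the partial derivatives of $F$ at $\msp_w = (0\!:\!0\!:\!0\!:\!0\!:\!1)$: using $w \notin f_3$ and the absence of $t^2 w$, one checks that $\partial F/\partial t$ and $\partial F/\partial w$ vanish at $\msp_w$, while $\partial F/\partial y$ and $\partial F/\partial z$ there equal $\coeff_F(w^2 y)$ and $\coeff_F(w^2 z)$, which are zero by assumption. Quasi-smoothness of $X$ at $\msp_w$ therefore forces $\partial F/\partial x(\msp_w) = \coeff_F(w^2 x) \neq 0$; set $e := \coeff_F(w^2 x) \neq 0$.

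Finally I would test quasi-smoothness at the point $\msq := (0\!:\!0\!:\!0\!:\!1\!:\!w_0) \in L \subset X$, where $w_0 \in \mbC$ is chosen so that $e w_0^2 = -1$. The heart of the matter is the computation that all five partials of $F$ vanish at $\msq$: the only monomials contributing to $\partial F/\partial x$, $\partial F/\partial y$, $\partial F/\partial z$ at a point with $x = y = z = 0$ are precisely $w^2 x$, $w^2 y$, $w^2 z$ (by the degree-$7$ weighted-degree constraint), so $\partial F/\partial x(\msq) = 1 + e w_0^2 = 0$ while $\partial F/\partial y(\msq) = \partial F/\partial z(\msq) = 0$ under our assumption; and $\partial F/\partial t(\msq) = \partial F/\partial w(\msq) = 0$ follow from $w \notin f_3$ together with the absence of pure powers of $w$ in degrees $3, 5, 7$. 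Since $\msq$ is a non-origin point of the affine cone $C_X$ at which $F$ and all its partials vanish, this contradicts the quasi-smoothness of $X$, proving the claim. The main (though routine) obstacle is the careful monomial bookkeeping needed to confirm each partial vanishes at $\msq$; once the point $\msq$ on $L$ is identified, the rest is mechanical. I note that the payoff of the claim is that, knowing $w^2 y$ or $w^2 z$ lies in $F$, the cubic part of $\bar{F}$ acquires a $\breve{w}^2 \breve{y}$ or $\breve{w}^2 \breve{z}$ term but no $\breve{w}^3$ term, hence cannot be the cube of a linear form, which will let us invoke Lemma \ref{lem:lcttangcube} to bound $\lct_{\msp}(X; H_x)$ from below.
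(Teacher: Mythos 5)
Your proof is correct and is essentially the paper's argument made explicit: the paper writes $F = x g + h$ with $h := F(0,y,z,t,w) \in (y,z)^2$ and concludes that $X$ fails to be quasi-smooth along the nonempty set $(x = y = z = g = 0)$, which is precisely the locus on the line $L$ that you exhibit (your point $\msq$, together with the preliminary use of quasi-smoothness at $\msp_w$ to force $\coeff_F(w^2 x) \ne 0$). The only slip is the phrase ``the only monomials contributing \dots are precisely $w^2x$, $w^2y$, $w^2z$'' --- the monomials $t^3x$, $t^3y$, $t^3z$ also contribute a priori --- but your formula $1 + e w_0^2$ already accounts for $t^3x$, and $t^3y$, $t^3z$ are absent by your normalization of coordinates, so nothing is affected.
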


\begin{proof}[Proof of Claim \ref{cl:No5-3}]
Suppose $w^2 y, w^2 z \notin F$.
Then $h := F (0, y, z, t, w)$ is contained in the ideal $(y, z)^2 \subset \mbC [y, z, t, w]$, and we can write $F = x g + h$, where $g = g (x, y, z, t, w)$.
We see that $X$ is not quasi-smooth at any point in the nonempty subset
\[
(x = y = z = g = 0) \subset \mbP (1, 1, 1, 2, 3).
\]
This is a contradiction and the claim is proved.
\end{proof}

We set $\bar{F} := F (0, y, z, 1, w)$.
By Claim \ref{cl:No5-3}, either $\bar{F} \in (y, z, w)^2 \setminus (y, z, w)^3$ or $\bar{F} \in (y, z, w)^3$ and the cubic part of $\bar{F}$ is not a cube of a linear form since $w^3 \notin \bar{F}$.
By Lemma \ref{lem:lcttangcube}, we have $\alpha_{\msp} (X) \ge 1/2$ since $\msp \in X$ is not a maximal center.

%%%%%%%%%%%%%%%%%%%%%%%%%%%%%%%%%%
\subsubsection{Singular point of type $\frac{1}{3} (1,1,2)$}
%%%%%%%%%%%%%%%%%%%%%%%%%%%%%%%%%%

Let $\msp = \msp_w$ be the singular point of type $\frac{1}{3} (1,1,2)$.
We can write
\[
F = w^2 x + w (\alpha t^2 + t a_2 (y, z) + b_4 (y, z)) + f_7 (x, y, z, t),
\]
where $\alpha \in \mbC$ and $a_2 = a_2 (x, y), b_4 = b_4 (y, z), f_7 = f_7 (x, y, z, t)$ are quasi-homogeneous polynomials of degree $2, 4, 7$, respectively.
Let $q = q_{\msp}$ be the quotient morphism of $\msp \in X$ and $\check{\msp}$ be the preimage of $\msp$.

\paragraph{\it Case: $\alpha \ne 0$}
We have $\mult_{\msp} (H_x) = 2$ and $\lct_{\msp} (X;H_x) \ge 1/2$.
Let $D \in |A|_{\mbQ}$ be an irreducible $\mbQ$-divisor other than $H_x$.
We can take a $\mbQ$-divisor $T \in |A|_{\mbQ}$ such that $\mult_{\msp} (T) \ge 1$ and $\Supp (T)$ does not contain any component of the effective $1$-cycle $D \cdot H_x$ since $\{x, y, z\}$ isolates $\msp$.
Then
\[
2 \omult_{\msp} (D) \le (q^*D \cdot q^*H_x \cdot q^*T)_{\check{\msp}} \le 3 (D \cdot H_x \cdot T) = \frac{7}{2}.
\]
This shows $\lct_{\msp} (X;D) \ge 4/7$ and thus $\alpha_{\msp} (X) \ge 1/2$.

\paragraph{\it Case $\alpha = 0$ and $a_2 \ne 0$}

The cubic part of $F (0, y, z, t, 1)$ is $t a_2$ and, by Lemma \ref{lem:lcttangcube}, we have $\lct_{\msp} (X;H_x) \ge 1/2$.
Let $D \sim_{\mbQ} A$ be an irreducible $\mbQ$-divisor on $X$ other than $H_x$.
Then we can take a general $T \in |\mcI_{\msp} (2 A)| = |2 A|$ which does not contain any component of $D \cap H_x$ since $\Bs |2 A| =\msp$.
We see that $T$ is defined by $t - q (x, y, z) = 0$ on $X$, where $q \in \mbC [x,y,z]$ is a general quadratic form.
Let $\rho = \rho_{\msp} \colon \breve{U}_{\msp} \to U_{\msp}$ be the orbifold chart of $X$ containing $\msp$ and let $\breve{\msp}$ be the preimage of $\msp$.
It is then easy to see that the effective $1$-cycle $\rho^*H_x \cdot \rho^*T$ on $\breve{U}_{\msp}$ has multiplicity $4$ at $\breve{\msp}$.
Then we have
\[
4 \omult_{\msp} (D) \le
(\rho^*D \cdot \rho^*H_x \cdot \rho^*T)_{\breve{\msp}}
\le 3 (D \cdot H_x \cdot T) = 7
\]
This shows $\lct_{\msp} (X;D) \ge 4/7$ and thus $\alpha_{\msp} (X) \ge 4/7$. 

\paragraph{\it Case: $\alpha = a_2 = 0$ and $b_4 \ne 0$}

By similar arguments as in the proof of Claim \ref{cl:No5-3}, we see that either $t^3 y \in f_7$ or $t^3 z \in f_7$. 
We choose $z$ and $t$ so that $b_4 (0,z) = z^4$ and $\coeff_{f_7} (t^3 z) = 1$.
Then we have
\[
F (0,0,z,t,1) = z^4 + t^3 z + \beta t^2 z^3 + \gamma t z^5 + \delta z^7,
\]
where $\beta, \gamma, \delta \in \mbC$.
The lowest weight part of $F (0,0,z,t,1)$ with respect to $\wt (z,t) = (1,1)$ is $z^4 + t^3 z$ which defines $4$ distinct points of $\mbP^1_{z, t}$.
Hence we have 
\[
\lct_{\msp} (X;H_x) \ge \lct_{\msp} (H_y;H_x|_{H_y}) \ge \frac{1}{2}
\] 
by Lemma \ref{lem:lctwblwh}.
Let $D \in |A|_{\mbQ}$ be an irreducible $\mbQ$-divisor other than $H_x$.
We can take a $\mbQ$-divisor $T \in |2 A|_{\mbQ}$ such that $\omult_{\msp} (T) \ge 1$ and $\Supp (T)$ does not contain any component of the effective $1$-cycle $D \cdot H_x$ since $\{x, y, z, t\}$ isolates $\msp$.
Then
\[
4 \omult_{\msp} (D) \le
(q^*D \cdot q^*H_x \cdot q^*T)_{\check{\msp}}
\le 3 (D \cdot H_x \cdot T) = 7.
\]
since $\omult_{\msp} (H_x) = 4$.
This shows $\lct_{\msp} (X;D) \ge 4/7$ and thus $\alpha_{\msp} (X) \ge 1/2$.

\paragraph{\it Case: $\alpha = a_2 = b_4 = 0$}
In this case the point $\msp \in X$ is a degenerate QI center and we have $\alpha_{\msp} (X) = 4/7$ by Proposition \ref{prop:lctdegQI}.

%%%%%%%%%%%%%%%%%%%%%%%%%%%%%%%%%%
\subsection{The family $\mcF_8$} 
%%%%%%%%%%%%%%%%%%%%%%%%%%%%%%%%%%

This subsection is devoted to the proof of Theorem \ref{thm:famI1} for the family $\mcF_8$.
In the following, let 
\[
X = X_9 \subset \mbP (1,1,1,3,4)_{x, y, z, t, w}
\] 
be a member of $\mcF_8$ with defining polynomial $F = F (x, y, z, t, w)$.

%%%%%%%%%%%%%%%%%%%%%%%%%%%%%%%%%%
\subsubsection{Smooth points}
%%%%%%%%%%%%%%%%%%%%%%%%%%%%%%%%%%

Let $\msp \in X$ be a smooth point.
We will prove $\alpha_{\msp} (X) \ge 1/2$.
We may assume $\msp = \msp_x$.
The proof will be done by division into cases.

\paragraph{\it Case: $x^5 w \in F$}
We can write
\[
F = x^5 w + x^4 f_5 + x^3 f_6 + x^2 f_7 + x f_8 + f_9,
\]
where $f_i = f_i (y,z,t,w)$ is a quasi-homogeneous polynomial of degree $i$.
We have $\mult_{\msp} (H_w) = 3$ since $t^3 \in f_9$.
Let $D \in |A|_{\mbQ}$ be an irreducible $\mbQ$-divisor on $X$.
Let $S \in |\mcI_{\msp} (A)|$ be a general member so that $\Supp (D) \ne S$.
Since $\{y, z, w\}$ isolates $\msp$, we can take a $\mbQ$-divisor $T \in |A|_{\mbQ}$ such that $\Supp (T)$ does not contain any component of the effective $1$-cycle $D \cdot S$ and $\mult_{\msp} (T) \ge 3/4$ (Note that $T$ is one of $H_y, H_z$ and $\frac{1}{4} H_w$). 
Then we have
\[
\frac{3}{4} \mult_{\msp} (D) \le (D \cdot S \cdot T)_{\msp} \le (D \cdot S \cdot T) = \frac{3}{4}.
\]
This shows $\lct_{\msp} (X;D) \ge 1$ and thus $\alpha_{\msp} (X) \ge 1$.

\paragraph{\it Case: $x^5 w \notin F$ and $x^6 t \in F$}
We can write
\[
F = x^6 t + x^5 f_4 + x^4 f_5 + x^3 f_6 + x^2 f_7 + x f_8 + f_9,
\]
where $f_i = f_i (y,z,t,w)$ is a quasi-homogeneous polynomial of degree $i$ with $w \notin f_4$.
Let $S, T \in |\mcI_{\msp} (A)|$ be general members.
Note that $S$ is smooth at $\msp$.
The intersection $S \cap T$ is isomorphic to the subscheme in $\mbP (1_x, 3_t, 4_w)$ defined by the equation $F (x,0,0,t,w) = 0$ and we can write
\[
F (x,0,0,t,w) = x^6 t + \alpha x^3 t^2 + \beta x^2 w t + \gamma x w^2 + t^3,
\]
where $\alpha, \beta, \gamma \in \mbC$.

\begin{Claim} \label{clm:No8smpt2-1}
If $\gamma \ne 0$, then $S \cdot T = \Gamma$, where $\Gamma$ is an irreducible and reduced curve of degree $3/4$ that is smooth at $\msp$.
\end{Claim}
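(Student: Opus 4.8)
The plan is to reduce the claim to a concrete statement about the single quasi-homogeneous polynomial $g := F(x,0,0,t,w)$. Since $\msp = \msp_x$ and $|\mcI_{\msp}(A)|$ is the pencil spanned by $y$ and $z$ (the weight-one coordinates vanishing at $\msp_x$), two general members $S,T$ satisfy $S\cap T = (y=z=0)\cap X$ scheme-theoretically, because two general linear forms in $y,z$ generate the ideal $(y,z)$. This intersection is isomorphic to the hypersurface $C := (g=0)\subset \mbP(1,3,4)_{x,t,w}$, where
\[
g = x^6 t + \alpha x^3 t^2 + \beta x^2 w t + \gamma x w^2 + t^3 .
\]
First I would record the two easy facts. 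The smoothness of $\Gamma$ at $\msp$ is immediate: in the affine chart $x\neq 0$ (which carries honest coordinates since $\deg x = 1$), the dehomogenization $g(1,t,w)$ has the linear term $t$ coming from $x^6 t\in F$, so $\mult_{\msp}(C)=1$. The degree is the routine computation $\deg\Gamma = (A\cdot\Gamma) = (A^3) = 9/(1\cdot 1\cdot 3\cdot 4) = 3/4$, valid once $S\cdot T$ is shown to be the reduced irreducible cycle $\Gamma$.

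The heart of the proof, and the step I expect to be the main obstacle, is irreducibility and reducedness of $C$, where the hypothesis $\gamma\neq 0$ enters decisively. I would view $g(1,t,w)$ as a monic-up-to-scalar quadratic in $w$ over $\mbC[t]$, namely $\gamma w^2 + \beta t\,w + (t+\alpha t^2 + t^3)$, whose leading coefficient $\gamma$ is a nonzero constant, so it is primitive. Its discriminant $\beta^2 t^2 - 4\gamma(t+\alpha t^2 + t^3)$ is a polynomial in $t$ of degree exactly $3$ (the leading term being $-4\gamma t^3$, nonzero precisely because $\gamma\neq 0$); having odd degree, it cannot be a square in $\mbC(t)$. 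Hence the quadratic has no root in $\mbC(t)$ and is irreducible there, and by Gauss's lemma $g(1,t,w)$ is irreducible in $\mbC[t,w]$. The delicate point I must not skip is lifting this to irreducibility of the quasi-homogeneous $g$ itself: a nontrivial factorization of $g$ would dehomogenize to one of $g(1,t,w)$, and since $t^3\in g$ we have $x\nmid g$, so neither factor could be a power of $x$, contradicting the irreducibility just established.

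Finally I would assemble the pieces. Irreducibility of $g$ in the UFD $\mbC[x,t,w]$ makes $(g)$ prime, so $C=\Gamma$ is integral, i.e.\ irreducible and reduced. Because $(y=z=0)\cap X$ is reduced (being cut out by the reduced $g$), the scheme-theoretic intersection $S\cap T$ carries multiplicity one along $\Gamma$, giving $S\cdot T = \Gamma$ as cycles and hence $\deg\Gamma = (A\cdot S\cdot T) = (A^3) = 3/4$. Together with the smoothness of $\Gamma$ at $\msp$ noted above, this establishes all assertions of the claim under the hypothesis $\gamma\neq 0$.
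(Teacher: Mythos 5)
Your proof is correct and follows essentially the same route as the paper, which simply asserts that $F(x,0,0,t,w)$ is irreducible when $\gamma\ne 0$ and reads off the degree and smoothness. Your discriminant-plus-Gauss's-lemma argument, and the lift from $g(1,t,w)$ to the quasi-homogeneous $g$ using $x\nmid g$, correctly fills in the detail the paper leaves as ``easy to see.''
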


\begin{proof}[Proof of Claim \ref{clm:No8smpt2-1}]
Suppose $\gamma \ne 0$.
Then it is easy to see that the polynomial $F (x, 0, 0, t, w)$ is irreducible.
Hence the curve
\[
\Gamma = (y = z = F (x,0,0,t,w) = 0) \subset \mbP (1,1,1,3,4).
\]
is irreducible and reduced.
It is also obvious that $\deg \Gamma = 3/4$ and $\Gamma$ is smooth at $\msp$.
\end{proof}

If $\gamma \ne 0$, then we have $\alpha_{\msp} (X) \ge 1$ by Claim \ref{clm:No8smpt2-1} and Lemma \ref{lem:exclL}.

In the following we consider the case where $\gamma = 0$.
We set 
\[
\Delta = (y = z = t = 0) \subset \mbP (1, 1, 1, 3, 4),
\]
which is a quasi-line of degree $1/4$ passing through $\msp$.
Note that $\Delta$ is smooth at $\msp$.

\begin{Claim} \label{clm:No8smpt2-2}
If $\gamma = 0$ and $\beta \ne 0$, then $T|_S = \Delta + \Xi$, where $\Xi$ is an irreducible and reduced curve which does not pass through $\msp$.
Moreover the intersection matrix $M (\Delta, \Xi)$ satisfies the condition $(\star)$.
\end{Claim}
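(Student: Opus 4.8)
The plan is to exploit the factorization of $F(x,0,0,t,w)$ forced by $\gamma = 0$. Since $S, T \in |\mcI_{\msp}(A)|$ are general and $\msp = \msp_x$, their common zero is $S \cap T = (y = z = 0)_X$, which is isomorphic to the curve $(F(x,0,0,t,w) = 0) \subset \mbP(1,3,4)_{x,t,w}$. With $\gamma = 0$ we have
\[
F(x,0,0,t,w) = x^6 t + \alpha x^3 t^2 + \beta x^2 w t + t^3 = t \cdot g, \quad g = x^6 + \alpha x^3 t + \beta x^2 w + t^2,
\]
so that $T|_S = \Delta + \Xi$ with $\Delta = (y = z = t = 0)$ and $\Xi = (y = z = g = 0)$. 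First I would show $\Xi$ is irreducible and reduced: viewing $g$ as linear in $w$ with coefficient the monomial $\beta x^2 \neq 0$, any nontrivial factor not involving $w$ would divide both $\beta x^2$ and the $w$-free part $x^6 + \alpha x^3 t + t^2$; but the latter contains $t^2$ and so is not divisible by $x$, whence $g$ is irreducible. Since $g(\msp_x) = 1 \neq 0$, the curve $\Xi$ does not pass through $\msp$.

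Next I would record the geometry along $\Delta$. The quasi-line $\Delta = \{(x:0:0:0:w)\} \cong \mbP^1$ joins $\msp_x$ (a smooth point) and $\msp_w$ (the $\frac{1}{4}(1,1,3)$ point), and these are the only points of $\Sing(X)$ meeting $\Delta$; the meeting locus $\Delta \cap \Xi$ is cut out on $\Delta$ by $g|_{t=0} = x^2(x^4 + \beta w)$, giving $\msp_w$ and a smooth point $\msq = (1:0:0:0:-1/\beta)$. I would then apply Lemma \ref{lem:pltsurfpair} with $\Gamma = \Delta$ — using that $S$ is a normal surface by Lemma \ref{lem:normalqhyp}, that $\Delta$ is a smooth weighted complete intersection curve, and that $S$ is quasi-smooth at the two points of $\Delta \cap \Xi$ — to conclude that $S$ is quasi-smooth along $\Delta$ and that $(S, \Delta)$ is plt along $\Delta$, with $\Sing_{\Delta}(S) = \{\msp_w\}$ of index $4$.

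The intersection matrix is then a short computation. Since $K_S = (K_X + S)|_S \sim 0$, Remark \ref{rem:compselfint} gives $(\Delta^2)_S = -2 + \frac{3}{4} = -\frac{5}{4}$. Using $\deg \Delta = \frac{1}{4}$, $\deg \Xi = (A^3) - \deg \Delta = \frac{3}{4} - \frac{1}{4} = \frac{1}{2}$, and the relations $(T|_S \cdot \Delta)_S = \deg \Delta$ and $(T|_S \cdot \Xi)_S = \deg \Xi$ applied to $T|_S = \Delta + \Xi$, I would obtain $(\Delta \cdot \Xi)_S = \frac{1}{4} + \frac{5}{4} = \frac{3}{2}$ and $(\Xi^2)_S = \frac{1}{2} - \frac{3}{2} = -1$ (a consistency check being $(\Delta + \Xi)^2 = \frac{3}{4} = (A^3)$). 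Thus
\[
M(\Delta, \Xi) = \begin{pmatrix} -\tfrac{5}{4} & \tfrac{3}{2} \\[1mm] \tfrac{3}{2} & -1 \end{pmatrix},
\]
whose diagonal entries are negative, whose off-diagonal entry is positive, and whose determinant is $\frac{5}{4} - \frac{9}{4} = -1 < 0$; hence $M(\Delta, \Xi)$ satisfies $(\star)$. The main obstacle is the second step: pinning down that $S$ is quasi-smooth at $\msp_w$ — so that the singularity of $S$ along $\Delta$ has index exactly $4$ — and at $\msq$, since these quasi-smoothness facts are precisely what license both the plt conclusion and the value $(\Delta^2)_S = -\frac{5}{4}$. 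Once they are in place the verification of $(\star)$ is purely numerical.
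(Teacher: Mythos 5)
Your proposal is correct and follows essentially the same route as the paper's proof: the factorization $F(x,0,0,t,w)=t\,g$, the irreducibility of $g$ (which the paper asserts directly from $\beta\ne 0$, while you spell out the argument via linearity in $w$), the identification $\Delta\cap\Xi=\{\msp_w,\msq\}$, the appeal to Lemma \ref{lem:pltsurfpair} and Remark \ref{rem:compselfint} to get $(\Delta^2)_S=-\tfrac{5}{4}$, and the derivation of $(\Delta\cdot\Xi)_S=\tfrac{3}{2}$ and $(\Xi^2)_S=-1$ from $T|_S=\Delta+\Xi$. The quasi-smoothness of $S$ at $\msp_w$ and $\msq$ that you flag as the remaining point is likewise left as ``easy to see'' in the paper, so there is no substantive gap relative to the original argument.
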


\begin{proof}[Proof of Claim \ref{clm:No8smpt2-2}]
We have
\[
F (x, 0, 0, t, w) = t (x^6 + \alpha x^3 t + \beta x^2 w + t^2),
\]
and the polynomial $x^6 + \alpha x^3 t + \beta x^2 w + t^2$ is irreducible since $\beta \ne 0$.
It follows that $T|_S = \Delta + \Xi$, where 
\[
\Xi = (y = z = x^6 + \alpha x^3 t + \beta x^2 w + t^2 = 0) \subset \mbP (1, 1, 1, 3, 4) 
\]
is an irreducible and reduced curve of degree $1/2$ that does not pass through $\msp$.
We have $\Delta \cap \Xi = \{\msp_w, \msq\}$, where $\msq = (1\!:\!0\!:\!0\!:\!0\!:\!-1/\beta)$.
It is easy to see that $S$ is quasi-smooth at $\msp_w$ and $\msq$, hence $S$ is quasi-smooth along $\Delta$ by Lemma \ref{lem:pltsurfpair}.
We have $\Sing_{\Gamma} (S) = \{\msp_w\}$ and $\msp_w \in S$ is of type $\frac{1}{4} (1, 3)$.
By Remark \ref{rem:compselfint}, we have
\[
(\Delta^2)_S = -2 + \frac{3}{4} = - \frac{5}{4}.
\]
By taking intersection number of $T|_S = \Delta + \Xi$ and $\Delta$, and then $T|_S$ and $\Xi$, we have
\[
(\Delta \cdot \Xi) = \frac{3}{2}, \quad
(\Xi^2)_S = -1.
\]
It follows that the intersection matrix $M (\Delta, \Xi)$ satisfies the condition $(\star)$.
\end{proof}

\begin{Claim} \label{clm:No8smpt2-3}
If $\gamma = \beta = 0$ and $\alpha \ne \pm 2$, then $T|_S = \Delta + \Theta_1 + \Theta_2$, where $\Theta_1$ and $\Theta_2$ are distinct quasi-lines which does not pass through $\msp$.
Moreover the intersection matrix $M (\Delta, \Theta_1, \Theta_2)$ satisfies the condition $(\star)$.
\end{Claim}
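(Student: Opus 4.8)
The plan is to follow the template of Claims \ref{clm:No8smpt2-1} and \ref{clm:No8smpt2-2}, now in the fully split case. First I would substitute $\gamma = \beta = 0$ into the expression for $F(x,0,0,t,w)$, obtaining
\[
F(x,0,0,t,w) = t\,(t^2 + \alpha x^3 t + x^6) = t\,(t - \xi_1 x^3)(t - \xi_2 x^3),
\]
where $\xi_1, \xi_2$ are the roots of $u^2 + \alpha u + 1 = 0$. Their product is $1$, so both are nonzero, and their discriminant is $\alpha^2 - 4$, so they are distinct precisely because $\alpha \ne \pm 2$. This yields the decomposition $T|_S = \Delta + \Theta_1 + \Theta_2$, with $\Delta = (y = z = t = 0)$ and $\Theta_i = (y = z = t - \xi_i x^3 = 0)$. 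Each $\Theta_i$ is a quasi-linear subspace of dimension one, hence a quasi-line $\cong \mbP^1$, and lies on $X$ because $F(x,0,0,t,w)$ vanishes on it. At $\msp = \msp_x$ one has $t = \xi_i \ne 0$, so $\msp \notin \Theta_i$, and $\Theta_1 \ne \Theta_2$ since $\xi_1 \ne \xi_2$.

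Next I would locate all pairwise intersections. On $(y = z = 0)$ the three curves are cut out respectively by $t = 0$, $t = \xi_1 x^3$, $t = \xi_2 x^3$; since the $\xi_i$ are distinct and nonzero, any two of these equations force $x = 0$, so $\Delta \cap \Theta_i = \Theta_1 \cap \Theta_2 = \{\msp_w\}$, and none of the curves meets $\msp_t$. I would then check that $S$ (a normal surface by Lemma \ref{lem:normalqhyp}) is quasi-smooth at $\msp_w$ and invoke Lemma \ref{lem:pltsurfpair} to conclude that $S$ is quasi-smooth along each of $\Delta, \Theta_1, \Theta_2$ and that $(S,\Gamma)$ is plt for each. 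Because $S \sim A$ and $K_X = -A$, adjunction gives $K_S \sim 0$, hence $(K_S \cdot \Gamma)_S = 0$; as the only singular point of $S$ on each curve is $\msp_w$, of index $4$, Remark \ref{rem:compselfint} yields $(\Delta^2)_S = (\Theta_1^2)_S = (\Theta_2^2)_S = -2 + \tfrac34 = -\tfrac54$.

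For the off-diagonal entries I would use the global relation $(T|_S \cdot \Gamma)_S = (A \cdot \Gamma) = \deg\Gamma = \tfrac14$ for each of the three curves, together with the symmetry between $\Theta_1$ and $\Theta_2$. Expanding $(T|_S \cdot \Delta)_S$ against $T|_S = \Delta + \Theta_1 + \Theta_2$ and using $(\Delta^2)_S = -\tfrac54$ gives $(\Delta\cdot\Theta_1)_S = (\Delta\cdot\Theta_2)_S = \tfrac34$, and expanding $(T|_S \cdot \Theta_1)_S$ then gives $(\Theta_1\cdot\Theta_2)_S = \tfrac34$. Thus
\[
M(\Delta,\Theta_1,\Theta_2) = \begin{pmatrix} -\tfrac54 & \tfrac34 & \tfrac34 \\ \tfrac34 & -\tfrac54 & \tfrac34 \\ \tfrac34 & \tfrac34 & -\tfrac54 \end{pmatrix} = -2 I + \tfrac34 J,
\]
with $I$ the identity and $J$ the all-ones matrix, whose eigenvalues are $\tfrac14$ (once) and $-2$ (twice). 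Condition $(\star)$ follows at once: the off-diagonal entries are positive, each $1\times1$ principal minor equals $-\tfrac54$ so $(-1)\det M_{\{i\}} > 0$, each $2\times2$ principal minor equals $\tfrac{25}{16}-\tfrac{9}{16}=1>0$, and $(-1)^{3-1}\det M = \det M = \tfrac14\cdot(-2)^2 = 1 > 0$.

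The main obstacle I anticipate is the verification that $S$ is quasi-smooth at $\msp_w$: this point lies in the base locus $(y = z = 0)\cap X$ of the pencil $|\mcI_{\msp}(A)|$, so it cannot be dispatched by genericity as the transverse intersection points were in Claim \ref{clm:No8smpt2-2}. I would instead verify it directly in the orbifold chart at $\msp_w$, using the quasi-smoothness of $X$ there and the fact that a general linear form $\lambda y + \mu z$ restricts to a smooth germ; this also underwrites the plt hypothesis feeding Remark \ref{rem:compselfint}. Everything else reduces to the routine intersection-theoretic bookkeeping above once the three curves and their unique common point $\msp_w$ are identified.
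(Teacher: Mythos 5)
Your proposal is correct and follows essentially the same route as the paper: the same factorization $t(t-\xi_1 x^3)(t-\xi_2 x^3)$ with $\xi_1\xi_2=1$ and distinctness from $\alpha\ne\pm 2$, the same identification of $\msp_w$ as the unique pairwise intersection point, the same use of Lemma \ref{lem:pltsurfpair} and Remark \ref{rem:compselfint} to get $(\Delta^2)_S=(\Theta_i^2)_S=-\tfrac54$, and the same bookkeeping against $T|_S$ to get all off-diagonal entries equal to $\tfrac34$. Your explicit Jacobian check of quasi-smoothness of $S$ at $\msp_w$ (which the paper dismisses as "clearly") and your eigenvalue computation of $\det M$ are sound refinements of steps the paper leaves implicit.
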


\begin{proof}[Proof of Claim \ref{clm:No8smpt2-3}]
We have
\[
F (x, 0, 0, t, w) = t (x^6 + \alpha x^3 t + t^2) = t (t - \lambda x^3)(t - \lambda^{-1} x^3),
\]
where $\lambda \ne 0, 1$ is a complex number such that $\alpha = \lambda + \lambda^{-1}$.
Hence we have
\[
T|_S = \Delta + \Theta_1 + \Theta_2,
\]
where 
\[
\Xi_1 = (y = z = t - \lambda x^3 = 0), \quad
\Xi_2 = (y = z = t - \lambda^{-1} x^3 = 0)
\]
are both quasi-lines of degree $1/4$ that do not pass through $\msp$.
We have $\Delta \cap (\Theta_1 \cup \Theta_2) = \{\msp_w\}$ and $S$ is clearly quasi-smooth at $\msp_w$.
It follows that $S$ is quasi-smooth along $\Gamma$ by Lemma \ref{lem:pltsurfpair}, and $\Sing_{\Delta} (S) = \{\msp_w\}$, where $\msp_w \in S$ is of type $\frac{1}{4} (1, 3)$.
Thus we have 
\[
(\Delta^2)_S = -\frac{5}{4}.
\]
By similar arguments, we see that $S$ is quasi-smooth along $\Theta_i$ and $\Sing_{\Theta} (S) = \{\msp_w\}$ for $i = 1, 2$, and hence 
\[
(\Theta_i^2)_S = - \frac{5}{4}.
\]
By taking intersection number of $T|_S = \Delta + \Theta_1 + \Theta_2$ and $\Delta, \Theta_1, \Theta_2$, we conclude
\[
(\Delta \cdot \Theta_1)_S = (\Delta \cdot \Theta_2)_S = (\Theta_1 \cdot \Theta_2)_S = \frac{3}{4}.
\]
It is then straightforward to see that $M (\Delta, \Theta_1, \Theta_2)$ satisfies the condition $(\star)$.
\end{proof}

\begin{Claim} \label{clm:No8smpt2-4}
If $\gamma = \beta = 0$ and $\alpha = \pm 2$, then $T|_S = \Delta + 2 \Theta$, where $\Theta$ is an irreducible and reduced curve which does not pass through $\msp$.
Moreover the intersection matrix $M (\Delta, \Theta)$ satisfies the condition $(\star)$.
\end{Claim}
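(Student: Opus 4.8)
The plan is to follow the pattern of Claims \ref{clm:No8smpt2-2} and \ref{clm:No8smpt2-3}, adapting it to the degenerate tangency that produces a multiplicity‑two component. First I would set $\beta = \gamma = 0$ and $\alpha = \pm 2$ in the restriction, so that
\[
F (x, 0, 0, t, w) = t (x^6 + \alpha x^3 t + t^2) = t (t \pm x^3)^2 .
\]
Hence $T|_S = \Delta + 2 \Theta$, where $\Delta = (y = z = t = 0)$ is the quasi-line of degree $1/4$ treated before and $\Theta = (y = z = t \pm x^3 = 0)$. Eliminating $t$ by $t = \mp x^3$ identifies $\Theta$ with $\mbP (1, 4)_{x, w} \cong \mbP^1$, which shows $\Theta$ is irreducible and reduced of degree $1/4$; and since $t = \mp x^3 \neq 0$ whenever $x \neq 0$, the point $\msp = \msp_x$ does not lie on $\Theta$.

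Next I would verify that $S$ is quasi-smooth along $\Delta$. On $y = z = 0$ the common zero of $t$ and $t \pm x^3$ is the single point $\msp_w$, so $\Delta \cap \Supp (2 \Theta) = \{\msp_w\}$, and a general $S$ in the pencil $|\mcI_{\msp} (A)|$ is quasi-smooth there. Lemma \ref{lem:pltsurfpair} then gives quasi-smoothness of $S$ along $\Delta$, with $\Sing_{\Delta} (S) = \{\msp_w\}$ and $\msp_w \in S$ of type $\frac{1}{4} (1, 3)$. Because $K_S \sim 0$ by adjunction, Remark \ref{rem:compselfint} yields $(\Delta^2)_S = -2 + \frac{3}{4} = -\frac{5}{4}$.

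The two remaining entries I would extract not from Remark \ref{rem:compselfint} applied to $\Theta$, but from the linear relations $(T|_S \cdot \Delta)_S = (A \cdot \Delta) = 1/4$ and $(T|_S \cdot \Theta)_S = (A \cdot \Theta) = 1/4$. Substituting $T|_S = \Delta + 2 \Theta$, the first relation together with $(\Delta^2)_S = -5/4$ forces $(\Delta \cdot \Theta)_S = 3/4$, and the second then forces $(\Theta^2)_S = -1/4$. Finally I would check directly that
\[
M (\Delta, \Theta) =
\begin{pmatrix}
-\frac{5}{4} & \frac{3}{4} \\
\frac{3}{4} & -\frac{1}{4}
\end{pmatrix}
\]
satisfies the condition $(\star)$: the off-diagonal entry $3/4$ is positive, both diagonal entries are negative (so $(-1)^{|I|} \det M_I \ge 0$ for the two singleton subsets $I$), and $\det M = \frac{5}{16} - \frac{9}{16} = -\frac{1}{4}$, whence $(-1)^{2-1} \det M = \frac{1}{4} > 0$.

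The main obstacle I anticipate is precisely the handling of the non-reduced component $2 \Theta$. In the reduced cases one computes every self-intersection by the plt/adjunction recipe of Remark \ref{rem:compselfint}, but here $S$ and $T$ are tangent along $\Theta$ — which is what the multiplicity two records — so the pair $(S, \Theta)$ need not be plt and that recipe does not apply to $\Theta$; indeed it would spuriously suggest $(\Theta^2)_S = -5/4$. The safeguard is the self-consistency of the two relations, each reproducing $\deg \Delta = \deg \Theta = 1/4$, which legitimizes deducing $(\Delta \cdot \Theta)_S$ and $(\Theta^2)_S$ purely from the single directly-computed value $(\Delta^2)_S = -5/4$.
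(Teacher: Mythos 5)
Your proposal is correct and follows essentially the same route as the paper: factor $F(x,0,0,t,w)=t(t\pm x^3)^2$, identify $\Theta$ as a quasi-line of degree $1/4$ avoiding $\msp$, compute $(\Delta^2)_S=-5/4$ via Lemma \ref{lem:pltsurfpair} and Remark \ref{rem:compselfint}, and then extract $(\Delta\cdot\Theta)_S=3/4$ and $(\Theta^2)_S=-1/4$ from the linear relations $(T|_S\cdot\Delta)_S=(T|_S\cdot\Theta)_S=1/4$, which is exactly what the paper does. Your closing remark correctly identifies why Remark \ref{rem:compselfint} cannot be applied to the non-reduced component $\Theta$ and why the linear-relation workaround is the right safeguard.
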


\begin{proof}[Proof of Claim \ref{clm:No8smpt2-4}]
Without loss of generality, we may assume $\alpha = -2$.
We have
\[
F (x, 0, 0, t, w) = t (t - x^3)^2,
\]
and hence
\[
T|_S = \Delta + 2 \Theta,
\]
where 
\[
\Theta = (y = z = t - x^3 = 0) \subset \mbP (1, 1, 1, 3, 4)
\]
is a quasi-line of degree $1/4$ that does not pass through $\msp$.
By the same arguments as in Claim \ref{clm:No8smpt2-3}, we have
\[
(\Delta^2)_S = - \frac{5}{4}.
\]
Then, by taking intersection number of $T|_S = \Delta + 2 \Theta$ and $\Delta, \Theta$, we have
\[
(\Delta \cdot \Theta)_S = \frac{3}{4}, \quad
(\Theta^2)_S = - \frac{1}{4}.
\]
Thus the matrix $M (\Delta, \Theta)$ satisfies the condition $(\star)$.
\end{proof}

By Claims \ref{clm:No8smpt2-2}, \ref{clm:No8smpt2-3},  \ref{clm:No8smpt2-4} and Lemma \ref{lem:mtdLred}, we conclude
\[
\alpha_{\msp} (X) \ge \min \left\{ 1, \ \frac{1}{(A^3) + 1 - \deg \Delta} \right\} = \frac{2}{3}.
\]

\paragraph{\it Case: $x^5 w, x^6 t \notin F$}
Replacing $y$ and $z$, we can write
\[
F = x^8 y + x^7 f_2 + x^6 f_3 + x^5 f_4 + x^4 f_5 + x^3 f_6 + x^2 f_7 + x f_8 + f_9,
\]
where $f_i = f_i (y,z,t,w)$ is a homogeneous polynomial of degree $i$ with $w \notin f_4$ and $t \notin f_3$.
Note that we have $2 \le \mult_{\msp} (H_y) \le 3$ since $t^3 \in F$.

Let $D \in |A|_{\mbQ}$ be an irreducible $\mbQ$-divisor other than $H_y$.
We can take a $\mbQ$-divisor $T \in |4 A|_{\mbQ}$ such that $\mult_{\msp} (T) \ge 1$ and $\Supp (T)$ does not contain any component of the effective $1$-cycle $D \cdot H_y$ since $\{y, z, t, w\}$ isolates $\msp$.
Then
\[
2 \mult_{\msp} (D) \le (D \cdot H_y \cdot T)_{\msp} \le (D \cdot H_y \cdot T) = 4 (A^3) = 3.
\]
This shows $\lct_{\msp} (X;D) \ge 2/3$ and thus it remains to show that $\lct (X; H_y) \ge 1/2$.

Suppose that either $\mult_{\msp} (H_y) = 2$ or $\mult_{\msp} (H_y) = 3$ and the cubic part of $\bar{F} := F (1,0,z,t,w)$ is a cube of a linear form.
Then $\lct_{\msp} (X;H_y) \ge 1/2$ by Lemma \ref{lem:lcttangcube}, and we are done.

In the following, we assume that $\mult_{\msp} (H_y) = 3$ and the cubic part of $\bar{F}$ is a cube of a linear form.
Since $t^3 \in \bar{F}$ and $w^3 \notin \bar{F}$, we may assume that the cubic part of $\bar{F}$ is $t^3$ after replacing $t$.

We claim $w^2 z \in F$.
We see that a monomial other than $w^2 z$ which appears in $\bar{F}$ with nonzero coefficient is contained in the ideal $(z, t)^2 \subset \mbC [z, t, w]$.
We can write $F = y G + F (x, 0, z, t, w)$ for some homogeneous polynomial $G (x, y, z, t, w)$.
If $w^2 z \notin F$, then $F (x, 0, z, t, w) \in (z, t)^2$ and $X$ is not quasi-smooth at any point contained in the nonempty set 
\[
(y = z = t = G = 0) \subset \mbP (1,1,1,3,4).
\]
This is a contradiction and the claim is proved.

Then, we may assume $\coeff_F (w^2 z) = 1$ and, by replacing $t$ and $w$, we can write
\[
\begin{split}
\bar{F} = \alpha_4 z^4 & + \alpha_5 z^5 + (\beta t z^3 + \alpha_6 z^6) + (\gamma w z^3 + \delta t z^4 + \alpha_7 z^7) + \\ 
+ & (\varepsilon w z^4 + \zeta t^2 z^2 + \eta t z^5 + \alpha_8 z^8)
+ (w^2 z + t^3 + \theta t^2 z^3 + \lambda t z^6 + \alpha_9 z^9),
\end{split}
\]
where $\alpha_4, \dots, \alpha_9, \beta, \gamma, \dots, \lambda \in \mbC$.
The lowest weight part of $\bar{F}$ with respect to $\wt (z,t,w) = (6,8,9)$ is $G := \alpha_4 z^4 + w^2 z + t^3$.
We set $\mbP = \mbP (6, 8, 9)$.
Then $\mbP^{\wf} = \mbP (1, 4, 3)_{\tilde{z}, \tilde{t}, \tilde{w}}$ and, by Lemma \ref{lem:lctwblwh}, we have
\[
\lct_{\msp} (X; H_y) \ge \min \left\{ \frac{23}{24}, \ \lct (\mbP^{\wf}, \Diff; \Gamma) \right\},
\]
where 
\[
\begin{split}
\Diff &= \frac{2}{3} H^{\wf}_{\tilde{t}} + \frac{1}{2} H^{\wf}_{\tilde{w}}, \\
\Gamma &= \mcD^{\wf}_G = (\alpha_4 \tilde{z}^4 + \tilde{w} \tilde{z} + \tilde{t} = 0) \subset \mbP (1, 4, 3),
\end{split}
\]
are $(\mbQ$-)divisors on $\mbP^{\wf}$ with $H^{\wf}_{\tilde{t}} = (\tilde{t} = 0)$ and $H^{\wf}_{\tilde{w}} = (\tilde{w} = 0)$.
It is easy to see that any pair of curves $H^{\wf}_{\tilde{t}}, H^{\wf}_{\tilde{w}}$ and $\Gamma$ intersect transversally.
If $\alpha_4 \ne 0$, then $H^{\wf}_{\tilde{t}} \cap H^{\wf}_{\tilde{w}} \cap \Gamma = \emptyset$, and thus $\lct (\mbP^{\wf}, \Diff; \Gamma) = 1$.
If $\alpha_4 = 0$, then $H^{\wf}_{\tilde{t}} \cap H^{\wf}_{\tilde{w}} \cap \Gamma = \{\msp_{\tilde{z}}\}$.
In this case, by consider the the blowup at $\msp_{\tilde{z}}$, we can confirm the equality $\lct (\mbP^{\wf}, \Diff; \Gamma) = 5/6$.
Thus, we have $\lct_{\msp} (X; H_y) \ge 5/6$, and the proof is completed.

%%%%%%%%%%%%%%%%%%%%%%%%%%%%%%%%%%
\subsubsection{The singular point of type $\frac{1}{4} (1,1,3)$}
%%%%%%%%%%%%%%%%%%%%%%%%%%%%%%%%%%

Let $\msp = \msp_w$ be the singular point of type $\frac{1}{4} (1,1,3)$.
We can write
\[
F = w^2 x + w (t a_2 (y, z) + b_5 (y, z)) + f_9 (x, y, z, t),
\]
where $a_2 = a_2 (y, z)$, $b_5 = b_5 (y, z)$ and $f_9 =f_9 (x, y, z, t)$ are homogeneous polynomials of degrees $2$, $5$ and $9$, respectively.

Suppose that $a_2 \ne 0$ as a polynomial.
Then $\bar{F} := F (0, y, z, t, 1) \in (y, z, t)^3$ and its cubic part $t a_2 + t^3$ is not a cube of a linear form. 
It follows that $\lct_{\msp} (X;H_x) \ge 2/3$ by Lemma \ref{lem:lcttangcube}.
Let $D \in |A|_{\mbQ}$ be an irreducible $\mbQ$-divisor other than $H_x$.
Since the set $\{x, y, z\}$ isolates $\msp$, we can take a $\mbQ$-divisor $T \in |A|_{\mbQ}$ such that $\omult_{\msp} (T) \ge 1$ and $\Supp (T)$ does not contain any component of $D \cdot H_x$.
We have $\omult_{\msp} (H_x) = 3$.
It follows that
\[
3 \omult_{\msp} (D) \le (q^*D \cdot q^*H_x \cdot q^*T)_{\check{\msp}} \le 4 (D \cdot H_x \cdot T) = 3,
\]
where $q = q_{\msp}$ is the quotient morphism of $\msp \in X$ and $\check{\msp}$ is the preimage of $\msp$ via $q$.
This shows $\lct_{\msp} (X;D) \ge 1$ and thus $\alpha_{\msp} (X) \ge 1/2$.

Finally, suppose that $a_2 = 0$ and $b_5 \ne 0$.
Replacing $y$ and $z$, we may assume $z^5 \in b_5$ and $\coeff_{b_5} (z^5) = 1$.
We may also assume that $\coeff_{f_9} (t^3) = 1$ by rescaling $t$.
Then we have
\[
F (0,0,z,t,1) = z^5 + f_9 (0,0,z,t).
\]
The lowest weight part with respect to the weight $\wt (z,t) = (3,5)$ is $z^5 + t^3$ and thus
\[
\lct_{\msp} (X;H_x) \ge \lct_{\msp} (H_y;H_x|_{H_y}) = \frac{8}{15}.
\]
We have $\omult_{\msp} (H_x) = 3$ and the set $\{x, y, z\}$ isolates $\msp$.
Hence by the same argument as in the the case $a_2 \ne 0$, we have $\lct_{\msp} (X;D) \ge 1$ for any irreducible $\mbQ$-divisor $D \in |A|_{\mbQ}$ other than $H_x$.
Thus $\alpha_{\msp} (X) \ge 1/2$.

Suppose that $a_2 = b_5 = 0$.
Then we have $\alpha_{\msp} (X) = 5/9$ by Proposition \ref{prop:lctdegQI}, and the proof is completed.

\begin{Rem} \label{rem:QIcent-8}
The singular point $\msp \in X$ of type $\frac{1}{4} (1, 1, 3)$ is a QI center.
When $\msp$ is non-degenerate, the above proof shows that $\lct_{\msp} (X; D) \ge 1$ for any irreducible $\mbQ$-divisor $D \in |A|_{\mbQ}$ other than $H_x$ and $\lct_{\msp} (X; H_x) > 1/2$.
\end{Rem}

%%%%%%%%%%%%%%%%%%%%%%%%%%%%%%%%%%
%%%%%%%%%%%%%%%%%%%%%%%%%%%%%%%%%%
%%%%%%%%%%%%%%%%%%%%%%%%%%%%%%%%%%
\section{Further results and discussion on related problems}
\label{chap:discuss}
%%%%%%%%%%%%%%%%%%%%%%%%%%%%%%%%%%
%%%%%%%%%%%%%%%%%%%%%%%%%%%%%%%%%%
%%%%%%%%%%%%%%%%%%%%%%%%%%%%%%%%%%

%%%%%%%%%%%%%%%%%%%%%%%%%%%%%%%%%%
%%%%%%%%%%%%%%%%%%%%%%%%%%%%%%%%%%
\subsection{Birationally superrigid Fano $3$-folds of higher codimensions}
%%%%%%%%%%%%%%%%%%%%%%%%%%%%%%%%%%
%%%%%%%%%%%%%%%%%%%%%%%%%%%%%%%%%%

We can embed a Fano 3-fold into a weighted projective space by choosing (minimal) generators of the anticanonical graded ring.
We consider embedded Fano 3-folds.
We have satisfactory results on the classification of Fano 3-folds of low codimensions (\cite{IF00}, \cite{CCC11}, \cite{ABR02}) and the following are known for their birational (super)rigidity.

\begin{itemize}
\item Fano 3-folds of codimension $2$ are all weighted complete intersections and they consist of 85 families.
Among them, there are exactly $19$ families whose members are  birationally rigid (\cite{Oka14}, \cite{AZ16}).
\item Fano 3-folds of codimension $3$ consist of 69 families of so called Pfaffian Fano 3-folds and 1 family of complete intersections of three quadrics in $\mbP^6$.
Among them, there are exactly $3$ families whose members are birationally rigid (\cite{AO18}).
\item Constructions of many families of Fano 3-folds of codimension 4 has been known (see e.g.\ \cite{BKR12}, \cite{CD18}), but their classification is not completed.
There are at least $2$ families of birationally superrigid Fano 3-folds of codimension 4 (\cite{Oka20a}). 
\end{itemize}

For birationally rigid Fano 3-folds of codimension 2 and 3, K-stability and existence of KE metrics are known under some generality assumptions.

\begin{Thm}[{\cite{KOW18}}] \label{thm:KOW18}
Let $X$ be a general quasi-smooth Fano 3-folds of codimension $c \in \{2, 3\}$ which is birationally rigid.
We assume that $X$ is a complete intersection of a quadric and cubic in $\mbP^5$ when $c = 2$.
Then $\alpha (X) \ge 1$, $X$ is K-stable and admits a KE metric.
\end{Thm}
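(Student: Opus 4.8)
The plan is to reduce the two stability/analytic conclusions to the single inequality $\alpha(X) \ge 1$ and then to establish that inequality by a point-by-point computation of local alpha invariants, in close parallel to the strategy carried out in Chapters \ref{chap:smpt}--\ref{chap:exc} for the hypersurface case. Once $\alpha(X) \ge 1$ is known, we have $\alpha(X) > 3/4 = n/(n+1)$ with $n = \dim X = 3$, so the Odaka--Sano criterion \cite{OS12} gives K-stability and Tian's criterion \cite{Tia87} (in its orbifold form for the singular members) gives the existence of a K\"ahler--Einstein metric. Note that here we only assume birational rigidity, not superrigidity, so Theorem \ref{thm:SXZ} is not directly available; the entire weight of the argument therefore rests on the strong bound $\alpha(X) \ge 1$ rather than on the weaker $\alpha(X) \ge 1/2$.

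First I would use $\alpha(X) = \inf_{\msp \in X} \alpha_{\msp}(X)$ from Definition \ref{def:glct} to split the problem into a local estimate $\alpha_{\msp}(X) \ge 1$ at every point, stratifying $X$ into $\Sm(X)$ and the finite set $\Sing(X)$ of terminal cyclic quotient singular points, with further substrata (coordinate points, and points lying on the distinguished curves cut out by the lowest-weight variables) treated separately. At a general point I would choose a divisor $S \sim_{\mbQ} -aK_X$ vanishing at $\msp$ to large (orbifold) multiplicity, either a hyperplane section or a quasi-tangent divisor, form the $1$-cycle $D \cdot S$ for an arbitrary irreducible $D \in \left| -K_X \right|_{\mbQ}$, and produce an auxiliary divisor $T$ via an $\msp$-isolating set or class (Lemmas \ref{lem:isolexT}, \ref{lem:isolclass}) whose support avoids the components of $D \cdot S$. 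Bounding $\omult_{\msp}(D)$ by the triple intersection number $r(D \cdot S \cdot T)$ through Lemma \ref{lem:multlct}, and controlling $\lct_{\msp}(X;S)$ by weighted blowups (Lemmas \ref{lem:lctwbl}, \ref{lem:lctwblwh}, \ref{lem:lcttangcube}), I would feed these into Lemma \ref{lem:exclL} and its reducible-intersection variants (Lemmas \ref{lem:mtdLred}, \ref{lem:mtdLredSsing}, \ref{lem:mtdLintpt}) to obtain $\alpha_{\msp}(X) \ge 1$. The anticanonical degree $(A^3)$ of each family is small enough that the resulting bounds clear $1$, and the generality hypothesis is exactly what guarantees generic behaviour of the relevant sections, quasi-tangent divisors, and isolating classes (irreducibility of $S \cap T$, quasi-smoothness along the distinguished curves, simplicity of the singular points).

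The step I expect to be the main obstacle is the replacement of the single defining polynomial $F$ by the defining data of a codimension $2$ or $3$ variety. In codimension $2$ the theorem concerns only the smooth $(2,3)$ complete intersection in $\mbP^5$, so $\Sing(X) = \emptyset$ and the estimate is made purely at smooth points, using hyperplane sections of $\mbP^5$ in place of quasi-tangent divisors; this case is essentially the adaptation of the smooth-point computations of Chapter \ref{chap:smpt} to a complete intersection of two forms. The genuinely new difficulty is codimension $3$: a Pfaffian Fano $3$-fold is \emph{not} a complete intersection, being defined by the $4 \times 4$ Pfaffians of a $5 \times 5$ skew-symmetric matrix of weighted forms, so there is no single ambient hypersurface to restrict, and one must instead work with the Buchsbaum--Eisenbud structure, extract the analogues of the lowest-weight expressions appearing in Lemmas \ref{lem:lcttangcube} and \ref{lem:lctwblwh} directly from the Pfaffian relations, and verify quasi-smoothness and the normality of general hyperplane sections (Lemma \ref{lem:normalqhyp}) from the matrix data. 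Organizing the finitely many families (the $3$ Pfaffian families in codimension $3$, together with the single codimension $2$ case) and in particular adapting the orbifold-multiplicity and Kawamata-blowup computations of \S\ref{sec:compwbl} to the terminal quotient singular points of the Pfaffian varieties is where the bulk of the genuinely new work lies.
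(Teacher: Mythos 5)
This theorem is imported verbatim from \cite{KOW18} and the paper under review gives no proof of it (it appears only as a quoted result in \S\ref{section:BRKst}), so there is no internal argument to compare your attempt against. Your outline --- reduce both conclusions to the single bound $\alpha (X) \ge 1$ via the Odaka--Sano criterion \cite{OS12} and the orbifold K\"ahler--Einstein criterion of \cite{DK} (the same deduction the authors themselves use in \S\ref{sec:KEmetric}), correctly note that mere birational rigidity blocks Theorem \ref{thm:SXZ} so the strong bound is genuinely needed, and then establish $\alpha_{\msp} (X) \ge 1$ pointwise using isolating classes, quasi-tangent divisors, weighted blowups and the reducible-intersection lemmas, with the Pfaffian (non-complete-intersection) structure as the main new difficulty in codimension $3$ --- is precisely the strategy of the cited source, which applies the same toolkit as Chapters \ref{chap:smpt}--\ref{chap:singpt} of the present paper, so the approach is the right one.
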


\begin{Thm}[{\cite[Theorem 1.3]{Zhu20b}}] \label{thm:Zhu20}
Let $X$ be a smooth complete intersection of a quadric and cubic in $\mbP^5$.
Then $X$ is K-stable and admits a KE metric.
\end{Thm}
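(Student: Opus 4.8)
The plan is to reduce the statement to a single local estimate and then prove that estimate by playing the $2n$-inequality against the geometry of an anticanonical K3 section. First I would record the basic numerics: by adjunction $-K_X=\mathcal{O}_X(1)$, so $X$ is a Fano threefold of index $1$ with $(-K_X)^3=6$, and by the Grothendieck--Lefschetz theorem $\Cl(X)=\Pic(X)=\mbZ\cdot\mathcal{O}_X(1)$. Every smooth $X_{2,3}\subset\mbP^5$ is birationally superrigid; this is the classical result of Iskovskikh and Pukhlikov, which I would cite. Granting superrigidity, Theorem \ref{thm:SXZ} reduces the assertion — uniform K-stability, hence K-stability and the existence of a KE metric — to the inequality $\alpha(X)\ge\tfrac12$, i.e.\ $\alpha_\msp(X)\ge\tfrac12$ at every point $\msp\in X$ (all of which are smooth).

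To prove the local bound, fix $\msp$ and argue by contradiction through Remark \ref{rem:covex}: suppose some irreducible $D\sim_{\mbQ}-K_X$ has $(X,\tfrac12 D)$ not log canonical, hence not canonical, at $\msp$. Write $D=\tfrac1{d_0}D_0$ with $D_0$ the reduced irreducible surface supporting $D$; since $\Cl(X)=\mbZ\cdot\mathcal{O}_X(1)$ we have $D_0\sim d_0\,\mathcal{O}_X(1)$ for an integer $d_0\ge1$, so the coefficient $1/d_0$ is at most $1$. Applying the $2n$-inequality (Lemma \ref{lem:2nineq}) with $n=2$ yields a line $L$ in the exceptional $\mbP^2$ over $\msp$ such that $\mult_\msp(D|_T)>4$ for every prime divisor $T$ that is smooth at $\msp$ and whose proper transform contains $L$.

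I would then take $T=S$ to be a general hyperplane section through $\msp$ whose tangent plane contains the direction $L$. Because $\mathcal{O}_X(1)$ is very ample and $X$ is smooth, such $S$ may be chosen smooth at $\msp$, whence it is a smooth K3 surface (as $K_S=(K_X+S)|_S=0$); by Bertini the curve $C:=D_0\cap S$ is reduced, and $S\ne\Supp(D)$ so $D|_S=\tfrac1{d_0}C$ is a genuine effective cycle. On $S$ one has $C\sim d_0\,\mathcal{O}_S(1)$ with $(\mathcal{O}_S(1))^2=6$, so $C^2=6d_0^2$ and $p_a(C)=1+3d_0^2$, while $\mult_\msp(C)=d_0\,\mult_\msp(D|_S)>4d_0$. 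Combining the arithmetic-genus formula with the inequalities $p_a(C_i)\ge\binom{\mult_\msp C_i}{2}$ and $(C_i\cdot C_j)\ge\mult_\msp(C_i)\mult_\msp(C_j)$ for the components of $C$, one gets $\binom{\mult_\msp C}{2}\le p_a(C)+r-1\le 3d_0^2+6d_0$, where $r\le 6d_0$ is the number of components of $C$; this is violated by $\binom{4d_0+1}{2}=8d_0^2+2d_0$ for every $d_0\ge1$. This contradiction proves $\alpha_\msp(X)\ge\tfrac12$.

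The deepest external input is the birational superrigidity of all smooth members, which I treat as a cited theorem. Within the self-contained part, the step requiring the most care is the genericity argument in the third paragraph: for every $\msp$, every line $L$, and every surface $D_0$ one must simultaneously arrange $S$ smooth at $\msp$, with $T_\msp S$ containing the direction $L$, and with $D_0\cap S$ reduced. Since the linear system of hyperplane sections carrying the prescribed tangent data at $\msp$ is base-point free away from a flag supported at $\msp$ — which does not contain the surface $D_0$ — Bertini applies and the construction is robust, but ruling out that every admissible section meets $D_0$ non-reducedly is the point I would verify most carefully. Finally, I would remark that for the general member the stronger bound $\alpha(X)\ge1$ already follows from Theorem \ref{thm:KOW18}, so the real content is the uniform treatment of the special smooth members, which the K3-section argument above handles with no genericity hypothesis on $X$ itself.
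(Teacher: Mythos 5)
The paper does not actually prove this statement; it is quoted verbatim from \cite[Theorem 1.3]{Zhu20b}, so there is no internal proof to compare against. Judged on its own terms, your proposal has a fatal gap at the very first reduction: you assert that every smooth complete intersection $X_{2,3}\subset\mbP^5$ is birationally \emph{superrigid} and cite Iskovskikh--Pukhlikov for this. That is false, and the surrounding text of this paper says so explicitly: in \S\ref{section:BRKst} the family $X_{2,3}$ is listed as one of the two families of smooth Fano $3$-folds that are \emph{strictly} birationally rigid, i.e.\ rigid but not superrigid. What Iskovskikh (and later Iskovskikh--Pukhlikov) proved is birational rigidity together with a description of $\Bir(X)$ as generated by $\Aut(X)$ and nontrivial birational involutions centered on certain lines and conics on $X$; in particular $\Bir(X)\ne\Aut(X)$ in general, so the hypothesis of Theorem \ref{thm:SXZ} fails and your entire reduction to $\alpha(X)\ge 1/2$ collapses. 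This is not a repairable citation slip: the existence of those involutions means there are movable linear systems $\mcM\sim_{\mbQ}-nK_X$ for which $(X,\frac1n\mcM)$ is not canonical at the corresponding centers, which is exactly the situation Theorem \ref{thm:SXZ} is designed to exclude.

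To salvage the argument you would need the refinement the paper itself uses for strictly rigid weighted hypersurfaces, namely Proposition \ref{prop:Kstmovalpha} (or the underlying \cite[Corollary 3.1]{SZ19}): in addition to $\alpha(X)\ge 1/2$ one must verify $\alpha^{\mov}_{\msp}(X)\ge 1$ at every maximal center $\msp$ (here, the lines and conics supporting the Iskovskikh involutions) and rule out the borderline case $(\alpha^{\mov}_{\msp}(X),\alpha_{\msp}(X))=(1,1/2)$; alternatively one follows Zhuang's actual proof, which does not route through superrigidity at all. Your third paragraph --- the $2n$-inequality combined with the genus bound on a K3 hyperplane section --- is a reasonable and essentially correct way to bound $\alpha_{\msp}(X)$ at smooth points (the numerics $\binom{4d_0+1}{2}>3d_0^2+6d_0$ do check out, modulo the care you already flag about choosing $S$ with $D_0\cap S$ reduced), but by itself it only addresses divisorial boundaries and says nothing about the non-canonical movable systems arising from the involutions, which is where the real difficulty of this theorem lies.
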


\begin{Question}
Can we conclude K-stability for any quasi-smooth Fano 3-fold of codimension 2 and 3 which is birationally (super)rigid?
How about for Fano 3-folds of codimension 4 or higher?
\end{Question}

%%%%%%%%%%%%%%%%%%%%%%%%%%%%%%%%%%
%%%%%%%%%%%%%%%%%%%%%%%%%%%%%%%%%%
\subsection{Lower bound of alpha invariants}
%%%%%%%%%%%%%%%%%%%%%%%%%%%%%%%%%%
%%%%%%%%%%%%%%%%%%%%%%%%%%%%%%%%%%

In the context of Theorem \ref{thm:SZ}, the following is a very natural question to ask. 

\begin{Question}
Is it true that $\alpha (X) \ge 1/2$ (or $\alpha (X) > 1/2$) for any birationally superrigid Fano variety? 
If yes, can we find a lower bound better than $1/2$?
\end{Question}

The following example suggests that the number $1/2$ is optimal (or the lower bound can be even smaller).

\begin{Ex} \label{ex:bsrlcthalf}
For an integer $a \ge 2$, let $X_a$ be a weighted hypersurfaces of degree $2 a + 1$ in $\mbP (1^{a+2}, a) = \Proj \mbC [x_1,\dots,x_{a+2},y]$, given by the equation
\[
y^2 x_1 + f (x_1,\dots,x_{a+2}) = 0,
\]
where $f$ is a general homogeneous polynomial of degree $2 a + 1$.
Then $X_a$ is a quasi-smooth Fano weighted hypersurface of dimension $a+1$ and Picard number $1$ with the unique singular point $\msp$ of type
\[
\frac{1}{a} (\overbrace{1,\dots,1}^{a+1}).
\]
The singularity $\msp \in X$ is terminal.
By the same argument as in the proof of Proposition \ref{prop:lctdegQI}, we obtain
\[
\alpha (X) \le \alpha_{\msp} (X) = \lct_{\msp} (X; H_{x_1}) = \frac{a+1}{2 a + 1}.
\]
When $a = 2$, $X_a = X_2$ is a member of the family $\mcF_2$ and it is birationally superrigid.
We expect that $X_a$ is birationally superrigid although this is not proved at all when $a \ge 3$.
If $X_a$ is birationally superrigid for $a \gg 0$, then it follows that there exists a sequence of birationally superrigid Fano varieties whose alpha invariants are arbitrary close to (or less than) $\frac{1}{2}$.
\end{Ex}

\begin{Question}
Let $X_a$ be as in Example \ref{ex:bsrlcthalf}.
Is $X_a$ birationally superrigid for $a \ge 3$?
\end{Question}

%%%%%%%%%%%%%%%%%%%%%%%%%%%%%%%%%%
%%%%%%%%%%%%%%%%%%%%%%%%%%%%%%%%%%
\subsection{Existence of K\"ahler--Einstein metrics}
\label{sec:KEmetric}
%%%%%%%%%%%%%%%%%%%%%%%%%%%%%%%%%%
%%%%%%%%%%%%%%%%%%%%%%%%%%%%%%%%%%

For a quasi-smooth Fano 3-fold weighted hypersurface of index $1$ which is strictly birationally rigid, we are unable to conclude the existence of a K\"{a}hler--Einstein metric as a direct consequence of Theorem \ref{mainthm}.
However, for a Fano variety $X$ of dimension $n$ with only quotient singularities, the implication
\[
\alpha (X) > \frac{n}{n + 1} \ \Longrightarrow \ \text{existence of a KE metric on $X$}
\]
is proved in \cite[Section 6]{DK}.
The aim of this section is to prove the existence of KE metrics on quasi-smooth members of suitable families.

We set
\[
\msI'_{\mathrm{KE}} =  \{42, 44, 45, 61, 69, 74, 76, 79 \} \subset \msI_{\BR},
\]
and 
\[
\msI_{\mathrm{KE}} = \msI_{\BSR} \sqcup \msI'_{\mathrm{KE}}.
\]
Note that $|\msI_{\mathrm{KE}}| = 56$.
For a family $\mcF_{\msi}$ with $\msi \in \msI$, the mark ``KE" is given in the right-most column of Table \ref{table:main} if and only if $\msi \in \msI_{\mathrm{KE}}$.

\begin{Thm} \label{thm:qsmWHKE}
For a member $X$ of a family $\mcF_{\msi}$ with $\msi \in \msI'_{\mathrm{KE}}$, we have
\[
\alpha (X) > \frac{3}{4}.
\]
In particular, any member of a family $\mcF_{\msi}$ with $\msI_{\mathrm{KE}}$ admits a KE metric and is K-stable.
\end{Thm}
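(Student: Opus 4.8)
The plan is to obtain both assertions as formal consequences of the single inequality $\alpha (X) > 3/4$. Since $3/4 = n/(n+1)$ for $n = \dim X = 3$, the existence of a K\"ahler--Einstein metric then follows from the criterion of \cite[Section 6]{DK} valid for Fano varieties with only quotient singularities, and K-stability follows from the criterion $\alpha (X) > n/(n+1)$ of Tian and Odaka--Sano \cite{OS12}. So the whole task is the strict bound $\alpha (X) > 3/4$. Writing $\alpha (X) = \inf_{\msp} \alpha_{\msp}(X)$ and noting that the bounds supplied by Chapters \ref{chap:smpt} and \ref{chap:singpt} are uniform on each of the finitely many point strata of $X$ (the smooth locus, each quotient singular point, and each component type of $L_{xy}$), it suffices to show that the bound attached to each stratum is a fixed rational number strictly larger than $3/4$; the minimum of finitely many such numbers is then $> 3/4$. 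I would carry this out family by family, rerunning the arguments of Chapters \ref{chap:smpt} and \ref{chap:singpt} but recording the actual values rather than the uniform estimate $1/2$ of Theorem \ref{mainthm}.

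First I would dispose of the smooth and non-BI singular points. All eight families satisfy $a_1 < a_2$, and for the seven families other than $\mcF_{42}$ the curve $L_{xy}$ is irreducible (Tables \ref{table:Lsmooth}, \ref{table:Lsing}); hence their smooth points give $\alpha_{\msp}(X) \ge 1$ by Propositions \ref{prop:smptU1}, \ref{prop:smptHminusL} and \ref{prop:smptL1}. The non-BI singular points give $\alpha_{\msp}(X) \ge 1$ by Propositions \ref{prop:singptCP} and \ref{prop:lctsingptL}, and $\ge 1$ as well in the single instance on $\mcF_{61}$ falling under Proposition \ref{prop:singptic} (there $c = 7/5$). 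It is essential here that none of the eight families appears in Proposition \ref{prop:singptrem}, so its weak value $1/2$ never intervenes. The only smooth bound that is not immediately $\ge 1$ comes from $\mcF_{42}$, whose $L_{xy}$ is reducible: in the relevant case of \S \ref{sec:smptL2-42} one has $a_1 = 2$ and $\Gamma \cap \Delta \subset \Sing(X)$, so I would use the sharper $\alpha_{\msp}(X) \ge 43/54 > 3/4$ of Remark \ref{rem:Lredcp1} in place of the generic $1/2$ of Lemma \ref{lem:Lredcp1}.

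The heart of the proof is the BI centers, where Proposition \ref{prop:ndQIcent} only guarantees $1/2$. The EI centers of $\mcF_{44}$, $\mcF_{61}$ and $\mcF_{76}$ cause no trouble, since Proposition \ref{prop:lctsingptL} already yields $\alpha_{\msp}(X) \ge 1$ for them (this is how they are treated inside Proposition \ref{prop:alphaEI}). For the QI centers I would distinguish the types of Lemma \ref{lem:QIeqtypes}. The family $\mcF_{42}$ has two QI centers of equal type, handled by the $2n$-inequality as in \S \ref{sec:ndQI2eqtypes}, which gives $\alpha_{\msp}(X) \ge 1$. The remaining families $\mcF_{45}$, $\mcF_{69}$, $\mcF_{74}$, $\mcF_{79}$ each carry a single case-(1) QI center, for which the uniform estimate is only $\tfrac{r+c}{2r+c} < 3/4$. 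The decisive observation is that this center lies on $L_{xy}$, which for these families is irreducible (Tables \ref{table:Lsmooth}, \ref{table:Lsing}) and smooth at the center. I may therefore run the $\diamondsuit$-argument of Proposition \ref{prop:lctsingptL}: taking general $S \in |A|$, $T \in |a_1 A|$ with $S \cap T = L_{xy}$ and applying Lemma \ref{lem:exclL} gives
\[
\alpha_{\msp}(X) \ge \min \left\{ \lct_{\msp}(X; S), \ \frac{a_1}{\mult_{\check{\msp}}(\check{\Gamma})}, \ \frac{1}{r a_1 (A^3)} \right\} = 1,
\]
because $L_{xy}$ smooth at $\msp$ forces $\mult_{\check{\msp}}(\check{\Gamma}) = 1$ and $\lct_{\msp}(X;S) = 1$, while one checks $r a_1 (A^3) \le 1$ for each of these weight tuples.

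I expect the principal difficulty to be organizational rather than a single hard estimate: one must make sure that, over all eight families, every stratum---smooth points, every quotient singularity, and every BI center---yields a value strictly above $3/4$, the two genuine upgrades over Chapters \ref{chap:smpt} and \ref{chap:singpt} being (i) the replacement of the generic QI bound $1/2$ by $\alpha_{\msp}(X) \ge 1$ at the case-(1) QI centers, justified by the irreducibility of $L_{xy}$ through the center, and (ii) the use of Remark \ref{rem:Lredcp1} for $\mcF_{42}$. Once every local bound is seen to exceed $3/4$, taking the minimum over the finitely many strata yields $\alpha (X) > 3/4$, whence the K\"ahler--Einstein and K-stability conclusions follow from \cite{DK} and \cite{OS12}.
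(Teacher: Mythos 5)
Your proposal is correct and follows essentially the same route as the paper: stratify $X$ into $U_1$, $H_x\setminus L_{xy}$, $L_{xy}$ and the singular points, upgrade the generic bound $1/2$ to explicit values $\ge 43/54$ on each stratum (using Remark \ref{rem:Lredcp1} for the reducible $L_{xy}$ of $\mcF_{42}$, the $\diamondsuit$-argument of Proposition \ref{prop:lctsingptL} at the QI centers on irreducible $L_{xy}$, and the computation of \S\ref{sec:ndQI2eqtypes} for the two QI centers of $\mcF_{42}$), then invoke \cite{DK} and \cite{OS12}. The only cosmetic difference is that on $U_1$ the paper re-verifies $\alpha_{\msp}(X)\ge 1$ directly from the numerics of the eight weight systems (via Lemmas \ref{lem:nsptU1-1}, \ref{lem:nsptU1-2} and \ref{lem:complctsingtang}) rather than quoting Proposition \ref{prop:smptU1} wholesale, and conversely you correctly flag the $\clubsuit$-marked $\frac{1}{4}(1,1,3)$ point of $\mcF_{61}$, which the paper's own Claim \ref{clm:KEsing} passes over silently.
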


\begin{proof}
By Corollary \ref{maincor:Kst} and the above arguments, it is enough to prove the first assertion.
Let 
\[
X = X_d \subset \mbP (1, a_1, a_2, a_3, a_4)_{x, y, z, t, w}
\] 
be a member of a family $\mcF_{\msi}$ with $\msi \in \msI'_{\mathrm{KE}}$, where we assume $a_1 \le a_2 \le a_3 \le a_4$.
Note that $1 < a_1 < a_2$.

\begin{Claim} \label{clm:KEU1}
$\alpha_{\msp} (X) \ge 1$ for any $\msp \in U_1 \cap \Sm (X)$.
\end{Claim}

\begin{proof}[Proof of Claim \ref{clm:KEU1}]
Let $\msp$ be a smooth point of $X$ contaied in $U_1$.

Suppose $\msi = 42$.
Then $d = 20$ is divisible by $a_4 = 10$ and $a_2 a_3 (A^3) = 1$.
By Lemma \ref{lem:nsptU1-2}, we have $\alpha_{\msp} (X) \ge 1$.

Suppose $\msi \in \{ 69, 74, 76, 79 \}$.
Then $a_2 a_4 (A^3) \le 1$.
By Lemma \ref{lem:nsptU1-1}, we have $\alpha_{\msp} (X) \ge 1$ in this case.

Suppose $\msi \in \{44, 45, 61\}$.
Then $a_3 a_4 (A^3) \le 2$.
We may assume $\msp = \msp_x$.
Then we have $\alpha_{\msp} (X) \ge 2/a_3 a_4 (A^3) \ge 1$ by Lemma \ref{lem:complctsingtang}.
This completes the proof.
\end{proof}

\begin{Claim} \label{clm:KEH}
$\alpha_{\msp} (X) \ge 1$ for any $\msp \in (H_x \setminus L_{xy}) \cap \Sm (X)$.
\end{Claim}

\begin{proof}[Proof of Claim \ref{clm:KEH}]
This follows immediately from Proposition \ref{prop:smptHminusL}.
\end{proof}

\begin{Claim} \label{clm:KEL}
$\alpha_{\msp} (X) \ge 43/54 > 3/4$ for any $\msp \in L_{xy} \cap \Sm (X)$.
\end{Claim}

\begin{proof}[Proof of Claim \ref{clm:KEL}]
Let $\msp$ be a smooth point of $X$ contained in $L_{xy}$.
Suppose that $X$ is a member of one of the families listed in Tables \ref{table:Lsmooth} or \ref{table:Lsing}, that is, $X$ is a member of a family $\mcF_{\msi}$ with $\msi \in \{44, 45, 61, 69, 74, 76, 79\}$.
Then the claim follows immediately from Proposition \ref{prop:smptL1}.

Suppose $\msi = 42$.
Then, by the proof of Proposition \ref{prop:smptL2} (see \S \ref{sec:smptL2-42}), either $\alpha_{\msp} (X) \ge 1$ for any $\msp \in L_{xy} \cap \Sm (X)$ or $X$ satisfies the assumption of Lemma \ref{lem:Lredcp1}.
In the latter case we have $\alpha_{\msp} (X) \ge 43/54$ by Remark \ref{rem:Lredcp1}.
This completes the proof.
\end{proof}

By Claims \ref{clm:KEU1}, \ref{clm:KEH} and \ref{clm:KEL}, we have $\alpha_{\msp} (X) \ge 3/4$ for any smooth point $\msp \in X$. 
It remains to consider singular points.

\begin{Claim} \label{clm:KEsing}
$\alpha_{\msp} (X) > 3/4$ for any $\msp \in \Sing (X)$.
\end{Claim}

\begin{proof}[Proof of Claim \ref{clm:KEsing}]
Let $\msp \in X$ be a singular point.
If the subscript $\heartsuit$ (resp.\ $\diamondsuit$) is given in Table \ref{table:main}, then $\alpha_{\msp} (X) \ge 1$ by Proposition \ref{prop:singptCP} (resp.\ Proposition \ref{prop:lctsingptL}).
It remains to consider the case where $\msi = 42$ and $\msp$ is of type $\frac{1}{5} (1, 2, 3)$.  
In this case we have $\alpha_{\msp} (X) \ge 1$ by the proof of Proposition \ref{prop:ndQIcent} (see \S \ref{sec:ndQI2eqtypes}).
\end{proof}

This completes the proof of Theorem \ref{thm:qsmWHKE}.
\end{proof}

%%%%%%%%%%%%%%%%%%%%%%%%%%%%%%%%%%
%%%%%%%%%%%%%%%%%%%%%%%%%%%%%%%%%%
\subsection{Birational rigidity and K-stability} \label{section:BRKst}
%%%%%%%%%%%%%%%%%%%%%%%%%%%%%%%%%%
%%%%%%%%%%%%%%%%%%%%%%%%%%%%%%%%%%

%%%
\subsubsection{Generalizations of the conjecture}
%%%

Birational superrigidity is a very strong property.
It is natural to relax the assumption of birational superrigidity to birational rigidity in Conjecture \ref{conj:BSRKst}, and we still expect a positive answer to the following.

\begin{Conj}[{\cite[Conjecture 1.9]{KOW18}}] \label{conj:BRKst}
A birationally rigid Fano variety is K-stable.
\end{Conj}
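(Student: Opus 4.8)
The plan is to reduce K-stability to the valuative criterion of Fujita and Li, i.e.\ to prove that
\[
\beta(E) := A_X(E)\,(-K_X)^n - \int_0^\infty \mathrm{vol}(-K_X - tE)\,dt > 0
\]
for every prime divisor $E$ over $X$ (where $A_X(E)$ denotes the log discrepancy), and then to analyze the possible destabilizing valuations by means of birational rigidity. Since Theorem \ref{thm:SZ} already settles the birationally \emph{super}rigid case through the bound $\alpha(X) \ge 1/2$, the genuinely new content is the strictly birationally rigid case, in which maximal centers are necessarily present. The first step would be to record that, by the Odaka--Sano inequality combined with the local alpha-invariant estimates of the type carried out in Chapters \ref{chap:smpt} and \ref{chap:singpt}, any valuation $E$ whose center lies away from the maximal centers satisfies $\beta(E) > 0$; thus a hypothetical destabilizing $E$ must be centered at a maximal center.

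Second, I would invoke the characterization of maximal centers (Theorem \ref{thm:charactbsr} and the discussion in \S\ref{sec:defBI}): a divisorial valuation $E$ with $\beta(E) \le 0$ should, via the volume computation above, force the existence of a movable linear system $\mcM \sim_{\mbQ} -n K_X$ and an exceptional divisor realizing $m_E(\mcM) > n\,a_E(K_X)$, that is, a maximal singularity. For a birationally rigid $X$ such a maximal singularity is \emph{untwistable}: there is a birational self-map $\chi\colon X \ratmap X$, built from the Sarkisov link centered at the maximal center (the QI/EI links of \S\ref{sec:defBI}), strictly decreasing the relevant Sarkisov degree. The core of the argument would be to track $\beta(E)$, or equivalently the stability threshold $\delta$, through this link, and to show that the destabilizing valuation is transported to one whose center is no longer maximal, where the estimate of the first step applies.

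The main obstacle is precisely this transport step. The untwisting links are not isomorphisms in codimension one; on the Kawamata blowup $Y \to X$ they proceed through a flop or a more intricate sequence of flips followed by a contraction to a new model, so neither $A_X(E)$ nor $\mathrm{vol}(-K_X - tE)$ is manifestly preserved, and $\beta$ enjoys no a priori invariance under such maps. Making the heuristic ``$\beta$ does not increase under untwisting'' precise appears to require a careful comparison of the two anticanonical volume functions across the link, presumably exploiting that $-K_Y$ lies on the boundary of the movable cone, as used in \cite{CP17} and in the proof of Proposition \ref{prop:ndQIcent}. A secondary difficulty is uniformity: the present paper establishes K-stability for the strictly birationally rigid, ``K''-marked families only through the case-by-case surface-intersection and weighted-blowup estimates of Chapter \ref{chap:singpt}, and extracting from these a model-independent statement valid for an arbitrary birationally rigid Fano variety, of any dimension and codimension, is the conceptual heart of the conjecture that I do not expect to dispatch with the techniques assembled here.
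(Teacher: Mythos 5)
This statement is a \emph{conjecture} in the paper (attributed to \cite[Conjecture 1.9]{KOW18}), not a theorem: the authors offer no proof of it, only partial evidence, so there is no argument of theirs to measure yours against. Your proposal is candid about this --- you yourself flag the transport step as unestablished --- and indeed it does not constitute a proof. The concrete gap is exactly where you locate it: after localizing a hypothetical destabilizing valuation $E$ at a maximal center, you need $\beta(E)$ (or the stability threshold) to behave monotonically under the untwisting Sarkisov link, but the link is not an isomorphism in codimension one, neither $A_X(E)$ nor the volume function $\mathrm{vol}(-K_X - tE)$ is preserved, and no such comparison is known. Without it the induction on Sarkisov degree never closes, and the argument proves nothing beyond the superrigid case already covered by Theorem \ref{thm:SZ}.

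It is worth contrasting your route with what the paper actually does for the strictly birationally rigid families it can handle (Theorem \ref{thm:qsmWHKst} and Corollary \ref{cor:Kstqsm}). The paper never attempts to track $\beta$ through a link. Instead it uses \cite[Corollary 3.1]{SZ19} in the form of Proposition \ref{prop:Kstmovalpha}: if $X$ is not K-stable, there is a single prime divisor $E$ over $X$ that is simultaneously a log canonical place of $(X, \tfrac{1}{n}\mcM)$ for a movable system and of $(X, \tfrac{1}{2}D)$ for an anticanonical $\mbQ$-divisor; its center must then be a maximal center, and this configuration is excluded by directly computing $\alpha^{\mov}_{\msp}(X) \ge 1$ and $(\alpha^{\mov}_{\msp}(X), \alpha_{\msp}(X)) \ne (1, 1/2)$ at each BI center. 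That mechanism sidesteps the link-invariance problem entirely, but it is intrinsically case-by-case and model-dependent --- which is why the paper leaves the general conjecture open, and why your (correctly identified) obstacle is the real content of the problem.
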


We explain the situation for smooth Fano 3-folds.
There are exactly 2 families of smooth Fano 3-folds which is strictly birationally rigid: one is the family of complete intersections of a quadric and cubic in $\mbP^6$ (\cite{IP96}), and another is the family of double covers $V$ of a smooth quadric $Q$ of dimension 3 branched along a smooth surface degree $8$ on $Q$ (\cite{Isk80}).
Former Fano 3-folds are K-stable and admit KE metrics (\cite{Zhu20b}), and so are the latter Fano 3-folds (this follows from \cite{Der16a} since $Q$ is K-semistable).
Some more evidences are already provided by Theorems \ref{thm:BRWH} and \ref{thm:KOW18}, and we will provide further evidences in the next subsection (see Corollary \ref{cor:Kstqsm}).

It may be interesting to consider further generalization of Conjecture \ref{conj:BRKst}.
According to systematic studies of Fano 3-folds of codimension 2 \cite{Oka14, Oka18, Oka20b}, existence of many birationally bi-rigid Fano 3-folds are verified.
Here a Fano variety $X$ of Picard number $1$ is {\it birationally bi-rigid} if there exists a Fano variety $X'$ of Picard number $1$ which is birational but not isomorphic to $X$, and up to isomorphism $\{X, X'\}$ are all the Mori fiber space in the birational equivalence class of $X$.
Extending bi-rigidity, tri-rigidity and so on, notion of solid Fano variety in introduced in \cite{AO18}: a Fano variety of Picard number $1$ is {\it solid} if any Mori fiber space in the birational equivalence class is a Fano variety of Picard number $1$.  
Solid Fano varieties are expected to behave nicely in moduli (\cite{Zhu20a}).
Only a few evidences are known (\cite{KOW19}) for the following question.

\begin{Question}
Is it true that any solid Fano variety is K-stable?
\end{Question}

%%%
\subsubsection{On K-stability for 95 families}
\label{sec:Kstqsm}
%%%

For strictly birationally rigid members of the 95 families, we are unable to conclude K-stability by Theorem \ref{mainthm}, except for those treated in Theorem \ref{thm:qsmWHKE}.
The aim of this subsection is to prove K-stability for all the quasi-smooth members of suitable families indexed by $\msI_{\BR}$.
This will be done by combining the inequality $\alpha \ge 1/2$ obtained by Theorem \ref{mainthm} and an additional information on local movable alpha invariants which are introduced below.

\begin{Def}
Let $X$ be a Fano variety of Picard number $1$ and $\msp \in X$ a point.
For a non-empty linear system $\mcM$ on $X$, we define $\lambda_{\mcM} \in \mbQ_{> 0}$ to be the rational number such that $\mcM \sim_{\mbQ} - \lambda_{\mcM} K_X$.
For a movable linear system $\mcM$ on $X$ and a positive rational number $\mu$, we define the {\it movable log canonical threshold} of $(X, \mu \mcM)$ at $\msp$ to be the number
\[
\lct^{\mov}_{\msp} (X; \mu \mcM) = \sup \{\, c \in \mbQ_{\ge 0} \mid \text{$(X, c \mu \mcM)$ is log canonical at $\msp$} \, \},
\]
and then we define the {\it movable alpha invariant of $X$ at} $\msp$ as
\[
\alpha^{\mov}_{\msp} (X) = \inf \{\, \lct_{\msp}^{\mov} (X, \lambda_{\mcM}^{-1} \mcM) \mid \text{$\mcM$ is a movable linear system on $X$} \}.
\]
\end{Def}

\begin{Prop}[{cf.\ \cite[Corollary 3.1]{SZ19}}] \label{prop:Kstmovalpha}
Let $X$ be a quasi-smooth Fano $3$-fold weighted hypersurface of index $1$.
Assume that, for any maximal center $\msp \in X$, we have 
\[
\alpha^{\mov}_{\msp} (X) \ge 1 \quad \text{and} \quad
(\alpha^{\mov}_{\msp} (X), \alpha_{\msp} (X)) \ne (1, 1/2).
\]
Then $X$ is K-stable.
\end{Prop}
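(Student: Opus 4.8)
The plan is to verify K-stability through the valuative criterion of Fujita and Li (\cite{Fuj19b}, \cite{Li17}): $X$ is K-stable if and only if $A_X(E) > S(E)$ for every prime divisor $E$ over $X$, where $A_X(E)$ is the log discrepancy and
\[
S(E) = \frac{1}{(-K_X)^3} \int_0^\infty \operatorname{vol}(-K_X - t E) \, dt .
\]
First I would assume, for contradiction, that there is a prime divisor $E$ over $X$ with $A_X(E) \le S(E)$, and set $Z = c_X(E) \subset X$ for its center. The strategy is to run the argument of Stibitz and Zhuang \cite{SZ19}, which bounds $S(E)$ from above by splitting the volume integral at the threshold past which $-K_X - tE$ ceases to be controlled by the movable cone, and which, for a birationally superrigid $X$, converts the canonicity of normalized movable linear systems together with $\alpha(X) \ge 1/2$ into the strict inequality $A_X(E) > S(E)$.

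The key point is to replace the global use of birational superrigidity in \cite{SZ19} by a local analysis at maximal centers. By Remark \ref{rem:maxcent}, every maximal center of $X$ is a BI center, hence an isolated singular point, and away from maximal centers every movable linear system $\mcM \sim_{\mbQ} -m K_X$ yields a pair $(X, \tfrac{1}{m}\mcM)$ that is canonical in a neighborhood of $Z$. So I would first treat the case that $Z$ is \emph{not} a maximal center: here the local version of the SZ estimate applies near $Z$, and combined with $\alpha(X) \ge 1/2$ (Theorem \ref{mainthm}) it forces $A_X(E) > S(E)$, a contradiction. Consequently $Z = \msp$ must be a maximal center.

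It then remains to rule out $A_X(E) \le S(E)$ when $Z = \msp$ is a maximal center, and this is where the two hypotheses on the movable alpha invariant enter. Near such a point the local surrogate for superrigidity is precisely $\alpha^{\mov}_{\msp}(X) \ge 1$: whereas superrigidity would provide canonicity of normalized movable systems at $\msp$, our weaker input only guarantees log canonicity with coefficient $1$. I would feed this into the SZ splitting of the volume integral, where the fixed (non-movable) part of $-K_X - tE$ is controlled by $\alpha_{\msp}(X) \ge 1/2$ and the movable part by $\alpha^{\mov}_{\msp}(X) \ge 1$. The resulting estimate should give $A_X(E) \ge S(E)$ in all cases, with strict inequality unless both local invariants are simultaneously extremal, i.e.\ $(\alpha^{\mov}_{\msp}(X), \alpha_{\msp}(X)) = (1, 1/2)$; since this case is excluded by hypothesis, we obtain $A_X(E) > S(E)$ and the desired contradiction.

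The main obstacle I expect is making the localization at $\msp$ rigorous: one must express the threshold decomposition of $\operatorname{vol}(-K_X - tE)$ and the resulting bound on $S(E)$ genuinely in terms of the local invariants $\alpha_{\msp}(X)$ and $\alpha^{\mov}_{\msp}(X)$ at the quotient singular point $\msp$, working through the quotient morphism $q_{\msp}$ so that log discrepancies and orbifold multiplicities are computed correctly, and then tracking the equality cases finely enough to see that only $(\alpha^{\mov}_{\msp}(X), \alpha_{\msp}(X)) = (1, 1/2)$ can produce $A_X(E) = S(E)$. A secondary technical point is to confirm that the SZ inequality, originally established under global superrigidity (canonicity), survives when that input is weakened to the localized log-canonical condition $\alpha^{\mov}_{\msp}(X) \ge 1$; this is exactly the adaptation needed to upgrade K-semistability to K-stability in the present setting.
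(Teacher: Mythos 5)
Your overall strategy is sound and is, at bottom, the same mathematics as the paper's, but you propose to re-derive the Stibitz--Zhuang machinery from the Fujita--Li valuative criterion, whereas the paper's proof is a short citation argument. Concretely, the paper first observes that your two hypotheses already yield the \emph{global} inputs that \cite{SZ19} needs: by Remark \ref{rem:maxcent} every point that is not a maximal center has $\alpha^{\mov}_{\msq}(X) \ge 1$ automatically (canonicity of movable pairs away from maximal centers, from \cite{CP17}), so together with the hypothesis at maximal centers the pair $(X, \lambda_{\mcM}^{-1}\mcM)$ is log canonical for \emph{every} movable $\mcM$; combined with $\alpha(X) \ge 1/2$ from Theorem \ref{mainthm}, \cite[Theorem 1.2]{SZ19} gives K-semistability outright. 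The localization you worry about in your last paragraph is therefore unnecessary for this step. The equality case is then handled by \cite[Corollary 3.1]{SZ19}: if $X$ were semistable but not K-stable, there would exist a prime divisor $E$ over $X$ that is simultaneously a log canonical place of $(X, \tfrac{1}{n}\mcM)$ for some movable $\mcM \sim_{\mbQ} -nK_X$ and of $(X, \tfrac{1}{2}D)$ for some effective $D \sim_{\mbQ} -K_X$. Its center must be a maximal center (an lc place of a movable pair cannot lie over a point where all such pairs are canonical), and then both invariants there are forced to be extremal, i.e.\ $(\alpha^{\mov}_{\msp}(X), \alpha_{\msp}(X)) = (1, 1/2)$, contradicting the hypothesis.

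The one place where your write-up falls short of a proof is precisely the step you flag as the ``main obstacle'': the claim that $A_X(E) \le S(E)$, given the bounds $\alpha_{\msp}(X) \ge 1/2$ and $\alpha^{\mov}_{\msp}(X) \ge 1$, forces both invariants to attain their extremal values simultaneously. This equality-case analysis of the volume-integral estimate is the entire content of \cite[Corollary 3.1]{SZ19} and is not routine; your sketch asserts it (``the resulting estimate should give\ldots'') without carrying out the splitting of $\operatorname{vol}(-K_X - tE)$ into movable and fixed parts and tracking when the two inequalities can both be equalities. Since the proposition is stated as a consequence of that corollary, the intended (and far shorter) argument is to cite it; if you insist on re-proving it in localized form, that computation is where all the work lies and is currently missing.
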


\begin{proof}
By the main result of \cite{CP17} (cf.\ Remark \ref{rem:maxcent}), we have $\alpha^{\mov}_{\msq} (X) \ge 1$ for any point $\msq \in X$ which is not a maximal center.
It follows that the pair $(X, \lambda_{\mcM}^{-1} \mcM)$ is log canonical for any movable linear system $\mcM$ on $X$.
Combining this with the inequality $\alpha (X) \ge 1/2$ obtained by Theorem \ref{mainthm}, we see that $X$ is K-semistable by \cite[Theorem 1.2]{SZ19}.

Suppose that $X$ is not K-stable.
Then, by \cite[Corollary 3.1]{SZ19}, there exists a prime divisor $E$ over $X$, a movable linear system $\mcM \sim_{\mbQ} - n K_X$ and an effective $\mbQ$-divisor $D \sim_{\mbQ} - K_X$ such that $E$ is a log canonical place of $(X, \frac{1}{n} \mcM)$ and $(X, \frac{1}{2} D)$.
Note that the center $\Gamma$ of $E$ on $X$ is necessarily a maximal center, and a maximal center on $X$ is a BI center.
Thus $\Gamma = \msp$ is a BI center, and this implies $(\alpha_{\msp}^{\mov} (X), \alpha_{\msp} (X)) = (1, 1/2)$. 
This is impossible by the assumption.
Therefore $X$ is K-stable.
\end{proof}

We define
\[
\msI'_{\mathrm{K}} = \{6, 8, 15, 16, 17, 26, 27, 30, 36, 41, 47, 48, 54, 56, 60, 65, 68\} \subset \msI_{\BR}.
\]

\begin{Thm} \label{thm:qsmWHKst}
Let $X$ be a member of a family $\mcF_{\msi}$ with $\msi \in \msI'_{\mathrm{K}}$.
Then, for any BI center $\msp \in X$, we have 
\begin{equation} \label{eq:qsmWHKst}
\alpha^{\operatorname{mov}}_{\msp} (X) \ge 1 \quad \text{and} \quad \alpha_{\msp} (X) > \frac{1}{2}.
\end{equation}
In particular, $X$ is K-stable.
\end{Thm}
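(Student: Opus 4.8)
The plan is to obtain K-stability from Proposition \ref{prop:Kstmovalpha}, so that the whole theorem collapses to the two local inequalities in \eqref{eq:qsmWHKst}. First I would record the logical reduction. By Remark \ref{rem:maxcent} every maximal center of $X$ is a BI center, so it suffices to verify the hypotheses of Proposition \ref{prop:Kstmovalpha} at BI centers. The first hypothesis there is exactly $\alpha^{\mov}_{\msp}(X) \ge 1$, and the second, namely $(\alpha^{\mov}_{\msp}(X), \alpha_{\msp}(X)) \ne (1, 1/2)$, is forced by $\alpha_{\msp}(X) > 1/2$ irrespective of the value of $\alpha^{\mov}_{\msp}(X)$. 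Thus, once \eqref{eq:qsmWHKst} is established at every BI center, the "in particular" clause follows at once.

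Next I would dispose of the inequality $\alpha_{\msp}(X) > 1/2$. For each $\msi \in \msI'_{\mathrm{K}}$ I would list the BI centers of $X$ from Table \ref{table:main} and then invoke the local computations already carried out in Chapter \ref{chap:singpt}: Proposition \ref{prop:alphaEI} for EI centers (which among these families occur only for $\mcF_{36}$), and Propositions \ref{prop:lctexcQI}, \ref{prop:lctdegQI}, \ref{prop:ndQIcent} for QI centers of exceptional, degenerate, and non-degenerate type respectively. The point is that for the seventeen chosen families these proofs in fact return a value strictly larger than $1/2$: a degenerate QI center always gives $\alpha_{\msp}(X) = (a_k+1)/(2a_k+1) > 1/2$ by Proposition \ref{prop:lctdegQI}, the exceptional and non-degenerate cases give the explicit fractions computed there (sharpened by Remark \ref{rem:QIcent-8} in the non-degenerate case), and EI centers give $\alpha_{\msp}(X)\ge 1$. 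This is a finite bookkeeping check whose only subtlety is confirming that none of the borderline "$=1/2$" situations of Chapter \ref{chap:singpt} occurs for $\msi \in \msI'_{\mathrm{K}}$; indeed the definition of $\msI'_{\mathrm{K}}$ is tailored to exclude them (the tight unique-QI-center families such as $\mcF_{24},\mcF_{46}$ are deliberately left out).

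The main work is the movable bound $\alpha^{\mov}_{\msp}(X) \ge 1$. Here I would first note that by \cite{CP17} (cf.\ Remark \ref{rem:maxcent}) one has $\alpha^{\mov}_{\msq}(X)\ge 1$ at every point $\msq$ that is not a maximal center, so by Remark \ref{rem:QImaxcent} the only BI centers requiring genuine work are the maximal ones, i.e.\ the non-degenerate QI centers and the EI centers. Fix such a center $\msp$ of index $r$, of type $\frac1r(1,a,r-a)$, and let $\varphi\colon Y \to X$ be the Kawamata blowup with exceptional divisor $E \cong \mbP(1,a,r-a)$. Given a movable linear system $\mcN \sim_{\mbQ} A$ (the normalization of an arbitrary one), write $\tilde{\mcN} \sim_{\mbQ} \varphi^*A - \tfrac{e}{r}E$ with $e = r\,\ord_E(\mcN)$. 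Using the nefness of $-K_Y \sim_{\mbQ} \varphi^*A - \tfrac1r E$, available for QI centers by \cite[Theorem 4.9]{CPR00} and by the analogous elliptic-involution link for the EI center of $\mcF_{36}$, together with the fact that $\tilde{\mcN}^2$ is represented by an effective $1$-cycle (since $\mcN$ has no fixed components), I would expand $(-K_Y \cdot \tilde{\mcN}^2)\ge 0$; all mixed terms vanish because $(\varphi^*A)^2\cdot E = \varphi^*A\cdot E^2 = 0$, leaving
\[
(A^3) - \frac{e^2}{r^3}(E^3) \ge 0, \qquad (E^3) = \frac{r^2}{a(r-a)},
\]
hence $e^2 \le r\,a\,(r-a)\,(A^3)$. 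For each family in $\msI'_{\mathrm{K}}$ this yields $e \le r+1$, so the discrepancy $a_E(X,\mcN) = (1-e)/r \ge -1$; writing $K_Y + \tilde{\mcN} + \tfrac{e-1}{r}E \sim_{\mbQ} 0$, log canonicity of $(X,\mcN)$ at $\msp$ then follows by inversion of adjunction once $(E, \Diff + \tilde{\mcN}|_E)$ is log canonical, with $\Diff$ the different of Remark \ref{rem:wbldiff}.

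The hard part will be precisely this last step: controlling the restricted movable system $\tilde{\mcN}|_E$ against the boundary $\Diff$ on the weighted projective plane $E$, including infinitely-near centers, since the crude cubic-intersection bound on $e$ does not by itself preclude a tangency of $\tilde{\mcN}|_E$ with the singular axes of $E$. I expect this to demand a case analysis parallel to the weighted-blowup computations of Lemma \ref{lem:lctwblwh}, carried out family by family, and it is here that the restriction $\msi \in \msI'_{\mathrm{K}}$ rather than all of $\msI_{\BR}$ is genuinely used.
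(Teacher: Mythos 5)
Your reduction to Proposition \ref{prop:Kstmovalpha} is correct, and your treatment of the inequality $\alpha_{\msp}(X) > \tfrac12$ is essentially sound, though it relies on reading the explicit fractions out of the proofs in Chapter \ref{chap:singpt} rather than the stated bounds (Proposition \ref{prop:ndQIcent} only asserts $\ge \tfrac12$; the strict inequality comes from the intermediate estimates such as $\lct_{\msp}(X;D) \ge \frac{r+c}{2r+c}$). The genuine gap is in the movable bound $\alpha^{\mov}_{\msp}(X) \ge 1$ at the maximal centers. Your Kawamata-blowup computation only controls $\ord_E(\mcN)$ for the single valuation $E$, and as you yourself note, log canonicity of $(X,\mcN)$ at $\msp$ then still requires controlling $(E,\Diff+\tilde{\mcN}|_E)$ and all infinitely near centers; you explicitly leave this step undone ("I expect this to demand a case analysis\dots"). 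That step is not a formality: for the family $\mcF_{56}$, for instance, the bound extracted from Proposition \ref{prop:ndQIcent} is only $\lct_{\msp}(X;D)\ge \frac{r+c}{2r+c}=\frac{13}{24}<1$ for irreducible $D\ne\frac1c H_v$, so nothing in Chapter \ref{chap:singpt} delivers the movable bound and your framework would have to produce a genuinely new estimate. As it stands the proposal is a plan, not a proof, of the first inequality in \eqref{eq:qsmWHKst}.

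The paper takes a much shorter route that you miss. For fourteen of the seventeen families ($\msi\in\{16,17,26,27,36,47,48,54,65\}$ via Proposition \ref{prop:lctsingptL}, and $\msi\in\{6,15,30,41,68\}$ via the computation in \S\ref{sec:ndQI2eqtypes}) one already has $\alpha_{\msp}(X)\ge 1$ at every BI center, which gives $\alpha^{\mov}_{\msp}(X)\ge\alpha_{\msp}(X)\ge 1$ for free and renders the Kawamata-blowup analysis unnecessary. For the three remaining families $\msi\in\{8,56,60\}$, each with a unique BI center, the paper proves dedicated statements (Remark \ref{rem:QIcent-8}, Propositions \ref{prop:No56QIex} and \ref{prop:No60QIex}) showing that \emph{every} irreducible $\mbQ$-divisor $D\sim_{\mbQ}A$ other than one explicit fixed divisor ($H_x$ resp.\ $\tfrac12 H_y$) satisfies $\lct_{\msp}(X;D)\ge 1$; since that exceptional divisor is not movable, $\alpha^{\mov}_{\msp}(X)\ge 1$ follows immediately, while the exceptional divisor itself only needs $\lct>\tfrac12$ for the second inequality. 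If you want to complete your proposal, the cleanest fix is to replace your blowup argument by this observation: identify the worst divisor through $\msp$ and check it has a fixed component.
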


\begin{proof}
Let $X$ be a member of  $\mcF_{\msi}$, where $\msi \in \msI'_{\mathrm{K}}$.
We first show that the inequalities \eqref{thm:qsmWHKst} are satisfied.

Suppose $\msi \in \{16, 17, 26, 27, 36, 47, 48, 54, 65\}$.
Then, the subscript $\diamondsuit$ is given in the 4th column of Table \ref{table:main} for any BI center on $X$.
By Proposition \ref{prop:lctsingptL}, we have $\alpha_{\msp} (X) \ge 1$, and hence $\alpha^{\mov}_{\msp} (X) \ge 1$, for any BI center $\msp \in X$.

Suppose $\msi \in \{6, 15, 30, 41, 68\}$.
In this case, $X$ admits two QI centers of equal singularity type, and does not admit any other BI center.
By the proof of Proposition \ref{prop:ndQIcent} (see \S \ref{sec:ndQI2eqtypes}), we have $\alpha_{\msp} (X) \ge 1$ for any QI center $\msp \in X$.
In particular, we have $\alpha^{\mov}_{\msp} (X) \ge \alpha_{\msp} (X) \ge 1$.

Suppose $\msi \in \{8, 56, 60\}$.
In this case $X$ admits a unique BI center and it is a QI center.
The inequalities \eqref{eq:qsmWHKst} follows from Remark \ref{rem:QIcent-8} and Propositions \ref{prop:No56QIex}, \ref{prop:No60QIex}. 
This completes the verifications for the inequalities \eqref{eq:qsmWHKst}.

The K-stability of $X$ follows from the inequalities \eqref{eq:qsmWHKst}, Theorem \ref{mainthm}, and Proposition \ref{prop:Kstmovalpha}.
\end{proof}

We define
\[
\msI_{\mathrm{K}} := \msI'_{\mathrm{K}} \sqcup \msI_{\mathrm{KE}}.
\]
Note that $|\msI_{\mathrm{K}}| = 73$.
Combining Theorems \ref{thm:qsmWHKE}, \ref{thm:qsmWHKst} and Corollary \ref{maincor:Kst}, we obtain the K-stability of arbitrary quasi-smooth member for families indexed by $\msI_{\mathrm{K}}$.

\begin{Cor} \label{cor:Kstqsm}
Let $X$ be a member of a family $\mcF_{\msi}$ with $\msi \in \msI_{\mathrm{K}}$.
Then $X$ is K-stable.
\end{Cor}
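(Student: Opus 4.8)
The plan is purely organizational: I would unwind the definition $\msI_{\mathrm{K}} = \msI'_{\mathrm{K}} \sqcup \msI_{\mathrm{KE}}$ and dispatch each block to the K-stability statement already established for it. First I would record that $\msI_{\mathrm{K}} = \msI_{\BSR} \sqcup \msI'_{\mathrm{KE}} \sqcup \msI'_{\mathrm{K}}$, with the three pieces pairwise disjoint. This is immediate from the definition $\msI_{\mathrm{KE}} = \msI_{\BSR} \sqcup \msI'_{\mathrm{KE}}$ together with the inclusions $\msI'_{\mathrm{KE}}, \msI'_{\mathrm{K}} \subset \msI_{\BR} = \msI \setminus \msI_{\BSR}$ (so neither meets $\msI_{\BSR}$) and a direct comparison of the two explicit lists $\msI'_{\mathrm{KE}}$ and $\msI'_{\mathrm{K}}$, which share no common element.

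Next, for $\msi \in \msI_{\mathrm{KE}} = \msI_{\BSR} \sqcup \msI'_{\mathrm{KE}}$ I would invoke Theorem \ref{thm:qsmWHKE}, whose concluding assertion is exactly that every member of a family $\mcF_{\msi}$ with $\msi \in \msI_{\mathrm{KE}}$ is K-stable (and in fact admits a KE metric). Inside this block the two sub-cases have genuinely different origins: for $\msi \in \msI_{\BSR}$ the members are birationally superrigid and K-stability comes from Corollary \ref{maincor:Kst} (via $\alpha(X) \ge 1/2$ from Theorem \ref{mainthm} together with Theorem \ref{thm:SXZ}), whereas for $\msi \in \msI'_{\mathrm{KE}}$ it comes from the stronger bound $\alpha(X) > 3/4$ of Theorem \ref{thm:qsmWHKE}, which produces a KE metric directly. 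Either way the conclusion I need, namely K-stability, is already in hand.

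Finally, for $\msi \in \msI'_{\mathrm{K}}$ I would cite Theorem \ref{thm:qsmWHKst}, which asserts K-stability of every member of such a family; there the content is the verification that $\alpha^{\mov}_{\msp}(X) \ge 1$ and $\alpha_{\msp}(X) > 1/2$ at every BI center, feeding into Proposition \ref{prop:Kstmovalpha}. Combining the three blocks exhausts $\msI_{\mathrm{K}}$ and completes the argument. The point I would emphasize is that this corollary carries no new mathematical content: all the difficulty has been front-loaded into the Main Theorem and into the movable alpha-invariant estimates for the strictly birationally rigid families in $\msI'_{\mathrm{K}}$. Accordingly the only genuine obstacle here is bookkeeping, the disjointness and coverage of the index lists, and the real work lies entirely in the cited results.
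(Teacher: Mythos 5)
Your proposal is correct and is exactly the paper's argument: the paper derives the corollary in one line by combining Theorems \ref{thm:qsmWHKE} and \ref{thm:qsmWHKst} with Corollary \ref{maincor:Kst}, which is precisely your decomposition $\msI_{\mathrm{K}} = \msI_{\BSR} \sqcup \msI'_{\mathrm{KE}} \sqcup \msI'_{\mathrm{K}}$ with each block dispatched to the corresponding result. Your observation that the corollary carries no new content beyond bookkeeping is accurate.
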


%%%%%%%%%%%%%%%%%%%%%%%%%%%%%%%%%%
%%%%%%%%%%%%%%%%%%%%%%%%%%%%%%%%%%
\subsection{Further computations of alpha invariants}
%%%%%%%%%%%%%%%%%%%%%%%%%%%%%%%%%%
%%%%%%%%%%%%%%%%%%%%%%%%%%%%%%%%%%

In this section, we compute local alpha invariants for a few families in order to give better lower bounds.
The results obtained in this section are used only in the proof of Theorem \ref{thm:qsmWHKst}. 

\begin{Prop} \label{prop:No56QIex}
Let $X$ be a member of the family $\mcF_{56}$ and $\msp = \msp_w \in X$ be the singular point of type $\frac{1}{11} (1, 3, 8)$.
Then,
\[
\alpha_{\msp} (X) \ge \frac{2}{3} \quad \text{and} \quad 
\alpha^{\mov}_{\msp} (X) \ge 1.
\]
\end{Prop}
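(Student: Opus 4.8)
The plan is to follow the same template used throughout Chapter~\ref{chap:singpt} for non-degenerate QI centers, adapted to the family $\mcF_{56}$. First I would determine the ambient weighted projective space and the explicit form of the defining polynomial near $\msp = \msp_w$. Since $\msp$ is a QI center of type $\frac{1}{11}(1,3,8)$, the singularity data forces $X = X_{d} \subset \mbP(1, a, b, c, 11)$ with $a + b = 11$ and the appropriate degree relation $d = 2\cdot 11 + c$; reading off Table~\ref{table:main} I would identify $(a,b,c)$ and write
\[
F = w^2 v + w f_{r+c}(x, s, u) + f_{2r+c}(x, s, u, v),
\]
in coordinates as in Lemma~\ref{lem:QIeqtypes}~(1), where $\msp_w$ is the unique QI center of the ``unique QI center'' type. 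The quasi-tangent divisor at $\msp$ is then $H_v$, and I would first establish the log canonical threshold bound $\lct_{\msp}(X; \frac{1}{c} H_v) \ge \frac{1}{2}$ by the weighted-blowup machinery of \S\ref{sec:compwbl}, namely Lemma~\ref{lem:lctwblwh}, computing the lowest weight part of $\bar{F} = F(x,s,u,0,1)$ and checking quasi-smoothness of the associated divisor $\mcD^{\wf}_G$ against the different $\Diff$.

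For the ordinary alpha invariant $\alpha_{\msp}(X) \ge \frac{2}{3}$, I would combine the $\lct$ bound on $H_v$ with the nefness of $-K_Y$ on the Kawamata blowup $\varphi\colon Y \to X$. Following the argument at the end of \S\ref{sec:ndQI3ditinct} and the unique-QI-center case of the proof of Proposition~\ref{prop:ndQIcent}, for an irreducible $\mbQ$-divisor $D \in |A|_{\mbQ}$ other than $\frac{1}{c}H_v$ I would intersect the proper transforms $\tilde{D}$ and $\tilde{H}_v$ against $-K_Y \sim_{\mbQ} \varphi^* A - \frac{1}{11} E$, which is nef by \cite[Theorem 4.9]{CPR00}. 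Positivity of $(-K_Y \cdot \tilde{D} \cdot \tilde{H}_v)$ yields an upper bound $\ord_E(D) \le \lambda$ for an explicit $\lambda$, and hence $\lct_{\msp}(X; D) \ge$ the corresponding threshold; taking the minimum with the $H_v$ bound gives $\alpha_{\msp}(X) \ge \frac{2}{3}$. Alternatively, if $X$ can be shown quasi-smooth enough to apply Lemma~\ref{lem:complctsingtang} with a suitable quasi-tangent coordinate, the bound $\frac{2}{r b_2 b_3 (A^3)}$ would directly deliver $\ge \frac{2}{3}$, and I would check which numeric hypothesis holds for $\mcF_{56}$.

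For the movable alpha invariant $\alpha^{\mov}_{\msp}(X) \ge 1$, the key input is that $\msp$ is \emph{not} a maximal center exactly when it is degenerate, but in the non-degenerate (exceptional/generic) case one must instead bound the movable log canonical threshold. I would invoke that for a movable linear system $\mcM \sim_{\mbQ} -nK_X$, the general member has no fixed component through $\msp$, so the relevant multiplicity along $E$ is controlled by $-K_Y$ being nef: intersecting two general members of $\tilde{\mcM}$ with $-K_Y$ gives $(-K_Y \cdot \tilde{M}_1 \cdot \tilde{M}_2) \ge 0$, forcing $\ord_E(\mcM) \le n/11$ and hence $(X, \frac{1}{n}\mcM)$ canonical, i.e.\ $\alpha^{\mov}_{\msp}(X) \ge 1$. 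The main obstacle I anticipate is the explicit $\lct$ computation on the exceptional divisor: verifying that the lowest-weight-part divisor $\mcD^{\wf}_G$ together with $\Diff$ has log canonical threshold at least the needed value may require a case division on which monomials of $f_{r+c}$ survive (as in the treatment of $\mcF_{46}$ in Table~\ref{table:No46lwp}), and one must ensure quasi-smoothness and transversality hold in every branch of that division. Handling the degenerate case separately (where Proposition~\ref{prop:lctdegQI} applies and gives the exact value) versus the non-degenerate case is the bookkeeping that will consume most of the effort.
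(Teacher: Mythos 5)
Your identification of the setup is correct ($\mcF_{56}$ is $X_{24}\subset\mbP(1,2,3,8,11)$, $\msp=\msp_w$ is the unique QI center, $c=2$, and the quasi-tangent divisor is $H_y$), but the tools you propose cannot reach the stated bound $\alpha_{\msp}(X)\ge\tfrac23$. Lemma~\ref{lem:lctwblwh} as you plan to use it is aimed at $\lct_{\msp}(X;\tfrac12 H_y)\ge\tfrac12$, and the $-K_Y$-nef argument from the unique-QI-center case of Proposition~\ref{prop:ndQIcent} gives only $\lct_{\msp}(X;D)\ge\tfrac{r+c}{2r+c}=\tfrac{13}{24}$ for the remaining divisors, so the minimum is $\tfrac{13}{24}<\tfrac23$. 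Your fallback via Lemma~\ref{lem:complctsingtang} is worse: since $(A^3)=\tfrac1{22}$, the bound $\tfrac{2}{11\cdot3\cdot8\cdot(A^3)}$ equals $\tfrac16$. The paper's proof uses a sharper, more elementary pair of observations: writing $F=w^2y+wf_{13}+f_{24}$ and using $t^3\in F$, one checks $F(x,0,z,t,1)\in(x,z,t)^3$, so $\omult_{\msp}(H_y)=3$ exactly and $\lct_{\msp}(X;\tfrac12H_y)\ge\tfrac23$; and for every irreducible $D\sim_{\mbQ}A$ other than $\tfrac12H_y$, the isolating set $\{x,y,z\}$ produces $T\in|3A|_{\mbQ}$ with $\omult_{\msp}(T)\ge1$ avoiding the components of $D\cdot H_y$, whence $3\,\omult_{\msp}(D)\le 11(D\cdot H_y\cdot T)=3$ and $\lct_{\msp}(X;D)\ge1$. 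That local intersection computation, exploiting $\omult_{\msp}(H_y)=3$ on both sides of the inequality, is the missing ingredient; no weighted blowup or case division on monomials is needed.

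For $\alpha^{\mov}_{\msp}(X)\ge1$ your intermediate claim is false as stated: intersecting two general members of $\tilde{\mcM}$ with $-K_Y$ yields $n^2(A^3)-\tfrac{m^2}{11}(E^3)\ge0$ with $(E^3)=\tfrac{121}{24}$, i.e.\ $\ord_E(\mcM)\le\tfrac{2\sqrt3}{11}n$, which is strictly larger than $\tfrac{n}{11}$; indeed, when $\msp$ is non-degenerate it is a maximal center, so some movable system is \emph{not} canonical there and the inequality $\ord_E(\mcM)\le n/11$ cannot hold. What your computation does give is log canonicity of $(X,\tfrac1n\mcM)$ at $\msp$ (since $2\sqrt3<r+1=12$), and that is all $\alpha^{\mov}_{\msp}(X)\ge1$ requires, so the method is salvageable after correcting ``canonical'' to ``log canonical.'' The paper obtains the movable bound for free from the first paragraph: a movable system has no fixed component, so its general member avoids $H_y$, and the bound $\lct_{\msp}(X;D)\ge1$ for irreducible $D\ne\tfrac12H_y$ applies by convexity.
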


\begin{proof}
We set $\msp = \msp_w$.
We can write the defining polynomial of $X$ as
\[
F = w^2 y + f_{13} + f_{24},
\]
where $f_i = f_i (x, y, z, t)$ is a quasi-homogeneous polynomial of degree $i$.
By the quasi-smoothness of $X$, we have $t^3 \in F$.
It is easy to see that $F (x, 0, z, t, 1) \in (x, z, t)^3$.
It follows that $\omult_{\msp} (H_y) = 3$, which in particular implies $\lct_{\msp} (X; \frac{1}{2} H_y) \ge 2/3$.

Let $D \in |A|_{\mbQ}$ be an irreducible $\mbQ$-divisor other than $\frac{1}{2} H_y$.
We can take a $\mbQ$-divisor $T \in |3 A|_{\mbQ}$ such that $\omult_{\msp} (T) \ge 1$ and $\Supp (T)$ does not contain any component of the effective $1$-cycle $D \cdot H_y$.
We have
\[
3 \omult_{\msp} (D) \le 11 (D \cdot H_y \cdot T) = 3.
\]
This shows $\lct_{\msp} (X;D) \ge 1$.
Therefore $\alpha^{\mov}_{\msp} (X) \ge 1$ and $\alpha_{\msp} (X) \ge 2/3$. 
\end{proof}

\begin{Prop} \label{prop:No60QIex}
Let $X$ be a member of the family $\mcF_{60}$ and let $\msp = \msp_w$ be the singular point of type $\frac{1}{9} (1, 4, 5)$.
Then 
\[
\alpha_{\msp} (X) = 1.
\]
\end{Prop}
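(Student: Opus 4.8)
The plan is to pin down $X$ and $\msp$ explicitly and then sandwich $\alpha_{\msp}(X)$ between $1$ and $1$. A member of $\mcF_{60}$ is $X = X_{24} \subset \mbP(1,4,5,6,9)_{x,y,z,t,w}$, and the singular point of type $\frac{1}{9}(1,4,5)$ is $\msp = \msp_w$; here $(A^3) = 24/(4\cdot 5\cdot 6\cdot 9) = 1/45$ and $\msp$ has index $r = 9$. Since $X$ is quasi-smooth at $\msp$ and $d = 2\cdot 9 + 6 = 2a_4 + a_3$, the monomial $w^2 t$ occurs in the defining polynomial $F$, so $t$ is a quasi-tangent coordinate at $\msp$ while $x,y,z$ give a system of local orbifold coordinates.

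For the upper bound I would use $H_x = (x=0)_X \in |A|_{\mbQ}$. In the orbifold chart $\breve U_w$ at $\msp$, the relation $w^2 t \in F$ lets one solve for $\breve t$ by the implicit function theorem, so $\breve x, \breve y, \breve z$ are smooth coordinates and the pullback of $H_x$ is the smooth hypersurface $(\breve x = 0)$. Hence $\omult_{\msp}(H_x) = 1$ and $\lct_{\msp}(X; H_x) = 1$, which already gives $\alpha_{\msp}(X) \le 1$.

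For the lower bound I would show $\lct_{\msp}(X; D) \ge 1$ for every irreducible $D \in |A|_{\mbQ}$; by Remark \ref{rem:covex} this suffices. The case $D = H_x$ is the computation above. For $D \ne H_x$, the cycle $D \cdot H_x$ is an effective $1$-cycle. The key geometric input is that $\{x,y,z\}$ isolates $\msp$: on $(x=y=z=0)_X$ one has $F(0,0,0,t,w) = t\,(c_1 w^2 + c_2 t^3)$ with $c_1 \ne 0$ (the coefficient of $w^2 t$), so $\msp_w = (0\!:\!0\!:\!0\!:\!0\!:\!1)$ is an isolated component of this finite set. Since $\max\{\deg x, \deg y, \deg z\} = \deg z = 5$, Remark \ref{rem:isolT} produces $T \sim_{\mbQ} 5A$ with $\omult_{\msp}(T) \ge 1$ whose support contains no component of $D \cdot H_x$. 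Writing $q = q_{\msp}$ and $\check{\msp}$ for the preimage of $\msp$,
\[
\omult_{\msp}(D) \le (q^*D \cdot q^*H_x \cdot q^*T)_{\check{\msp}} \le r\,(D \cdot H_x \cdot T) = 9 \cdot 1 \cdot 1 \cdot 5 \cdot (A^3) = 1,
\]
so $\lct_{\msp}(X;D) \ge 1/\omult_{\msp}(D) \ge 1$ by Lemma \ref{lem:multlct}. Combining the two bounds yields $\alpha_{\msp}(X) = 1$.

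The computation is forced once the identification of $X$ is made, so the only genuine verifications — neither of which I expect to be hard — are that $H_x$ is quasi-smooth at $\msp$ (i.e.\ that $t$, not $x$, is the quasi-tangent coordinate) and that $\{x,y,z\}$ really isolates $\msp$. The latter reduces to checking that $w^2 t$ is the unique monomial of $F(0,0,0,t,w)$ divisible by $w^2$, so that the residual factor $c_1 w^2 + c_2 t^3$ does not vanish at $\msp_w$; this is exactly where the precise numerics $r\cdot e\cdot (A^3) = 9\cdot 5\cdot \tfrac{1}{45} = 1$ make the bound sharp and match the upper bound, which is the conceptual heart of the statement.
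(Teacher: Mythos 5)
Your proof is correct, and it reaches the same numerical sweet spot ($9\cdot 5\cdot (A^3)=1$, plus quasi-smoothness of $H_x$ at $\msp$ forced by $w^2t\in F$) as the paper, but by a genuinely different mechanism. The paper fixes $T=H_z$ and applies Lemma \ref{lem:exclL} to the explicit curve $\Gamma=H_x\cap H_z=(x=z=0)_X$, which requires verifying that $\Gamma$ is irreducible and reduced and that $\omult_{\msp}(\Gamma)=1$ from the equation $w^2t+t^4+\lambda t^2y^3+y^6=0$; the conclusion is then $\alpha_{\msp}(X)\ge\min\{1,\,5,\,1/(9\cdot 5\cdot(A^3))\}=1$. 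You instead run the generic isolating-set method of \S\ref{sec:isol} (the same template as Proposition \ref{prop:singptic}): since $F(0,0,0,t,w)=t(c_1w^2+c_2t^3)$ with $c_1\ne 0$, the set $\{x,y,z\}$ isolates $\msp$, so Remark \ref{rem:isolT} supplies $T\sim_{\mbQ}5A$ avoiding the components of $D\cdot H_x$, and the crude bound $\omult_{\msp}(D)\le r(D\cdot H_x\cdot T)=1$ finishes. Your route trades the curve analysis for the (easier) isolation check and works here precisely because the crude bound is already sharp; Lemma \ref{lem:exclL} is the tool one would need if it were not. Two small points you implicitly use and should be aware of: $D\cdot H_x$ being a well-defined effective $1$-cycle for irreducible $D\ne H_x$ rests on $H_x$ being irreducible, which follows from Lemma \ref{lem:normalqhyp} (a quasi-hyperplane section is normal, hence irreducible being connected and ample); and the upper bound $\alpha_{\msp}(X)\le 1$, which the paper leaves tacit, is exactly your observation that $\lct_{\msp}(X;H_x)=1$.
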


\begin{proof}
We set $S = H_x \sim A$, $T = H_z$ and $\Gamma = S \cap T = (x = z = 0)_X$.
Let $\rho = \rho_{\msp} \colon \breve{U}_{\msp} \to U_{\msp}$ be the orbifold chart of $\msp \in X$, and we set $\breve{\Gamma} = (\breve{x} = \breve{z} = 0) \subset \breve{U}_{\msp}$.
We can write the defining polynomial of $X$ as
\[
F = w^2 t + w f_{15} + f_{24},
\]
where $f_i = f_i (x, y, z, t)$ is a quasi-homogeneous polynomial of degree $i$.
By the quasi-smoothness of $X$, we have $t^4, y^6 \in F$, and we may assume $\coeff_F (t^4) = \coeff_F (y^6) = 1$ by rescaling $y$ and $t$.
We set $\lambda = \coeff_F (t^2 y^3) \in \mbC$.
Then,  
\[
\begin{split}
\Gamma &\cong (w^2 t + t^4 + \lambda t^2 y^3 + y^6 = 0) \subset \mbP (3, 10, 17)_{y, t, w}, \\
\breve{\Gamma} & \cong (\breve{w}^2 \breve{t} + \breve{t}^4 + \lambda \breve{t}^{\hspace{0.3mm} 2} \breve{y}^3 + \breve{y}^6 = 0) \subset \mbA^3_{\breve{y}, \breve{t}, \breve{w}}.
\end{split}
\]
It is easy to see that $\Gamma$ is an irreducible and reduced curve, and $\mult_{\breve{\msp}} (\breve{\Gamma}) = 1$, where $\breve{\msp} = o \in \mbA^3$ is the preimage of $\msp$ via $\rho$.

We see that $H_x$ is quasi-smooth at $\msp$, and hence $\lct_{\msp} (X; H_x) = 1$.
Therefore, we have $\alpha_{\msp} (X) \ge 1$ by Lemma \ref{lem:exclL}.
\end{proof}

%%%%%%%%%%%%%%%%%%%%%%%%%%%%%%%%%%
%%%%%%%%%%%%%%%%%%%%%%%%%%%%%%%%%%
%%%%%%%%%%%%%%%%%%%%%%%%%%%%%%%%%%
\section{The table}
\label{chap:table}
%%%%%%%%%%%%%%%%%%%%%%%%%%%%%%%%%%
%%%%%%%%%%%%%%%%%%%%%%%%%%%%%%%%%%
%%%%%%%%%%%%%%%%%%%%%%%%%%%%%%%%%%

The list of the 93 families together with their basic information are summarized in Table \ref{table:main}, and we explain the contents.

The first two columns indicate basic information of each family and the anticanonical degree $(A^3) = (-K_X)^3$ is indicated in the 3rd column.
%The $4$th column indicates the Lemmas which compute $\alpha_{\msp} (X)$ for nonsingular point $\msp$.

In the 4th column, the number and the singularities of $X$ are described.
The symbol $\frac{1}{r} [a, r-a]$ stands for the cyclic quotient singularity of type $\frac{1}{r} (1, a, r-a)$, where $1 \le a \le r/2$.
Moreover the symbols $\frac{1}{2}$, $\frac{1}{3}$ and $\frac{1}{4}$ stand for singularities of types $\frac{1}{2} (1, 1, 1)$, $\frac{1}{3} (1, 1, 2)$ and $\frac{1}{4} (1, 1, 3)$.
The superscript $\Q$ and $\E$ indicates that the corresponding singular point $\msp$ is a QI center and EI center, respectively (see \S \ref{sec:defBI} for definitions). 
The meaning of the subscripts is explained as follows.

\begin{itemize}
\item The subscript $\heartsuit$ indicates that $\alpha_{\msp} (X) \ge 1$ is proved by Proposition \ref{prop:singptCP}.
\item The subscript $\diamondsuit$ (resp.\ $\diamondsuit'$) indicates that $\alpha_{\msp} (X) \ge 1$ (resp.\ $\alpha_{\msp} (X) \ge 2/3$) is proved by Proposition \ref{prop:lctsingptL}.
\item The subscript $\clubsuit$ indicates that $\alpha_{\msp} (X) \ge 1/2$ is proved by Proposition \ref{prop:singptic}.
\item The subscript $\spadesuit$ indicates that $\alpha_{\msp} (X) \ge 1/2$ is proved by Proposition \ref{prop:singptrem}.
\end{itemize}

In Theorem \ref{mainthm}, any birational superrigid member of each of the 95 families is proved.
Apart from this main result, we have results on the existence of KE metrics or K-stability for any quasi-smooth member of suitable families. 
In the right-most column the mark ``KE" and ``K" are given and their meanings are as follows.
\begin{itemize}
\item The mark ``KE" in the right-most column means that any quasi-smooth member admits a KE metric and is K-stable (see \S \ref{sec:KEmetric}).
\item The mark ``K" in the right-most column means that any quasi-smooth member is K-stable (see \S \ref{sec:Kstqsm}).
\end{itemize}

\begin{Rem} \label{rem:whatisleft}
We explain what is left about K-stability of quasi-smooth Fano $3$-fold weighted hypersurfaces of index $1$.

As it is explained in \S \ref{sec:relresults}, the result \cite{LXZ22} obtained after this paper is written in particular implies that the K-stability of a quasi-smooth Fano $3$-fold weighted hypersurface is equivalent to the existence of a KE metric.
It follows that the meaning of the mark ``KE" and ``K" in the right-most column of Table \ref{table:main} are the same: it indicates that any quasi-smooth member is K-stable (and admits a KE metric). 
All in all, we obtain the following results in this article:
\begin{itemize}
\item Any quasi-smooth member in a family $\mcF_{\msi}$ with a mark ``K" or ``KE" in the right-most column of Table \ref{table:main} is K-stable.
\item Any quasi-smooth and birationally superrigid member in a family $\mcF_{\msi}$ with a blank right-most column in Table \ref{table:main} is K-stable.
\end{itemize}
Therefore, it remains to determine K-stability for quasi-smooth members in a family $\mcF_{\msi}$ with a blank right-most column that are not birationally superrigid.
\end{Rem}

%\addtocounter{table}{2}

\begingroup
\renewcommand{\arraystretch}{1.35}
\begin{longtable}{clclc}
\caption{The 93 families}
\label{table:main}
\\
\hline\hline
No.\ & $X_d \subset \mbP (1,a_1,a_2,a_3,a_4)$ & $(A^3)$ & Singular points & \\ 
\hline\hline
\endfirsthead
\hline\hline
No.\ & $X_d \subset \mbP (1,a_1,a_2,a_3,a_4)$ & $(A^3)$ & Singular points & \\ 
\hline\hline
\endhead
\rowcolor{lightgray}
2 & $X_5 \subset \mbP (1,1,1,1,2)$ & $\frac{5}{2}$ & $\frac{1}{2}^{\Q}$ & \\
4 & $X_6 \subset \mbP (1,1,1,2,2)$ & $\frac{3}{2}$ & $3 \ntimes \frac{1}{2}^{\Q}$ & \\
\rowcolor{lightgray}
5 & $X_7 \subset \mbP (1,1,1,2,3)$ & $\frac{7}{6}$ & $\frac{1}{2}^{\Q}, \ \frac{1}{3}^{\Q}$ & \\
6 & $X_8 \subset \mbP (1,1,1,2,4)$ & $1$ & $2 \ntimes \frac{1}{2}^{\Q}$ & K \\
\rowcolor{lightgray}
7 & $X_8 \subset \mbP (1,1,2,2,3)$ & $\frac{2}{3}$ & $4 \ntimes \frac{1}{2}^{\E}, \ \frac{1}{3}^{\Q}$ & \\
8 & $X_9 \subset \mbP (1,1,1,3,4)$ & $\frac{3}{4}$ & $\frac{1}{4}^{\Q}$ & K \\
\rowcolor{lightgray}
9 & $X_9 \subset \mbP (1,1,2,3,3)$ & $\frac{1}{2}$ & $\frac{1}{2}_{\heartsuit}, \ 3 \ntimes \frac{1}{3}^{\Q}$ & \\
10 & $X_{10} \subset \mbP (1,1,1,3,5)$ & $\frac{2}{3}$ & $\frac{1}{3}_{\clubsuit}$ & KE \\
\rowcolor{lightgray}
11 & $X_{10} \subset \mbP (1,1,2,2,5)$ & $\frac{1}{2}$ & $5 \ntimes \frac{1}{2}_{\heartsuit}$ & KE \\
12 & $X_{10} \subset \mbP (1,1,2,3,4)$ & $\frac{5}{12}$ & $2 \ntimes \frac{1}{2}_{\spadesuit}, \ \frac{1}{3}^{\Q}, \ \frac{1}{4}^{\Q}$ & \\
\rowcolor{lightgray}
13 & $X_{11} \subset \mbP (1,1,2,3,5)$ & $\frac{11}{30}$ & $\frac{1}{2}_{\spadesuit}, \ \frac{1}{3}^{\Q}, \ \frac{1}{5} [2, 3]^{\Q}$ & \\
14 & $X_{12} \subset \mbP (1,1,1,4,6)$ & $\frac{1}{2}$ & $2 \ntimes \frac{1}{2}_{\heartsuit}$ & KE \\
\rowcolor{lightgray}
15 & $X_{12} \subset \mbP (1,1,2,3,6)$ & $\frac{1}{3}$ & $2 \ntimes \frac{1}{2}_{\diamondsuit}, \ 2 \ntimes \frac{1}{3}_{\diamondsuit}^{\Q}$ & K \\ 
16 & $X_{12} \subset \mbP (1,1,2,4,5)$ & $\frac{3}{10}$ & $3 \ntimes \frac{1}{2}_{\heartsuit}, \ \frac{1}{5} [1, 4]^{\Q}_{\diamondsuit}$ & K \\
\rowcolor{lightgray}
17 & $X_{12} \subset \mbP (1,1,3,4,4)$ & $\frac{1}{4}$ & $3 \ntimes \frac{1}{4}^{\Q}_{\diamondsuit}$ & K \\
18 & $X_{12} \subset \mbP (1,2,2,3,5)$ & $\frac{1}{5}$ & $6 \ntimes \frac{1}{2}_{\heartsuit}, \ \frac{1}{5} [2, 3]^{\Q}$ & \\
\rowcolor{lightgray}
19 & $X_{12} \subset \mbP (1,2,3,3,4)$ & $\frac{1}{6}$ & $3 \ntimes \frac{1}{2}_{\heartsuit}, \ 4 \ntimes \frac{1}{3}_{\heartsuit}$ & KE \\
20 & $X_{13} \subset \mbP (1,1,3,4,5)$ & $\frac{13}{60}$ & $\frac{1}{3}^{\E}, \ \frac{1}{4}^{\Q}, \ \frac{1}{5} [1, 4]^{\Q}$ & \\
\rowcolor{lightgray}
21 & $X_{14} \subset \mbP (1,1,2,4,7)$ & $\frac{1}{4}$ & $3 \ntimes \frac{1}{2}_{\heartsuit}, \ \frac{1}{4}_{\diamondsuit}$ & KE \\
22 & $X_{14} \subset \mbP (1,2,2,3,7)$ & $\frac{1}{6}$ & $7 \ntimes \frac{1}{2}_{\heartsuit}, \ \frac{1}{3}_{\heartsuit}$ & KE \\
\rowcolor{lightgray}
23 & $X_{14} \subset \mbP (1,2,3,4,5)$ & $\frac{7}{60}$ & $3 \ntimes \frac{1}{2}_{\clubsuit}, \ \frac{1}{3}_{\clubsuit}, \ \frac{1}{4}^{\E}_{\clubsuit}, \ \frac{1}{5} [2, 3]^{\Q}$ & \\
24 & $X_{15} \subset \mbP (1,1,2,5,7)$ & $\frac{3}{14}$ & $\frac{1}{2}_{\spadesuit}, \ \frac{1}{7} [2, 5]^{\Q}$ & \\
\rowcolor{lightgray}
25 & $X_{15} \subset \mbP (1,1,3,4,7)$ & $\frac{5}{28}$ & $\frac{1}{4}^{\Q}, \ \frac{1}{7} [3, 4]^{\Q}$ & \\
26 & $X_{15} \subset \mbP (1,1,3,5,6)$ & $\frac{1}{6}$ & $2 \ntimes \frac{1}{3}_{\heartsuit}, \ \frac{1}{6} [1, 5]^{\Q}_{\diamondsuit}$ & K \\
\rowcolor{lightgray}
27 & $X_{15} \subset \mbP (1,2,3,5,5)$ & $\frac{1}{10}$ & $\frac{1}{2}_{\spadesuit}, 3 \ntimes \frac{1}{5} [2, 3]^{\Q}_{\diamondsuit}$ & K \\
28 & $X_{15} \subset \mbP (1,3,3,4,5)$ & $\frac{1}{12}$ & $5 \ntimes \frac{1}{3}_{\heartsuit}, \ \frac{1}{4}_{\heartsuit}$ & KE \\
\rowcolor{lightgray}
29 & $X_{16} \subset \mbP (1,1,2,5,8)$ & $\frac{1}{5}$ & $2 \ntimes \frac{1}{2}_{\clubsuit}, \ \frac{1}{5} [2, 3]_{\clubsuit}$ & KE \\
30 & $X_{16} \subset \mbP (1,1,3,4,8)$ & $\frac{1}{6}$ & $\frac{1}{3}_{\heartsuit}, \ 2 \ntimes \frac{1}{4}^{\Q}$ & K \\
\rowcolor{lightgray}
31 & $X_{16} \subset \mbP (1,1,4,5,6)$ & $\frac{2}{15}$ & $\frac{1}{2}_{\clubsuit}, \ \frac{1}{5} [1, 4]^{\Q}, \ \frac{1}{6} [1, 5]^{\Q}$ & \\
32 & $X_{16} \subset \mbP (1,2,3,4,7)$ & $\frac{2}{21}$ & $4 \ntimes \frac{1}{2}_{\heartsuit}, \ \frac{1}{3}_{\spadesuit}, \ \frac{1}{7} [3, 4]^{\Q}$ & \\
\rowcolor{lightgray}
33 & $X_{17} \subset \mbP (1,2,3,5,7)$ & $\frac{17}{210}$ & $\frac{1}{2}_{\spadesuit}, \ \frac{1}{3}_{\clubsuit}, \ \frac{1}{5} [2, 3]^{\Q}, \ \frac{1}{7} [2, 5]^{\Q}$ & \\
34 & $X_{18} \subset \mbP (1,1,2,6,9)$ & $\frac{1}{6}$ & $3 \ntimes \frac{1}{2}_{\heartsuit}, \ \frac{1}{3}_{\heartsuit}$ & KE \\
\rowcolor{lightgray}
35 & $X_{18} \subset \mbP (1,1,3,5,9)$ & $\frac{2}{15}$ & $2 \ntimes \frac{1}{3}_{\heartsuit}, \ \frac{1}{5} [1, 4]_{\diamondsuit}$ & KE \\
36 & $X_{18} \subset \mbP (1,1,4,6,7)$ & $\frac{3}{28}$ & $\frac{1}{2}_{\heartsuit}, \ \frac{1}{4}^{\E}_{\diamondsuit}, \ \frac{1}{7} [1, 6]^{\Q}_{\diamondsuit}$ & K \\
\rowcolor{lightgray}
37 & $X_{18} \subset \mbP (1,2,3,4,9)$ & $\frac{1}{12}$ & $4 \ntimes \frac{1}{2}_{\heartsuit}, \ 2 \ntimes \frac{1}{3}_{\clubsuit}, \  \frac{1}{4}_{\heartsuit}$ & KE \\
38 & $X_{18} \subset \mbP (1,2,3,5,8)$ & $\frac{3}{40}$ & $2 \ntimes \frac{1}{2}_{\heartsuit}, \ \frac{1}{5}  [2, 5]^{\Q}, \frac{1}{8} [3, 5]^{\Q}$ & \\
\rowcolor{lightgray}
39 & $X_{18} \subset \mbP (1,3,4,5,6)$ & $\frac{1}{20}$ & $ \frac{1}{2}_{\heartsuit}, \ 3 \ntimes \frac{1}{3}_{\clubsuit}, \ \frac{1}{4}_{\clubsuit}, \ \frac{1}{5} [1, 4]_{\heartsuit}$ & KE \\
40 & $X_{19} \subset \mbP (1,3,4,5,7)$ & $\frac{19}{420}$ & $\frac{1}{3}_{\spadesuit}, \ \frac{1}{4}_{\clubsuit}, \ \frac{1}{5} [2, 3]^{\E}_{\clubsuit}, \  \frac{1}{7} [3, 4]^{\Q}$ & \\
\rowcolor{lightgray}
41 & $X_{20} \subset \mbP (1,1,4,5,10)$ & $\frac{1}{10}$ & $\frac{1}{2}_{\heartsuit}, \ 2 \ntimes \frac{1}{5} [1, 4]^{\Q}_{\diamondsuit}$ & K \\
42 & $X_{20} \subset \mbP (1,2,3,5,10)$ & $\frac{1}{15}$ & $2 \ntimes \frac{1}{2}_{\heartsuit}, \ \frac{1}{3}_{\heartsuit}, \ 2 \ntimes \frac{1}{5} [2, 3]^{\Q}$ & KE \\
\rowcolor{lightgray}
43 & $X_{20} \subset \mbP (1,2,4,5,9)$ & $\frac{1}{18}$ & $5 \ntimes \frac{1}{2}_{\heartsuit}, \ \frac{1}{9} [4, 5]^{\Q}$ & \\
44 & $X_{20} \subset \mbP (1,2,5,6,7)$ & $\frac{1}{21}$ & $3 \ntimes \frac{1}{2}_{\heartsuit}, \ \frac{1}{6} [1, 5]^{\E}_{\diamondsuit}, \  \frac{1}{7} [2, 5]^{\Q}_{\diamondsuit}$ & KE \\
\rowcolor{lightgray}
45 & $X_{20} \subset \mbP (1,3,4,5,8)$ & $\frac{1}{24}$ & $\frac{1}{3}_{\heartsuit}, \ 2 \ntimes \frac{1}{4}_{\heartsuit}, \ \frac{1}{8} [3, 5]^{\Q}_{\diamondsuit}$ & KE \\
46 & $X_{21} \subset \mbP (1,1,3,7,10)$ & $\frac{1}{10}$ & $\frac{1}{10} [3, 7]^{\Q}$ & \\
\rowcolor{lightgray}
47 & $X_{21} \subset \mbP (1,1,5,7,8)$ & $\frac{3}{40}$ & $\frac{1}{5} [2, 3]_{\spadesuit}, \ \frac{1}{8} [1, 7]^{\Q}_{\diamondsuit}$ & K \\
48 & $X_{21} \subset \mbP (1,2,3,7,9)$ & $\frac{1}{18}$ & $\frac{1}{2}_{\spadesuit}, \  2 \ntimes \frac{1}{3}_{\heartsuit}, \ \frac{1}{9} [2, 7]^{\Q}_{\diamondsuit}$ & K \\
\rowcolor{lightgray}
49 & $X_{21} \subset \mbP (1,3,5,6,7)$ & $\frac{1}{30}$ & $3 \ntimes \frac{1}{3}_{\heartsuit}, \ \frac{1}{5} [2, 3]_{\spadesuit}, \ \frac{1}{6} [1, 5]_{\heartsuit}$ & KE \\
50 & $X_{22} \subset \mbP (1,1,3,7,11)$ & $\frac{2}{21}$ & $\frac{1}{3}_{\heartsuit}, \ \frac{1}{7} [3, 4]_{\clubsuit}$ & KE \\
\rowcolor{lightgray}
51 & $X_{22} \subset \mbP (1,1,4,6,11)$ & $\frac{1}{12}$ & $\frac{1}{2}_{\heartsuit}, \ \frac{1}{4}_{\heartsuit}, \ \frac{1}{6} [1, 5]_{\diamondsuit}$ & KE \\
52 & $X_{22} \subset \mbP (1,2,4,5,11)$ & $\frac{1}{20}$ & $5 \ntimes \frac{1}{2}_{\heartsuit}, \ \frac{1}{4}_{\heartsuit}, \ \frac{1}{5} [1, 4]_{\heartsuit}$ & KE \\
\rowcolor{lightgray}
53 & $X_{24} \subset \mbP (1,1,3,8,12)$ & $\frac{1}{12}$ & $2 \ntimes \frac{1}{3}_{\heartsuit}, \ \frac{1}{4}_{\heartsuit}$ & KE \\
54 & $X_{24} \subset \mbP (1,1,6,8,9)$ & $\frac{1}{18}$ & $\frac{1}{2}_{\heartsuit}, \ \frac{1}{3}_{\heartsuit}, \ \frac{1}{9} [1, 8]^{\Q}_{\diamondsuit}$ & K \\
\rowcolor{lightgray}
55 & $X_{24} \subset \mbP (1,2,3,7,12)$ & $\frac{1}{21}$ & $2 \ntimes \frac{1}{2}_{\heartsuit}, \ 2 \ntimes \frac{1}{3}_{\heartsuit}, \  \frac{1}{7} [2, 5]_{\diamondsuit}$ & KE \\
56 & $X_{24} \subset \mbP (1,2,3,8,11)$ & $\frac{1}{22}$ & $3 \ntimes \frac{1}{2}_{\heartsuit}, \ \frac{1}{11} [3, 8]^{\Q}$ & K \\
\rowcolor{lightgray}
57 & $X_{24} \subset \mbP (1,3,4,5,12)$ & $\frac{1}{30}$ & $2 \ntimes \frac{1}{3}_{\heartsuit}, \ 2 \ntimes \frac{1}{4}_{\heartsuit}, \  \frac{1}{5} [2, 3]_{\heartsuit}$ & KE \\
58 & $X_{24} \subset \mbP (1,3,4,7,10)$ & $\frac{1}{35}$ & $\frac{1}{2}_{\heartsuit}, \ \frac{1}{7} [3, 4]^{\Q}, \ \frac{1}{10} [3, 7]^{\Q}$ &  \\
\rowcolor{lightgray}
59 & $X_{24} \subset \mbP (1,3,6,7,8)$ & $\frac{1}{42}$ & $\frac{1}{2}_{\heartsuit}, \ 4 \ntimes \frac{1}{3}_{\heartsuit}, \ \frac{1}{7} [1, 6]_{\heartsuit}$ & KE \\
60 & $X_{24} \subset \mbP (1,4,5,6,9)$ & $\frac{1}{45}$ & $2 \ntimes \frac{1}{2}_{\heartsuit}, \ \frac{1}{3}_{\heartsuit}, \ \frac{1}{5} [1, 4]_{\heartsuit}, \ \frac{1}{9} [4, 5]^{\Q}$ & K \\
\rowcolor{lightgray}
61 & $X_{25} \subset \mbP (1,4,5,7,9)$ & $\frac{5}{252}$ & $\frac{1}{4}_{\clubsuit}, \ \frac{1}{7} [2, 5]^{\EI}_{\diamondsuit}, \ \frac{1}{9} [4, 5]_{\diamondsuit}$ & KE \\
62 & $X_{26} \subset \mbP (1,1,5,7,13)$ & $\frac{2}{35}$ & $\frac{1}{5} [2, 3]_{\spadesuit}, \ \frac{1}{7} [1, 6]_{\diamondsuit}$ & KE \\
\rowcolor{lightgray}
63 & $X_{26} \subset \mbP (1,2,3,8,13)$ & $\frac{1}{24}$ & $3 \ntimes \frac{1}{2}_{\heartsuit}, \ \frac{1}{3}_{\heartsuit}, \ \frac{1}{8} [3, 5]_{\clubsuit}$ & KE \\
64 & $X_{26} \subset \mbP (1,2,5,6,13)$ & $\frac{1}{30}$ & $4 \ntimes \frac{1}{2}_{\heartsuit}, \ \frac{1}{5} [2, 3]_{\clubsuit}, \  \frac{1}{6} [1, 5]_{\heartsuit}$ & KE \\
\rowcolor{lightgray}
65 & $X_{27} \subset \mbP (1,2,5,9,11)$ & $\frac{3}{110}$ & $\frac{1}{2}_{\spadesuit}, \ \frac{1}{5} [1, 4]_{\heartsuit}, \ \frac{1}{11} [2, 9]^{\Q}_{\diamondsuit}$ & K \\
66 & $X_{27} \subset \mbP (1,5,6,7,9)$ & $\frac{1}{70}$ & $\frac{1}{3}_{\heartsuit}, \ \frac{1}{5} [1, 4]_{\clubsuit}, \ \frac{1}{6} [1, 5]_{\heartsuit}, \ \frac{1}{7} [2, 5]_{\heartsuit}$ & KE \\
\rowcolor{lightgray}
67 & $X_{28} \subset \mbP (1,1,4,9,14)$ & $\frac{1}{18}$ & $\frac{1}{2}_{\heartsuit}, \ \frac{1}{9} [4, 5]_{\spadesuit}$ & KE \\
68 & $X_{28} \subset \mbP (1,3,4,7,14)$ & $\frac{1}{42}$ & $\frac{1}{2}_{\heartsuit}, \ \frac{1}{3}_{\clubsuit}, \ 2 \ntimes \frac{1}{7} [3, 4]^{\Q}_{\diamondsuit}$ & K \\
\rowcolor{lightgray}
69 & $X_{28} \subset \mbP (1,4,6,7,11)$ & $\frac{1}{66}$ & $2 \ntimes \frac{1}{2}_{\heartsuit}, \ \frac{1}{6} [1, 5]_{\heartsuit}, \  \frac{1}{11} [4, 7]^{\Q}_{\diamondsuit}$ & KE \\
70 & $X_{30} \subset \mbP (1,1,4,10,15)$ & $\frac{1}{20}$ & $\frac{1}{2}_{\heartsuit}, \ \frac{1}{4}_{\heartsuit}, \ \frac{1}{5} [1, 4]_{\heartsuit}$ & KE \\
\rowcolor{lightgray}
71 & $X_{30} \subset \mbP (1,1,6,8,15)$ & $\frac{1}{24}$ & $\frac{1}{2}_{\heartsuit}, \ \frac{1}{3}_{\heartsuit}, \ \frac{1}{8} [1, 7]_{\diamondsuit}$ & KE \\
72 & $X_{30} \subset \mbP (1,2,3,10,15)$ & $\frac{1}{30}$ & $3 \ntimes \frac{1}{2}_{\heartsuit}, \ 2 \ntimes \frac{1}{3}_{\heartsuit}, \  \frac{1}{5} [2, 3]_{\heartsuit}$ & KE \\
\rowcolor{lightgray}
73 & $X_{30} \subset \mbP (1,2,6,7,15)$ & $\frac{1}{42}$ & $5 \ntimes \frac{1}{2}_{\heartsuit}, \ \frac{1}{3}_{\heartsuit}, \ \frac{1}{7} [1, 6]_{\heartsuit}$ & KE \\
74 & $X_{30} \subset \mbP (1,3,4,10,13)$ & $\frac{1}{52}$ & $\frac{1}{2}_{\heartsuit}, \ \frac{1}{4}_{\heartsuit}, \ \frac{1}{13} [3, 10]^{\Q}_{\diamondsuit}$ & KE \\
\rowcolor{lightgray}
75 & $X_{30} \subset \mbP (1,4,5,6,15)$ & $\frac{1}{60}$ & $2 \ntimes \frac{1}{2}_{\heartsuit}, \ \frac{1}{3}_{\heartsuit}, \ \frac{1}{4}_{\heartsuit}, \ 2 \ntimes \frac{1}{5} [1, 4]_{\heartsuit}$ & KE \\ 
76 & $X_{30} \subset \mbP (1,5,6,8,11)$ & $\frac{1}{88}$ & $\frac{1}{2}_{\heartsuit}, \ \frac{1}{8} [3, 5]^{\E}_{\diamondsuit}, \ \frac{1}{11} [5, 6]^{\Q}_{\diamondsuit}$ & KE \\
\rowcolor{lightgray}
77 & $X_{32} \subset \mbP (1,2,5,9,16)$ & $\frac{1}{45}$ & $2 \ntimes \frac{1}{2}_{\heartsuit}, \ \frac{1}{5} [1, 4]_{\heartsuit}, \  \frac{1}{9} [2, 7]_{\diamondsuit}$ & KE \\
78 & $X_{32} \subset \mbP (1,4,5,7,16)$ & $\frac{1}{70}$ & $2 \ntimes \frac{1}{4}_{\heartsuit}, \ \frac{1}{5} [1, 4]_{\heartsuit}, \  \frac{1}{7} [2, 5]_{\heartsuit}$ & KE \\
\rowcolor{lightgray}
79 & $X_{33} \subset \mbP (1,3,5,11,14)$ & $\frac{1}{70}$ & $\frac{1}{5} [1, 4]_{\heartsuit}, \ \frac{1}{14} [3, 11]^{\Q}_{\diamondsuit}$ & KE \\
80 & $X_{34} \subset \mbP (1,3,4,10,17)$ & $\frac{1}{60}$ & $\frac{1}{2}_{\heartsuit}, \ \frac{1}{3}_{\clubsuit}, \ \frac{1}{4}_{\heartsuit}, \  \frac{1}{10} [3, 7]_{\diamondsuit}$ & KE \\
\rowcolor{lightgray}
81 & $X_{34} \subset \mbP (1,4,6,7,17)$ & $\frac{1}{84}$ & $2 \ntimes \frac{1}{2}_{\heartsuit}, \ \frac{1}{4}_{\heartsuit}, \ \frac{1}{6} [1, 5]_{\heartsuit}, \ \frac{1}{7} [3, 4]_{\heartsuit}$ & KE \\
82 & $X_{36} \subset \mbP (1,1,5,12,18)$ & $\frac{1}{30}$ & $\frac{1}{5} [2, 3]_{\spadesuit}, \frac{1}{6} [1, 5]_{\heartsuit}$ & KE \\
\rowcolor{lightgray}
83 & $X_{36} \subset \mbP (1,3,4,11,18)$ & $\frac{1}{66}$ & $\frac{1}{2}_{\heartsuit}, \ 2 \ntimes \frac{1}{3}_{\heartsuit}, \ \frac{1}{11} [4, 7]_{\diamondsuit}$ & KE \\
84 & $X_{36} \subset \mbP (1,7,8,9,12)$ & $\frac{1}{168}$ & $\frac{1}{3}_{\heartsuit}, \ \frac{1}{4}_{\heartsuit}, \ \frac{1}{7} [2, 5]_{\spadesuit}, \  \frac{1}{8} [1, 7]_{\heartsuit}$ & KE \\
\rowcolor{lightgray}
85 & $X_{38} \subset \mbP (1,3,5,11,19)$ & $\frac{2}{165}$ & $\frac{1}{3}_{\heartsuit}, \ \frac{1}{5} [1, 4]_{\heartsuit}, \ \frac{1}{11} [3, 8]_{\diamondsuit}$ & KE \\
86 & $X_{38} \subset \mbP (1,5,6,8,19)$ & $\frac{1}{120}$ & $\frac{1}{2}_{\heartsuit}, \ \frac{1}{5} [1, 4]_{\heartsuit}, \ \frac{1}{6} [1, 5]_{\heartsuit}, \ \frac{1}{8} [3, 5]_{\heartsuit}$ & KE \\
\rowcolor{lightgray}
87 & $X_{40} \subset \mbP (1,5,7,8,20)$ & $\frac{1}{140}$ & $\frac{1}{4}_{\heartsuit}, \ 2 \ntimes \frac{1}{5} [2, 3]_{\heartsuit}, \  \frac{1}{7} [1, 6]_{\heartsuit}$ & KE \\
88 & $X_{42} \subset \mbP (1,1,6,14,21)$ & $\frac{1}{42}$ & $\frac{1}{2}_{\heartsuit}, \ \frac{1}{3}_{\heartsuit}, \ \frac{1}{7} [1, 6]_{\heartsuit}$ & KE \\
\rowcolor{lightgray}
89 & $X_{42} \subset \mbP (1,2,5,14,21)$ & $\frac{1}{70}$ & $3 \ntimes \frac{1}{2}_{\heartsuit}, \ \frac{1}{5} [1, 4]_{\heartsuit}, \  \frac{1}{7} [2, 5]_{\heartsuit}$ & KE \\
90 & $X_{42} \subset \mbP (1,3,4,14,21)$ & $\frac{1}{84}$ & $\frac{1}{2}_{\heartsuit}, \ 2 \ntimes \frac{1}{3}_{\heartsuit}, \ \frac{1}{4}_{\heartsuit}, \ \frac{1}{7} [3, 4]_{\heartsuit}$ & KE \\
\rowcolor{lightgray}
91 & $X_{44} \subset \mbP (1,4,5,13,22)$ & $\frac{1}{130}$ & $\frac{1}{2}_{\heartsuit}, \ \frac{1}{5} [2, 3]_{\heartsuit}, \ \frac{1}{13} [4, 9]_{\diamondsuit}$ & KE \\
92 & $X_{48} \subset \mbP (1,3,5,16,24)$ & $\frac{1}{120}$ & $2 \ntimes \frac{1}{3}_{\heartsuit}, \ \frac{1}{5} [1, 4]_{\heartsuit}, \  \frac{1}{8} [3, 5]_{\heartsuit}$ & KE \\
\rowcolor{lightgray}
93 & $X_{50} \subset \mbP (1,7,8,10,25)$ & $\frac{1}{280}$ & $\frac{1}{2}_{\heartsuit}, \ \frac{1}{5} [2, 3]_{\heartsuit}, \ \frac{1}{7} [3, 4]_{\clubsuit}, \ \frac{1}{8} [1, 7]_{\heartsuit}$ & KE \\
94 & $X_{54} \subset \mbP (1,4,5,18,27)$ & $\frac{1}{180}$ & $\frac{1}{2}_{\heartsuit}, \ \frac{1}{4}_{\heartsuit}, \ \frac{1}{5} [2, 3]_{\heartsuit}, \ \frac{1}{9} [4, 5]_{\heartsuit}$ & KE \\
\rowcolor{lightgray}
95 & $X_{66} \subset \mbP (1,5,6,22,33)$ & $\frac{1}{330}$ & $\frac{1}{2}_{\heartsuit}, \ \frac{1}{3}_{\heartsuit}, \ \frac{1}{5} [2, 3]_{\clubsuit}, \ \frac{1}{11} [5, 6]_{\heartsuit}$ & KE
\end{longtable}
\endgroup

%%%%%%%%%%%%%%%%%%%%%%%%%%%%%%%%%%

\end{document}